\numberwithin{equation}{section}
\definecolor{Vino}{rgb}{0.356,0,0}
\definecolor{Vino2}{rgb}{0.5,0,0}
\definecolor{Ruta}{rgb}{0.309, 0.459, 0.208}
\let\cal\mathcal
\def\Ascr{{\cal A}}
\def\Dscr{{\cal D}}
\def\Fscr{{\cal F}}
\def\Hscr{{\cal H}}
\def\Kscr{{\cal K}}
\def\Lscr{{\cal L}}
\def\Oscr{{\cal O}}
\def\Pscr{{\cal P}}
\def\Tscr{{\cal T}}
\def\Wscr{{\cal W}}
\let\blb\mathbb
\def\CC{{\blb C}}
\def\QQ{{\blb Q}}
\def \PP{{\blb P}}
\def \AA{{\blb A}}
\def \ZZ{{\blb Z}}
\def \TT{{\blb T}}
\def \NN{{\blb N}}
\def \RR{{\blb R}}
\def\id{\text{id}}
\def\Id{\operatorname{id}}
\def\quot{/\!\!/}
\def\mod{\operatorname{mod}}
\def\length{\mathop{\text{length}}}
\def\coh{\mathop{\text{\upshape{coh}}}}
\def\Spec{\operatorname {Spec}}
\def\Ext{\operatorname {Ext}}
\def\Hom{\operatorname {Hom}}
\def\RHom{\operatorname {RHom}}
\def\relint{\operatorname {relint}}
\def\coker{\operatorname {coker}}
\def\ker{\operatorname {ker}}
\def\id{{\operatorname {id}}}
\def\rk{\operatorname {rk}}
\def\r{\rightarrow}
\DeclareMathOperator{\Perf}{Perf}
\let\dirlim\injlim
\newtheorem{lemma}{Lemma}[section]
\newtheorem{proposition}[lemma]{Proposition}
\newtheorem{theorem}[lemma]{Theorem}
\newtheorem{corollary}[lemma]{Corollary}
\newtheorem{convention}[lemma]{Convention}
\theoremstyle{definition}
\newtheorem{example}[lemma]{Example}
\newtheorem{definition}[lemma]{Definition}
\newtheorem{conjecture}[lemma]{Conjecture}
\newtheorem{hypothesis}{Hypothesis}
\theoremstyle{remark}
\newtheorem{remark}[lemma]{Remark}
\newtheorem{notation}[lemma]{Notation}
\newdimen\uboxsep \uboxsep=1ex
\def\uboxn#1{\vtop to 0pt{\hrule height 0pt depth 0pt\vskip\uboxsep
\hbox to 0pt{\hss #1\hss}\vss}}
\def\uboxs#1{\vbox to 0pt{\vss\hbox to 0pt{\hss #1\hss}
\vskip\uboxsep\hrule height 0pt depth 0pt}}
\let\oldmarginpar\marginpar
\long\def\marginpar#1{\oldmarginpar{\raggedright \tiny\baselineskip 0pt \lineskip 0pt #1}}
\def\TT{\mathbb{T}}
\def\Re{\operatorname{Re}}
\def\Im{\operatorname{Im}}
\def\Re{\operatorname{Re}}
\def\WF{\Wscr\Fscr}
\def\Log{\operatorname{Log}}
\def\Tw{\Dscr}
\def\Two{\operatorname{Tw}}
\def\Ho{\operatorname{Ho}}
\def\grad{\operatorname{grad}}
\def\tl{\operatorname{tl}}
\pgfplotsset{compat=1.16}
\providecommand{\charclass}{}
\renewcommand{\charclass}[1][1]{%
\tikzsetnextfilename{charclass_cached_#1}
\begin{tikzpicture}[baseline=(current bounding box.center),scale=#1]
\begin{scope}[cm={1.5,0.0,0.0,1.5,(0.0,0.0)},,,anchor=center,inner sep=0,outer sep=0]
\clip(0,0) rectangle (8,5);
\path[draw,fill=gray!10,draw=white]  
   (8,0) -- (5.2,1.75) -- (5.2,3.25) -- (8,5) -- (4,5) -- (4,-5) -- cycle;
\path[draw,dashed,fill={rgb,1:red,0.826;green,0.868;blue,0.742}]  
   (8,-0.833333) .. controls (7.446511,-0.445776)  and (3.455069,1.775372)
    .. (3.475,2.5) .. controls (3.494489,3.208538)  and (7.464249,5.249863)
    .. (8,5.625);
\path[draw,color=gray!20]  
   (4,0) -- (4,5);
\path[draw,color=gray!20]  
   (4.133333,0) -- (4.133333,5);
\path[draw,color=gray!20]  
   (4.266667,0) -- (4.266667,5);
\path[draw,color=gray!20]  
   (4.4,0) -- (4.4,5);
\path[draw,color=gray!20]  
   (4.533333,0) -- (4.533333,5);
\path[draw,color=gray!20]  
   (4.666667,0) -- (4.666667,5);
\path[draw,color=gray!20]  
   (4.8,0) -- (4.8,5);
\path[draw,color=gray!20]  
   (4.933333,0) -- (4.933333,5);
\path[draw,color=gray!20]  
   (5.066667,0) -- (5.066667,5);
\path[draw,color=gray!20]  
   (5.2,0) -- (5.2,5);
\path[draw,color=gray!20]  
   (5.333333,0) -- (5.333333,5);
\path[draw,color=gray!20]  
   (5.466667,0) -- (5.466667,5);
\path[draw,color=gray!20]  
   (5.6,0) -- (5.6,5);
\path[draw,color=gray!20]  
   (5.733333,0) -- (5.733333,5);
\path[draw,color=gray!20]  
   (5.866667,0) -- (5.866667,5);
\path[draw,color=gray!20]  
   (6,0) -- (6,5);
\path[draw,color=gray!20]  
   (6.133333,0) -- (6.133333,5);
\path[draw,color=gray!20]  
   (6.266667,0) -- (6.266667,5);
\path[draw,color=gray!20]  
   (6.4,0) -- (6.4,5);
\path[draw,color=gray!20]  
   (6.533333,0) -- (6.533333,5);
\path[draw,color=gray!20]  
   (6.666667,0) -- (6.666667,5);
\path[draw,color=gray!20]  
   (6.8,0) -- (6.8,5);
\path[draw,color=gray!20]  
   (6.933333,0) -- (6.933333,5);
\path[draw,color=gray!20]  
   (7.066667,0) -- (7.066667,5);
\path[draw,color=gray!20]  
   (7.2,0) -- (7.2,5);
\path[draw,color=gray!20]  
   (7.333333,0) -- (7.333333,5);
\path[draw,color=gray!20]  
   (7.466667,0) -- (7.466667,5);
\path[draw,color=gray!20]  
   (7.6,0) -- (7.6,5);
\path[draw,color=gray!20]  
   (7.733333,0) -- (7.733333,5);
\path[draw,color=gray!20]  
   (7.866667,0) -- (7.866667,5);
\path[draw]  
   (4,0) -- (4,5);
\node[anchor=east, cm={1.5,0.0,0.0,1.5,(0.0,0.0)}] at (2.0,2.5) {$\mathbb{C}_{\operatorname{Re}\le 0}$};
\node[anchor=east, cm={1.5,0.0,0.0,1.5,(0.0,0.0)}] at (3.9,0.625) {$\scriptstyle {\operatorname{Re}= 0}$};
\path[draw,fill=black]  
   (4.05,2.5) .. controls (4.05,2.527614)  and (4.027614,2.55) .. (4,2.55)
    .. controls (3.972386,2.55)  and (3.95,2.527614) .. (3.95,2.5)
    .. controls (3.95,2.472386)  and (3.972386,2.45) .. (4,2.45)
    .. controls (4.027614,2.45)  and (4.05,2.472386) .. cycle;
\node[anchor=east, cm={1.5,0.0,0.0,1.5,(0.0,0.0)}] at (3.9,2.5) {$\scriptstyle 0$};
\path[draw,color={rgb,1:red,0.5;green,0;blue,0},dashed]  
   (4.7,2.5) .. controls (4.7,2.886599)  and (4.386599,3.2) .. (4,3.2)
    .. controls (3.613401,3.2)  and (3.3,2.886599) .. (3.3,2.5)
    .. controls (3.3,2.113401)  and (3.613401,1.8) .. (4,1.8)
    .. controls (4.386599,1.8)  and (4.7,2.113401) .. cycle;
\node[anchor=south east,color={rgb,1:red,0.5;green,0;blue,0}, cm={1.5,0.0,0.0,1.5,(0.0,0.0)}] at (3.41,1.9100000000000001) {$\scriptstyle S^1$};
\path[draw,color=brown,dashed]  
   (5.4,0) .. controls (5.16961,0.399048)  and (4.35,2.037076) .. (4.35,2.5)
    .. controls (4.35,2.962924)  and (5.16961,4.600952) .. (5.4,5);
\node[anchor=west,color=brown, cm={1.5,0.0,0.0,1.5,(0.0,0.0)}] at (4.9399999999999995,4.5) {$\scriptstyle l$};
\node[color={rgb,1:red,0.42;green,0.56;blue,0.14}, cm={1.5,0.0,0.0,1.5,(0.0,0.0)}] at (6.5,4.375) {U};
\path[draw,color={rgb,1:red,0.5;green,0;blue,0}]  
   (4.01063,1.800079) .. controls (4.382126,1.805721)  and (4.684496,2.100614)
    .. (4.699436,2.471853)
    .. controls (4.714375,2.843092)  and (4.436679,3.161329)
    .. (4.066837,3.196804);
\path[draw,color=brown]  
   (4.803849,1.192404) .. controls (4.563887,1.720378)  and (4.35,2.263518)
    .. (4.35,2.5) .. controls (4.35,2.71808)  and (4.531895,3.19695)
    .. (4.748393,3.684218);
\path[draw,fill=white,color=white]  
   (8,0) -- (5.2,1.75) -- (5.2,3.25) -- (8,5) -- cycle;
\node[color=gray!70, cm={1.5,0.0,0.0,1.5,(0.0,0.0)}] at (4.35,4.65) {V};
\path[draw]  
   (8,0) -- (5.2,1.75) -- (5.2,3.25) -- (8,5);
\node[anchor=west, cm={1.5,0.0,0.0,1.5,(0.0,0.0)}] at (6.0,2.5) {$\mathbb{C}_{\operatorname{Re}\ge 0}$};
\end{scope}
\end{tikzpicture}%
}%
\newcommand*\bigcdot{\mathpalette\bigcdot@{.5}}
\newcommand*\bigcdot@[2]{\mathbin{\vcenter{\hbox{\scalebox{#2}{$\m@th#1\bullet$}}}}}
\newcommand{\pushright}[1]{\ifmeasuring@#1\else\omit\hfill$\displaystyle#1$\fi\ignorespaces}
\newcommand{\pushleft}[1]{\ifmeasuring@#1\else\omit$\displaystyle#1$\hfill\fi\ignorespaces}
\author[\v{S}pela \v{S}penko and Michel Van den Bergh]{\v{S}pela \v{S}penko and Michel
  Van den Bergh} 
\address[\v{S}pela \v{S}penko]{D\'epartement de Math\'ematique, Universit\'e Libre de Bruxelles, Campus de la Plaine CP 213, Bld du Triomphe, B-1050 Bruxelles}
\email{spela.spenko@ulb.be}
\address[Michel Van den Bergh]{Vakgroep Wiskunde, Universiteit Hasselt, Universitaire Campus \\
  B-3590 Diepenbeek}
\email{michel.vandenbergh@uhasselt.be}
\address[Michel Van den Bergh]{Vakgroep Wiskunde en Data Science, Vrije Universiteit Brussel, Pleinlaan 2, 1050 Brussel} 
\email{michel.van.den.bergh@vub.be}
\thanks{The first author is supported by a MIS grant from the National Fund for Scientific Research (FNRS) and an ARC grant from the Université Libre de Bruxelles. The second author is a senior researcher at the Research
  Foundation Flanders (FWO).  While working on this project he was
  supported by the ERC grant SCHEMES and FWO grant G0D8616N: ``Hochschild cohomology and
  deformation theory of triangulated categories''. Part of this paper was written while the authors were in residence at the Simons Laufer Mathematical Sciences Institute (formerly MSRI) in Berkeley, California, during the Spring 2024 semester.}
\subjclass{14A05}
\title[HMS symmetries of toric boundary divisors]{HMS symmetries of toric boundary divisors}
\def\LGr{\operatorname{LGr}}
\def\conv{\operatorname{conv}}
\def\Skel{\operatorname{Skel}}
\def\Grass{\operatorname{Grass}}
\def\spantwo{\operatorname{span}}
\def\tl{\operatorname{tl}}
\def\vol{\operatorname{vol}}
\def\HMS{\operatorname{HMS}}
\begin{document}  
\begin{abstract}
  Let $X$  
  be a projective crepant resolution of a Gorenstein affine toric variety and let
  $((\CC^*)^k,f)$ be the LG-model which is the Hori-Vafa mirror dual of $X$.
  Let ${D}$ be a generic fiber of $f$ equipped with the restriction of the standard Liouville
  form on $(\CC^*)^k$.  Let
  $\Kscr_A$ be the so-called ``stringy K\"ahler moduli space'' of $X$. We show that $\pi_1(\Kscr_A)$ 
  acts 
  on the wrapped Fukaya category of ${D}$. Using results by Gammage -- Shende and Zhou, this result implies that
  $\pi_1(\Kscr_A)$ acts on $D^b(\coh(\partial X))$ where $\partial X$ is the toric boundary
  divisor  of $X$. We show that the induced action of $\pi_1(\Kscr_A)$ on $K_0(\coh(\partial X))$ may be 
extended in a natural way to an action on $K_0(X)$ which corresponds to a GKZ system.
\end{abstract}

\maketitle
\setcounter{tocdepth}{1}
\tableofcontents
\section{Introduction}

Let\footnote{In order to avoid making too many technical definitions we first state our results in somewhat restricted generality. At the end of the introduction we   specify the actual hypotheses under which we prove each result.} $X$ be a crepant projective resolution of 
a Gorenstein affine toric variety, possibly by a DM-stack.\footnote{We say that a morphism
  between DM-stacks is projective if the induced morphism between the
  coarse moduli spaces is projective.}\footnote{Projectivity is needed in order to be able to construct a tropical limit on the mirror.} 
Let $A$ be the generators of the one-dimensional cones of the fan of $X$.  
The \emph{Hori-Vafa mirror} for $X$ is given by the LG model
$((\CC^*)^A,f)$ where $f$ is a generic Laurent polynomial with
monomials given by $A$. 
 The
\emph{stringy K\"ahler moduli space (SKMS)} $\Kscr_A$ of $X$ is the
DM-stack
\[
(\CC^A-V(E_A))/(\CC^\ast)^k
\]
where $V(E_A)$ is the divisor in $\CC^A$ defined by the
\emph{principal $A$-determinant} \cite[\S10]{GKZbook} (see \S\ref{sec:prindet}), after identifying the $\CC^A$ 
with the space of Laurent polynomials that are linear combinations of the monomials corresponding to the columns of $A$. The $(\CC^\ast)^k$-action is given by rescaling the
variables in the Laurent polynomials.  See \S\ref{eq:KA} for a precise definition and for example  \cite{Iritani,CCIT,DonovanSegal,Kite} for background.

Homological mirror symmetry suggests the existence of an action of $\pi_1(\Kscr_A)$ on $D^b(\coh(X))$.
 Indeed one expects an
action of $\pi_1(\Kscr_A)$ by some kind of ``parallel transport''  on the Fukaya category associated to
the LG model $((\CC^\ast)^k,f)$. This action should then be transported to an action on $D^b(\coh(X))$.
Unfortunately, while there exist a number of definitions for the Fukaya 
category associated to an LG-model, which apply in the generality that we need  \cite{AA, GammageShende, Hanlon,HanlonHicks}, these typically involve some extra choices (e.g.\ a tropical limit) which
obstruct the construction of a parallel transport action, or at least make it nonobvious.\footnote{Recently Wenyuan Li pointed out to us that perhaps a suitable definition is given in \cite{jeffs}.}

Therefore in this paper we aim for a more easily achievable goal.
Namely, we construct an action of $\pi_1(\Kscr_A)$ by parallel transport on the
wrapped Fukaya category $\Wscr\Fscr(D)$ ($\ZZ$-graded with $\ZZ$-coefficients) of a generic fiber ${D}$ of $f$, equipped with the restriction of a toric Liouville form on $(\CC^\ast)^k$. 
\begin{theorem}[\protect{Theorem \ref{th:mainth1}}] \label{th:mainth1intro} 
There exists a natural action of $\pi_1(\Kscr_A)$ on $\Wscr\Fscr(D)$.
\end{theorem}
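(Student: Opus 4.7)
The plan is to construct the desired action by symplectic parallel transport in the universal family of generic fibers over the SKMS, along the lines of the standard mirror-symmetric heuristic on the $A$-side.

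First I would set up the universal fibration. Let $B \subset \CC^A$ denote the complement $\CC^A - V(E_A)$, parametrizing Laurent polynomials $F_b$ whose generic fibers are non-degenerate. Over $B$ consider the total space
\[
\Dscr \;=\; \{(x,b)\in (\CC^\ast)^k\times B \ :\ F_b(x)=c_b\}\;\longrightarrow\; B,
\]
where $c_b$ is chosen continuously to avoid the critical values of $F_b$ (for instance large in modulus so that the fiber stays in the regime governed by tropicalization). Each fiber $D_b$ inherits the restriction of the fixed toric Liouville form $\lambda$ on $(\CC^\ast)^k$. The interpretation of $V(E_A)$ as the full discriminant of the family implies that $\Dscr\to B$ is a smooth family of Liouville domains of fixed symplectic type.

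Second I would construct symplectic parallel transport. The $d\lambda$-orthogonal complement of the vertical tangent distribution defines a symplectic Ehresmann connection; integrating it along a path $\gamma\subset B$ yields a symplectomorphism $\varphi_\gamma\colon D_{\gamma(0)}\to D_{\gamma(1)}$. Because the fibers are non-compact Stein domains, the key step is to modify $\varphi_\gamma$ by a Hamiltonian supported in a cylindrical end to upgrade it to an honest Liouville equivalence, as needed to induce a well-defined auto-equivalence of the wrapped Fukaya category in the sense of \cite{GammageShende,Hanlon,HanlonHicks}. The condition $b\notin V(E_A)$ is exactly what forces the tropical skeleton of $D_b$ to remain combinatorially constant, which in turn provides the necessary model for the cylindrical end into which the corrective Hamiltonian can be dropped.

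Third I would verify functoriality and descent. Homotopic paths with fixed endpoints produce Hamiltonian-isotopic Liouville equivalences, hence equal auto-equivalences of $\Wscr\Fscr(D)$ up to canonical natural isomorphism, yielding a representation $\pi_1(B)\to \Aut(\Wscr\Fscr(D))$. To descend to $\pi_1(\Kscr_A)=\pi_1(B/(\CC^\ast)^k)$, I would show that the rescaling action of $(\CC^\ast)^k$ on $B$ acts trivially: coordinate rescaling lifts to a biholomorphism of $(\CC^\ast)^k$ that, once intertwined with the change of $c_b$, restricts to a symplectomorphism of the fiber that is Hamiltonian-isotopic to the identity on the Liouville completion.

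The main obstacle is the cylindrical-end bookkeeping in steps two and three. Naive symplectic parallel transport preserves $d\lambda$ but not $\lambda$, so the Liouville primitive on the fiber is transported only up to an exact shift; producing the corrective Hamiltonian, and checking that it behaves functorially both under concatenation of paths and under the $(\CC^\ast)^k$-rescaling, requires synchronizing the analytic parallel transport with the combinatorial parallel transport of the tropical skeleton across $\Kscr_A$. The hypothesis $b\notin V(E_A)$ is exactly what makes this synchronization possible, and pinning down the precise class of Hamiltonian corrections that keeps everything strictly coherent is the technical heart of the theorem.
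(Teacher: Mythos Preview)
Your overall architecture---parallel transport over $B=\CC^A-V(E_A)$ followed by descent along the quotient by $(\CC^\ast)^k$---matches the paper's. But two aspects are off, and the first is a genuine gap.

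The technical heart you identify is wrong. The condition $b\notin V(E_A)$ does \emph{not} force ``the tropical skeleton of $D_b$ to remain combinatorially constant''; there is no tropical skeleton for a general $D_b$, since tropicalization only appears after passing to a degeneration limit $t\to\infty$ with a chosen convex function $\nu$. The paper does not use any tropical input in the proof of this theorem. What it actually needs is that each fiber $f_\alpha^{-1}(0)$ is a genuine Liouville manifold (complete Liouville vector field, existence of a generating Liouville domain) and that such generating domains can be chosen in families locally over $B$. This is the content of Proposition~\ref{lem:compactangle0} and Theorem~\ref{prop:Dlocally}: one shows, by a compactness argument using Hermitian angles between $\grad_g\psi$ and $\partial f_\alpha$, that the locus where the Liouville field fails to be outward-pointing is compact. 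The definition of $V(A)=V(E_A)$ enters precisely here (via the face-by-face nondegeneracy of $f_\alpha$), not through any combinatorial rigidity of a skeleton. Without this step you have no reason to believe the fibers are Liouville manifolds at all, so there is nothing to parallel-transport.

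Two smaller divergences. First, the paper works with the zero fiber $f_\alpha^{-1}(0)$ rather than a moving generic fiber $F_b^{-1}(c_b)$; the relation to the introduction's ``generic fiber'' is handled by adjoining $0$ to $A$ (see \S\ref{sec:adjoin}), which is cleaner than tracking a varying $c_b$. Second, the paper does not use the symplectic Ehresmann connection plus an ad hoc cylindrical correction. Instead it invokes a family version of Moser's lemma for Liouville manifolds (Theorem~\ref{lem:moser2}): once the family locally has generating Liouville domains, Moser produces isomorphisms $\psi_t^\ast\theta_t+df_t=\theta_b$ with $f_t$ compactly supported, which is exactly the notion of isomorphism under which $\Wscr\Fscr$ is functorial (Proposition~\ref{cor:functorial}). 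Homotopy-invariance and independence of choices then come from Corollaries~\ref{cor:independence1}--\ref{cor:independence2}. Your Ehresmann-plus-correction scheme could perhaps be made to work, but it re-derives Moser by hand and leaves the coherence checks you flag unresolved. For the descent, the paper's argument is that the $(S^1)^k\subset(\CC^\ast)^k$ action preserves the toric K\"ahler form, so a loop in $(S^1)^k$ furnishes an isotopy whose time-$1$ map is literally the identity; this is more direct than appealing to a Hamiltonian isotopy.
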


This result may then be combined with HMS for toric boundary divisors as in \cite{GammageShende}.  
\begin{theorem}[\protect{Theorem \ref{th:mainth1}, Theorem \ref{prop:lastmile}, \cite[Theorem 1.0.1]{GammageShende}}] \label{th:mirror1000} 
We have $\Tw(\Wscr\Fscr(D)_\CC)\cong D^b(\coh(\partial X))$  where
$\partial X$ is the toric boundary divisor of $X$, i.e.\ the complement of the generic orbit.
Hence by Theorem \ref{th:mainth1intro} 
we obtain a natural action of $\pi_1(\Kscr_A)$ on  $D^b(\coh(\partial X))$.
\end{theorem}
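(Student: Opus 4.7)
The plan is to deduce this statement by composition of three ingredients already in hand or referenced in the statement itself. First, Theorem \ref{th:mainth1intro} supplies a natural $\pi_1(\Kscr_A)$-action on $\Wscr\Fscr(D)$, where $D$ is a generic fiber of $f$ equipped with the restriction of the \emph{toric} Liouville form on $(\CC^*)^k$. Second, the Gammage--Shende HMS theorem \cite[Theorem 1.0.1]{GammageShende} provides an equivalence of the form $\Tw(\Wscr\Fscr(D')_\CC)\cong D^b(\coh(\partial X))$, but only after $D'$ is endowed with a specific Liouville structure adapted to a chosen tropical/Newton-subdivision skeleton. The intermediate Theorem \ref{prop:lastmile} is the ``last mile'' comparison: it identifies the wrapped Fukaya category of $D$ (with the toric Liouville form) and the wrapped Fukaya category $\Wscr\Fscr(D')$ appearing in Gammage--Shende, tensored up to $\CC$.

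Given this, the argument structure is very short. First I would invoke Theorem~\ref{prop:lastmile} to obtain a quasi-equivalence $\Tw(\Wscr\Fscr(D)_\CC)\simeq \Tw(\Wscr\Fscr(D')_\CC)$, then compose with the Gammage--Shende equivalence to produce the first assertion $\Tw(\Wscr\Fscr(D)_\CC)\cong D^b(\coh(\partial X))$. For the second assertion, the $\CC$-linear extension of the action from Theorem~\ref{th:mainth1intro} acts on $\Tw(\Wscr\Fscr(D)_\CC)$, and by transport of structure along this chain of equivalences it becomes an action of $\pi_1(\Kscr_A)$ on $D^b(\coh(\partial X))$. Naturality of the action follows from the naturality of each of the three ingredients (none of the comparisons requires auxiliary choices beyond those already fixed when setting up $\Wscr\Fscr(D)$).

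The one point requiring real care is ensuring that the different constructions agree on the nose, rather than only up to unspecified equivalence: the Liouville form on $D$ used in our parallel-transport construction is the restriction of the standard toric form, whereas Gammage--Shende work with a Weinstein structure obtained from a tropical degeneration, and a priori these two Liouville structures give Fukaya categories that are merely \emph{Liouville-homotopy equivalent}. The substance of Theorem~\ref{prop:lastmile} is precisely to produce such a Liouville homotopy (equivalently, to exhibit the toric skeleton as deformation-equivalent to the Gammage--Shende skeleton on $D$) and to invoke invariance of $\Wscr\Fscr$ under Liouville homotopy. This is the main obstacle, and it is the reason Theorem~\ref{prop:lastmile} is formulated as a separate statement rather than absorbed into the present one. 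Once that comparison is in place, Theorem~\ref{th:mirror1000} is a formal consequence and there is nothing further to check.
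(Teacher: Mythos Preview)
Your proposal is correct and matches the paper's approach: the result is precisely the composition of Theorem~\ref{th:mainth1}, Theorem~\ref{prop:lastmile}, and the Gammage--Shende equivalence (stated in the paper as Theorem~\ref{th:mirror_}), and the paper packages this combination as Corollary~\ref{cor:maincor}.

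One point you gloss over: beyond the Liouville-form comparison, there is a separate compatibility issue for \emph{gradings}. The $\pi_1(\Kscr_A)$-action in Theorem~\ref{th:mainth1} is built using the ``natural'' grading on $f^{-1}_\alpha(0)$ (the restriction of the canonical torus grading via Definition~\ref{def:lemfibers}), whereas the Gammage--Shende equivalence in Theorem~\ref{th:mirror_} uses the ``special'' grading on $f^{-1}_{t,1}(0)$ coming from the Liouville-pair construction (Definition~\ref{def:liouvillepair}). The paper devotes all of \S\ref{sec:gradingbackground} (culminating in Proposition~\ref{prop:maingradingprop}) to showing these two gradings coincide, via a reduction to individual pairs of pants; without this, the graded isotopy produced by Theorem~\ref{lem:simpiso} in the proof of Theorem~\ref{prop:lastmile} would land in the wrong graded category. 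Your sentence ``none of the comparisons requires auxiliary choices'' is therefore slightly too optimistic: the gradings are genuinely different a priori, and checking they agree is a nontrivial step hidden inside the hypotheses of Theorem~\ref{prop:lastmile}.
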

\begin{remark}
The equivalence $\Tw(\Wscr\Fscr(D)_\CC)\cong D^b(\coh(\partial X))$ is proved in \cite{GammageShende} 
for a Liouville form on ${D}$, obtained from tropical degeneration, which is different from the restriction of the ambient toric Liouville form we are using.
However we can complete our argument by showing that the two Liouville forms are isotopic, proving the conjecture stated in \cite[Remark 5.3.3]{GammageShende}. See Theorem \ref{prop:lastmile}.

Moreover \cite{GammageShende} require the fan of $X$ to satisfy  a certain technical condition called ``per\-fectly-centered''.
We can remove this condition by using K\"ahler potentials of degree two during the proof. The usefulness of such potentials was realised by Zhou in \cite{Zhou}.
\end{remark}
Of course the ``natural action'' of $\pi_1(\Kscr_A)$ on
$D^b(\coh(\partial X))$, whose existence is asserted in Theorem
\ref{th:mirror1000}, could in principle be trivial. To show that this is
not the case we relate the action of $\pi_1(\Kscr_A)$ on
$K_0(\Tw(\Wscr\Fscr(D)_\CC))$ obtained from Theorem
\ref{th:mainth1intro} (and hence the corresponding action on
$K_0(\coh(\partial(X)))$ via Theorem \ref{th:mirror1000}) 
to the action of $\pi_1(\Kscr_A)$ on $H^{k-1}(D,\ZZ)$ obtained from
the Gauss-Manin connection (the Gauss-Manin connection is obtained
from the fact that $\Kscr_A$ describes the possible coefficients of
$f$, up to rescaling the variables).  

\begin{theorem}[Proposition \ref{prop:Lazarev}] \label{th:GKZ1} Assume that $X$ is a scheme. Then there exists an isomorphism
$K_0(\Tw(\Wscr\Fscr(D)_\CC))\cong H^{k-1}(D,\ZZ)$ which is compatible with the action of  $\pi_1(\Kscr_A)$ on $K_0(\Tw(\Wscr\Fscr(D)_\CC))$ obtained from Theorem \ref{th:mainth1intro} and
the action of $\pi_1(\Kscr_A)$
on $H^{k-1}(D,\ZZ)$ obtained from  the Gauss-Manin connection.
\end{theorem}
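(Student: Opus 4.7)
The plan is to build the isomorphism $K_0(\Tw(\WF(D)_\CC)) \cong H^{k-1}(D,\ZZ)$ geometrically, by sending a Lagrangian brane to its underlying middle-dimensional cycle, and then to observe that both $\pi_1(\Kscr_A)$-actions descend from a single operation: the parallel transport of the family of smooth hypersurfaces $\{D_t\}_{t\in\Kscr_A}$ cut out by the Laurent polynomial $f$.

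First I would construct the isomorphism. By Theorem \ref{th:mirror1000} we have $\Tw(\WF(D)_\CC) \cong D^b(\coh(\partial X))$, so the K-group is identified with $K_0(\coh(\partial X))$; under the scheme hypothesis on $X$ this is a finitely generated free abelian group of rank computable from the stratification of $\partial X$. On the symplectic side, the Ganatra--Pardon--Shende generation theorem produces a basis of $K_0(\WF(D))$ by cocores of a Weinstein handle decomposition of $D$; each cocore is a properly embedded Lagrangian half-disk and thus represents a class in $H_{k-1}(D,\ZZ)$, which by Poincar\'e--Lefschetz duality is identified with $H^{k-1}(D,\ZZ)$. The assignment ``Lagrangian $\mapsto$ homology class'' is well-defined on $K_0$ because Lagrangian surgery, which realises the exact triangles in $\WF(D)$, corresponds to addition of middle-dimensional cycles. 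A rank comparison, or more intrinsically the identification of cocores with vanishing thimbles of $f$, shows the resulting map is an isomorphism; this reduces to the classical GKZ/toric dictionary between the middle cohomology of the affine hypersurface $D$ and the $K$-theory of the toric boundary $\partial X$.

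For compatibility with the two monodromies, observe that both are manifestly built from the same topological datum. A loop $\gamma$ in $\Kscr_A$ based at the parameter of $D$ lifts to an isotopy $\{f_t\}_{t\in[0,1]}$ of Laurent polynomials and hence to a symplectic isotopy $\{D_{\gamma(t)}\}$ of smooth hypersurfaces. On the Fukaya side the action constructed in Theorem \ref{th:mainth1intro} is parallel transport of branes along this isotopy by continuation; on the cohomology side it is the classical Gauss--Manin parallel transport of cycles along the same isotopy. Since the map of the previous paragraph forgets a brane to its underlying Lagrangian cycle, and continuation is by construction a Lagrangian isotopy inside the ambient family, the two operations agree on classes.

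The main obstacle is making this last step rigorous: the construction of the parallel transport in Theorem \ref{th:mainth1intro} is not defined directly by moving Lagrangians in a family of ambient manifolds but through an algebraic/combinatorial model of $\WF(D)$, and one must trace through that construction to see that the induced action on underlying middle-dimensional cycles agrees with the one produced by transporting cocores along the isotopy of fibers. A related subtlety is to pin down the identification $K_0(\coh(\partial X)) \cong H^{k-1}(D,\ZZ)$ used above and show that it coincides with the topological one coming from the cycle-class map on the Fukaya side; the most transparent route is through the GKZ hypergeometric dictionary for the Hori--Vafa family, so the content of the proposition is ultimately that the Fukaya-categorical monodromy we constructed reproduces the standard GKZ monodromy, exactly as the statement asserts.
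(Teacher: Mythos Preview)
Your outline has the right shape—cycle-class map, rank comparison, and parallel transport for compatibility—but there is a genuine gap in the construction of the isomorphism, and the paper fills it with an ingredient you do not invoke.

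You propose the map $K_0(\Tw(\WF(D)_\CC)) \to H^{k-1}(D,\ZZ)$ by ``Lagrangian $\mapsto$ cycle class'' and justify well-definedness by saying that Lagrangian surgery realises exact triangles and corresponds to addition of cycles. But the relations defining $K_0$ of a pretriangulated category come from \emph{all} exact triangles, and in $\Tw(\WF(D))$ not every exact triangle arises from a geometric surgery of Lagrangians; so your argument does not show the map descends to $K_0$. The paper avoids this problem by going the other way: it invokes Lazarev's theorem \cite{Lazarev}, which says that on a Weinstein manifold every class in $H^{k-1}(D,\ZZ)$ is $c(L)$ for some exact conical Lagrangian $L$, and that $c(L_1)=c(L_2)$ implies $[L_1]=[L_2]$ in $K_0$. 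This gives a well-defined \emph{surjection} $\operatorname{L}: H^{k-1}(D,\ZZ) \to K_0(\Two(\WF(D)))$, which is then upgraded to an isomorphism by the rank count you mention. A second point you skip is the passage from $\Two$ to the idempotent completion $\Tw$: the paper proves these agree (Proposition~\ref{prop:idcomplete}) by a separate HMS argument using \cite{HanlonHicks}, and this is where the scheme hypothesis on $X$ is actually consumed. The rank count itself uses Theorem~\ref{thm:BHH} and the exact sequence \eqref{eq:K0ses} on the $B$-side and Proposition~\ref{prop:GKZ1_} on the $A$-side, both giving $k!\operatorname{vol}(P)+k-1$.

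Your compatibility argument is essentially the paper's: both actions are parallel transport in the locally trivial family $(D_\alpha)_\alpha$, and the Lazarev map visibly intertwines them. Your stated worry—that the action of Theorem~\ref{th:mainth1intro} is defined through ``an algebraic/combinatorial model'' rather than by moving Lagrangians—is misplaced: the action in Theorem~\ref{th:mainth1} is constructed exactly by Moser-type parallel transport of the Liouville structures, so the step you flag as the main obstacle is in fact immediate. (A small aside: your mention of ``vanishing thimbles of $f$'' is off—$D$ is a single fibre, not the total space of the LG model, so thimbles do not enter here.)
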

\begin{remark} The reason why we  need to assume $X$ to be a scheme is that, although Theorem \ref{th:GKZ1} is purely a symplectic statement, we use HMS, and more specifically
the results in \cite{HanlonHicks}, to prove it. The results in loc.\ cit.\ assume that $X$ is a scheme.
\end{remark}

It is well-known that the Gauss-Manin connection on $H^{k-1}(D,\CC)$ is closely related to the GKZ system \cite{GFK0, SchulzeWalther,Reichelt} (which yields in particular that it is non-trivial):
\begin{proposition}[{Proposition \ref{prop:GKZ1_}, \eqref{eq:sesH_}}] \label{prop:GKZ1}
The long exact relative cohomology sequence for the pair $(\CC^{\ast k},D)$ is $\pi_1(\Kscr_A)$-equivariant and yields the following subsequence
\begin{equation}\label{eq:sesH}
0\r H^{k-1}(\CC^{\ast k},\ZZ)\r H^{k-1}(D,\ZZ)\r H^{k}(\CC^{\ast k},D,\ZZ)\r H^{k}(\CC^{\ast k},\ZZ)\r 0
\end{equation}
such that  $H^{k-1}(\CC^{\ast k},\ZZ)\cong \ZZ^k$, $H^{k}(\CC^{\ast k},\ZZ)\cong \ZZ$ are trivial representations and such that the induced action of  $\pi_1(\CC^A-V(E_A))$ 
(via the canonical map $\pi_1(\CC^A-V(E_A))\r \pi_1(\Kscr_A)$) 
on $H^{k-1}(\CC^{\ast k},D,\CC)$ 
is given by the
GKZ system  with trivial parameters corresponding to the matrix A.
\end{proposition}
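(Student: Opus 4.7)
The plan is to interpret everything as a family over the base $B := \CC^A-V(E_A)$, which parametrises the Laurent polynomials $f_c=\sum_{\alpha\in A} c_\alpha x^\alpha$ with smooth fibres. Over $B$ we have the universal family of pairs $(\CC^{\ast k}, D_c)_{c\in B}$; applying $R^\bullet\pi_\ast \ZZ$ yields local systems on $B$ that sit in a functorial long exact sequence, and taking stalks at a basepoint recovers the long exact relative cohomology sequence of the pair together with its built-in $\pi_1(B)$-action.

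To descend this to $\Kscr_A=B/(\CC^\ast)^k$, I would lift the rescaling action of $(\CC^\ast)^k$ on $B$ to a compatible action on the universal family by simultaneously translating the ambient $\CC^{\ast k}$. Because $(\CC^\ast)^k$ is connected, each translation is homotopic to the identity and hence acts trivially on every cohomology group in sight; this shows that the local systems descend along $B\r \Kscr_A$, yielding the claimed $\pi_1(\Kscr_A)$-equivariance. The local systems $H^i(\CC^{\ast k},\ZZ)$ are moreover constant on $\Kscr_A$ (the ambient torus is independent of $c$), which gives the trivial representations at the two ends of \eqref{eq:sesH}.

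To extract the four-term subsequence \eqref{eq:sesH} I need two cohomological facts. First, $H^k(D,\ZZ)=0$ by Artin vanishing, since $D$ is a smooth affine variety of complex dimension $k-1$; this truncates the long exact sequence on the right. Second, the restriction map $H^{k-1}(\CC^{\ast k},\ZZ)\hookrightarrow H^{k-1}(D,\ZZ)$ is injective, which follows from a standard Lefschetz-type argument for smooth hypersurfaces in a torus --- the classes $d\log x_{i_1}\wedge\cdots\wedge d\log x_{i_{k-1}}$ restrict to linearly independent cohomology classes on $D$.

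Finally, the identification with the GKZ system will be read off from the cited results. Specifically, \cite{GFK0, SchulzeWalther, Reichelt} identify the Gauss-Manin connection on the relative cohomology $H^{\bullet}(\CC^{\ast k}, D_c,\CC)$ --- realised by integration of logarithmic forms against relative cycles --- with the GKZ $\Dscr_B$-module attached to $A$ at parameter $\beta=0$; the monodromy representations of $\pi_1(B)$ then coincide. The main obstacle is the bookkeeping of normalisations: verifying that our matrix $A$, our parametrisation of $f_c$, and our choice of relative versus absolute cohomology all match the conventions of the cited references. Once this dictionary is set, the proposition follows.
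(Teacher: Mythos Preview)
Your proposal is correct and follows essentially the same route as the paper: sheafify the long exact sequence of the pair over $B=\CC^A-V(A)$, use local triviality of the family to obtain local systems with $\pi_1(B)$-action, descend to $\Kscr_A$ via the $(\CC^\ast)^k$-equivariance of the family (so that loops in orbits have trivial monodromy), and cite Reichelt/Stienstra for the GKZ identification. The only minor difference is that for the injectivity of $H^{k-1}(\CC^{\ast k},\ZZ)\hookrightarrow H^{k-1}(D,\ZZ)$ the paper invokes Oka's Lefschetz result on integral homology and dualizes via universal coefficients, whereas you sketch a $d\log$-forms argument---both suffice for the stated injectivity.
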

The action of $\pi_1(\Kscr_A)$ on $D^b(\coh(\partial X))$  in Theorem \ref{th:mirror1000}
may be regarded
as an approximation to the expected action on $D^b(\coh(X))$. On the level of Grothendieck groups
$X$ and $\partial X$ are related in the following way.
\begin{theorem}[{Theorem \ref{eq:K0boundary}}]\label{thm:K0boundary}
Let $i:\partial X\hookrightarrow X$ be the inclusion.  Then we have an exact sequence
\begin{equation}
\label{eq:K0ses:intro}
0\r X(T)\xrightarrow{\partial} K_0(\coh(\partial X))\xrightarrow{i_\ast} K_0(X)\xrightarrow{\rk} \ZZ\r 0\,.
\end{equation}

\end{theorem}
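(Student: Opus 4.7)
The natural strategy is to apply Quillen's localization theorem to the open-closed decomposition $X = T \sqcup \partial X$, where $T \cong (\CC^\ast)^k$ is the open torus orbit, obtaining the exact sequence
\[
K_1(X) \xrightarrow{\alpha} K_1(T) \xrightarrow{\partial} G_0(\partial X) \xrightarrow{i_\ast} G_0(X) \xrightarrow{j^\ast} G_0(T) \to 0.
\]
Smoothness of $X$ and $T$ gives $G_0 = K_0$ in those places, and triviality of $\Pic(T)$ together with $T$ being affine yields $K_0(T) = \ZZ$ via the rank, matching the rightmost arrow $\rk$ in the statement. The task is then to identify $\coker(\alpha) \cong X(T)$.

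Iterating the Bass--Heller--Swan fundamental theorem along $\CC[X(T)] \cong \CC[t_1^{\pm 1}, \ldots, t_k^{\pm 1}]$ computes $K_1(T) = \CC^\ast \oplus X(T)$, and in fact exhibits the determinant $\det_T : K_1(T) \to \Oscr_T^\ast(T) = \CC^\ast \oplus X(T)$ as an isomorphism, with the $X(T)$-summand generated by the BHS classes $[\Oscr_T, \cdot\,\chi^m]$. The plan is to prove $\im(\alpha) = \CC^\ast \subset K_1(T)$. Naturality of $\det : K_1(-) \to H^0(-, \Oscr^\ast)$ along the open immersion $T \hookrightarrow X$ yields a commutative square whose top row is $\alpha$, whose bottom row is the restriction map $\Oscr_X^\ast(X) \to \Oscr_T^\ast(T) = \CC^\ast \oplus X(T)$, and whose vertical arrows are $\det_X$ and $\det_T$. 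The crucial geometric input is $\Oscr_X^\ast(X) = \CC^\ast$: since $\pi : X \to \Spec R$ is projective with $\pi_\ast \Oscr_X = \Oscr_{\Spec R}$ (by properness and normality of $\Spec R$), and since $R^\ast = \CC^\ast$ for the coordinate ring of a strongly convex affine toric variety, the global units on $X$ reduce to scalars. Hence the bottom arrow lands in $\CC^\ast \subset \CC^\ast \oplus X(T)$, and since $\det_T$ is an isomorphism one obtains $\im(\alpha) \subseteq \CC^\ast$. The reverse inclusion follows from the natural map $\CC^\ast = K_1(\Spec\CC) \to K_1(X)$ (pullback along the structure morphism), whose composition with $\alpha$ is the scalar inclusion $\CC^\ast \hookrightarrow K_1(T)$. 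Therefore $\im(\alpha) = \CC^\ast$, so $\coker(\alpha) \cong X(T)$, and Quillen's sequence collapses to the claimed four-term exact sequence.

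The boundary $\partial : X(T) \to K_0(\coh(\partial X))$ admits an explicit description: writing $\chi^m = \chi^{m_+}/\chi^{m_-}$ with $m_\pm$ in the dual cone (hence $\chi^{m_\pm}$ globally regular with effective divisors $D^\pm = \sum_\rho \langle m_\pm, v_\rho \rangle D_\rho$), we get $\partial(\chi^m) = [\Oscr_{D^+}] - [\Oscr_{D^-}]$, and an iterated filtration argument further identifies this with $\sum_\rho \langle m, v_\rho \rangle [\Oscr_{D_\rho}]$. I expect the main subtlety to lie in the DM-stack case, where one must verify $\Oscr_X^\ast(X) = \CC^\ast$ (which reduces to the coarse-moduli argument, since toric DM-stacks in this setting have trivial generic stabilizer) and invoke the extensions of Quillen's localization theorem and the naturality of the determinant map to smooth Deligne--Mumford stacks, both of which are available via the equivariant / stacky K-theory frameworks of Vistoli and Kresch.
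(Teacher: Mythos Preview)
Your argument is correct and shares the overall shape of the paper's proof---both start from the localization sequence for the open--closed decomposition $T \subset X \supset \partial X$ and compute $K_1(T) \cong \CC^\ast \oplus X(T)$ via the Bass--Heller--Swan theorem---but the treatment of the left end differs. The paper invokes Heller's four-term sequence beginning at $K_1(T)$, so it must handle the kernel of $\partial$ by hand: it checks directly (from Heller's explicit formula for the boundary map) that $\partial|_{\CC^\ast}=0$, and then proves injectivity of $\partial|_{X(T)}$ by a separate local computation, restricting to an affine chart $[\AA^n/G]$ and using length functions at the generic points of the boundary components. Your approach instead extends the sequence one step further to $K_1(X)$ and identifies $\operatorname{im}(K_1(X)\to K_1(T))=\CC^\ast$ exactly, via naturality of the determinant and the observation $\Oscr_X^\ast(X)=\CC^\ast$; exactness then gives injectivity on $X(T)$ for free, bypassing the length computation entirely. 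This is more conceptual, at the cost of needing Quillen localization and the determinant map in the stacky setting (which you flag). One small remark: your justification of $\Oscr_X^\ast(X)=\CC^\ast$ via projectivity over $\Spec R$ is tailored to the introduction's crepant-resolution hypotheses, whereas the paper proves the result for any pure full-dimensional simplicial stacky fan; in that generality the simpler argument is to restrict to the affine chart of a maximal cone $\sigma$, where $\Oscr(U_\sigma)^\ast=\CC^\ast$ since $\sigma^\vee$ is strongly convex. The paper's hands-on approach has the side benefit of rendering the map $\partial$ completely explicit, which is what is used later when matching the two exact sequences in \eqref{eq:maindiagram}.
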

The two exact sequences  \eqref{eq:sesH}\eqref{eq:K0ses:intro}
look very similar and indeed we use
  the results of \cite{Lazarev} and \cite{HanlonHicks} to show that there is a natural isomorphism between them 
  when $X$ is a scheme (see Proposition \ref{prop:seqsiso}):
\[
\footnotesize
\begin{tikzcd}
0\ar[r]& H^{k-1}(\CC^{\ast k},\ZZ)\ar[r]\ar[d,"\cong"] &H^{k-1}(D,\ZZ)\ar[r]\ar[d,"\cong"]& H^{k}(\CC^{\ast k},D,\ZZ)\ar[r]\ar[d,"\cong"] &H^{k}(\CC^{\ast k},\ZZ)\ar[r] \ar[d, "\cong"]&0\\
0\ar[r]& X(T)\ar[r, "\partial"'] &K_0(\coh(\partial X))\ar[r,"i_\ast"']& K_0(X)\ar[r,"\rk"']& \ZZ\ar[r]& 0
\end{tikzcd}
\]
By transporting the $\pi_1(\Kscr_A)$-action on $H^{k}(\CC^{\ast k},D,\ZZ)$ to $K_0(X)$ via the third vertical arrow, we see
in particular that the $\pi_1(\Kscr_A)$-action on $K_0(\coh(\partial X))$ obtained from Theorem \ref{th:mirror1000} extends in a natural way to an action on $K_0(X)$ which corresponds to the GKZ system.

\begin{corollary}[Corollary \ref{cor:GKZ}]\label{cor:K0Xaction}
The induced action on $K_0(X)_\CC$ is obtained from the GKZ-system corresponding to the matrix $A$ with trivial parameters. 
\end{corollary}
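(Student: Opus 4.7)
The plan is to deduce the corollary directly by combining Proposition~\ref{prop:GKZ1} with the identification of exact sequences provided by Proposition~\ref{prop:seqsiso}, following the route laid out in the paragraph preceding the corollary.

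First, by construction, the $\pi_1(\Kscr_A)$-action on $K_0(X)_\CC$ is the transport, along the third vertical isomorphism $H^{k}(\CC^{\ast k},D,\ZZ) \xrightarrow{\sim} K_0(X)$ of Proposition~\ref{prop:seqsiso}, of the action on $H^{k}(\CC^{\ast k},D,\CC)$ coming from the equivariance of the long exact relative cohomology sequence. This transport is tautologically $\pi_1(\Kscr_A)$-equivariant, so identifying $K_0(X)_\CC$ with $H^{k}(\CC^{\ast k},D,\CC)$ via this arrow also identifies their $\pi_1(\Kscr_A)$-module structures.

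Next I invoke Proposition~\ref{prop:GKZ1}: the induced action of the subgroup $\pi_1(\CC^A - V(E_A))$ on $H^{k}(\CC^{\ast k},D,\CC)$, obtained through the canonical map $\pi_1(\CC^A - V(E_A)) \to \pi_1(\Kscr_A)$, coincides with the monodromy of the GKZ system with trivial parameters for the matrix $A$. Combining this with the previous step yields that the action of $\pi_1(\CC^A - V(E_A))$ on $K_0(X)_\CC$ is the GKZ monodromy, as claimed.

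The statement is therefore essentially a bookkeeping consequence of results already assembled; the only point of substance to verify is that the isomorphism chosen in Proposition~\ref{prop:seqsiso} is exactly the one used both to transport the GKZ action and to carry the $\pi_1(\Kscr_A)$-action onto $K_0(X)_\CC$. Since these two arrows coincide by construction, no genuine obstacle arises.
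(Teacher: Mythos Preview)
Your proposal is correct and follows essentially the same approach as the paper: the corollary is stated in the paper as an immediate consequence of Proposition~\ref{prop:seqsiso} together with Proposition~\ref{prop:GKZ1}, with no separate proof given beyond the sentence ``this sequence of $\pi_1(\Kscr_X)$-representations is well understood by the work of Reichelt, Stienstra as mentioned in the introduction, cf.\ Proposition~\ref{prop:GKZ1}.'' Your write-up simply makes that deduction explicit.
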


\begin{remark}
We will also discuss in \S\ref{sec:another_action} another method to obtain an action of 
$\pi_1(\Kscr_A)$ on $K_0(X)$ when $X$ is a scheme. It is easy to see that the action of $\pi_1(\Kscr_A)$ on
$D^b(\coh(\partial X))$ descends to an action on $\Perf(\partial X)$. 
This implies that $\pi_1(\Kscr_X)$ acts on $K_{\operatorname{top}}^0(\partial X)$ which we show to be isomorphic to $K_{\operatorname{top}}^0(X)$. The latter  turns out to be  isomorphic to $K_0(X)$. 
Hence the $\pi_1(\Kscr_A)$-action on  $D^b(\coh(\partial X))$  gives an action on $K_0(X)$. 
We do not know if this action is the same as the previous one, or if it is even non-trivial.
\end{remark}

\subsection*{Comment on assumptions}\label{conditions}
The results mentioned above are actually proved under the following assumptions:
\begin{itemize}
  \item
Theorem \ref{th:mainth1intro}: any $A$ and $D=f^{-1}(0)$.
\item
Theorem \ref{th:mirror1000}: ``star-shaped'' triangulation, cf. \S\ref{sec:stacky}.
\item
Theorem \ref{th:GKZ1}, Corollary \ref{cor:K0Xaction}: ``star-shaped'' triangulation and  $X$ is a scheme ($A$ consists of all lattice points of $P = {\rm conv}(A)$).
\item
Proposition \ref{prop:GKZ1}: any $A$ with $|A|>1$ and $D=f^{-1}(0)$.
\item
Theorem \ref{thm:K0boundary}: any smooth toric DM stack.
\end{itemize}
The relation between the setting in the current introduction and the setting in the rest of the paper (e.g. the relation between the zero and the generic fiber) is explained in \S\ref{sec:adjoin}.

\section{Acknowledgement}
We thank Denis Auroux, Jeff Hicks, Cristopher Kuo, Oleg Lazarev, Wenyuan Li, Vivek Shende and Peng Zhou for patiently answering our (sometimes naive) questions and for providing insightful comments.

\section{$A$-notions}\label{sec:toricprelims}
Various interesting objects can be associated to finite subsets $A\subset \ZZ^k$. We collect some in this section. Our setting will be more general than in the introduction
since we do not assume $A\subset \ZZ^{k-1}\times \{1\}$.
\subsection{The principal $A$-determinant}
\label{sec:prindet}
Let $k \in \NN$ and $A=\{a_1,\dots,a_d\}$ be a finite subset of $\ZZ^k$. Let $P\subset \RR^k$ be the convex hull of $A$ and let $\Fscr(P)$ be
its set of faces.
Let $x=(x_1,\dots,x_k)$ be variables and for  $\alpha\in \CC^A$ put 
\[
f_\alpha(x):=\sum_{a_i\in A} \alpha_i x^{a_i}.
\]
For $F\in \Fscr(P)$ we denote by $\nabla_F$ the set of all $\alpha\in \CC^{F}$ such that
the singular locus of $V(f^F_\alpha)$ for
$f^F_\alpha:=\sum_{a_i\in F} \alpha_i x^{a_i}$ (i.e.\ the common zeros
of $\partial_i f^F_\alpha$ for $\partial_i=\partial/\partial_{x_i}$
and $f^F$) meets $(\CC^*)^k\subset \CC^k$. If $\overline{\nabla}_F$ is a
hypersurface in $\CC^{F}$, then we denote by $\Delta_F$ its defining
equation. Otherwise, we set $\Delta_F=1$. The \emph{principal $A$-determinant} is a polynomial
which satisfies
$E_A:=\prod_{F\in \Fscr(P)} \Delta_F^{m_F}$ for suitable nonzero
multiplicities $m_F\in \NN$  \cite[Theorem 10.1.2]{GKZbook}.

We set $V(A):=\bigcup_F p^{-1}_F(\nabla_F)$ where $p_F:\CC^{A}\to \CC^{F}$ is the projection.
It is clear that $V(E_A)$ and $V(A)$ can only differ in codimension $\leq 2$ and in fact they coincide:

\begin{proposition}\label{lem:VEA=VA}\cite[Proposition 1.1]{SVdBGKZdisc}
We have $V(A)=V(E_A)$. 
\end{proposition}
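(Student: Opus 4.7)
The plan is to identify two explicitly defined subsets of $\CC^A$. Unpacking definitions, $V(E_A) = \bigcup_{F} p_F^{-1}(\overline{\nabla}_F)$ with the union restricted to faces $F \in \Fscr(P)$ whose $\overline{\nabla}_F$ is a hypersurface in $\CC^F$ (all other faces have $\Delta_F = 1$ and contribute trivially), whereas $V(A) = \bigcup_{F \in \Fscr(P)} p_F^{-1}(\nabla_F)$ is unrestricted. The two descriptions therefore differ \emph{a priori} in two ways: $V(A)$ uses the constructible set $\nabla_F$ rather than its closure $\overline{\nabla}_F$, and $V(A)$ receives contributions from \emph{dual defective} faces, i.e.\ those with $\overline{\nabla}_F$ of codimension $\geq 2$ in $\CC^F$.

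For the inclusion $V(E_A) \subseteq V(A)$, I would note that when $\overline{\nabla}_F$ is a hypersurface it is by definition the Zariski closure of $\nabla_F$, so the only thing left to check is that the boundary $\overline{\nabla}_F \setminus \nabla_F$ is also captured. A point $\alpha$ in this boundary arises as a limit of parameters whose critical points of $f^F_\alpha$ escape the torus toward a proper face $F' \subsetneq F$. Choosing a one-parameter subgroup $\lambda \colon \CC^\ast \to (\CC^\ast)^k$ witnessing this escape and taking initial terms with respect to the induced filtration, the limiting polynomial is supported on $F'$ and inherits a critical point in $(\CC^\ast)^k$, so $p_{F'}(\alpha) \in \nabla_{F'}$; an induction on $\dim F$ then finishes the argument.

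The reverse inclusion $V(A) \subseteq V(E_A)$ is immediate on the strata where $\overline{\nabla}_F$ is already a hypersurface. The essential content, and the main obstacle, is the dual defective case: one must check that $p_F^{-1}(\nabla_F)$ is contained in $p_G^{-1}(\overline{\nabla}_G)$ for some face $G$ with $\overline{\nabla}_G$ a hypersurface, despite the $F$-contribution having no corresponding factor of $E_A$. This requires the structure theory of $A$-discriminants, in particular Cayley-type decompositions expressing a defective $\overline{\nabla}_F$ as an image of a non-defective discriminant at a larger Cayley configuration, which identifies the $F$-piece inside $V(E_A)$. Since the detailed combinatorial analysis is carried out in \cite[Proposition 1.1]{SVdBGKZdisc}, from which the present statement is quoted verbatim, I would simply invoke that result rather than reproduce the argument here.
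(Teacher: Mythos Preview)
The paper provides no proof of this proposition; it is simply stated with the citation to \cite[Proposition 1.1]{SVdBGKZdisc} and immediately followed by the next result. Since your proposal ultimately invokes that same reference for the substantive argument, the approaches coincide; the heuristic sketch of the two inclusions you offer is additional content beyond what the paper itself supplies.
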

For use below we note the following proposition.
\begin{proposition}[{\cite[Proposition 9.1.4]{GKZbook}}]
\label{prop:affine} If $p:\ZZ^k\r \ZZ^l$ is an injective affine transformation then $V(A)=V(p(A))$.
\end{proposition}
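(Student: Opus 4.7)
The plan is to factor the injective affine transformation as $p = \tau_b \circ L$, where $L: \ZZ^k \to \ZZ^l$ is linear injective and $\tau_b$ is translation by some $b \in \ZZ^l$, and to verify the equality $V(A) = V(p(A))$ separately in the two cases. The bijection $A \to p(A)$ induces an identification $\CC^A \cong \CC^{p(A)}$ together with a face-preserving bijection between $\conv(A)$ and $\conv(p(A))$, so it suffices to check $\nabla_F = \nabla_{p(F)}$ for every face $F$ of $P$.

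For the translation step, observe that under the identification $\CC^F \cong \CC^{\tau_b(F)}$ one has $f^{\tau_b(F)}_{\tau_b(\alpha)}(y) = y^b\, f^F_\alpha(y)$ on $(\CC^\ast)^l$. Since $y^b$ is a unit on $(\CC^\ast)^l$, the Leibniz rule gives $d(y^b f) = y^b\, df$ at any point where $f = 0$. Hence multiplication by $y^b$ preserves singular loci inside $(\CC^\ast)^l$, so $\nabla_F = \nabla_{\tau_b(F)}$.

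For the linear step, regard $A$ as a subset of the character lattice $M = \ZZ^k$ of $T = (\CC^\ast)^k$, and similarly $M' = \ZZ^l$, $T' = (\CC^\ast)^l$. Applying the exact functor $\Hom(-, \CC^\ast)$ to $0 \to M \xrightarrow{L} M' \to \coker(L) \to 0$ produces a surjective homomorphism of algebraic tori $\phi: T' \to T$, and by construction $\phi^* f^F_\alpha = f^{L(F)}_\alpha$ under the natural identification $\CC^F \cong \CC^{L(F)}$. Because $\phi$ is a surjective homomorphism of smooth algebraic groups, its differential has constant rank equal to $\dim T$; in particular $d\phi_y$ is surjective at every $y \in T'$, i.e.\ $\phi$ is a submersion. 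It follows that $y \in T'$ is a singular point of $V(\phi^* f^F_\alpha)$ iff $\phi(y) \in T$ is a singular point of $V(f^F_\alpha)$: the implication $(\Rightarrow)$ uses surjectivity of $d\phi_y$ to propagate the vanishing of $df^F_\alpha \circ d\phi_y$ to that of $df^F_\alpha$, while $(\Leftarrow)$ uses that $\phi$ itself is surjective to find preimages. This gives $\nabla_F = \nabla_{L(F)}$, and combining with Step 2 we conclude $V(A) = V(L(A)) = V(p(A))$.

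The only conceptually nontrivial ingredient is the submersiveness of $\phi$, but this is automatic for any surjective homomorphism of algebraic tori, so no genuine obstacle arises.
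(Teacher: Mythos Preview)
The paper does not give its own proof of this proposition; it simply cites \cite[Proposition 9.1.4]{GKZbook}, which records the invariance of the $A$-discriminant under affine isomorphisms. Your argument is a correct self-contained proof working directly from the paper's description $V(A)=\bigcup_F p_F^{-1}(\nabla_F)$: the factorisation $p=\tau_b\circ L$ is valid for any injective affine map, the translation step is a standard Leibniz-rule computation on the torus, and in the linear step the key point---that the dual homomorphism $\phi:(\CC^\ast)^l\to(\CC^\ast)^k$ is a surjective submersion, so that singular points of $V(\phi^\ast f^F_\alpha)$ are exactly $\phi^{-1}$ of singular points of $V(f^F_\alpha)$---is handled correctly (surjectivity of $\phi$ follows from divisibility of $\CC^\ast$, hence $\Ext^1(\coker L,\CC^\ast)=0$).

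Compared to the cited approach, which goes through the discriminant polynomials $\Delta_F$ and then uses $V(A)=V(E_A)$, your route is slightly more elementary: it avoids the product formula for $E_A$ entirely and stays at the level of the sets $\nabla_F$. Both arguments ultimately rest on the same geometric observation that a monomial change of variables or multiplication by a monomial unit preserves the torus singular locus of a Laurent hypersurface.
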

Note that if $A\subset A'$ have the same convex hull then
\[
V(A)=V(A')\cap \CC^A.
\]
We let $(\CC^*)^k$ act on $\CC^{A}$ with weights given by the elements of $A$. Note that $V(A)$ is $(\CC^*)^k$-invariant for this action. 
We define the quotient stack:
\begin{equation}
\label{eq:KA}
\Kscr_A:=(\CC^A-V(A))/(\CC^*)^k.
\end{equation}
It is  not hard to see that $\Kscr_A$ is a Deligne-Mumford stack if $A$ spans $\RR^k$. 
Thanks to \cite[Example 5.6]{MR3144243} and the fact that $(\CC^\ast)^k$ is connected, we have a presentation for $\alpha\not\in V(A)$:
\begin{equation}
\label{eq:presentation}
\pi_1((\CC^\ast)^k,1 )\r \pi_1(\CC^{A}-V(A),\alpha) \r \pi_1(\Kscr_A,\alpha)\r 1
\end{equation}
The following lemma which follows directly from the definition of $V(A)$ is convenient.
\begin{lemma} \label{lem:convenient} $V(A)$ contains the coordinate planes $\{\alpha_i=0\}$ for those $a_i\in A$ which are vertices of $P$.
\end{lemma}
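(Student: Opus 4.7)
The plan is to exhibit, for each vertex $a_i$ of $P$, a single face $F$ of $P$ whose contribution $p_F^{-1}(\nabla_F)$ to the union defining $V(A)$ already equals the coordinate hyperplane $\{\alpha_i=0\}$. The natural candidate is the zero-dimensional face $F:=\{a_i\}$ itself: since $a_i$ is a vertex of $P$, the singleton $\{a_i\}$ is a legitimate face in the indexing set of $V(A)=\bigcup_F p_F^{-1}(\nabla_F)$.

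With this choice $\CC^F\cong\CC$ with coordinate $\alpha_i$, and the corresponding truncation is the single monomial $f^F_\alpha(x)=\alpha_i x^{a_i}$. I would then compute $\nabla_F$ by a case analysis. If $\alpha_i\neq 0$, then $\alpha_i x^{a_i}$ is nowhere vanishing on $(\CC^\ast)^k$, so $V(f^F_\alpha)\cap(\CC^\ast)^k=\emptyset$ and in particular has no singular point meeting the torus. If instead $\alpha_i=0$, then $f^F_\alpha\equiv 0$, its partial derivatives $\partial_j f^F_\alpha$ vanish identically, and every point of $(\CC^\ast)^k$ lies in the singular locus of $V(0)=\CC^k$. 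Hence $\nabla_F=\{\alpha_i=0\}\subset\CC^F$.

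Since $p_F:\CC^A\to\CC^F$ is simply the coordinate projection onto $\alpha_i$, this gives $p_F^{-1}(\nabla_F)=\{\alpha\in\CC^A:\alpha_i=0\}$, and this is contained in $V(A)$ by definition. The only mild point to double-check is the convention that the zero polynomial should count as singular at every point of the torus, which is natural since all its partial derivatives vanish identically (and is compatible with the fact that the principal $A$-determinant $E_A$ is known to be divisible by $\alpha_i$ whenever $a_i$ is a vertex of $P$). Beyond this convention check, the argument is purely a matter of unwinding the definition of $\nabla_F$ in the degenerate one-monomial case, so there is no real obstacle.
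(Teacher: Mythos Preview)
Your proof is correct and is precisely the unwinding of the definition that the paper has in mind; the paper itself gives no argument beyond ``follows directly from the definition of $V(A)$,'' and your case analysis for the vertex face $F=\{a_i\}$ is exactly that unwinding. Your caveat about the zero polynomial is harmless here: with the paper's definition of the singular locus as the common zero set of $f^F_\alpha$ and its partials, the zero polynomial has singular locus all of $\CC^k$, which certainly meets $(\CC^*)^k$.
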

We recall the following lemma.
\begin{lemma} \label{lem:notzero}
Let $\alpha\in \CC^A$. If $|\{\alpha_i\neq 0\}|>1$ (in particular if $|A|>1$ and $\alpha\not\in V(A)$ by Lemma \ref{lem:convenient}) then $0$ is  in the image of $f_\alpha(x)$ considered as a function 
$(\CC^\ast)^k\r \CC$.
\end{lemma}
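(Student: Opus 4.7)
The plan is to prove the contrapositive: if $f_\alpha$ is nowhere zero on $(\CC^\ast)^k$, then at most one $\alpha_i$ is nonzero.

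The key observation is that $f_\alpha$ is a regular function on the affine scheme $(\CC^\ast)^k = \Spec \CC[x_1^{\pm 1}, \ldots, x_k^{\pm 1}]$. A regular function on an affine scheme is nowhere vanishing if and only if it is a unit in the coordinate ring, so the non-vanishing hypothesis means $f_\alpha$ is a unit in the Laurent polynomial ring. I would then invoke the well-known classification of units in $\CC[x_1^{\pm 1}, \ldots, x_k^{\pm 1}]$: every unit is of the form $c\, x^a$ with $c \in \CC^\ast$ and $a \in \ZZ^k$. A quick way to see this is to observe that if $u \cdot u^{-1} = 1$, then the Minkowski sum of Newton polytopes satisfies $N(u) + N(u^{-1}) = \{0\}$, which forces each of $N(u)$, $N(u^{-1})$ to be a single point. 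Since the monomials $\{x^{a_i}\}_{a_i\in A}$ are linearly independent in the Laurent polynomial ring (as $A$ is a set of distinct elements of $\ZZ^k$), writing $f_\alpha = c\, x^a$ forces $a = a_j$ for some $j$, $\alpha_j = c$, and $\alpha_i = 0$ for all $i \neq j$.

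For the parenthetical assertion, suppose $|A| > 1$ and $\alpha\not\in V(A)$. Then $P = \conv(A)$ has at least two vertices, so if $\alpha$ had at most one nonzero coordinate $\alpha_j$, there would exist a vertex $a_\ell$ of $P$ with $\ell \neq j$ and hence $\alpha_\ell = 0$. By Lemma \ref{lem:convenient}, this places $\alpha$ on the coordinate plane $\{\alpha_\ell = 0\} \subset V(A)$, contradicting $\alpha \not\in V(A)$.

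The argument is essentially elementary, and I do not anticipate significant obstacles beyond recalling the classification of units in the Laurent polynomial ring.
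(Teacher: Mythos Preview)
Your proof is correct and takes a genuinely different route from the paper's. The paper argues constructively: since at least two distinct monomials appear, some variable (say $x_1$) occurs with at least two different exponents; writing $f_\alpha = \sum_{i\in I} p_i(\hat{x}) x_1^i$ with $|I|>1$, one picks $\hat{y}\in(\CC^\ast)^{k-1}$ generic so that all $p_i(\hat{y})\neq 0$, and then finds a nonzero root $y_1$ of the resulting one-variable Laurent polynomial. Your approach is algebraic and nonconstructive: a nowhere-vanishing regular function on $(\CC^\ast)^k$ is a unit in the Laurent polynomial ring, and the classification of such units (via the Newton-polytope argument you sketch) forces $f_\alpha$ to be a single monomial. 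Your version is arguably cleaner and invokes a structural fact that is worth knowing in its own right; the paper's version has the mild advantage of being entirely self-contained and of actually locating a zero, which fits the hands-on style of the surrounding section. Both handle the parenthetical remark identically.
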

\begin{proof} There must be some variable which occurs with at least two different powers. We may assume it is $x_1$.
Write $f_\alpha(x)=\sum_{i\in I} p_i(\hat{x}) x_1^i$ where $\hat{x}=(x_2,\ldots,x_k)$ and $p_i\neq 0$ for $i\in I$. By hypothesis $|I|>1$. We may find $\hat{y}\in (\CC^\ast)^{k-1}$ such that 
$p_i(\hat{y})\neq 0$ for all $i\in I$. Now let $y_1\neq 0$ be a zero of $\sum_{i\in I} p_i(\hat{y}) x_1^i$ and put $y=(y_1,\hat{y})$. Then $f_\alpha(y)=0$.
\end{proof}

\subsection{Polyhedral decompositions of $\RR^k$}
\label{sec:polyhedral}
Let $\nu$ be a  strictly convex\footnote{\label{footnote:3} Our convention for convexity
    of a function is that $x^2$ is convex whereas $-x^2$ is not. This is opposite to \cite[Definition 6.1.4]{CoxLittleSchenck}, which we otherwise use as a reference.} 
function $A\r \ZZ$  so that the singular locus of the
associated convex piecewise linear function $P\r \RR$ 
 is a triangulation $\Delta_\nu$ of $P$.

For $a_i\in A$ we
let $\Hscr(a_i)$ be the function $\RR^k\r \RR:u\mapsto a_i\cdot u  -\nu(a_i)$. Let $\Pi_\nu\subset \RR^k$ be  the nonsmooth locus of the function 
\begin{equation}
\label{eq:legendre}
\RR^k \r \RR:u\mapsto \max_{a_i\in A}\Hscr(a_i).
\end{equation}
We call $\Pi_\nu$ the \emph{tropical amoeba} of $f_\alpha$ (although
it does not depend on $\alpha$, but only on $\nu$).  

For $\tau$ a face in $\Delta_\nu$ (which we identify with the set of $a_i$ that belong to it) we define $C_\tau$ by
\begin{equation}
\label{eq:Cequations}
\begin{aligned}
\Hscr(a_i)&=\Hscr(a_j) && \text{for all $a_i,a_j\in \tau$},\\
\Hscr(a_i)&\ge \Hscr(a_j) && \text{for all $a_i\in \tau$ and $a_j\not\in \tau$}.
\end{aligned}
\end{equation}
Note $C_\tau=\bigcap_{{a_i} \in \tau} C_{a_i}$. 
The fact that $\nu$ is strictly convex on $A$ implies that the $C_\tau$ form the faces of the polyhedral decomposition of $\RR^k$ determined by $\Pi_\nu$.
We have  $\dim C_\tau=k-|\tau|+1$ and $C_\tau$ is a face of $\Pi_\nu$ if and
only if $|\tau|\ge 2$ (in the case $|\tau|=1$, $C_\tau$ is a chamber in the complement of $\Pi_\nu$).

For later use we note that the affine span $L_\tau$ of $C_\tau$ is given by
\begin{align}\label{eq:Ltau}
\Hscr(a_i)&=\Hscr(a_j) \qquad \text{for all $a_i,a_j\in \tau$}.
\end{align}
For $0<\epsilon\ll 1$ we  put
\begin{equation}
\label{eq:tau_epsilon}
C_{\tau,\epsilon}=\{p\in \RR^k\mid \forall {a_i}\in \tau: d(p,C_{a_i})\le \epsilon\}.
\end{equation}
One verifies there exists a constant $u$, depending only on $\Pi_\nu$, such
that 
for all $p\in C_{\tau,\epsilon}$
\begin{equation}
\label{eq:compact1}
d(p,C_\tau)<u\epsilon.
\end{equation}

\subsection{Tropical amoeba from Laurent polynomials} 
If for a fixed $\alpha\in (\CC^\ast)^A$ we put 
\[
f_t=\sum_{i=1}^d \alpha_i t^{-\nu(a_i)} x^{a_i}
\]
then we have \cite[Proposition 6.1.3]{GammageShende}\cite{Mikhalkin}
\[
\lim_{t\r\infty} \Log_t f^{-1}_{t}(0)=\Pi_\nu
\]
where $\Log_t(x)=\Log(x)/\log t$ with $\Log(x):=(\log |x_i|)_{i=1}^k$. 
\begin{remark} \label{rem:dominance}
It is clear from the definition that $p=\Log_t x\in C_{a_\ell}$ if and only if for all $i$ one has 
\[
\left|\frac{t^{-\nu(a_i)} x^{a_i}}{t^{-\nu(a_\ell)} x^{a_\ell}}\right|\le 1.
\]
\end{remark}
\subsection{Tailoring}
\label{sec:tailoring}
It was observed by  Mikhalkin in \cite{Mikhalkin} that $f^{-1}_t(0)$ can
be deformed into a non-holomorphic hypersurface with an easier to
understand combinatorial structure. This procedure was streamlined in
\cite{Abouzaid} and also in \cite{Zhou}. Fix $\alpha\in
(\CC^\ast)^d$. Choose\footnote{In \cite[\S4]{Abouzaid} it is assumed
  $\epsilon_1=\epsilon/2$, $\epsilon_2=\epsilon$.  However under the
  restriction $\epsilon_1=\epsilon_2/2$ the conditions imposed on the
  $(\chi_{a_i})_i$ are not independent of the metric.}
$1\gg \epsilon_2>\epsilon_1>0$. Let $\chi_{a_i}:\RR^k\r \RR$ be smooth
functions with the following property 
\begin{equation}
\label{eq:distance}
\begin{aligned}
d(p,C_{a_i})&\le \epsilon_1 \Rightarrow \chi_{a_i}(p) =1\\
d(p,C_{a_i})&\ge \epsilon_2\Rightarrow \chi_{a_i}(p) =0
\end{aligned}
\end{equation}
where the distance function is computed with respect to a constant Riemannian metric  (see Remark \ref{rem:metric}
below for a comment on metrics).

In addition we demand that the $\chi_{a_i}$ are strictly decreasing in the radial direction from $C_{a_i}$ and that the level set  $\{\chi_{a_i}(p)=\xi\}$ for $0<\xi<1$ are convex. 
For $p\in \RR^k$ we put
\begin{align*}
A_0(p)&=\{a_i\in A\mid d(p,C_{a_i})<\epsilon_1\},\\
A_1(p)&=\{a_i\in A\mid \epsilon_1\le d(p,C_{a_i})\le \epsilon_2\},\\
A_2(p)&=\{a_i\in A\mid d(p,C_{a_i})>\epsilon_2\}.
\end{align*}
For all $p$ we have $A=A_0(p)\coprod A_1(p)\coprod A_2(p)$. 
For every $p\in \RR^k$ the set  $\tau(p):=A_0(p)\coprod A_1(p)$ is a face of $\Delta_\nu$. 
We use this notation to put an additional condition on
$\chi_{a_i}$. We demand\footnote{See \cite[\S1.2]{Zhou} how this can be achieved.}  that for $\tau$ a face of $\Delta_\nu$, $a_i \in \tau$,  the level surface  
$\{p\mid \chi_{a_i}(p)=\xi,  
\tau(p)=\tau\}$ for  $0<\xi<1$ is parallel to $C_\tau$ 
 outside a compact set.

It follows from this that the $(\chi_{a_i})_i$ are determined by their values on a compact region of~$\RR^k$ and hence we obtain that there is some constant $K_1>0$ such that 
\begin{equation}
\label{eq:parbound}
\forall j=1,\ldots,k,\forall p\in \RR^k:|(\partial_j \chi_{a_i})(p)|\le K_1/(\epsilon_2-\epsilon_1).
\end{equation}
We put for $s\in [0,1]$
$
\chi_{a_i,s}=(1-s)+s\chi_{a_i}
$
and\footnote{In \cite{Abouzaid} one uses the functions $\phi_{a_i}(x):=1-\chi_{a_i}(\Log_t x)$ and it is moreover assumed that $\forall j:\alpha_j=1$. Thus the notation
in loc.\ cit.\ is:
$f_{t,s}(x)=\sum_{a_i\in A} (1-s\phi_{a_i}(x)) x^{a_i}$.}
\begin{equation}
\label{eq:ft1}
f_{t,s}(x)=\sum_{i=1}^d \alpha_i \chi_{a_i,s}(\Log_t x) t^{-\nu(a_i)} x^{a_i}
\end{equation}
for some fixed $\alpha\in (\CC^\ast)^A$. We state some observations.
\begin{enumerate}
\item \label{it:tau}  
Inspecting the definition of $\tau(p)$, we see that $f_{t,1}$ is obtained by gluing pieces which only depend 
on the faces of $\Delta_\nu$ (see \S\ref{sec:pantscover} below). 
\item There is a very useful refinement of Remark \ref{rem:dominance}.
Let $p=\Log_t x\in C_{a_\ell}$ so that in particular $\chi_{a_\ell}(p)=1$. If $a_i\in A_1(p)\cup A_2(p)$
then by \cite[Lemma 4.5]{Abouzaid}
\begin{equation}
\label{eq:negligible}
\left|\frac{t^{-\nu(a_i)}x^{a_i}}{t^{-\nu(a_\ell)}x^{a_\ell}}\right|\le t^{-K_2\epsilon_1}
\end{equation}
for $t\gg 0$ and for a suitable constant $K_2>0$ not depending on
$x,t,\epsilon_i$.\footnote{The constant $K_2$ here could be replaced
  by $c$ in loc.cit. This constant $c$ comes from Lemma 4.1 in
  loc.cit. and it does not depend on $x,t$ and not on $\epsilon_i$ for
  $\epsilon_i$ sufficiently small which is what we assume here.} 
\item For $p=\Log_t x$ it will be advantageous to write
\[
f_{t,s}(x)=f^{\text{core}}_{t,s}(x)+f^{\text{small}}_{t,s}(x)
\]
where 
\begin{align*}
f^{\text{core}}_{t,s}(x)&=\sum_{a_i\in A_0(p)} \alpha_i t^{-\nu(a_i)}  x^{a_i}\\
f^{\text{small}}_{t,s}(x)&=\sum_{a_i\in A_1(p)\cup A_2(p)} \alpha_i t^{-\nu(a_i)} \chi_{a_i,s}(\Log_t x) x^{a_i}.
\end{align*}
Note that $f^{\text{core}}_{t,s}(x)$ is holomorphic. Using \eqref{eq:negligible} one may bound the terms in $f^{\text{small}}_{t,s}(x)$, as well as in  $(\partial f^{\text{small}}_{t,s})(x)$ and $(\bar{\partial} f^{\text{small}}_{t,s})(x)$ (using
in addition \eqref{eq:parbound}).
\item For use below we note the following. Assume $t\gg 0$. Let $x\in f^{-1}_{t,s}(0)$,   $p=\Log_tx\in C_{a_\ell}$.
Then by a combination of the formulas on the first display on \cite[p1134]{Abouzaid} and on the first display \cite[p1135]{Abouzaid}\footnote{In loc. cit. $\delta$ is $a_\ell$ in the notation here, while $\gamma$ is a vertex of $\tau(p)$ different from $a_\ell$. Note that $\tau(p)$ cannot consist of only one vertex.} together with \eqref{eq:negligible}
we obtain:
\begin{equation}
\label{eq:mainbound}
\left| \frac{\partial f^{\text{core}}_{t,s}(x)}{t^{-\nu(a_\ell)}x^{a_\ell}}\right|
\ge K_3
\end{equation}
where $K_3>0$ is a constant independent of $x, t,\epsilon_i$.
\end{enumerate}
\begin{remark} \label{rem:metric} We have been sloppy by not specifying the metrics in \eqref{eq:distance} and \eqref{eq:mainbound}. The reason is that all the metrics we will use are globally equivalent
  and they are also compatible with the metrics used in \cite{Abouzaid}. See \S\ref{sec:zhou} and in particular Proposition \ref{prop:bounds} below for a more in-depth discussion.
So  \eqref{eq:distance}\eqref{eq:negligible}\eqref{eq:mainbound} are true for all such metrics with suitably altered $(\epsilon_i)_i$ and constants $(K_i)_i$.
\end{remark}
For the comfort of the reader we state some preliminary lemmas which are implicit in \cite{Abouzaid}.
\begin{lemma} \label{rem:easysmooth} 
Assume $|A|>1$. Then $f^{-1}_{t,s}(0)$ is smooth and non-empty for $s\in [0,1]$ and $t\gg 0$.
\end{lemma}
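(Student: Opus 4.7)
\emph{Plan.} Both parts will follow from the pointwise bounds of items (2)--(4) of \S\ref{sec:tailoring}, once it is checked that these hold uniformly in $s\in[0,1]$. The uniformity is built into the choice $\chi_{a_i,s}=(1-s)+s\chi_{a_i}$: these functions still take values in $[0,1]$ and their logarithmic derivatives inherit the bound \eqref{eq:parbound} up to the harmless factor $s\le 1$, so the constants $K_1,K_2,K_3$ and the estimates \eqref{eq:negligible} and \eqref{eq:mainbound} may be chosen independent of $s$.

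\emph{Smoothness.} Fix $s\in[0,1]$, $t\gg 0$ and $x\in f^{-1}_{t,s}(0)$; set $p=\Log_t x$ and choose $a_\ell\in A$ with $p\in C_{a_\ell}$. Decompose $f_{t,s}=f^{\text{core}}_{t,s}+f^{\text{small}}_{t,s}$ pointwise as in item~(3) of \S\ref{sec:tailoring}. For the correction term, \eqref{eq:negligible} bounds each summand of $f^{\text{small}}_{t,s}$ and of its holomorphic differential, with a further factor from \eqref{eq:parbound} for those summands with $a_i\in A_1(p)$ in whose differential $\chi_{a_i,s}$ actually varies (on $A_2(p)$ the cutoff is locally constant). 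The upshot is
\[
|\partial f^{\text{small}}_{t,s}(x)|\le C\,t^{-K_2\epsilon_1}\,|t^{-\nu(a_\ell)}x^{a_\ell}|,
\]
with $C$ independent of $x,s,t$. Since $f^{\text{core}}_{t,s}$ is holomorphic, $\partial f_{t,s}=\partial f^{\text{core}}_{t,s}+\partial f^{\text{small}}_{t,s}$; combining with the lower bound \eqref{eq:mainbound}, for $t$ sufficiently large one has $\partial f_{t,s}(x)\neq 0$, and hence $x$ is a smooth point of $f^{-1}_{t,s}(0)$.

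\emph{Non-emptiness.} At $s=0$ one has $f_{t,0}=\sum_i\alpha_i t^{-\nu(a_i)}x^{a_i}$, a Laurent polynomial all of whose coefficients lie in $\CC^*$, so Lemma \ref{lem:notzero} (which applies since $|A|>1$) produces a zero in $(\CC^*)^k$. For general $s$, pick an edge $\tau=\{a_i,a_j\}$ of $\Delta_\nu$ (which exists because $|A|>1$) and let $C_\tau\subset\Pi_\nu$ be the corresponding facet. Fix $p_0$ in its relative interior; on $\Log_t^{-1}(U)$ for $U$ a small neighbourhood of $p_0$, the leading binomial equation $\alpha_i t^{-\nu(a_i)}x^{a_i}+\alpha_j t^{-\nu(a_j)}x^{a_j}=0$ cuts out a $(k-1)$-torus, and the remaining terms of $f_{t,s}$ are of relative size $O(t^{-K_2\epsilon_1})$ uniformly in $s$ by \eqref{eq:negligible}. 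The implicit function theorem, with transverse lower bound provided by \eqref{eq:mainbound}, then yields a zero of $f_{t,s}$ near $\Log_t^{-1}(p_0)$ for every $s\in[0,1]$ and every sufficiently large $t$.

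\emph{Expected difficulty.} The delicate point is checking that \eqref{eq:mainbound}, which in \cite{Abouzaid} is derived at $s=1$, remains valid with the same constant $K_3$ for all $s\in[0,1]$. Its proof uses the vanishing $f_{t,s}(x)=0$ together with a bound on the ratio $|f^{\text{small}}_{t,s}(x)/(t^{-\nu(a_\ell)}x^{a_\ell})|$, so the $s$-uniform version of that ratio bound established above should propagate to \eqref{eq:mainbound}, closing the argument.
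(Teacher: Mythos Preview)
Your smoothness argument coincides with the paper's: split $f_{t,s}=f^{\text{core}}+f^{\text{small}}$ and let \eqref{eq:mainbound} dominate the small contributions. The $s$-uniformity concern you raise is harmless, since $f^{\text{core}}_{t,s}$ is independent of $s$ and the bounds on $f^{\text{small}}_{t,s}$ use only $|\chi_{a_i,s}|\le 1$ and $|\partial_j\chi_{a_i,s}|\le |\partial_j\chi_{a_i}|$.

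For non-emptiness the paper takes a different, shorter route: it invokes \cite[Lemma~4.12]{Abouzaid}, which shows (without presupposing non-emptiness) that all $f^{-1}_{t,s}(0)$ for $s\in[0,1]$ are symplectomorphic, and then reduces to the Laurent case $s=0$ handled by Lemma~\ref{lem:notzero}. Your direct perturbation argument near a facet $C_\tau$ of $\Pi_\nu$ is more self-contained and also works, with one caveat: \eqref{eq:mainbound} as stated requires $x\in f^{-1}_{t,s}(0)$, not a zero of the binomial $f^{\text{core}}$, so it does not literally supply the transversality you need. Instead observe directly that a two-term Laurent polynomial has nowhere-vanishing $\partial$ on its zero set (at a zero the two monomials cancel, leaving $\partial$ equal to $\alpha_i t^{-\nu(a_i)}x^{a_i}$ times the nonzero form $\sum_l (a_i-a_j)_l\,dx_l/x_l$). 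You also implicitly need $p_0$ chosen far enough from the lower-dimensional faces of $\Pi_\nu$ that $A_0(p_0)=\tau$ exactly; this is available in the relative interior of $C_\tau$ once $\epsilon_2$ is small. The paper's approach is quicker given the Abouzaid citation; yours avoids that dependency at the cost of the local analysis near~$C_\tau$.
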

\begin{proof}
That the fibers are smooth if non-empty follows easily from \eqref{eq:parbound}\eqref{eq:negligible}\eqref{eq:mainbound}.  Indeed for $x\in f^{-1}_{t,s}(0)$
  we have to prove $(df_{t,s})(x)=(\partial +\bar{\partial})(f_{t,s})(x)\neq 0$. If we split $f_{t,s}(x)$ as $f^{\text{core}}_{t,s}(x)+f^{\text{small}}_{t,s}(x)$ then a quick verification 
(using the fact that $\bar{\partial} f^{\text{core}}_{t,s}(x)=0$) shows that the contribution given by \eqref{eq:mainbound} dominates. 

By \cite[Lemma 4.12]{Abouzaid} it follows that all $f^{-1}_{t,s}(0)$ are symplectomorphic (without a priori assuming that they are non-empty). Hence it is sufficient to prove that $f^{-1}_{t,0}(0)$ is  non-empty. This follows from Lemma \ref{lem:notzero}
since for the Laurent polynomial $f_{t,0}$ all the coefficients $\alpha_i t^{-\nu(a_i)}$ are nonzero.
\end{proof}
\begin{lemma} \label{lem:nui} Denote the coefficients of the Laurent polynomial $f_{t,0}$ by $\alpha_\nu$. Thus $(\alpha_{\nu})_i=\alpha_i t^{-\nu(a_i)}$ for $i=1,\ldots,d$. Then for $t\gg 0$ we have $\alpha_\nu\not\in V(A)$.
\end{lemma}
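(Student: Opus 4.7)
The plan is to use the decomposition $V(A)=\bigcup_{F\in\Fscr(P)}p_F^{-1}(\nabla_F)$ and to argue face by face: I will show that for every face $F\in\Fscr(P)$ and for $t$ sufficiently large (depending on $F$), the truncation $p_F(\alpha_\nu)$ lies outside $\nabla_F$, i.e.\ the polynomial $f^F_{\alpha_\nu}(x)=\sum_{a_i\in F\cap A}\alpha_i t^{-\nu(a_i)}x^{a_i}$ has no singular point on $(\CC^\ast)^k$. Since $\Fscr(P)$ is finite, a single threshold $t_0$ then works simultaneously for all faces, giving $\alpha_\nu\notin V(A)$.

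For a vertex $F=\{v\}$ of $P$ (necessarily lying in $A$), $f^F_{\alpha_\nu}$ is a nonzero monomial whose zero locus on $(\CC^\ast)^k$ is empty, so trivially $p_F(\alpha_\nu)\notin\nabla_F$. For a face $F$ with $|F\cap A|>1$, the plan is to apply Lemma \ref{rem:easysmooth} to the data $(F\cap A,\nu|_F,\alpha|_F)$ in place of $(A,\nu,\alpha)$. Two small reductions are required first. By Proposition \ref{prop:affine}, an injective affine transformation identifies the affine hull of $F$ with $\RR^{\dim F}$ without changing $V(F)$; under this identification $f^F_{\alpha_\nu}$, up to a monomial factor, becomes a genuine Laurent polynomial on $(\CC^\ast)^{\dim F}$, and the presence or absence of singularities on the ambient torus $(\CC^\ast)^k$ is equivalent to the same question on the sub-torus. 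Second, one has to check that $\nu|_F$ is still strictly convex on $F\cap A$ in the sense of \S\ref{sec:polyhedral}; this follows from the standard fact about regular triangulations that the simplices of $\Delta_\nu$ contained in $F$ assemble into a regular triangulation of $F$ whose vertices lie in $F\cap A$. Once these reductions are in place, Lemma \ref{rem:easysmooth} yields smoothness of $f^{F,-1}_{t,0}(0)=V(f^F_{\alpha_\nu})$ on $(\CC^\ast)^{\dim F}$ for $t\gg 0$, which is exactly the assertion $p_F(\alpha_\nu)\notin\nabla_F$.

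I do not foresee any serious obstacle. The only points requiring real care are the affine-hull reduction on each face and the inheritance of strict convexity under restriction; both are routine, and the rest is a direct reapplication of a lemma already established.
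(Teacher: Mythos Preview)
Your proposal is correct and follows essentially the same approach as the paper: decompose $V(A)$ face by face via $V(A)=\bigcup_F p_F^{-1}(\nabla_F)$, handle vertices trivially using $\alpha\in(\CC^\ast)^A$, and invoke Lemma~\ref{rem:easysmooth} on each non-vertex face. The paper's proof is a one-sentence version of exactly this; your remarks on the affine-hull reduction and the inheritance of strict convexity of $\nu$ under restriction simply make explicit what the paper leaves implicit.
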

\begin{proof} This follows from the definition of $V(A)$ together with Lemma \ref{rem:easysmooth} applied for all $f^F_{p_F(\alpha_\nu)}$ for $F$ a face that is not a vertex, for $F$ a vertex this follows as we fixed $\alpha\in (\CC^*)^A$.
\end{proof}
\begin{remark}
\label{rem:cutoff}
Below in \S\ref{sec:paistropical}
we will use \eqref{eq:parbound}\eqref{eq:negligible}\eqref{eq:mainbound} to obtain more precise information about $f^{-1}_{t,s}(0)$.
In particular it will follow from Theorem \ref{prop:tropical} below combined with Moser's lemma (see Theorem \ref{lem:moser2}) that $f^{-1}_{t,1}(0)\cong f^{-1}_{t,0}(0)$ as Liouville
manifolds (see \cite[Proposition 4.9]{Abouzaid} for the symplectic structure). Since $f^{-1}_{t,0}(0)$ does not depend on the cutoff functions it follows that $f^{-1}_{t,s}(0)$
is independent (up to isomorphism of Liouville manifolds) of cutoff functions for all $s\in [0,1]$.
\end{remark}
\subsection{The $A$-toric variety}
\label{sec:Atoric}
Let $X_A$ be the closure of $\{(x^{a_1}:\dots:x^{a_d})\in \PP^{d-1}\mid x\in (\CC^*)^k\}$ in $\PP^{d-1}$. Then $X_A$ is a (usually nonnormal) toric variety for the torus $(\CC^\ast)^k$, which acts
with weights $(a_i)_i$ on the homogeneous coordinates of $\PP^{d-1}$. We recall the following result from \cite{GKZbook}.
\begin{proposition}\cite[Proposition 5.1.9, Theorem 5.3.1]{GKZbook}\label{prop:XA}
The $(\CC^\ast)^k$-orbits in $X_A$ are in $1:1$ correspondence with the faces of $P$. The orbit corresponding to a face $F$ of $P$ consists of 
$\{(z_1:\dots:z_d)\in X_A\mid z_i\neq 0\iff a_i\in F\}=\{(z_1:\dots:z_d)\in
  \PP^{d-1}\mid \exists u\in (\CC^*)^k\text{such that} \;z_i=u^{a_i}\; \text{if
    $a_i\in F$}\text{ and } z_i=0\;\text{if $a_i\not\in F$}\}$. 
\end{proposition}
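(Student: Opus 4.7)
The plan is to prove both directions of the correspondence separately: first, each face of $P$ gives a $(\CC^\ast)^k$-orbit of the claimed form, and second, every orbit of $X_A$ arises this way.

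For the first direction, I would fix a face $F\in\Fscr(P)$ and pick $m\in\ZZ^k$ such that $a\mapsto m\cdot a$ attains its maximum $c$ on $A$ precisely along $F$. The one-parameter subgroup $\lambda(t):=(t^{m_1},\dots,t^{m_k})$ acts on a dense-orbit point $y=(x^{a_1}:\dots:x^{a_d})\in X_A$ by $\lambda(t)\cdot y=(t^{m\cdot a_i}x^{a_i})_i$; projective rescaling by $t^{-c}$ followed by $t\to\infty$ then produces a limit $z^F\in X_A$ with $z^F_i=x^{a_i}$ for $a_i\in F$ and $z^F_i=0$ otherwise. Computing the torus action on $z^F$ and absorbing $x$ into the torus parameter yields $(\CC^\ast)^k\cdot z^F=\{(v^{a_i})_{a_i\in F}\mid v\in(\CC^\ast)^k\}$, matching the second description in the proposition.

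For the converse direction, take $z\in X_A$, set $F_z:=\{a_i\mid z_i\ne 0\}$, and show $F_z$ is a face by a tropical-style argument. Pick sequences $x_n\in(\CC^\ast)^k$ and $\mu_n\in\CC^\ast$ with affine representatives $(\mu_n x_n^{a_i})_i\to(z_i)_i$ in $\CC^d\setminus\{0\}$. If $\Log(x_n)$ remains bounded, a subsequence gives $x_n\to x_\infty\in(\CC^\ast)^k$ and $z$ lies in the open orbit ($F_z=P$). Otherwise set $v_n:=\Log(x_n)/\|\Log(x_n)\|$ and pass to a limit $v_\infty\in S^{k-1}$: boundedness of $|\mu_n x_n^{a_i}|$ away from $0$ and $\infty$ for $a_i\in F_z$ forces $a\mapsto v_\infty\cdot a$ to be constant on $F_z$, while $|\mu_n x_n^{a_j}|\to 0$ for $a_j\notin F_z$ yields $v_\infty\cdot a_j\le v_\infty\cdot a_i$ for $a_i\in F_z$. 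Hence $F_z$ lies in the face $F'\subset P$ on which $v_\infty$ is maximized, and if $F_z=F'$ we are done; otherwise I would induct on $|A|$ by restricting to the closed toric subvariety $X_A\cap\{z_j=0:a_j\notin F'\}$, identified with $X_{A\cap F'}$.

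The main obstacle is precisely this identification $X_A\cap\{z_j=0:a_j\notin F'\}=X_{A\cap F'}$ needed for the induction. To handle it, I would observe that both sides are closed $(\CC^\ast)^k$-invariant subvarieties of $\PP^{|A\cap F'|-1}\hookrightarrow\PP^{d-1}$ containing the dense $(\CC^\ast)^k$-orbit of $z^{F'}$ produced by the first-direction construction applied to $F'$; provided the intersection on the left is irreducible, the two closures coincide, and irreducibility can be deduced either from the toric (binomial) equations defining $X_A$ or by running the tropical limit argument again inside that subvariety. Distinctness of orbits corresponding to distinct faces is immediate from their coordinate supports, completing the bijection.
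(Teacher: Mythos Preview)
The paper does not supply a proof here: it defers the orbit--face bijection to \cite[Proposition 5.1.9, Theorem 5.3.1]{GKZbook} and the explicit parametric description of each orbit (the ``last equality'') to \cite[Proposition 2.1]{SVdBGKZdisc}. So there is no argument in the paper to compare against; your sketch is providing what the paper chose to cite.

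Your outline is the standard toric-geometry argument and is essentially correct. One point deserves care in the second direction: the limit direction $v_\infty$ could pair to the same constant with every $a_i\in A$, so that $F'=P$ and the induction does not shrink $A$. This is harmless once observed: if $v_\infty$ is constant on $A$ then the one-parameter subgroup in direction $v_\infty$ acts trivially on $X_A\subset\PP^{d-1}$, so you may multiply $x_n$ by an element of that subgroup to kill the $v_\infty$-component of $\Log(x_n)$ without changing the image point; equivalently, project $\Log(x_n)$ onto the span of $\{a_i-a_1\}$ before normalising. After this reduction either $\Log(x_n)$ stays bounded (open orbit) or the new $v_\infty$ is non-constant on $A$, giving a proper face $F'$ and a genuine drop in $|A|$. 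The identification $X_A\cap\{z_j=0:a_j\notin F'\}=X_{A\cap F'}$ you flag is exactly the statement that orbit closures in a projective toric variety are again toric for the corresponding face; your ``same dense orbit, both closed and irreducible'' argument is the usual one and is fine.
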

For a proof of the last equality see \cite[Proposition 2.1]{SVdBGKZdisc}.

\subsection{Stacky fans and star-shaped triangulations}
\label{sec:stacky}
Recall that a stacky fan \cite[\S3]{borisov2005orbifold} $\Sigma$ in
$N_\RR$ for a lattice $N$ is a fan in which the one-dimensional cones~$\tau$ come with chosen generators (``stacky primitives'') $v_\tau\in \tau\cap N$ (for an
ordinary fan, $v_\tau$ is taken to be the generator of $\tau\cap N$ as
a semi-group). In contrast to loc.\ cit.\ we will not assume that a
stacky fan is automatically simplicial as it is not necessary for the basic definitions (as explained in \cite[Remark
3.4]{borisov2005orbifold} it is necessary for smoothness).  We write $T_N=N\otimes_{\ZZ}\CC^\ast$ and
we let $X_\Sigma$ be the toric stack corresponding to $\Sigma$ (see
\cite[\S3]{borisov2005orbifold}). $X_\Sigma$ contains $T_N$ as an open
substack and the complementary closed substack, denoted by
$\partial X_\Sigma$, is called the \emph{toric boundary}.

\medskip

Assume $0\in A$. Following \cite[Definition 3.3.1]{GammageShende} we say that
an integral triangulation of $P={\rm conv}(A)$ with vertices $A$ is \emph{star-shaped} if every
simplex in $\Tscr$ which is not contained in $\partial P$ has $0$ as a
vertex. Such a triangulation (which need not exist) gives rise to a pure full-dimensional simplicial stacky fan $\Sigma$ whose cones
are of the form $\RR_{\ge 0}\tau$ for $\tau$ a simplex in $\Tscr$ not
containing $0$.\footnote{An example of a fan that does not arise from a star-shaped triangulation is given by the fan determined by two $2$-dimensional cones spanned by $(-1,2)$, $(0,1)$ and $(0,1)$, $(1,2)$.}
In particular the 1-dimensional cones are the half lines $\RR_{\ge 0}v$ where $v$ is a vertex of $\Tscr$ different from $0$. The stacky primitive on this cone is taken to be $v$. We obtain
\[
A=\{\text{stacky primitives}\} \coprod \{0\}.
\]
\begin{definition}
Let $\Tscr$ be a star-shaped triangulation of $P$ with vertices $A$ 
with corresponding stacky fan $\Sigma$.
We call $\Kscr_A$ (see \eqref{eq:KA}) the \emph{stringy K\"ahler moduli space} of $X_\Sigma$.
\end{definition}
\begin{remark} Note that this definition does not precisely cover the situation considered in the introduction where definitely $0\not\in A$. 
See \S\ref{sec:SKMS} below for the connection.
\end{remark}
\subsection{Stacky fans and the  FLTZ-skeleton}
\label{sec:fltz}
Let $\Sigma\subset N_\RR$ be a  stacky fan as in \S\ref{sec:stacky}. Let $S_{N^\ast}$ be the real torus $N^\ast_{\RR}/N^\ast$. We have
\[
 T^\ast (S_{N^\ast})=S_{N^\ast}\times N_\RR, \quad  T_{N^\ast}=T(S_{N^\ast})=S_{N^\ast}\times N^\ast_\RR,
\]
where $T_{N^\ast} =N^\ast\otimes_\ZZ \CC^{\ast}$ is the dual torus of $T_N$.
We define a (singular) conical Lagrangian of $T^\ast (S_{N^\ast})$ via:
\[
\Lambda_\Sigma=\bigcup_{\sigma\in \Sigma} \sigma^\perp\times \sigma
\]
where for a cone $\sigma$, $\sigma^\perp\subset S_{N^\ast}$ is a (possibly non-connected) subtorus  of  $S_{N^\ast}$ defined via: 
\[
\sigma^\perp=\{\eta\in S_{N^\ast}\mid \forall \tau\in \Sigma,\tau\subset\sigma, \dim\tau=1:\langle \eta,v_\tau\rangle\cong 0\mod \ZZ\}.
\]
Put $S^\infty N_\RR=(N_\RR-0)/\RR_{>0}$ and $T^{\ast,\infty}(S_{N^*})=S_{N^\ast}\times S^\infty N_\RR$, $T^{\infty}(S_{N^*})=S_{N^\ast}\times S^\infty {N^*_\RR}$.
The FLTZ-skeleton $\Lambda_\Sigma^\infty$ \cite{FLTZ} (see also \cite{RSTZ}) is the image 
of $\Lambda_\Sigma \cap  (S_{N^\ast} \times (N_\RR-0))$ in $S_{N^\ast} \times S^\infty N_\RR$. This is a (singular) Legendrian in the contact manifold  $T^{\ast,\infty}(S_{N^\ast})$.

\section{Some symplectic geometry}
\subsection{Liouville domains and manifolds}
\label{sec:liouville}
 An \emph{exact symplectic manifold} is a pair
  $(N,\theta)$ where $N$ is a manifold (with boundary if specified) and $\theta$ is a 1-form
  such that $\omega_\theta:=d\theta$ is nondegenerate.
 If $N$ is given then we call $\theta$ \emph{a Liouville form}.  The \emph{Liouville vector field} $Z_\theta$
  associated to $\theta$ is defined by
  \[
    i_{Z_\theta}\omega_\theta=\theta.
  \]
    A morphism of exact symplectic manifolds $(N,\theta)\r (N',\theta')$ is a pair\footnote{We make the function $f$ part of the data of an exact
      morphism since it makes it easier to consider families.} $(\phi,f)$ where
          $\phi:N\r N'$ is a smooth morphism, $f:N\r\RR$ is a smooth function with compact support 
          and $\phi^\ast(\theta')+df=\theta$.

If $N$ has no boundary then  we say that a Liouville form $\theta$ is \emph{complete} if $Z_\theta$
      is a complete vector field. In that case we call the corresponding flow $(\phi_{Z_\theta,t})_t$ the \emph{Liouville flow}
      and we call $(N,\theta)$ a \emph{complete exact symplectic manifold}.       If $(N,\theta)$ is complete then we say that a subset $M\subset N$ \emph{generates} $N$ if $\bigcup_{t\ge 0} \phi_{Z_\theta,t} M=N$.

A \emph{Liouville domain} is a  compact exact symplectic manifold with boundary $(M,\theta)$ such that $Z_\theta{\mid} \partial M$ is outward pointing.
A Liouville domain $M$ has a \emph{Liouville completion} $(M,\theta)\subset (\hat{M},\hat{\theta})$ constructed e.g.\ as
\begin{equation}
  \label{eq:completionformula}
\hat{M}=  \dirlim_{s\r -\infty} (M,e^{-s}\theta)
\end{equation}
where the transition maps in the direct limit are given by $\phi_{Z_\theta,s}$ for $s<0$. Here
$(\hat{M},\hat{\theta})$ is a complete exact symplectic manifold and $M$ generates $\hat{M}$ as defined above.

A \emph{Liouville manifold}\footnote{What we call a Liouville manifold
  is sometimes called Liouville manifold of finite type, e.g. in
  \cite{CiEl}.} is a complete exact symplectic manifold $(N,\theta)$
containing a generating Liouville domain. If $(M,\theta)$ is such a
generating Liouville domain then $\hat{M}\cong N$. 
If $N$ is a Liouville manifold then the \emph{skeleton} $\Skel(N)$ consists of the set of points that do not escape to infinity under the Liouville flow. 
\begin{remark}
If $N$ is a Liouville manifold then  every generating Liouville domain is a deformation retract by the inverse Liouville flow.
\end{remark}

\subsection{Taming the Liouville vector field}
\label{sec:taming}
Liouville manifolds may become easier to handle when an auxiliary function is present which behaves well with respect to the Liouville vector field. 
\begin{proposition} \label{prop:prestein} Let $(N,\theta)$ be a complete exact symplectic manifold. Assume that there exists a  proper bounded below smooth function $\psi:N\r \RR$ such  
that $L=\{m\in N\mid (Z_\theta\psi)(m)\le 0\}$ is compact.
Let $C>\max(\psi|L)$.
Then $M=\psi^{-1}(]-\infty,C])$ is a generating Liouville domain for $N$. In particular $N$ is a Liouville manifold.
\end{proposition}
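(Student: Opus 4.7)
The proof breaks into three easy verifications (compactness, boundary, outward pointing) plus one slightly more delicate step (generation). The inequality $C > \max(\psi|L)$ is what makes everything work: it guarantees that on $\{\psi \ge C\}$ we are entirely outside $L$, so $Z_\theta\psi > 0$ there.

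First, since $\psi$ is proper and bounded below, $M = \psi^{-1}([\inf\psi, C])$ is compact. Next, on $\partial M = \psi^{-1}(C)$ we have $\psi = C > \max(\psi|L)$, so $\partial M \cap L = \emptyset$, hence $Z_\theta\psi > 0$ on $\partial M$. In particular $d\psi \neq 0$ on $\partial M$, so $C$ is a regular value and $\partial M$ is a smooth hypersurface. The same inequality $Z_\theta\psi > 0$ on $\partial M$ says exactly that $Z_\theta$ is outward pointing at the boundary, since $\psi$ increases along $Z_\theta$. Thus $(M,\theta|_M)$ is a Liouville domain.

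It remains to show $M$ generates $N$, i.e.\ every $m \in N$ satisfies $\phi_{Z_\theta,-t}(m) \in M$ for some $t \ge 0$. If $\psi(m) \le C$ we are done with $t=0$, so assume $\psi(m) > C$. The plan is to track $g(t) := \psi(\phi_{Z_\theta,-t}(m))$, which satisfies $g'(t) = -(Z_\theta\psi)(\phi_{Z_\theta,-t}(m))$. Suppose for contradiction that $g(t) > C$ for all $t \ge 0$. Since $g$ is bounded below (by $\inf\psi$) and, as long as the trajectory stays in $\{\psi > C\} \subset N \setminus L$, we have $g'(t) < 0$, the function $g$ is decreasing and converges to some $\ell \ge C$. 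Then the entire backward trajectory lies in the set $K := \psi^{-1}([\ell,\psi(m)])$, which is compact by properness of $\psi$, and disjoint from $L$ because $\ell \ge C > \max(\psi|L)$. On the compact set $K$, the continuous positive function $Z_\theta\psi$ attains a positive minimum $c > 0$, giving $g(t) \le \psi(m) - ct$ for all $t \ge 0$, which contradicts $g(t) \ge C$. Hence $g(t) \le C$ for some $t \ge 0$, so $\phi_{Z_\theta,-t}(m) \in M$, as required.

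The only place requiring any care is the generation step, where one must rule out a trajectory that approaches but never crosses the level set $\{\psi = C\}$; the compactness of $L$ together with properness of $\psi$ reduces this to the standard escape-to-infinity trick on a compact set where $Z_\theta\psi$ is uniformly positive. Everything else follows directly from the definitions.
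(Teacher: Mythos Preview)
Your proof is correct. The first three steps (compactness of $M$, smoothness of $\partial M$, outward pointing of $Z_\theta$) match the paper's essentially verbatim.

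For the generation step the two arguments diverge in style. The paper argues by contradiction on the set $N\setminus\hat{M}$: it invokes an auxiliary lemma (their Lemma~\ref{lem:open}) to see that $N\setminus\hat{M}$ is closed, then picks a point $m$ in this set where $\psi$ attains its minimum (using properness and boundedness below), and observes that since $N\setminus\hat{M}$ is flow-invariant and $(Z_\theta\psi)(m)>0$, flowing backward a small amount produces a point of $N\setminus\hat{M}$ with strictly smaller $\psi$, contradicting minimality. Your argument instead tracks an individual backward trajectory and extracts a uniform positive lower bound for $Z_\theta\psi$ on the compact $\psi$-sublevel set $K=\psi^{-1}([\ell,\psi(m)])$ it is trapped in. Your route is more self-contained (it avoids the auxiliary lemma about openness of $\hat{M}$) and is the standard ``escape'' estimate; the paper's route is a one-line minimum argument once that lemma is in hand. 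Both are entirely standard and neither has a real advantage here.
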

\begin{proof} $M$ is a manifold with boundary $\partial M=\psi^{-1}(C)$ since $d\psi$ has no zeros on~$\partial M$ (a zero of $d\psi$ is in particular a zero of $Z_\theta\psi=(d\psi)(Z_\theta)$). Moreover $M$ is compact since $\psi$ is bounded below and proper.
Finally $M$ is a Liouville domain since $Z_\theta\psi>0$ on $\partial M$.

  Assume $M$ does not generate $N$ and let $m\in N\setminus \hat{M}$  be such that $\psi(m)$ is minimal (such $m$ exists
  since $\psi$ is bounded below and proper, and $N\setminus \hat{M}$ is closed, cf. Lemma \ref{lem:open} below). Since $N$ and $\hat{M}$ are $\phi_{Z_\theta,s}$-invariant, the same is true of $N\setminus \hat{M}$. Hence
  $\bigcup_{s\in \RR} \phi_{Z_\theta,s}(m)\cap \hat{M}=\emptyset$. Since $(Z_{\theta}\psi)(m)>0$ we can find a small $\epsilon<0$ such that $\psi(\phi_{Z_\theta,\epsilon}(m))<\psi(m)$, contradicting the choice of $m$.
\end{proof}
We have used the following result.
 \begin{lemma}
  \label{lem:open}
 Let $(N,\theta)$ be a complete exact symplectic manifold without boundary and let $M\subset N$ be a compact submanifold (with boundary) of codimension zero such that
 $(M,\theta{|}M)$ is a Liouville domain. There is a unique induced map $\gamma:\hat{M}\r N$ which extends $M\subset N$. This map is moreover an injective immersion
 (in particular its image is open). If $M$ is generating then $\gamma$ is an isomorphism.
\end{lemma}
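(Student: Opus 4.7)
The plan is to construct $\gamma$ explicitly via the Liouville flow on $N$, then check its properties by repeatedly invoking the outward-pointing condition on $\partial M$.

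To construct $\gamma$, I would use the direct limit description \eqref{eq:completionformula}. For each $s \leq 0$, define $\gamma_s \colon (M, e^{-s}\theta) \to (N,\theta)$ by $\gamma_s(m) = \phi^N_{Z_\theta, -s}(m)$, which makes sense because $N$ is complete. Since $(\phi^N_{Z_\theta,-s})^\ast \theta = e^{-s}\theta$, each $\gamma_s$ is a morphism of exact symplectic manifolds, and by the group law of the flow the family $(\gamma_s)_{s \leq 0}$ is compatible with the transition maps $\phi^N_{Z_\theta, s_2 - s_1}$. Passing to the direct limit yields $\gamma \colon \hat{M} \to N$ extending the inclusion. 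Uniqueness: any extension $\gamma$ is a morphism of exact symplectic manifolds, hence intertwines the Liouville vector fields, hence the Liouville flows; since $\hat{M}$ is generated by $M$ under the positive flow and $\gamma|_M$ is fixed as the inclusion, $\gamma$ is determined.

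That $\gamma$ is an immersion is immediate from the construction: on $M$ the restriction is the inclusion, and on the cylindrical end each $\gamma_s$ is a flow map, which is a diffeomorphism onto its image. Since $\dim \hat{M} = \dim N$, an immersion is automatically a local diffeomorphism, so the image is open.

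The main obstacle is injectivity, which reduces to the geometric claim: \emph{for any $q \in \partial M$ and any $t > 0$ one has $\phi^N_{Z_\theta, t}(q) \notin M$.} I would prove this by contradiction: since $Z_\theta$ is outward pointing on $\partial M$, the curve $t \mapsto \phi^N_{Z_\theta, t}(q)$ leaves $M$ for small $t > 0$, so if the set $\{t>0 \mid \phi^N_{Z_\theta,t}(q) \in M\}$ is nonempty it has a minimum $t^\ast$, at which the curve lies on $\partial M$ (using closedness of $M$ in $N$ and the continuity of the flow). But at $t^\ast$ the velocity $Z_\theta$ points into $M$ (the curve is entering $M$ from the exterior), contradicting the outward-pointing hypothesis. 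Given this claim, injectivity of $\gamma$ follows by a routine case analysis on where two points $x,y \in \hat{M}$ with $\gamma(x) = \gamma(y)$ live: any putative collision involving $\partial M$ and a positive flow time is ruled out by the claim, leaving only $x=y$.

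Finally, assuming $M$ generates $N$, I would prove surjectivity by the same first-hitting-time argument run in reverse: for $n \in N$, write $n = \phi^N_{Z_\theta, t}(m)$ with $m \in M$, $t \geq 0$; if $n \notin M$, consider the backward trajectory $s \mapsto \phi^N_{Z_\theta,-s}(n)$ on $[0,t]$ and let $s^\ast$ be its first entry time into $M$. By continuity and closedness of $M$, the entry point $p := \phi^N_{Z_\theta,-s^\ast}(n)$ lies on $\partial M$, and then $n = \gamma_{-s^\ast}(p)$ by construction. Combined with injectivity and the immersion property, this yields a bijective local diffeomorphism, hence an isomorphism.
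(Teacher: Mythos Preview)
Your construction and uniqueness argument match the paper's: both extend the inclusion via the Liouville flow on $N$, and the paper writes this as $\gamma(m) = \phi^N_{Z_\theta,s}\bigl(\phi^{\hat{M}}_{Z_\theta,-s}(m)\bigr)$ for any $s$ with $\phi^{\hat{M}}_{Z_\theta,-s}(m) \in M^\circ$. The divergence is in injectivity and surjectivity, where the paper is much terser. For injectivity: given $x,y \in \hat{M}$, choose a common $s$ sending both into $M^\circ$; then near $\{x,y\}$ one has $\gamma = (\text{flow diffeo on }N) \circ (M^\circ \hookrightarrow N) \circ (\text{flow diffeo on }\hat{M})^{-1}$, a composition of diffeomorphisms with a tautologically injective immersion. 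This bypasses your no-return claim and case analysis entirely. Your route also works, but the contradiction at $t^\ast$ is phrased backwards: the velocity there is $Z_\theta$, which points \emph{outward} by hypothesis, not ``into $M$''; what actually fails is that outward transversality forces $\phi^N_{Z_\theta,t^\ast - \epsilon}(q) \in M^\circ$ for small $\epsilon>0$, contradicting minimality of $t^\ast$. For surjectivity the paper simply says ``clear'': since $\gamma$ intertwines Liouville flows (as you already noted for uniqueness) and $\gamma|_M$ is the inclusion, any $n = \phi^N_{Z_\theta,t}(m)$ with $m \in M$ equals $\gamma\bigl(\phi^{\hat{M}}_{Z_\theta,t}(m)\bigr)$; the first-hitting-time detour to $\partial M$ is unnecessary.
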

\begin{proof} If $m\in \hat{M}$ then we should have
  $\gamma(m)=\phi^N_{Z_\theta,s}(\phi^{\hat{M}}_{Z_\theta,-s}(m))$ for
  $s$ such that $\phi^{\hat{M}}_{Z_\theta,-s}(m)\in M^\circ$. It is an
  easy verification that this is independent of $s$ and hence yields a
  well defined map $\hat{M}\r N$. The claim that the resulting
  $\gamma$ is an injective immersion follows from the fact that the
  inclusion map $M^\circ\subset N$ is tautologically an injective
  immersion.  The last claim is clear.
\end{proof}

We consider a particular case in which the hypotheses on $(Z_\theta,\psi)$ in Proposition \ref{prop:prestein} are satisfied.
\begin{lemma} \label{lem:particular}
Assume that $(N,g)$ is a Riemannian manifold and put $Z=\grad_g \psi$ for
a smooth function $\psi$ on $N$.
Then TFAE for $m\in N$:
\begin{enumerate}
\item $Z_m=0$,
\item $(Z\psi)(m)\le 0$,
\item $d\psi_m=0$.
\end{enumerate}
\end{lemma}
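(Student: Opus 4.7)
The plan is to observe that all three conditions reduce to the single statement $|Z_m|^2_g = 0$, exploiting the defining property of the gradient and the positive-definiteness of $g$.

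First I would recall the defining identity of the Riemannian gradient: for every tangent vector $V$ at $m$,
\[
g_m(Z_m, V) = d\psi_m(V).
\]
Setting $V = Z_m$ gives the key identity
\[
(Z\psi)(m) = d\psi_m(Z_m) = g_m(Z_m, Z_m) = |Z_m|^2_g.
\]

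Next I would derive each equivalence. For (1) $\Leftrightarrow$ (3), the identity above combined with nondegeneracy of $g$ yields: $Z_m = 0$ iff $g_m(Z_m, V) = 0$ for all $V \in T_m N$ iff $d\psi_m(V) = 0$ for all $V$, i.e.\ $d\psi_m = 0$. For (1) $\Leftrightarrow$ (2), positive-definiteness of $g$ gives $|Z_m|^2_g \ge 0$, with equality iff $Z_m = 0$; thus $(Z\psi)(m) \le 0$ forces $(Z\psi)(m) = 0$, hence $Z_m = 0$, and conversely $Z_m = 0$ obviously implies $(Z\psi)(m) = 0 \le 0$.

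There is really no obstacle here; the only thing to watch is that the implication (2) $\Rightarrow$ (1) crucially uses positive-definiteness (so the nonstrict inequality in (2) is in fact automatically an equality under the hypothesis), and that (1) $\Leftrightarrow$ (3) uses nondegeneracy of $g$ rather than positive-definiteness. Together, these three equivalences settle the lemma.
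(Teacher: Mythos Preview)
Your proof is correct and follows essentially the same approach as the paper: both use the defining identity $d\psi = g(Z,-)$ to obtain $Z\psi = g(Z,Z)$, from which all three equivalences follow immediately by nondegeneracy and positive-definiteness of $g$. The paper's proof is just a terser version of what you wrote.
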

\begin{proof} This follows from the definition of gradient:
$
g(\grad_g \psi,-)=d\psi
$.
Thus $d\psi=g(Z,-)$ and $
Z\psi=g(Z,Z)
$ which yields the asserted equivalences.
\end{proof}
In order to prove that the Liouville vector field is complete one may use the following 
result.
\begin{theorem}[{\cite[Theorem 2.2]{MR2839561}}] 
\label{thm:complete}
Let $N$ be a manifold and let $Z$ be a smooth vector field on $N$. Assume that $\psi$ is a smooth proper function on $N$ such that there exist $A,B\ge 0$ such that 
\begin{equation}
\label{eq:completeness}
|Z\psi|\le A|\psi|+B.
\end{equation}
Then $Z$ is complete.
\end{theorem}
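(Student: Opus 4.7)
The plan is a completely standard Gr\"onwall-plus-escape-lemma argument. I would fix a point $m\in N$ and consider the maximal integral curve $\gamma:(T_-,T_+)\to N$ of $Z$ with $\gamma(0)=m$; the aim is to show $T_+=+\infty$ (the argument for $T_-=-\infty$ is symmetric). Set $\phi(t)=\psi(\gamma(t))$ for $t\in[0,T_+)$. By the chain rule,
\[
\phi'(t)=(Z\psi)(\gamma(t)),
\]
and the hypothesis \eqref{eq:completeness} gives $|\phi'(t)|\le A|\phi(t)|+B$.

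Next I would apply a standard one-variable differential inequality argument (Gr\"onwall) to bound $|\phi(t)|$ on any bounded time interval. Concretely, if $A>0$ one gets
\[
|\phi(t)|\le \Bigl(|\phi(0)|+\tfrac{B}{A}\Bigr)e^{At}-\tfrac{B}{A},
\]
while if $A=0$ one simply has $|\phi(t)|\le|\phi(0)|+Bt$. In either case, on any finite sub-interval $[0,T]\cap[0,T_+)$ the values $\phi(t)$ remain bounded by some constant $M=M(T)$.

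Now I would invoke properness of $\psi$: the set $K:=\psi^{-1}([-M,M])$ is compact in $N$, and by the bound above $\gamma([0,T]\cap[0,T_+))\subset K$. If $T_+<+\infty$, then choosing $T>T_+$ shows that the forward orbit $\gamma([0,T_+))$ is contained in the compact set $K$. This contradicts the standard escape lemma from ODE theory: a maximal integral curve defined on a finite forward interval $[0,T_+)$ must leave every compact set as $t\to T_+$. Hence $T_+=+\infty$, and by the symmetric argument $T_-=-\infty$, so $Z$ is complete.

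I do not expect any real obstacle here; the only subtle point is remembering to apply Gr\"onwall to $|\phi|$ (not to $\phi$), which is legitimate because $|\phi|'\le|\phi'|$ holds almost everywhere and $|\phi|$ is absolutely continuous. Everything else is textbook ODE theory on manifolds.
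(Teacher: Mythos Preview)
Your argument is correct and is the standard Gr\"onwall-plus-escape-lemma proof. Note that the paper does not actually prove this theorem: it simply quotes it from \cite[Theorem 2.2]{MR2839561} without argument, so there is no ``paper's own proof'' to compare against. Your write-up would serve perfectly well as the omitted proof.
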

\begin{remark} \label{rem:complete} It clear that if we have \eqref{eq:completeness} outside a compact set then it holds everywhere possibly after replacing $B$ by a larger constant.
\end{remark}

\subsection{K\"ahler geometry}
\label{sec:Kahlerreview}
\subsubsection{General conventions}
\label{sec:general}
If $(N,J,\psi)$ is K\"ahler manifold (with global K\"ahler potential $\psi$) then one can make $N$ into an exact symplectic manifold by
\begin{equation}
\label{eq:kahlercase}
\begin{aligned}
\theta&=-d\psi \circ J\\
\omega &=d\theta\\
Z&=\grad_g\psi
\end{aligned}
\end{equation}
(we have dropped the $(-)_\theta$ decoration which we used in \S\ref{sec:liouville},\S\ref{sec:taming}) where $g(X,Y)$ is the Riemannian metric given by $\omega(X,JY)$.
We have $d=\partial+\bar{\partial}$ where $J$ acts on $\partial$, $\bar{\partial}$ with eigenvalues $i$, $-i$ respectively.
The formulas for $\theta,\omega$ become:
\begin{align*}
\theta&=-i (\partial-\bar{\partial})\psi,\\
\omega&=2i \partial \bar{\partial}\psi.\\
\end{align*}
\subsubsection{Local coordinates}
In local holomorphic  coordinates $(z_k)_k=(x_k+iy_k)_k$ we have $\partial=\sum_i \partial_{z_i}dz_i$, $\bar{\partial}=\sum_i \partial_{\bar{z}_i}d\bar{z}_i$ 
with $\partial_{z_k}=(1/2)(\partial_{x_k}-i\partial_{y_k})$, $\bar{\partial}_{z_k}=(1/2)(\partial_{x_k}+i\partial_{y_k})$ and $J(\partial_{x_k})=\partial_{y_k}$.
Usually the coordinates are clear
from the context and then we write $\partial_i=\partial_{z_i}$, $\bar{\partial}_i=\partial_{\bar{z}_i}$.
\subsection{Stein manifolds}
\label{sec:stein}
A Stein manifold is a K\"ahler manifold equipped with a global K\"ahler potential which is proper and bounded below.
Let $(N,J,\psi)$ be a Stein manifold.
We can make $N$ into an exact symplectic manifold as described in \S\ref{sec:general}.
However, without further conditions,
the Liouville vector field may not be complete.\footnote{A counterexample is given in \cite[Example 2.10]{CiEl}. It is the function $\psi(z):=\sqrt{1+|z|^2}$ on~$\CC$.} 
The following result describes a standard method for constructing Liouville manifolds.
\begin{proposition}
\label{prop:stein}
Let $(N,J,\psi)$ be a Stein manifold as above. Assume that the Liouville vector field is complete and that the zeros of $d\psi$ form a compact set. Then $(N,\theta)$ with $\theta$ as in \eqref{eq:kahlercase} is a Liouville manifold.
\end{proposition}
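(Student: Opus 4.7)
The plan is to reduce this directly to Proposition~\ref{prop:prestein}. Concretely, I would verify that the triple $(N,\theta,\psi)$ satisfies the hypotheses of that proposition and then invoke it. By assumption $\theta=-d\psi\circ J$ equips $N$ with the structure of a complete exact symplectic manifold (completeness of the Liouville vector field $Z=\grad_g\psi$ is given), and the K\"ahler potential $\psi$ is smooth, proper and bounded below since $N$ is Stein. So the only nontrivial ingredient to check is that the set $L=\{m\in N\mid (Z\psi)(m)\le 0\}$ is compact.

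For this I would use Lemma~\ref{lem:particular}. Since $Z=\grad_g\psi$ for the K\"ahler metric $g(X,Y)=\omega(X,JY)$, that lemma identifies $L$ with the zero locus of $d\psi$. The set of zeros of $d\psi$ is compact by hypothesis, and hence $L$ is compact as well.

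Having verified all hypotheses, Proposition~\ref{prop:prestein} produces, for any $C>\max(\psi\mid L)$, a generating Liouville domain $M=\psi^{-1}(]-\infty,C])$ for $(N,\theta)$. By the definition in~\S\ref{sec:liouville}, this is exactly what it means for $(N,\theta)$ to be a Liouville manifold, so the conclusion follows.

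There is no real obstacle here: the statement is essentially a bookkeeping corollary of Lemma~\ref{lem:particular} together with Proposition~\ref{prop:prestein}, and the only content is the observation that for the gradient vector field of a K\"ahler potential the descending set $\{Z\psi\le 0\}$ reduces to the critical set of $\psi$. The nontrivial input, namely completeness of $Z$, is imported as a hypothesis (and would itself typically be established using Theorem~\ref{thm:complete}, but that is outside the scope of this proposition).
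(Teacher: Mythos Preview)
Your proposal is correct and matches the paper's approach exactly: the paper's proof consists of the single line ``This follows from Proposition~\ref{prop:prestein} and Lemma~\ref{lem:particular},'' which is precisely the reduction you spell out.
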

\begin{proof}
This follows from Proposition \ref{prop:prestein} and Lemma \ref{lem:particular}.
\end{proof}
\subsection{Fibers of Stein manifolds}
Let $(N,J,\psi)$ be a Stein manifold as in \S\ref{sec:stein} which is also a Liouville manifold (see e.g.\ Proposition \ref{prop:stein}) with $\theta$ as in \eqref{eq:kahlercase}.
Let $f:N\r \CC$ be a smooth function whose image contains $0$ and let $N_0=f^{-1}(0)$. Let $\psi_0, \theta_0$ be the restrictions of $\psi$, $\theta$ to $N_0$. Then we have
the following result which is a refinement of a result stated in \cite[p670]{Donaldson}.
\begin{proposition}
\label{prop:fiber} Assume that $(Z_\theta,\psi)$ satisfy \eqref{eq:completeness}.
Assume $\partial f\neq 0$ on $N_0$ and assume furthermore that there exists $0\le r<1$ such that $|\bar{\partial} f|\le r |\partial f|$ on $N_0$.  
Then $(N_0,\theta_0)$ is a complete exact symplectic manifold. Let $\tau\in [0,\pi/2]$ be the Hermitian angle  (see \S\ref{sec:hermitian}) between $Z_\theta$ and $\partial f$ where
we view $\tau$ as a function
on $N_0$.
If  $\{\sin \tau \le r\}$ is a compact subset of $N_0$ then $(N_0,\theta_0)$ is a Liouville manifold.
\end{proposition}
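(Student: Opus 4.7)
The plan is to verify sequentially four properties:
(i) $N_0$ is a smooth real submanifold of $N$ of codimension~$2$;
(ii) the restriction $\omega_0 = d\theta_0$ is non-degenerate, so that $(N_0,\theta_0)$ is exact symplectic;
(iii) the Liouville vector field $Z_{\theta_0}$ is complete; and
(iv) under the additional compactness hypothesis on $\{\sin\tau\le r\}$, the sublevels of $\psi_0$ provide a generating Liouville domain via Proposition~\ref{prop:prestein}.

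Steps (i) and (ii) both hinge on a single local identity, verified by direct calculation in holomorphic coordinates. Writing $f = u+iv$ and letting $\xi_u,\xi_v$ be the Hamiltonian vector fields of $u,v$, one finds
\[
\omega(\xi_u,\xi_v) \;=\; \tfrac{1}{2}\bigl(|\bar\partial f|^2 - |\partial f|^2\bigr)
\]
(up to a positive constant depending on the Hermitian convention on forms). The hypothesis $|\bar\partial f|\le r|\partial f|$ with $r<1$ combined with $\partial f\ne 0$ then forces $\omega(\xi_u,\xi_v)\ne 0$ on $N_0$. This at once yields $du\wedge dv \ne 0$ on $N_0$, giving (i), and implies that $(T_pN_0)^{\omega\perp}=\spantwo(\xi_u,\xi_v)$ meets $T_pN_0 = \ker du\cap\ker dv$ only in $\{0\}$, giving (ii).

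For (iii), decompose $Z_\theta|_{N_0} = Z_{\theta_0} + E$ with $E\in (TN_0)^{\omega\perp}$. Solving the tangency conditions $du(Z_{\theta_0})=dv(Z_{\theta_0})=0$ yields the explicit formula
\[
E \;=\; \frac{1}{\omega(\xi_u,\xi_v)}\bigl(-(Z_\theta v)\,\xi_u + (Z_\theta u)\,\xi_v\bigr).
\]
Using $|\xi_u|=|\nabla u|$, $|\xi_v|=|\nabla v|$, the elementary identity $|\nabla u|^2+|\nabla v|^2 = |\partial f|^2+|\bar\partial f|^2$ and the lower bound $|\omega(\xi_u,\xi_v)|\gtrsim (1-r^2)|\partial f|^2$, one obtains $|E|\le C(r)|Z_\theta|$ with a constant depending only on $r$. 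Since $Z_{\theta_0}\psi_0 = g(Z_\theta,Z_{\theta_0}) = |Z_\theta|^2 - g(Z_\theta,E)$, this gives $|Z_{\theta_0}\psi_0| \le (1+C(r))\,Z_\theta\psi$, which combined with hypothesis \eqref{eq:completeness} produces $|Z_{\theta_0}\psi_0|\le A'|\psi_0|+B'$. Theorem~\ref{thm:complete} then yields completeness of $Z_{\theta_0}$, and together with (ii) one concludes that $(N_0,\theta_0)$ is a complete exact symplectic manifold.

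For (iv), by Proposition~\ref{prop:prestein} it suffices to show that $\{Z_{\theta_0}\psi_0\le 0\}$ is compact in $N_0$. Writing $a = \partial f(Z_\theta)$ and $b = \bar\partial f(Z_\theta)$, unpacking $E\psi$ yields the exact formula
\[
Z_{\theta_0}\psi_0 \;=\; |Z_\theta|^2 \;+\; \frac{2(|a|^2-|b|^2)}{|\partial f|^2-|\bar\partial f|^2}.
\]
The Hermitian angle $\tau$ between $Z_\theta$ and $\partial f$ is calibrated so that $|a|$ is expressed (up to a dimensional constant) as $|Z_\theta|\,|\partial f|\cos\tau$, while $|b|$ is controlled by Cauchy-Schwarz against $|\bar\partial f|\le r|\partial f|$. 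Combining these with $|\partial f|^2-|\bar\partial f|^2\ge (1-r^2)|\partial f|^2$, one obtains an estimate of the form $Z_{\theta_0}\psi_0 \ge F(\sin\tau,r)\,|Z_\theta|^2$ where $F(\sin\tau,r)$ is strictly positive on $\{\sin\tau>r\}$. Together with Lemma~\ref{lem:particular} (which identifies $\{Z_\theta=0\}$ with the compact locus $\{d\psi=0\}$ of the ambient Stein manifold), this shows $\{Z_{\theta_0}\psi_0\le 0\}$ lies in the compact set $\{\sin\tau\le r\}\cup(\{d\psi=0\}\cap N_0)$, and Proposition~\ref{prop:prestein} furnishes the generating Liouville domain.

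The principal obstacle is the algebraic step at the end of (iv): extracting from the explicit formula the precise estimate in which the parameter $r$ bounding the Hermitian gap $|\bar\partial f|/|\partial f|$ matches the threshold on $\sin\tau$ appearing in the compactness hypothesis. It is this matching which forces the Hermitian angle, rather than any other notion of geometric angle, to appear in the statement.
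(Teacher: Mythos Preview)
Your proof follows the same route as the paper, which simply packages the linear algebra you carry out in (ii)--(iv) into appendix propositions: Lemmas~\ref{lem:don1} and~\ref{lem:donaldson1} for smoothness and non-degeneracy, Proposition~\ref{prop:projectionbound} for the bound $|Z_{\theta_0}|\le\gamma|Z_\theta|$ giving completeness, and Proposition~\ref{prop:anotherbound} for positivity of $g(Z_\theta,Z_{\theta_0})$ on $\{\sin\tau>r\}$.

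One computational slip that matters for your final paragraph: the displayed ``exact formula'' in (iv) should carry a minus sign and no factor of~$2$,
\[
Z_{\theta_0}\psi_0 \;=\; |Z_\theta|^2 \;-\; \frac{|a|^2-|b|^2}{|\partial f|^2-|\bar\partial f|^2}
\]
(this is the paper's \eqref{eq:calcul}). With your plus sign the expression is $\ge|Z_\theta|^2$ whenever $|a|\ge|b|$, so the angle~$\tau$ never enters and the ``matching'' you flag as the principal obstacle disappears for the wrong reason. With the correct sign, substituting $|a|=|\partial f|\,|Z_\theta|\cos\tau$ and $|b|\le|\bar\partial f|\,|Z_\theta|$ gives
\[
Z_{\theta_0}\psi_0 \;\ge\; \frac{|Z_\theta|^2}{|\partial f|^2-|\bar\partial f|^2}\bigl(|\partial f|^2\sin^2\tau-|\bar\partial f|^2\bigr),
\]
which is strictly positive exactly when $\sin\tau>|\bar\partial f|/|\partial f|$, hence on $\{\sin\tau>r\}$. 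That is precisely the matching you were looking for; once the sign is fixed, it falls out directly and the obstacle dissolves.
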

\begin{proof} We have $|df|\ge |\partial f|-|\bar{\partial}f|\ge (1-r)|\partial f|>0$ on $N_0$. So $N_0$ is smooth.
The fact that $N_0$ is moreover symplectic is stated in \cite[p670]{Donaldson}. It is a direct corollary of Lemmas \ref{lem:don1},\ref{lem:donaldson1} below (also due to Donaldson) with 
$V=T_mN$, $a=df_m$ for
$m\in N_0$. Let $Z_0$ be the Liouville vector field of $N_0$. Then $Z_0$ is the symplectic projection of $Z_\theta$ on $TN_0$.
By Proposition \ref{prop:projectionbound} below there exists a $\gamma$ such that $|Z_0|\le \gamma |Z_\theta|$. Since by assumption  $(Z_\theta,\psi)$ satisfy \eqref{eq:completeness},
the same is true for  $(Z_0,\psi)$. Hence $Z_0$ is complete by Theorem \ref{thm:complete}.

It remains to prove that $N_0$ is a Liouville manifold. By Proposition \ref{prop:prestein} we need to prove that $\{Z_0\psi\le 0\}$ is compact. By \eqref{eq:kahlercase} we have
$Z_0\psi=g(Z_\theta,Z_0)$. Then it follows from Proposition \ref{prop:anotherbound} below that $\{Z_0\psi\le 0\}\subset \{Z_\theta=0\}\cup \{\sin \tau \le r\}$. The compactness of 
$\{\sin \tau \le r\}$ follows from the hypotheses and the compactness of $\{Z_\theta=0\}$  follows from the assumption that $(N,\theta)$ is a Liouville manifold as then $\{Z_\theta=0\}$ is a closed subset contained in a generating Liouville domain of $N$ which is compact.
\end{proof}
\subsection{Families of Liouville manifolds}
Let $B$ be a manifold. A family of manifolds (with boundary) is a surjective submersion $M\r B$.
We will sometimes  informally write such a family as $(M_t)_{t\in B}$.

Let $N\r B$ be a family  of Liouville manifolds. We say that a proper submanifold (with boundary) $M\subset N$ is a \emph{family of generating Liouville domain} 
if for all $t\in B$,  $M_t$ is a generating Liouville domain for $N_t$. We say that $N\r B$ locally has families of generating Liouville domains if such families exist locally over $B$.

\medskip
We now state a version of Moser's lemma for families of Liouville manifolds.

\begin{theorem}[\S\ref{subsubsec:proof_Moser}] \label{lem:moser2}
  Let $(N_t,\theta_t)_{t\in B}$ be a family Liouville manifolds which locally has families of generating Liouville domains. Assume that $B$ is smoothly contractible and let $b\in B$. Then there exists a diffeomorphism
  $\psi: N_b\times B\r N$ over $B$, inducing the identity on $N_b$ as well as a smooth function $f$ on $N_b\times B$ with $f_b=0$, supported on some $L\subset N_b\times B$ with $L\r B$ is proper,
  such that for $t\in B$ we have
  $\psi_t^\ast(\theta_t)+df_t=\theta_b$.
\end{theorem}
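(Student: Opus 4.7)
The plan is to first smoothly trivialize the bundle $N\to B$, then apply a parametric Moser trick, controlling the resulting vector field at infinity using the Liouville structure on the cylindrical ends.

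Step 1 (Smooth trivialization). The local families of generating Liouville domains are proper submersions of compact manifolds with boundary onto open subsets of $B$, so Ehresmann's fibration theorem produces smooth local trivializations of these. Extending by the Liouville flow on each fiber (which fills $N_t$ starting from $M_t$ since $M_t$ is generating, cf.\ Lemma \ref{lem:open}) yields smooth local trivializations of $N\to B$ itself as a smooth fiber bundle. Since $B$ is smoothly contractible, a partition-of-unity argument patches these into a global diffeomorphism $\Phi\colon N_b\times B\to N$ over $B$ with $\Phi_b=\id$. Replacing $\theta_t$ by $\Phi_t^\ast\theta_t$ we reduce to the case of a smooth $B$-family of Liouville $1$-forms $\theta_t$ on the fixed underlying manifold $N_b$.

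Step 2 (Parametric Moser trick). By contractibility, choose a smooth homotopy $H\colon B\times[0,1]\to B$ with $H(\cdot,0)\equiv b$ and $H(\cdot,1)=\id_B$. For each $t\in B$ define the Moser vector field $X_{t,s}$ on $N_b$ by
\[
 i_{X_{t,s}}\, d\theta_{H(t,s)} \;=\; -\partial_s\theta_{H(t,s)},
\]
let $\phi_{t,s}$ be its flow, and set $f_{t,s}:=-\int_0^s \phi_{t,r}^\ast\bigl(i_{X_{t,r}}\theta_{H(t,r)}\bigr)\,dr$. The Cartan identity $\mathcal L_X\theta=i_X d\theta+d(i_X\theta)$ then gives $\phi_{t,s}^\ast\theta_{H(t,s)}+df_{t,s}=\theta_b$. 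Set $\psi_t:=\Phi_t\circ\phi_{t,1}$ and $f_t:=f_{t,1}$; smoothness in $t$ follows from smooth dependence of ODE solutions on parameters, and $\psi_b=\id$, $f_b=0$ by construction.

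Step 3 (Completeness of the Moser flow and proper support of $f$). The main obstacle is that $X_{t,s}$ is not a priori complete, nor is $f_t$ supported in a set proper over $B$. One can refine Step 1 so that, outside some family of generating Liouville domains, $\Phi$ identifies $\theta_t$ with the standard cylindrical form $e^r\alpha_t$ on $[0,\infty)\times \partial M_b$ (where $\alpha_t$ is a smooth family of contact forms on $\partial M_b$). In such coordinates the defining equation for $X_{t,s}$ becomes, after cancelling a common factor of $e^r$,
\[
 dr(X_{t,s})\,\alpha_{H(t,s)}-\alpha_{H(t,s)}(X_{t,s})\,dr+i_{X_{t,s}}d\alpha_{H(t,s)}=-\partial_s\alpha_{H(t,s)},
\]
whose solution has radial and tangential components bounded uniformly in $r$ on compact subsets of $B$. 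Completeness of $\phi_{t,s}$ then follows from Theorem \ref{thm:complete} applied to the proper function $r$, and since $i_{X_{t,r}}\theta_{H(t,r)}=e^r\alpha_{H(t,r)}(X_{t,r})$ can be further arranged to vanish outside a compact region of $N_b$ for $t$ in any compact subset of $B$ (by introducing a cutoff in the radial direction and absorbing the correction into the cylindrical part of $f_{t,s}$), one obtains the required proper support for $L=\overline{\supp f}\subset N_b\times B$.

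The hardest part will be Step 3: refining $\Phi$ to match cylindrical structures on all fibers uniformly in $t$, and then extracting the bounds on $X_{t,s}$ needed to ensure both completeness of the Moser flow and the $B$-proper support of $f$.
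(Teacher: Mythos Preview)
Your approach is essentially the paper's: trivialize so that the pulled-back Liouville forms are standard cylindrical at infinity (this is the paper's Lemma~\ref{lem:ultimate}), then run Moser and use the cylindrical structure to get completeness of $X_t$ and proper support of $f$. Two remarks on the details.

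First, the refinement you defer to Step~3 is where the actual work is, and your ``partition-of-unity argument patches these into a global diffeomorphism'' in Step~1 is too vague to deliver it. The paper achieves this via an \emph{interpolation construction for Liouville domains} (\S\ref{sec:interpolating}, Proposition~\ref{prop:main}): one averages the local families of generating Liouville domains along the Liouville flow (using the classification in Lemma~\ref{lem:generation}) to obtain a single \emph{global} family with submersive boundary, then applies Ehresmann once to that and extends by the Liouville flow. This is what guarantees the trivialization respects the cylindrical structure outside a compact set.

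Second, your cutoff in Step~3 is unnecessary. Taking the $dr$-component of your displayed equation gives $\alpha_{H(t,s)}(X_{t,s})=0$ directly (this is \eqref{eq:simplified} in the paper), so $i_{X_{t,r}}\theta_{H(t,r)}=e^r\alpha_{H(t,r)}(X_{t,r})=0$ on the cylinder and $f_t$ vanishes there automatically; the proper support of $f$ then comes for free from the fact that the flow $\phi_{t,s}$ preserves compactness.
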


\begin{remark}\label{rem:moser_generality}
This version of Moser's lemma is more general than \cite[Theorem 6.8]{CiEl} in the sense that we allow the underlying
manifold in the family to vary and we also allow the base manifold to be more general (in loc.\ cit.\ $B=[0,1]$). Needless to say that this added generality is in fact illusory. 
\end{remark}

\subsection{Families of fibers of Stein manifolds}
Let $(N,J,\psi)$ be a Stein manifold as in \S\ref{sec:stein} which is also a Liouville manifold (see e.g.\ Proposition \ref{prop:stein}) with $\theta$ as in \eqref{eq:kahlercase}.
Let $B$ be a compact base manifold (with boundary) and let $f:B\times N\r \CC$ be a smooth function such that for all $t\in B$ the image of $f_t$  contains $0$ and $f_t$ is a submersion.
Put $N_0=f^{-1}(0)$ and let $\pi:N_0\r B$ be the family which is the restriction of the projection $B\times N\r N$.
Let $\psi_0, \theta_0$ be the restrictions to $N_0$ of pullbacks of $\psi$, $\theta$ via the projection $B\times N\r N$. 
Then the following result is a family version of Proposition \ref{prop:fiber}.
\begin{proposition}
\label{prop:fiber_families} Assume that $(Z_\theta,\psi)$ satisfy \eqref{eq:completeness}.
Assume that $\partial f\neq 0$ and there exists $0\le r<1$ such that $|\bar{\partial} f|\le r |\partial f|$ on the fibers of $\pi:N_0\r B$.
Then the fibers of $\pi$ are complete exact symplectic manifolds. Let $\tau\in [0,\pi/2]$ be the Hermitian angle  (see \S\ref{sec:hermitian}) between $Z_\theta$ and $\partial f$
(the latter computed fiberwise).
If the restriction of $\pi$ to  $\{\sin \tau \le r\}$ is proper then $\pi$ is a family of Liouville manifolds with a family of generating Liouville domains. 
\end{proposition}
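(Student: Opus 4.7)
The plan is to reduce to the single-fiber case \ref{prop:fiber} and then upgrade the resulting pointwise structure to a genuine family by producing local families of generating Liouville domains. For the first step I would fix $t \in B$ and verify that the four hypotheses of Proposition \ref{prop:fiber} hold on the fiber $N_{0,t}$: $(Z_\theta,\psi)$ satisfies \eqref{eq:completeness} globally on $N$ by assumption, the bounds $\partial f \neq 0$ and $|\bar{\partial}f| \leq r|\partial f|$ hold fiberwise by hypothesis, and the compactness of $\{\sin\tau \leq r\} \cap N_{0,t}$ is precisely the fiber at $t$ of the restriction of $\pi$ to $\{\sin\tau \leq r\}$, which is compact because $\pi|_{\{\sin\tau \leq r\}}$ is proper. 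Thus each $(N_{0,t},\theta_{0,t})$ is a complete exact symplectic manifold, in fact a Liouville manifold.

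For the family structure I would then construct local families of generating Liouville domains using the sublevel set construction from Proposition \ref{prop:prestein}. Fix $b \in B$ and a compact neighborhood $K \subset B$ of $b$. The key point is that the ``bad set'' controlling the Liouville flow on each fiber, namely $L_t := \{Z_{0,t}\psi \leq 0\} \cap N_{0,t}$, is contained in $(\{Z_\theta = 0\} \cup \{\sin\tau \leq r\}) \cap N_{0,t}$ by Proposition \ref{prop:anotherbound} (as used in the proof of \ref{prop:fiber}). Over $\pi^{-1}(K) \subset N_0$, the first piece is compact because $\{Z_\theta = 0\}$ is compact in $N$ (using that $(N,\theta)$ is a Liouville manifold) and $K$ is compact, while the second is compact by the properness assumption on $\pi|_{\{\sin\tau \leq r\}}$. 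Hence $\psi$ is bounded above by some constant $C_0$ on the union $F_K$ of these two sets intersected with $\pi^{-1}(K)$.

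Choosing any $C > C_0$, I would set $M_K := \{m \in \pi^{-1}(K) \mid \psi(m) \leq C\}$. For each $t \in K$, the fiber $(M_K)_t$ is exactly the generating Liouville domain for $N_{0,t}$ produced by Proposition \ref{prop:prestein} applied with threshold $C$, since $C > \max(\psi|L_t)$. Smoothness of the boundary $\psi^{-1}(C) \cap \pi^{-1}(K)$ and the submersion property $M_K \to K$ follow from $d\psi \neq 0$ on $\partial M_{K,t}$ (by Lemma \ref{lem:particular}, a zero of $d\psi$ would force $Z_\theta \psi \leq 0$, hence lie in $F_K$ where $\psi \leq C_0 < C$) together with properness of $\pi$ on $M_K$, which holds because $M_K$ is closed in $\pi^{-1}(K)$ and is contained in a set on which $\psi$ is bounded.

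I expect the main obstacle to be verifying that $M_K \to K$ is genuinely proper with smooth fiber boundary varying smoothly in $t$, so that it qualifies as a family of Liouville domains in the sense required by Theorem \ref{lem:moser2}; this hinges on the uniform bound $C > C_0$ being achievable on compact neighborhoods, which is exactly what the properness hypothesis on $\{\sin\tau \leq r\}$ buys us. Once this is in place the conclusion is immediate from the definition of a family of Liouville manifolds with local families of generating Liouville domains.
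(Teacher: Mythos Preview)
Your proposal is correct and takes essentially the same approach as the paper, whose proof simply reads ``This is proved in exactly the same way as Proposition~\ref{prop:fiber}.'' You have made explicit what the paper leaves implicit: since $B$ is compact (as stated in the setup preceding the proposition), one may take $K=B$ and choose a single uniform constant $C>C_0$, so that $M_B=\psi_0^{-1}(]-\infty,C])$ is a global family of generating Liouville domains; the only notational slip is that on $\partial M_{K,t}$ it is $d(\psi|_{N_{0,t}})$ (not the ambient $d\psi$) whose nonvanishing you need, and this follows exactly as in the proof of Proposition~\ref{prop:prestein} because a zero of the fiberwise differential would give $Z_{0,t}\psi\le 0$.
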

\begin{proof} This is proved in exactly the same way as Proposition \ref{prop:fiber}.
\end{proof}

\subsection{Graded symplectic manifolds}  
\subsubsection{Gradings via the symplectic Lagrangian} For a symplectic vector space $V$ of real dimension $2r$ we let $\LGr(V)$ be the
  Grassmannian of Lagrangian subspaces of $V$. The Maslov index provides an isomorphism 
\begin{equation}
\label{eq:maslov}
\pi_1(\LGr(V))\cong \ZZ.
\end{equation}
Let $\widetilde{\LGr}(V)$ be the universal cover of $ \LGr(V)$. Since the universal cover is a $\ZZ$-torsor it defines
an element $u_V\in H^1(\LGr(V),\ZZ)$.
\begin{remark} \label{rem:pi1iso}
The isomorphism \eqref{eq:maslov} and the corresponding universal cover can be understood very concretely. For a vector space $W$ let us write $W_0$ for the non-zero elements.
We  equip $V$ with a compatible complex structure (unique up to homotopy by Proposition \ref{prop:compat} below). Then we may define the ``phase'' map
\[
\LGr(V)\xrightarrow{P\mapsto \wedge^r_{\RR} P} (\wedge_{\CC}^r V)_0/(
\RR-{0})\cong (\wedge_{\CC}^r V)_0^{\otimes 2}/\RR_{>0}\cong S^1
\]
which  induces an isomorphism on $\pi_1$. In particular $\widetilde{\LGr}(V)$ is the pullback of the universal cover $\RR\r S^1$.
\end{remark}
Let $E$ be a symplectic vector bundle on a manifold $M$
  and let $\pi:\LGr(E)\r M$ be the bundle of Lagrangian subspaces of
  $E$.
A \emph{grading} on $E$ is a $\ZZ$-torsor 
\[
\widetilde{\LGr}(E)\r \LGr(E)
\]
whose fiber in $m\in M$ is $\widetilde{\LGr}(E_m)\r \LGr(E_m)$. (Hence $\LGr(E)$ is a fiberwise universal cover of $\LGr(E)$.) In other words a grading
is given by an element $u_E\in H^1(\LGr(E),\ZZ)$ whose restriction in $m\in M$ is given by $u_{E_m}$.
\begin{lemma}\cite[Lemma 2.2]{MR1765826}
\label{lem:ob}
The obstruction against the existence of a grading on a symplectic vector bundle on $M$ lives in $H^2(M,\ZZ)$. Two different
gradings differ by an element of $H^1(M,\ZZ)$. Finally the automorphisms of a grading
are given by $H^0(M,\ZZ)$.
\end{lemma}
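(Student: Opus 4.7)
The plan is to run the Leray--Serre spectral sequence for the fiber bundle $\pi : \LGr(E)\r M$ with $\ZZ$-coefficients. The typical fiber is $\LGr(V)\cong U(r)/O(r)$, which is connected with $\pi_1 = \ZZ$ by the Maslov isomorphism \eqref{eq:maslov}, so $H^0(\LGr(V),\ZZ)=H^1(\LGr(V),\ZZ)=\ZZ$ (with the latter identified canonically with the class $u_V$). The goal is to express the data of a grading as the lift of a prescribed element on the $E_2$-page and read off existence, ambiguity and automorphisms from the resulting filtration.

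The first key step is to check that the local systems $R^0\pi_\ast\ZZ$ and $R^1\pi_\ast\ZZ$ are \emph{trivial}. The transition functions of $E$ take values in $\Sp(2r,\RR)$, which is connected, so the induced monodromy action on $H^0$ and $H^1$ of the fiber is trivial; equivalently, using the phase-map description in Remark~\ref{rem:pi1iso}, one may choose a compatible complex structure on $E$ (the space of such is contractible) and realise $u_V$ fiberwise as the pullback of the generator of $H^1(S^1,\ZZ)$ under a globally defined phase map, which makes the triviality of $R^1\pi_\ast\ZZ$ manifest. Hence the relevant portion of the $E_2$-page reads $E_2^{p,0}=E_2^{p,1}=H^p(M,\ZZ)$, with the only possibly nonzero differential in this range being $d_2 : E_2^{0,1}=H^0(M,\ZZ)\r E_2^{2,0}=H^2(M,\ZZ)$.

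A grading on $E$ is, by definition, a class $u_E\in H^1(\LGr(E),\ZZ)$ whose image under the edge map $H^1(\LGr(E),\ZZ)\twoheadrightarrow E_\infty^{0,1}\hookrightarrow E_2^{0,1}=H^0(M,\ZZ)$ equals the constant $1$. Such a lift exists if and only if $1\in H^0(M,\ZZ)$ survives to $E_\infty^{0,1}$, and this is governed by the single obstruction $c:=d_2(1)\in H^2(M,\ZZ)$; the difference of any two lifts lies in the kernel of the edge map, which by the spectral sequence filtration is exactly $E_\infty^{1,0}=E_2^{1,0}=H^1(M,\ZZ)$. Finally, an automorphism of the $\ZZ$-torsor $\widetilde{\LGr}(E)\r \LGr(E)$ is the same as a locally constant $\ZZ$-valued function on $\LGr(E)$, and since $\LGr(E)\r M$ has connected fibers one has $H^0(\LGr(E),\ZZ)=H^0(M,\ZZ)$, giving the third statement.

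The main technical point — and the one I would expect to need the most care — is the triviality of the local system $R^1\pi_\ast\ZZ$: one needs the Maslov class $u_V$ to be \emph{canonically} identified with the generator of $\ZZ$ in a way that is stable under the symplectic structure group, and the cleanest route is the phase-map formulation of Remark~\ref{rem:pi1iso} combined with the contractibility of the space of compatible complex structures (cf.\ Proposition~\ref{prop:compat} below). Once this is in hand the rest of the argument is a formal spectral-sequence bookkeeping.
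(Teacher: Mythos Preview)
Your spectral-sequence argument is correct. The paper itself does not give an independent proof of this lemma: it simply cites Seidel for the first two claims and observes that the third is immediate from the fact that a grading is a $\ZZ$-torsor (so its automorphisms are locally constant $\ZZ$-valued functions on $\LGr(E)$, hence on $M$).

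The underlying approach in the paper (and in Seidel) is different from yours, however, and more geometric: in \S\ref{sec:grp1} the paper uses the phase map of Remark~\ref{rem:pi1iso} to identify a grading, up to homotopy, with a nowhere-vanishing section of the complex line bundle $(\wedge^r_\CC E)^{\otimes 2}$, equivalently a section of the associated $S^1$-bundle $\nu(E)$. Standard obstruction theory for $S^1$-bundles then gives the obstruction in $H^2(M,\ZZ)$, the $H^1(M,\ZZ)$-torsor of choices, and $H^0(M,\ZZ)$ for automorphisms. The payoff of that route is that the obstruction class is identified \emph{explicitly} as $2c_1(E)$ (see Lemma~\ref{lem:S1grading} and the lemma following it), whereas your $d_2(1)$ is the same class but only after one computes the transgression. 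Conversely, your Leray--Serre argument is cleaner in that it works directly from the definition of $u_E$ as a class in $H^1(\LGr(E),\ZZ)$ restricting fiberwise to $u_V$, and it packages existence, ambiguity and automorphisms uniformly via the five-term exact sequence, without needing to introduce a compatible complex structure first.
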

The last claim is not in loc.\ cit.\ but it is clear from the fact that a grading is a $\ZZ$-torsor.
\begin{definition} A \emph{graded symplectic manifold} is a symplectic manifold, equipped with a grading on $TM$. We write $u_M$ for the corresponding $u_{TM}$.
\end{definition}

A graded Lagrangian on $M$ is a commutative diagram
\begin{equation}
\label{eq:grlang}
\begin{tikzcd}
&\widetilde{\LGr}(TM)\ar[d]\\
& \LGr(TM)\ar[d,"\pi"]\\
L\ar[ru,"\tilde{\imath}"]\ar[ruu]\ar[r,hook,"i"] & M
\end{tikzcd}
\end{equation}
\begin{lemma}\cite[Lemma 2.3]{MR1765826} 
The obstruction against the existence of a grading on a Lagrangian $L$ is in $H^1(L,\ZZ)$. 
The difference between two gradings can be measured by an element of $H^0(L,\ZZ)$.
\end{lemma}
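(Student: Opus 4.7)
The plan is to recognize a grading of $L$ as a section of a pullback $\ZZ$-torsor and then apply the standard classification of $\ZZ$-torsors on a space.

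First I would unwind the definition. By the diagram \eqref{eq:grlang}, a grading on $L$ is a lift $\tilde{\imath}: L \to \widetilde{\LGr}(TM)$ of the Gauss map $i: L \to \LGr(TM)$, $\ell \mapsto T_\ell L$, through the $\ZZ$-cover $\widetilde{\LGr}(TM) \to \LGr(TM)$. Equivalently, a grading is a continuous section of the pullback $\ZZ$-torsor
\[
i^\ast \widetilde{\LGr}(TM)\ \longrightarrow\ L.
\]

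Second I would invoke the classification of $\ZZ$-torsors. Since $\ZZ$ is discrete, the isomorphism classes of $\ZZ$-torsors on $L$ are in natural bijection with $H^1(L,\ZZ)$, and a $\ZZ$-torsor admits a global section if and only if its class vanishes. By naturality, the class of the pullback torsor is $i^\ast(u_M) \in H^1(L,\ZZ)$, which is therefore precisely the obstruction to existence of a grading on $L$. This gives the first claim.

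Third, assuming the obstruction vanishes, $i^\ast \widetilde{\LGr}(TM)$ is isomorphic to the trivial torsor $L \times \ZZ$, whose sections are the locally constant $\ZZ$-valued functions on $L$, i.e.\ elements of $H^0(L,\ZZ)$. Since the $\ZZ$-action on the torsor is free and transitive on fibers, any two gradings $\tilde{\imath}_1,\tilde{\imath}_2$ differ by a unique locally constant function $L \to \ZZ$, which is exactly the element of $H^0(L,\ZZ)$ asserted in the lemma.

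I do not expect any real obstacle here: the argument is a direct application of obstruction theory for discrete principal bundles, fully parallel to the proof of the preceding Lemma~\ref{lem:ob} but one degree lower, since here the underlying base is just $L$ rather than a bundle over $L$ with Lagrangian fibers $\LGr(T_\ell M)$ having $\pi_1 \cong \ZZ$ (which is what shifts the degrees in Lemma~\ref{lem:ob} up by one).
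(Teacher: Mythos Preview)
Your proof is correct and follows essentially the same approach as the paper: the paper's proof consists of the single sentence ``The obstruction is given by $\tilde{\imath}^\ast(u_M)$'' together with the remark that the difference statement is clear, which is exactly the $\ZZ$-torsor argument you spell out in detail. One notational point: in the paper's diagram \eqref{eq:grlang}, $i:L\hookrightarrow M$ denotes the inclusion and $\tilde{\imath}:L\to\LGr(TM)$ the Gauss map, so what you call $i^\ast(u_M)$ is the paper's $\tilde{\imath}^\ast(u_M)$.
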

\begin{proof} The obstruction is given by $\tilde{\imath}(u_M)$. The fact that two gradings differ by an element of $H^0(L,\ZZ)$ is clear.
\end{proof}
\subsubsection{The category of graded symplectic manifolds }
If $\widetilde{\LGr}(TM)\r \LGr(TM)$, $\widetilde{\LGr}(TN)\r \LGr(TN)$ define graded symplectic manifolds then a graded symplectomorphism is a 
commutative diagram
\[
\begin{tikzcd}
\widetilde{\LGr}(TM) \ar[d]\ar[r,"\widetilde{\LGr}(df)"] & \widetilde{\LGr}(TN)\ar[d]\\
{\LGr}(TM) \ar[d,"\pi"']\ar[r, "\LGr(df)"] & {\LGr}(TN)\ar[d,"\pi"]\\
M\ar[r,"f"'] &N
\end{tikzcd}
\]
where $f$ is a symplectomorphism and $\LGr(df)$ is the map which sends a Lagrangian subspace $L\subset T_m M$ to the Lagrangian
subspace $(df)(L)\subset T_{f(m)}N$.
\begin{lemma}
The obstruction to lifting a symplectomorphism $f:M\r N$ to a graded symplectomorphism lives in $H^1(M,\ZZ)$. The difference between two lifts determines an element of  $H^0(M,\ZZ)$.
\end{lemma}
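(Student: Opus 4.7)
The plan is to reduce everything to the classification of $\ZZ$-torsors. A lift $\widetilde{\LGr}(df)$ of $\LGr(df)\colon \LGr(TM)\to \LGr(TN)$ covering the given $\ZZ$-torsor structures is the same data as an isomorphism of $\ZZ$-torsors
\[
\widetilde{\LGr}(TM)\;\cong\;\LGr(df)^{\ast}\,\widetilde{\LGr}(TN)
\]
on $\LGr(TM)$. So first I would identify the precise cohomological obstruction to the existence of such an isomorphism.

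Consider the ``difference'' torsor $E$ on $\LGr(TM)$, i.e.\ the $\ZZ$-torsor whose class in $H^1(\LGr(TM),\ZZ)$ is $u_M-\LGr(df)^\ast u_N$. Its triviality is equivalent to the existence of a lift. The key observation is that $E$ is fiberwise trivial along $\pi\colon \LGr(TM)\to M$: on the fiber $\LGr(T_mM)$ over $m\in M$, the restriction of $\widetilde{\LGr}(TM)$ is by definition the universal cover $\widetilde{\LGr}(T_mM)$, and the restriction of $\LGr(df)^{\ast}\widetilde{\LGr}(TN)$ is the pullback of $\widetilde{\LGr}(T_{f(m)}N)$ along the isomorphism $\LGr(df_m)\colon \LGr(T_mM)\to \LGr(T_{f(m)}N)$, which is again canonically the universal cover of $\LGr(T_mM)$. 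Hence $[E]$ restricts to $0$ on every fiber.

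Next I would invoke the Leray--Serre spectral sequence for $\pi$. Its low-degree exact sequence reads
\[
0\to H^1(M,\ZZ)\xrightarrow{\pi^\ast} H^1(\LGr(TM),\ZZ)\to H^0(M,R^1\pi_\ast \ZZ)\to \cdots,
\]
since the fibers $\LGr(T_mM)$ are connected (so $\pi_\ast \ZZ=\ZZ$). The image of $[E]$ in $H^0(M,R^1\pi_\ast \ZZ)$ measures its restriction to fibers, which we just argued is zero. Therefore $[E]$ lies in the image of $\pi^\ast$ and determines a canonical class in $H^1(M,\ZZ)$, giving the desired obstruction.

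For the second assertion, suppose two lifts exist. Their difference is an automorphism of the $\ZZ$-torsor $\widetilde{\LGr}(TM)\to \LGr(TM)$, i.e.\ a locally constant $\ZZ$-valued function on $\LGr(TM)$. Since the fibers of $\pi$ are connected, this is the same as a locally constant function on $M$, that is, an element of $H^0(M,\ZZ)$. The main technical point to verify is the fiberwise triviality of $E$ (including checking that the canonical identification is natural in $m$), but this is essentially built into Remark \ref{rem:pi1iso} together with the defining property of a grading; the remaining spectral sequence / torsor bookkeeping is routine.
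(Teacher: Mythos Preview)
Your argument is correct and follows the same underlying idea as the paper. The paper's proof is simply a one-line citation of Lemma~\ref{lem:ob}: the pullback via $f$ of the grading on $N$ is a second grading on $TM$, and Lemma~\ref{lem:ob} says that two gradings on a symplectic vector bundle over $M$ differ by a class in $H^1(M,\ZZ)$, while automorphisms of a grading form $H^0(M,\ZZ)$. What you have done is unpack exactly this statement: your difference torsor $E$ on $\LGr(TM)$ is the comparison of the two gradings, and your Leray--Serre argument reproving that a fiberwise-trivial $\ZZ$-torsor on $\LGr(TM)$ descends to $H^1(M,\ZZ)$ is precisely the content of Lemma~\ref{lem:ob} in this situation. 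So the approaches coincide; yours is just more self-contained.
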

\begin{proof} The grading on $M$ and the pullback of the grading on $N$ via $f$ define two different gradings on them. Their difference
defines an element of $H^1(M,\ZZ)$ via Lemma \ref{lem:ob}. Hence this gives the obstruction. If the obstruction vanishes then two
different lifts give an automorphism of the grading on $M$, hence an element of $H^0(M,\ZZ)$, again using Lemma \ref{lem:ob}.
\end{proof}
\subsubsection{Construction of graded symplectic isotopies}
We show that an  isotopy of symplectic manifolds, e.g.\ constructed using Theorem \ref{lem:moser2}, can be uniquely lifted to a graded isotopy. More precisely: 
\begin{theorem} \label{lem:simpiso} Let $(N_t,\omega_t)_{t\in B}$ be a family of graded symplectic manifolds with $B$ contractible.
Let $b\in B$ and assume that we have a family of symplectomorphisms $\phi_t:N_b\r N_t$ such that $\phi_b=\Id$. Then this can be lifted in a unique way to a family of graded symplectomorphisms,
such that the graded lift of $\phi_b$ is the obvious graded lift of the identity.
\end{theorem}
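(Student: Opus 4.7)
The plan is to bundle the family of symplectomorphisms into a single map over $B$ and reduce both existence and uniqueness of the graded lift to the obstruction-theoretic statements already available from Lemma~\ref{lem:ob} together with the contractibility of $B$.

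First, assemble the family into a single smooth map $\Phi : N_b \times B \to N$ over $B$, defined by $\Phi(m,t) = \phi_t(m)$, which is fiberwise a symplectomorphism between $(N_b,\omega_b)$ and $(N_t,\omega_t)$. The grading on the family $N\to B$ determines a grading on the vertical tangent bundle, and pulling back through $\Phi$ yields one grading on the symplectic vector bundle $T N_b \boxtimes B$ over $N_b\times B$. A second, ``constant'' grading on the same bundle is obtained by pulling back the grading of $TN_b$ along the projection $N_b\times B\to N_b$. By Lemma~\ref{lem:ob}, the two gradings differ by an element
\[
o \in H^1(N_b\times B,\ZZ).
\]
Since $B$ is smoothly contractible, the inclusion $N_b\times\{b\}\hookrightarrow N_b\times B$ is a homotopy equivalence, hence induces an isomorphism $H^1(N_b\times B,\ZZ)\xrightarrow{\sim} H^1(N_b,\ZZ)$. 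At $t=b$ we have $\phi_b=\Id$ and the two gradings coincide (the identity is trivially graded by the obvious lift), so $o|_{N_b\times\{b\}}=0$. Therefore $o=0$ and the two gradings on $T N_b\boxtimes B$ agree. Choosing this common grading, the map $\Phi$ now lifts, fiberwise and smoothly in $t$, to a map $\widetilde{\LGr}(TN_b)\times B \to \widetilde{\LGr}(T_\pi N)$ making each $\phi_t$ graded, and which at $t=b$ restricts to the prescribed lift of the identity.

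For uniqueness, the difference between any two families of graded lifts of the $\phi_t$ satisfying the boundary condition at $b$ defines, via the last part of Lemma~\ref{lem:ob}, an element of $H^0(N_b\times B,\ZZ)$. Contractibility of $B$ gives $H^0(N_b\times B,\ZZ)\cong H^0(N_b,\ZZ)$ through restriction to $N_b\times\{b\}$, and by hypothesis this restriction vanishes since both lifts reduce to the obvious graded identity at $t=b$; hence the two lifts coincide globally. The only genuinely nonroutine point is checking that the constructed fiberwise lift is smooth in $t$, but this is automatic from the fact that $\Phi$ is smooth and the two gradings on the total bundle $T N_b\boxtimes B$ are literally equal (not merely isomorphic on each fiber), so the smooth section realizing the equality provides the desired smooth family of lifts.
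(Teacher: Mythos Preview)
Your proof is correct and follows essentially the same approach as the paper: both assemble the family into a map $N_b\times B\to N$, observe that the obstruction to matching the two gradings lies in $H^1(N_b\times B,\ZZ)\cong H^1(N_b,\ZZ)$ by contractibility of $B$ and vanishes because it vanishes at $t=b$, and then handle uniqueness via $H^0(N_b\times B,\ZZ)\cong H^0(N_b,\ZZ)$ in the same way. Your version is somewhat more explicit about the bundle-theoretic setup and adds a remark on smoothness, but the argument is the same.
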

\begin{proof} Note first that Lemma \ref{lem:ob} has a family version  for a family of symplectic vector bundles $E$. In that case $\LGr(E)$ stands for the bundles of Lagrangians in $E$ living in 
the fibers of the family. The proof of Lemma \ref{lem:ob} remains the same.
So the obstruction to lifting $\phi:N_b\times B\r N$ to a family of graded symplectic manifolds is in $H^1(N_b\times B,\ZZ)=H^1(N_b,\ZZ)$ (the equality holds because $B$ is contractible). Since the obstruction is zero when specialized to $t=b$
we obtain that it is zero.  Again by a family version of Lemma \ref{lem:ob} two graded lifts differ by an element $H^0(N_b\times B,\ZZ)=H^0(N_b,\ZZ)$. Hence two graded lifts coincide
when they coincide for $t=b$. Since we fix the graded lift for $t=b$, this proves uniqueness.
\end{proof}
\subsection{Construction of gradings}
\subsubsection{Gradings via $S^1$-bundles}
\label{sec:grp1}
We recall the following. 
\begin{proposition}[{\cite[Proposition 2.6.4]{MR3674984}}] \label{prop:compat}
If $E$ is a vector bundle on a manifold $M$ of rank $2r$ then sending a symplectic structure on $E$  to a compatible complex structure (after fixing an inner product on $E$) 
defines a homotopy equivalence between the symplectic structures and complex structures on $E$.
\end{proposition}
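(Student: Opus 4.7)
The plan is to reduce the claim to a fiberwise statement and then use polar decomposition to produce an explicit homotopy equivalence. Let $\Sscr(E)$ denote the space of (smooth) non-degenerate alternating bilinear forms on $E$ and $\Jscr(E)$ the space of bundle endomorphisms $J$ with $J^2=-\id$. After fixing a Riemannian metric $g$ on $E$, I will construct a map $\Phi \colon \Sscr(E) \to \Jscr(E)$ and show that both $\Sscr(E)$ and $\Jscr(E)$ deformation retract onto a common subspace on which $\Phi$ restricts to an isomorphism; since everything is natural in the base point, it suffices to work on a single fiber.

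First, I would define $\Phi$ fiberwise by the standard polar decomposition trick. Given $\omega\in\Sscr(E_m)$, define the endomorphism $A=A(\omega)$ of $E_m$ by $\omega(u,v)=g(Au,v)$. Non-degeneracy of $\omega$ and skew-symmetry with respect to $g$ imply that $A$ is invertible and $g$-skew-adjoint, so $-A^2$ is symmetric and positive-definite. Let $Q:=(-A^2)^{1/2}$ be its unique positive-definite square root, and set $\Phi(\omega):=Q^{-1}A$. A direct check shows that $\Phi(\omega)^2=-\id$, that $\Phi(\omega)$ is $g$-orthogonal and that the associated form $g(\cdot,\Phi(\omega)\cdot)$ is symplectic and $g$-compatible. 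The map $\omega\mapsto \Phi(\omega)$ is evidently smooth in $\omega$ and $g$.

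Next I would show that $\Sscr(E)$ retracts onto the subspace $\Sscr_g(E)$ of $g$-compatible symplectic forms by the straight-line homotopy $\omega_t:=(1-t)\omega+t\,g(\cdot,\Phi(\omega)\cdot)$; the estimate $\omega_t(u,\Phi(\omega)u)>0$ for $u\neq0$ shows that $\omega_t$ remains non-degenerate along the path. Dually, $\Jscr(E)$ retracts onto the subspace $\Jscr_g(E)$ of $g$-compatible complex structures: given $J\in\Jscr(E)$, form the symplectic companion $\omega_J$ of $J$ via the averaged inner product $g_J(u,v):=\tfrac12(g(u,v)+g(Ju,Jv))$ and run a corresponding polar-decomposition homotopy through $\Jscr(E)$. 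Finally the mutually inverse assignments $\omega\mapsto\Phi(\omega)$ and $J\mapsto g(\cdot,J\cdot)$ provide a homeomorphism $\Sscr_g(E)\cong\Jscr_g(E)$; composed with the retractions, this yields the sought homotopy equivalence $\Sscr(E)\simeq\Jscr(E)$, realized by $\Phi$.

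The main obstacle is the verification that the two straight-line interpolations above really stay inside $\Sscr(E)$ and $\Jscr(E)$ for all $t\in[0,1]$ — non-degeneracy of $\omega_t$ (respectively the condition $J_t^2=-\id$ after appropriate rescaling) is not automatic, and it is what forces one to go through the $g$-compatible intermediate subspaces rather than interpolate directly. Once this is in hand the passage from a single fiber to a vector bundle is routine, because the polar-decomposition formula depends smoothly on all of its inputs and therefore globalizes to a smooth map of bundles of spaces over $M$, and homotopy equivalences of fibers depending continuously on the base give a fiberwise homotopy equivalence.
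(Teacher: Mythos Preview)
The paper gives no proof of this proposition; it is quoted verbatim from McDuff--Salamon. Your outline is essentially the standard polar-decomposition argument found there, so in spirit it agrees with the intended source.

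Two points to tighten. First, there is a sign slip: with $\omega(u,v)=g(Au,v)$ and $J=\Phi(\omega)=(-A^2)^{-1/2}A$ one gets $\omega(u,Ju)=g(QJu,Ju)>0$, but $g(\cdot,J\cdot)$ evaluated at $(u,Ju)$ is $g(u,J^2u)=-g(u,u)<0$; so your straight-line $\omega_t$ as written can degenerate. Replace $g(\cdot,\Phi(\omega)\cdot)$ by $g(\Phi(\omega)\cdot,\cdot)$ and the positivity estimate goes through. Second, the retraction on the $\Jscr(E)$ side is only gestured at, and ``run a corresponding polar-decomposition homotopy'' is not yet a path of almost complex structures. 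One clean way to realise the needed homotopy $\Phi\Psi\simeq\id_{\Jscr}$ (with $\Psi(J):=g_J(J\cdot,\cdot)$ for the averaged metric $g_J$) is to interpolate the metrics $g_t=(1-t)g+tg_J$ and set $J_t:=\Phi_{g_t}(\Psi(J))$; this lies in $\Jscr(E)$ for every $t$ and joins $\Phi\Psi(J)$ at $t=0$ to $J$ at $t=1$. With these two fixes your argument is complete and coincides with the cited reference.
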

This proposition makes it possible to assign Chern classes $c_i(E)$ in $H^{2i}(M,\ZZ)$ to a symplectic vector bundle $E$ on $M$.

If $E$ is a symplectic vector bundle on a manifold of rank $2r$ then after equipping with a compatible complex structure
$E$ becomes a complex vector bundle.
For a line bundle $L$ let $L_0$ be the complement of the zero section. We obtain a morphism
\[
\LGr(E)\xrightarrow{P\in \LGr(E_m)\mapsto \wedge^r_{\RR} P} (\wedge_{\CC}^r E)_0/(\underline{
\RR}-{0})\cong (\wedge_{\CC}^r E)_0^{\otimes 2}/\underline{\RR}_{>0}.
\]
Here $\nu(E):=(\wedge_{\CC}^r E)_0^{\otimes 2}/\underline{\RR}_{>0}$ is an $S^1$-bundle. From Remark \ref{rem:pi1iso} we obtain that a grading on $E$ is, up to homotopy, the same as giving a section
of $\nu(E)$ or an everywhere non-zero section of  $(\wedge_{\CC}^r E)^{\otimes 2}$ (since $(\underline{\RR}_{>0},\times)\cong (\underline{\RR},+)$ has
vanishing cohomology). This leads to the following observation:
\begin{lemma} \label{lem:S1grading}
If $E\r M$ is a complex (or equivalently symplectic) vector bundle of rank $2r$ on a manifold $M$ then a grading on $E$ is the same as giving an everywhere non-zero section of $(\wedge^r_{\CC}E)^{\otimes 2}$ up to homotopy.
\end{lemma}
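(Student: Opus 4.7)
The plan is to extend the sketch given in the paragraph preceding the lemma and split the desired equivalence into two identifications, both taken up to homotopy. First, the quotient map $(\wedge^r_\CC E)_0^{\otimes 2} \to \nu(E)$, with $\nu(E) := (\wedge^r_\CC E)_0^{\otimes 2}/\underline{\RR}_{>0}$, is a principal $\underline{\RR}_{>0}$-bundle whose fibers are contractible; hence it is a fiberwise homotopy equivalence over $M$, and homotopy classes of everywhere non-zero sections of $(\wedge^r_\CC E)^{\otimes 2}$ correspond bijectively to homotopy classes of sections of the $S^1$-bundle $\nu(E) \to M$.

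Second, I would identify homotopy classes of sections of $\nu(E)$ with gradings on $E$ up to homotopy. To a section $s: M \to \nu(E)$ I would associate the map
\[
\Phi_s: \LGr(E) \to S^1, \qquad P \mapsto \phi(P)\cdot s(\pi(P))^{-1},
\]
where $\phi(P) := (\wedge^r_\RR P)^{\otimes 2}$ modulo $\underline{\RR}_{>0}$ and $\pi: \LGr(E) \to M$ is the projection; the ratio is well-defined because $\phi(P)$ and $s(\pi(P))$ lie in the same $S^1$-fiber of $\nu(E)$. Pulling back the universal cover $\RR \to S^1$ along $\Phi_s$ yields a $\ZZ$-torsor over $\LGr(E)$, and Remark \ref{rem:pi1iso} applied fiberwise shows that its restriction to each $\LGr(E_m)$ is $\widetilde{\LGr}(E_m) \to \LGr(E_m)$. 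This is precisely a grading on $E$.

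Finally, to see that the assignment $s \mapsto \Phi_s^\ast(\RR \to S^1)$ induces a bijection on homotopy classes, I would invoke Lemma \ref{lem:ob}: both the gradings on $E$ (up to homotopy) and the sections of the $S^1$-bundle $\nu(E)$ (up to homotopy) are classified by the same three-tier affine data — an existence obstruction in $H^2(M,\ZZ)$, a torsor under $H^1(M,\ZZ)$ of possible choices, and an automorphism group $H^0(M,\ZZ)$. By inspection the constructed assignment is equivariant for the $H^1(M,\ZZ)$-action. The main point to check, and the step I expect to require most care, is the matching of the two obstruction classes: the Leray obstruction for lifting the fiberwise universal class on $\LGr(E) \to M$ to a global class in $H^1(\LGr(E),\ZZ)$ must coincide with $c_1(\nu(E)) = 2c_1(E)$, the Chern class of the $S^1$-bundle $\nu(E)\to M$. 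This equality reduces to the fiberwise content of Remark \ref{rem:pi1iso}, which asserts that $\phi^\ast:H^1(\nu(E),\ZZ)\to H^1(\LGr(E),\ZZ)$ sends the class of the fiberwise $S^1$-generator on $\nu(E)$ to the fiberwise universal Maslov class on $\LGr(E)$, so the two Leray-type obstructions agree.
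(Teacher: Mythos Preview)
Your proposal is correct and follows the same route the paper sketches in the paragraph immediately preceding the lemma: reduce to sections of the $S^1$-bundle $\nu(E)$ via the contractible $\underline{\RR}_{>0}$-fibers, and then use the fiberwise phase map of Remark~\ref{rem:pi1iso} to pass between sections of $\nu(E)$ and gradings. The paper simply records the lemma as an observation following from that paragraph without writing out a formal proof; your step~3 (matching obstructions via Lemma~\ref{lem:ob}) is a careful way to certify bijectivity that the paper leaves implicit.
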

The obstruction against a complex line bundle $L$ to have a non-zero section is $c_1(L)\in H^2(M,\ZZ)$. Hence:
\begin{lemma}\cite[Lemma 2.2]{MR1765826} The obstruction against the existence of a grading on $E$ is $2c_1(E)$.
\end{lemma}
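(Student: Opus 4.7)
The plan is to deduce the result directly from the preceding Lemma \ref{lem:S1grading}, which identifies gradings on $E$ (up to homotopy) with everywhere non-zero sections of the complex line bundle $L:=(\wedge^r_{\CC}E)^{\otimes 2}$. Under this identification, the obstruction to the existence of a grading becomes the obstruction to the existence of a non-vanishing section of $L$, and the latter is a standard computation.

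First I would recall that for any complex line bundle $L$ on $M$, an everywhere non-zero section is the same as a trivialization of $L$, so the obstruction to its existence is exactly the isomorphism class of $L$ in $H^1(M,\underline{\CC}^\ast)\cong H^2(M,\ZZ)$, i.e.\ the first Chern class $c_1(L)$. (This is the classical interpretation of $c_1$ via the exponential sequence, and it matches the obstruction-theoretic interpretation of the grading in Remark \ref{rem:pi1iso}, since $\nu(E)$ is the unit circle bundle of $L$.)

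Applying this with $L=(\wedge^r_{\CC}E)^{\otimes 2}$ and using multiplicativity of $c_1$ with respect to tensor products together with the equality $c_1(\wedge^r_{\CC}E)=c_1(E)$ for the top exterior power of a rank $r$ complex bundle, we obtain
\[
c_1\bigl((\wedge^r_{\CC}E)^{\otimes 2}\bigr)=2c_1(\wedge^r_{\CC}E)=2c_1(E),
\]
which is the claimed obstruction. The only thing that requires care is matching the two notions of ``obstruction'': the one coming from the torsor description of a grading in Lemma \ref{lem:ob}, and the one coming from trivializing the line bundle $L$; but Lemma \ref{lem:S1grading} has already carried out this matching, so no additional argument is needed.
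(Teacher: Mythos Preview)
Your proof is correct and follows essentially the same route as the paper: use Lemma \ref{lem:S1grading} to reduce to trivializing the line bundle $(\wedge^r_{\CC}E)^{\otimes 2}$, then invoke that the obstruction to a non-vanishing section of a complex line bundle is its first Chern class, and compute $c_1((\wedge^r_{\CC}E)^{\otimes 2})=2c_1(E)$. The paper's argument is just the one-line observation preceding the lemma (``The obstruction against a complex line bundle $L$ to have a non-zero section is $c_1(L)$''), so your write-up is a slightly more explicit version of the same reasoning.
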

\subsubsection{Inducing gradings on submanifolds}
\label{sec:inducing}
Let $(F,\omega)$ be a symplectic submanifold of $(M,\omega)$.  Then as symplectic vector bundles we have
\[
TM{\mid} F=TF\oplus N_{F/M}
\] 
so that after choosing complex structures we have
\[
(\wedge^{d_M}_\CC TM{\mid}F)=\wedge^{d_F}_\CC TF\otimes_F \wedge^{d_M-d_F}_\CC N_{F/M}.
\]
So to restrict a grading on $M$ to  $F$ we have to give a grading on $N_{F/M}$. 
\begin{remark}
\label{rem:cheapgrading}
In the special case that $\rk N_{F/M}=2$ then below we will often specify a grading on $N_{F/M}$ by
giving an everywhere non-zero section of $N_{F/M}$. The actual grading is then given by the line bundle generated by this section.
\end{remark}
\subsubsection{Gradings on fibers}
\label{sec:fibergrading}
Let $M$ be a symplectic manifold and let $f:M\r \CC$ be a function which is a surjective submersion on a neighbourhood of $F:=f^{-1}(0)$ such that $F$ is  in addition symplectic. 
Then $N_{F/M}=f^\ast T_0(\CC)$ (as real vector bundles). Hence we can construct a non-vanishing section $\xi$ of $N_{F/M}$ 
by pulling back  $1\in \CC\cong T_0(\CC)$ (pulling back another non-zero element gives a  homotopic choice).
\begin{definition} \label{def:lemfibers}
If $M$ is graded then 
the restriction of the grading on $M$ to $f^{-1}(0)$ is the one obtained  from $\xi$ via Remark \ref{rem:cheapgrading}.
\end{definition}
Assume $X$ is a nowhere vanishing section of $N_{f^{-1}(0)/M}$. Then it defines another grading on $f^{-1}(0)$. So  according to Lemma \ref{lem:ob} the difference with the restricted grading should be an element of
$H^1(f^{-1}(0),\ZZ)$. 
One may verify the following lemma.
\begin{lemma} \label{lem:arglemma}
Define $\phi:f^{-1}(0)\to S^1$ by setting  to $X$ and the restricted grading is given by
$2[\phi]\in \Hom_{\operatorname{Grp}}(\pi_1(f^{-1}(0)), \pi_1(S^1)) =H^1(f^{-1}(0),\ZZ)$.
\end{lemma}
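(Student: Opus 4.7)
The plan is to interpret both gradings through Lemma~\ref{lem:S1grading}. Fix a compatible complex structure on $L := N_{f^{-1}(0)/M}$, unique up to homotopy by Proposition~\ref{prop:compat}; then $L$ is a complex line bundle of rank $1$ over $F := f^{-1}(0)$. Since the grading on $M$ is held fixed and the decomposition $\wedge^{\mathrm{top}}_\CC TM|_F = \wedge^{\mathrm{top}}_\CC TF \otimes L$ is tensorial, the $H^1(F,\ZZ)$-difference between the two gradings on $F$ obtained as in \S\ref{sec:inducing} equals the difference of the corresponding two gradings on $L$: the one induced by $\xi$ (encoding the restricted grading of Definition~\ref{def:lemfibers}) and the one induced by $X$.

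By Lemma~\ref{lem:S1grading}, a grading on the complex line bundle $L$ is, up to homotopy, a nowhere zero section of $L^{\otimes 2}$, equivalently a section of the $S^1$-bundle $\nu(L) = (L^{\otimes 2})_0 / \underline{\RR}_{>0}$. The two gradings at hand correspond to $[\xi^{\otimes 2}]$ and $[X^{\otimes 2}]$. Using $[\xi^{\otimes 2}]$ to trivialize $\nu(L)$, the section $[X^{\otimes 2}]$ becomes a map $F \to S^1$ whose homotopy class represents, by construction of the torsor difference in Lemma~\ref{lem:ob} and Remark~\ref{rem:pi1iso}, the $H^1(F,\ZZ)$-difference of the two gradings. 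This ratio is $[(X/\xi)^{\otimes 2}]$; composing $X/\xi : F \to \CC^*$ with the deformation retraction $\CC^* \to S^1$, $z \mapsto z/|z|$, produces the map $\phi$ in the statement, so the difference class is represented by $\phi^2 : F \to S^1$. Under $[F, S^1] = H^1(F, \ZZ)$, the class of $\phi^2$ equals $2[\phi]$, which is what is claimed.

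The only real subtlety is bookkeeping the factor of $2$: it arises from the squaring built into the definition of $\nu(L)$ (equivalently, from describing a grading as a section of $L^{\otimes 2}$ rather than $L$ itself), which converts the linear comparison of the sections $X$ and $\xi$ of $L$ into a quadratic comparison of $X^{\otimes 2}$ and $\xi^{\otimes 2}$. This is precisely the same $2$ that appears in the obstruction formula for the existence of a grading on a symplectic vector bundle, and it reflects the double-covering character of $\widetilde{\LGr}(TM) \to \LGr(TM)$. Once this identification is made, the computation is a direct unraveling of definitions.
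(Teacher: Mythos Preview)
The paper does not actually prove this lemma; it is introduced with ``One may verify the following lemma,'' so there is no proof in the paper to compare against. Your argument is the natural verification: reduce to the normal line bundle $L=N_{F/M}$, use Lemma~\ref{lem:S1grading} to identify gradings on $L$ with nowhere-vanishing sections of $L^{\otimes 2}$, and read off the torsor difference as the homotopy class of $(X/\xi)^{\otimes 2}$, which is $2[\phi]$. This is correct and is exactly how the lemma is applied later (in the proof of item~\eqref{item:pop_triv}, where $\phi(m)=\arg(df_{t,1}(Z_m))$, consistent with $df(\xi)=1$).

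One small correction: your closing remark that the factor of $2$ ``reflects the double-covering character of $\widetilde{\LGr}(TM)\to\LGr(TM)$'' is not right. That map is the universal ($\ZZ$-)cover, not a double cover. The $2$ comes purely from the squaring in the phase map of Remark~\ref{rem:pi1iso}, i.e.\ from the identification $(\wedge^r_\CC E)_0/(\underline{\RR}-0)\cong (\wedge^r_\CC E)_0^{\otimes 2}/\underline{\RR}_{>0}$, which is what forces gradings to be recorded as sections of $L^{\otimes 2}$ rather than $L$. This does not affect the validity of your argument, only the explanatory gloss.
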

\begin{remark} Note that the difference between the two gradings does not depend on the grading on $M$.
\end{remark}
\subsubsection{Restriction of gradings in Liouville pairs}
\label{sec:restriction}
Recall that a \emph{Liouville pair} is a pair $(M,F)$  where $M,F$ are Liouville domains  
 and $F$ is embedded in $\partial M$ as a hypersurface
such that the Liouville form $\theta$ on $M$ restricts to the Liouville form  on~$F$. As usual we put $\omega=d\theta$.

Let $(M,F)$ be a Liouville pair and let $Z$ be the Liouville vector field on $\hat{M}$.
We have
\begin{equation}
\label{eq:border}
T(\hat{M}){|}\partial M=T(\partial M) \oplus \underline{\RR} (Z|\partial M).
\end{equation}
\begin{definition} \label{def:liouvillepair}
If $\hat{M}$ (or equivalently $M$) is equipped with a grading then
the restricted grading on $F$ is the one obtained  from the non-zero section $Z{|}F$ of $N_{F/\hat{M}}$ via Remark \ref{rem:cheapgrading}.
\end{definition}
For use in \S\ref{sec:inducing2} below we will elaborate a bit on the geometry of $(M,F)$.
Let $R$ be the Reeb vector field
 on $\partial 
M$.  It is characterized by $\theta(R)=1$ and 
$i_R(\omega|\partial M)=0$.  We denote the restrictions of $R,Z$ to $F$ by $R_F$ and $Z_F$ respectively.
We have
\begin{equation}
\label{eq:red}
T(\partial M){|}F=TF\oplus \underline{\RR} R_F
\end{equation}
 as an element in $TF\cap \RR R_F$ would be in $\ker (\omega |F)=0$.
It follows
\begin{equation}
\label{eq:proportional}
T(\hat{M}){|}F\overset{\eqref{eq:border}}{=}T(\partial M){|}F \oplus \underline{\RR} Z_F\overset{\eqref{eq:red}}{=}TF\oplus \underline{\RR} R_F \oplus \underline{\RR} Z_F.
\end{equation}

Note that \eqref{eq:proportional} is however not a symplectic orthogonal decomposition. Let
$Z'_F$ be the projection on the symplectic orthogonal to $TF$. So
\[
Z_F=Z'_F+Y
\]
where $Y$ is a section of $TF$ and $\omega(Z'_F,Y)=0$.
Then we obtain a symplectic orthogonal decomposition
\begin{equation}
\label{eq:proportional2}
T(\hat{M}){|}F=TF\oplus \underline{\RR} R_F \oplus \underline{\RR} Z_F'=TF\oplus N_{F/\hat{M}}
\end{equation}
and one verifies $\omega(Z'_F,R_F)=1$.
 
\subsubsection{Gradings from polarizations}
\label{sec:polar}
\begin{definition}\label{def:pol} Let $M$ be a manifold and let $E$ be a symplectic vector bundle on $M$. A \emph{polarization} of $E$ is a Lagrangian subbundle $L$ of $E$.
If $M$ is a symplectic manifold then a polarization of $M$ is a polarization of $TM$.
\end{definition}
\begin{remark} \label{rem:pol}
Giving a polarization on $E$ is the same up to contractible choices
 as giving a real vector bundle $L$ such that $L\otimes_\RR \CC\cong E$ where $E$ is equipped with its unique (up to a contractible choice) compatible complex structure.
Below we make no distinction between these two concepts of polarization. Note that by this observation it makes sense to talk about a polarization of an almost complex manifold which is
not necessarily equipped with a symplectic form.
\end{remark}
If $L$ is a polarization on a complex vector bundle $E$ or rank $2r$ then $\wedge^r L$ defines a rank one real sub-bundle of $\wedge^r_\CC E$.
whose square defines a grading on $E$ as in Lemma \ref{lem:S1grading}.
\begin{example}
\label{ex:canonical}
If $N$ is a manifold then the Lagrangian foliation of $T^\ast N$ given by the fibers of $T^\ast N\r N$ defines a polarization of $T^\ast N$. We call this the corresponding grading the \emph{canonical grading}
on $T^\ast N$.
\end{example}
\subsubsection{Inducing polarizations in Liouville pairs}
\label{sec:inducing2}
We place ourselves in the setting of \S\ref{sec:restriction}. So
$(M,F)$ is a Liouville pair. Assume $L\subset T\hat{M}$ is
polarization of $\hat{M}$ which contains $Z$. It follows that
$\theta(L)=0$ (since for $l\in L$ we have
$\theta(l)=\omega(Z,l)=0$, as $L$ is Lagrangian).  Put
$L_0=(L|\partial M)\cap T(\partial M)$. We have $\theta(L_0)=0$ which is the
only property of $L_0$ we will use.

We claim that the projection 
\begin{equation}
\label{eq:projection}
T(\partial M){|}F\overset{\eqref{eq:red}}{=}TF\oplus \underline{\RR} R_F\r TF
\end{equation}
is injective on $L_0|F$. Indeed if $l\in L_0\cap \RR R_F$ then applying $\theta$ we find $l=0$.  We furthermore claim that \eqref{eq:projection} sends
 $L_0|F$ to a Lagrangian subbundle on $TF$. Indeed if $l_i'=l_i+\lambda_i R_F$ for $i=1,2$ are in $L_0|F$ with $(l_i)_i\in TF$ and $(\lambda_i)_i\in \RR$ then
$\omega(l_1,l_2)=\omega(l'_1,l'_2)=0$.
\begin{definition} \label{def:restrictpol} Let $L'_0$ be the image of $L_0$ under \eqref{eq:projection}. We say that $L'_0$ is the polarization on $F$ induced from the polarization $L$ on $\hat{M}$.
\end{definition}
\begin{proposition} \label{prop:twogradingssame} 
The grading of $F$ corresponding to $L'_0$ (see \S\ref{sec:polar}) is the restriction (see Definition \ref{def:liouvillepair}) of the grading on $\hat{M}$ corresponding to $L$.
\end{proposition}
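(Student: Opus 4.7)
The plan is to reinterpret both gradings as arising from two different Lagrangian subbundles of $T\hat{M}|F$ (via Definition \ref{def:liouvillepair} and Remark \ref{rem:cheapgrading}, using that $Z_F$ and $Z'_F$ represent the same section of the quotient $N_{F/\hat{M}} = T\hat{M}|F/TF$, since they differ by $Y \in TF$), and then to connect these two subbundles through a continuous path of Lagrangians. Using the decomposition $T\hat{M}|F = TF \oplus \underline{\RR}R_F \oplus \underline{\RR}Z'_F$ from \eqref{eq:proportional2}, the isomorphism $L_0|F \xrightarrow{\sim} L'_0$ of \eqref{eq:projection} realises $L_0|F = \{\ell + \lambda(\ell)R_F : \ell \in L'_0\}$ for a unique continuous $\lambda \in \Hom(L'_0,\underline{\RR})$. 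Setting $Y := Z_F - Z'_F \in TF$, for $t \in [0,1]$ I would define
\[
L_t := \{\ell + t\lambda(\ell)R_F : \ell \in L'_0\} \;\oplus\; \underline{\RR}(Z'_F + tY) \;\subset\; T\hat{M}|F,
\]
so that $L_0 = L'_0 \oplus \underline{\RR}Z'_F$ and $L_1 = L_0|F \oplus \underline{\RR}Z_F = L|F$.

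The decisive step is verifying that $L_t$ is Lagrangian for every $t$. This will rest on three symplectic identities along $F$: $\omega(\xi,R_F) = 0$ for $\xi \in TF$ (from $i_R\omega|_{\partial M} = 0$); $\omega(Y,R_F) = 0$ (from $\omega(Z_F,R_F) = \theta(R_F) = 1 = \omega(Z'_F,R_F)$); and $\omega(Y,\ell) = -\lambda(\ell)$ for $\ell \in L'_0$ (obtained by expanding the Lagrangian identity $\omega(\ell + \lambda(\ell)R_F,\, Z_F) = 0$). Combined with $\omega(R_F,Z'_F) = -1$ from \S\ref{sec:restriction}, the only non-trivial pairing reduces to
\[
\omega\bigl(\ell + t\lambda(\ell)R_F,\; Z'_F + tY\bigr) = t\,\omega(\ell,Y) + t\lambda(\ell)\omega(R_F,Z'_F) = t\lambda(\ell) - t\lambda(\ell) = 0,
\]
while the other pairings vanish trivially. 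The main technical point — and the hard part of the argument — is that the $R_F$-twist $\lambda$ on $L'_0$ and the $Y$-piece of $Z_F$ must be deformed by the \emph{same} parameter $t$ so that these two contributions cancel; a naive linear interpolation between generators of $L|F$ and $L'_0 \oplus \underline{\RR}Z'_F$ does not stay Lagrangian.

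Since both the map ``Lagrangian subbundle $\mapsto$ grading section of $\nu(T\hat{M}|F)$'' (Lemma \ref{lem:S1grading}, \S\ref{sec:polar}) and the subsequent restriction to $F$ obtained by dividing by the trivialization of $N_{F/\hat{M}}$ by $Z'_F$ (Remark \ref{rem:cheapgrading}) are continuous, the homotopy $(L_t)$ will induce a homotopy between the restricted grading coming from $L$ and the restricted grading coming from $L'_0 \oplus \underline{\RR}Z'_F$. To finish, I would evaluate the latter directly: for a local frame $(l'_i)_{i=1}^{r-1}$ of $L'_0$, the generator $Z'_F \wedge l'_1 \wedge \cdots \wedge l'_{r-1}$ of $\wedge^r_\RR(L'_0 \oplus \underline{\RR}Z'_F)$ decomposes in $\wedge^r_\CC T\hat{M}|F = \wedge^{r_F}_\CC TF \otimes_\CC N_{F/\hat{M}}$ as $\pm(l'_1 \wedge \cdots \wedge l'_{r-1}) \otimes Z'_F$, so dividing by $Z'_F$ (the sign being irrelevant after squaring per Lemma \ref{lem:S1grading}) yields the generator of $\wedge^{r_F}_\RR L'_0$, which is exactly the $L'_0$-grading of \S\ref{sec:polar}. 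Combined with the homotopy above, this gives the claimed equality.
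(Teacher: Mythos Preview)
Your proof is correct, but it proceeds along a different axis than the paper's. The paper never builds an interpolating family of Lagrangian subbundles of $T\hat{M}|F$; instead it computes $\wedge^r_\CC(L|F)$ directly, obtaining an expression $(A+JB)\wedge Z'_F$ with $A,B$ sections of $\wedge^{r-1}_\RR L'_0$ (this uses the choice $JZ'_F=R_F$, so that $R_F\wedge_\CC Z'_F=0$ and all cross-terms collapse). The homotopy is then the elementary linear path $t\mapsto A+tJB$ inside $\wedge^{r-1}_\CC TF$, whose nonvanishing follows from the direct-sum decomposition $\wedge^{r-1}_\CC TF=\wedge^{r-1}_\RR L'_0\oplus J(\wedge^{r-1}_\RR L'_0)$. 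So the paper works downstairs in determinant lines and avoids any symplectic bookkeeping, whereas you work upstairs in $\LGr(T\hat{M}|F)$ and must verify the three identities $\omega(\xi,R_F)=0$, $\omega(Y,R_F)=0$, $\omega(Y,\ell)=-\lambda(\ell)$ to keep your path Lagrangian. Your route is more geometric and makes transparent why the $R_F$-twist and the $Y$-shear have to be scaled together; the paper's route is shorter and needs no symplectic cancellation, only a wedge computation. One cosmetic point: you overload the symbol $L_0$ (the paper's $L_0=(L|\partial M)\cap T(\partial M)$ versus your $L_{t=0}=L'_0\oplus\underline{\RR}Z'_F$); renaming your path, say $\Lambda_t$, would remove the ambiguity.
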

\begin{proof} Let $2r=\dim M$.  Note that \eqref{eq:border} restricts to a decomposition
\begin{equation}
\label{eq:restricteddecomp}
 L|F= L_0{|}F\oplus \underline{\RR} Z_F.
\end{equation}
First consider the inclusion
\[
  i:L{|} F\overset{\eqref{eq:restricteddecomp}}{=} L_0{|}F\oplus \underline{\RR} Z_F \hookrightarrow T(\partial M)\oplus \underline{\RR} Z_F
\overset{\eqref{eq:red}}{=} TF\oplus \underline{\RR} R_F \oplus \underline{\RR} Z_F' \overset{\eqref{eq:proportional2}}{=}T\hat{M}{|}F\,.
\]
We choose compatible complex structures on the symplectic vector bundles $TF$ and $N_{F/T^*N}= \underline{\RR} R_F \oplus \underline{\RR} Z_F'$.
For $N_{F/T^\ast M}$ we simply put $JZ'_F=R_F$. Let  $i_0$ be the composition $L_0|F\r TF\oplus \underline{\RR} R_F\r TF$ and as in Definition \ref{def:restrictpol}  put $L'_0:=i_0(L_0)\subset TF$. We have
\begin{equation}
\label{eq:wedgepol}
\wedge_\CC^{r-1} TF=\wedge^{r-1}_\RR L'_0\oplus J(\wedge^{r-1}_\RR L'_0).
\end{equation}
Choose (locally) a basis for $L_0$: $e_1,\ldots,e_{r-1}$. Then  by \eqref{eq:red} we have $i(e_j)=e'_j+\alpha_j R_F$ for $e'_j$ sections of $TF$ and $\alpha_j$ functions on $F$.
We have $i_0(e_j)=e'_j$.
We obtain
\begin{align*}
(\wedge^r &i)(e_1\wedge_{\RR} \cdots \wedge_{\RR} e_{r-1}\wedge Z_F)\\
&=(e'_1+\alpha_1 R_F)\wedge_{\CC} \cdots \wedge_{\CC} (e'_1+\alpha_1 R_F) \wedge_{\CC} (Z'_F+Y)\\
&=e'_1\wedge_\CC \cdots \wedge_\CC e'_{r-1} \wedge_\CC Z'_F+\sum_{j=1}^{r-1} \pm \alpha_j e'_1\wedge_\CC \cdots \wedge_\CC \widehat{e'}_j \wedge_\CC \cdots e'_{r-1} \wedge_\CC Y \wedge_\CC R_F \\
&=(A+JB)\wedge Z'_F
\end{align*}
for sections $A$, $B$ of $\wedge^{r-1} L'_0$. Note that in the third line we have used $R_F\wedge_\CC Z'_F=0$ since $R_F$ and $Z'_F$ are proportional over $\CC$.

The restricted grading on $F$ corresponds to the section $A+JB$ of  $(TF)_0/\underline{\RR}_0$ whereas the grading obtained from $L_0$ corresponds to $A$.
But $t\mapsto A+tJB$ for $t\in [0,1]$ defines a homotopy between these sections. Note that $A+tJB\neq 0$ because $A\neq 0$ ($L'_0$ is a polarization) and \eqref{eq:wedgepol}.
This finishes the proof.
\end{proof}

\subsection{The background class}
\subsubsection{Orientation data}
In order for the Fukaya category to be $\ZZ$-graded we need to consider Lagrangians which are  equipped with \emph{orientation data} (see \cite[\S5.3]{GPS3}). 
Similarly to the fact that graded Lagrangians only make sense for graded symplectic manifolds, orientation data depends on a so-called \emph{background class} on the symplectic manifold.
More precisely, let $M$ be a symplectic manifold and let $\Lscr\r \LGr(TM)$ be the universal (vector)-bundle of Lagrangian subspaces of the tangent spaces and let $w_2(\Lscr)$ be its second 
Stieffel-Whitney class (recall that $w_2(\Lscr)$ is the obstruction against $\Lscr$ being spin). A \emph{background class} $\eta$ is an element of $H^2(M,\ZZ/2\ZZ)$. Then $\pi^\ast(\eta)+w_2(\Lscr)\in H^2(\LGr(TM),\ZZ/2\ZZ)$ defines a $\RR\PP^\infty$ bundle $\LGr(M)^\#\r \LGr(M)$
(as $\RR \PP^\infty=B(\ZZ/2\ZZ)$). Orientation data on a Lagrangian $i:L\hookrightarrow M$
consists of a commutative diagram similar to \eqref{eq:grlang}:
\[
\begin{tikzcd}
&\LGr(TM)^\#\ar[d]\\
& \LGr(TM)\ar[d,"\pi"]\\
L\ar[ru,"\tilde{\imath}"]\ar[ruu]\ar[r,hook,"i"] & M
\end{tikzcd}
\]
If the background class is trivial then  orientation data on $L$ consists of a spin structure on $TL$.
In general orientation data is a twisted spin structure with obstruction class given by $-i^\ast(\eta)$.
We have an analogue of Theorem \ref{lem:simpiso}.
\begin{theorem} \label{lem:bgisotopies}
Let $(N_t,\omega_t,\eta_t)_{t\in B}$ be a family of symplectic manifolds with background class, with $B$ connected.
Let $b\in B$ and assume that we have a family of symplectomorphisms $\phi_t:N_b\r N_t$ such that $\phi_b=\Id$. Then $\phi_t^\ast(\eta_t)=\eta_0$.
\end{theorem}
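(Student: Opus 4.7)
The plan is to reduce the statement to the homotopy invariance of singular cohomology with $\ZZ/2\ZZ$-coefficients. First I would unpack the meaning of a ``family of symplectic manifolds with background class'': such a family gives a smooth submersion $\pi : N \to B$ with symplectic fibers $(N_t, \omega_t)$, together with a single cohomology class $\eta \in H^2(N, \ZZ/2\ZZ)$ restricting to $\eta_t$ on each fiber. The family of symplectomorphisms $\phi_t : N_b \to N_t$ assembles into a smooth diffeomorphism
\[
\Phi : N_b \times B \to N, \qquad (x,t) \mapsto \phi_t(x),
\]
over $B$, which reduces to the identity on $N_b \times \{b\}$.

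Next I would consider the pulled-back class $\Phi^\ast(\eta) \in H^2(N_b \times B, \ZZ/2\ZZ)$. By construction, for each $t \in B$ the restriction of $\Phi^\ast(\eta)$ along the inclusion $i_t : N_b \hookrightarrow N_b \times B$, $x \mapsto (x,t)$, equals $\phi_t^\ast(\eta_t)$. In particular $i_b^\ast \Phi^\ast(\eta) = \eta_b$. So it suffices to show that $i_t^\ast \Phi^\ast(\eta)$ is independent of $t \in B$.

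For this I would use the fact that $B$, being a connected manifold, is path-connected: given $t_0, t_1 \in B$, pick a smooth path $\gamma : [0,1] \to B$ with $\gamma(0) = t_0$ and $\gamma(1) = t_1$. The homotopy $H : N_b \times [0,1] \to N_b \times B$ defined by $H(x,s) = (x, \gamma(s))$ connects $i_{t_0}$ to $i_{t_1}$, so by homotopy invariance of cohomology $i_{t_0}^\ast = i_{t_1}^\ast$ on $H^2(N_b \times B, \ZZ/2\ZZ)$. Applying this to $\Phi^\ast(\eta)$ and taking $t_0 = b$, $t_1 = t$ arbitrary, yields $\phi_t^\ast(\eta_t) = \eta_b$ for every $t \in B$, which is the desired equality.

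The argument is essentially a continuity/discreteness argument for a locally constant invariant, and I expect no real obstacle. The only point that deserves care is the formalization of a ``family of background classes'' as a single class $\eta$ on the total space $N$; once this is set up (as in the analogous setup for gradings in Lemma \ref{lem:ob} and Theorem \ref{lem:simpiso}), the homotopy invariance argument runs without further input and, notably, does not require contractibility of $B$, only connectedness.
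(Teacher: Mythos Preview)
Your proof is correct and follows essentially the same approach as the paper's: both arguments rest on the observation that the compositions $N_b \xrightarrow{\phi_t} N_t \hookrightarrow N$ are mutually homotopic for $t$ varying in the connected base $B$, whence homotopy invariance of $H^2(-,\ZZ/2\ZZ)$ forces $\phi_t^\ast(\eta_t)$ to be constant in $t$. Your version simply makes the assembly into $\Phi:N_b\times B\to N$ and the pullback $\Phi^\ast(\eta)$ explicit, whereas the paper compresses this into a single sentence.
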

\begin{proof}
This follows from the fact that since $B$ is connected the maps $N_b\xrightarrow{\phi_b=\Id} N_b\hookrightarrow N$ and $N_b\xrightarrow{\phi_t} N_t\hookrightarrow N$ are homotopic.
\end{proof}
\subsubsection{The background class associated to a polarization}
Let $M$ be a manifold equipped with a symplectic vector bundle $E$. Let $L\subset E$
 be a polarization as in Definition \eqref{def:pol}. Then the background class associated to $L$ is simply $w_2(L)\in H^2(M,\ZZ/2\ZZ)$.

If $M$ is symplectic and if $E=TM$ is equipped with a  polarization then this yields
a background class on $M$ as explained in \cite[\S5.3]{GPS3}.
\subsubsection{The background class of a cotangent bundle}
\label{sec:backgroundcotangent}
If $N$ is a manifold then the Lagrangian foliation of $T^\ast N$  as in Example \ref{ex:canonical} defines a polarization on $T^\ast N$. Writing $\pi:T^\ast N\r N$ for the
projection, this polarization is given by $\pi^\ast(T^\ast N)$. So the background class is equal to $\pi^\ast(w_2(T^\ast N))$. 
 In particular if $N$ has free tangent bundle then
the background class is trivial.
\subsection{The wrapped Fukaya category of a Liouville manifold}
\label{sec:wrapped}
Let $N=(N,\theta)$ 
be a Liouville manifold.
The wrapped Fukaya category $\Wscr\Fscr(N)$ is an $A_\infty$-category  $\Wscr\Fscr(N)$ with homological units which is appropriately functorial 
 (see \cite{GPS1}). We quote some results below from \cite{GPS1, TannakaOh}. For convenience we state them in the most open setting but as explained in \cite{TannakaOh} they remain
valid in the presence of additional data such as a grading and/our background class. If both the grading and the background class are present then $\Wscr\Fscr(N)$ is a $\ZZ$-graded,
$\ZZ$-linear $A_\infty$-category (see \cite[\S5.3]{GPS3}).

\begin{proposition}\cite[\S3.7, Lemma 3.41]{GPS1} \label{cor:functorial}
  Let $(\phi,f):(N_1,\theta_1)\r (N_2,\theta_2)$ be an isomorphism (see \S\ref{sec:liouville}) of Liouville manifolds. Then there is a strict $A_\infty$-quasi-equivalence
  $\Wscr\Fscr(\phi,f):\Wscr\Fscr(N_1,\theta_1)\r \Wscr\Fscr(N_2,\theta_2)$.
 Moreover $\Wscr\Fscr(\phi,f)$ is  compatible with compositions and if  $N_1=N_2$ then $\Wscr\Fscr(\Id,0)$ is the identity.
\end{proposition}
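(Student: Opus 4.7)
The plan is to note that this proposition is essentially a direct quotation from \cite[\S3.7, Lemma 3.41]{GPS1}, so my ``proof'' will mainly be an outline of the standard construction and a verification that the extra data of the function $f$ plays its expected role. The wrapped Fukaya category $\Wscr\Fscr(N,\theta)$ is built from Lagrangian branes, a choice of almost complex structure (compatible with $\omega_\theta$), and a family of quadratic Hamiltonians used to wrap at infinity; the counts of pseudo-holomorphic disks with Hamiltonian perturbations give the $A_\infty$-operations.

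First I would define $\Wscr\Fscr(\phi,f)$ on objects by pushing forward all the geometric data under $\phi$: a Lagrangian $L\subset N_1$ goes to $\phi(L)\subset N_2$, and a choice of almost complex structure $J$ on $N_1$ goes to $\phi_*J$ on $N_2$. Since $\phi$ is a diffeomorphism, moduli spaces of $J$-holomorphic disks with boundary on the branes transform bijectively to moduli spaces of $\phi_*J$-holomorphic disks on the pushforward branes, with matching transversality and orientation data. This already gives a strict $A_\infty$-isomorphism between the versions of the categories defined without wrapping, or with compactly supported Hamiltonian perturbations; the identity case and compatibility with compositions are tautologies at this level.

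The substantive point is that wrapping uses Hamiltonians built from the Liouville form: the defining equation $\phi^*\theta_2 + df = \theta_1$ together with the compact support of $f$ means that, outside a compact set, $\phi$ is a strict isomorphism of Liouville structures and hence conjugates the Liouville flows. Quadratic Hamiltonians at infinity used for wrapping are intrinsic to the Liouville end, so the pushforward of a wrapping Hamiltonian on $N_1$ agrees with a wrapping Hamiltonian on $N_2$ up to a compactly supported perturbation. Because the wrapped category is cofinal-invariant under compactly supported Hamiltonian isotopies, this compactly supported discrepancy is harmless, and $\Wscr\Fscr(\phi,f)$ is well defined as a strict $A_\infty$-quasi-equivalence.

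Finally, functoriality in compositions and the identity case follow because pushforward of Lagrangians, almost complex structures, Hamiltonians, orientation data and grading data is strictly functorial; all chain-level constructions are built from these. The main (and only mildly technical) obstacle is precisely the bookkeeping around the wrapping Hamiltonians and the role of $f$, which is exactly what \cite[\S3.7]{GPS1} addresses; for the purposes of our paper we therefore simply invoke that result.
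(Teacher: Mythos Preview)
The paper does not give its own proof of this proposition: it is stated as a direct citation of \cite[\S3.7, Lemma 3.41]{GPS1} with no accompanying argument. Your sketch of the construction (pushforward of branes, almost complex structures, and wrapping data, with $f$ accounting for the compactly supported discrepancy in Liouville forms) is a reasonable outline of what that reference does, and is consistent with the paper's intent of simply invoking the result.
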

Write $\Wscr\Fscr(\phi,f)$ for the morphism of $A_\infty$-categories $\Wscr\Fscr(N_1,\theta_1)\r \Wscr\Fscr(N_2,\theta_2)$ described in  Proposition \ref{cor:functorial}.
We recall the following.
\begin{proposition}\cite[Theorem 1.0.3]{TannakaOh} \label{cor:isotopy}
  Let $(\phi_t,f_t):N\r N$, $t\in [0,1]$, be an arbitrary isotopy of Liouville manifolds. Then
  $\Wscr\Fscr(\phi_0,f_0)$ and $\Wscr\Fscr(\phi_1,f_1)$ are $A_\infty$-homotopic.
\end{proposition}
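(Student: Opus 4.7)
The plan is to construct an explicit $A_\infty$-homotopy between $\Wscr\Fscr(\phi_0,f_0)$ and $\Wscr\Fscr(\phi_1,f_1)$ using a parametrized version of the moduli spaces that define the wrapped Fukaya functor. The key observation is that an isotopy of Liouville isomorphisms $(\phi_t,f_t)$ yields a smoothly varying family of $A_\infty$-data (Lagrangian boundary conditions, Hamiltonian perturbations, almost complex structures) pulled back via $\phi_t$, and the difference between the two endpoints should be measured by counting configurations in one-parameter families of pseudo-holomorphic polygons.

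The steps I would carry out are as follows. First, reduce to showing that for sufficiently short isotopies the two functors agree up to a canonical homotopy, then concatenate via the $A_\infty$-category of homotopies. Second, by Proposition \ref{cor:functorial}, $\Wscr\Fscr(\phi_t,f_t)$ is a strict $A_\infty$-quasi-equivalence for each $t$, and is constructed on objects by sending a Lagrangian brane $L$ to its pullback $\phi_t^{-1}(L)$ with induced grading and spin/background data transported by $d\phi_t$. The homotopy is then built by considering the moduli spaces of polygonal maps $u$ whose $i$-th boundary component is mapped to $\phi_{t(i)}^{-1}(L_i)$ for some monotone function $t:\{1,\dots,k\}\to [0,1]$ interpolating between $t=0$ and $t=1$, and summing over $t$ the counts of rigid solutions to define the components $H^k$ of the pre-natural transformation. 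Third, Gromov compactness together with the standard TQFT-style boundary analysis of one-dimensional moduli spaces (whose codimension-one strata correspond either to $A_\infty$-bubbling or to collision of parameters $t(i)=t(i+1)$) gives exactly the $A_\infty$-homotopy relation $\Wscr\Fscr(\phi_1,f_1)-\Wscr\Fscr(\phi_0,f_0)=dH+H\cdot(\cdots)$.

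The main technical obstacle is to ensure that the wrapped version of this construction makes sense, i.e.\ that the quadratic wrapping Hamiltonians (or linear ones at infinity) can be coherently deformed along the isotopy so that compactness holds uniformly in $t$. The boundary condition that $(\phi_t,f_t)$ is a Liouville isomorphism, including the compact support hypothesis on $f_t$ from \S\ref{sec:liouville}, guarantees that outside a compact set the Liouville flow is preserved and the wrapping Hamiltonian can be chosen $t$-independent there; this suffices to obtain the required a priori $C^0$-estimates. After these estimates are in place, the signed count of parametrized moduli spaces is well-defined and the algebraic verification of the $A_\infty$-homotopy relation proceeds exactly as in the unparametrized wrapped setting.

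Finally, one verifies that this construction is independent (up to higher homotopy) of the auxiliary choices, so that the homotopy class of $H$ depends only on the homotopy class of the isotopy rel endpoints. This is precisely the content of \cite[Theorem 1.0.3]{TannakaOh}, whose argument we would follow; the gradings and background classes introduced in \S\ref{sec:polar}--\S\ref{sec:backgroundcotangent} are transported coherently along the isotopy by Theorem \ref{lem:simpiso} and Theorem \ref{lem:bgisotopies}, so the construction respects the $\ZZ$-graded, $\ZZ$-linear enhancement.
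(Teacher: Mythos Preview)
The paper does not give its own proof of this proposition: it is quoted verbatim as \cite[Theorem 1.0.3]{TannakaOh} and used as a black box. Your proposal is therefore not comparable to ``the paper's proof'' --- there is none --- but rather a sketch of how the cited external theorem is established. As such the sketch is reasonable in spirit (parametrized moduli spaces of polygons, compactness via the compact-support condition on $f_t$, boundary analysis giving the $A_\infty$-homotopy relation), and you yourself note at the end that you would follow the argument of \cite{TannakaOh}. For the purposes of this paper nothing beyond the citation is required; if you wanted to include a proof, the appropriate thing would be to simply defer to \cite{TannakaOh} rather than reproduce its analysis.
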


\begin{corollary}
  \label{cor:independence1}
  Let $(N_t,\theta_t)$, $t\in [0,1]$, be a family of Liouville manifolds.
  Let $(\phi_{i,t},f_{i,t}):(N_0,\theta_0)\r (N_t,\theta_t)$ for
  $i=0,1$ be two isotopies which are the identity for $t=0$. Then $\Wscr\Fscr(\phi_{0,1},f_{0,1})$ and $\Wscr\Fscr(\phi_{1,1}, f_{1,1})$ (which are $A_\infty$-functors from $\Wscr\Fscr(N_0,\theta_0)$ to
  $\Wscr\Fscr(N_1,\theta_1)$)
  are $A_\infty$-homotopic.
\end{corollary}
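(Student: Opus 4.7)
The plan is to reduce the assertion to Proposition \ref{cor:isotopy} by combining the two given isotopies into a single isotopy of self-maps of the \emph{fixed} Liouville manifold $(N_0,\theta_0)$. Everything then follows from the functoriality properties stated in Proposition \ref{cor:functorial}.

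First I would observe that each $(\phi_{i,t},f_{i,t})$ is an isomorphism of Liouville manifolds $(N_0,\theta_0)\to(N_t,\theta_t)$, and so it has a well-defined inverse in the category of Liouville manifolds, namely $(\phi_{i,t}^{-1},-f_{i,t}\circ\phi_{i,t}^{-1}):(N_t,\theta_t)\to(N_0,\theta_0)$ (this is a direct check from the condition $\phi_{i,t}^{\ast}\theta_t+df_{i,t}=\theta_0$). Composing in the category of Liouville manifolds, set
\[
\Xi_t:=(\phi_{1,t},f_{1,t})^{-1}\circ(\phi_{0,t},f_{0,t}):(N_0,\theta_0)\longrightarrow (N_0,\theta_0).
\]
Because both isotopies are the identity at $t=0$, we have $\Xi_0=\Id_{N_0}$ with the zero function, while at $t=1$ we get $\Xi_1=(\phi_{1,1},f_{1,1})^{-1}\circ(\phi_{0,1},f_{0,1})$. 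Thus $(\Xi_t)_{t\in[0,1]}$ is a genuine isotopy of Liouville self-maps of the single manifold $(N_0,\theta_0)$ in the sense of Proposition \ref{cor:isotopy}.

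Applying Proposition \ref{cor:isotopy} to $(\Xi_t)_t$, the functors $\Wscr\Fscr(\Xi_0)$ and $\Wscr\Fscr(\Xi_1)$ are $A_\infty$-homotopic as endofunctors of $\Wscr\Fscr(N_0,\theta_0)$. By Proposition \ref{cor:functorial}, $\Wscr\Fscr(\Xi_0)=\Wscr\Fscr(\Id,0)=\Id$, and by the compatibility of $\Wscr\Fscr$ with composition together with $\Wscr\Fscr$ sending inverses to strict quasi-inverses, we have
\[
\Wscr\Fscr(\Xi_1)=\Wscr\Fscr(\phi_{1,1},f_{1,1})^{-1}\circ\Wscr\Fscr(\phi_{0,1},f_{0,1}).
\]
Hence $\Wscr\Fscr(\phi_{1,1},f_{1,1})^{-1}\circ\Wscr\Fscr(\phi_{0,1},f_{0,1})$ is $A_\infty$-homotopic to the identity, which is equivalent to the required $A_\infty$-homotopy between $\Wscr\Fscr(\phi_{0,1},f_{0,1})$ and $\Wscr\Fscr(\phi_{1,1},f_{1,1})$.

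There is essentially no hard step here; the only mild subtlety is checking that the composition $\Xi_t$ is a bona fide isotopy in the sense required by Proposition \ref{cor:isotopy} (smooth dependence in $t$, compact support of the function part), but this is automatic from the fact that both $(\phi_{i,t},f_{i,t})$ are isotopies and composition/inversion in the category of Liouville manifolds is smooth in the parameter. Once this is noted, the conclusion is a formal manipulation with the already-established functorial statements.
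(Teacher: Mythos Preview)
Your argument is correct and is precisely the one the paper gives: the paper's proof is the single line ``This follows from Proposition~\ref{cor:isotopy} by considering the isotopy $(\phi_{1,t},f_{1,t})^{-1}\circ (\phi_{0,t},f_{0,t}):(N_0,\theta_0)\to (N_0,\theta_0)$ for $t\in [0,1]$.'' You have simply spelled out the details of inverses, composition, and the invocation of Proposition~\ref{cor:functorial} that the paper leaves implicit.
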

\begin{proof}
  This follows from Proposition \ref{cor:isotopy} by considering the isotopy
  $(\phi_{1,t},f_{1,t})^{-1}\circ (\phi_{0,t},f_{0,t}):(N_0,\theta_0)\r (N_0,\theta_0)$
  for $t\in [0,1]$.
\end{proof}

Moreover, two homotopic isotopies which share  starting and ending points induce $A_\infty$-homotopic equivalences. 
\begin{corollary}
  \label{cor:independence2}
  Let $(N_{u,t},\theta_{u,t})$, $u,t\in [0,1]$, be a family of Liouville manifolds
  such that $(N_{u,0},\theta_{u,0})=(N_{0,0},\theta_{0,0}):=(N_0,\theta_0)$,
  $(N_{u,1},\theta_{u,1})=(N_{0,1},\theta_{0,1}):=(N_1,\theta_1)$ for all $u$.

  For $u=0,1$ assume that $(\phi_{u,t},f_{u,t}):(N_0,\theta_0)\r (N_{u,t},\theta_{u,t})$ are isotopies which are the identity for $t=0$. 
Then $\Wscr\Fscr(\phi_{0,1},f_{0,1})$ and $\Wscr\Fscr(\phi_{1,1}, f_{1,1})$ (which are $A_\infty$-functors, $\Wscr\Fscr(N_0,\theta_0)
  \r \Wscr\Fscr(N_1,\theta_1)$)
  are $A_\infty$-homotopic.

\end{corollary}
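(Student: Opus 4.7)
The plan is to reduce the assertion to Proposition \ref{cor:isotopy} by trivializing the 2-parameter family over the contractible square $[0,1]^2$ and then invoking Corollary \ref{cor:independence1} to replace the trivialization by the given isotopies on the edges $u=0,1$.

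First, I would apply Theorem \ref{lem:moser2} to the family $(N_{u,t}, \theta_{u,t})$ over $B = [0,1]^2$ (smoothly contractible, modulo a minor corner issue that one handles by passing to an open smoothly contractible neighborhood), obtaining a smooth family of Liouville isomorphisms $(\Psi_{u,t}, F_{u,t}): (N_0, \theta_0) \to (N_{u,t}, \theta_{u,t})$ with $(\Psi_{0,0}, F_{0,0}) = (\Id, 0)$. Since we want to match the boundary condition $(\phi_{u,0}, f_{u,0}) = (\Id, 0)$ of the given isotopies at $t=0$, it is convenient to rescale: set $(\tilde\Psi_{u,t}, \tilde F_{u,t}) := (\Psi_{u,t}, F_{u,t}) \circ (\Psi_{u,0}, F_{u,0})^{-1}$, which then satisfies $(\tilde\Psi_{u,0}, \tilde F_{u,0}) = (\Id, 0)$ for every $u\in [0,1]$. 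For each fixed $u \in \{0,1\}$, the paths $(\tilde\Psi_{u,t}, \tilde F_{u,t})$ and $(\phi_{u,t}, f_{u,t})$ are then two isotopies in the single 1-parameter family $(N_{u,t})_{t\in [0,1]}$ that both start at the identity, so Corollary \ref{cor:independence1} yields $\Wscr\Fscr(\tilde\Psi_{u,1}, \tilde F_{u,1}) \simeq \Wscr\Fscr(\phi_{u,1}, f_{u,1})$ for $u = 0, 1$.

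It then remains to compare $\Wscr\Fscr(\tilde\Psi_{0,1})$ with $\Wscr\Fscr(\tilde\Psi_{1,1})$. Here the crucial observation is that the family $(N_{u,1})_{u \in [0,1]}$ is constantly equal to $N_1$, so $(\tilde\Psi_{u,1}, \tilde F_{u,1})$ is a genuine 1-parameter family of Liouville isomorphisms $N_0 \to N_1$. Composing on the left with $(\tilde\Psi_{0,1}, \tilde F_{0,1})^{-1}: N_1 \to N_0$ yields an isotopy of Liouville automorphisms of $N_0$ equal to the identity at $u = 0$; Proposition \ref{cor:isotopy} applied to this isotopy, together with the compatibility of $\Wscr\Fscr$ with composition and inverses (Proposition \ref{cor:functorial}), gives $\Wscr\Fscr(\tilde\Psi_{1,1}) \simeq \Wscr\Fscr(\tilde\Psi_{0,1})$. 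Chaining this equivalence with the two equivalences from the previous paragraph produces the desired $A_\infty$-homotopy $\Wscr\Fscr(\phi_{0,1}, f_{0,1}) \simeq \Wscr\Fscr(\phi_{1,1}, f_{1,1})$.

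The principal obstacle is the initial application of Theorem \ref{lem:moser2} to a 2-parameter family: one must verify that the family admits local families of generating Liouville domains over the 2-dimensional base (this is an implicit hypothesis one should build into the assumptions), and one needs to deal with the mild technical annoyance that $[0,1]^2$ is a manifold with corners rather than a manifold in the strict sense. Once these technicalities are dispatched, the rest of the argument is purely formal manipulation with the functoriality and $A_\infty$-homotopy invariance of $\Wscr\Fscr$ already established.
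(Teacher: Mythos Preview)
Your proposal is correct and follows essentially the same approach as the paper's proof: trivialize over the square via Theorem \ref{lem:moser2}, normalize so the trivialization is the identity along $t=0$, invoke Corollary \ref{cor:independence1} on the edges $u=0,1$, and then use Proposition \ref{cor:isotopy} on the resulting $u$-dependent isotopy at $t=1$. The technical caveats you flag (corners of $[0,1]^2$, and the implicit hypothesis of local generating Liouville domains over the $2$-dimensional base) are indeed glossed over in the paper but are exactly the points one has to address.
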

\begin{proof}
The difference with Corollary \ref{cor:independence1} is that the two isotopies $(\phi_{0,t},f_{0,t})$ and $(\phi_{1,t},f_{1,t})$ may pass through different families of Liouville manifolds 
(but they still have the same begin and end point). So we have to connect these two families. This is achieved by introducing the $u$-coordinate.

  We let $B=[0,1]_u\times [0,1]_t$, $b=(0,0)$ and we construct a trivializing family $(\phi'_{u,t},f'_{u,t}):(N_0,\theta_0)\r (N_{u,t},\theta_{u,t})$ as in
  Theorem \ref{lem:moser2}. Replacing $(\phi'_{u,t},f'_{u,t})$ by $(\phi'_{u,t},f'_{u,t})\circ (\phi'_{u,0},f'_{u,0})^{-1}$ we may, and we will, assume that
  $(\phi'_{u,0},f'_{u,0})=(\Id_{N_0},0)$ for $u\in [0,1]$.

  Writing $\sim$ for $A_\infty$-homotopy equivalence,
 Corollary \ref{cor:independence1} yields $\Wscr\Fscr(\phi'_{u,1},f'_{u,1})\sim \Wscr\Fscr(\phi_{u,1},f_{u,1})$ for $u=0,1$.
 Hence it is sufficient to prove $\Wscr\Fscr(\phi'_{0,1},f'_{0,1})\sim \Wscr\Fscr(\phi'_{1,1},f'_{1,1})$. To this end we consider the isotopy $(\phi_{u,1},f_{u,1})^{-1}\circ (\phi_{0,1},f_{0,1}):(N_0,\theta_0)\r (N_0,\theta_0)$ for $u\in [0,1]$ and we invoke Proposition 
  \ref{cor:isotopy}.
\end{proof}

\section{Symplectic geometry in the toric case}
\label{sec:zhou} 
\subsection{Kähler structure}
\label{subsec:some_formulas}
We refer to \S\ref{sec:Kahlerreview} for a quick review of K\"ahler geometry. Here we consider K\"ahler structures on tori.
For $(z_j)_j\in (\CC^\ast)^k$ it will be convenient to use logarithmic polar coordinates $w_j=(\rho_j+i\eta_j)_j$ for $\rho_j\in \RR$, $\eta_j\in \RR/2\pi\ZZ$ such that $z_j=e^{w_j}$. We have $\partial=\sum_i \partial_{w_i}dw_i$, $\bar{\partial}=\sum_i \partial_{\bar{w}_i}d\bar{w}_i$ 
with $\partial_{w_k}=(1/2)(\partial_{\rho_k}-i\partial_{\eta_k})$, $\bar{\partial}_{w_k}=(1/2)(\partial_{\rho_k}+i\partial_{\eta_k})$ and $J(\partial_{\rho_k})=\partial_{\eta_k}$.
For a K\"ahler potential $\psi$ the associated symplectic and Liouville forms are given by (see \S\ref{sec:Kahlerreview}):
\begin{align*}
\omega&=2i \partial \bar{\partial}\psi=2i\sum_{jk} (\partial_{w_j}\bar{\partial}_{w_k} \psi) dw_jd\bar{w}_k,\\
\theta&=-i (\partial-\bar{\partial})\psi=-i\sum_{j} ((\partial_{w_j} \psi) dw_j-(\bar{\partial}_{w_j} \psi) d\bar{w}_j).
\end{align*}
We consider the special case that $\psi$ only depends on $(\rho_i)_i$. In that case using the formulas for $\partial_{w_j}$ we get:
\begin{equation}
\label{eq:omega}
\begin{aligned}
\omega
      &=\sum_{jk} (\partial_{\rho_j}\partial_{\rho_k} \psi) d\rho_j d\eta_k, \\
\theta
&=\sum_j (\partial_{\rho_j} \psi) d\eta_j.
\end{aligned}
\end{equation}
Thus the metric associated to $\omega$ is given by:
\begin{equation}
\label{eq:zhoumetric}
g=\sum_{jk} (\partial_{\rho_j}\partial_{\rho_k} \psi) (d\rho_j d\rho_k+d\eta_j d\eta_k).
\end{equation}
\subsection{The Legendre transform}
\label{sec:legendre0}
Let us now work coordinate free and put $N=\ZZ^k$. From this we obtain
identifications
$T(S_{N^\ast})=S_{N^\ast}\times N^\ast_\RR=N^\ast_{(\RR/\ZZ)\times
  \RR}=N^\ast_{\CC^\ast}=(\CC^\ast)^k$ where $S_{N^\ast}$ is as in
\S\ref{sec:fltz}. Recall from \S\ref{sec:fltz} that we likewise have
$T^\ast(S_{N^\ast})=S_{N^\ast}\times N_\RR$.  We view $\psi$ as a function on $N^\ast_\RR$. As in \cite[\S2.1]{Zhou}
we define the \emph{Legendre transform} as
$\Phi_\psi:N_\RR^\ast\r N_\RR:(\rho_i)_i\mapsto (\partial_{\rho_i}
\psi)_i$ (this is obviously invariant under linear coordinate
changes).  As explained in \cite{legendre}  $\Phi_\psi$ is an
injective immersion.\footnote{It does not have to be a
  diffeomorphism. E.g.\ in dimension $1$ we may consider a smooth function $\psi:\RR\r \RR$ such that $\psi''(\rho)>0$
  which is nearly linear far from~$0$. In that case
  $\rho \r\psi'(\rho)$ will have bounded codomain. A concrete example is given by $\int_0^x\int_0^y\exp(-w^2)dw dy$.}

Put $p_j=\partial_{\rho_j} \psi$.  Then from \cite[\S2.1,2.2]{Zhou} we get a corresponding injective immersion  
\begin{equation}
\label{eq:legendre}
\id\times\Phi_\psi:T(S_{N^\ast})\r T^\ast(S_{N^\ast}):(\eta_i,\rho_i)_i\mapsto (\eta_i,p_i)_i.
\end{equation}
If we put 
\begin{equation*}
\theta_{st}=\sum_j p_j d\eta_j\qquad 
\omega_{st}=\sum_j dp_j d\eta_j\qquad
Z_{st}=\sum_j p_j \partial_{p_j}
\end{equation*}
then we get
\begin{equation}
\label{eq:legendre10}
\theta=(\Id\times\Phi_\psi)^\ast(\theta_{st})\qquad
\omega=(\Id\times \Phi_\psi)^\ast(\omega_{st})\qquad
Z=(\Id\times\Phi_\psi)^\ast(Z_{st}).
\end{equation}
\subsection{Strictly convex functions}
For use below we discuss some properties of K\"ahler potentials which only depend on $\rho$.
Assume $\psi:\RR^k\r \RR$  is proper, bounded below such that the matrix $\partial_{\rho_i}\partial_{\rho_j}\psi$ is positive definite in every point of $\RR^k$.
\begin{lemma} \label{lem:extremal}
$\psi$ has a unique extremal point $\rho_L$ on every affine subspace $L$
of $\RR^k$. This extremal point is a non-degenerate minimum. Moreover the function $L\mapsto \rho_L$ is continuous on the set of affine spaces of dimension $l$ when we view the latter
set as an open subset of $\Grass(l+1,\RR^{k+1})$ (via $L\mapsto \spantwo(0,L\times\{1\})$) 
and equip it with the induced topology.
\end{lemma}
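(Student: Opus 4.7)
My plan is to first establish existence, uniqueness and non-degeneracy of $\rho_L$ fiberwise, and then handle the continuity statement separately by a compactness argument using properness of $\psi$.

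For a fixed affine subspace $L$ with direction $V\subset \RR^k$, the restriction $\psi|_L$ is still proper and bounded below (since $L$ is closed in $\RR^k$), so it attains a minimum. Its Hessian at any point is the restriction of the Hessian of $\psi$ to $V$; since $(\partial_{\rho_i}\partial_{\rho_j}\psi)$ is positive definite, so is its restriction to $V$. Hence $\psi|_L$ is strictly convex, its minimum is unique, and this minimum is non-degenerate. The extremal point $\rho_L$ is characterized by $\rho_L\in L$ together with $(\nabla\psi)(\rho_L)\perp V$.

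For continuity, I would argue as follows. Write $\Grass(l+1,\RR^{k+1})^\circ$ for the open subset of $(l+1)$-planes in $\RR^{k+1}$ not contained in $\RR^k\times\{0\}$, so that the map $W\mapsto L_W:=\{x\in \RR^k\mid (x,1)\in W\}$ gives a bijection with the set of $l$-dimensional affine subspaces. A standard fact about this topology is that $L_n\to L$ if and only if every $\rho\in L$ is a limit of a sequence $\rho_n\in L_n$ and conversely every convergent subsequence of a sequence $\rho_n\in L_n$ has limit in $L$. Given $L_n\to L$, fix $C>\psi(\rho_L)$. Pick a sequence $\rho'_n\in L_n$ with $\rho'_n\to \rho_L$; then $\psi(\rho'_n)\le C$ eventually, so
\[
\psi(\rho_{L_n})\le \psi(\rho'_n)\le C
\]
eventually. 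Since $\psi$ is proper and bounded below, $\{\psi\le C\}$ is compact, so the sequence $(\rho_{L_n})_n$ is eventually contained in a compact set. Any convergent subsequence $\rho_{L_{n_k}}\to \rho^*$ satisfies $\rho^*\in L$ by the characterization of the Grassmannian topology, and passing to the limit in $\psi(\rho_{L_{n_k}})\le \psi(\rho'_{n_k})$ gives $\psi(\rho^*)\le \psi(\rho_L)$. By uniqueness of the minimum, $\rho^*=\rho_L$. Since every convergent subsequence has the same limit $\rho_L$ and the full sequence lies in a compact set, $\rho_{L_n}\to \rho_L$.

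The main (and essentially only) obstacle is bookkeeping the relationship between convergence in $\Grass(l+1,\RR^{k+1})^\circ$ and the associated convergence of affine subspaces as subsets of $\RR^k$; everything else is a standard use of strict convexity and properness.
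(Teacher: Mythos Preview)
Your proof is correct. The existence/uniqueness/non-degeneracy part is essentially identical to the paper's: the paper also uses that the restricted Hessian is positive definite, phrasing uniqueness as ``connecting two local minima by a line gives a contradiction'' rather than invoking strict convexity directly, but this is the same argument.

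Where you genuinely diverge is in the continuity claim. The paper simply says this is ``an application of the implicit function theorem'': one uses the characterization you wrote down, $\rho_L\in L$ and $(\nabla\psi)(\rho_L)\perp V$, as a smooth system of equations in local coordinates on the Grassmannian, and non-degeneracy of the restricted Hessian gives the hypothesis needed to apply IFT. Your route instead is a sequential compactness argument exploiting properness of $\psi$ and the description of Grassmannian convergence via limits of points. Your argument is more elementary in that it avoids setting up smooth charts on $\Grass(l+1,\RR^{k+1})$ and checking the Jacobian condition, at the cost of being longer and only yielding continuity rather than smoothness of $L\mapsto\rho_L$. Since only continuity is ever used downstream (e.g.\ in Lemma~\ref{lem:epsilon_choice} and Lemma~\ref{lem:perpbound}), nothing is lost.
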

\begin{proof}
Let $\psi_L=\psi\mid L$. The fact that $\partial_{\rho_i}\partial_{\rho_j}\psi_L$ is positive definite implies that all extrema (on $L$) are non-degenerate local minima. Connecting two local minima by a line
we quickly obtain a contradiction so there is at most one local minimum on~$L$. However the fact that $\psi:L\r \RR$  is proper, bounded below shows that at least one global
minimum exists. Hence there is a unique global minimum on $L$. The claim about continuity in $L$ is an application of the implicit function theorem.
\end{proof}
A related lemma is the following.
\begin{lemma} \label{lem:maximum}
Assume that $Q\subset \RR^k$ is a convex polygon (not necessarily full dimensional). Then every maximum of $\psi{\mid} Q$ is achieved in a vertex of
$Q$.
\end{lemma}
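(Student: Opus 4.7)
The plan is to reduce the claim to the standard fact that a strictly convex function on a convex polytope attains its maximum at a vertex. The only content we really need about $\psi$ is therefore strict convexity on $\RR^k$, which is immediate from the hypothesis that the Hessian $(\partial_{\rho_i}\partial_{\rho_j}\psi)_{ij}$ is positive definite at every point of $\RR^k$ (apply the one-variable test along an arbitrary line segment).

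First I would argue that the maximum of $\psi{\mid}Q$ exists: $\psi$ is continuous and $Q$ is compact, so this is automatic. Next, suppose toward a contradiction that $\psi{\mid}Q$ attains its maximum at some point $p\in Q$ which is not a vertex of $Q$. Then $p$ lies in the relative interior of a face $F$ of $Q$ with $\dim F\ge 1$, so we can pick two distinct points $q_1,q_2\in F$ with $p=\tfrac{1}{2}(q_1+q_2)$ (for instance, intersect $F$ with a short line segment through $p$ in any direction parallel to $F$). By strict convexity of $\psi$,
\[
\psi(p)<\tfrac{1}{2}(\psi(q_1)+\psi(q_2))\le \max(\psi(q_1),\psi(q_2)),
\]
contradicting the maximality of $\psi(p)$. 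Hence every maximum must be attained at a vertex.

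There is no real obstacle here; the only thing worth being careful about is the passage from positive definiteness of the Hessian to strict convexity of $\psi$ on the (possibly lower-dimensional) polytope $Q$, but this follows by restricting to any line segment in $Q$ and applying the one-dimensional statement that $f''>0$ implies strict convexity of $f$.
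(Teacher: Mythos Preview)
Your proof is correct and follows essentially the same idea as the paper's: both derive a contradiction from strict convexity when the maximum lies in the relative interior of a positive-dimensional face. The only cosmetic difference is that the paper phrases the contradiction via Lemma~\ref{lem:extremal} (an extremal point of $\psi$ on the affine span of $F$ must be a minimum, not a maximum), whereas you use the midpoint inequality for strict convexity directly; your version is slightly more self-contained.
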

\begin{proof} A maximum will be in the relative interior of some face $F$. Hence in particular it is an extremal point on the affine span $L$ of $F$. If $F$ is not a point then we obtain a contradiction with Lemma \ref{lem:extremal} which asserts that
any extremal point on~$L$ must be a unique global  minimum, and hence cannot be a local maximum.
\end{proof}
\subsection{Potentials homogeneous of degree two}
\label{sec:weight2}
Following \cite{Zhou} we now discuss \emph{homogeneous potentials of degree two}. These are potentials $\psi:(\CC^\ast)^k-\{\rho=0\}\r \RR$, depending
only on $\rho=(\rho_i)_i$ satisfying $\psi(\lambda \rho)=\lambda^2\psi(\rho)$ for $\lambda>0$. A particular case of
such a potential is a \emph{toric potential} in which case $\psi(\rho)$ is a positive definite quadratic form.
However in \cite{Zhou} the utility of more general potentials is demonstrated.
\begin{remark} We extend $\psi$ to a continuous function $\psi:(\CC^\ast)^k\r \RR$ by putting $\psi(\rho=0)=0$. However $\psi$ is not smooth for $\rho=0$ unless it is toric.
\end{remark}
\begin{lemma} Let $\psi$ be a potential depending only on $\rho$. Then $\grad_g \psi=\sum_k u_k\partial_{\rho_k}$ with $u_k$ being the solution to
\begin{equation}
\label{eq:gradg100}
\sum_k u_k \partial_{\rho_k}\partial_{\rho_j}\psi=\partial_{\rho_j}\psi.
\end{equation}
\end{lemma}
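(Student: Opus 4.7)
The plan is to unpack the defining relation $g(\grad_g\psi,-)=d\psi$ (see Lemma \ref{lem:particular}) using the explicit form \eqref{eq:zhoumetric} of the metric.

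First I would write the most general vector field as
\[
\grad_g\psi=\sum_k u_k\partial_{\rho_k}+\sum_k v_k\partial_{\eta_k}
\]
with unknown coefficient functions $u_k,v_k$. From \eqref{eq:zhoumetric} one reads off that the pairings $g(\partial_{\rho_j},\partial_{\rho_k})=g(\partial_{\eta_j},\partial_{\eta_k})=\partial_{\rho_j}\partial_{\rho_k}\psi$ while the mixed pairings $g(\partial_{\rho_j},\partial_{\eta_k})$ vanish. Since $\psi$ depends only on $\rho$, we have $d\psi=\sum_j(\partial_{\rho_j}\psi)\,d\rho_j$.

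Next I would test the identity $g(\grad_g\psi,W)=d\psi(W)$ separately against $W=\partial_{\rho_j}$ and $W=\partial_{\eta_j}$. The first substitution gives
\[
\sum_k u_k\,\partial_{\rho_k}\partial_{\rho_j}\psi=\partial_{\rho_j}\psi,
\]
which is exactly \eqref{eq:gradg100}. The second substitution gives $\sum_k v_k\,\partial_{\rho_k}\partial_{\rho_j}\psi=0$ for every $j$. Because the Hessian matrix $(\partial_{\rho_i}\partial_{\rho_j}\psi)$ is positive definite (this is the Kähler condition encoded in the non\-degeneracy of $\omega$ in \eqref{eq:omega}), it is invertible, so $v_k\equiv 0$ for all $k$. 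This leaves $\grad_g\psi=\sum_k u_k\partial_{\rho_k}$ with $u_k$ determined by \eqref{eq:gradg100}.

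There is no real obstacle here; the statement is essentially a direct linear-algebra computation once the gradient is defined and the metric is written out. The only mild subtlety is making sure that the Hessian is non\-singular so that \eqref{eq:gradg100} admits (and uniquely determines) $u_k$ and forces $v_k=0$, but this is automatic from the Kähler hypothesis underlying the whole subsection.
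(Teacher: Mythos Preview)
Your proof is correct and follows essentially the same approach as the paper: both unpack the defining relation $g(\grad_g\psi,X)=X\psi$ against the basis vectors using the explicit metric \eqref{eq:zhoumetric}. The only difference is cosmetic: the paper posits the form $\sum_k u_k\partial_{\rho_k}$ from the start and notes that it suffices to test against $X=\partial_{\rho_j}$ (the test against $\partial_{\eta_j}$ being trivially satisfied since both sides vanish), whereas you begin from a general ansatz and explicitly eliminate the $\partial_{\eta_k}$-components via the invertibility of the Hessian.
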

\begin{proof} 
We should have for any vector $X$
\begin{equation}
\label{eq:gradequation}
g(\sum_k u_k\partial_{\rho_k},X)=X\psi.
\end{equation}
It is enough to consider $X=\partial_{\rho_j}$ for all $j$. Applying \eqref{eq:gradequation} with $X=\partial_{\rho_j}$ and using \eqref{eq:zhoumetric} we get \eqref{eq:gradg100}.
\end{proof}
\begin{lemma} 
Assume $\psi$ is homogeneous of degree two. In that case:
\begin{equation}
\label{eq:gradg1}
\grad_g \psi=\sum_j \rho_j\partial_{\rho_j}
\end{equation}
and
\begin{equation}
\label{eq:normgradg1}
g(\grad_g \psi,\grad_g \psi)=2\psi.
\end{equation}
\end{lemma}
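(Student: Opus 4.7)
The plan is to derive both identities from Euler's homogeneity identity applied to $\psi$ and to its first derivatives, combined with the characterization of $\grad_g\psi$ given in \eqref{eq:gradg100}.

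First I would note that since $\psi$ is homogeneous of degree two on $\RR^k-\{0\}$, Euler's identity gives
\[
\sum_j \rho_j \partial_{\rho_j}\psi = 2\psi.
\]
Differentiating this equation with respect to $\rho_j$ (so that the first derivatives $\partial_{\rho_k}\psi$ are homogeneous of degree one) yields
\[
\sum_k \rho_k\,\partial_{\rho_k}\partial_{\rho_j}\psi = \partial_{\rho_j}\psi
\]
for every $j$. Comparing with \eqref{eq:gradg100}, we see that $u_k=\rho_k$ solves the defining linear system for $\grad_g\psi$; since the matrix $(\partial_{\rho_k}\partial_{\rho_j}\psi)_{jk}$ is positive definite (hence invertible), this solution is unique, giving \eqref{eq:gradg1}.

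For \eqref{eq:normgradg1} I would simply use the defining property of the gradient together with what was just proved: by \eqref{eq:gradequation} applied to $X=\grad_g\psi$,
\[
g(\grad_g\psi,\grad_g\psi) = (\grad_g\psi)\psi = \sum_j \rho_j\,\partial_{\rho_j}\psi = 2\psi,
\]
where the last equality is again Euler's identity. There is no real obstacle here; the only subtlety is that $\psi$ is only smooth away from $\rho=0$, but both identities are stated pointwise on the locus where $\psi$ is smooth, so homogeneity of degree two applies directly.
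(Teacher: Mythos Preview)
Your proof is correct and follows essentially the same route as the paper: both verify \eqref{eq:gradg1} by checking that $u_k=\rho_k$ satisfies \eqref{eq:gradg100} via the Euler identity for the degree-one functions $\partial_{\rho_j}\psi$, and both obtain \eqref{eq:normgradg1} from $g(\grad_g\psi,\grad_g\psi)=(\grad_g\psi)\psi$ together with the degree-two Euler identity. Your explicit remark on uniqueness (positive definiteness of the Hessian) is a small addition not spelled out in the paper but implicit there.
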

\begin{proof}
In order for \eqref{eq:gradg1} to be true, by \eqref{eq:gradg100}  we should have
\[
\sum_k \rho_k \partial_{\rho_k}\partial_{\rho_j} \psi=\partial_{\rho_j} \psi.
\]
This is an application of the Euler identity since $\partial_{\rho_j}\psi$ is homogeneous
of degree one.

 To obtain \eqref{eq:normgradg1} we observe that $g(\grad_g \psi,\grad_g \psi)=(\grad_g \psi)(\psi)$
which is equal to the right hand side of \eqref{eq:normgradg1} by \eqref{eq:gradg1} and the Euler identity.
\end{proof}
\begin{remark} The formula \eqref{eq:gradg1} is a simplification of \cite[Proposition 2.7]{Zhou}.
\end{remark}
\begin{remark} In the original coordinates the formula \eqref{eq:gradg1} can be written as
\begin{equation}
\label{eq:psigradient}
\grad_g\psi=\sum_j\log |z_j|(z_j\partial_{z_j}+\bar{z}_j \partial_{\bar{z}_j}).
\end{equation}
\end{remark}

We can interpolate between homogeneous potentials of degree $2$. For further reference we record this in a lemma.
\begin{lemma}\label{lem:interpolation}
Let $\psi_1,\psi_2$ be two homogeneous potentials of degree two, then $t\psi_1+(1-t)\psi_2$, $t\in [0,1]$ is a homogeneous potential of degree two.
\end{lemma}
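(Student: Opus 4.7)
The plan is to verify the three defining properties of a homogeneous potential of degree two directly from the definitions, since each is preserved under convex combinations in a transparent way.

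First I would note that since $\psi_1$ and $\psi_2$ both depend only on $\rho = (\rho_i)_i$, any convex combination $\psi_t := t\psi_1 + (1-t)\psi_2$ also depends only on $\rho$, so it descends to a function on $N^\ast_\RR - \{0\}$. Next, for the homogeneity condition, I would simply compute
\[
\psi_t(\lambda \rho) = t\psi_1(\lambda\rho) + (1-t)\psi_2(\lambda\rho) = \lambda^2\bigl(t\psi_1(\rho) + (1-t)\psi_2(\rho)\bigr) = \lambda^2 \psi_t(\rho)
\]
for $\lambda > 0$, using degree-two homogeneity of each $\psi_i$.

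The remaining point, which is the only one with any content, is to check that $\psi_t$ is still a genuine Kähler potential, i.e.\ that the Hessian $(\partial_{\rho_i}\partial_{\rho_j}\psi_t)$ is positive definite at every point where $\rho \neq 0$. Since differentiation is linear, this Hessian equals $t(\partial_{\rho_i}\partial_{\rho_j}\psi_1) + (1-t)(\partial_{\rho_i}\partial_{\rho_j}\psi_2)$. Each summand is positive semidefinite with strictly positive coefficient (unless $t = 0$ or $t = 1$, in which case the statement is vacuous), so the sum is positive definite because a nonnegative linear combination of positive definite symmetric matrices, with at least one strictly positive coefficient, is positive definite.

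I expect no real obstacle here; the statement is a formal consequence of linearity of the Hessian operator and of the convex cones "functions homogeneous of degree two in $\rho$" and "functions with positive definite Hessian in $\rho$" being stable under taking convex combinations. The only thing worth recording is the positive-definiteness argument above, which is what ensures the interpolation stays within the class of \emph{Kähler} potentials rather than merely smooth ones.
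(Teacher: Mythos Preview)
Your proof is correct; the paper does not actually give a proof of this lemma, merely recording it ``for further reference'' as an evident fact. Your verification of the three properties (dependence on $\rho$ only, degree-two homogeneity, and positive-definiteness of the Hessian via convexity of the positive-definite cone) is exactly the routine check being taken for granted.
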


For the use below we will need special homogeneous potentials of degree $2$. 
\begin{definition}\cite[Definiton 2.8]{Zhou}\label{def:adapted}
Let $Q\subset \RR^k$ be  a convex polytope (possibly unbounded) containing $0$ as the interior point. 
A  homogeneous degree two convex function $\psi : \RR^k \to \RR$ is {\em adapted to $Q$}  if for each
face $F$ of $Q$ the restriction of $\psi$ to~$F$ has a unique minimum point in the relative interior
of $F$. 
\end{definition}

\begin{proposition}\cite[Proposition 2.10]{Zhou}\label{prop:adapted}
For any convex polytope $Q\subset \RR^k$ containing $0$ as an interior point, the set of Kähler potentials adapted to $Q$ is non-empty and contractible.

\end{proposition}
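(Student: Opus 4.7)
The plan is to prove non-emptiness by constructing an explicit adapted potential and then to deduce contractibility by showing that the space $\mathcal{A}$ of adapted potentials is convex inside the convex cone of homogeneous degree two strictly convex functions (convexity of the latter cone being Lemma~\ref{lem:interpolation}).

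For non-emptiness, I would consider the family of quadratic potentials $\psi_g(\rho)=\tfrac12\, g(\rho,\rho)$ indexed by positive definite symmetric bilinear forms $g$ on $\RR^k$. For such a $\psi_g$ the unique minimum of $\psi_g|_{L_F}$ on the affine span of a face $F$ is exactly the $g$-orthogonal projection $\pi_g^F(0)$ of the origin onto $L_F$. The condition $\pi_g^F(0)\in\relint(F)$ for every face $F$ is an open condition on $g$, and to show that the corresponding set of $g$ is non-empty I would pick points $p_F\in\relint(F)$ (for instance the centroids) and solve the linear-algebraic problem of finding a positive definite $g$ whose $g$-orthogonal projections of $0$ onto the various $L_F$ realise the prescribed $p_F$. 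Solvability uses crucially that $0\in\operatorname{int}(Q)$, so the constraints coming from different faces are mutually compatible.

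For contractibility, I would attempt to show that $\mathcal{A}$ is convex: given $\psi_0,\psi_1\in\mathcal{A}$ and $t\in[0,1]$, the function $\psi_t=(1-t)\psi_0+t\psi_1$ is again adapted. Fix a face $F$ and let $m_t^F$ be the unique minimizer of $\psi_t|_{L_F}$ (well-defined by strict convexity). The map $t\mapsto m_t^F$ is continuous on $[0,1]$ by Lemma~\ref{lem:extremal} together with the implicit function theorem, and $m_0^F,m_1^F\in\relint(F)$ by hypothesis. I would argue by induction on $\dim F$ (vertices being trivial) that $m_t^F\in\relint(F)$ for every $t$: if $m_t^F$ were to leave $\relint(F)$, then by continuity it would first hit the relative boundary $\partial F$ at some $t_0\in(0,1)$, landing in a proper subface $F'\subsetneq F$, and by criticality $m_{t_0}^F$ would coincide with the unique minimum $m_{t_0}^{F'}$ of $\psi_{t_0}|_{L_{F'}}$, contradicting the inductive control on $F'$.

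The main obstacle is that this induction is somewhat circular, since we are trying to prove the adapted condition for $\psi_t$ simultaneously at all faces, so we cannot simply assume it at the subface $F'$ for intermediate times. One needs either to set up the induction carefully (e.g.\ arranging it so that the set of ``bad'' $t$ at each face is a closed subset of $(0,1)$ and using minimality across faces) or to replace the straight-line homotopy by a more subtle deformation retraction of $\mathcal{A}$ onto a specific quadratic adapted potential, built by a homotopy of Hessians through positive definite forms along which the fiberwise minima on each $L_F$ can be tracked explicitly and shown never to cross the relative boundary.
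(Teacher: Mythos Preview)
The paper does not give its own proof of this proposition; it is simply quoted from \cite[Proposition~2.10]{Zhou}. So there is no ``paper's proof'' to compare against, and the question is whether your sketch stands on its own.

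On contractibility: your instinct to prove that $\mathcal A$ is convex is natural, but the inductive argument you outline does not close. You correctly observe that if the minimiser $m_{t_0}^{F}$ on $L_F$ first reaches $\partial F$ at time $t_0$, landing on a facet $F'$, then it must coincide with the minimiser $m_{t_0}^{F'}$ on $L_{F'}$. But the inductive hypothesis only tells you $m_{t_0}^{F'}\in\relint(F')$, and the equality $m_{t_0}^{F}=m_{t_0}^{F'}\in\relint(F')$ is \emph{not} a contradiction: a point of $\relint(F')$ lies on $\partial F$, which is precisely where you assumed $m_{t_0}^{F}$ to be. Your subsequent diagnosis that the induction is ``circular'' is also off: the induction on $\dim F$ is perfectly well-founded (you may legitimately assume the result for all lower-dimensional faces and all $t\in[0,1]$ simultaneously); the problem is that the inductive hypothesis, once granted, simply does not produce the contradiction you claim. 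What is missing is an argument that the \emph{normal derivative} $\partial_n\psi_{t_0}$ along $L_{F'}$ cannot vanish at a point of $F'$ when both $\psi_0$ and $\psi_1$ are adapted, and this requires controlling $\partial_n\psi_i$ at $m_{t_0}^{F'}$ rather than at $m_i^{F'}$.

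On non-emptiness: the step ``solve the linear-algebraic problem of finding a positive definite $g$'' is the entire content of the claim and is left unjustified. The space of positive definite quadratic forms has dimension $\binom{k+1}{2}$, while the number of faces of $Q$ can be arbitrarily large, so prescribing the $g$-orthogonal projections $\pi_g^F(0)$ is heavily overdetermined; you only need them to lie in open sets, but you have given no reason why the resulting open conditions on $g$ have a common solution. Note that the motivation in \cite{Zhou} for introducing general homogeneous degree-two potentials (rather than only quadratic ones) is exactly that the standard quadratic form $|\rho|^2$ is not always adapted; whether \emph{some} quadratic form always is, is not addressed by your sketch.
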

For use below we note the following fact.
\begin{lemma} \label{lem:liouville}
If $\iota:\CC^{\ast k} \r \CC^{\ast k}: x\mapsto (x^{a_1},\ldots, x^{a_k})$ is an \'etale map of tori (i.e.\ $\det(a_1,\ldots,a_n)\neq 0$) then
the pullback under $\iota$ yields a 1-1 correspondence between potentials depending only on $\rho$, and the same holds for degree two potentials.
\end{lemma}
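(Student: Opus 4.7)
The plan is to reduce the statement to the observation that in logarithmic polar coordinates the map $\iota$ becomes a linear isomorphism $A:\RR^k\to \RR^k$ on the $\rho$-coordinates. Concretely, writing $x_j=e^{w_j}$ with $w_j=\rho_j+i\eta_j$ on the source and $y_i=e^{w'_i}$ with $w'_i=\rho'_i+i\eta'_i$ on the target, the identity $y_i=x^{a_i}$ yields
\[
w'_i=a_i\cdot w=\sum_j a_{ij}(\rho_j+i\eta_j),
\]
so $\rho'=A\rho$ and $\eta'\equiv A\eta\pmod{2\pi\ZZ^k}$, where $A$ is the matrix with rows $a_i$. In particular $\rho'$ is a function of $\rho$ alone.

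From this, the first main step is immediate: for any smooth function $\psi'$ on $(\CC^\ast)^k$ that depends only on $\rho'$, one has
\[
(\iota^\ast\psi')(\rho,\eta)=\psi'(A\rho),
\]
which again depends only on $\rho$. Hence $\iota^\ast$ sends the set of $\rho'$-only potentials into the set of $\rho$-only potentials.

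The second step is to invert this map. Since $\det(a_1,\ldots,a_k)\neq 0$, the assignment $\rho\mapsto A\rho$ is a linear automorphism of $\RR^k$. Consequently, for any $\rho$-only function $\psi(\rho)$ on the source, the function $\psi'(\rho'):=\psi(A^{-1}\rho')$ is a well-defined $\rho'$-only function on the target satisfying $\iota^\ast\psi'=\psi$. This gives a two-sided inverse to $\iota^\ast$, establishing the desired bijection. Positivity of the Hessian, properness, and boundedness below — the conditions that make $\psi$ a Kähler potential in the sense used in \S\ref{sec:weight2} — are preserved because they are invariant under composition with the invertible linear map $A$ (up to its non-zero determinant factor in the Hessian, which preserves positive-definiteness).

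For the degree-two case, one simply notes that homogeneity of degree two is preserved under composition with a linear map: if $\psi'(\lambda\rho')=\lambda^2\psi'(\rho')$, then
\[
(\iota^\ast\psi')(\lambda\rho)=\psi'(\lambda A\rho)=\lambda^2\psi'(A\rho)=\lambda^2(\iota^\ast\psi')(\rho),
\]
and the inverse construction $\psi'(\rho')=\psi(A^{-1}\rho')$ likewise preserves degree-two homogeneity. There is no real obstacle here; the entire content of the lemma is the elementary observation that $\iota$ acts as the invertible linear map $A$ on the $\rho$-coordinate, so the issue of covering multiplicities in the $\eta$-direction is invisible to potentials that ignore $\eta$.
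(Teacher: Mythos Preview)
Your proof is correct and takes essentially the same approach as the paper: both rest on the observation that in logarithmic polar coordinates $\iota$ factors as a linear isomorphism $A:\RR^k\to\RR^k$ on the $\rho$-part (times a map on $(S^1)^k$), from which the bijection and preservation of degree-two homogeneity follow immediately. You have simply spelled out in detail what the paper dispatches in one line.
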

\begin{proof}
If we write $\CC^{\ast k}=(S^1)^k\times \RR^k$ then $\iota=(\iota_0,\iota_1)$ where  $\iota_1:\RR^k\r \RR^k$ is a
linear isomorphism. From this the conclusion easily follows.
\end{proof}

\subsection{Equivalent metrics}
In this section we show that all metrics associated to potentials homogeneous of degree two are globally equivalent (for $\rho\neq 0$).
We recall the following basic lemma.
\begin{lemma} \label{lem:norm} Let $q(x)=\sum_{i,j=1}^k g_{ij} x_i x_j$ be a positive definite quadratic form. Then
\begin{equation*}
a:=\min_x \frac{q(x)}{|x|^2} \qquad
b:=\max_x \frac{q(x)}{|x|^2}
\end{equation*}
are continuous functions of $(g_{ij})_{ij}$. Moreover $a>0$.
\end{lemma}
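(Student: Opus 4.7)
The plan is to recognize $a$ and $b$ as the extremal eigenvalues of the symmetric matrix $G=(g_{ij})$ and then invoke standard continuity facts about eigenvalues of symmetric matrices.

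First I would rewrite
\[
a=\min_{|x|=1} q(x),\qquad b=\max_{|x|=1} q(x),
\]
so that the optimization is over the compact unit sphere $S^{k-1}\subset \RR^k$. By the Courant--Fischer/Rayleigh quotient characterization, $a$ and $b$ are the smallest and largest eigenvalues of $G$. Since $G$ is assumed positive definite, every eigenvalue is strictly positive, hence $a>0$.

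For continuity in $(g_{ij})_{ij}$, I would give a direct argument rather than invoking continuity of eigenvalues. The function $F:\mathrm{Sym}_k(\RR)\times S^{k-1}\r \RR$ given by $F(G,x)=x^{t}Gx$ is jointly continuous, and $S^{k-1}$ is compact, so by a standard uniform-continuity argument (or Berge's maximum theorem) the partial minimum $G\mapsto \min_{x\in S^{k-1}} F(G,x)$ and the partial maximum $G\mapsto \max_{x\in S^{k-1}} F(G,x)$ are continuous. Concretely, for $G,G'$ one has
\[
|q(x)-q'(x)|\le \|G-G'\|_{\mathrm{op}},\qquad |x|=1,
\]
so $|a(G)-a(G')|\le \|G-G'\|_{\mathrm{op}}$ and similarly for $b$, giving Lipschitz continuity.

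The main obstacle is essentially nonexistent: this is a textbook result, and the only thing worth being slightly careful about is that one does not need positive definiteness for continuity of $a$ and $b$ as functions of $(g_{ij})_{ij}$, only for the strict positivity $a>0$. So I would present the continuity argument without reference to positive definiteness and then use positive definiteness only at the last step to conclude $a>0$ (equivalently, the smallest eigenvalue of $G$ is positive).
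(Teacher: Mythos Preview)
Your proposal is correct and essentially matches the paper's proof: both identify $a$ and $b$ as the smallest and largest eigenvalues of $G$ (the paper does this via an orthogonal diagonalization, you via the Rayleigh quotient) and conclude $a>0$ from positive definiteness. The only minor difference is that the paper appeals directly to continuity of eigenvalues, whereas you give an explicit Lipschitz bound over the compact sphere; either is fine.
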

\begin{proof}
We may make an orthogonal change of coordinates such that
$
q(x)=\lambda_1 x_1^2+\cdots + \lambda_k x_k^2
$
with $\lambda_1\ge \cdots\ge \lambda_k>0$. The $(\lambda_i)_i$ are the eigenvalues 
of $(g_{ij})_{ij}$ and hence they are continuous functions of $(g_{ij})_{ij}$. We then find
$a=\lambda_k$, $b=\lambda_1$, finishing the proof.
\end{proof}
\begin{proposition}\label{prop:bounds}
Assume that $\psi$ is a potential homogeneous of degree two. Then there exist $0<a\le b$ in $\RR$ such that for all $\rho\in \RR^k-\{0\}$, $\eta\in (S^1)^k$ and $X\in T_{\rho,\eta}(\CC^{\ast k})$ 
we have
\[
a \le \frac{g(\rho)(X,X)}{|X|^2} \le b
\]
where the norm $|X|$ is computed with respect to the metric $\sum_i(d\rho_i^2+d\eta_i^2)$.
\end{proposition}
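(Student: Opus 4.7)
The plan is to reduce the estimate to a compactness argument on the sphere $S^{k-1}$, using degree-two homogeneity of $\psi$ to make the metric coefficients independent of the radial direction.

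First, I would unpack \eqref{eq:zhoumetric}: at $(\rho,\eta)$ the metric $g$ is a block-diagonal form $\bigl(\begin{smallmatrix} H(\rho) & 0 \\ 0 & H(\rho) \end{smallmatrix}\bigr)$, where $H(\rho) = (\partial_{\rho_j}\partial_{\rho_k}\psi(\rho))_{jk}$ is positive definite by hypothesis, and crucially does not depend on $\eta$. Writing $X = (X_\rho, X_\eta)$ in the splitting $T_{\rho,\eta}(\CC^{\ast k}) = T_\rho \RR^k \oplus T_\eta (S^1)^k$, we have
\[
g(\rho)(X,X) = X_\rho^T H(\rho) X_\rho + X_\eta^T H(\rho) X_\eta,
\]
so the ratio $g(\rho)(X,X)/|X|^2$ lies between the smallest and largest eigenvalues of $H(\rho)$.

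Next I would exploit homogeneity. Since $\psi(\lambda\rho) = \lambda^2 \psi(\rho)$ for $\lambda>0$, its Hessian $H(\rho)$ is homogeneous of degree $0$, i.e.\ $H(\lambda\rho) = H(\rho)$. Hence $H$ is completely determined by its restriction to the unit sphere $S^{k-1} = \{|\rho| = 1\}$, which is compact. Applying Lemma \ref{lem:norm}, the quantities
\[
a(\rho) := \min_{X\neq 0}\frac{g(\rho)(X,X)}{|X|^2}, \qquad b(\rho) := \max_{X\neq 0}\frac{g(\rho)(X,X)}{|X|^2}
\]
are continuous functions of the entries of $H(\rho)$, and hence continuous functions of $\rho \in S^{k-1}$. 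By compactness they attain their extrema; define
\[
a := \min_{\rho \in S^{k-1}} a(\rho), \qquad b := \max_{\rho \in S^{k-1}} b(\rho).
\]
Positive definiteness of $H(\rho)$ at each point gives $a(\rho) > 0$ for every $\rho \in S^{k-1}$, and the minimum over the compact sphere is therefore strictly positive, so $a > 0$. By homogeneity, these same bounds hold at every $\rho \neq 0$.

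There is no real obstacle: the only mild subtlety is checking that the $\eta$-dependence of the metric is trivial and that homogeneity of $\psi$ passes to homogeneity of degree $0$ on $H$, both of which are immediate from \eqref{eq:zhoumetric} and the Euler identity (cf.\ the argument surrounding \eqref{eq:gradg1}). The continuity-of-eigenvalues input is packaged in Lemma \ref{lem:norm}, so the argument is essentially "compactness on $S^{k-1}$" once the reduction is in place.
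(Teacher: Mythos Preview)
Your proof is correct and follows essentially the same approach as the paper: observe that the metric coefficients $\partial_{\rho_i}\partial_{\rho_j}\psi$ are homogeneous of degree zero, restrict to the compact sphere $\{|\rho|=1\}$, invoke Lemma~\ref{lem:norm} for continuity of the eigenvalue bounds, and take the min/max. You are slightly more explicit about the block-diagonal structure of $g$ in the $(\rho,\eta)$-splitting, but this is the same argument.
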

\begin{proof}
The coefficients of
\[
g=\sum_{ij} (\partial_{\rho_i}\partial_{\rho_j}\psi) (d\rho_i d\rho_j+d\eta_i d\eta_j)
\]
are functions of $\rho$ of degree zero on $\{\rho\neq 0\}$. In other word they are determined by their value on $\{|\rho|=1\}$ which is a
compact set.
By Lemma \ref{lem:norm} we have 
\[
0<a(\rho) \le \frac{g(\rho)(X,X)}{|X|^2} \le b(\rho)
\]
where $a(\rho),b(\rho)$ are continuous functions on $\{|\rho|=1\}$. By putting $a=\min_{|\rho|=1} a(\rho)$ and $b=\max_{|\rho|=1} b(\rho)$
we get what we want.
\end{proof}
\begin{remark} \label{rem:propforforms}
We will need a result similar to Proposition \ref{prop:bounds} for 1-forms: this follows formally from the following observation. Let $g_1$, $g_2$ be two
linear Riemannian metrics on a vector space $V$ such that $ag_1(X,X)\le g_2(X,X)\le bg_1(X,X)$ for $b\ge a>0$ and let  $X_1, X_2\in V$ be such that $g_1(X_1,-)=g_2(X_2,-)$. Then we claim 
$g_2(X_2,X_2)\le g_1(X_1,X_1) \le bg_2(X_2,X_2)$. To see this we may, without loss of generality, assume that $X_1\neq 0$, and equivalently $X_2\neq 0$. Then
\begin{align*} 
g_1(X_1,X_1)^2&=g_2(X_2,X_1)^2\\
&\le g_2(X_2,X_2) g_2(X_1, X_1)\\
&\le bg_2(X_2,X_2) g_1(X_1,X_1).
\end{align*}
This yields $g_1(X_1,X_1)\le bg_2(X_2,X_2)$. The other inequality is similar.
\end{remark} 
\begin{corollary} \label{eq:stein} Assume that $\psi$ is a potential homogeneous of degree two. 
We have $\psi(\rho)>0$ for $\rho\neq 0$. Moreover $\psi$ is proper. 
\end{corollary}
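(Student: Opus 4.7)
The plan is to prove positivity first and then deduce properness from it using homogeneity.

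For positivity, I would fix $\rho_0 \in \RR^k - \{0\}$ and exploit the interplay between the positive definiteness of the Hessian $(\partial_{\rho_i}\partial_{\rho_j}\psi)(\rho_0)$ (which is built into the definition of a K\"ahler potential and has already been used in \eqref{eq:zhoumetric} to define a Riemannian metric) and homogeneity of degree two. Consider the ray $\gamma(t) = t\rho_0$ for $t > 0$. By homogeneity one has $\psi(\gamma(t)) = t^2 \psi(\rho_0)$, so $\tfrac{d^2}{dt^2}\psi(\gamma(t))\big|_{t=1} = 2\psi(\rho_0)$. On the other hand, this second derivative is exactly the value of the Hessian form at $\rho_0$ applied to the direction $\rho_0$, which is strictly positive because the Hessian is positive definite on $\RR^k - \{0\}$. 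Hence $\psi(\rho_0) > 0$.

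For properness, I would combine the positivity just proved with the compactness of the unit sphere $S^{k-1} \subset \RR^k$. By continuity of $\psi$ away from $0$ and the previous step, the minimum $m := \min_{|\rho|=1}\psi(\rho)$ is attained and strictly positive. Homogeneity then upgrades this pointwise inequality to a global one: for any $\rho \in \RR^k - \{0\}$,
\[
\psi(\rho) \;=\; |\rho|^2\, \psi\!\left(\tfrac{\rho}{|\rho|}\right) \;\geq\; m|\rho|^2,
\]
and the inequality extends trivially to $\rho = 0$. Consequently, every sublevel set $\psi^{-1}([0,C])$ is contained in the closed ball of radius $\sqrt{C/m}$ and is therefore compact, which is exactly properness.

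The argument is essentially immediate once one invokes positive definiteness of the Hessian, and I do not anticipate a real obstacle. The only mild subtlety lies in the first step: converting ``strict convexity on $\RR^k - \{0\}$'' into a pointwise sign statement for $\psi$ itself. The trick that accomplishes this is the observation that along the ray through the origin, the second derivative of $\psi$ (which is positive by convexity) is literally $2\psi(\rho_0)$, turning a differential inequality into the desired inequality on values.
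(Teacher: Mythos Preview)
Your proof is correct. Both your argument and the paper's rest on the same identity: evaluating the Hessian quadratic form at the point $\rho$ gives $2\psi(\rho)$. You reach it by differentiating $t\mapsto\psi(t\rho_0)=t^2\psi(\rho_0)$ twice; the paper reaches it via the formulas \eqref{eq:gradg1} and \eqref{eq:normgradg1}, namely $\grad_g\psi=\sum_j\rho_j\partial_{\rho_j}$ and $g(\grad_g\psi,\grad_g\psi)=2\psi$, together with the explicit form \eqref{eq:zhoumetric} of $g$. For properness the paper again routes through these formulas and then invokes Proposition~\ref{prop:bounds} (equivalence of all degree-two metrics with the standard one) to conclude that $\{\psi\le C\}$ is bounded; your argument bypasses this by applying homogeneity directly to $\psi$ and taking the minimum over the unit sphere, which is a little more self-contained. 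The paper's route has the advantage of reusing lemmas it needs elsewhere (the identities for the Liouville vector field), while yours is the shortest path to this particular corollary.
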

\begin{proof} Assume $\psi(\rho)\le 0$ for some $\rho\neq 0$. By \eqref{eq:normgradg1} we have $(\grad_g \psi)_\rho=0$ and by \eqref{eq:gradg1} this is not possible if  $\rho\neq 0$. 
To prove properness of $\psi$,   we have to prove, using \eqref{eq:normgradg1}\eqref{eq:gradg1}, that $\{g(\rho)(\sum_{i} \rho_i\partial_{\rho_i},\sum_{i} \rho_i\partial_{\rho_i})\le C\}$ defines a compact set for $C\in \RR$.
By Proposition \ref{prop:bounds} it suffices to do this for the standard metric for which it is obvious.
\end{proof}
\begin{lemma}
Let $N=(\CC^\ast)^n$ and let $\psi:(\CC^\ast)^k\r \RR$ be a K\"ahler potential which outside a compact set is homogeneous of degree two. Then $N$ is Stein. 
Moreover $(N,\theta)$ with
$\theta$ as in \eqref{eq:kahlercase} is a Liouville manifold.
\end{lemma}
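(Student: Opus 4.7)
The plan is to show that $\psi$ is proper and bounded below (which makes $N$ Stein in the sense of \S\ref{sec:stein}) and then to apply Proposition \ref{prop:stein}, verifying its two hypotheses via the formulas of \S\ref{sec:weight2}.

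First I would fix a compact $K \subset (\CC^\ast)^k$ such that $\psi$ coincides on $N \setminus K$ with a K\"ahler potential $\psi_0$ homogeneous of degree two in $\rho$. By Corollary \ref{eq:stein}, $\psi_0 > 0$ off $\{\rho = 0\}$ and $\psi_0$ is proper. From this I would deduce that $\psi$ is bounded below (it is continuous on $K$ and strictly positive off $K$) and that $\psi$ is proper (its sublevel sets are closed and contained in $K$ together with a sublevel set of $\psi_0$, both of which are compact). This establishes that $N$ is Stein.

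For the Liouville manifold statement I would invoke Proposition \ref{prop:stein}, which asks for completeness of $Z = \grad_g \psi$ and compactness of $\{d\psi = 0\}$. Outside $K$ both the metric and the potential are those of $\psi_0$, so the identity \eqref{eq:normgradg1} gives $Z\psi = g(Z,Z) = 2\psi_0 > 0$ on $N \setminus K$. Hence $\{Z\psi \le 0\} \subset K$, and Lemma \ref{lem:particular} identifies this set with $\{d\psi = 0\}$, which is therefore closed and bounded, hence compact. For completeness, the same identity $Z\psi = 2\psi$ off $K$ together with Remark \ref{rem:complete} yields a bound $|Z\psi| \le 2|\psi| + B$ on all of $N$ for some $B$; this is precisely the hypothesis \eqref{eq:completeness} of Theorem \ref{thm:complete}, and since $\psi$ is proper the theorem gives completeness of $Z$.

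The main point to watch is that the metric $g$ is the one associated to $\psi$, not $\psi_0$; but the two agree outside $K$, which is exactly where all of the homogeneity-based identities from \S\ref{sec:weight2} are applied. Beyond this bookkeeping I do not expect a serious obstacle: the statement is essentially a packaging of Corollary \ref{eq:stein}, Theorem \ref{thm:complete}, and Proposition \ref{prop:stein}.
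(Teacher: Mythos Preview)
Your proof is correct. The paper's own argument is more direct: instead of invoking Proposition \ref{prop:stein} and Theorem \ref{thm:complete}, it uses \eqref{eq:gradg1} to identify the Liouville vector field explicitly as $\sum_j \rho_j \partial_{\rho_j}$ outside the compact set, from which completeness is immediate (the flow is $\rho \mapsto e^t\rho$), and then exhibits a generating Liouville domain by hand as $\{|\rho|\le C\}$ for $C\gg 0$. Stein-ness is dispatched by a bare reference to Corollary \ref{eq:stein}. Your route goes through the abstract criteria the paper set up precisely for such situations, so it is a natural alternative; the paper's approach is shorter because the explicit Euler form of $Z$ makes both completeness and the generating domain visible at once, while yours has the virtue of showing that the general machinery (Proposition \ref{prop:stein}, Theorem \ref{thm:complete}, Lemma \ref{lem:particular}) really does the job without ever writing $Z$ down.
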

\begin{proof}
By \eqref{eq:gradg1}, outside a compact set the Liouville vector field is given by $\sum_j \rho_j \partial_{\rho_j}$ which is obviously complete. Moreover $\{|\rho| \le C\}$ for $C\gg 0$
defines a generating Liouville domain. 
The fact that $N$ is Stein follows from Corollary \ref{eq:stein}.
\end{proof}

\begin{lemma}
\label{lem:bounds} Assume that $\psi$, $\psi'$ are two potentials, homogeneous of degree two. Then there exist $0<c<d$ such that on $\{\rho\neq 0\}$ we have
\[
c\psi\le \psi'\le d\psi.
\]
\end{lemma}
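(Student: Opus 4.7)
The statement is a straightforward compactness argument using the homogeneity. My plan is as follows.

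First I would restrict attention to the unit sphere $S = \{\rho\in \RR^k \mid |\rho|=1\}$, computed with respect to the standard Euclidean metric (the same one used in Proposition \ref{prop:bounds}). This is a compact subset of $\{\rho\neq 0\}$, on which both $\psi$ and $\psi'$ are smooth, hence continuous. By Corollary \ref{eq:stein} applied to $\psi$ (and $\psi'$), we have $\psi>0$ and $\psi'>0$ on $\{\rho\neq 0\}$, and in particular on $S$. Therefore the ratio $\psi'/\psi$ is a well-defined, continuous, strictly positive function on the compact set $S$, and so attains a positive minimum $c>0$ and a finite maximum $d<\infty$, i.e.
\[
c\,\psi(\hat\rho)\le \psi'(\hat\rho)\le d\,\psi(\hat\rho) \qquad \text{for all } \hat\rho\in S.
\]

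Next I would extend the inequality to all of $\{\rho\neq 0\}$ using homogeneity of degree two. For any $\rho\neq 0$ write $\rho = |\rho|\,\hat\rho$ with $\hat\rho\in S$. Then $\psi(\rho) = |\rho|^2\psi(\hat\rho)$ and $\psi'(\rho)=|\rho|^2\psi'(\hat\rho)$, so the above inequality multiplies through by $|\rho|^2>0$ to give $c\,\psi(\rho)\le \psi'(\rho)\le d\,\psi(\rho)$ on all of $\{\rho\neq 0\}$, as required.

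There is no real obstacle; the only point worth double-checking is that $\psi$ is indeed continuous and strictly positive on $S$, which is exactly the content of Corollary \ref{eq:stein}, and that the compactness of $S$ makes the min/max well-defined. Note that we only need the inequality on $\{\rho\neq 0\}$, which matches the statement (the ratio $\psi'/\psi$ need not extend continuously through the origin, but this is irrelevant since the conclusion explicitly excludes $\rho=0$).
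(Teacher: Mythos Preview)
Your proof is correct and follows exactly the same approach as the paper: restrict to the compact sphere $\{|\rho|=1\}$, use Corollary \ref{eq:stein} for strict positivity, and extend by homogeneity. The paper's proof is simply a terser version of what you wrote.
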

\begin{proof}
Both $\psi$ and $\psi'$ are determined by their values on the compact set $\{|\rho|=1\}$ on which they take strictly positive values by Corollary \ref{eq:stein}. From this the lemma easily follows.
\end{proof}

\subsection{Gradings}
\label{sec:canonical}
Let  $z_1,\ldots,z_k$ be toric coordinates of an algebraic
torus $T=(\CC^\ast)^k$. According to \S\ref{sec:grp1} a grading on $T$ can be given by an everywhere non-zero
section of $\omega_\CC^{\otimes -2}$. 
We may take
\begin{equation}
\label{eq:firstgrading}
\left(z_1\partial_1 \wedge \cdots\wedge z_k\partial z_k\right)^{\otimes 2}.
\end{equation}
This does not depend on the choice of toric coordinates. 
Writing $T=TM$ where $M=(S^1)^k$ one checks that the fibers of $TM\r M$ define a polarization of $TM$ in the sense of Remark \ref{rem:pol}.
Hence we see that $T$ has a second ``canonical grading'', as explained in \S\ref{sec:polar}.
\begin{lemma} The two gradings on $T$ we have defined are the same. 
\end{lemma}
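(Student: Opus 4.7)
The plan is to compare the two everywhere non-zero sections of $(\wedge^k_{\mathbb{C}} TT)^{\otimes 2}$ that represent the two gradings, via Lemma~\ref{lem:S1grading}, in the logarithmic polar coordinates $w_j=\rho_j+i\eta_j$ (so that $z_j=e^{w_j}$) introduced in \S\ref{subsec:some_formulas}.

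First, I will identify, locally, the grading produced by the polarization. Writing $M=(S^1)^k$ and viewing $T=(\mathbb{C}^\ast)^k$ as the total space of $TM\to M$, the base coordinates are the $(\eta_j)_j$ and the fiber coordinates are the $(\rho_j)_j$, so the canonical polarization $L\subset TT$ (the vertical tangent bundle of $TM\to M$) is locally spanned by $\partial_{\rho_1},\dots,\partial_{\rho_k}$. Consequently, by the construction in \S\ref{sec:polar}, its associated grading is represented by the everywhere non-zero section $(\partial_{\rho_1}\wedge\cdots\wedge\partial_{\rho_k})^{\otimes 2}$ of $(\wedge^k_{\mathbb{C}} TT)^{\otimes 2}$.

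Next, I will compute the image of $\partial_{\rho_j}$ under the canonical identification of complex vector bundles $(TT,J)\xrightarrow{\sim} T^{1,0}T$, given by $v\mapsto \tfrac{1}{2}(v-iJv)$. Using $J\partial_{\rho_j}=\partial_{\eta_j}$ from \S\ref{subsec:some_formulas} together with $\partial_{w_j}=\tfrac{1}{2}(\partial_{\rho_j}-i\partial_{\eta_j})$ and the chain rule $\partial_{w_j}=z_j\partial_{z_j}$, we get
\[
\partial_{\rho_j}\;\longmapsto\; \tfrac{1}{2}(\partial_{\rho_j}-i\partial_{\eta_j})=\partial_{w_j}=z_j\partial_{z_j}.
\]
Taking the $k$-th complex exterior power and squaring, the section $(\partial_{\rho_1}\wedge\cdots\wedge\partial_{\rho_k})^{\otimes 2}$ is identified with the section $(z_1\partial_{z_1}\wedge\cdots\wedge z_k\partial_{z_k})^{\otimes 2}$ of $\omega_{\mathbb{C}}^{\otimes -2}$ from \eqref{eq:firstgrading}. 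Hence by Lemma~\ref{lem:S1grading} the two gradings coincide.

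There is essentially no obstacle beyond setting up the bookkeeping between the real tangent bundle with its complex structure $J$ and the holomorphic tangent bundle: once the identification $\partial_{\rho_j}\leftrightarrow z_j\partial_{z_j}$ is in hand, equality of the two sections (and hence of the two gradings, which in fact are strictly equal, not merely homotopic) is immediate; coordinate-independence of \eqref{eq:firstgrading} was already noted in the text, and coordinate-independence of the polarization side is manifest in its definition.
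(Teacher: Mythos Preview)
Your proof is correct and in fact slightly sharper than the paper's. Both arguments work in logarithmic polar coordinates and both hinge on the identity $z_j\partial_{z_j}=\partial_{w_j}=\tfrac{1}{2}(\partial_{\rho_j}-i\partial_{\eta_j})$. The paper, however, writes down two a priori distinct real line subbundles of $\omega_\CC^{-1}$ (one spanned by $(\partial_{\rho_1}-i\partial_{\eta_1})\wedge\cdots\wedge(\partial_{\rho_k}-i\partial_{\eta_k})$, the other by $\partial_{\rho_1}\wedge\cdots\wedge\partial_{\rho_k}$) and then interpolates between them via $(\partial_{\rho_1}-it\partial_{\eta_1})\wedge\cdots\wedge(\partial_{\rho_k}-it\partial_{\eta_k})$, $t\in[0,1]$, to conclude the gradings are homotopic. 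You instead make explicit the $\CC$-linear isomorphism $(TT,J)\xrightarrow{\sim}T^{1,0}T$, $v\mapsto\tfrac12(v-iJv)$, under which $\partial_{\rho_j}\mapsto z_j\partial_{z_j}$, and observe that the two sections of $(\wedge^k_\CC TT)^{\otimes 2}$ literally coincide. This buys you strict equality rather than mere homotopy and dispenses with the interpolation step; the paper's approach, on the other hand, avoids having to name the isomorphism $(TT,J)\cong T^{1,0}T$ explicitly.
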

\begin{proof}
We choose logarithmic polar coordinates $w= \rho+i\eta$ on $\CC^\ast$ so that $z=e^w$. Then $z\partial_z=\partial_w=(1/2)(\partial_\rho-i\partial_\eta)$ with $\eta\in \RR/2\pi \ZZ$.
The real line subbundles of  $\omega^{-1}_\CC$ associated to the two gradings respectively have bases:
\[
\left(\partial_{\rho_1}-i\partial_{\eta_1}\right)\wedge\cdots\wedge
\left(\partial_{\rho_k}-i\partial_{\eta_k}\right)
\]
and
\[
\partial_{\rho_1}\wedge\cdots \wedge\partial_{\rho_k}.
\]
The line bundle generated by 
\[
\left(\partial_{\rho_1}-it\partial_{\eta_1}\right)\wedge\cdots\wedge
\left(\partial_{\rho_k}-it\partial_{\eta_k}\right)
\]
for $t\in [0,1]$ interpolates between them.
\end{proof}

\subsection{The Legendre transform revisited}
\label{sec:legrevisited}
In this section we assume that $\psi$ is a homogeneous potential of degree two. Otherwise we use the notations from \S\ref{sec:weight2}.
\begin{lemma} \begin{enumerate}
\item $\Phi_\psi$ is a diffeomorphism $N^\ast_\RR-\{0\}\r N_\RR-\{0\}$ which is linear on the half-rays starting in the origin.
\item Let $\psi'$ be a K\"ahler potential (i.e.\ $\psi'$ is smooth) on $N^\ast_\RR$ which agrees with $\psi$ outside a compact set. Then $\Phi_{\psi'}:N^\ast_\RR\r N_\RR$ is a diffeomorphism.
\end{enumerate}
\end{lemma}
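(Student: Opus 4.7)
The plan is to handle (1) first and then use it to establish the properness needed for (2).

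For part (1), I start from the fact that $\psi$ is homogeneous of degree two on $N^\ast_\RR-\{0\}$, so Euler's identity gives that each $\partial_{\rho_i}\psi$ is positively homogeneous of degree one. Hence $\Phi_\psi(\lambda\rho)=\lambda\Phi_\psi(\rho)$ for $\lambda>0$, which is the linearity on half-rays. Next, I show $\Phi_\psi$ does not vanish on $N^\ast_\RR-\{0\}$: by \eqref{eq:gradg100}, vanishing of $\Phi_\psi(\rho)$ is equivalent to $(\grad_g\psi)_\rho=0$, but by \eqref{eq:normgradg1} this would force $\psi(\rho)=0$, contradicting $\psi(\rho)>0$ for $\rho\neq 0$ from Corollary~\ref{eq:stein}. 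Combined with the fact (quoted before the lemma from \cite{legendre}) that $\Phi_\psi$ is an injective immersion, radial projection yields a smooth map $S^{k-1}\to S^{k-1}$, $\rho\mapsto\Phi_\psi(\rho)/|\Phi_\psi(\rho)|$, which is a local diffeomorphism (immersion between equidimensional manifolds), and is injective because $\Phi_\psi(\rho)/|\Phi_\psi(\rho)|=\Phi_\psi(\rho')/|\Phi_\psi(\rho')|$ with both on the unit sphere means $\Phi_\psi(\rho)=t\Phi_\psi(\rho')=\Phi_\psi(t\rho')$ for some $t>0$, hence $\rho=t\rho'$ by injectivity of $\Phi_\psi$ and then $t=1$. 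An injective local diffeomorphism of compact connected manifolds of the same dimension is a diffeomorphism, and extending by homogeneity produces the diffeomorphism $N^\ast_\RR-\{0\}\to N_\RR-\{0\}$.

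For part (2), I observe that $\psi'$ is globally smooth on $N^\ast_\RR$ with positive-definite Hessian (being a K\"ahler potential of a Kähler form depending only on $\rho$, via \eqref{eq:zhoumetric}), so the Legendre theory again gives that $\Phi_{\psi'}$ is an injective immersion. The key new point is properness. Outside some large compact set $K\subset N^\ast_\RR$, $\psi'=\psi$ and hence $\Phi_{\psi'}=\Phi_\psi$; by part (1), the continuous function $\rho\mapsto|\Phi_\psi(\rho)|$ on the unit sphere is strictly positive and thus bounded below by some $c>0$, so by the degree-one homogeneity $|\Phi_\psi(\rho)|\geq c|\rho|$ for all $\rho\neq 0$. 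This forces $|\Phi_{\psi'}(\rho)|\to\infty$ as $|\rho|\to\infty$, so $\Phi_{\psi'}$ is proper.

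Finally, an injective immersion $\Phi_{\psi'}\colon N^\ast_\RR\to N_\RR$ of equidimensional manifolds is a local diffeomorphism, hence an open map; properness makes it a closed map; since $N_\RR$ is connected and the image is nonempty (e.g.\ contains $\Phi_\psi$ of anything outside $K$), the image is all of $N_\RR$. A bijective local diffeomorphism is a diffeomorphism, proving (2). The main point to get right is the nonvanishing/lower bound for $\Phi_\psi$ away from the origin in part (1), since this is exactly the input that rescues properness on the (possibly ill-behaved) compact region where $\psi'$ differs from $\psi$ in part (2); everything else is a standard packaging of Legendre transforms plus the covering-space argument.
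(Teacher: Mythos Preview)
Your argument is correct and self-contained, but it differs in organization from the paper. For part~(1) the paper simply cites \cite{Zhou}, whereas you supply a direct proof via the induced self-map of $S^{k-1}$. One point you pass over quickly deserves a sentence: the claim that $\rho\mapsto\Phi_\psi(\rho)/|\Phi_\psi(\rho)|$ is an immersion does not follow just from $\Phi_\psi$ being an immersion, since postcomposing with the radial projection could kill a tangent direction. The fix is immediate from homogeneity: differentiating $\Phi_\psi(\lambda\rho)=\lambda\Phi_\psi(\rho)$ at $\lambda=1$ gives $d\Phi_\psi|_\rho(\rho)=\Phi_\psi(\rho)$, so the radial direction in the target is hit exactly by the radial direction in the source, and the Hessian being invertible forces $d\Phi_\psi(T_\rho S^{k-1})$ to be transverse to it.

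For part~(2) your route and the paper's are genuinely different packagings of the same control at infinity. You extract from part~(1) the lower bound $|\Phi_\psi(\rho)|\ge c|\rho|$, deduce properness of $\Phi_{\psi'}$, and finish with the standard ``proper injective local diffeomorphism onto a connected manifold is a diffeomorphism'' argument. The paper instead argues by hand: taking a large sphere $\partial K$ outside of which $\psi'=\psi$, part~(1) shows $\Phi_{\psi'}$ maps $N^\ast_\RR\setminus K$ diffeomorphically onto the exterior of a compact region $K'$, and then the complement $S$ of the image satisfies $K'=S\amalg\Phi_{\psi'}(K)$ with both pieces closed; connectedness of $K'$ forces $S=\emptyset$. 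Your properness argument is cleaner and more portable; the paper's decomposition is more explicit about exactly which compact region might fail to be hit. Either way the substance is the same: part~(1) controls the map at infinity, and an open--closed argument on a connected target does the rest.
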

\begin{proof}
\begin{enumerate}
\item \label{it:zhou} This is explained in \cite{Zhou} after Definition 2.6.
\item We already know that $\Phi_{\psi'}$ is an injective immersion 
  (see \S\ref{sec:legendre0}). It follows from the inverse function theorem that  $\Phi_{\psi'}$ is open (alternatively one may invoke \ the ``Invariance of domain'' theorem \cite{MR1511658}).
Let $S$ be the complement of the 
  image of $\Phi_{\psi'}$. Then $S$ is closed.  Assume that outside the sphere $K\subset N^\ast_\RR$ centred at $0$ the potentials $\psi$ and $\psi'$ agree. Let $K'$ be the compact set in $N_\RR$ which consists of the points inside $\Phi_{\psi}(\partial K)$.
Then $\psi'$ defines a diffeomorphism between $N^\ast_\RR-K$ and $N_\RR-K'$.  Hence we have $K'=S\coprod \Phi_{\psi'}(K)$ with $ \Phi_{\psi'}(K)$ being compact and hence closed. But this is impossible since
$K'$ is contractible and hence connected. 
\qedhere
\end{enumerate}
\end{proof}
It follows that we obtain a diffeomorphism
\begin{equation}
\label{eq:legendrediffeo1}
\Id\times\Phi_{\psi'}:T(S_{N^\ast})\r T^{\ast}(S_{N^\ast})
\end{equation}
extending the partially defined one in \eqref{eq:legendre}.

 If we let $T^{\ast,\infty}(S_{N^\ast})$, $T^{\infty}(S_{N^\ast})$ be as in \S\ref{sec:fltz} then \eqref{eq:legendrediffeo1} induces a diffeomorphism
\[
\Id\times\Phi^\infty_\psi:T^{\infty}(S_{N^\ast})\r T^{\ast,\infty}(S_{N^\ast}).
\]
For use below we record the following.
\begin{lemma} \label{lem:canonicalgrading}
Under $\Id\times \Phi_{\psi'}$ the canonical grading on $T(S_{N^\ast})$ (see \S\ref{sec:canonical}) corresponds to the canonical grading on $T^\ast(S_{N^\ast})$ (see Example \ref{ex:canonical}).
\end{lemma}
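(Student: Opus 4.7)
The plan is to verify that the polarization on $T(S_{N^{\ast}})$ inducing the canonical grading (see \S\ref{sec:canonical}) is mapped by $d(\Id\times\Phi_{\psi'})$ to a polarization homotopic to the one on $T^{\ast}(S_{N^{\ast}})$ inducing its canonical grading (Example \ref{ex:canonical}), and then appeal to Lemma \ref{lem:S1grading} and Remark \ref{rem:pol}.

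First, I would spell out the two polarizations in coordinates. Using logarithmic polar coordinates $w_j=\rho_j+i\eta_j$ as in \S\ref{subsec:some_formulas}, the canonical grading on $T(S_{N^{\ast}})=S_{N^{\ast}}\times N^{\ast}_{\RR}$ comes (by the computation in the preceding subsection) from the fiberwise polarization spanned by $\partial_{\rho_1},\ldots,\partial_{\rho_k}$. On the target, with coordinates $(\eta_j,p_j)$ as in \S\ref{sec:legendre0}, the canonical grading from Example \ref{ex:canonical} is given by the vertical polarization spanned by $\partial_{p_1},\ldots,\partial_{p_k}$.

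Next, I would compute the differential of $\Id\times\Phi_{\psi'}$. Since $p_i=\partial_{\rho_i}\psi'$, we have $d(\Id\times\Phi_{\psi'})(\partial_{\eta_i})=\partial_{\eta_i}$ and
\[
d(\Id\times\Phi_{\psi'})(\partial_{\rho_j})=\sum_{i}(\partial_{\rho_j}\partial_{\rho_i}\psi')\,\partial_{p_i}\,.
\]
Because $\psi'$ is strictly convex (equivalently, the Hessian $H=(\partial_{\rho_j}\partial_{\rho_i}\psi')_{ij}$ is positive definite everywhere), the image of $\spantwo\{\partial_{\rho_j}\}_j$ coincides with $\spantwo\{\partial_{p_i}\}_i$. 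Hence the two polarizations correspond under $\Id\times\Phi_{\psi'}$.

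Finally, taking top exterior powers:
\[
(\wedge^k d(\Id\times\Phi_{\psi'}))(\partial_{\rho_1}\wedge\cdots\wedge\partial_{\rho_k})=\det(H)\,\partial_{p_1}\wedge\cdots\wedge\partial_{p_k},
\]
and $\det(H)>0$ since $H$ is positive definite. By Lemma \ref{lem:S1grading}, the gradings are represented by the squares of these sections of $\wedge^k_{\CC}(-)$, so they are related by a strictly positive scalar, and hence homotopic via linear interpolation of positive multiples (using $(\underline{\RR}_{>0},\times)\cong(\underline{\RR},+)$ as in \S\ref{sec:grp1}). This identifies the two gradings, as desired.

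The only substantive step is the positivity of $\det(H)$, which is immediate from strict convexity of $\psi'$; once this is in hand, the computation of the differential and of the induced map on top exterior powers is essentially tautological. The lemma is therefore a formal consequence of the Legendre transform interchanging the two natural polarizations.
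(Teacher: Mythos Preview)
Your proof is correct and follows essentially the same approach as the paper: both arguments observe that the Lagrangian foliation by fibers of $T(S_{N^\ast})\to S_{N^\ast}$ (spanned by the $\partial_{\rho_j}$) is carried by $\Id\times\Phi_{\psi'}$ to the Lagrangian foliation by fibers of $T^\ast(S_{N^\ast})\to S_{N^\ast}$ (spanned by the $\partial_{p_i}$), and since equal polarizations yield equal gradings, the conclusion follows. The paper simply notes that the map is fiber-preserving by construction, whereas you compute the differential explicitly via the Hessian; your final paragraph checking $\det(H)>0$ and homotoping is in fact unnecessary, since once the two polarizations coincide as subbundles the gradings coincide on the nose (the grading from a polarization $L$ depends only on the line $(\wedge^k_\RR L)^{\otimes 2}$, not on a choice of trivializing section).
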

\begin{proof} The fibers of  $T^\ast(S_{N^\ast})\r S_{N^\ast}$ form a Lagrangian foliation. Likewise
the fibers of $T(S_{N^\ast})\r S_{N^\ast}$ form a Lagrangian foliation by the formulas \eqref{eq:omega}. The two Lagrangian foliations correspond to each other via  $\Id\times \Phi_{\psi'}$.
Hence the corresponding gradings correspond. These are the gradings defined in \S\ref{sec:canonical} and Example \ref{ex:canonical}.
\end{proof}
\section{Some families of hypersurfaces in tori}
\subsection{Families of Laurent polynomials}\label{subsec:families}
We use the notations as in \S\ref{sec:prindet} without further comment. 
We equip $(\CC^\ast)^k$ with a toric K\"ahler potential $\psi$ as in \S\ref{sec:weight2}.
For $\alpha\in \CC^A-V(A)$ we consider $f^{-1}_\alpha(0)\subset (\CC^\ast)^k$. To avoid trivialities we assume $|A|>1$ so that $f^{-1}_\alpha(0)$ is non-empty by Lemma \ref{lem:notzero}. 
We equip $f^{-1}_\alpha(0)$ with the K\"ahler structure induced from $(\CC^\ast)^k$.  In this section we prove the following result.
\begin{theorem} \label{prop:Dlocally} The family of exact symplectic manifolds $(f^{-1}_\alpha(0))_{\alpha\in \CC^A-V(A)}$ is a family of Liouville manifolds which locally has
families of generating Liouville domains.
\end{theorem}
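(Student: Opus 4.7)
The strategy is to apply Proposition \ref{prop:fiber_families} locally on $B = \CC^A - V(A)$. Fix $\alpha_0 \in B$ and choose a precompact open neighborhood $U$ of $\alpha_0$. Equip $(\CC^\ast)^k$ with a K\"ahler potential $\psi$ homogeneous of degree two (see \S\ref{sec:weight2}), with associated Liouville data $(\theta, \omega, Z_\theta)$ as in \eqref{eq:kahlercase}. Set $f : U \times (\CC^\ast)^k \to \CC$, $(\alpha, x) \mapsto f_\alpha(x)$, and let $\pi : f^{-1}(0) \to U$ be the projection. I will verify the three hypotheses of Proposition \ref{prop:fiber_families} for this family.

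Two of these hypotheses are immediate. The completeness bound \eqref{eq:completeness} holds with $A=2$, $B=0$ because \eqref{eq:normgradg1} gives $Z_\theta \psi = g(Z_\theta, Z_\theta) = 2\psi$. Since each $f_\alpha$ is holomorphic on $(\CC^\ast)^k$, one has $\bar\partial f_\alpha = 0$, so the bound $|\bar\partial f_\alpha| \le r\,|\partial f_\alpha|$ is trivially satisfied with $r = 0$. Furthermore $\partial f_\alpha$ is nowhere vanishing on $f_\alpha^{-1}(0) \subset (\CC^\ast)^k$: if there existed $x \in (\CC^\ast)^k$ with $f_\alpha(x) = 0$ and all $\partial_i f_\alpha(x) = 0$, then $x$ would lie in the singular locus of $V(f_\alpha)$, so $\alpha \in \nabla_P$, whence $\alpha \in V(A)$ by Proposition \ref{lem:VEA=VA}, contradicting $\alpha \in B$.

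The main obstacle is the properness requirement: for every compact $K \subset U$, the set $T_K := \{\sin\tau \le 0\} \cap f^{-1}(0) \cap \pi^{-1}(K)$ must be compact, where $\tau \in [0, \pi/2]$ is the Hermitian angle between $Z_\theta$ and $\partial f_\alpha$ computed along the fibers. Because $\tau = 0$ forces $Z_\theta$ to be $\CC$-proportional to the metric-dual of $\partial f_\alpha$ (with $\tau$ set to $0$ also at the compact zero locus $\{\rho = 0\}$ of $Z_\theta$, cf.\ \eqref{eq:gradg1}), bounding $T_K$ amounts to bounding the critical set of $\psi|_{f_\alpha^{-1}(0)}$ uniformly in $\alpha \in K$.

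To establish this I would use a tropical estimate. Choose a strictly convex $\nu : A \to \ZZ$ with associated amoeba $\Pi_\nu$ and cones $C_{a_\ell}$ as in \S\ref{sec:polyhedral}. For $\rho = \Log(x)$ outside a bounded neighborhood of $\Pi_\nu$, say $\rho$ deep in some cone $C_{a_\ell}$, the polynomial $f_\alpha$ is dominated by its single monomial $\alpha_\ell x^{a_\ell}$ (cf.\ Remark \ref{rem:dominance}), and hence $\partial f_\alpha$ points in an approximately constant complex direction determined by $a_\ell \in \RR^k$. On the other hand, $Z_\theta$ is the real vector field $\sum_j \rho_j \partial_{\rho_j}$, so the condition $\tau = 0$ forces $\rho$ to be a positive real multiple of $a_\ell$ (up to a controlled perturbation), while $f_\alpha(x) = 0$ forces $\rho$ to remain near $\Pi_\nu$; these two constraints are jointly incompatible once $|\rho|$ is sufficiently large. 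Continuous dependence of the estimates on $\alpha$ together with compactness of $K$ delivers a single bounded region of $K \times (\CC^\ast)^k$ containing $T_K$, which is then closed in this region and therefore compact. With all three hypotheses verified, Proposition \ref{prop:fiber_families} yields a family of Liouville manifolds with a family of generating Liouville domains over $U$; since this holds in a neighborhood of every point of $B$, the theorem follows.
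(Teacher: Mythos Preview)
Your overall framework is correct and matches the paper: reduce to Proposition~\ref{prop:fiber_families} with $r=0$, using that $f_\alpha$ is holomorphic, that $Z_\theta\psi=2\psi$ gives the completeness bound, and that $\alpha\notin\nabla_P$ gives $\partial f_\alpha\neq 0$ on the fiber. The only nontrivial input is the properness of $\{\tau=0\}$ over compacts, which the paper isolates as Proposition~\ref{lem:compactangle0}.

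Your tropical sketch for this properness step has a genuine gap. You argue that if $\rho=\Log(x)$ lies deep in a chamber $C_{a_\ell}$ then a single monomial dominates, and you then claim the constraints ``$\rho\parallel a_\ell$'' and ``$\rho$ near $\Pi_\nu$'' are incompatible for large $|\rho|$. But $\Pi_\nu$ is \emph{unbounded}: the amoeba of $f_\alpha^{-1}(0)$ has tentacles going to infinity along directions dual to the positive-dimensional faces $F$ of $P$, and along such a tentacle no single monomial dominates --- rather all monomials in $F$ are comparable and the rest are negligible. Your dichotomy ``either deep in a chamber, or bounded'' is false, and the case you omit is precisely the hard one. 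Note too that your argument only invokes $\alpha\notin\nabla_P$, whereas the full hypothesis is $\alpha\notin V(A)=\bigcup_F p_F^{-1}(\nabla_F)$; the face conditions for $\dim F\ge 1$ are exactly what control the behaviour on the tentacles, and any correct proof must use them.

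The paper's argument (Proposition~\ref{lem:compactangle0}) handles all faces uniformly by a compactness trick: given an escaping sequence $(x^{(\ell)},\alpha^{(\ell)})$ with $\tau^{(\ell)}\le\delta$, pass to a subsequence so that $((x^{(\ell)})^{a_1}:\cdots:(x^{(\ell)})^{a_d})$ converges in the projective toric variety $X_A$. By Proposition~\ref{prop:XA} the limit lies in the orbit attached to some face $F$, which forces the ``partial'' coordinates of $x^{(\ell)}$ (after an \'etale cover) to converge while the rest escape. One then rescales by a monomial $x^{-a_l}$ with $a_l\in F$, uses $\bar\alpha\notin p_F^{-1}(\nabla_F)$ to get $\partial g^F_{p_F(\bar\alpha)}(\bar x)\neq 0$, and combines this with the explicit form $\grad_g\psi=\sum_j\log|x_j|(z_j\partial_{z_j}+\bar z_j\bar\partial_{z_j})$ to show $\cos\tau^{(\ell)}\to 0$, contradicting $\tau^{(\ell)}\le\delta<\pi/2$. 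Your tropical heuristic could in principle be made to work face by face along these lines, but as written it stops at the vertex case.
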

\begin{proof} We apply Proposition \ref{prop:fiber_families} with $N=(\CC^\ast)^k$ and
$r=0$ together with Proposition \ref{lem:compactangle0} below with $\delta=0$. 
\end{proof}
As the above proof shows, the following proposition is much more general than what we need. However it seems that the case $\delta=0$, which assert that the
critical points of $\grad_g\psi \mid f^{-1}_\alpha(0)$ for $\alpha\in K$ form a compact set, is not easier to prove. Moreover some results obtained in the proof will be reused in
the proof of
 Theorem \ref{prop:tropical} below.
\begin{proposition} \label{lem:compactangle0}
Let $K\subset \CC^A-V(A)$ be a compact set. For $\alpha\in K$ let $\tau$ be the Hermitian angle (see \S\ref{sec:hermitian} below) between $\grad_g\psi$ and $\partial f_\alpha$ on $f^{-1}_\alpha(0)$ (so
$\tau$ is a function on $N_K:=\bigcup_{\alpha\in K}  f^{-1}_\alpha(0)\subset (\CC^\ast)^k\times K$). Then for $0\le \delta<\pi/2$, the set $\{\tau \le \delta\}$ is a compact subset of $N_K$.
\end{proposition}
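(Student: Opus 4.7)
The plan is to prove compactness of $\{\tau\le\delta\}$ by contradiction, combining a tropical analysis with the assumption $\alpha\notin V(A)$. Closedness is immediate from the continuity of $\tau$. Since $K$ is compact, the only way to escape $N_K$ is via $|\rho(x)|=|\log|x||\to\infty$. Suppose a sequence $(x_n,\alpha_n)\in\{\tau\le\delta\}$ with $|\rho_n|\to\infty$ exists. After passing to subsequences, $\alpha_n\to\alpha_\infty\in K\subset\CC^A-V(A)$, $v_n:=\rho_n/|\rho_n|\to v\in S^{k-1}$, and (using that $X_A$ is compact in $\PP^{d-1}$) $\bar x_n:=[x_n^{a_1}:\cdots:x_n^{a_d}]\to\bar x_\infty$ on the orbit of some face $F'$ of $P$, see Proposition \ref{prop:XA}. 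Let $F:=\{a_i\in A:a_i\cdot v=\max_j a_j\cdot v\}$ be the face of $P$ dominating along $v$; then $F'\subseteq F$. Choose $a^*\in F'$ maximizing $a\cdot\rho_n$ over $a_i\in F$ (constant along a further subsequence); it is a vertex of $F$, hence of $P$, so by Lemma \ref{lem:convenient} $\alpha_{\infty,*}\ne 0$. The case $F=\{a^*\}$ is ruled out directly: the dominant term forces $f_{\alpha_n}(x_n)\ne 0$, contradicting $x_n\in f_{\alpha_n}^{-1}(0)$. Hence $|F|\ge 2$.

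The main calculation reduces $\cos\tau$ to a geometric expression in $\CC^k$. Using $\grad_g\psi=\sum_j\rho_j\partial_{\rho_j}$ (homogeneity of degree two, \eqref{eq:gradg1}) together with Proposition \ref{prop:bounds} and Remark \ref{rem:propforforms}, the Hermitian angle may be computed modulo bounded factors in the standard flat metric, giving
\[
\cos\tau\;=\;\frac{|v^\alpha\cdot\rho|}{|\rho|\,|v^\alpha|},\qquad v^\alpha:=\sum_{i}\alpha_i x^{a_i}\,a_i\in\CC^k.
\]
Setting $\mu_{n,i}:=\alpha_{n,i}x_n^{a_i-a^*}$ (so $\sum_i\mu_{n,i}=x_n^{-a^*}f_{\alpha_n}(x_n)=0$) and factoring $v^\alpha=x_n^{a^*}W_n$ with $W_n:=\sum_i\mu_{n,i}(a_i-a^*)$, one obtains $\cos\tau_n=|W_n\cdot v_n|/|W_n|$. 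Since $(a_i-a^*)\cdot v=0$ for $a_i\in F$, while $|\mu_{n,i}|\le Ce^{-c|\rho_n|}$ for $a_i\notin F$, we have $W_n\cdot v=O(e^{-c|\rho_n|})$, hence
\[
\cos\tau_n\;\le\;|v_n-v|\;+\;O\!\left(\frac{e^{-c|\rho_n|}}{|W_n|}\right).
\]
If $|W_n|$ stays bounded below, the right-hand side tends to $0$, contradicting $\cos\tau_n\ge\cos\delta>0$.

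The main obstacle is the degenerate case $|W_n|\to 0$, where the assumption $\alpha_\infty\notin V(A)$ enters. Our choice of $a^*$ ensures $|x_n^{a_i-a^*}|\le 1$ for $a_i\in F$, so the $\mu_{n,i}$ are bounded; after a further subsequence $\mu_{n,i}\to\mu_{\infty,i}$. The convergence $\bar x_n\to\bar x_\infty$, combined with $\bar x_{\infty,*}\ne 0$, forces $\mu_{\infty,i}=\alpha_{\infty,i}y^{a_i-a^*}$ for $a_i\in F'$ (where $y\in(\CC^*)^k$ lifts the orbit representative $\bar x_\infty$) and $\mu_{\infty,i}=0$ otherwise. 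The limiting identity $W_n\to 0$ together with $\sum_i\mu_{\infty,i}=0$ then yields
\[
f^{F'}_{\alpha_\infty}(y)=0,\qquad \partial_{w_j}f^{F'}_{\alpha_\infty}(y)=0\text{ for all }j,
\]
so $y$ is a singular point of $V(f^{F'}_{\alpha_\infty})\cap(\CC^*)^k$. Therefore $p_{F'}(\alpha_\infty)\in\nabla_{F'}$ and hence $\alpha_\infty\in V(A)$, contradicting the hypothesis $K\cap V(A)=\emptyset$.
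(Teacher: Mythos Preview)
Your proof is correct and follows essentially the same route as the paper's: argue by contradiction with an escaping sequence, use compactness of $X_A$ to land on a face, and invoke $\alpha_\infty\notin V(A)$ to obtain the contradiction. The only real difference is organizational: the paper passes to an \'etale cover (via Proposition~\ref{prop:affine}) so that the face from the $X_A$-limit becomes the first $p$ coordinates, and then shows separately that $|\partial g|\not\to 0$ and $\partial g(\operatorname{Ngrad}\psi)\to 0$; you instead avoid the cover by working directly with $W_n=x_n^{-a^*}v^{\alpha_n}$ and an auxiliary face $F$ determined by the limiting direction $v$, packaging the same two facts as the dichotomy ``$|W_n|$ bounded below'' versus ``$|W_n|\to 0$''.
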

\begin{proof} Assume that $\{\tau \le \delta\}$ is not compact. So choose a sequence $(x^{(\ell)},\alpha^{(\ell)})_\ell$ in $N_K\subset (\CC^\ast)^k\times K$ such that $\tau^{(\ell)}:=\tau(x^{(\ell)},\alpha^{(\ell)})\le \delta$, which does not
have a convergent subsequence. We will obtain a contradiction.

Since $(x^{(\ell)})_{\ell}$ must be unbounded (otherwise there would be a convergent subsequence) we may, as $\psi$ is proper by Corollary \ref{eq:stein}, replace  $(x^{(\ell)})_{\ell}$ by a subsequence such that 
\begin{equation}
\label{eq:unbounded}
\lim_{\ell\r\infty} \psi(x^{(\ell)})=\infty.
\end{equation}
After further replacing  $(x^{(\ell)},\alpha^{(\ell)})_\ell$ by a subsequence we may assume that  $((x^{(\ell)})^{a_1}:\cdots:(x^{(\ell)})^{a_d})_\ell$ is convergent in $X_{A}$ 
(see \S\ref{sec:Atoric}) and $(\alpha^{(\ell)})_\ell$ is convergent
with a limit $\bar{\alpha}$ in $K\subset \CC^{d}-V(A)$ (since $(\alpha^{(\ell)})_\ell\subset K$ and $K$ is compact).

By Proposition \ref{prop:XA} there exists a face $F$ of $P$ such that the limit of the sequence  $((x^{(\ell)})^{a_1}:\cdots:(x^{(\ell)})^{a_d})_\ell$ 
lies in the $(\CC^\ast)^k$-orbit corresponding to $F$. Choosing an arbitrary ${a}_l\in F$ this implies:
\begin{equation}\label{eq:facelimit1}
\lim_{\ell\to \infty} ({x}^{(\ell)})^{{a}_j-{a}_l}=
\begin{cases}
\text{exists $\neq 0$} & \text{if ${a}_j\in F$},\\
0&\text{if ${a}_j\not\in F$}.
\end{cases}
\end{equation}
Put $p=\dim F$. After pullback for a suitable \'etale covering $e:(\CC^\ast)^k\r (\CC^\ast)^k$, and replacing $A$ by $A\circ e$
we may assume that 
  $x^{a_i-a_j}$, $a_i,a_j\in F$ are monomials in $(x_l)_{l=1,\ldots, p}$.
We choose a lift of  the sequence $(x^{(\ell)})_\ell\subset  (\CC^\ast)^k$ for the \'etale cover $e$ and denote it also by  $(x^{(\ell)})_\ell$.
Note that by Proposition \ref{prop:affine}  $V(A)$ is invariant under \'etale coverings.

With these simplifications we observe that $(x^{a_i-a_l})_{a_i\in F}$ determines
$(x_i)_{i=1}^p$ up to a finite number of possibilities. By  \eqref{eq:facelimit1} we may replace  $(x^{(\ell)},\alpha^{(\ell)})_\ell$ by a further subsequence
such that 
\begin{equation}
\label{eq:ulimit1}
\bar{x}_i= \lim_{\ell\to\infty} x^{(\ell)}_i\qquad  \text{exists and is nonzero for $i=1,\ldots,p$}.
\end{equation}
By \eqref{eq:psigradient} we get
\begin{equation}
\label{eq:gradu10}
\grad_g\psi=\sum_i\log|x_i| (x_i\partial_{x_i}+\bar{x}_i\bar{\partial}_{x_i})
\end{equation}
while by definition
\[
\partial f_\alpha=\sum_i (x_i\partial_{x_i} f_\alpha) \frac{dx_i}{x_i}.
\]
Put $g_\alpha=x^{-a_l}f_\alpha$. Then on $f^{-1}_\alpha(0)$ we have
\begin{equation}
\label{eq:umult}
\partial g_\alpha=x^{-a_l}\partial f_\alpha.
\end{equation}
By \eqref{eq:facelimit1} we have
\begin{equation}
\label{eq:limitexists}
\lim_{\ell\r \infty} (x_i\partial_{x_i})( g_{\alpha^{(\ell)}})(x^{(\ell)})\text{ exists for $i=1,\ldots,k$}.
\end{equation}
We can however be more precise. If $i>p$ then, since $x^{{a}_j-{a}_l}$ for $a_j\in F$ does not depend on $x_i$, we get from \eqref{eq:facelimit1}
\begin{equation}
\label{eq:splim2}
\lim_{\ell\r \infty} (x_i\partial_{x_i})( g_{\alpha^{(\ell)}})(x^{(\ell)})=0.
\end{equation}
If $1\le i\le p$ then (see \S\ref{sec:prindet} for the notation $p_F$)
\begin{equation}
\label{eq:splim1}
\lim_{\ell\r \infty} (x_i\partial_{x_i})(g_{\alpha^{(\ell)}})(x^{(\ell)})=(x_i\partial_{x_i})( g^{F-a_l}_{p_F(\bar{\alpha})})(\bar{x}).
\end{equation}
Since $\bar\alpha\not\in V(A)=V(A-a_l)$ (cfr.\ Proposition \ref{prop:affine}), we have $(\partial g^{F-a_l}_{p_F(\bar{\alpha})})(\bar{x})\neq 0$ and hence
there is some $1\le i\le p$ such that 
\begin{equation}\label{eq:supp}
\lim_{\ell\r \infty} (x_i\partial_{x_i})( g_{\alpha^{(\ell)}})(x^{(\ell)})\neq 0
\end{equation}
which yields
\begin{equation}
\label{eq:splim3}
\lim_{\ell\r \infty} |(\partial g_{\alpha^{(\ell)}})(x^{(\ell)})|\neq 0.
\end{equation}
To see this note that it can be checked for any metric obtained from a toric potential by Proposition \ref{prop:bounds}\footnote{Proposition \ref{prop:bounds} is stated for vector fields
but there is an analogue for (complex valued holomorphic) forms by Remark \ref{rem:propforforms} combined with \eqref{eq:normproportionality}.}. Hence we can do it for the potential $\sum_i (\log |x_i|)^2$ for which it follows from \eqref{eq:zhoumetric} and \eqref{eq:supp}.

Let $\operatorname{Ngrad}_g\psi$ be the normalization of $\grad_g\psi$. Then the components of $\operatorname{Ngrad}_g\psi$ are bounded.
Note furthermore that by \eqref{eq:normgradg1}\eqref{eq:psigradient}\eqref{eq:unbounded}\eqref{eq:ulimit1} and \eqref{eq:gradu10} the components of 
$(x_i\partial_{x_i}+ \bar{x}_i\bar{\partial}_{x_i})$ in $\operatorname{Ngrad}_g\psi(x^{(\ell)})$ for $i\in \{1,\ldots,p\}$ tend to zero as $\ell\r \infty$. 
Hence by \eqref{eq:facelimit1}\eqref{eq:splim2} 
\begin{equation}
\label{eq:keypoint}
\lim_{\ell\r\infty} \partial g_{\alpha^{(\ell)}}(x^{(\ell)})(\operatorname{Ngrad}_g\psi(x^{(\ell)}))=0.
\end{equation}
Let us write $\angle_h$ for Hermitian angle (see \S\ref{sec:hermitian}). Since Hermitian angles are not affected by complex multiples, we have by \eqref{eq:umult}
\begin{align*}
\tau^{(\ell)}=\angle_h(\grad_g\psi(x^{(l)}),\partial f_{\alpha^{(\ell)}}(x^{(\ell)}))&=\angle_h(\operatorname{Ngrad}_g\psi(x^{(\ell)}),\partial g_{\alpha^{(\ell)}}(x^{(\ell)})).
\end{align*}
Then we find using \eqref{eq:splim3}\eqref{eq:keypoint}
\begin{align}\label{eq:costaul}
\lim_{\ell\r\infty}\cos\tau^{(\ell)}
&=\lim_{\ell\r\infty}\frac{|\partial g_{\alpha^{(\ell)}}(x^{(\ell)})(\operatorname{Ngrad}_g\psi(x^{(\ell)}))|}{ |\partial g_{\alpha^{(\ell)}}(x^{(\ell)})|}\\
&=0.
\end{align}
Hence $\lim_{\ell\r\infty}\tau^{(\ell)}=\pi/2$ which contradicts $0\le \tau^{(\ell)}\le \delta<\pi/2$.
\end{proof}
\subsection{Tropical localization}
\label{sec:paistropical}
We use the notations as in \S\ref{sec:prindet}-\S\ref{sec:tailoring} without further comment. These include $A$, $\nu$, $\chi_{a_i}$, $C_{a_i}$, $K_i$, $\epsilon_i$. We assume that $0\not\in \Pi_\nu$
so that by \cite[Lemma 4.6]{Abouzaid} 
\begin{equation}
\label{eq:notzero}
\{\rho=0\}\cap \Log_t(f^{-1}_{t,s}(0))=\emptyset
\end{equation}
 for $\epsilon_1,\epsilon_2$ chosen small enough and $t\gg 0$. So the homogeneous potentials of degree two
are defined on $f^{-1}_{t,s}(0)$.
If the condition $0\not\in \Pi_v$ is not satisfied then in the discussion below one should restrict to toric potentials which are defined everywhere.

\medskip

By \cite[Proposition 4.9]{Abouzaid} all $f^{-1}_{t,s}(0)$ for $s\in [0,1]$ are symplectomorphic for $t\gg 0$ (they are non-empty e.g.\ by Lemma \ref{rem:easysmooth}). We prove a slight strengthening of this result in Theorem \ref{prop:tropical} below.

For $s\in [0,1]$ we choose a smooth family $(\psi_s)_s$ of potentials, homogeneous of degree two, on $(\CC^\ast)^k-\{\rho=0\}$ (they will be further restricted later).
We decorate the concepts associated with $\psi_s$ by a subscript $(-)_s$; e.g. the associated Liouville form is denoted by $\theta_s$ and we use the same notation for
its restriction to $f^{-1}_{t,s}(0)$. We use $|-|_g$ for the norm with respect to the Hermitian metric corresponding to a Riemannian metric $g$. To avoid confusion we advise the reader
to consult Remark \ref{rem:normproportionality}.
\begin{theorem}\label{prop:tropical}
For $t\gg 0$, the family $(f^{-1}_{t,s}(0),\theta_s)_{s\in [0,1]}$ is a family of Liouville manifolds which locally has
families of generating Liouville domains.
\end{theorem}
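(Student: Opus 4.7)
The plan is to apply Proposition \ref{prop:fiber_families} with base $B = [0,1]_s$, ambient manifold $N = (\CC^\ast)^k$ equipped with the family of potentials $(\psi_s)_s$, and the function $f : B \times N \to \CC : (s,x) \mapsto f_{t,s}(x)$ for a single, sufficiently large $t$. Because Proposition \ref{prop:fiber_families} produces a single global family of generating Liouville domains, the ``locally has'' clause in the statement is automatic.

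The easy hypotheses of Proposition \ref{prop:fiber_families} I would dispose of first. By \eqref{eq:gradg1} the Liouville vector field is $Z_s = \sum_j \rho_j \partial_{\rho_j}$, independent of $s$, and \eqref{eq:normgradg1} gives $Z_s \psi_s = 2\psi_s$, so \eqref{eq:completeness} is satisfied with constants $A=2$, $B=0$ uniformly in $s$. For the ratio bound $|\bar\partial f_{t,s}| \le r |\partial f_{t,s}|$ on the fiber, I would use the splitting $f_{t,s} = f^{\text{core}}_{t,s} + f^{\text{small}}_{t,s}$ from item~(3) of \S\ref{sec:tailoring}. Since $f^{\text{core}}_{t,s}$ is holomorphic we have $\bar\partial f_{t,s} = \bar\partial f^{\text{small}}_{t,s}$, and the derivative estimate \eqref{eq:parbound} combined with the dominance estimate \eqref{eq:negligible} bounds every term of $\bar\partial f^{\text{small}}_{t,s}$ and of $\partial f^{\text{small}}_{t,s}$ by a constant multiple of $t^{-K_2 \epsilon_1}$ times the dominant monomial $t^{-\nu(a_\ell)} x^{a_\ell}$ at $p = \Log_t x \in C_{a_\ell}$. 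The lower bound \eqref{eq:mainbound} controls $|\partial f^{\text{core}}_{t,s}|$ from below by $K_3$ times the same monomial. Taking $t$ large (uniformly in $s \in [0,1]$) makes the ratio as small as we like, in particular forces $\partial f_{t,s} \neq 0$ on the fiber, so $f_{t,s}$ is a submersion there.

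The hard part will be verifying that $\{\sin \tau \le r\}$ has proper projection to $B$, where $\tau$ is the Hermitian angle between $Z_s$ and $\partial f_{t,s}$. I would adapt the proof of Proposition \ref{lem:compactangle0} as follows: suppose for contradiction that there is a sequence $(x^{(\ell)}, s^{(\ell)})$ in the total space with $\tau^{(\ell)} \le \arcsin r$ and no convergent subsequence. After passing to subsequences, $s^{(\ell)} \to \bar s \in [0,1]$, and (using properness of $\psi_{\bar s}$ together with the uniform equivalence of the metrics $g_s$ given by Proposition \ref{prop:bounds}) $\psi_{s^{(\ell)}}(x^{(\ell)}) \to \infty$ and $((x^{(\ell)})^{a_1} : \cdots : (x^{(\ell)})^{a_d})$ converges in $X_A$ into the $(\CC^\ast)^k$-orbit of some face $F$ of $P$. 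I apply the \'etale reduction and the rescaling by $x^{-a_\ell}$ for $a_\ell \in F$ exactly as in the proof of Proposition \ref{lem:compactangle0}; the key new point is that for $p = \Log_t x^{(\ell)}$ with $A_0(p) \subseteq F$ the surviving cutoffs $\chi_{a_i,s^{(\ell)}}$ are identically $1$ on $A_0(p)$, so the core part of the rescaled polynomial converges in the limit to $g^{F-a_\ell}_{p_F(\alpha_\nu)}$, while the small part is negligible by the estimates of the previous paragraph. By Lemma \ref{lem:nui} one has $\alpha_\nu \notin V(A)$, so $\partial g^{F-a_\ell}_{p_F(\alpha_\nu)}(\bar x) \neq 0$, and the argument culminating in \eqref{eq:costaul} forces $\cos \tau^{(\ell)} \to 0$, contradicting $\tau^{(\ell)} \le \arcsin r$. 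Proposition \ref{prop:fiber_families} then yields the theorem.
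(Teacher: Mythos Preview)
Your overall plan—apply Proposition~\ref{prop:fiber_families} and verify its hypotheses via the core/small splitting—is exactly the paper's strategy, and your handling of the completeness condition and the ratio bound $|\bar\partial f_{t,s}|\le r|\partial f_{t,s}|$ is the content of the paper's Lemma~\ref{lem:Cbound}.

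The compactness argument for $\{\sin\tau\le r\}$, however, has a genuine gap. You take the $X_A$-limit of the full sequence to obtain a face $F$ of $P$, then assert ``$A_0(p)\subseteq F$'' and that ``the core part of the rescaled polynomial converges to $g^{F-a_\ell}_{p_F(\alpha_\nu)}$''. The first claim is unjustified (it conflates the face structure of $P$ with the simplices of $\Delta_\nu$), and the second is false in general: the core part has support $A_0(p)$, so after rescaling it converges only to the $A_0(p)$-portion of $g^F$; the remaining terms of $F$ lie in $f^{\text{small}}$ and carry cutoff factors $\chi_{a_i,s^{(\ell)}}(p^{(\ell)})\in[1-s^{(\ell)},1]$ that need not tend to $1$. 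Hence the limit of the rescaled $f_{t,s}$ is $g^F$ with coefficients multiplied by unknown constants $c_i\in[1-\bar s,1]$, and $\alpha_\nu\notin V(A)$ no longer forces $\partial(\text{limit})\neq 0$. Your ``small part is negligible'' only gives a uniform $O(t^{-K_2\epsilon_1})$ bound, not convergence to $0$ along $\ell$.

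The paper's Lemma~\ref{lem:compactangle} sidesteps this by decomposing differently. After passing to a subsequence with $A_\ast(p^{(\ell)})$ constant (so $f^{\text{core}}$ is a \emph{fixed} Laurent polynomial), one proves the explicit inequality
\[
\cos\tau^{(\ell)}\le\frac{\cos\tau^{(\ell),\text{core}}+c^{(\ell)}}{1-c^{(\ell)}},\qquad c^{(\ell)}=\frac{|\partial f^{\text{small}}|_{g_s}}{|\partial f^{\text{core}}|_{g_s}}\le C_1 t^{-C_2},
\]
and then applies the argument of Proposition~\ref{lem:compactangle0} not to $f_{t,s}$ but directly to the Laurent polynomial $f^{\text{core}}$ (whose support is a simplex of $\Delta_\nu$ with all coefficients nonzero, so the required non-degeneracy is automatic). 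This yields $\cos\tau^{(\ell),\text{core}}\to 0$ as $\ell\to\infty$; combined with the uniform bound on $c^{(\ell)}$ one gets $\cos\tau^{(\ell)}<\sqrt{1-r^2}$ for $t\gg 0$ and $\ell\gg 0$, the contradiction. The key observation enabling this transfer is that, by \eqref{eq:gradg1}, the Liouville field has the same formula $\sum_j\rho_j\partial_{\rho_j}$ for every homogeneous degree-two potential, so the proof of Proposition~\ref{lem:compactangle0} goes through verbatim for the pair $(f^{\text{core}},g_{s^{(\ell)}})$.
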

\begin{proof} We apply Proposition \ref{prop:fiber_families}
together with Lemmas \ref{lem:Cbound}, \ref{lem:compactangle}  below. 
\end{proof}

\begin{lemma} \label{lem:Cbound}
\begin{equation}
\label{eq:Cbound}
\forall r>0:\exists T>0:\forall t\ge T:\forall s\in [0,1]:\forall x\in f^{-1}_{t,s}(0):|\bar{\partial} f_{t,s}(x)|_{g_s} < r|\partial f_{t,s}(x)|_{g_s}.
\end{equation}
Hence in particular
\[
\exists T>0:\forall t\ge T:\forall s\in [0,1]:\forall x\in f^{-1}_{t,s}(0):\partial f_{t,s}(x)\neq 0.
\]
\end{lemma}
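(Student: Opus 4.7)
My plan is to leverage the decomposition $f_{t,s} = f^{\text{core}}_{t,s} + f^{\text{small}}_{t,s}$ from \S\ref{sec:tailoring}. Since $f^{\text{core}}_{t,s}$ is holomorphic, we have $\bar{\partial} f_{t,s} = \bar{\partial} f^{\text{small}}_{t,s}$, and $\partial f_{t,s} = \partial f^{\text{core}}_{t,s} + \partial f^{\text{small}}_{t,s}$. Fix $x \in f^{-1}_{t,s}(0)$ and let $p = \Log_t x$. Since the chambers $\{C_{a_j}\}_j$ cover $\RR^k$, we may choose $\ell$ so that $p \in C_{a_\ell}$. The strategy is to show that $|\partial f_{t,s}(x)|_{g_s}$ is, up to a controlled correction, of order $|t^{-\nu(a_\ell)} x^{a_\ell}|$, while $|\bar{\partial} f_{t,s}(x)|_{g_s}$ is smaller by a factor that tends to $0$ as $t \to \infty$.

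First I would derive a lower bound on $|\partial f_{t,s}(x)|$: the bound \eqref{eq:mainbound} gives $|\partial f^{\text{core}}_{t,s}(x)| \geq K_3 |t^{-\nu(a_\ell)} x^{a_\ell}|$ in some reference metric. Each term in $\partial f^{\text{small}}_{t,s}$, $a_i \in A_1(p) \cup A_2(p)$, is a sum of (i) a chi-derivative contribution $\tfrac{1}{2\log t}(\partial_{\rho_j}\chi_{a_{i},s})(p)\, t^{-\nu(a_i)} x^{a_i}$ bounded via \eqref{eq:parbound}\eqref{eq:negligible} by $\tfrac{K_1}{(\epsilon_2-\epsilon_1)\log t} t^{-K_2\epsilon_1} |t^{-\nu(a_\ell)} x^{a_\ell}|$, and (ii) an algebraic contribution $\chi_{a_i,s}(p)\, a_{i,j}\, t^{-\nu(a_i)} x^{a_i}$ bounded via \eqref{eq:negligible} by $C\, t^{-K_2\epsilon_1} |t^{-\nu(a_\ell)} x^{a_\ell}|$. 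Summing the finitely many terms yields $|\partial f^{\text{small}}_{t,s}(x)| \leq C'\, t^{-K_2\epsilon_1}|t^{-\nu(a_\ell)} x^{a_\ell}|$, so for $t$ large, $|\partial f_{t,s}(x)| \geq \tfrac{K_3}{2} |t^{-\nu(a_\ell)} x^{a_\ell}|$. For $|\bar{\partial} f_{t,s}|$, only the chi-derivative contribution survives (since $\bar{\partial}_{w_j} x^{a_i} = 0$), giving $|\bar{\partial} f_{t,s}(x)| \leq C''\, \tfrac{1}{\log t}\, t^{-K_2\epsilon_1}|t^{-\nu(a_\ell)} x^{a_\ell}|$. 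Taking the ratio yields $|\bar{\partial} f_{t,s}(x)|/|\partial f_{t,s}(x)| \leq C'''/\log t \to 0$ as $t \to \infty$, uniformly in $s, x$.

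The main obstacle is ensuring that everything is uniform in $s \in [0,1]$ when working with the varying metric $g_s$. The estimates above naturally live in a reference metric (e.g.\ $\sum_j(d\rho_j^2+d\eta_j^2)$), but the lemma is stated for $|\cdot|_{g_s}$. This is handled by Proposition \ref{prop:bounds} (and Remark \ref{rem:propforforms} for forms), which gives a uniform two-sided comparison between each $g_s$ and the reference metric, since the family $(\psi_s)_s$ of degree-two homogeneous potentials is continuous on the compact interval $[0,1]$ (cf.\ Lemma \ref{lem:bounds}). Note that this step requires $\rho \neq 0$ on the fiber, which holds by \eqref{eq:notzero} under the standing assumption $0 \notin \Pi_\nu$. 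Passing to $g_s$ only multiplies the ratio by a uniform constant, preserving the vanishing limit.

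Finally, the second assertion is immediate: if $\partial f_{t,s}(x)$ vanished at some $x \in f^{-1}_{t,s}(0)$, then \eqref{eq:Cbound} (with any $r < \infty$) would force $\bar{\partial} f_{t,s}(x) = 0$ as well, hence $df_{t,s}(x) = 0$, contradicting the smoothness of the fiber established in Lemma \ref{rem:easysmooth}.
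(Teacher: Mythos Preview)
Your proof is correct and follows essentially the same approach as the paper's own proof: decompose $f_{t,s}=f^{\text{core}}_{t,s}+f^{\text{small}}_{t,s}$, use \eqref{eq:parbound}\eqref{eq:negligible}\eqref{eq:mainbound} to show $|\bar{\partial} f^{\text{small}}_{t,s}|$ is negligible compared to $|\partial f^{\text{core}}_{t,s}|-|\partial f^{\text{small}}_{t,s}|$, and pass to the metrics $g_s$ via Proposition \ref{prop:bounds} and Remark \ref{rem:propforforms}. The only (harmless) wrinkle is your derivation of the second assertion: the strict inequality $|\bar{\partial} f|<r\cdot 0$ is already impossible, so $\partial f\neq 0$ follows immediately without appealing to Lemma \ref{rem:easysmooth}.
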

\begin{proof}
For $x\in (\CC^\ast)^k$  it follows from \eqref{eq:parbound}\eqref{eq:negligible}\eqref{eq:mainbound} that
 $|\bar{\partial} f_{t,s}(x)|=|\bar{\partial} f^{\text{small}}_{t,s}(x)|$ can be made arbitrarily small relative to $|\partial f_{t,s}(x)|\ge |\partial f^{\text{core}}_{t,s}(x)|-|\partial f^{\text{small}}_{t,s}(x)|$ for $t\gg 0$. The lemma follows from this combined with Proposition \ref{prop:bounds} (and Remark \ref{rem:propforforms}).
\end{proof}
\begin{lemma} \label{lem:compactangle} Choose $0\le r<1$.
Let $\tau$ be the Hermitian angle (see \S\ref{sec:hermitian}) between $\grad_{g_s}\psi_s$ and $\partial f_{t,s}$ on $f^{-1}_{t,s}(0)$ (so 
$\tau$ is a function on $N_t:=\bigcup_{s\in [0,1]}  f^{-1}_{t,s}(0)$).
Then for $t\gg 0$ the set $\{\sin \tau \le r\}$ is a compact subset of $N_t$.
\end{lemma}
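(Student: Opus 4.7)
The plan is to mimic the proof of Proposition~\ref{lem:compactangle0}, replacing the toric-compactification argument via $X_A$ by the tropical chamber decomposition together with the dominance estimates \eqref{eq:negligible}, \eqref{eq:parbound}, \eqref{eq:mainbound}. A preparatory simplification: since each $\psi_s$ is homogeneous of degree two, formula \eqref{eq:gradg1} shows that $v:=\grad_{g_s}\psi_s=\sum_j\rho_j\partial_{\rho_j}$ is independent of $s$; by Lemma~\ref{lem:bounds} and Proposition~\ref{prop:bounds} the potentials $\psi_s$ and metrics $g_s$ are uniformly equivalent on $\{\rho\neq 0\}$ for $s\in[0,1]$, so $|v^{1,0}|_{g_s}\asymp |\rho|$ uniformly in $s$.

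I argue by contradiction, assuming that for some $t$ (to be chosen large) the set $\{\sin\tau\le r\}\subset N_t$ is not compact, and picking a sequence $(x^{(\ell)},s^{(\ell)})\in N_t$ with $\sin\tau^{(\ell)}\le r$ and no convergent subsequence. After extraction $s^{(\ell)}\to\bar s$ and $|\rho^{(\ell)}|\to\infty$. A preliminary lemma I would establish first: for $t\gg 0$ any zero of $f_{t,s}$ has its tropical coordinate in the $\epsilon_2$-tube around $\Pi_\nu$, because outside this tube only the $a_\ell$-monomial is unsuppressed by $\chi_{a_i,s}$ and \eqref{eq:negligible} forces $f_{t,s}\neq 0$. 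Thus after further extraction $p^{(\ell)}:=\Log_t(x^{(\ell)})$ lies in a fixed unbounded chamber $C_{a_\ell}$ (so $a_\ell$ is a vertex of $P$), the partition $A_0(p^{(\ell)})\sqcup A_1(p^{(\ell)})\sqcup A_2(p^{(\ell)})$ is constant, and $p^{(\ell)}$ is asymptotic to the face $C_\sigma$ of $\Delta_\nu$ with $\sigma:=A_0\sqcup A_1$.

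The main step is to evaluate $\partial f_{t,s^{(\ell)}}(v^{1,0})$ with $v^{1,0}=\sum_j\rho_j\partial_{w_j}$ along the zero fiber. Expanding $\partial f_{t,s}$ and factoring out the pivot $\alpha_\ell t^{-\nu(a_\ell)}x^{a_\ell}$, I split $\langle a_k,\rho\rangle=\langle a_\ell,\rho\rangle+\langle a_k-a_\ell,\rho\rangle$; the first piece telescopes into $\langle a_\ell,\rho\rangle\cdot f_{t,s}(x)=0$. The residual sum over $a_k\in A_0$ contributes $O(1)|\alpha_\ell t^{-\nu(a_\ell)}x^{a_\ell}|$ since \eqref{eq:Ltau} gives $(a_k-a_\ell)\cdot p=\nu(a_k)-\nu(a_\ell)+O(\epsilon_1)$, hence $\langle a_k-a_\ell,\rho\rangle=O(\log t)$; the sum over $a_k\in A_1\cup A_2$ contributes $O(t^{-K_2\epsilon_1}|\rho|)|\alpha_\ell t^{-\nu(a_\ell)}x^{a_\ell}|$ by \eqref{eq:negligible}. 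The derivative-of-cutoff terms are bounded using \eqref{eq:parbound} together with the asymptotic parallelism of the level sets $\{\chi_{a_k}=\xi\}$ to $C_\sigma$: this forces $\sum_i p_i(\partial_{p_i}\chi_{a_k,s})(p)$ to be $O(1)$ since only the bounded component of $p$ perpendicular to $C_\sigma$ survives, and the corresponding contribution is another $O(t^{-K_2\epsilon_1})|\alpha_\ell t^{-\nu(a_\ell)}x^{a_\ell}|$. Altogether
\[
|\partial f_{t,s}(v^{1,0})|\le \bigl(C_1+C_2\, t^{-K_2\epsilon_1}|\rho|\bigr)\,|\alpha_\ell t^{-\nu(a_\ell)}x^{a_\ell}|.
\]

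The denominator $|\partial f_{t,s}|\ge \tfrac12 K_3|\alpha_\ell t^{-\nu(a_\ell)}x^{a_\ell}|$ follows from \eqref{eq:mainbound} together with the $\partial f^{\mathrm{small}}$ bound of Lemma~\ref{lem:Cbound}, and $|v^{1,0}|_{g_s}\asymp|\rho|$ as noted above. This yields
\[
\cos\tau^{(\ell)}\le \frac{C_1'}{|\rho^{(\ell)}|}+C_2'\, t^{-K_2\epsilon_1}
\]
with constants uniform in $\ell$. Choosing $t$ so large that $C_2' t^{-K_2\epsilon_1}<\tfrac12\sqrt{1-r^2}$ and then using $|\rho^{(\ell)}|\to\infty$ gives $\cos\tau^{(\ell)}<\sqrt{1-r^2}$, i.e.\ $\sin\tau^{(\ell)}>r$, contradicting the choice of the sequence. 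The main obstacle is organizing the telescoping cancellation against $f_{t,s}(x)=0$ and verifying that the cutoff derivatives $\partial\chi_{a_k,s}$ contribute only $O(1)$ along the asymptotic direction of $C_\sigma$; the remaining analytic input is entirely \eqref{eq:parbound}, \eqref{eq:negligible}, \eqref{eq:mainbound}, and the uniform metric control of Proposition~\ref{prop:bounds}.
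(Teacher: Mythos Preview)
Your argument is essentially correct and reaches the same conclusion, but it is organized differently from the paper's proof. The paper does not carry out the telescoping computation directly; instead it splits $f_{t,s}=f^{\text{core}}+f^{\text{small}}$, bounds $\cos\tau$ in terms of an auxiliary ``core'' angle via
\[
\cos\tau^{(\ell)}\le \frac{\cos\tau^{(\ell),\text{core}}+c^{(\ell)}}{1-c^{(\ell)}},\qquad c^{(\ell)}:=\frac{|\partial f^{\text{small}}|_{g_s}}{|\partial f^{\text{core}}|_{g_s}}=O(t^{-C}),
\]
and then invokes the proof of Proposition~\ref{lem:compactangle0} (the Laurent-polynomial case, using the $X_A$ compactification) to conclude $\cos\tau^{(\ell),\text{core}}\to 0$. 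Your version unfolds this reduction: the telescoping against $f_{t,s}(x)=0$ plays the same role as passing to $g=x^{-a_\ell}f$ in the proof of Proposition~\ref{lem:compactangle0}, and your tropical bound $\langle a_k-a_\ell,\rho\rangle=O(\log t)$ for $a_k\in A_0$ is the direct analogue of the bounded-coordinate step there. So the two routes are close in spirit; yours is more explicit and avoids the $X_A$ machinery entirely, while the paper's is more modular and shorter because it recycles a proposition already on hand. Two small remarks on your write-up: the parenthetical ``so $a_\ell$ is a vertex of $P$'' is not quite right (unboundedness of $C_{a_\ell}$ only forces $a_\ell\in\partial P$) and is in any case unused; and for the cutoff-derivative term you do not actually need the parallelism of level sets---the crude bound \eqref{eq:parbound} together with \eqref{eq:negligible} already gives a contribution of order $t^{-K_2\epsilon_1}|\rho|/\log t$, which suffices.
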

\begin{proof}
We choose $0<a<b$, $0<c<d$ such that for all $s\in [0,1]$ we have
\begin{equation}
\label{eq:equivalent}
\begin{gathered}
a|X|^2 \le |X|^2_{g_s}\le b|X|^2\\
c\psi \le \psi_s\le d\psi
\end{gathered}
\end{equation}
as in Proposition \ref{prop:bounds}, Lemma \ref{lem:bounds} where $\psi=(\Log)^2$ denotes the standard toric potential on $(\CC^\ast)^k$, and $|-|$ is the associated metric.
 Note that we can choose $a,b,c,d$ uniformly in $s$ since $[0,1]$ is a compact set.

 Put $\delta=\sin^{-1} r\in [0,\pi/2.[$. 
 Assume that $\{\tau \le \delta\}$ is not compact. So choose a sequence $(x^{(\ell)},s^{(\ell)})_\ell$ in $N_t$ such that $\tau^{(\ell)}:=\tau(x^{(\ell)},s^{(\ell)})\le \delta$, which does not
have a convergent subsequence. We will obtain a contradiction. By further replacing $(x^{(\ell)},s^{(\ell)})_\ell$ by a subsequence we may assume that all $p^{(\ell)}=\Log_t x^{(\ell)}$ 
are contained in the same $C_{a_h}$ and furthermore $A_{\ast}(p^{(\ell)})$ for $\ast \in \{0,1,2\}$ do not depend on $\ell$.

Since $(x^{(\ell)})_{\ell}$ must be unbounded (otherwise there would be a convergent subsequence) we may replace  $(x^{(\ell)})_{\ell}$ by a subsequence such that  
\[
\lim_{\ell\r\infty} \psi(x^{(\ell)})=\infty.
\]
By \eqref{eq:equivalent} we also obtain
\[
\lim_{\ell\r\infty} \psi_{s^{(\ell)}}(x^{(\ell)})=\infty.
\]
Put
\[
c^{(\ell)}=
\frac{ |\partial f^{\text{small}}_{t,s^{(\ell)}}(x^{(\ell)})|_{g_s}}{ |\partial f^{\text{core}}_{t,s^{(\ell)}}(x^{(\ell)})|_{g_s}}
\]
where $f^{\text{core}}_{t,s}$, $f_{t,s}^{\text{small}}$ are as defined
in \S\ref{sec:tailoring}.  The fraction $c^{(\ell)}$ is well defined
by \eqref{eq:mainbound} and moreover by
\eqref{eq:parbound}\eqref{eq:negligible}\eqref{eq:mainbound}\eqref{eq:equivalent}
we obtain
\begin{equation}
\label{eq:cell}
c^{(\ell)}\le C_1t^{-C_2}
\end{equation}
for some constants $C_1,C_2>0$ depending only on $A$, $\nu$, $\alpha$, $(\epsilon_i)_i$. We have:
\begin{align*}
\cos\tau^{(\ell)}&=\frac{|\partial f_{t,s^{(\ell)}}(x^{(\ell)})(\operatorname{Ngrad}_{g_{s^{(\ell)}}}\psi_{s^{(\ell)}}(x^{(\ell)}))|_{g_{s^{(\ell)}}}}{|\partial f_{t,s^{(\ell)}}(x^{(\ell)})|_{g_{s^{(\ell)}}}}
\end{align*}
(the notation $\operatorname{Ngrad}$ was introduced in the proof of Proposition \ref{lem:compactangle0}).
For the numerator we have (using Cauchy-Schwarz):
\begin{align*}
|\partial f_{t,s^{(\ell)}}(&x^{(\ell)})(\operatorname{Ngrad}_{g_{s^{(\ell)}}}\psi_{s^{(\ell)}}(x^{(\ell)}))|_{g_{s^{(\ell)}}}\\
&\le |\partial f^{\text{core}}_{t,s^{(\ell)}}(x^{(\ell)})(\operatorname{Ngrad}_{g_{s^{(\ell)}}}\psi_{s^{(\ell)}}(x^{(\ell)}))|_{g_{s^{(\ell)}}}\\
&\qquad\qquad\qquad\qquad+|\partial f^{\text{small}}_{t,s^{(\ell)}}(x^{(\ell)})(\operatorname{Ngrad}_{g_{s^{(\ell)}}}\psi_{s^{(\ell)}}(x^{(\ell)}))|_{g_{s^{(\ell)}}}\\
&\le |\partial f^{\text{core}}_{t,s^{(\ell)}}(x^{(\ell)})(\operatorname{Ngrad}_{g_{s^{(\ell)}}}\psi_{s^{(\ell)}}(x^{(\ell)}))|_{g_{s^{(\ell)}}}+|\partial f^{\text{small}}_{t,s^{(\ell)}}(x^{(\ell)})|_{g_{s^{(\ell)}}}\\
&=|\partial f^{\text{core}}_{t,s^{(\ell)}}(x^{(\ell)})(\operatorname{Ngrad}_{g_{s^{(\ell)}}}\psi_{s^{(\ell)}}(x^{(\ell)}))|_{g_{s^{(\ell)}}}+c^{(\ell)}|\partial f^{\text{core}}_{t,s^{(\ell)}}(x^{(\ell)})|_{g_{s^{(\ell)}}}
\end{align*}
and for the denominator we have
\[
|\partial f_{t,s^{(\ell)}}(x^{(\ell)})|_{g_{s^{(\ell)}}}\ge (1-c^{(\ell)})|\partial f^{\text{core}}_{t,s^{(\ell)}}(x^{(\ell)})|_{g_{s^{(\ell)}}}.
\]
Put 
\[
\cos \tau^{(\ell),\text{core}}=\frac{|\partial f^{\text{core}}_{t,s^{(\ell)}}(x^{(\ell)})(\operatorname{Ngrad}_{g_{s^{(\ell)}}}\psi_{s^{(\ell)}}(x^{(\ell)}))|_{g_{s^{(\ell)}}}}{|\partial f^{\text{core}}_{t,s^{(\ell)}}(x^{(\ell)})|_{g_{s^{(\ell)}}}}.
\]
Then we get
\begin{equation}
\label{eq:cosformula}
\cos\tau^{(\ell)}\le 
\frac{\cos \tau^{(\ell),\text{core}}+c^{(\ell)}}{1-c^{(\ell)}}.
\end{equation}
Since $f^{\text{core}}_{t,s}(x)$ is a Laurent polynomial (independent of $\ell$ by the choice of $A_\ast(x^{(\ell)})$ above) we obtain as in the proof of Proposition \ref{lem:compactangle0} (cf. \eqref{eq:costaul}) that
\begin{equation}
\label{eq:coslimit}
\lim_{\ell \r \infty} \cos \tau^{(\ell),\text{core}}=0.
\end{equation}
The key point is that the formula for $\grad_g \psi$ used in the
proof of Proposition \ref{lem:compactangle0} (see \eqref{eq:gradu10}) is valid for arbitrary potentials homogeneous of degree two by \eqref{eq:gradg1}\eqref{eq:psigradient}.

Hence using \eqref{eq:cell}\eqref{eq:cosformula}\eqref{eq:coslimit} we can choose $t\gg 0$ in such a way such that for $\ell\gg 0$ we have
\[
\cos\tau^{(\ell)}<\sqrt{1-r^2}
\]
and hence $\sin \tau^{(\ell)}>r$, or $\tau^{(\ell)}>\delta$, contradicting the initial choice of $(\tau^{(\ell)})_\ell$.
\end{proof}
\section{Star-shaped triangulations and HMS in the tropical limit}
\label{sec:setting}
In this section we freely use the notations and concepts introduced in \S\ref{sec:toricprelims}. 
We assume $0\in A\subset \ZZ^k$ and we assume that $P=\conv(A)$ is equipped with a star-shaped triangulation $\Tscr$ as in \S\ref{sec:stacky}.
We denote the associated stacky fan by~$\Sigma$. We assume $|A|>1$ so that Lemma \ref{rem:easysmooth} applies. 
We assume that a function $\nu:A\r \RR$ as in \S\ref{sec:polyhedral} exists whose associated triangulation  $\Delta_\nu$
coincides with $\Tscr$. 
\begin{convention}
\label{conv:starshaped}
Whenever we are using an explicit star-shaped triangulation we silently make the following additional conventions.  
\begin{enumerate}
\item We put $a_1=0$. 
\item\label{item:conv2}  We assume that $\nu(a_1)=0$ and that $0$ is the unique minimal value of $\nu(a_i)$. 
This is equivalent to $0$ being in the relative interior of $C_0:=C_{a_1}$ (see \S\ref{sec:polyhedral}). See also \cite[Example 6.1.2]{GammageShende}.
\item For $i=1,\ldots,d$ we put $\alpha_i=1$ for $i\neq 1$ and $\alpha_1=-1$.  
\end{enumerate}
Sometimes we let our numbering start from $0$ instead of $1$. In that case the above conventions apply with $1$ replaced by $0$.
\end{convention}

We choose a potential $\psi$ on $\RR^k$ which is homogeneous of degree two outside a  compact neighbourhood $K$ of $0$
and which is adapted to $C_0$ (see Definition \ref{def:adapted}). This
is important for Theorem \ref{thm:Zhou} below which is valid only
under this assumption.\footnote{In \cite{GammageShende} the authors
  use the toric potential $\rho\mapsto|\rho|^2$ and an additional
  technical condition on the fan, i.e. they assume that the fan is
  ``perfectly centred''. Their condition is equivalent to the toric
  potential being adapted to $C_0$ for some choice of
  $\nu$.\label{foot:PC}}

We choose cut-off functions~$\chi_{a_i}$ as
in \S\ref{sec:tailoring} and we use $(\alpha_i)_i$ and 
$(\chi_{a_i})_i$ to define $f_{t,1}:(\CC^\ast)^k\r \CC$ as in
\S\ref{sec:tailoring}. We also assume that $t$ is large enough so that $f_{t,1}^{-1}(0)$ is a Liouville manifold.  
This is a special cases for $s=1$ of Theorem \ref{prop:tropical}. In general we choose $t$ large enough such that $K/\log t$ is disjoint from all regions of interest. In particular 
$(K/\log(t))\cap \partial C_0=\emptyset$ (see \eqref{item:conv2}) and  $K\cap \Log (f^{-1}_{t,1}(0))=\emptyset$ (see \eqref{eq:notzero}).

Assuming furthermore that the cut-off functions $\chi_{a_i}$ are chosen sufficiently
carefully (besides the properties we already imposed in
\S\ref{sec:tailoring}) one may prove the following
result. 
\begin{theorem}[{\cite[Theorem 3]{Zhou}}]\label{thm:Zhou} If $t\gg 0$ then
the diffeomorphism (see \S\ref{sec:legrevisited})  
\[
\Id\times \Phi_\psi^\infty:T^\infty(S_{N^\ast})\r T^{\ast,\infty}(S_{N^\ast})
\]
induces a homeomorphism between $\Skel(f^{-1}_{t,1}(0))$ (see \S\ref{sec:liouville}) and $\Lambda^\infty_\Sigma$ (see \S\ref{sec:fltz}). The map 
$\Skel(f^{-1}_{t,1}(0))\r T^\infty(S_{N^\ast})$ is the composition $\Skel(f^{-1}_{t,1}(0))\r (N^\ast-\{0\})\times S_{N^\ast}\r T^\infty(S_{N^\ast})$.
\end{theorem}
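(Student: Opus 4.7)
The plan is to identify $\Skel(f^{-1}_{t,1}(0))$ with its expected image under $\Id\times \Phi^\infty_\psi$ by a cell-by-cell analysis using the tropical decomposition of \S\ref{sec:tailoring} and the radial structure of the Liouville flow attached to degree-two homogeneous potentials. Outside the compact neighborhood $K$ of $0$ where $\psi$ is homogeneous, formula \eqref{eq:gradg1} gives $Z_\psi=\sum_j\rho_j\partial_{\rho_j}$, so the ambient Liouville flow on $(\CC^*)^k$ is the radial dilation $(\rho,\eta)\mapsto (e^\lambda\rho,\eta)$ in log-polar coordinates. The Liouville vector field $Z_0$ on $f^{-1}_{t,1}(0)$ is the symplectic projection of $Z_\psi$ onto the tangent bundle, and $\Skel(f^{-1}_{t,1}(0))$ is the subset of points whose forward orbits under $Z_0$ remain bounded.

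The local analysis is performed over each cell $C_\tau$ of the polyhedral decomposition. By the tailoring construction in \S\ref{sec:tailoring}, combined with Lemma \ref{lem:Cbound} and the estimates \eqref{eq:negligible}, \eqref{eq:mainbound}, the restriction of $f_{t,1}$ to $\Log_t^{-1}(C_{\tau,\epsilon})$ is dominated by $f^{\text{core}}_{t,1}=\sum_{a_i\in\tau}\alpha_i t^{-\nu(a_i)}x^{a_i}$, which depends only on the face $\tau\in\Tscr$. Fix such a $\tau$ with $0\notin\tau$ and set $\sigma_\tau:=\RR_{\ge 0}\tau\in\Sigma$. On the affine span $L_\tau$ of \eqref{eq:Ltau}, all the moduli $t^{-\nu(a_i)}|x^{a_i}|$ ($a_i\in\tau$) coincide, so the vanishing condition for $f^{\text{core}}_{t,1}$ reduces to a phase equation in $\eta$. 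The subset of $\eta\in S_{N^*}$ for which this equation is preserved under the radial flow is precisely the locus where the differences $\langle\eta,a_i-a_j\rangle$ ($a_i,a_j\in\tau$) are integral, i.e.\ $\eta\in \sigma_\tau^\perp$. This produces the candidate skeleton stratum $\sigma_\tau^\perp\times C_\tau$ away from the core.

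Transporting via the Legendre transform completes the match. Since $\psi$ is homogeneous of degree two, $\Phi_\psi$ is linear along rays from $0$ (see \S\ref{sec:legrevisited}), and the hypothesis that $\psi$ is adapted to $C_0$ (\cite[Definition 2.8]{Zhou}; see Proposition \ref{prop:adapted}) ensures that $\Phi_\psi$ carries each unbounded cell $C_\tau$ onto the corresponding cone $\sigma_\tau$ up to a compact correction. Consequently each stratum $\sigma_\tau^\perp\times C_\tau$ maps under $\Id\times\Phi^\infty_\psi$ to $\sigma_\tau^\perp\times(\sigma_\tau\setminus\{0\})/\RR_{>0}$, i.e.\ the stratum of $\Lambda^\infty_\Sigma$ indexed by $\sigma_\tau$. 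Assembling over all $\tau$ and checking continuous gluing along the incidences $\tau\subset\tau'$ (ensured by the parallel-to-$C_\tau$ condition on the level sets of the $\chi_{a_i}$ in \S\ref{sec:tailoring}) yields the desired stratum-preserving homeomorphism.

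The principal obstacle is the compact core region: inside $K$ the potential $\psi$ is not homogeneous, $Z_\psi$ is not radial, and the cut-off functions $\chi_{a_i}$ interact nontrivially, so the candidate stratum-by-stratum description fails a priori there. One must verify that no spurious components of the skeleton appear in the core, that the stratum attached to the zero cone $\sigma=0$ matches the zero section $S_{N^*}\times\{0\}$ of $\Lambda^\infty_\Sigma$, and that these compact pieces glue continuously with the asymptotic analysis. This forces the refined cut-off construction in \cite[\S1.2]{Zhou} and, crucially, the adaptation of $\psi$ to $C_0$; a generic toric potential would in general admit additional critical loci of $Z_0$ in the compact region, which is why \cite{GammageShende} needed the ``perfectly centered'' restriction on the fan (cf.\ footnote \ref{foot:PC}).
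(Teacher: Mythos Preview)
This theorem is not proved in the paper: it is quoted verbatim from \cite[Theorem~3]{Zhou} and no argument is supplied here.  So there is nothing in the paper to compare your sketch against; what you have written is an attempted reconstruction of Zhou's proof.

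Your outline has the right surrounding ingredients (radial ambient Liouville flow from degree-two homogeneity, tropical stratification by faces $\tau$, Legendre transform sending cells $C_\tau$ to cones $\sigma_\tau$ via adaptedness), but the central mechanism in your second paragraph is off.  You characterise skeleton points as those $\eta$ for which the defining equation is ``preserved under the radial flow,'' i.e.\ where the ambient field $Z_\psi$ is tangent to $f^{-1}_{t,1}(0)$.  This is backwards: by Lemma~\ref{prop:GSpushout} the ambient Liouville vector field is \emph{nowhere} tangent to the hypersurface near the skeleton.  The skeleton is the non-escape locus for the \emph{projected} field $Z_0$, and Zhou identifies it via Morse--Bott theory of $\psi_1:=\psi|_{f^{-1}_{t,1}(0)}$: the critical manifolds are the tori $T_\tau\times\{\tilde\rho_\tau\}$ with $T_\tau=\sigma_\tau^\perp$ (cf.\ the proof of Lemma~\ref{lem:MB} and the references there to \cite[Propositions~5.5, 5.7, 5.9]{Zhou}), and the skeleton is the union of their stable manifolds $T_\tau\times W_\tau$, which are then matched to the FLTZ strata.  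Your proposed stratum ``$\sigma_\tau^\perp\times C_\tau$'' is unbounded and hence cannot be a piece of the (compact) skeleton itself; at best it is the ray-saturation, and you have not explained why the projected flow has exactly those bounded orbits.

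Two smaller corrections.  First, your concern about the compact core $K$ is misplaced: the standing assumptions in \S\ref{sec:setting} arrange $K\cap\Log(f^{-1}_{t,1}(0))=\emptyset$ for $t\gg 0$, so $\psi$ is genuinely degree-two homogeneous on the whole hypersurface and no separate analysis of that region is needed.  Second, there is no ``zero section'' of $\Lambda^\infty_\Sigma$: by definition $\Lambda^\infty_\Sigma\subset T^{*,\infty}(S_{N^\ast})$ is the image of $\Lambda_\Sigma\cap(S_{N^\ast}\times(N_\RR\setminus\{0\}))$, so the contribution of the zero cone has already been excised.
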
 
We have the following result.
\begin{proposition}[{\cite[Corollary 6.2.6]{GammageShende}}] 
\label{prop:GSpair} If $t\gg 0$ then
there are generating Liouville domains $M\subset (\CC^\ast)^k$, $F\subset f^{-1}_{t,1}(0)$ such that $(M,F)$ is a Liouville pair (see \ref{sec:restriction}) and such that $\partial M\cap K=\emptyset$.
\end{proposition}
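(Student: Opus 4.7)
The plan is to adapt the construction of \cite[Corollary 6.2.6]{GammageShende} to our setting. GS carry out the construction for the toric potential $|\rho|^2$ under the ``perfectly centered'' hypothesis on the fan (see footnote \ref{foot:PC}); the more flexible potential $\psi$ that is merely homogeneous of degree two outside $K$ and adapted to $C_0$ (cf.\ Proposition \ref{prop:adapted}) is exactly what allows the construction to proceed without the perfect-centering hypothesis.

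First I would choose $C > 0$ large enough that $K \subset \{\psi < C\}$, that $M_0 := \{\psi \leq C\}$ is a generating Liouville domain of $(\CC^\ast)^k$ (Proposition \ref{prop:prestein}, using that $\psi$ is homogeneous of degree two outside $K$), and that $F := \{\psi_0 \leq C\} \subset f_{t,1}^{-1}(0)$, with $\psi_0 := \psi\vert_{f_{t,1}^{-1}(0)}$, is a generating Liouville domain of $f_{t,1}^{-1}(0)$ (Theorem \ref{prop:tropical}). Next I would modify $\partial M_0$ in a tubular neighborhood $U$ of $f_{t,1}^{-1}(0)$: by Lemma \ref{lem:Cbound}, for $t \gg 0$ one has $|\bar\partial f_{t,1}| \ll |\partial f_{t,1}|$ near $f_{t,1}^{-1}(0)$, so $U$ can be trivialized using $f_{t,1}$ as a transverse complex coordinate. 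Following GS, I would construct $\partial M$ to coincide with $\{\psi = C\}$ outside $U$, to agree inside a smaller neighborhood $U' \subset U$ with a real hypersurface tangent to $f_{t,1}^{-1}(0)$ along $F$, and to interpolate smoothly in $U \setminus U'$. Then $F$ embeds in $\partial M$ as a codimension-one hypersurface, the restriction $\theta\vert_F$ is automatically the Liouville form on $F$, and $\partial M \cap K = \emptyset$ follows because $\{\psi = C\}$ avoids $K$ by construction while $U$ can be taken disjoint from $K$ (since $\Log(f_{t,1}^{-1}(0))$ concentrates near $\log t \cdot \Pi_\nu$ with $0 \notin \Pi_\nu$ for $t \gg 0$).

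The main obstacle is verifying that the Liouville vector field $Z := \grad_g \psi$ remains outward-pointing along the modified boundary, especially on the piece tangent to $f_{t,1}^{-1}(0)$ near $F$. The key input is the Hermitian-angle estimate of Lemma \ref{lem:compactangle}: the angle between $Z$ and $\partial f_{t,1}$ is bounded away from $\pi/2$ in the relevant compact region, so $Z$ has a definite complex projection onto the normal direction of $f_{t,1}^{-1}(0)$; the direction of this projection in the complex normal line dictates into which continuously-varying half-space the modified $\partial M$ should bend. The construction of a coherent global $\partial M$ realizing this prescription then proceeds as in GS, and the interpolation in $U \setminus U'$ is a standard convex-combination argument, using the tropical estimates \eqref{eq:parbound}, \eqref{eq:negligible}, \eqref{eq:mainbound} together with the metric equivalence of Proposition \ref{prop:bounds} to maintain transversality of $Z$ throughout.
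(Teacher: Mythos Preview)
Your overall strategy---start with a large sublevel set $M'=\{\psi\le C\}$ disjoint from $K$, then modify near the hypersurface following the construction of \cite[Corollary 6.2.6]{GammageShende}---matches the paper. But you have misidentified the key technical input, and the lemma you invoke says the opposite of what you need.

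You claim that Lemma \ref{lem:compactangle} gives ``the angle between $Z$ and $\partial f_{t,1}$ is bounded away from $\pi/2$ in the relevant compact region.'' It does not. That lemma asserts that $\{\sin\tau\le r\}$ is compact, i.e.\ that \emph{outside} a compact set one has $\sin\tau>r$, which only bounds $\tau$ away from $0$. It places no constraint keeping $\tau$ away from $\pi/2$; indeed the proof shows $\cos\tau\to 0$ along escaping sequences, so $Z$ becomes \emph{nearly tangent} to $f_{t,1}^{-1}(0)$ at infinity. This is precisely the wrong direction for ensuring $Z$ is outward along a piece of $\partial M$ tangent to the hypersurface.

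What the construction actually needs is that the ambient Liouville vector field is \emph{nowhere tangent} to $f_{t,1}^{-1}(0)$ in a neighbourhood of its skeleton. This is the content of Lemma \ref{prop:GSpushout}, and its proof is entirely different from the Hermitian-angle estimates: one uses that $\Skel(f_{t,1}^{-1}(0))\subset \Log_t^{-1}(\partial\tilde C_0)$ (from \cite{Zhou}), that in the homogeneous-of-degree-two region $Z=\sum_i\rho_i\partial_{\rho_i}$ is radial, and that a line through the origin cannot be tangent to the boundary of a convex body containing $0$ in its interior. Convexity of $C_0$ together with the Hausdorff closeness of $\partial\tilde C_0$ to $\partial C_0$ (as $\epsilon_i\to 0$) then gives the transversality. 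The ``adapted to $C_0$'' hypothesis on $\psi$ enters through this skeleton description, not through the angle lemma. Once Lemma \ref{prop:GSpushout} is in hand, the rest of the Gammage--Shende argument runs verbatim.
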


The proposition was proved in loc. cit. for the toric potential. 
One starts with choosing a Liouville domain $M'$ which is large enough so that $\partial M'\cap K=\emptyset$. 
Then we proceed as in loc.\  cit.\ invoking the following lemma which in loc.\ cit.\ is stated for toric potentials but which is true more generally.
\begin{lemma}[{\cite[Lemma 6.2.5]{GammageShende},\cite{Zhou}}] \label{prop:GSpushout}
If $t\gg 0$ then in a neighbourhood of the skeleton of $f^{-1}_{t,1}(0)$ the Liouville vector field on $(\CC^\ast)^k$ is nowhere tangent 
to $f^{-1}_{t,1}(0)$.
\end{lemma}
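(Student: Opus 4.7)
The plan is to show that $Z(f_{t,1})(x) \neq 0$ for all $x$ in an open neighborhood of $\Skel(f^{-1}_{t,1}(0))$, where $Z$ denotes the ambient Liouville vector field on $(\CC^\ast)^k$. Since tangency of $Z$ to the hypersurface $f^{-1}_{t,1}(0)$ at $x$ is equivalent to $Z(f_{t,1})(x) = 0$ (viewing $f_{t,1}$ as $\CC$-valued), it suffices to prove nonvanishing on the skeleton itself and invoke continuity.

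First I would unpack $Z(f_{t,1})$. Using \eqref{eq:gradg1} and the fact that $\psi$ is homogeneous of degree two outside the compact set $K$ (which, by our choice of $t$, is disjoint from $\Log(f^{-1}_{t,1}(0))$ thanks to \eqref{eq:notzero}), we have $Z=\sum_j \rho_j \partial_{\rho_j}$ in the logarithmic polar coordinates of \S\ref{subsec:some_formulas}. A direct computation using $\rho=(\log t)p$, $p=\Log_t x$, and the explicit expression \eqref{eq:ft1} for $f_{t,1}$ gives
\[
Z(f_{t,1})(x) = (\log t)\sum_i \alpha_i\, \chi_{a_i,1}(p)\,(a_i\cdot p)\, t^{-\nu(a_i)}x^{a_i} + B(x),
\]
where $|B(x)|$ is bounded independently of $t$ in terms of $\max_i |t^{-\nu(a_i)}x^{a_i}|$ by \eqref{eq:parbound}. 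Hence for $t\gg 0$ the first sum dominates the second by a factor of $\log t$.

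Next I would localize near the skeleton. By Theorem \ref{thm:Zhou}, a point $x$ close to $\Skel(f^{-1}_{t,1}(0))$ has $p=\Log_t x$ close to some face $C_\tau$ of the tropical amoeba $\Pi_\nu$ with $|\tau|\geq 2$, and its argument in $S_{N^\ast}$ constrained to lie in the subtorus $\sigma^\perp$ attached to the cone $\sigma=\RR_{\geq 0}\tau$ of $\Sigma$. For $a_i\in\tau$ we have $\chi_{a_i,1}(p)=1$ and $a_i\cdot p=\xi+\nu(a_i)$ for a common value $\xi=\Hscr(a_i)(p)$, while by \eqref{eq:negligible} the contributions from $a_i\notin\tau$ are exponentially smaller. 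Setting $f^{\tau}_{t,1}(x) = \sum_{a_i\in\tau}\alpha_i t^{-\nu(a_i)}x^{a_i}$ and $h_\tau(x) = \sum_{a_i\in\tau}\alpha_i \nu(a_i) t^{-\nu(a_i)}x^{a_i}$, the leading term simplifies to $(\log t)[\xi f^\tau_{t,1}(x)+h_\tau(x)]$. Since $x\in f^{-1}_{t,1}(0)$ and the complementary monomials are negligible, $f^\tau_{t,1}(x)$ is exponentially smaller than $\max_{a_i\in\tau}|t^{-\nu(a_i)}x^{a_i}|$, so the problem reduces to giving a lower bound on $|h_\tau(x)|$ relative to this maximum.

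The main obstacle is this final nonvanishing. When $|\tau|=2$, say $\tau=\{a_i,a_j\}$, the approximate relation $f^{\tau}_{t,1}(x)\approx 0$ forces $\alpha_i t^{-\nu(a_i)}x^{a_i}\approx -\alpha_j t^{-\nu(a_j)}x^{a_j}$, so $h_\tau(x)\approx(\nu(a_i)-\nu(a_j))\alpha_i t^{-\nu(a_i)}x^{a_i}$, which is nonzero by the strict convexity of $\nu$ on $A$ (in particular $\nu(a_i)\neq\nu(a_j)$). For $|\tau|\geq 3$ the additional constraint $\eta\in\sigma^\perp$ provided by Theorem \ref{thm:Zhou} synchronizes the phases of the monomials $\alpha_i t^{-\nu(a_i)}x^{a_i}$ for $a_i\in\tau$; combined with strict convexity of $\nu$, which makes the weights $\nu(a_i)|_\tau$ pairwise distinct, a short linear-algebraic argument excludes simultaneous vanishing of $f^\tau_{t,1}$ and $h_\tau$ on the corresponding stratum. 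The lemma then follows by continuity on an open neighborhood of the skeleton.
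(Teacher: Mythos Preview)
Your approach is a direct computation, and it differs substantially from the paper's, which is purely geometric. The paper observes (via \cite{Zhou}) that the skeleton lies in $\Log_t^{-1}(\partial\tilde{C}_0)$, where $\tilde{C}_0$ is the closure of the component of $\RR^k\setminus\Log_t f^{-1}_{t,1}(0)$ containing $0$. Supposing $Z_x$ were tangent to the hypersurface and pushing forward by $d\Log_t$, the radial vector $(1/\log t)\sum_i\rho_i\partial_{\rho_i}$ would then be tangent to $\partial\tilde{C}_0$ at $p=\Log_t x$. But $\tilde{C}_0$ is $C^1$-close to the convex polytope $C_0$, which contains $0$ in its interior, and no line through the origin can be tangent to the boundary of such a body. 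This argument never unpacks $f_{t,1}$ term by term and uses neither the stratification of the skeleton nor the phase constraint $\eta\in\sigma^\perp$.

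Your route can be made to work, but the final step is wrong as stated. The assertion that strict convexity of $\nu$ forces the values $\nu(a_i)|_\tau$ to be pairwise distinct is false: strict convexity in the sense of \S\ref{sec:polyhedral} refers to the induced piecewise-linear function, not to injectivity of values, and in any case distinctness would not by itself supply the ``linear-algebraic argument'' you allude to. The correct closure is simpler and needs no case split on $|\tau|$. Since the skeleton sits over $\partial C_0$, every relevant face $C_\tau$ has $0\in\tau$, so $\xi=\Hscr(0)(p)=0$ and the leading term reduces to $h_\tau(x)=\sum_{a_i\in\tau\setminus\{0\}}\nu(a_i)\,t^{-\nu(a_i)}x^{a_i}$. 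The phase condition $\eta\in\sigma^\perp$ makes each $x^{a_i}$ real and positive for $a_i\in\tau\setminus\{0\}$, and Convention~\ref{conv:starshaped}\eqref{item:conv2} gives $\nu(a_i)>0$ for those $a_i$; hence $h_\tau(x)>0$ outright. Your ``pairwise distinct'' hypothesis is neither available nor needed.
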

\begin{proof} This statement is actually implicit in the statement of
  Theorem \ref{thm:Zhou}.  For the benefit of the reader we recall the
  proof (based on some results in \cite{Zhou}) since it avoids the
  induction over pants used in \cite[Lemma 6.2.5]{GammageShende}.  Let
  $\tilde{C}_0$ be the closure of the connected component of the
  complement of $\Log_t(f^{-1}_{t,1}(0))$ that contains $0$.  By
  \cite[\S5.2,5.4]{Zhou} we have
  $\Skel(f^{-1}_{t,1}(0))\subset \Log^{-1}_t(\partial \tilde{C}_0)$.
  Hence it is sufficient to prove for
  $x\in \Log^{-1}_t(\partial \tilde{C}_0)\cap f^{-1}_{t,1}(0)$ that
  $Z_x$ is not contained in $T_x(f^{-1}_{t,1}(0))$ since this will
  then also be true for a small neighbourhood of the skeleton (since
  the skeleton is compact, cf. \eqref{eq:capformula}).  Assume that
  $Z_x\in T_x(f^{-1}_{t,1}(0))$ holds.  Let $\rho_t:=\Log_t(x)$. Then
  $Z_\rho:=(d\Log_t)(Z_x)\in T_{\rho_t}(\partial\tilde{C}_0)$ since
  $(d\Log_t)(T_x(f^{-1}_{t,1}(0)))\subset T_{\rho_t}(\partial
  \tilde{C}_0)$.

  We choose logarithmic polar coordinates $(\rho_i,\eta_i)_i$ on
  $(\CC^\ast)^k$ as in \S\ref{sec:zhou}. Then by \eqref{eq:gradg1}
  $Z_x=\sum_i \rho_i\partial_{\rho_i}$ and hence
  $Z_\rho=(1/\log t) \sum_i \rho_i\partial_{\rho_i}$. So it follows
  that a line through the origin is tangent to
  $\partial \tilde{C}_0$.  Something like this cannot hold if we
  replace $\tilde{C}_0$ by $C_0$ since $C_0$ is a convex  (we
  define the tangent bundle to $C_0$ as the kernel of the outward
  conormal bundle).

It follows that the statement is also false for $\tilde{C}_0$ since by
\cite[Propositions 1.12, 1.13]{Zhou} the Hausdorff distance between
$\partial\tilde{C}_0$ and $\partial{C}_0$ goes to zero as
$(\epsilon_i)_i\r 0$, and the same is true for the unit outward
conormal bundles (see also \cite[Lemma 4.6]{Abouzaid}).
\end{proof}

Below  $\Dscr(\Wscr\Fscr(-))$ stands for the Karoubian closure of the pretriangulated closure of $\Wscr\Fscr(-)$.

\begin{theorem}[\protect{\cite[Theorem 1.0.1]{GammageShende}}] \label{th:mirror_} 
Assume $A$, $\nu$, $\Tscr$, $\psi$ and the cutoff functions are as stated in the beginning of this section. 
Let $\Sigma$ be the stacky fan determined by $\Tscr$ and let $X_\Sigma$ (see \S\ref{sec:stacky})
be the associated toric stack.

Let $(M,F)$ be a Liouville pair in $(T(S_{N^\ast}),f_{t,1}^{-1}(0))$ as in Proposition \ref{prop:GSpair}. 
We equip $T(S_{N^\ast})=\CC^{\ast k}$ with the canonical grading on a torus introduced in \S\ref{sec:canonical} and we
equip~$F$ (and hence $f^{-1}_{t,1}(0)$) with the restricted grading as defined in Definition \ref{def:liouvillepair}. Finally we equip
$f^{-1}_{t,1}(0)$ with the trivial background class.

Then for $t\gg 0$ we have
$\Tw(\Wscr\Fscr(f^{-1}_{t,1}(0))_\CC)\cong D^b(\coh(\partial X_\Sigma))$  where
$\partial X_\Sigma$ is the toric boundary divisor of $X_\Sigma$, i.e.\ the complement of the generic orbit.
\end{theorem}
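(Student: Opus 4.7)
The plan is to reduce the statement to the Gammage--Shende equivalence \cite[Theorem 1.0.1]{GammageShende} by showing that the Liouville structure on $f^{-1}_{t,1}(0)$ produced here, together with the chosen grading and background class, is Liouville-isotopic (in a graded, background-compatible way) to the one used in loc.\ cit. The Gammage--Shende argument then applies essentially verbatim to yield the equivalence $\Tw(\Wscr\Fscr(f^{-1}_{t,1}(0))_\CC)\cong D^b(\coh(\partial X_\Sigma))$.

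First I would identify the skeleton. Theorem~\ref{thm:Zhou} gives a homeomorphism $\Skel(f^{-1}_{t,1}(0)) \cong \Lambda^\infty_\Sigma$, and Proposition~\ref{prop:GSpair} together with Lemma~\ref{prop:GSpushout} provides the Liouville pair $(M,F)$ whose existence is the key geometric input for running the computation. The upshot is that, despite the fact that our potential is only homogeneous of degree two outside a compact set $K$ and only required to be adapted to $C_0$ (not the more restrictive perfectly-centred hypothesis of \cite{GammageShende}), the skeleton and the near-skeleton Liouville geometry are the ones required for the Gammage--Shende argument. The replacement of perfect centeredness by adaptedness to $C_0$ is exactly the generality in which Zhou proved Theorem~\ref{thm:Zhou}.

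Next I would interpolate between our Liouville structure and the one of \cite{GammageShende}. By Lemma~\ref{lem:interpolation} any two potentials homogeneous of degree two adapted to $C_0$ can be joined by a one-parameter family of such potentials, and the associated family of Liouville structures on $f^{-1}_{t,1}(0)$ is a family of Liouville manifolds with families of generating Liouville domains (apply Theorem~\ref{prop:tropical} with $s=1$ held fixed and varying the potential as the family parameter). By Theorem~\ref{lem:moser2} the family is Liouville-trivial, so Proposition~\ref{cor:functorial} and Corollary~\ref{cor:independence1} give a canonical (up to $A_\infty$-homotopy) quasi-equivalence of wrapped Fukaya categories. The grading carries through by Theorem~\ref{lem:simpiso}, and the trivial background class is preserved by Theorem~\ref{lem:bgisotopies}. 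Hence $\Wscr\Fscr(f^{-1}_{t,1}(0))$ with our chosen graded/background data is $A_\infty$-equivalent to the one in \cite{GammageShende}.

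The main obstacle is the last step: verifying that the grading we define using Definition~\ref{def:liouvillepair} (restriction via the Liouville vector field in the pair $(M,F)$) coincides with the grading used in \cite{GammageShende}, which is most naturally phrased via the polarization coming from the Lagrangian foliation of $T^\ast(S_{N^\ast})$ pulled back through the Legendre transform. For this I would combine Lemma~\ref{lem:canonicalgrading}, which identifies the canonical grading on $T(S_{N^\ast})$ with the canonical grading on $T^\ast(S_{N^\ast})$ under $\Id\times\Phi_\psi$, with Proposition~\ref{prop:twogradingssame}, which shows that the polarization-induced grading and the Liouville-pair-restricted grading on $F$ agree. Finally, Theorem~\ref{th:mirror_} itself is then a direct invocation of \cite[Theorem 1.0.1]{GammageShende} applied to the Liouville-isotopic model, since both sides are invariants of the data we have just matched.
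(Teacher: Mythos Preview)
Your reduction strategy has a genuine gap: you cannot invoke \cite[Theorem~1.0.1]{GammageShende} as a black box, because that theorem carries the perfectly-centred hypothesis (equivalently, that the toric potential $|\rho|^2$ is adapted to $C_0$; see footnote~\ref{foot:PC}). When $\Sigma$ is not perfectly-centred, there is no ``Gammage--Shende Liouville structure'' inside the class of potentials adapted to $C_0$ to interpolate to; your one-parameter family of adapted potentials stays entirely in Zhou's setting and never reaches the hypotheses of \cite{GammageShende}. So the interpolation paragraph is either vacuous (perfectly-centred case, where you could have used the toric potential from the start) or does not achieve the claimed reduction.

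What the paper does instead is rerun the \emph{proof} of \cite[Theorem~1.0.1]{GammageShende}, replacing the perfectly-centred inputs by their adapted-potential analogues. Concretely: push everything through the Legendre transform $\Id\times\Phi_\psi:T(S_{N^\ast})\to T^\ast(S_{N^\ast})$, so that $\Wscr\Fscr(f^{-1}_{t,1}(0))=\Wscr\Fscr(\tilde f^{-1}_{t,1}(0))$ with matching gradings (Lemma~\ref{lem:canonicalgrading}) and trivial background class; identify $\Skel(\tilde f^{-1}_{t,1}(0))$ with $\Lambda^\infty_\Sigma$ via Theorem~\ref{thm:Zhou}; then apply \cite[Theorem~1.4]{GPS3} to obtain $\Tw(\Wscr\Fscr(\tilde f^{-1}_{t,1}(0)))^\circ\cong \mu\operatorname{sh}_{\Lambda^\infty_\Sigma}(\Lambda^\infty_\Sigma)^c$; and finish with the purely combinatorial \cite[Theorem~7.4.1]{GammageShende}, which identifies the microlocal sheaf category (over $\CC$) with $D^b(\coh(\partial X_\Sigma))$. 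The ingredient you omit entirely is the hypothesis of \cite[Theorem~1.4]{GPS3}: one must know that $f^{-1}_{t,1}(0)$ is (Morse--Bott) Weinstein. This is Theorem~\ref{th:weinstein}, it is not automatic, and it is exactly where the adapted-potential condition does real work beyond the skeleton identification.
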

\begin{proof} For the benefit of the reader we sketch the proof given in loc. cit., making sure that the grading and background class
are the expected ones.
We consider the symplectomorphism (see \eqref{eq:legendrediffeo1}, \eqref{eq:legendre10})
\begin{equation}
\label{eq:legendrediffeo}
\Id\times\Phi_\psi:\CC^{\ast k}=T(S_{N^\ast})\r T^{\ast}(S_{N^\ast}).
\end{equation}
Put
\[
\tilde{f}_{t,0}=f_{t,0}\circ (\Id\times\Phi_\psi)^{-1}
\]
and $\tilde{M}=(\Id\times\Phi_\psi)(M)$, $\tilde{F}=(\Id\times\Phi_\psi)(F)$. Hence $(\tilde{M},\tilde{F})$ is a Liouville pair  which completes to 
$(T^\ast(S_{N^\ast}),\tilde{f}^{-1}_{t,1}(0))$. 
By Theorem \ref{thm:Zhou}, $\Skel(\tilde{f}_{t,0}^{-1}(0))\cong \Lambda^\infty_\Sigma$.
By Lemma \ref{lem:canonicalgrading} the canonical gradings on  $T(S_{N^\ast})$ and $T^\ast(S_{N^\ast})$ correspond. Hence so do the restricted gradings on
$f^{-1}_{t,1}(0)$ and $\tilde{f}^{-1}_{t,1}(0)$ obtained via the Liouville pairs
$(M,F)$ and $(\tilde{M},\tilde{F})$ (see Definition \ref{def:liouvillepair}). Since $S_{N^\ast}$ has free tangent bundle, the background class on $T^\ast(S_{N^\ast})$
coming from the Langrangion foliation is trivial (see \S\ref{sec:backgroundcotangent}). Hence all constructions are compatible with trivial background classes.

For these gradings and (trivial) background classes we trivially have 
\begin{equation}
\label{eq:totilde}
\Wscr \Fscr(f^{-1}_{t,1}(0))=\Wscr\Fscr(\tilde{f}^{-1}_{t,1}(0))).
\end{equation}

Furthermore {by Theorem \ref{th:weinstein} below $f^{-1}_{t,1}(0)$ is (Morse-Bott) Weinstein} 
 and hence the same is true for $\tilde{f}^{-1}_{t.1}(0)$.  
Hence by \cite[Theorem 1.4]{GPS3} we have 
\begin{equation}
\label{eq:tomicro}
\Tw (\Wscr\Fscr(\tilde{f}^{-1}_{t,1}(0))^\circ \cong \mu \operatorname{sh}_{\Lambda^\infty_\Sigma}(\Lambda^\infty_\Sigma)^c 
\end{equation}
where
$\operatorname{sh}_{\Lambda^\infty_\Sigma}(\Lambda^\infty_\Sigma)$ is
computed using the embedding
$\Lambda^\infty_\Sigma\hookrightarrow T^{\ast,\infty}(S_{N^\ast})$.
By \cite[Thm 7.4.1]{GammageShende}
$D^b(\coh(\partial X_\Sigma))\cong (\mu
\operatorname{sh}_{\Lambda^\infty_\Sigma}(\Lambda^\infty_\Sigma)_\CC)^c$. 
By Grothendieck
duality we have
$D^b(\coh(\partial X_\Sigma))^\circ \cong D^b(\coh(\partial
X_\Sigma))$. Combining this with \eqref{eq:totilde} and
\eqref{eq:tomicro} (with coefficients extended from $\ZZ$ to $\CC$) finishes the proof.
\end{proof}

\section{Grading and background class}
\label{sec:gradingbackground}
\subsection{Introduction}
\label{sec:gradingbackgroundintro}
In this section we assume that we are in the setting of
\S\ref{sec:setting}. Note in loc.\ cit.\ we put a number of
restrictions on $A,\nu,\Tscr,\psi,\dots$ which we will continue to assume (in particular Convention \ref{conv:starshaped}).
Throughout we assume that $t\gg 0$ so that the conclusions of
\S\ref{sec:setting} hold, as well as those of \S\ref{sec:paistropical}.

Our strategy will be to combine Theorems \ref{prop:Dlocally},
\ref{prop:tropical} and \ref{th:mirror_} to obtain an equivalence
$\Tw \Wscr\Fscr(f^{-1}_\alpha(0))\cong D^b(\coh(\partial X_\Sigma))$ on which
we can then define an appropriate monodromy action via Theorem
\ref{lem:moser2} and \S\ref{sec:wrapped}.  However to this end we have
to take care of gradings and background classes. For the background classes there is nothing to do.
We equip all Liouville manifolds with the trivial background class which is clearly compatible
with families so that we can combine Theorem \ref{lem:moser2} with Theorem \ref{lem:bgisotopies}.

The gradings represent a more delicate problem. We have to put gradings on
the appropriate families of Liouville manifolds so that we can apply
Theorem \ref{lem:simpiso}.
Unfortunately the grading used in
Theorem \ref{th:mirror_} is specific for $f^{-1}_{t,1}(0)$. So it is
not obvious that it can for example be extended to the family
$(f^{-1}_{t,s}(0))_s$. Therefore in this section we will show:
\begin{proposition} \label{prop:maingradingprop}
The grading on $f^{-1}_{t,1}(0)$ used in Theorem \ref{th:mirror_} is obtained from the restriction of the canonical grading on $\CC^{\ast k}$  (see \S\ref{sec:canonical})
to the zero fiber of  $f_{t,1}$ via the construction in Definition \ref{def:lemfibers}.
\end{proposition}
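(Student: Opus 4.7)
The plan is to unwind both grading constructions on $F := f_{t,1}^{-1}(0)$, isolate their difference as a single cohomology class, and then show that this class vanishes by an explicit analysis of the Liouville derivative $Z_\theta(f_{t,1})|F$. Both gradings arise by restricting the canonical grading on $\CC^{\ast k}$ along the codimension-two submanifold $F$ via the procedure of \S\ref{sec:inducing}, which (by Remark \ref{rem:cheapgrading} and Lemma \ref{lem:S1grading}) amounts to choosing a nowhere-vanishing section of the rank-one complex normal bundle $N := N_{F/\CC^{\ast k}}$, up to homotopy of its square. Definition \ref{def:liouvillepair} uses the section $Z_\theta|F$, while Definition \ref{def:lemfibers} uses the pullback $\xi$ of $1 \in T_0\CC$ under the real-linear isomorphism $df_{t,1} : N \xrightarrow{\sim} f_{t,1}^\ast T_0\CC$. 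Identifying $N$ with the trivial complex line bundle $\underline{\CC}_F$ via $df_{t,1}$, the sections $Z_\theta|F$ and $\xi$ correspond to $Z_\theta(f_{t,1})|F$ and the constant section $1$. Lemma \ref{lem:arglemma} then identifies the discrepancy between the two gradings with $2[\arg Z_\theta(f_{t,1})|F] \in H^1(F,\ZZ)$, so (since $H^1(F,\ZZ)$ is torsion-free) it is enough to show that $Z_\theta(f_{t,1})|F : F \to \CC^{\ast}$ has vanishing winding number along every loop.

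Using $Z_\theta = \sum_j \rho_j \partial_{\rho_j}$ from \eqref{eq:gradg1} together with $Z_\theta(z^{a_i}) = (\rho \cdot a_i)\, z^{a_i}$, a direct computation gives
\[
Z_\theta(f_{t,1})(z) \;=\; \sum_i \alpha_i t^{-\nu(a_i)} z^{a_i} \left[ \chi_{a_i}(\Log_t z)\,(\rho \cdot a_i) \;+\; \frac{1}{\log t} \sum_j \rho_j\, (\partial_j \chi_{a_i})(\Log_t z) \right].
\]
The bracketed factor is \emph{real}-valued, so $Z_\theta(f_{t,1})|F$ is an $\RR$-linear combination of the Laurent monomials $\alpha_i t^{-\nu(a_i)} z^{a_i}$, subject to the relation $f_{t,1}|F = 0$ which removes one complex degree of freedom.

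The remaining step is the winding-number analysis, which I would carry out using the tropical localization of \S\ref{sec:tailoring}. In a neighborhood of the codimension-one stratum $C_{\{a_\ell, a_m\}}$ of the polyhedral decomposition determined by $\nu$, the equation $f^{\text{core}}_{t,1}|F \approx 0$ forces $\alpha_\ell t^{-\nu(a_\ell)} z^{a_\ell} \approx -\alpha_m t^{-\nu(a_m)} z^{a_m}$; substituting into the formula above and using \eqref{eq:Cequations} one obtains
\[
Z_\theta(f_{t,1}^{\text{core}})|F \;\approx\; (\log t)\,(\nu(a_\ell) - \nu(a_m)) \cdot \alpha_\ell t^{-\nu(a_\ell)} z^{a_\ell},
\]
so $\arg Z_\theta(f_{t,1})|F$ is, up to a known sign, locally equal to $\arg(\alpha_\ell z^{a_\ell})$. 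The main obstacle is then to show that these local choices of argument patch together to a single continuous $\RR$-valued function on all of $F$: this requires compatibility checks at the higher-codimension strata where three or more chamber monomials contribute comparably, and control near the non-compact ends of $F$. I expect this to follow from the combinatorics of the star-shaped triangulation $\Tscr$ together with the sign conventions $\alpha_1 = -1$ and $\alpha_i = 1$ for $i \neq 1$ of Convention \ref{conv:starshaped}, which ensure that the phase jumps incurred when crossing adjacent walls cancel globally along any loop in $F$.
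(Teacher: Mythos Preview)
Your setup is correct and agrees with the paper: both gradings arise from trivializations of the complex normal bundle, and by Lemma~\ref{lem:arglemma} their difference is $2[\arg Z_\theta(f_{t,1})|F]\in H^1(F,\ZZ)$, so it suffices to kill this class. Your explicit formula for $Z_\theta(f_{t,1})$ is also correct (and matches the computation the paper does on a single pair of pants).

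The gap is in the final step. You propose to show that $\arg Z_\theta(f_{t,1})|F$ lifts to a global $\RR$-valued function by analyzing its behaviour near each codimension-one wall of the tropical amoeba and then ``patching''. But near the wall $C_{\{a_\ell,a_m\}}$ your own computation shows $\arg Z_\theta(f_{t,1})\approx \arg(\alpha_\ell z^{a_\ell})$, and the latter certainly has nontrivial winding around loops in the torus direction. So the class does \emph{not} vanish patch-by-patch; any proof must exhibit cancellation among patches, and the sentence ``I expect this to follow from the combinatorics of $\Tscr$ and the sign conventions'' is exactly the missing content. Carrying this through directly would require tracking phases across all higher-codimension strata of $\Pi_\nu$ and at infinity, which is delicate.

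The paper avoids this entirely. Instead of a global patching argument, it uses the $\Pscr$-pants cover (Definition~\ref{def:pantscover}) together with Mayer--Vietoris to inject $H^1(F,\ZZ)$ into $\bigoplus_B H^1(O_B,\ZZ)$, and observes that the class $\gamma$ restricts compatibly to each truncated $\Pscr$-pants. A further \'etale pullback via \eqref{eq:etalemappants} reduces to a single \emph{standard} tailored pants $P_k$. There $H_1(P_k,\ZZ)$ is generated (at least up to torsion, which is all that is needed) by the explicit coordinate loops $\gamma_i:\theta\mapsto(y_1,\dots,y_ie^{i\theta},\dots,y_k)$ lying in the skeleton. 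On such a loop only the cutoff functions with $\chi_{a_j}\equiv 1$ survive, and a two-line computation using \eqref{eq:psigradient} shows that $df_{t,1}(Z_{\gamma_i(\theta)})$ is literally \emph{constant} in $\theta$---so the winding number is manifestly zero, with no patching required. This reduction to explicit loop generators is the key idea you are missing.
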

Obviously the canonical grading on $\CC^{\ast k}$ can be restricted to any
hypersurface $f^{-1}(0)$ where $f$ satisfies the assumptions as in \S\ref{sec:fibergrading}. So it can be defined in families of hypersurfaces. We will call this the \emph{natural grading}. 
The grading on $f^{-1}_{t,1}(0)$ used in Theorem \ref{th:mirror_} will be called the \emph{special} grading.

\subsection{Pants cover}
\label{sec:pantscover}
\subsubsection{The general case}
For use below we summarize some results from \cite{Abouzaid}, \cite{Mikhalkin}, \cite[\S 6.1.3]{GammageShende}.
We place ourselves in the setting of \S\ref{sec:toricprelims} and we will freely use notations from there. However
we will make our notations more explicit by writing $f^{-1}_{A,\nu,\alpha,t,1}(0)$ for $f^{-1}_{t,1}(0)$.
We assume throughout $t\gg 0$. We will sometimes need to choose appropriate cutoff functions but this is harmless as explained in Remark \ref{rem:cutoff}.

A \emph{translated P-pants} is a Laurent hypersurface in $\CC^{\ast k}$ of the form
\begin{equation}
\label{eq:Ppants}
\beta_0 t^{-\nu_0}x^{b_0}+\cdots+\beta_k t^{-\nu_k}x^{b_k}=0
\end{equation}
such that $\beta_i\neq 0$ for all $i$ and such that the affine span of $B=\{b_0,b_1,\ldots,b_k\}$ is a translated cofinite lattice in $\ZZ^k$.  A translated $\Pscr$-pants can be \emph{tailored} as in \S\ref{sec:tailoring}. 

If $B$ is a maximal face of $\Delta_\nu$ then we may consider  
\[
O_B=  \{x\in f^{-1}_{A,\nu,\alpha,t,1}(0)\mid \tau(\Log_t x)\subset B\}
\]
We may define suitable cut-off functions $\chi_{b_i}$ such that 
\[
O_B=\{x\in f^{-1}_{B,\nu_B,\beta,t,1}(0)\mid \tau(\Log_t x)\in B\}
\]
where the $(\nu_B,\beta)$ are the restrictions of $(\nu,\alpha)$. 
We clearly have
\begin{equation}
\label{eq:pantscover}
 f^{-1}_{t,1}(0)=\bigcup_{B\in \Delta_\nu^{\text{facets}}} O_B
\end{equation}
Moreover
$O_B$ is a deformation retract of $f^{-1}_{B,\nu_B,\beta,t,1}(0)$.
\begin{definition} \label{def:pantscover}
We call \eqref{eq:pantscover} the \emph{(truncated) $\Pscr$-pants cover} of $f^{-1}_{A,\nu,\alpha,t,1}(0)$.
\end{definition}
A special case of translated $\Pscr$-pants are ordinary translated pants
which are Laurent hypersurfaces in $\CC^{\ast k}$ of the form
\[
-1+t^{-\nu_1} x_1+\cdots+t^{-\nu_k} x_k=0
\]
In other words they are equal to $f^{-1}_{E_k,\nu,\epsilon,t,1}(0)$ with $E_k:=\{0\}\cup \{e_i|1\leq i\leq k\}$, where the $e_i$ are the standard basis vectors in $\ZZ^k$ and where 
$\epsilon_0=-1$, $\epsilon_i=1$ for $i>0$.

For future use we note that if in  \eqref{eq:Ppants} we have $\beta_0=-1$, $\beta_i=1$ for $i>0$ and $b_0=0$ then 
 with a suitable choice of cut-off functions $(\chi_{b_i})_i$ there is
a commutative diagram
\begin{equation}
\label{diag:etalemappants}
\begin{tikzcd}
\CC^{\ast k} \ar[rr, "{\iota}"]\ar[dr, "{f_{B,\nu,\beta,t,1}}"']&&\CC^{\ast k}\ar[dl, "{f_{E_k,\nu,\epsilon,t,1}}"]\\
& \CC &
\end{tikzcd}
\end{equation}
where the top row is the \'etale map
\begin{equation}
\label{eq:etalemappants}
\iota:\CC^{\ast k}\r  \CC^{\ast k}:x\mapsto (x^{b_1},\ldots, x^{b_{k}})
\end{equation}
\subsubsection{The star-shaped case}
Now assume furthermore that the triangulation $\Delta_\nu$ is
star-shaped
and that $\CC^{\ast k}$ is equipped with a smoothed homogeneous 
potential~$\psi$ of degree two adapted to~$C_0$ (see Definition 
\ref{def:adapted}). Since $\alpha$ is now constant by Convention \ref{conv:starshaped}, we drop it from the notation.
The $\Pscr$-pants cover corresponds to the facets of $\Delta_\nu$ and these in turn  correspond to
the vertices of $C_0$. If we write $C_{0,B}$ for $C_0$ of the tropical amoeba of $f^{-1}_{B,\nu_B,t,1}(0)$ then $C_{0,B}$ is the cone spanned by~$C_0$ at the vertex corresponding to $B$.
It follows from Lemma \ref{lem:extremal} that $\psi$ is adapted to all $C_{0,B}$.

In the star shaped case $\Skel(f^{-1}_{t,1}(0))$ was computed
in \cite{GammageShende} and \cite{Zhou} and it is compatible with the
$\Pscr$-pants cover.  In other words if we write
\[
\Skel(f^{-1}_{t,1}(0))=\bigcup_{B\in \Delta_\nu^{\text{facets}}} O_B\cap \Skel(f^{-1}_{t,1}(0))
\]
then
\[
 O_B\cap \Skel(f^{-1}_{t,1}(0))=O_B\cap \Skel(f^{-1}_{B,\nu_B,t,1}(0))
\]
and furthermore $O_B\cap \Skel(f^{-1}_{B,\nu_B,t,1}(0))$ is a deformation retract of $O_B$.

\subsection{The special grading and the natural grading coincide}
Let $\gamma$ be the difference between the two gradings. According to Lemma \ref{lem:ob} $\gamma\in H^1(f^{-1}_{t,1}(0),\ZZ)$.
We have to prove $\gamma=0$. This will follow from the following three claims.
\begin{enumerate}
  \item\label{item:pop1} The $\Pscr$-pants cover \eqref{eq:pantscover} yields an inclusion
\[
H^1(f^{-1}_{t,1}(0),\ZZ)\hookrightarrow \bigoplus_{B\in \Delta^{\text{facets}}_\nu} H^1(O_B,\ZZ)\cong  \bigoplus_{B\in \Delta^{\text{facets}}_\nu} H^1(f^{-1}_{B,\nu_B,t,1}(0),\ZZ)
\]
 which sends $\gamma$ to $(\gamma_B)_B$ where $\gamma_B$ represents the difference between the
special grading and the natural grading on $f^{-1}_{B,\eta_B,t,1}(0)$ (see Remark \ref{rem:better_restriction} below for a better result).
  \item\label{item:Ppop} The class $\gamma$ on a translated tailored  $\Pscr$-pants is induced by pullback of the corresponding translated tailored pants via the map $\iota$ in \eqref{eq:etalemappants}.
  \item\label{item:pop_triv} The class $\gamma$ is trivial on translated tailored pants. 
\end{enumerate}
We now prove these claims.
\begin{enumerate}
\item[\eqref{item:pop1}]
This follows by  induction on the number of $\Pscr$-pants and the Mayer-Vietoris sequence for the gluing of a new $\Pscr$-pants with the union of the earlier ones.
\item[\eqref{item:Ppop}] We have to be bit careful here since the pullback of $\iota$ does not preserve the ambient canonical grading on $\CC^{\ast k}$. However as we are only concerned
with the difference of two gradings, this does not matter.
By Lemma \ref{lem:liouville} we may put a potential, which is homogeneous of degree two outside a compact set, on the target of $\iota$ such that
$\iota$ is an \'etale map of Liouville manifolds. Hence the pullback of the ambient Liouville vector field is the ambient Liouville vector field. Similarly the vector field
$\xi$ in Definition \ref{def:lemfibers} is compatible with pullback via  \eqref{diag:etalemappants}.
Hence \eqref{item:Ppop} follows.
\item[\eqref{item:pop_triv}] Let $P_{k}=f_{t,1}^{-1}(0)$ be a translated tailored pants  (of real dimension $2k-2$)
with equation (writing $\chi_i(x)$ for $\chi_{b_i}(\Log_t x)$):
\[
-\chi_0(x)+t^{-\nu_1}\chi_{1}(x) x_1+\cdots+ t^{-\nu_k} \chi_{k}(x)x_k=0
\]
(recall that by convention here $\nu_i>0$ for $i=1,\ldots,k$)
and let $S$ be its skeleton. 
By Lemma
  \ref{lem:arglemma} it suffices to show that 
  $S \to S^1: m \mapsto \operatorname{arg}(d
  f_{t,1}(Z_m))$ induces the trivial map after applying $H_1(-,\ZZ)$ (recall that this makes sense by Lemma \ref{prop:GSpushout}).

To start we have to understand $H_1(S,\ZZ)=H_1(P_{k},\ZZ)$. We note that by the description of the skeleton in \cite[Proposition 5.5]{Zhou} 
$S$ contains loops~$\gamma_i$ corresponding to the minima of $\psi$ on the $k$ coordinate half-lines in~$\partial C_0$.  
Concretely near such a minimum the equation of $O_k$ is given by 
\begin{equation*}
t^{-\nu_1}x_1+\cdots + t^{-\nu_{i-1}} x_{i-1}
+  t^{-\nu_{i+1}} x_{i+1}+ \cdots + t^{-\nu_k} x_k=1
\end{equation*}
and $\gamma_i$ is defined by:
\[
\gamma_i:S^1\r O_k:\theta\mapsto (y_1,\ldots,y_{i-1},y_ ie^{i\theta} ,y_{i+1},\ldots,y_k)
\]
for some fixed $(y_1,\ldots,y_k)\in \RR^k_{>0}$.

The $(\gamma_i)_i$ generate $H_1(P_{k},\ZZ)$ but for our purposes it is sufficient that they generate up to torsion. This clear for $k>2$ since as by Lemma \ref{lem:claim} below the map $H_1(f^{-1}_{t,1}(0),\ZZ)\r H_1(\CC^{\ast k},\ZZ)$
is an isomorphism and the $\gamma$'s clearly generate $H_1(\CC^{\ast k},\ZZ)$. For $k=2$ we find that the $\gamma$'s are at least independent so it is sufficient to check
that $\rk H_1(O_k,\ZZ)=2$. This follows from the fact that a two-dimensional  pair of pants is a thrice punctured sphere.

\medskip

Using \eqref{eq:psigradient} we get on the image of $\gamma_i$:
\begin{align*}
df_{t,1}(Z_{\gamma_i(\theta)})&=Z_{\gamma_i(\theta)}(f_{t,1})\\
&=\sum_j\log |x_j|(x_j\partial_{x_j}+\bar{x}_j \partial_{\bar{x}_j})(f_{t,1})\biggr|_{x=\gamma_i(\theta)}\\
&=t^{-\nu_1} \log |y_1| y_1+\cdots + t^{-\nu_{i-1}} \log |y_{i-1}|  y_{i-1}\\
&+  t^{-\nu_{i+1}}  \log |y_{i+1}| y_{i+1}+ \cdots + t^{-\nu_k}  \log |y_k| y_k
\end{align*}
The fact that this is constant finishes the proof.
\end{enumerate}

We used the following lemma.
\begin{lemma}\label{lem:claim}
The restriction maps $H_1(f^{-1}_{t,1}(0),\ZZ)\r H_1(\CC^{\ast k},\ZZ)$, $H^1(\CC^{\ast k},\ZZ)\r H^1(f^{-1}_{t,1}(0),\ZZ)$
are isomorphisms.
\end{lemma}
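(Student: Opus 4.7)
The plan is to prove the cohomological statement; the homological one will follow by the universal coefficient theorem, since we will see that both $H^1$'s are free abelian of rank $k$. First I would reduce to the untailored case: by Remark~\ref{rem:cutoff} the manifold $f^{-1}_{t,1}(0)$ is diffeomorphic to $f^{-1}_{t,0}(0)=\{-1+\sum_i t^{-\nu_i}x_i=0\}$, and the torus automorphism $y_i=t^{-\nu_i}x_i$ of $(\CC^\ast)^k$ identifies the latter with the classical pants $P:=\{y_1+\cdots+y_k=1\}\cap(\CC^\ast)^k$ while sending $[dx_i/x_i]$ to $[dy_i/y_i]$. So it suffices to show that the restriction $H^1((\CC^\ast)^k,\ZZ)\to H^1(P,\ZZ)$ is an isomorphism.

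Next, projecting off the first coordinate identifies $P$ diffeomorphically with the complement of $k$ affine hyperplanes
\[
M\;=\;\CC^{k-1}\setminus\Big(\bigcup_{i=2}^k\{y_i=0\}\;\cup\;\{y_2+\cdots+y_k=1\}\Big).
\]
By the Brieskorn--Orlik--Solomon theorem for arrangement complements, $H^1(M,\ZZ)$ is free abelian of rank $k$, freely generated by the logarithmic classes $[dy_i/y_i]$ for $i=2,\dots,k$ together with $[-d(y_2+\cdots+y_k)/(1-y_2-\cdots-y_k)]$. Since $y_1=1-y_2-\cdots-y_k$ on $P$, this last class is precisely $[dy_1/y_1]|_P$. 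Hence the restriction map sends the standard basis $\{[dx_i/x_i]\}_{i=1}^k$ of $H^1((\CC^\ast)^k,\ZZ)$ bijectively onto the Orlik--Solomon basis of $H^1(P,\ZZ)$, so it is an isomorphism. The universal coefficient theorem then yields the same conclusion for $H_1$.

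The only nontrivial input is the Brieskorn--Orlik--Solomon description, which is entirely classical; if one wanted to avoid citing it, the same conclusion can be obtained by induction on $k$ using Mayer--Vietoris, peeling off one coordinate hyperplane at a time and using that the complement of a single affine hyperplane in $\CC^{k-1}$ is homotopy equivalent to $S^1\times\CC^{k-2}$. Thus there is no real obstacle; the only care required is to track the bases correctly so as to match the generators $[dx_i/x_i]$ of the ambient torus with the dlog generators of the arrangement complement.
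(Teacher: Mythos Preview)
Your argument treats only the ordinary pants case $A=E_k=\{0,e_1,\ldots,e_k\}$: you write $f^{-1}_{t,0}(0)=\{-1+\sum_i t^{-\nu_i}x_i=0\}$ and then identify it with a hyperplane-arrangement complement. But the lemma, as stated and as applied in Remark~\ref{rem:better_restriction}, concerns the general tailored hypersurface of \S\ref{sec:setting}, where $A$ is an arbitrary finite set admitting a star-shaped triangulation; for such $A$ the fibre $f^{-1}_{t,0}(0)$ is a general smooth Laurent hypersurface and has no Orlik--Solomon description. The paper's proof handles the general case by first passing to $s=0$ via the family $F=\bigcup_s f^{-1}_{t,s}(0)\subset(\CC^\ast)^k\times[0,1]$ and then invoking Oka's Lefschetz-type theorem for smooth Laurent hypersurfaces, which gives the $H_1$ isomorphism for any such hypersurface once $k>2$.

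There is also a smaller gap in your reduction step. Remark~\ref{rem:cutoff} (via Moser) only produces an abstract Liouville isomorphism $f^{-1}_{t,1}(0)\cong f^{-1}_{t,0}(0)$; it does not by itself say that this isomorphism intertwines the two inclusions into $(\CC^\ast)^k$, which is what you need to compare the \emph{restriction} maps. The paper's family argument is precisely what closes this: trivialising $F$ over $[0,1]$ shows the two inclusions are homotopic in $(\CC^\ast)^k$, whence the commuting square of pushforwards. Once that is said, your Orlik--Solomon computation is a clean alternative to Oka for the pants case, with the minor bonus that it also covers $k=2$, whereas the paper's proof needs $k>2$.
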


\begin{proof}
By the universal coefficient theorem, it suffices to prove the lemma for homology. 
Let $F=\bigcup_{s\in [0,1]} f_{t,s}^{-1}(0)\subset \CC^{\ast k}\times [0,1]$. This is a trivial family by \cite[Proposition 4.9]{Abouzaid}.
For $s\in [0,1]$ we have the following diagram of pushforward maps
\[
\xymatrix{
H_1(F,\ZZ)\ar[r]\ar[d]&H_1(\CC^{\ast k}\times [0,1],\ZZ)\ar[d]\\
H_1(f^{-1}_{t,s}(0),\ZZ)\ar[r]& H_1(\CC^{\ast k},\ZZ).\\
}
\]
The vertical maps are isomorphisms and by \cite[Corollary (1.1.1)]{Oka} the bottom horizontal map is an isomorphism for $s=0$ (since $k>2$ and $f^{-1}_{t,0}(0)$ is defined by a Laurent polynomial). 
Hence  the upper horizontal map (which does not involve $s$) is an isomorphism. It follows that the lower horizontal map is an isomorphism for all $s$, in particular for $s=1$.
Thus the claim is proved.
\end{proof}
\begin{remark}
\label{rem:better_restriction}
Although we do not use it, it is interesting to observe that for $k>2$ a grading on $f^{-1}_{t,1}(0)$ is in fact determined by the restriction to a single arbitrary truncated tailored translated $\Pscr$-pants $O_B$.

If $B$ is a maximal simplex in $\Delta_\nu$ then we have, using Lemma \ref{lem:claim}, a commutative diagram.
\[
\begin{tikzcd}
& H^1(f^{-1}_{t,1}(0),\ZZ)\ar[rd]&\\
H^1(\CC^{\ast k},\ZZ) \ar[rr,"\cong"']\ar[ur, "\cong"] && H^1(O_B,\ZZ)
\end{tikzcd}
\]
It follows that the right most diagonal map is an isomorphism as well.
\end{remark}

\section{HMS action of the fundamental group of the SKMS}
\label{sec:mainresult}
For a commutative ring let $\Ho_R(A_\infty)$ be the homotopy category of $R$-linear
$\ZZ$-graded $A_\infty$-cat\-egories, i.e.\ the category
or $R$-linear $\ZZ$-graded $A_\infty$-categories with $A_\infty$-equivalences formally
inverted \cite{CanonacoOrnaghiStellari}.

If $M$ is a Liouville manifold equipped 
  with a grading and background class then $\WF(M)$ denotes the
  corresponding wrapped Fukaya category as introduced in
  \cite{GPS1}. In particular $\Wscr\Fscr(M)$ is an object in $\Ho_\ZZ(A_\infty)$.

\begin{theorem}
\label{th:mainth1}
Let ${\alpha}\in \CC^{ A}- V({A})$. 
 Let $f^{-1}_{{\alpha}}(0)$ be equipped with the restriction of the toric K\"ahler structure on $(\CC^\ast)^k$  as in \S\ref{subsec:families} and hence in particular with the
corresponding symplectic/Liouville forms. We equip $f^{-1}_\alpha(0)$ with the trivial background class and the natural grading as a hypersurface in a torus (see \S\ref{sec:gradingbackgroundintro}).
There exists a natural action of $\pi_1(\Kscr_{X_\Sigma},\alpha)$ on $\Wscr\Fscr(f^{-1}_{{\alpha}}(0))$ considered as an object in $\Ho_\ZZ(A_\infty)$.
\end{theorem}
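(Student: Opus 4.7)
The plan is to build the action by combining a Moser-type trivialization of the family of Liouville manifolds with the functoriality of the wrapped Fukaya category, then showing that loops in the image of $\pi_1((\CC^\ast)^k)$ act trivially by exhibiting an explicit isometric trivialization.

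First I would set up the family. By Theorem \ref{prop:Dlocally}, $(f^{-1}_\alpha(0))_{\alpha\in\CC^A-V(A)}$ is a family of Liouville manifolds that locally admits families of generating Liouville domains. The natural grading on each fiber (restriction of the canonical grading on $(\CC^\ast)^k$ via Definition \ref{def:lemfibers}) extends to a smoothly varying grading on the total family, and the trivial background class is obviously compatible with the family by Theorem \ref{lem:bgisotopies}.

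Next, for a loop $\gamma:[0,1]\r \CC^A-V(A)$ based at $\alpha$, pull back the family to $[0,1]$, which is contractible. Theorem \ref{lem:moser2} produces a diffeomorphism $\Psi:f^{-1}_\alpha(0)\times[0,1]\r \bigcup_t f^{-1}_{\gamma(t)}(0)$ with $\Psi_0=\Id$ together with a function $F$ with $F_0=0$ satisfying $\Psi_t^\ast(\theta_{\gamma(t)})+dF_t=\theta_\alpha$. Theorem \ref{lem:simpiso} then uniquely lifts this to a graded isotopy, and at $t=1$ we obtain a graded Liouville automorphism $(\Psi_1,F_1)$ of $f^{-1}_\alpha(0)$. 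Applying Proposition \ref{cor:functorial} yields an $A_\infty$-autoequivalence $\Wscr\Fscr(\Psi_1,F_1)$ of $\Wscr\Fscr(f^{-1}_\alpha(0))$. By Corollary \ref{cor:independence1}, this does not depend up to $A_\infty$-homotopy on the choice of trivialization in Theorem \ref{lem:moser2}, and by Corollary \ref{cor:independence2} it depends only on the based homotopy class of $\gamma$. The compositionality statement in Proposition \ref{cor:functorial} (applied to the concatenation of trivializations for two loops) implies the assignment is a group homomorphism $\pi_1(\CC^A-V(A),\alpha)\r \Aut_{\Ho_\ZZ(A_\infty)}(\Wscr\Fscr(f^{-1}_\alpha(0)))$.

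Finally, I descend along the surjection in \eqref{eq:presentation} by showing the image of $\pi_1((\CC^\ast)^k,1)$ acts trivially. Since $\pi_1((\CC^\ast)^k)=\pi_1((S^1)^k)$, every generator is represented by a loop $\lambda(t)$ in the compact torus $(S^1)^k$. The corresponding loop in $\CC^A-V(A)$ is $\gamma(t)=\lambda(t)\cdot\alpha$, i.e.\ $(\lambda(t)\cdot\alpha)_i=\lambda(t)^{a_i}\alpha_i$, which satisfies $f_{\lambda(t)\cdot\alpha}(x)=f_\alpha(\lambda(t)x)$. Hence $\Phi_t(x):=\lambda(t)^{-1}x$ is an explicit diffeomorphism $f^{-1}_\alpha(0)\r f^{-1}_{\gamma(t)}(0)$. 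Because the toric Kähler potential depends only on $\rho$, translation in the angular coordinates by $\lambda(t)^{-1}\in(S^1)^k$ preserves $\theta$ on $(\CC^\ast)^k$, so $\Phi_t^\ast(\theta_{\gamma(t)})=\theta_\alpha$; moreover, $\Phi_t$ preserves the canonical grading $(z_1\partial_1\wedge\cdots\wedge z_k\partial_k)^{\otimes 2}$ (since each $z_i\partial_i$ is invariant under the $(\CC^\ast)^k$-action), hence the restricted natural grading on fibers. At $t=1$ we have $\Phi_1=\Id$ and $F_1=0$, so this trivialization gives the identity autoequivalence; by Corollary \ref{cor:independence1} any trivialization produces the same $A_\infty$-homotopy class, so the loop acts trivially.

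The main potential obstacle is bookkeeping: verifying that the unique graded lift of Theorem \ref{lem:simpiso} and the construction of Proposition \ref{cor:functorial} are compatible with the explicit isometric trivialization above, so that the class of the loop genuinely coincides with the identity rather than merely lying in the image of some non-trivial automorphism of the grading. This is however automatic from the invariance of the canonical grading and the trivial background class under the $(S^1)^k$-action, together with the uniqueness in Theorem \ref{lem:simpiso}.
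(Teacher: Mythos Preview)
Your proposal is correct and follows essentially the same route as the paper: set up the family via Theorem \ref{prop:Dlocally}, trivialize along paths with Theorem \ref{lem:moser2}, pass to $\Wscr\Fscr$ using Proposition \ref{cor:functorial}, use Corollaries \ref{cor:independence1}--\ref{cor:independence2} for well-definedness, and kill the $\pi_1((\CC^\ast)^k)$ contribution via the $(S^1)^k$-action (which preserves the K\"ahler structure and ends at the identity). The paper handles the grading and background class by a blanket appeal to Theorems \ref{lem:simpiso} and \ref{lem:bgisotopies}, exactly as you do; your more explicit check that the $(S^1)^k$-action preserves the canonical grading is fine but not strictly needed once uniqueness of the graded lift in Theorem \ref{lem:simpiso} is invoked.
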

\begin{proof}
The proof consist of several steps where we use the results from previous sections. Throughout we silently use Theorems \ref{lem:simpiso} and \ref{lem:bgisotopies}
to ensure that our constructions are compatible with the gradings and the background classes.

\begin{enumerate}
	\item\label{critical1} 
From Theorem \ref{prop:Dlocally} we obtain that $(f_{{\alpha}}^{-1}(0))_{\alpha\in \CC^{{ A}}-V({A})}$ is a family of Liouville manifolds which locally has families of generating Liouville domains.

	\item\label{isomorphism}
          Let ${\gamma}:[0,1]\r \CC^{{A}}- V({A})$ be a  path. 
          By \eqref{critical1} we may apply the Moser lemma in this setting, Theorem \ref{lem:moser2}, 
        to obtain an isotopy  $\rho_{\gamma,t}:{f}_{{\gamma}(0)}^{-1}(0)\to {f}_{{\gamma}(t)}^{-1}(0)$, $t\in [0,1]$, of Liouville manifolds. 
	\item\label{action} In particular if $\gamma(0)={\gamma}(1)$ then
        we obtain an automorphism $\rho_{\gamma,1}$ of the Liouville manifold ${f}_{{\gamma}(0)}^{-1}(0)$. 
Using Proposition \ref{cor:functorial} we obtain a corresponding $A_\infty$-auto-equivalence of $\Wscr\Fscr(f_{{\gamma}(0)}^{-1}(0))$.
	\item\label{independent} The autoequivalence in \eqref{action} depends on the actual path $\gamma$ (and not only its homotopy class) and the chosen isotopy $\rho_{\gamma,t}$. However
 it follows from Corollary \ref{cor:independence2} that different choices give rise to $A_\infty$-homotopic auto-equivalences. 
We obtain an action of $\pi_1(\CC^{A}-V(A),\alpha)$ on the wrapped Fukaya category $\Wscr\Fscr(f_{{\alpha}}^{-1}(0))$ considered as
an object in $\Ho_{\ZZ}(A_\infty)$.
\item
Thanks to the presentation \eqref{eq:presentation} of $\pi_1(\Kscr_A,\alpha)$
we have to prove that the image of $\pi_1((\CC^\ast)^k,1)$ acts trivially on   $\Wscr\Fscr(f_{{\alpha}}^{-1}(0))$. We have $\pi_1((\CC^\ast)^k,1)=\pi_1((S^1)^k,1)$ and the $(S^1)^k$ action on $(\CC^\ast)^k$ preserves
the K\"ahler form (see \eqref{eq:omega}).  Hence the action of the elements of a path $\gamma:[0,1]\r (S^1)^k $ with $\gamma(0)=\gamma(1)=1$ (i.e.\ $\gamma \in \pi_1((S^1)^k,1)$) yields an isotopy of Liouville manifolds $\rho_{\gamma\cdot {\alpha},t}:
f_{{\alpha}}^{-1}(0)\r f_{\gamma(t)\cdot {\alpha}}^{-1}(0)$. But this isotopy is such that $\rho_{\gamma\cdot {\alpha},1}$ is the identity (as it is the action by the identity element of $(S^1)^k$). It follows that the image of $\pi_1((\CC^\ast)^k,1)$ under the first
arrow in \eqref{eq:presentation} does indeed act trivially on $\Wscr\Fscr(f^{-1}_{\alpha}(0))$. This yields the desired result. \qedhere
          \end{enumerate}
\end{proof}
\begin{theorem} \label{prop:lastmile} 
We assume $0\not \in \Pi_\nu$. 
Let 
  ${\alpha}\in \CC^{A}-V({A})$. Let
  $f_{t,0}^{-1}(0)=f_{{\alpha}}^{-1}(0)$, resp. $f_{t,1}^{-1}(0)$,   be equipped with the restriction of the  K\"ahler structure coming from a toric, resp.\ any smoothed homogeneous degree $2$, potential on $(\CC^\ast)^k$  as in \S\ref{sec:paistropical}\footnote{For this we need to impose the assumption $0\not \in \Pi_\nu$.} and hence in particular with the
corresponding symplectic/Liouville forms. Equip  $f^{-1}_\alpha(0)$ with the   $\ZZ$-grading and (trivial) background class as in Theorem \ref{th:mainth1}. 
Similarly equip
  $f^{-1}_{t,1}(0)$ 
with the grading and (trivial) background class as in Theorem \ref{th:mirror_}.
Then there is an isomorphism
  $\Wscr\Fscr(f_\alpha^{-1}(0))\cong
  \Wscr\Fscr(f_{t,1}^{-1}(0))$ in $\Ho_\ZZ(A_\infty)$.
\end{theorem}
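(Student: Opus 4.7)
The plan is to produce the desired isomorphism as a composition of two Moser-type equivalences, glued at the common fiber $f^{-1}_{t,0}(0) = f^{-1}_{\alpha_\nu}(0)$, where $\alpha_\nu = (\alpha_i t^{-\nu(a_i)})_i$ is the coefficient vector of $f_{t,0}$ (see Lemma \ref{lem:nui}). The first equivalence deforms the tailored hypersurface with a smoothed degree two potential into a pure Laurent hypersurface with a toric potential. The second equivalence moves the coefficient vector through $\CC^A - V(A)$ while keeping the toric potential fixed. Concretely, Lemma \ref{lem:nui} places $\alpha_\nu$ in $\CC^A - V(A)$ for $t \gg 0$, and since $\alpha \in \CC^A - V(A)$ by hypothesis and $V(A)$ is a proper algebraic subset, the two points can be connected by a path inside $\CC^A - V(A)$.

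For the first step I would pick a smooth family of potentials $(\psi_s)_{s\in[0,1]}$ with $\psi_1$ the chosen smoothed degree two potential adapted to $C_0$ and $\psi_0$ the toric potential, interpolated using Lemma \ref{lem:interpolation} (the assumption $0 \notin \Pi_\nu$ together with \eqref{eq:notzero} ensures that the hypersurfaces stay in the region where the homogeneity is effective). Theorem \ref{prop:tropical} then guarantees that $(f^{-1}_{t,s}(0),\theta_s)_{s\in[0,1]}$ is a family of Liouville manifolds which locally admits families of generating Liouville domains, so the parametrized Moser lemma (Theorem \ref{lem:moser2}) produces an isomorphism of Liouville manifolds between the two endpoints, which by Proposition \ref{cor:functorial} induces a quasi-equivalence of wrapped Fukaya categories. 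For the second step I would apply Theorem \ref{prop:Dlocally} together with Theorem \ref{lem:moser2} and Proposition \ref{cor:functorial} along a path in $\CC^A - V(A)$ from $\alpha_\nu$ to $\alpha$. Composing yields the desired equivalence on the level of plain Liouville manifolds.

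The remaining issue is to verify that this composite respects the $\ZZ$-grading (and background class) appearing in the statement. Throughout I would carry the natural grading on the hypersurfaces, namely the one obtained by restricting the canonical grading of $(\CC^\ast)^k$ from \S\ref{sec:canonical} via the construction in Definition \ref{def:lemfibers}; this grading is defined purely from the smooth submersion $f$ into $\CC$ and is therefore preserved in both families. Theorem \ref{lem:simpiso} lifts each of the two symplectic isotopies above uniquely to a graded isotopy compatible with this natural grading at the starting point, and Theorem \ref{lem:bgisotopies} takes care of the (trivial) background class. The key identification is then Proposition \ref{prop:maingradingprop}, which asserts that at the endpoint $f^{-1}_{t,1}(0)$ the natural grading coincides with the special grading used in Theorem \ref{th:mirror_}; combining this with the naturality of the Fukaya assignment then gives the isomorphism in $\Ho_\ZZ(A_\infty)$.

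The main obstacle is precisely the grading comparison. Once Proposition \ref{prop:maingradingprop} is in hand, the rest of the argument is a mechanical application of parametrized Moser and the functoriality statements in \S\ref{sec:wrapped}, and the two families are exactly the ones built in Theorems \ref{prop:Dlocally} and \ref{prop:tropical}. The genuinely delicate input is the pants-cover reduction behind Proposition \ref{prop:maingradingprop}, which ultimately forces the grading discrepancy class in $H^1(f^{-1}_{t,1}(0),\ZZ)$ to be trivial by checking that on a translated tailored pants the argument of $df_{t,1}(Z)$ is constant along the generators of $H_1$ supplied by the skeleton; all of the analytical work needed in the proof of Theorem \ref{prop:lastmile} itself has already been packaged in the earlier sections.
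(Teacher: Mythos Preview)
Your proposal is correct and follows essentially the same approach as the paper's own proof: interpolate the potentials via Lemma \ref{lem:interpolation} and apply Theorem \ref{prop:tropical} together with Moser (Theorem \ref{lem:moser2}) to pass from $(f^{-1}_{t,1}(0),\theta_1)$ to $(f^{-1}_{t,0}(0),\theta_0)$, then use Theorem \ref{prop:Dlocally} and Moser along a path in $\CC^A-V(A)$ from $\alpha_\nu$ to $\alpha$, invoking Theorems \ref{lem:simpiso} and \ref{lem:bgisotopies} throughout for the grading and background class. Your observation that Proposition \ref{prop:maingradingprop} is the only genuinely nontrivial input (the grading comparison at $s=1$) matches the paper's treatment, which packages that identification separately in \S\ref{sec:gradingbackground} and then silently uses it here.
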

\begin{proof}
  We use a similar but somewhat more involved argument as in the proof of Theorem \ref{th:mainth1}. As before, throughout we silently use Theorems \ref{lem:simpiso} and \ref{lem:bgisotopies}.
          The steps are as follows. 
\begin{enumerate}
    \item\label{critical2}

Let $\psi_0$, $\psi_1$ be the toric, resp. the smoothed homogeneous degree $2$ potential from the statement of the theorem. Set $\psi_s=s\psi_1+(1-s)\psi_0$, which is still a smoothed homogeneous degree $2$ potential by Lemma \ref{lem:interpolation}, and let $\theta_s$ be the corresponding Liouville form. From Theorem \ref{prop:tropical} we obtain that 
for $t\gg 0$, the family $(f^{-1}_{t,s}(0),\theta_s)_{s\in [0,1]}$ is a family of Liouville manifolds which locally has
families of generating Liouville domains.

  \item\label{FWFSigma}
Applying Moser's lemma, Theorem \ref{lem:moser2}, 
 we obtain an isomorphism of Liouville manifolds  $(f^{-1}_{t,0}(0),\theta_0)\cong ({f_{t,1}}^{-1}(0),\theta_1)$. 
 Since $t\gg 0$ we have ${\alpha}_\nu:=(\alpha_i t^{-\nu(a_i)})_i\not\in V({A})$ by Lemma \ref{lem:nui}. 
Choosing an arbitrary path  in $(\CC^\ast)^A- V(A)$ connecting ${\alpha}$ and 
${\alpha}_\nu$ we obtain from \eqref{isomorphism} in the proof of Theorem \ref{th:mainth1} an isomorphism of Liouville manifolds (with the Liouville structure coming from the toric potential)
$f^{-1}_{t,0}(0)=f^{-1}_{{\alpha}_\nu}(0)\cong f^{-1}_{{\alpha}}(0)$.  Combining the two isomorphisms yields by Proposition \ref{cor:functorial}  an $A_\infty$-equivalence 
$\WF({f}_{{\alpha}}^{-1}(0),\theta_0)\cong\WF({f}_{t,1}^{-1}(0),\theta_1)$. \qedhere
\end{enumerate}	
\end{proof}

Hence combining Lemma \ref{rem:easysmooth}, Theorem \ref{th:mainth1}, Theorem \ref{prop:lastmile} and Theorem \ref{th:mirror_} we obtain:
\begin{corollary}
\label{cor:maincor}
Assume $0\in A\subset \ZZ^k$, $|A|>1$, and assume that $P=\conv(A)$ is equipped with a star-shaped triangulation $\Tscr$ with vertices $A$ as in \S\ref{sec:stacky}.
Denote the associated stacky fan by~$\Sigma$.
Assume that a strictly convex function $\nu:A\r \RR$ exists whose associated triangulation coincides with $\Tscr$.
Under these hypotheses there is a natural action of $\pi_1(\Kscr_{A})$ on  $D^b(\coh(\partial X_\Sigma))$ considered as an object in $\Ho_\CC(A_\infty)$.
\end{corollary}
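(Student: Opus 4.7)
The plan is simply to chain together the three main theorems of the preceding sections, with only book-keeping left to verify. The strategy is:
(i) fix a base point $\alpha\in \CC^A-V(A)$ and apply Theorem \ref{th:mainth1} to obtain an action of $\pi_1(\Kscr_A,\alpha)$ on $\WF(f^{-1}_\alpha(0))$ in $\Ho_{\ZZ}(A_\infty)$;
(ii) use Theorem \ref{prop:lastmile} to transport this action, under an $A_\infty$-equivalence, to an action on $\WF(f^{-1}_{t,1}(0))$ for $t\gg 0$;
(iii) apply Theorem \ref{th:mirror_} to identify $\Tw(\WF(f^{-1}_{t,1}(0))_\CC)\cong D^b(\coh(\partial X_\Sigma))$, thereby descending the action of $\pi_1(\Kscr_A)$ to the claimed action on $D^b(\coh(\partial X_\Sigma))$ as an object in $\Ho_{\CC}(A_\infty)$.

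To execute this cleanly, I first need to verify that all hypotheses of the three intermediate theorems are met under the hypotheses of the corollary. The condition $|A|>1$ immediately gives, via Lemma \ref{rem:easysmooth}, that $f^{-1}_\alpha(0)$ is smooth and non-empty for generic $\alpha$, and also ensures via Lemma \ref{lem:notzero} that generic fibers are non-empty, so Theorem \ref{th:mainth1} applies. For Theorem \ref{prop:lastmile} one needs the extra assumption $0\not\in\Pi_\nu$; this is automatic here, because Convention \ref{conv:starshaped}(\ref{item:conv2}) places $0$ in the relative interior of $C_0=C_{a_1}$, which is one of the top-dimensional chambers in the complement of $\Pi_\nu$. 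For Theorem \ref{th:mirror_} the star-shaped triangulation $\Tscr$ is present by hypothesis, and one chooses a smoothed degree-two potential adapted to $C_0$ (which exists by Proposition \ref{prop:adapted}). A choice of basepoint $\alpha$ can be made to coincide with $\alpha_\nu$ of Lemma \ref{lem:nui} for $t\gg 0$, so that Theorem \ref{prop:lastmile} produces the needed equivalence at that basepoint.

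The only remaining delicate point is that the grading and background class on $f^{-1}_\alpha(0)$ used in Theorem \ref{th:mainth1} must agree with those used in Theorem \ref{th:mirror_} after transport through the equivalence of Theorem \ref{prop:lastmile}; otherwise the action on $\WF(f^{-1}_{t,1}(0))$ is not the one whose twisted completion is identified with $D^b(\coh(\partial X_\Sigma))$. This compatibility is however exactly the content of Proposition \ref{prop:maingradingprop}, which tells us that the ``special grading'' on $f^{-1}_{t,1}(0)$ underlying Theorem \ref{th:mirror_} coincides with the natural grading on $f^{-1}_{t,1}(0)$ as a hypersurface of the torus, and hence with the one carried along in Theorem \ref{prop:lastmile}. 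The background class is trivial throughout, and Theorem \ref{lem:bgisotopies} together with the Moser isotopies built in Theorem \ref{lem:moser2} guarantees this stays trivial. Thus the equivalence of Theorem \ref{prop:lastmile} intertwines the actions and the hypotheses of Theorem \ref{th:mirror_} are verified.

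The main obstacle in principle would be this grading-and-background-class matching, but it has been arranged in advance via Proposition \ref{prop:maingradingprop} and the systematic use of Theorems \ref{lem:simpiso} and \ref{lem:bgisotopies} in the proofs of Theorems \ref{th:mainth1} and \ref{prop:lastmile}. So the proof amounts to nothing more than assembling these pieces: applying Theorem \ref{th:mainth1} at $\alpha=\alpha_\nu$, composing with the equivalence of Theorem \ref{prop:lastmile}, extending coefficients from $\ZZ$ to $\CC$, and invoking Theorem \ref{th:mirror_} to pass to $D^b(\coh(\partial X_\Sigma))$.
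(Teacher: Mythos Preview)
Your proposal is correct and follows essentially the same approach as the paper, which simply states that the corollary is obtained by combining Lemma \ref{rem:easysmooth}, Theorem \ref{th:mainth1}, Theorem \ref{prop:lastmile}, and Theorem \ref{th:mirror_}. You have in fact supplied more detail than the paper does, correctly identifying the verification of $0\notin\Pi_\nu$ via Convention \ref{conv:starshaped} and the grading compatibility via Proposition \ref{prop:maingradingprop} as the points requiring attention.
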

\begin{remark}\label{rem:quasi-projective} The existence of $\nu$ is equivalent with $X_\Sigma$ being quasi-projective. This follows from \cite[Propositions 14.4.1, 14.4.9, 15.2.9 (and 7.2.9)]{CoxLittleSchenck}. 
\end{remark}

\section{The Grothendieck group of a toric stack and its boundary}
For notations regarding stacky fans see \S\ref{sec:stacky}. We will prove the following result. 
\begin{theorem}
\label{eq:K0boundary}
Assume that $\Sigma$ is a pure full dimensional simplicial stacky fan and put $X=X_\Sigma$. Furthermore let $i:\partial X\hookrightarrow X$ be the inclusion.  Then we have an exact sequence
\begin{equation}
\label{eq:K0ses}
0\r X(T)\xrightarrow{\partial} K_0(\coh(\partial X))\xrightarrow{i_\ast} K_0(X)\xrightarrow{\rk} \ZZ\r 0
\end{equation}
where $\partial(\chi)$ for $\chi\in X(T)$ is defined as follows. The character $\chi:T\r \CC^\ast$ defines a unit in $\Oscr(T)$. The map
$\chi:\Oscr_T\r \Oscr_T:f\mapsto \chi f$ can be extended to a map $\tilde{\chi}:I\r \Oscr_X$ where $I\subset \Oscr_X$ is such that the support of $\Oscr_X/I$ is contained in $\partial X$. Then
$\partial(\chi)=[\coker \tilde{\chi}]-[\Oscr_X/I]\in K_{0}(\coh\nolimits_{\partial X}(X))\cong K_0(\coh(\partial X))$.
\end{theorem}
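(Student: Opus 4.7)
The plan is to apply the Thomason--Trobaugh/Quillen localization sequence in algebraic $K$-theory to the open--closed decomposition $T\stackrel{j}{\hookrightarrow} X\stackrel{i}{\hookleftarrow} \partial X$, combined with d\'evissage to identify $K_\ast(\coh_{\partial X}(X))\cong K_\ast(\coh(\partial X))$. The relevant tail reads
\[
K_1(X)\xrightarrow{j^\ast} K_1(T) \xrightarrow{\partial'} K_0(\coh(\partial X)) \xrightarrow{i_\ast} K_0(X) \xrightarrow{j^\ast} K_0(T)\to 0,
\]
and these results are available for smooth quotient DM stacks.

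First I would compute the two endpoints. The open substack $T=T_N$ is the genuine algebraic torus $(\CC^\ast)^k$ (the stackiness of $\Sigma$ plays no role on the open torus), so $K_0(T)=\ZZ$ by rank, giving the rightmost surjection. Iterating the Bass--Quillen fundamental theorem $K_n(R[t,t^{-1}])\cong K_n(R)\oplus K_{n-1}(R)$ starting from $R=\CC$ yields $SK_1(T)=0$, so the determinant is an isomorphism $K_1(T)\cong \Oscr_T^\ast(T)=\CC^\ast\oplus X(T)$.

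Next I would pin down the image of $j^\ast:K_1(X)\to K_1(T)$. Since $\Sigma$ is pure full-dimensional, the fan spans $N_\RR$, so no non-trivial character of $T$ extends to a global unit on $X$, i.e.\ $\Oscr_X^\ast(X)=\CC^\ast$. Naturality of the determinant gives a commutative square
\[
\begin{tikzcd}
K_1(X)\ar[r,"j^\ast"]\ar[d,"\det"'] & K_1(T)\ar[d,"\det","\cong"']\\
\CC^\ast=\Oscr_X^\ast(X) \ar[r] & \Oscr_T^\ast(T)=\CC^\ast\oplus X(T)
\end{tikzcd}
\]
in which the right vertical map is an isomorphism. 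Hence the image of $j^\ast$ in $K_1(T)$ is precisely the $\CC^\ast$-summand, and $\partial'$ therefore vanishes on $\CC^\ast$ and induces an injection $X(T)\hookrightarrow K_0(\coh(\partial X))$ fitting into the desired four-term exact sequence.

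It remains to match $\partial'|_{X(T)}$ with the explicit formula in the statement. The Quillen boundary of a unit $u$ on $T$ is computed by choosing ideal sheaves $I,J\subset \Oscr_X$ equal to $\Oscr_T$ on $T$, together with an $\Oscr_X$-iso $\alpha:I\to J$ restricting to multiplication by $u$, and setting $\partial'(u)=[\Oscr_X/J]-[\Oscr_X/I]$. For $u=\chi\in X(T)$, taking $J=\chi I$ (legitimate precisely because $\chi I\subset \Oscr_X$) and $\alpha=\tilde{\chi}$ gives exactly $[\coker\tilde{\chi}]-[\Oscr_X/I]$, recovering $\partial$. Independence of the choice of $I$ is a routine snake-lemma check comparing two choices $I_1,I_2$ after reducing to $I_1\subset I_2$ and observing that multiplication by $\chi$ identifies $I_2/I_1$ with $\chi I_2/\chi I_1$. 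The main technical point in the whole argument is the identification of $K_1(T)$ with $\Oscr_T^\ast(T)$ and, consequently, of the image of $j^\ast$ with the $\CC^\ast$-summand; once these are in place, the rest is formal.
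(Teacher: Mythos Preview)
Your proof is correct and follows the same overall architecture as the paper: both apply the $K$-theory localization sequence for the open/closed pair $(T,\partial X)$ in $X$, identify $K_0(T)\cong\ZZ$, compute $K_1(T)\cong \Oscr(T)^\ast=\CC^\ast\oplus X(T)$ via the fundamental theorem, and then match the boundary map with the explicit formula for $\partial(\chi)$.

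The one substantive difference is in how injectivity of $\partial$ on $X(T)$ is obtained. The paper treats this as a separate lemma: it restricts to a single maximal cone, writes $X_\sigma=\AA^n/G$, and checks injectivity by computing lengths $l_i(\partial(\chi))$ at the generic points of the boundary divisors. You instead argue upstream in the localization sequence, using naturality of the determinant and the fact that $\Oscr_X^\ast(X)=\CC^\ast$ (which holds because $\Sigma$ is pure full-dimensional) to show that the image of $j^\ast:K_1(X)\to K_1(T)$ is exactly the $\CC^\ast$-summand; exactness then gives both $\partial'|_{\CC^\ast}=0$ and injectivity of $\partial'|_{X(T)}$ simultaneously. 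This is a cleaner, more conceptual route and avoids the chart-by-chart length computation; the price is that you are implicitly using that the determinant map $K_1(X)\to \Oscr_X^\ast(X)$ exists and is natural for the smooth DM stack $X_\Sigma$, which is true but perhaps worth a word of justification. The paper's length argument, by contrast, is more hands-on but entirely self-contained.
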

\begin{lemma} The map $\partial$ in \eqref{eq:K0ses} is injective.
\end{lemma}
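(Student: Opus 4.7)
The plan is to post-compose $\partial$ with a ``generic length along $D_\rho$'' map, for each ray $\rho$ of $\Sigma$, and to identify the composite $X(T)\to \bigoplus_\rho \ZZ$ with the pairing map $\chi\mapsto(\langle\chi,v_\rho\rangle)_\rho$. Injectivity then follows from a purely combinatorial fact: since $\Sigma$ is pure full-dimensional and simplicial, every maximal cone is spanned by $k=\dim N_\RR$ linearly independent rays, hence the stacky primitives $\{v_\rho\}$ span $N_\RR$, and the simultaneous vanishing of $\langle\chi,v_\rho\rangle$ for all $\rho$ forces $\chi=0$ in $M=X(T)$.

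To construct the generic length map, I would fix a ray $\rho$ and the generic point $\eta_\rho$ of $D_\rho$ in a smooth \'etale atlas of $X$. Simpliciality of $\Sigma$ ensures each $D_\rho$ is Cartier on the stack, so $\Oscr_{X,\eta_\rho}$ is a DVR with uniformizer $y$ cutting out $D_\rho$. For $\Fscr\in\coh(\partial X)$ the stalk $\Fscr_{\eta_\rho}$ is a finite-length $\Oscr_{X,\eta_\rho}$-module; setting $\rk_\rho([\Fscr]):=\operatorname{length}_{\Oscr_{X,\eta_\rho}}\Fscr_{\eta_\rho}$ defines a homomorphism $\rk_\rho:K_0(\coh(\partial X))\to\ZZ$, well-defined by exactness of localization.

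To compute $\rk_\rho\circ\partial$, I would specialize to a convenient ideal: set $I:=\Oscr_X(-D)$ with $D=N\sum_{\sigma}D_\sigma$ for $N\gg 0$ satisfying $N+\langle\chi,v_\sigma\rangle\geq 0$ for every ray $\sigma$, so that $\tilde\chi$ becomes honest multiplication by $\chi$ and $\tilde\chi(I)=\Oscr_X(-D-\operatorname{div}(\chi))$. Near $\eta_\rho$ only $D_\rho$ passes through (the remaining $D_\sigma$ meet $D_\rho$ in codimension at least two), and $\chi$ restricts to $y^{\langle\chi,v_\rho\rangle}$ times a unit; a short calculation then gives
\[
\rk_\rho(\partial\chi) \,=\, \operatorname{length}\bigl(\Oscr_{X,\eta_\rho}/(y^{N+\langle\chi,v_\rho\rangle})\bigr) - \operatorname{length}\bigl(\Oscr_{X,\eta_\rho}/(y^{N})\bigr) \,=\, \langle\chi,v_\rho\rangle,
\]
as required.

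The main subtlety I expect is the stacky bookkeeping at $\eta_\rho$: the generic point of $D_\rho$ may carry a nontrivial cyclic stabilizer (when $v_\rho$ is a nontrivial multiple of the lattice primitive), so one has to verify, via an \'etale slice, that the local model around $\eta_\rho$ really has the form above and that $\chi\in M$ pulls back to a function of uniformizer-order exactly $\langle\chi,v_\rho\rangle$. This is the content of the standard description of Cartier divisors on toric stacks via pairings with stacky primitives, so once that is in place the remainder of the argument is purely formal.
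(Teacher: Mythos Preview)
Your approach is essentially the same as the paper's: both define length functions at the generic points of the toric boundary divisors and show that the resulting composite $X(T)\to\ZZ^{\text{rays}}$ is injective. The execution differs slightly. The paper first restricts to a single maximal cone, so that $X=\AA^n/G$ for a finite abelian $G$, and then rather than computing $(l_i\circ\partial)(\chi)$ for general $\chi$, it exhibits specific characters $\chi_i$ (a nonzero $T$-weight on the $i$-th coordinate axis) for which the matrix $(l_j(\partial\chi_i))$ is diagonal with positive entries; injectivity on the finite-index sublattice spanned by the $\chi_i$ then forces injectivity on all of $X(T)$. Your version stays global, computes $\rk_\rho(\partial\chi)=\langle\chi,v_\rho\rangle$ directly via a convenient choice of $I$, and concludes from the fact that the stacky primitives span $N_\RR$. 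Your explicit identification of the composite with the pairing map is a bit cleaner and more informative; the paper's affine reduction sidesteps the stacky bookkeeping at $\eta_\rho$ you flag, since on $\AA^n/G$ the length of $\Oscr_X/m_i\Oscr_X$ at $\eta_i$ is visibly positive without needing to pin down its exact value.
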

\begin{proof}
  Let $\sigma$ be a cone in $\Sigma$. By considering the restriction for $X_\sigma \r X$ we see that we may assume that $\Sigma$ is a maximal simplicial cone. Hence we may assume that 
$X=\AA^n/G$ where $G$ is a finite abelian group acting diagonally on $\AA^n$ and $T=\TT/G$ where $\TT=(\CC^\ast)^n$ acts on $\AA^n$ via  the canonical coordinates $(x_1,\ldots,x_n)$. 
We write $\bar{Y}$ for the coarse moduli space of a DM-stack $Y$.
We put $X_i=\{x_i=0\}/G$ and we
let $\eta_i$ be the generic point of $\bar{X}_i$. We obtain additive functions $l_i:K_0(\coh(\partial X))\r \ZZ$ which send $[\Fscr]$ to the length\footnote{We can talk about length
since coherent sheaves on $\eta_i\times_{\bar{X}} X$ have finite length.} of $ \Fscr_{\eta_i}$. We claim that the composition
$X(T)\r K_0(\coh(\partial X))\xrightarrow{(l_i)_i} \ZZ^n$ is injective. To see this let $\chi_i$ be a non-zero $T$-weight of $\Oscr(\bigcap_{j\neq i}\bar{X}_j)$ with corresponding weight vector~$m_i\in \Oscr(\bar{X})$.
Then $l_i(\partial(\chi_i))=\length_{\eta_i}(\Oscr_X/m_i\Oscr_X)>0$ (to see the first inequality note that in this case one may choose $I=\Oscr_X$) whereas $l_j(\partial(\chi_i))=0$ for $j\neq i$. This proves what we want.
\end{proof}
Recall that if $R$ is a commutative ring then there is a canonical map $R^\ast\r K_1(R)$ which is an isomorphism if $R$ is a field.
\begin{lemma} \label{lem:K1torus} Let $T$ be an algebraic torus. Then the map $\Oscr(T)^\ast\r K_1(T)$ is an isomorphism. 
\end{lemma}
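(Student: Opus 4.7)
The plan is to reduce to the Bass–Heller–Swan fundamental theorem of algebraic $K$-theory, applied inductively. Choose a splitting $T\cong (\CC^\ast)^n$, so that $\Oscr(T)=\CC[x_1^{\pm 1},\ldots,x_n^{\pm 1}]$. Because $\Oscr(T)$ is a Laurent polynomial ring over a domain, its units are exactly the Laurent monomials: $\Oscr(T)^\ast = \CC^\ast \times \ZZ^n$, where the $\ZZ^n$-factor is generated by $x_1,\ldots,x_n$.

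Next I would compute $K_1(\Oscr(T))$ via the fundamental theorem. Since $\CC$ is regular, all intermediate Laurent extensions $R_j:=\CC[x_1^{\pm 1},\ldots,x_j^{\pm 1}]$ are regular, so $NK_\ast(R_j)=0$ and
\[
K_1(R_j[t^{\pm 1}])\cong K_1(R_j)\oplus K_0(R_j),\qquad K_0(R_j[t^{\pm 1}])\cong K_0(R_j).
\]
Starting from $K_1(\CC)=\CC^\ast$, $K_0(\CC)=\ZZ$ and iterating $n$ times gives $K_1(\Oscr(T))\cong \CC^\ast \oplus \ZZ^n$ (and $K_0(\Oscr(T))=\ZZ$).

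Finally I would identify the canonical map $\Oscr(T)^\ast\to K_1(\Oscr(T))$ with this decomposition. Under the Bass–Heller–Swan splitting, the $K_1(R)$-summand is the image of the natural map $K_1(R)\to K_1(R[t^{\pm 1}])$, and the $K_0(R)$-summand is generated by the class of $t$, viewed as a unit (equivalently, as the automorphism ``multiplication by $t$'' of $R$ regarded as free $R$-module of rank one); see e.g.\ the original treatment in Bass's book. Inducting, the subgroup $\CC^\ast \subset \Oscr(T)^\ast$ maps isomorphically onto the ``innermost'' $K_1(\CC)=\CC^\ast$, and each variable $x_i$ maps to a generator of the corresponding $\ZZ$-summand. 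Thus the canonical map is an isomorphism.

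The main point to be careful about is the compatibility in the last paragraph: matching the abstract BHS splitting with the concrete map from units. This is a formality once one recalls that $[t]\in K_1(R[t^{\pm 1}])$ is precisely the image of the unit $t$ under the canonical map; everything else is bookkeeping through the inductive step.
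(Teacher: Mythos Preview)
Your proof is correct and follows essentially the same approach as the paper: induction on the dimension of $T$ using the fundamental theorem of algebraic $K$-theory for Laurent polynomials over a regular ring (the paper cites \cite[Theorem 3.6]{MR3076731}), together with the identification of the $K_0$-summand with the class of the new variable $z$ as a unit in $K_1$. The only cosmetic difference is that the paper carries the identification with $\Oscr(T)^\ast$ through the induction directly, whereas you first compute both sides abstractly and then match them; the content is the same.
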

\begin{proof} 
  We use induction on $\dim T$. If $T$ is zero dimensional then the claim follows from the fact that $\Oscr(T)=\CC$.
Otherwise write $T=T'\times \CC^\ast$. Then we have $\Oscr(T)=\Oscr(T')[z,z^{-1}]$. Since $\Oscr(T')$ is regular, by \cite[Theorem 3.6]{MR3076731} we have 
an isomorphism 
\begin{equation}
\label{eq:weibeliso}
K_1(\Oscr(T'))\oplus K_0(\Oscr(T'))\r K_1(\Oscr(T')[z,z^{-1}]).
\end{equation} The first component is obtained from the inclusion $\Oscr(T')\r \Oscr(T')[z,z^{-1}]$ and the second component
sends $P$ to $Pz$ (where $z\in \Oscr(T')[z,z^{-1}]^\ast$ is viewed as a class in $K_1(\Oscr(T')[z,z^{-1}])$). By induction we have $K_1(\Oscr(T'))=\Oscr(T')^\ast$ and we also have $K_0(\Oscr(T'))=\ZZ$. Under the second component of \eqref{eq:weibeliso}
$\ZZ\cong K_0(\Oscr(T'))\r K_1(\Oscr(T')[z,z^{-1}])$, $1$ is sent to $z$ and hence $n=\underbrace{1+\cdots+1}_n$ is sent to $\underbrace{z\cdots z}_n=z^n$. Thus if we identify 
$\Oscr(T')^\ast \oplus \ZZ$ with $\Oscr(T)^\ast$ via $(u,n)\mapsto uz^n$ then \eqref{eq:weibeliso} corresponds to the canonical map $\Oscr(T)^\ast\r K_1(T)$. This finishes the induction step and the proof.
\end{proof}
\begin{proof}[Proof of Theorem {\ref{eq:K0boundary}}] According to \cite[Theorem 10.2]{MR179229} there is an exact sequence\footnote{Note that \cite{MR179229} does not use Quillen K-theory.
However $K_1(T)$ as defined in loc.\ cit.\ coincides with Quillen K-theory because of \cite[Propositions 8.6, 8.9]{MR179229}.}
\[
K_1(T)\xrightarrow{\partial} K_0(\coh\nolimits_{\partial X}(X))\r K_0(X) \r K_0(T)\r 0.
\]
This translates immediately into the sequence \eqref{eq:K0ses}, except for the left most map.
By Lemma \ref{lem:K1torus} we have $K_1(T)=\Oscr(T)^\ast=\CC^\ast\oplus X(T)$. Using the formula in \cite[Proposition 7.5]{MR179229} one checks that $\partial$ restricted to $X(T)$ coincides
with the description we have given in the statement of the theorem. A similar verification shows that $\partial$ restricted to $\CC^\ast$ is zero. This finishes the proof.
\end{proof}
To complete the discussion we give some further information on $K_0(X)$ in the setting of Theorem \ref{eq:K0boundary}. If $\sigma$ is a maximal cone then we put $c(\sigma):=|N/N_\sigma|$ where
$N_\sigma$ is the lattice spanned by the $v_\tau$ for the one-dimensional cones $\tau$ contained in $\sigma$. If $\Sigma$ is smooth then $c(\sigma)=1$ for all $\sigma$.
\begin{theorem}[{\cite{BorisovHorja,Hua}}] \label{thm:BHH} Assume that $\Sigma$ is a pure full-dimensional simplicial stacky fan and  put $X=X_\Sigma$. 
In each of the following cases:
\begin{enumerate}
\item $\Sigma$ is a complete fan;
\item $\Sigma$ is a triangulation of a cone;
\end{enumerate}
$K_0(X)$ is a finitely generated free abelian group whose rank is $\sum_\sigma c(\sigma)$ where the sum runs over the maximal cones in $\Sigma$.
\end{theorem}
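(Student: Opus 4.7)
The plan is to reduce both cases to a local computation and then assemble. For each maximal cone $\sigma$ the affine open substack $X_\sigma\subset X$ is isomorphic to $[\CC^n/G_\sigma]$ where $G_\sigma=N/N_\sigma$ is a finite abelian group of order $c(\sigma)$ acting linearly (diagonally in the simplicial case) on $\CC^n$. Since $\CC^n$ is $G_\sigma$-equivariantly contractible, equivariant K-theory gives
\[
K_0(X_\sigma)=K_0^{G_\sigma}(\CC^n)=K_0^{G_\sigma}(\mathrm{pt})=R(G_\sigma),
\]
which, because $G_\sigma$ is abelian, is free abelian of rank exactly $|G_\sigma|=c(\sigma)$ with basis given by the characters of $G_\sigma$ (i.e.\ the one-dimensional $G_\sigma$-equivariant line bundles on $\mathrm{pt}$).

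First, for case (2) where $\Sigma$ triangulates a cone, I would argue by induction on the number of maximal cones of $\Sigma$. The base case is a single cone, handled by the local computation above. For the inductive step, pick a maximal cone $\sigma$ and use the localization sequence
\[
K_0\bigl(\coh\nolimits_{V(\sigma)}(X_\Sigma)\bigr)\xrightarrow{j_*} K_0(X_\Sigma)\xrightarrow{} K_0(X_{\Sigma\setminus\{\sigma\}})\to 0,
\]
where $V(\sigma)\cong BG_\sigma$ is the closed orbit. In the simplicial setting, $V(\sigma)$ is a locally complete intersection in $X_\Sigma$, so the Koszul resolution shows $K_0(\coh_{V(\sigma)}(X_\Sigma))\cong K_0(BG_\sigma)=R(G_\sigma)$ of rank $c(\sigma)$. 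Freeness follows if one can exhibit the composition $K_0(BG_\sigma)\to K_0(X_\Sigma)\to K_0(X_{\Sigma\setminus\{\sigma\}})$ as zero and produce a splitting of the remaining short exact sequence; the splitting comes from restriction to a suitable subvariety/substack, yielding the direct sum decomposition with total rank $\sum_\sigma c(\sigma)$.

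Second, for case (1) where $\Sigma$ is complete, I would use the stacky analog of the Białynicki--Birula decomposition: a generic one-parameter subgroup $\lambda:\CC^\ast\to T$ decomposes $X_\Sigma$ into locally closed attracting cells $U_\sigma$ indexed by maximal cones $\sigma$ (the fixed stacky points), each $U_\sigma$ being a $G_\sigma$-equivariant affine bundle over $BG_\sigma$. Hence $K_0(U_\sigma)\cong R(G_\sigma)$. Filtering $X_\Sigma$ by unions of closures of cells and iterating the localization sequence produces a filtration on $K_0(X_\Sigma)$ whose associated graded pieces are the $R(G_\sigma)$'s. The filtration splits because the cells are affine, so all boundary maps vanish on $K_0$, yielding the free direct sum decomposition of the asserted rank.

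The main obstacle is proving freeness and the vanishing of the relevant boundary maps---a priori only the right exactness of the localization sequence is guaranteed. In both Borisov--Horja and Hua this is done by constructing an explicit $\ZZ$-basis (one class per pair $(\sigma,g)$ with $\sigma$ maximal and $g\in G_\sigma$) together with a dual pairing (via $K$-theoretic intersection with residual classes supported on the opposite cells, or via the equivariant Chern character into cohomology of the inertia stack) that certifies $\ZZ$-linear independence. Assembling this pairing, and matching it with the cellular filtration above, is the step where I would rely on---or directly import---the arguments from \cite{BorisovHorja,Hua}.
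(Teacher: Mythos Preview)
Your outline is a plausible geometric route, but it diverges from the paper's argument and leaves the hard step where you found it. The paper does not set up any localization sequence or Bia\l ynicki--Birula filtration. Instead it observes that in both cases (1) and (2) the Stanley--Reisner ring of $\Sigma$ is Cohen--Macaulay (by standard results in \cite{BrunsHerzog}), and then invokes Hua's argument \cite[Theorem 2.2]{Hua} that Cohen--Macaulayness of the Stanley--Reisner ring forces $K_0(X)$ to be a free $\ZZ$-module. This handles freeness in one stroke, uniformly for both cases. The rank is then read off from the Borisov--Horja formula \cite[Theorem 5.5, Remark 3.11]{BorisovHorja}, which expresses $\rk K_0(X)$ as the value at $t=1$ of the fractional series $(1-t)^{\rk N}\sum_{n\in |\Sigma|\cap N} t^{\deg n}$; a short combinatorial computation shows this equals $\sum_\sigma c(\sigma)$.

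Your approach, by contrast, has two soft spots you should be aware of. First, in case (2) the induction is not well-posed as written: removing a maximal cone from a triangulation of a cone does not in general leave a triangulation of a cone, so the inductive hypothesis no longer applies. You would need a shelling order and a statement that survives along the shelling, which is essentially rediscovering the Cohen--Macaulay input. Second, the splitting of the localization sequence (equivalently, injectivity of $j_*$) is exactly the content of freeness and is not automatic; your proposed ``restriction to a suitable substack'' does not produce a section in general, and the dual-basis pairing you allude to is precisely what Borisov--Horja construct. So while your local computation $K_0(X_\sigma)\cong R(G_\sigma)$ is correct and the cell picture is morally right, the paper's Stanley--Reisner/Cohen--Macaulay shortcut is both cleaner and avoids the case split entirely.
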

\begin{proof}
The conditions given in the theorem imply that the Stanley-Reisner ring \cite[Definition 5.1.2]{BrunsHerzog} of $\Sigma$ is Cohen-Macaulay by \cite[Theorem 5.1.13, Corollary 5.3.9]{BrunsHerzog}. It then follows
as in \cite[Theorem 2.2]{Hua} that $K_0(X)$ is a free $\ZZ$-module.

From \cite[Theorem 5.5, Remark 3.11]{BorisovHorja} we obtain that the rank of $K_0(X)$ is the evaluation at $1$ of the fractional polynomial 
\begin{equation}
\label{eq:polynomial}
(1-t)^{\rk N} \sum_{n \in |\Sigma|\cap N} t^{\deg n}
\end{equation}
where $\deg n$ is defined as follows. Let $\sigma$ be a cone in $\Sigma$ which contains $n$. Then $n=\sum_{\tau} n_\tau v_\tau$ where $n_\tau\in \QQ$ and $\tau$ runs over the one-dimensional cones in $\sigma$. We put $\deg n=\sum_\tau n_\tau\in \QQ$ (this does not depend on $\sigma$).

It is not hard to see that the lower dimensional cones do not contribute to \eqref{eq:polynomial}, after setting $t=1$.
Hence it is sufficient to understand the case that $\Sigma$ consists of a single full-dimensional simplicial cone $\sigma$. In that case \eqref{eq:polynomial} becomes a sum over cosets of $N_\sigma$ in $N$, each
coset giving the same contribution after setting $t=1$. So finally we have to understand the sum
\[
(1-t)^{\rk N} \sum_{n \in |\Sigma|\cap N_\sigma} t^{\deg n}
\]
which is clearly equal to one.
\end{proof}
\begin{remark} Some restrictions on $\Sigma$ are required for the
  conclusions of Theorem \ref{thm:BHH} to hold, even if $X$ is a
  scheme. For example $K_0(X)$ is not always torsion free (see the
  example by Lev Borisov given in \cite[Example 4.1]{Hua}). Moreover
  the rank of $K_0(X)$ is not always given by the maximal
  cones. For example, consider the case $N=\ZZ^2$ and $\Sigma$ consists of two
  cones, spanned respectively by $\{(1,0),(0,1)\}$ and
  $\{(-1,0),(0,-1)\}$. Then $X$ is the complement of two points in
  $\PP^1\times \PP^1$. Since these points have the same class in
  $K_0(\PP^1\times \PP^1)\cong \ZZ^4$ we conclude that $\rk K_0(X)=3$
  whereas Theorem \ref{thm:BHH} would have predicted $\rk K_0(X)=2$.
\end{remark}

\section{Induced HMS: singular cohomology vs Grothendieck group}
If $\Ascr$ is a $A_\infty$-category then $\Two(\Ascr)$ denotes is pretriangulated closure of $\Ascr$ 
(this might not be idempotent complete). 
In our notations we sometimes write $\Ascr$ when strictly speaking we mean $H^0(\Ascr)$. As before $\Dscr(\Ascr)$ is the Karoubian closure of $\Two(\Ascr)$. 
\subsection{Partially wrapped Fukaya categories of Liouville pairs}
Let $(M,F)$ be a Liouville pair (see \S\ref{sec:restriction}).
For $Y\subset \partial M$ let $Y_\infty \subset \partial_\infty \hat{M}$\footnote{For a Liouville manifold $(N,\theta)$ we define $\partial^\infty N$ as the set of unbounded $Z_\theta$-orbits. As a topological space $\partial^\infty N=(N-\Skel(N))/(\text{Liouville flow})$.} be the
image of $Y$ under the Liouville flow. 
Then we write\footnote{Sometimes the Liouville pair is defined to be $(\hat{M},F_\infty)$ instead of $(M,F)$.} $\Wscr\Fscr(\hat{M},F_\infty)$ for the partially wrapped Fukaya category of $\hat{M}$ with a stop given
by $F_\infty$ \cite[\S1.1]{GPS2}.  We summarize some facts about partially wrapped Fukaya categories we use below.
\begin{enumerate}
\item[(F1)]\label{it:F1} By \cite[(1.3)]{GPS2} the natural\footnote{By a natural functor we mean a functor that is the identity on objects.} functor
\[
  \Wscr\Fscr(\hat{M},F_\infty)\r \Wscr\Fscr(\hat{M},\Skel(F)_\infty)
\]
is a  quasi-equivalence.
\item[(F2)]
The Reeb flow on $\partial M$ is transversal to $F$. If $B=[\epsilon,\epsilon]\times F\subset
\partial M$ is a corresponding thickening of $F$ then (a smoothing of)
\[
\tilde{M}:=\hat{M}-\operatorname{int}(B) \times \RR_{>0}
\]
(where the embedding $B \times \RR_{>0}\subset \hat{M}$ is obtained by applying the Liouville flow to $B$)
defines a \emph{Liouville sector} with boundary $\partial \tilde{M}\cong\hat{F}\times \RR$ (see \cite[Lemma 2.13]{GPS1}).
The natural functor 
\begin{equation}
\label{eq:qeev}
\Wscr\Fscr(\tilde{M})\r \Wscr\Fscr(\hat{M}, F_\infty)
\end{equation}
is also a quasi-equivalence (see \cite[(1.2), Corollary 3.9]{GPS2}).
\item[(F3)]
There is a standard functor
\begin{equation}
\label{eq:cup1}
\cup : \Wscr\Fscr(\hat{F})\r \Wscr\Fscr(\tilde{M})
\end{equation}
(see \cite[\S7.3, (7.12)]{GPS3}) whose construction is as follows. 
Let $V$ be the sub-Liouville sector of 
$\CC_{\Re \ge 0}$ of the form
\[
V=\{z\in \CC_{\Re \ge 0}\mid \Re z\le \epsilon\}\cup \{ z\in \CC_{\Re \ge 0}\mid |\arg \theta|\ge \theta_0\}
\]
for $\epsilon>0$, $\theta_0\in ]0,\pi/2[$ (see \eqref{Uchoice} below).  
We assume that the Liouville structure on $V$ is deformed in such a way that $V$ is exact (see \cite[Proposition 2.25]{GPS1})
so that $V\times \hat{F}$ is (after smoothing) the product of the Liouville sector $V$ and the Liouville sector $\hat{F}$.
It follows from \cite[Proposition 2.25]{GPS1} that there is an embedding $h:V\times \hat{F}\hookrightarrow \tilde{M}$ for suitable $\epsilon, \theta_0$. Choose a conical exact Lagrangian~$l$ in $V$ as
in \eqref{Uchoice} below. 
Then
 $\cup(L)$ for a conical exact Lagrangian in $\hat{F}$ is given by the image of $l\tilde{\times} L$ under $h$ where $\tilde{\times}$ is the conicalized product defined in \cite[\S7]{GPS2}.

\item[(F4)]\label{it:F4}
We may interpret \eqref{eq:cup1} as a functor
\[
\cup : \Wscr\Fscr(\hat{F})\r \Wscr\Fscr(\hat{M},\Skel(F)_\infty)
\]
via the quasi-equivalence \eqref{eq:qeev}. In  case $\hat{F}$ is Weinstein, $\cup$ sends  the closed cocores\footnote{By cocores we mean cocores that are Lagrangian, i.e. whose dimension is 
half the dimension of the symplectic manifold. In loc.cit. these are called ``cocores of critical handles''.} in $\hat{F}$ to the corresponding linking disks in 
$\Wscr\Fscr(\hat{M},\Skel(F)_\infty)$ 
(see \cite[1st paragraph of \S1.4]{GPS2}). 
\item[(F5)]\label{it:F5}
If $\hat{F}$ is Weinstein and all  cocores are properly embedded then $\Two(\Wscr\Fscr(\hat{F}))$ is generated by the  cocores
\cite[Theorem 1.13]{GPS2}.
\end{enumerate}
\subsection{The Lazarev map}
\def\LC{\operatorname{L}_\CC}
\label{sec:lazarev}
We recall the following\footnote{In loc. cit. this result is actually stated for Weinstein domains. However one checks that for a Liouville domain $N$, 
the functor  $\Wscr\Fscr(N)\r \Wscr\Fscr(\hat{N})$ which sends a Lagrangian with Legendrian boundary to its conical completion  is quasi-fully faithful and essentially surjective. To see this last fact one may deform the Weinstein structure a bit such that  $\Wscr\Fscr(\hat{N})$ is generated by cocores (see (F5))}. 
\begin{theorem}[{\cite[Theorem 1.4]{Lazarev}}]\label{th:lazarev}
Let $M$ be a either a $2d$-dimensional  Liouville manifold which is isomorphic (cfr \S\ref{sec:liouville}) to a Weinstein manifold or else a Weinstein sector.\footnote{A Weinstein sector 
is a sector constructed from a Liouville pair $(M,F)$ as in (F2) such that $\hat{M}$ and $\hat{F}$ are both Weinstein.}
For an exact conical Lagrangian $L$ we write $c(L)\in H^{d}(M,\ZZ)$ for the cohomology class
of $L$. We use $[L]$ for the class of $L$ in $K_0(\Two(\Wscr\Fscr(M))$.
We have 
\begin{enumerate}
\item every element of $H^{d}(M,\ZZ)$ is of the form $c(L)$ for an  exact conical Lagrangian $L$ in $M$;
\item if $L_1$, $L_2$ are exact conical Lagrangians in $M$ then equality $c(L_1)=c(L_2)$ in $H^{d}(M,\ZZ)$ implies equality $[L_1]=[L_2]$ in $ K_0(\Two(\Wscr\Fscr(M))))$.
\end{enumerate}
In other words there is a well-defined surjective map 
\begin{equation}
\label{eq:lazarev}
\operatorname{L}:H^{d}(M,\ZZ)\r K_0(\Two(\Wscr\Fscr(M))):c(L)\mapsto [L] \qedhere
\end{equation}
\end{theorem}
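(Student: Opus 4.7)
The plan is to combine the cocore generation theorem already recorded as fact (F5) with the Lagrangian cobordism formalism of Biran--Cornea. Fix a Weinstein structure on $M$ (or on the Weinstein sector) whose critical handles have cocores $D_1,\ldots,D_r$, all properly embedded Lagrangian disks of dimension $d$. By (F5), the classes $[D_i]$ generate $K_0(\Two(\Wscr\Fscr(M)))$. On the topological side, Morse theory for the Weinstein function shows that the relative homology classes $[D_i,\partial D_i]\in H_d(M,\partial^\infty M)$ generate this group, and Poincaré--Lefschetz duality translates this to the statement that the cohomology classes $c(D_i)$ generate $H^d(M,\ZZ)$. This already establishes assertion (1), and will drive the construction of $\operatorname{L}$.

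Next I would define $\operatorname{L}$ on the generators by sending $c(D_i)\mapsto [D_i]$ and extending $\ZZ$-linearly. To show this is well defined, one must check that any integral relation $\sum n_i c(D_i)=0$ in $H^d(M,\ZZ)$ forces $\sum n_i[D_i]=0$ in $K_0$. The strategy is to realize the homological relation geometrically: the vanishing of $\sum n_i[D_i,\partial D_i]$ in $H_d(M,\partial^\infty M)$ means there is a relative $(d+1)$-chain bounding the formal combination. Using Gromov's h-principle for Lagrangian embeddings in the Weinstein setting (where the formal tangential data assemble into a genuine Lagrangian because of the flexibility coming from the Weinstein handle structure), one promotes this bounding chain to an exact Lagrangian cobordism between $\bigsqcup_{n_i>0} n_i D_i$ and $\bigsqcup_{n_i<0}|n_i|D_i$ inside $\hat M\times \CC$. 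The Biran--Cornea iterated cone decomposition then supplies a distinguished triangle in $\Two(\Wscr\Fscr(M))$ whose image in $K_0$ gives exactly $\sum n_i[D_i]=0$.

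With $\operatorname{L}$ in hand, the remaining content is the compatibility $\operatorname{L}(c(L))=[L]$ for a general exact conical Lagrangian $L$. Writing $c(L)=\sum n_i c(D_i)$, one applies the same construction to produce a Lagrangian cobordism connecting $L$ to the formal combination $\sum n_i D_i$, whence $[L]=\sum n_i[D_i]=\operatorname{L}(c(L))$. This simultaneously establishes assertion (2) and the surjectivity of $\operatorname{L}$, the latter being immediate once the $[D_i]$ are known to generate the target.

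The main obstacle is the h-principle step producing the cobordism from a homological relation. One must check that the relevant formal obstructions vanish: in codimension zero the Gromov--Lees h-principle gives Lagrangian immersions from the chain data, and the extra Weinstein structure must be used to make the immersion into an embedded cylindrical-at-infinity Lagrangian suitable for the Biran--Cornea machinery. The Weinstein sector case needs an additional argument to ensure the cobordism is disjoint from the stop, or equivalently that it can be constructed inside the ambient Weinstein sector rather than its completion; this is where one invokes the stop-removal and Viterbo restriction properties implicit in the $(M,F)$-structure of \S\ref{sec:restriction}.
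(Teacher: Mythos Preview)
The paper does not attempt to reprove this statement: its entire proof is a citation to \cite[Theorem~1.4]{Lazarev}, together with two one-line observations --- that conditions (1) and (2) are preserved under isomorphisms of Liouville manifolds (extending the Weinstein case to the ``isomorphic to Weinstein'' case), and that the Weinstein sector case follows from \cite[Proposition~2.15]{Lazarev}. There is also a footnote explaining that Lazarev states his result for $\ZZ/2$-coefficients but indicates after his Theorem~1.8 that it holds integrally. So you are attempting far more than the paper does; a faithful comparison is simply that the paper cites, while you sketch.

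As for your sketch of Lazarev's argument itself: the broad architecture (realize cohomology classes by cocores, turn homological relations into Lagrangian cobordisms via an h-principle, extract $K_0$-relations from cobordisms) is in the right spirit, but the h-principle step is where the real content lies and your account is imprecise. Gromov--Lees only yields Lagrangian \emph{immersions}; Lazarev's key input is Murphy's h-principle for \emph{loose} Lagrangians, which upgrades formal isotopies to genuine ones after arranging looseness (via stabilization). The passage from cobordism to $K_0$-relation in the wrapped setting also requires care --- the Biran--Cornea framework as originally developed is for closed/monotone settings, and one needs the cylindrical-at-infinity version (e.g.\ Bosshard). Your final paragraph acknowledges the obstacle but does not name the correct resolution; ``the extra Weinstein structure must be used'' is not specific enough --- it is looseness, not Weinstein-ness per se, that makes the h-principle work.
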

\begin{proof} This result for a Liouville manifold is stated in loc.\ cit.\ in the case that $M$ is Weinstein.\footnote{Theorem in loc.cit. is stated for $\ZZ_2$-coefficients and $\ZZ_2$-grading. However, in the paragraph after Theorem 1.8 in loc.cit. it is stated that it holds also in the case of $\ZZ$-coefficients and $\ZZ$-grading.} 
However it is clear that (1,2) are preserved under isomorphisms of Liouville manifolds. The corresponding result for Weinstein sectors follows from \cite[Proposition 2.15]{Lazarev}.
\end{proof}
\begin{theorem}
Assume $(M,F)$ is a Liouville pair such that $\hat{M}$, $\hat{F}$ are both Weinstein with $\dim M=2d$. Then there is a commutative diagram
\begin{equation}
\label{eq:lazarev1}
\begin{tikzcd}
H^{d-1}(\hat{F},\ZZ)\arrow[d,"\operatorname{L}"']\arrow[r,"\partial"] &H^{d}(\tilde{M},\hat{F},\ZZ)\arrow[d,"\operatorname{L}"]\\
K_0(\Two(\Wscr\Fscr(\hat{F})))\arrow[r,"\cup"']& K_0(\Two(\Wscr\Fscr(\tilde{M})))
\end{tikzcd}
\end{equation}
where $\tilde{M}$ has been defined in (F2).
\end{theorem}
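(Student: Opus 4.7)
The plan is to check commutativity by a direct chain-level computation on a single generator, exploiting the explicit construction of $\cup$ recalled in (F3). Since $\operatorname{L}$ is defined on classes of the form $c(L)$ for $L$ an exact conical Lagrangian and every class in $H^{d-1}(\hat F,\ZZ)$ arises this way by Theorem \ref{th:lazarev}, it suffices to show, for every such $L$, that
\[
\operatorname{L}(\partial\, c(L)) \;=\; [\cup(L)]\qquad \text{in } K_0(\Two(\Wscr\Fscr(\tilde M))).
\]
Because $\operatorname{L}(c(\cup(L)))=[\cup(L)]$, this will follow from the topological identity
\[
c(\cup(L)) \;=\; \partial\, c(L)\qquad \text{in } H^{d}(\tilde M,\hat F,\ZZ),
\]
which is the core claim.

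To prove this, I would first unpack the cohomology classes in terms of Poincaré--Lefschetz duality. Both $\hat F$ and $\tilde M$ are oriented (by their almost complex structures), and the relevant cohomology classes of conical exact Lagrangians in Theorem \ref{th:lazarev} are defined as PL-duals of Lagrangians viewed as proper (Borel--Moore) cycles: $c(L)\in H^{d-1}(\hat F,\ZZ)$ is the dual of $[L]\in H^{BM}_{d-1}(\hat F,\ZZ)$, and $c(\cup(L))\in H^d(\tilde M,\hat F,\ZZ)$ is the dual of $[\cup(L)]\in H^{BM}_{d}(\tilde M,\partial \tilde M;\ZZ)$, the latter making sense because $\cup(L)$ is properly embedded and has its "boundary at finite distance" contained in $\partial\tilde M\simeq\hat F\times\RR$. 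Under PL-duality the connecting map $\partial$ of the long exact sequence of the pair $(\tilde M,\hat F)$ corresponds to the geometric boundary operator on (Borel--Moore) chains relative to $\partial\tilde M$. Thus the claim reduces to the statement that $\cup(L)$ is a $d$-chain in $\tilde M$ whose boundary, viewed as a cycle in $\hat F$ via the identification $\partial \tilde M\simeq \hat F\times\RR$, represents the class $[L]$.

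The chain-level boundary computation is local on the collar $h:V\times\hat F\hookrightarrow\tilde M$ used to define $\cup$. By construction $\cup(L)=h(l\,\tilde\times\, L)$, where $l$ is a properly embedded conical arc in the Liouville sector $V\subset\CC_{\Re\ge 0}$ that starts on the boundary $\{\Re=0\}$ and goes to infinity. Hence $l$ is an oriented $1$-chain in $V$ with (geometric) boundary a single point on $\partial V$. The conicalised product $l\,\tilde\times\,L$ therefore has boundary $\{\text{pt}\}\times L$, and this boundary is sent by $h$ into $\partial \tilde M\cong\hat F\times\RR$ as a translate of $L$. Projecting onto $\hat F$ gives a cycle in the class $[L]$. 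Passing to PL-duals yields the identity $c(\cup(L))=\partial\,c(L)$.

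I expect the main obstacle to be the careful setup of the Poincaré--Lefschetz duality isomorphism in the Liouville-sector context and the verification that Lazarev's class $c$ is indeed compatible with it (as implicit in \cite[Proposition 2.15]{Lazarev}). In particular one has to be careful about what "cohomology class" of a non-compact Lagrangian means for a sector; the point is that conical Lagrangians are properly embedded with controlled behaviour at both $\partial\tilde M$ and at infinity, so the Borel--Moore/relative-PL set-up applies uniformly. Once this framework is in place the boundary calculation above is essentially tautological from the definition of $\cup$, and the commutativity of \eqref{eq:lazarev1} follows immediately.
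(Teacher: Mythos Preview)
Your overall reduction is exactly the paper's: use surjectivity of $c$ to reduce commutativity to the topological identity $c(\cup(L))=\partial\,c(L)$, and then verify that identity by a local computation in the collar $V\times\hat F$. Where your argument departs from the paper, however, it rests on an incorrect description of the arc $l$.

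You assert that $l$ ``starts on the boundary $\{\Re=0\}$ and goes to infinity'', so that $l\tilde\times L$ has geometric boundary $\{\mathrm{pt}\}\times L$ on $\partial\tilde M$. This is not the case. In the construction of $\cup$ (see (F3) and the figure referenced there), $l$ is a conical exact Lagrangian in the sector $V$; as with all objects of $\Wscr\Fscr$ of a sector, it lies in the interior and does not meet $\partial V=\{\Re=0\}$. Both ends of $l$ escape to infinity inside $V$. Consequently $\cup(L)$ is a properly embedded $d$-cycle \emph{disjoint} from $\partial\tilde M$, and there is no literal ``boundary equals $L$'' statement to invoke. Relatedly, under Poincar\'e--Lefschetz duality the connecting map $\partial:H^{d-1}(\hat F)\to H^d(\tilde M,\partial\tilde M)$ does not become a geometric boundary operator on Borel--Moore chains; it becomes the composite $H^{BM}_{d-1}(\hat F)\cong H^{BM}_d(\hat F\times\RR)\to H^{BM}_d(\tilde M)$ given by ``cross with $\RR$ and include''. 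So the identity you actually need is $[l]=[\partial V]$ in $H^{BM}_1(V)$ (equivalently, that $l$ and the line $\{\Re=0\}$ cobound in $V$), not that $l$ has an endpoint on $\partial V$.

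The paper sidesteps all of this by passing to the convex completion $\hat M\supset\tilde M$ (a homotopy equivalence), where the standard description of the connecting map gives $\partial c(L)=c(S^1\times L)$ for a small normal circle $S^1$, and then proving $c(l\times L)=c(S^1\times L)$ by restricting to a carefully chosen open set $U$ in which $l\cap U$ and $S^1\cap U$ are homotopic arcs. Your approach can be repaired along the lines above, but as written the boundary computation is based on a false premise.
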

\begin{proof}
$\hat{M}$ is the convex completion of $\tilde{M}$ (see \cite[\S2.7]{GPS1}). It is obtained by replacing $V\times \hat{F}\subset \tilde{M}$ (described in (F3)) by $\tilde{V}\times \hat{F}$ where
$\tilde{V}:=\CC_{\Re \le 0}\cup V$.
In particular the inclusion $i:\tilde{M}\r \hat{M}$ is a homotopy equivalence. It is therefore sufficient
to prove that the following diagram is commutative
\begin{equation}
\label{biggerdiagram}
\begin{tikzcd}
H^{d-1}(\hat{F},\ZZ)\arrow[d,"\operatorname{L}"']\arrow[r,"\partial"] &H^{d}(\hat{M},\hat{F},\ZZ)\arrow[d,"\operatorname{L}\circ i^\ast"]\\
K_0(\Two(\Wscr\Fscr(\hat{F})))\arrow[r,"\cup"']& K_0(\Two(\Wscr\Fscr(\tilde{M})))
\end{tikzcd}
\end{equation}
Let $\ell \in H^{d-1}(\hat{F},\ZZ)$. By Theorem \ref{th:lazarev}(1) $\ell=c(L)$ for some exact conical Lagrangian $L$ in $\hat{F}$. 
So in particular by \eqref{eq:lazarev} $\operatorname{L}(\ell)=\operatorname{L}(c(L))=[L]$.
Then by (F3) we have $(\cup\circ \operatorname{L})(\ell)=[l\tilde{\times} L]
=
\operatorname{L}(c(l\tilde{\times} L))=
\operatorname{L}(c(l\times L))=
(\operatorname{L}\circ i^\ast)(c(l\times L))$ where in
the rightmost expression we consider $l$ to be contained in $\tilde{V}$ via the inclusion $V\subset \tilde{V}$. Hence to verify the commutativity of \eqref{biggerdiagram} it is sufficient to check
$
\partial(c(L))=c(l\times L)
$ for exact conic Lagrangians $L$ in $\hat{F}$.

It follows from the well-known description of $\partial$ that  $\partial(c(L))=c(S^1\times L)$ where $S^1$ is a small circle around the origin in $\tilde{V}$ and $S^1\times L$ is embedded in $\hat{M}$ via the inclusion
$\tilde{V}\times\hat{F}\subset \hat{M}$. So we need to prove $c(l\times L)=c(S^1\times L)$. 
To this end we choose an open subset $j_0:U\hookrightarrow V$ as in \eqref{Uchoice} below and we let
$\hat{M}_U\subset \hat{M}$ be obtained from $\hat{M}$ by replacing $\operatorname{int}(\tilde{V})\times \hat{F}
\subset \hat{M}$ by $U\times \hat{F}$. Since the inclusion $j:\hat{M}_U\r \hat{M}$ is a homotopy equivalence 
it is sufficient to show that $j^*(c(l\times L))=j^\ast(c(S^1\times L))$. Since
cohomology classes commute with pullback, this amounts to showing that
$c(j_0^{-1}(l)\times L)=c(j_0^{-1}(S^1)\times L)$. This is true because 
$j_0^{-1}(l)$ and $j_0^{-1}(S^1)$ are homotopy equivalent (as can be seen from \eqref{Uchoice}).

\begin{equation}
\label{Uchoice}
\charclass
\end{equation}
\end{proof}

\subsection{Liouville pairs in the toric setting}
\label{sec:liouvilletoric}
With the notation of \S\ref{sec:toricprelims} we put $D_\alpha=f^{-1}_\alpha(0)$ for $\alpha\not\in V(A)$.
If we are in the setting of \S\ref{sec:setting} then we also put $D^{\tl}:=f^{-1}_{t,1}(0)$. In that case we also assume
that $1\gg \epsilon_2>\epsilon_1>0$ and $t\gg 0$ so that all prior results apply.

According to Proposition \ref{prop:GSpair} the pair $((\CC^\ast)^k,D^{\tl})$ contains a Liouville pair $(M,F)$. 
Note that $\hat{M}=(\CC^{\ast})^k$.
We write $\Wscr\Fscr((\CC^\ast)^k,D^{\tl}):=\Wscr\Fscr((\CC^{\ast})^k,F_\infty)$.

According to Theorem \ref{thm:Zhou} we have an isomorphism of pairs
\begin{equation}
\label{eq:pairs}
((\CC^{\ast})^k,\Skel(F)_\infty)\cong (T^\ast(S_{N^\ast}),\Lambda_\Sigma^\infty)
\end{equation}
where $\Lambda_\Sigma^\infty$ is  the FLTZ-skeleton (see \S\ref{sec:fltz}). By Lemma \ref{lem:canonicalgrading} this isomorphism is compatible with gradings.
\subsection{Homological mirror symmetry for pairs}
Let $\Sigma$ be a toric fan. Combining \cite[Theorem 1.1]{GPS3}  with \cite[Theorem 1.2]{Kuwagaki} yields an equivalence of categories
\[
\HMS_1:\Dscr(\Wscr\Fscr(T^\ast(S_{N^\ast}),\Lambda_\Sigma^\infty)_\CC)\cong D^b(\coh(X_\Sigma)).
\]
 If $\Sigma$ is a smooth fan with $X_\Sigma$ quasi-projective then an alternative construction of such an equivalence 
has been given in \cite[Corollary D]{HanlonHicks}.\footnote{\cite{HanlonHicks} work mostly in the setting of projective varieties, however the referred result is stated in a greater generality of quasi-projective varieties.} 
We denote it $\HMS_2$. 
We will use properties of both constructions  but we will not use that they are the same as this seems to be unknown (see \cite[Remark 1.2]{HanlonHicks}).

If we are in the setting of \S\ref{sec:setting} then there is a commutative diagram \cite[Figure 6]{GammageShende}
\begin{equation}\label{eq:commutative_diagram}
\begin{tikzcd}
\Tw(\Wscr\Fscr(D^{\tl})_\CC)\arrow[d,"{\HMS}","\cong"']\arrow[r,"\cup"]& \Tw(\Wscr\Fscr((\CC^*)^k,D^{\tl})_\CC)
\arrow[d,"{\HMS_1}","\cong"']\\
D^b(\coh(\partial X))\arrow[r]&D^b(\coh(X)).
\end{tikzcd}
\end{equation}
The left  arrow is the one described in Theorem \ref{th:mirror_}. The right arrow is constructed via the isomorphism of pairs \eqref{eq:pairs}.
The top arrow is obtained from (F4). The bottom arrow is the pushforward.

\subsection{A result on idempotent completion}
Let us start with a conjecture.
\begin{conjecture}
\label{conj:idcomplete}
The canonical map
\[
\Two(\Wscr\Fscr(D_\alpha))\r \Dscr(\Wscr\Fscr(D_\alpha))
\]
is an equivalence. In other words $\Two(\Wscr\Fscr(D_\alpha))$ is idempotent complete.
\end{conjecture}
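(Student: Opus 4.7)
The plan is to reduce the statement, via Theorem~\ref{prop:lastmile}, to the analogous assertion for the tropical limit $D^{\tl}=f^{-1}_{t,1}(0)$. Indeed, Theorem~\ref{prop:lastmile} yields an $A_\infty$-equivalence $\Wscr\Fscr(D_\alpha)\cong \Wscr\Fscr(D^{\tl})$ in $\Ho_\ZZ(A_\infty)$, which transports under both the pretriangulated and the Karoubian closure to give equivalences $\Two(\Wscr\Fscr(D_\alpha))\cong \Two(\Wscr\Fscr(D^{\tl}))$ and $\Dscr(\Wscr\Fscr(D_\alpha))\cong \Dscr(\Wscr\Fscr(D^{\tl}))$. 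So it suffices to prove the conjecture with $D^{\tl}$ in place of $D_\alpha$.

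Next I would invoke the integral version of \cite[Theorem~1.4]{GPS3}. Since $D^{\tl}$ is (Morse-Bott) Weinstein (a fact that is already used in the proof of Theorem~\ref{th:mirror_}), equation~\eqref{eq:tomicro} provides an $A_\infty$-equivalence
\[
\Two(\Wscr\Fscr(D^{\tl}))^\circ \;\cong\; \mu\operatorname{sh}_{\Lambda^\infty_\Sigma}(\Lambda^\infty_\Sigma)^c,
\]
where the right-hand side is the full subcategory of \emph{compact} objects in the cocomplete $A_\infty$-category of global sections of the sheaf of wrapped microsheaves on $\Lambda^\infty_\Sigma$.

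The decisive step is then purely formal: in any cocomplete triangulated (or stable $A_\infty$-) category, the subcategory of compact objects is closed under retracts, since a retract of a compact object is itself compact. Hence $\mu\operatorname{sh}_{\Lambda^\infty_\Sigma}(\Lambda^\infty_\Sigma)^c$ is idempotent complete, and therefore so is its opposite $\Two(\Wscr\Fscr(D^{\tl}))$. Transporting this through the equivalence of the first paragraph yields the conjecture.

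The main obstacle in this plan is ensuring that \eqref{eq:tomicro} is genuinely available with $\ZZ$-coefficients (rather than only after base change to~$\CC$). Reassuringly, the proof of Theorem~\ref{th:mirror_} in the text only invokes extension of coefficients to~$\CC$ when passing through the Gammage--Shende identification $\mu\operatorname{sh}^c_\CC\cong D^b(\coh(\partial X_\Sigma))$; the GPS3 equivalence itself, applied to a graded Weinstein manifold equipped with orientation data, should hold integrally. What requires care is verifying that all grading and background-class conventions imposed on $D^{\tl}$ in Theorem~\ref{th:mirror_} are compatible with the integral formulation of \cite[Theorem~1.4]{GPS3}.
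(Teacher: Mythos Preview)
There is a genuine gap that makes the argument circular. Equation \eqref{eq:tomicro} is stated for $\Tw$, which in this paper is by definition $\Dscr$, the Karoubian closure of the pretriangulated hull---not $\Two$, the pretriangulated hull alone. This matches the formulation of \cite[Theorem~1.4]{GPS3}, which identifies $\operatorname{Perf}\Wscr\Fscr$ (the idempotent completion) with compact microlocal sheaves. So what \eqref{eq:tomicro} actually gives is
\[
\Dscr(\Wscr\Fscr(D^{\tl}))^\circ \;\cong\; \mu\operatorname{sh}_{\Lambda^\infty_\Sigma}(\Lambda^\infty_\Sigma)^c,
\]
and your observation that the right-hand side is idempotent complete only recovers the tautology that $\Dscr(\Wscr\Fscr(D^{\tl}))$ is idempotent complete. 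Passing from $\Dscr$ back to $\Two$ is precisely the content of the conjecture. The GPS machinery does not circumvent this: although cocores generate $\Two(\Wscr\Fscr)$ by (F5), on the microsheaf side the corresponding corepresenting objects are only known to split-generate $\mu\operatorname{sh}^c$, so the comparison is an equivalence only after Karoubi completion. (A secondary point: your reduction via Theorem~\ref{prop:lastmile} already presupposes the star-shaped setting of \S\ref{sec:setting}, so even if the argument went through it would not settle the conjecture for general $A$.)

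This is why the paper leaves the general statement open. The partial result, Proposition~\ref{prop:idcomplete}, adds the hypothesis that $A=P\cap\ZZ^k$ (so $X_\Sigma$ is a smooth scheme) and argues over $\CC$ along an entirely different route: it uses the Hanlon--Hicks equivalence $\HMS_2$ to identify the subcategory generated by linking disks with $D^b(\coh_{\partial X}(X))$, which is therefore idempotent complete; then, via the $\cup$ functor and a $K_0$-rank comparison, it shows that $K_0(\Two(\Wscr\Fscr(D^{\tl}))_\CC)\to K_0(\Dscr(\Wscr\Fscr(D^{\tl}))_\CC)$ is an isomorphism, and concludes by Thomason's theorem \cite{Thomason}. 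Your route would be far cleaner if it worked, but the input from \cite{GPS3} is simply not strong enough to supply the pre-Karoubi equivalence you need.
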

We can prove this conjecture in a particular case.
\begin{proposition} \label{prop:idcomplete}
Assume that we are in the setting of  \S\ref{sec:setting} and that
 $A$ consists of all lattice points of $P=\conv(A)$.
Then  Conjecture \ref{conj:idcomplete} is true.
\end{proposition}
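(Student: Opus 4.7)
The plan is to transport the question across homological mirror symmetry to the algebro-geometric side, where idempotent completeness is a classical fact.

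First, by Theorem \ref{prop:lastmile} there is an isomorphism of Liouville manifolds $D_\alpha \cong D^{\tl} := f^{-1}_{t,1}(0)$, hence $\Wscr\Fscr(D_\alpha) \cong \Wscr\Fscr(D^{\tl})$ in $\Ho_\ZZ(A_\infty)$; it therefore suffices to prove that $\Two(\Wscr\Fscr(D^{\tl}))$ is idempotent complete. The hypothesis that $A$ equals the set of all lattice points of $P$ ensures that $\Sigma$ is a smooth fan and $X = X_\Sigma$ is a smooth toric \emph{scheme} (quasi-projective by Remark \ref{rem:quasi-projective}), placing us in the setting where Hanlon--Hicks \cite{HanlonHicks} applies. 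I would invoke their construction, which, combined with the diagram \eqref{eq:commutative_diagram}, upgrades the derived equivalence of $\HMS_1$ to a $\ZZ$-linear equivalence already at the pre-triangulated level between $\Two(\Wscr\Fscr((\CC^\ast)^k, D^{\tl}))$ and $\Two(\coh(X)) = D^b(\coh(X))$ (the last equality using smoothness of $X$).

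Under this equivalence, and invoking (F4), cocores of $D^{\tl}$ map via the cup functor to linking disks of the $D^{\tl}$-stop, which in turn correspond to the structure sheaves of the torus-invariant divisors in $X$. Consequently, $\cup: \Two(\Wscr\Fscr(D^{\tl})) \to \Two(\Wscr\Fscr((\CC^\ast)^k, D^{\tl}))$ corresponds under the upgraded Hanlon--Hicks identification to the pushforward $Ri_\ast: D^b(\coh(\partial X)) \to D^b(\coh(X))$ along the closed immersion $i: \partial X \hookrightarrow X$. Since $Ri_\ast$ is fully faithful with essential image the triangulated subcategory generated by the $\Oscr_{D_\rho}$, the functor $\cup$ is fully faithful at the $\Two$-level and identifies $\Two(\Wscr\Fscr(D^{\tl}))$ $\ZZ$-linearly with $D^b(\coh(\partial X))$. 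The latter is idempotent complete because $\partial X$ is a Noetherian scheme (e.g.\ by Balmer--Schlichting), which yields the claim.

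The principal obstacle I anticipate is verifying that the Hanlon--Hicks equivalence is genuinely $\ZZ$-linear at the $\Two$ level, rather than only at the $\Dscr$ level after tensoring with $\CC$; this should be addressable because the $A_\infty$-structure constants in their combinatorial construction are naturally integral and the morphism complexes between the chosen line-bundle generators are finite free $\ZZ$-modules. A secondary subtlety is confirming the matching between cocores of $D^{\tl}$ and structure sheaves $\Oscr_{D_\rho}$; here one can argue by computing the image of a single cocore (via its linking disk) in the partially wrapped category and tracing it through the explicit Hanlon--Hicks dictionary.
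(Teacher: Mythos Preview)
Your strategy has a fatal gap: the pushforward $Ri_\ast : D^b(\coh(\partial X)) \to D^b(\coh(X))$ along a closed immersion is \emph{not} fully faithful. Already for a smooth divisor $D\hookrightarrow X$ one has
\[
\RHom_X(i_\ast\Oscr_D,i_\ast\Oscr_D)\;\cong\;\RHom_D(Li^\ast i_\ast\Oscr_D,\Oscr_D)\;\cong\;\RHom_D(\Oscr_D\oplus N_{D/X}^\vee[1],\Oscr_D),
\]
which acquires an extra summand $H^0(D,N_{D/X})[-1]$ beyond $\RHom_D(\Oscr_D,\Oscr_D)$. So the step ``$\cup$ is fully faithful at the $\Two$-level and identifies $\Two(\Wscr\Fscr(D^{\tl}))$ with $D^b(\coh(\partial X))$'' fails, and with it the direct transport of idempotent completeness. (The secondary worry you flag, about $\ZZ$-linearity of Hanlon--Hicks, is also genuine: the paper works throughout with $\CC$-coefficients.)

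The paper circumvents the failure of full faithfulness by arguing at the level of $K_0$ and invoking Thomason's classification of dense subcategories. Concretely: the subcategory $\Dscr\subset \Tw(\Wscr\Fscr((\CC^\ast)^k,D^{\tl})_\CC)$ generated by linking disks is identified via $\HMS_2$ with $D^b(\coh_{\partial X}(X))$ (\emph{not} $D^b(\coh(\partial X))$), which is idempotent complete. One then shows that $\Dscr$ coincides with the subcategory generated by the image of $\Dscr(\Wscr\Fscr(D^{\tl})_\CC)$ under~$\cup$. Passing to $K_0$, the d\'evissage isomorphism $K_0(\coh(\partial X))\cong K_0(\coh_{\partial X}(X))$ forces the inclusion $K_0(\Two(\Wscr\Fscr(D^{\tl})_\CC))\hookrightarrow K_0(\Tw(\Wscr\Fscr(D^{\tl})_\CC))$ to be an isomorphism, and Thomason's theorem \cite[Theorem~2.1]{Thomason} then yields idempotent completeness. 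The point is that $i_\ast$ does induce an isomorphism on $K_0$ even though it is not fully faithful on categories.
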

\begin{proof}
Although this is a purely
symplectic statement we have no intrinsic proof for it, and our proof relies on homological mirror symmetry.

Given that $A$ is the set of vertices of the triangulation $\Tscr$ of
$P$, with all elements of $A$, except possibly zero, being contained
in $\partial P$, this implies that the cones in $\Sigma$ are smooth.
This implies that $X:=X_\Sigma$ is a smooth scheme and by Remark \ref{rem:quasi-projective} it is quasi-projective so that we can use
the results from \cite{HanlonHicks}.

By Theorem \ref{prop:lastmile} it suffices to prove Conjecture  \ref{conj:idcomplete}  with $D_\alpha$ replaced by $D^{\tl}$.
Let $\Dscr$ be the full subcategory of
\[
\Tw(\Wscr\Fscr((\CC^*)^k,D^{\tl})_\CC)\cong \Tw(\Wscr\Fscr(T^\ast(S_{N^\ast}),\Lambda^\infty_\Sigma)_\CC)
\] (cfr \eqref{eq:pairs})
 generated by the linking
  disks. 
  An inductive application of the proof of \cite[Proposition
  6.8]{HanlonHicks} shows that $\HMS_2(\Dscr)\subset D^b(\coh(X))$ is generated by the set of line bundles on the irreducible 
divisors in the toric boundary of $X$. The latter generate the category $D^b(\coh_{\partial X}(X))$ so that we obtain an equivalence 
$
\Dscr\cong D^b(\coh\nolimits_{\partial X}(X))
$ which yields that $\Dscr$ is 
  idempotent complete.

The Liouville manifold $D^{\tl}$ is Weinstein by Theorem \ref{th:weinstein} below. Moreover, the cocores are properly embedded by Lemma \ref{lem:properly_embedded} below. 
Hence, by (F4)(F5) the objects of the image of $\Two(\Wscr\Fscr(D^{\tl})_\CC)$ by the functor $\cup$ generate  $\Dscr$.

Let $\Dscr'$ be the subcategory of $\Tw(\Wscr\Fscr((\CC^*)^k,D^{\tl})_\CC)$ generated by the image of $\Dscr(\Wscr\Fscr(D^{\tl})_\CC)$ under $\cup$.
Since $\Two(\Wscr\Fscr((\CC^*)^k,D^{\tl})_\CC)\subset \Tw(\Wscr\Fscr((\CC^*)^k,D^{\tl})_\CC)$ we obtain from the previous paragraph that $\Dscr\subset \Dscr'$.

On the other hand $\Tw(\Wscr\Fscr((\CC^*)^k,D^{\tl})_\CC)$ is also the idempotent completion of $\Two(\Wscr\Fscr((\CC^*)^k,D^{\tl})_\CC)$ so  $\Dscr'$ is contained in the idempotent
completion of $\Dscr$. But since $\Dscr$ is idempotent complete we have in fact $\Dscr'\subset \Dscr$. Thus we conclude $\Dscr=\Dscr'$.
We obtain a commutative diagram
\[
\begin{tikzcd}
\Two(\Wscr\Fscr(D^{\tl})_\CC)\arrow[rd]\arrow[d,hook]&\\
\Tw(\Wscr\Fscr(D^{\tl})_\CC)\arrow[d,"\HMS","\cong"']\arrow[r,"\cup"]& \Dscr'\ar[d,"{\HMS_1}","\cong"']\\
D^b(\coh(\partial X))\arrow[r]&D^b(\coh_{\partial X}(X)).
\end{tikzcd}
\]
where the lower square is obtained from \eqref{eq:commutative_diagram}, together with the fact that the image of $D^b(\coh(\partial X))$ in $D^b(\coh(X))$ generates $D^b(\coh_{\partial X}(X))$.

Applying $K_0(-)$ to this diagram, noting $K_0(\partial X)\cong K_0(\coh_{\partial X}(X))$, we get
\[
\begin{tikzcd}
K_0(\Two(\Wscr\Fscr(D^{\tl}))_\CC)\arrow[rd,twoheadrightarrow]\arrow[d,hook]&\\
K_0(\Tw(\Wscr\Fscr(D^{\tl}))_\CC)\arrow[d,"\cong"]\arrow[r]& K_0(\Dscr')\arrow[d,"\cong"]\\
K_0(\partial X)\arrow[r,"\cong"]&K_0(\coh_{\partial X}(X)).
\end{tikzcd}
\]
We conclude that all arrows in the latter diagram are isomorphisms. In particular we get
\[
K_0(\Two(\Wscr\Fscr(D^{\tl}))_\CC)\cong K_0(\Tw(\Wscr\Fscr(D^{\tl}))_\CC)
\]
which implies Conjecture \ref{conj:idcomplete} by \cite[Theorem 2.1]{Thomason}.
\end{proof}
\subsection{More on the Lazarev map}
\label{sec:lazarev2}
Note that on the nose we have a surjective map
\[
K_0(\Two(\Wscr\Fscr(M))) \r  K_0(\Two(\Wscr\Fscr(M))_\CC)
\]
and we denote the composition with $\operatorname{L}$ (see \S\ref{sec:lazarev}) by $\LC$. This map is still surjective.
We state the following conjecture.
\begin{conjecture} \label{conj:lazarev} Claim (1,2) in the statement of Theorem \ref{th:lazarev} hold for $D_\alpha$.  Moreover the resulting map $\operatorname{L}_\CC$ (and hence
$\operatorname{L}$) is an isomorphism.
\end{conjecture}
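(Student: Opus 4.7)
I would first reduce to the tropicalized case. By Theorem \ref{prop:lastmile}, for $t \gg 0$ there is an isomorphism of graded Liouville manifolds $D_\alpha \cong D^{\tl} := f^{-1}_{t,1}(0)$, and such an isomorphism transports exact conical Lagrangians together with their cohomology classes, hence transports claims (1), (2), and the isomorphism property of $\operatorname{L}_\CC$. Since $D^{\tl}$ is Weinstein (Theorem \ref{th:weinstein}), Theorem \ref{th:lazarev} applies directly and yields parts (1) and (2), together with the surjectivity of $\operatorname{L}$ (and therefore of $\operatorname{L}_\CC$) for $D = D^{\tl}$; this proves the first half of the conjecture.

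For the isomorphism $\operatorname{L}_\CC$, I would work under the hypotheses of Proposition \ref{prop:idcomplete}, so that $\Two(\Wscr\Fscr(D^{\tl})_\CC) = \Tw(\Wscr\Fscr(D^{\tl})_\CC)$ and Theorem \ref{th:mirror_} furnishes an identification $K_0(\Two(\Wscr\Fscr(D^{\tl}))_\CC) \cong K_0(\coh(\partial X))$. The plan is to embed the Lazarev map into the ladder
\[
\begin{tikzcd}[column sep=small]
0 \ar[r] & H^{k-1}((\CC^\ast)^k,\ZZ) \ar[r] \ar[d] & H^{k-1}(D,\ZZ) \ar[r] \ar[d,"\operatorname{L}_\CC"] & H^{k}((\CC^\ast)^k,D,\ZZ) \ar[r] \ar[d] & H^{k}((\CC^\ast)^k,\ZZ) \ar[r] \ar[d] & 0 \\
0 \ar[r] & X(T) \ar[r,"\partial"] & K_0(\coh(\partial X)) \ar[r,"i_\ast"] & K_0(X) \ar[r,"\rk"] & \ZZ \ar[r] & 0
\end{tikzcd}
\]
whose rows are \eqref{eq:sesH} and \eqref{eq:K0ses}, with the third vertical arrow defined as the Lazarev map for the Weinstein sector $\tilde{M}$ of Proposition \ref{prop:GSpair} post-composed with the identification $K_0(\Tw(\Wscr\Fscr(\tilde{M}))_\CC) \cong K_0(X)$ extracted from diagram \eqref{eq:commutative_diagram}. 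Commutativity of the middle square is exactly \eqref{eq:lazarev1}; commutativity of the outer squares I would verify by tracking explicit generators (the $d\log z_i$ classes in $H^1((\CC^\ast)^k,\ZZ) \cong \ZZ^k$ map to the characters of $T$ under $\partial$, and the volume class on the right maps to $[\Oscr_X]$). Provided the first, third, and fourth vertical arrows are isomorphisms, the five lemma yields the isomorphism property of the middle $\operatorname{L}_\CC$.

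The main obstacle is showing that the third vertical arrow is an isomorphism, which is itself a non-trivial Lazarev-type statement for the Weinstein sector $\tilde{M}$. My approach would be to compute on explicit generators: by (F4)--(F5), linking disks at the irreducible components of $\Skel(D^{\tl})_\infty$ generate $K_0(\Tw(\Wscr\Fscr(\tilde{M}))_\CC)$, and following the argument of Proposition \ref{prop:idcomplete} (via \cite{HanlonHicks}) they correspond under HMS to structure sheaves of boundary divisors, whose $K_0$-classes span $K_0(\coh_{\partial X}(X)) \subset K_0(X)$. I would then verify that the Lazarev cohomology classes of these disks in $H^k((\CC^\ast)^k, D, \ZZ)$ are the Poincaré-dual classes of the corresponding boundary divisors, and hence match the $K_0(X)$-generators under the identification of Proposition \ref{prop:seqsiso}. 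Bridging the topologically-defined Lazarev map with the mirror-symmetric construction of the isomorphism $H^k((\CC^\ast)^k,D,\ZZ) \cong K_0(X)$ on the level of these concrete generators is the genuine crux of the argument.
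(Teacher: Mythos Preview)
Your first paragraph is correct and matches the paper exactly: reduce to $D^{\tl}$ via Theorem \ref{prop:lastmile}, invoke its Weinstein structure (Theorem \ref{th:weinstein}), and apply Theorem \ref{th:lazarev} to obtain (1), (2), and surjectivity of $\operatorname{L}_\CC$.

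For the isomorphism, however, you are overcomplicating things and missing the key observation. Since $\operatorname{L}_\CC$ is already \emph{surjective}, it is an isomorphism as soon as source and target are free abelian groups of the same rank. The paper simply computes both ranks: $H^{k-1}(D_\alpha,\ZZ)\cong\ZZ^{k!\vol(P)+k-1}$ by Proposition \ref{prop:GKZ1_}(\ref{it:D1},\ref{it:D6}); and on the other side, Proposition \ref{prop:idcomplete} gives $K_0(\Two(\Wscr\Fscr(D^{\tl}))_\CC)\cong K_0(\Tw(\Wscr\Fscr(D^{\tl}))_\CC)$, Theorem \ref{th:mirror_} identifies this with $K_0(\coh(\partial X))$, and then the exact sequence \eqref{eq:K0ses} together with Theorem \ref{thm:BHH} (using smoothness of $\Sigma$) gives $\rk K_0(\coh(\partial X))=k!\vol(P)+k-1$. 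That is the entire argument.

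Your five-lemma ladder is precisely what the paper assembles \emph{afterwards}, in Proposition \ref{prop:seqsiso}, and there the isomorphism property of $\operatorname{L}_\CC$ for $D$ is used as a \emph{known} vertical arrow, not deduced from the diagram. So you have the logical dependency reversed. Moreover, the ``crux'' you identify---showing the third vertical (sector Lazarev) map is an isomorphism---is handled in the paper by the \emph{same} rank-counting trick (surjectivity from Theorem \ref{th:lazarev}, plus $\rk H^k((\CC^\ast)^k,D,\ZZ)=k!\vol(P)=\rk K_0(X)$), not by the explicit generator-matching you propose. That generator-matching is not needed anywhere.
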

\begin{proposition}\label{prop:LC}
Assume that we are in the setting of \S\ref{sec:setting} and that
 $A$ consists of all lattice points of $P=\conv(A)$. 
Then Conjecture \ref{conj:lazarev} is true.
\end{proposition}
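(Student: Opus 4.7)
The plan is to first reduce to the tropical limit $D^{\tl}$ and then invoke Theorem \ref{th:lazarev} together with the commutative square \eqref{eq:lazarev1}. By Theorem \ref{prop:lastmile} there is an isomorphism of Liouville manifolds $D_\alpha \cong D^{\tl}$, which preserves both the singular cohomology of exact conical Lagrangians and the Grothendieck group of the wrapped Fukaya category, and is compatible with the Lazarev construction. Since $D^{\tl}$ is Weinstein by Theorem \ref{th:weinstein}, applying Theorem \ref{th:lazarev} to $D^{\tl}$ yields claims (1,2) for $D^{\tl}$ --- and hence for $D_\alpha$ --- together with a surjection $\operatorname{L}_\CC: H^{k-1}(D^{\tl}, \CC) \twoheadrightarrow K_0(\Two(\Wscr\Fscr(D^{\tl}))_\CC)$. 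It remains to establish injectivity of this surjection.

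For this I would use the commutative square \eqref{eq:lazarev1} applied to the Liouville pair $(M,F)$ from Proposition \ref{prop:GSpair}, with $\hat F = D^{\tl}$ and $\hat M = (\CC^\ast)^k$, tensored with $\CC$. The aim is to identify both rows with familiar exact sequences. Via the homotopy equivalence $\tilde M \simeq (\CC^\ast)^k$, the top row fits into the long exact cohomology sequence of the pair $((\CC^\ast)^k, D^{\tl})$, matching the middle portion of \eqref{eq:sesH}. For the bottom row, Proposition \ref{prop:idcomplete} combined with Theorem \ref{th:mirror_} gives $K_0(\Two(\Wscr\Fscr(D^{\tl}))_\CC) \cong K_0(\coh(\partial X))_\CC$, while HMS for pairs (via \cite[Theorem 1.1]{GPS3} and \cite[Theorem 1.2]{Kuwagaki}, or alternatively \cite{HanlonHicks}, combined with the pair isomorphism \eqref{eq:pairs}) gives $K_0(\Two(\Wscr\Fscr(\tilde M))_\CC) \cong K_0(\coh(X))_\CC$, with $\cup$ corresponding to the pushforward $i_\ast$; the bottom row then matches the middle portion of the exact sequence of Theorem \ref{thm:K0boundary}.

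Granting these identifications, it suffices to show that the right vertical $\operatorname{L}_\CC$ is an isomorphism: by Theorem \ref{th:lazarev} applied to the Weinstein sector $\tilde M$ it is already surjective, and a diagram chase (combined with surjectivity of the left $\operatorname{L}_\CC$ already established) then forces the left $\operatorname{L}_\CC$ to be injective. To prove the right $\operatorname{L}_\CC$ is an isomorphism I would adapt the argument appearing in the proof of Proposition \ref{prop:idcomplete}: use \cite[Proposition 6.8]{HanlonHicks} to identify the linking disks under HMS with (twists of) structure sheaves of irreducible toric boundary divisors, so that the image of $\operatorname{L}_\CC$ generates $K_0(\coh\nolimits_{\partial X}(X))_\CC = \operatorname{image}(i_\ast) = \ker(K_0(X)_\CC \to \CC)$, and then match ranks using Theorem \ref{thm:BHH}. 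The principal technical obstacle is precisely this last step: aligning the Lazarev cycles $c(L)$ with the Hanlon--Hicks generators so that the rank count can be carried through, and verifying that the identifications of the two rows are compatible with the vertical maps $\operatorname{L}_\CC$ --- this compatibility is essentially the content of the natural isomorphism between \eqref{eq:sesH} and \eqref{eq:K0ses:intro} asserted in the introduction.
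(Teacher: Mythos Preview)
Your opening reduction is the same as the paper's: pass to $D^{\tl}$ via Theorem \ref{prop:lastmile}, invoke the Weinstein property from Theorem \ref{th:weinstein}, and conclude from Theorem \ref{th:lazarev} that $\operatorname{L}_\CC$ is a well-defined surjection. (One small slip: the source of $\operatorname{L}_\CC$ is $H^{k-1}(D^{\tl},\ZZ)$, not $\CC$-coefficients; the subscript $\CC$ refers to the target Fukaya category being $\CC$-linear.)

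Where you diverge is in proving injectivity, and here your diagram chase has a genuine gap. From the commutative square \eqref{eq:lazarev1}, knowing that the right vertical $\operatorname{L}_\CC$ is an isomorphism and the left vertical is surjective only yields $\ker(\operatorname{L}_\CC^{\text{left}})\subseteq \ker(\partial)$. But $\ker(\partial)$ is the image of the restriction map $H^{k-1}((\CC^\ast)^k,\ZZ)\to H^{k-1}(D^{\tl},\ZZ)$, which is a copy of $\ZZ^k$ by Proposition \ref{prop:GKZ1_}\eqref{it:D3}; so the chase alone cannot kill the kernel. To close this you would need the full four-term map of exact sequences with isomorphisms on the outer terms and then appeal to the five lemma. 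That is exactly the content of Proposition \ref{prop:seqsiso}, which in the paper is proved \emph{after} Proposition \ref{prop:LC} and in fact relies on it (via Proposition \ref{prop:Lazarev}). So your route either loops back on itself or requires reproving Proposition \ref{prop:seqsiso} independently, which you flag but do not carry out.

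The paper's argument is far more direct and avoids the square \eqref{eq:lazarev1} entirely: since $\operatorname{L}_\CC$ is already surjective, it suffices to check that source and target are free abelian of the same finite rank. The source $H^{k-1}(D^{\tl},\ZZ)$ is free of rank $k!\operatorname{vol}(P)+k-1$ by Proposition \ref{prop:GKZ1_}(\ref{it:D1},\ref{it:D6}). For the target, Proposition \ref{prop:idcomplete} gives $K_0(\Two(\Wscr\Fscr(D^{\tl}))_\CC)\cong K_0(\Tw(\Wscr\Fscr(D^{\tl}))_\CC)$, and Theorem \ref{th:mirror_} identifies this with $K_0(\coh(\partial X))$; since $\Sigma$ is smooth, Theorem \ref{thm:BHH} gives $\rk K_0(X)=k!\operatorname{vol}(P)$, and then the exact sequence \eqref{eq:K0ses} forces $K_0(\coh(\partial X))$ to be free of rank $k!\operatorname{vol}(P)+k-1$. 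A surjection between free abelian groups of equal finite rank is an isomorphism, and you are done. Your detour through the Liouville pair and the right-hand Lazarev map is unnecessary for this proposition; those ingredients are used later, in Proposition \ref{prop:seqsiso}, once $\operatorname{L}_\CC$ is already known to be an isomorphism.
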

\begin{proof} By Theorem \ref{prop:lastmile}  $D_\alpha\cong D^{\tl}$. Moreover the Liouville manifold $D^{\tl}$ is Weinstein by Theorem \ref{th:weinstein} below. Hence 
(1,2) hold, i.e. $\LC$ is a well-defined surjective map. It remains to show that $\LC$ is an isomorphism for $M=D^{\tl}$. Note that as in the proof of Proposition \ref{prop:idcomplete}, $\Sigma$ is smooth.

To prove that $\LC$ is an isomorphism it suffices to prove that source and target are free and have the same rank.
\begin{itemize}
\item $H^{k-1}(D^{\tl},\ZZ)\cong H^{k-1}(D_\alpha,\ZZ)\cong \ZZ^{k!\operatorname{vol}(P)+k-1}$ by Proposition \ref{prop:GKZ1_}(\ref{it:D1},\ref{it:D6}).
\item From Proposition \ref{prop:idcomplete} we get $K_0(\Two(\Wscr\Fscr(D^{\tl}))_\CC)\cong 
K_0(\Dscr(\Wscr\Fscr(D^{\tl}))_\CC)$.
From Theorem \ref{th:mirror_} we obtain a corresponding isomorphism of Grothendieck groups 
$K_0(\Tw(\Wscr\Fscr(D^{\tl})_\CC)) \cong K_0(\coh(\partial X))$.
Since the cones in $\Sigma$ are smooth the simplices spanned by the generators of the one dimensional cones have volume $1/k!$.
By Theorem \ref{thm:BHH} we obtain that $\rk K_0(X)=k! \vol(P)$ (since $c(\sigma)=1$). 
It follows from \eqref{eq:K0ses}  that  $ K_0(\coh(\partial X))$ is free of rank $k!{\vol}(P)+k-1$. This finishes the proof. \qedhere
\end{itemize}
\end{proof}
\subsection{Summary}
Combining everything, we have proved the following result.
\begin{proposition}\label{prop:Lazarev}
Assume that we are in the setting of \S\ref{sec:setting} and that
 $A$ consists of all lattice points of $P=\conv(A)$. 
 There is an isomorphism of abelian groups $\LC:H^{k-1}(D_\alpha,\ZZ) \allowbreak\to K_0(\Tw(\Wscr\Fscr(D_\alpha)_\CC))$
which is compatible with the action of  $\pi_1(\Kscr_A,\alpha)$ on $H^{k-1}(D_\alpha,\CC)$ described in Proposition \ref{prop:GKZ1_}\eqref{it:D7}
and the action of  $\pi_1(\Kscr_A,\alpha)$ on  $K_0(\Tw(\Wscr\Fscr(D_\alpha)_\CC))$
obtained from Theorem \ref{th:mainth1}.
\end{proposition}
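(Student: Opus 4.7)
The construction of the map $\LC$ and the fact that it is an isomorphism are both already contained in Proposition~\ref{prop:LC} (via the identification $K_0(\Two(\Wscr\Fscr(D_\alpha)_\CC))\cong K_0(\Tw(\Wscr\Fscr(D_\alpha)_\CC))$ from Proposition~\ref{prop:idcomplete}). Therefore the only thing left to establish is compatibility of $\LC$ with the two $\pi_1(\Kscr_A,\alpha)$-actions.

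The crucial intermediate step is a naturality property: for every isomorphism of Liouville manifolds $\phi:(M,\theta)\to(M',\theta')$ satisfying the hypotheses of Theorem~\ref{th:lazarev} (and of the variant proved in Proposition~\ref{prop:LC}), the square
\[
\begin{tikzcd}
H^{d}(M,\ZZ) \ar[r,"\LC"] \ar[d,"\phi_{*}"'] & K_0(\Tw(\Wscr\Fscr(M)_\CC)) \ar[d,"\Wscr\Fscr(\phi)"] \\
H^{d}(M',\ZZ) \ar[r,"\LC"] & K_0(\Tw(\Wscr\Fscr(M')_\CC))
\end{tikzcd}
\]
commutes. This should be essentially tautological from the construction: by Theorem~\ref{th:lazarev} every class in $H^{d}(M,\ZZ)$ has the form $c(L)$ for $L\subset M$ an exact conical Lagrangian, and $\LC(c(L))=[L]$; the functor $\Wscr\Fscr(\phi)$ provided by Proposition~\ref{cor:functorial} sends $[L]$ to $[\phi(L)]$; and $\phi_{*}c(L)=c(\phi(L))$ because $c(-)$ only depends on the isomorphism class of the embedded Lagrangian.

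Assuming this naturality, equivariance is immediate. Indeed, given a loop $\gamma$ in $\CC^A-V(A)$ based at $\alpha$, the action on $K_0(\Tw(\Wscr\Fscr(D_\alpha)_\CC))$ coming from Theorem~\ref{th:mainth1} is by construction $\Wscr\Fscr(\rho_{\gamma,1})$, where $\rho_{\gamma,t}:D_\alpha\to D_{\gamma(t)}$ is the family of Liouville isomorphisms obtained by applying Theorem~\ref{lem:moser2} to the family of hypersurfaces of Theorem~\ref{prop:Dlocally}. The Gauss--Manin monodromy along $\gamma$ on $H^{k-1}(D_\alpha,\ZZ)$ featured in Proposition~\ref{prop:GKZ1_}(\ref{it:D7}) is the holonomy of the flat Ehresmann connection on the smooth locally trivial fibration $\{D_{\alpha'}\}_{\alpha'\in \CC^A-V(A)}$, hence may be computed using any $C^\infty$-trivialization, and in particular via the Moser trivialization $(\rho_{\gamma,t})_{t}$. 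This identifies the Gauss--Manin monodromy with $(\rho_{\gamma,1})_{*}$, and applying the naturality square above with $\phi=\rho_{\gamma,1}$ concludes the argument.

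\textbf{Main obstacle.} The only nontrivial verification lies in the naturality step. While the fact that $L\mapsto [L]$ and $L\mapsto c(L)$ are both covariant in $L$ under symplectomorphisms is essentially formal, one has to go through the construction of the class $c(L)\in H^{d}(M,\ZZ)$ in \cite{Lazarev} and confirm that it is indeed natural under push-forward by Liouville isomorphisms of the ambient manifold (as opposed to being only invariant under Lagrangian isotopies of $L$ inside a fixed target).
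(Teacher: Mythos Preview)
Your proposal is correct and follows essentially the same approach as the paper's proof, which is very terse: the paper simply says that the isomorphism comes from Proposition~\ref{prop:LC} and that compatibility ``boils down to the fact that the family $(D_\alpha)_\alpha$ is locally trivial (by Theorem~\ref{prop:Dlocally} and Theorem~\ref{lem:moser2}), which is used in the construction of the action in Proposition~\ref{prop:GKZ1_}\eqref{it:D7} and Theorem~\ref{th:mainth1}.'' Your naturality square and the observation that the Gauss--Manin monodromy can be computed via the Moser trivialization are exactly the content hidden behind that sentence. Your flagged ``main obstacle'' is not a genuine issue: in \cite{Lazarev} the class $c(L)\in H^d(M,\ZZ)$ is simply the cohomology class of the Lagrangian submanifold, which is manifestly natural under diffeomorphisms of the ambient manifold.
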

\begin{proof} The isomorphism follows from Proposition \ref{prop:LC}. That it is compatible with the action boils down to the fact that the family $(D_\alpha)_\alpha$ is locally trivial (by Theorem \ref{prop:Dlocally} and Theorem \ref{lem:moser2}), which is used in the construction of the action in  Proposition \ref{prop:GKZ1_}\eqref{it:D7} and Theorem \ref{th:mainth1}.
\end{proof}

\subsection{The Gauss-Manin connection and the GKZ system}
In this section we assume that we are in the setting of \S\ref{sec:setting} and that
 $A$ consists of all lattice points of $P=\conv(A)$. 

\begin{proposition}\label{prop:seqsiso}
There is an isomorphism of  exact sequences 
\begin{equation}
\label{eq:maindiagram}
\footnotesize{
\begin{tikzcd}
0\arrow[r]& H^{k-1}(\CC^{*k},\ZZ)\arrow[r]\arrow[d]&H^{k-1}(D_\alpha,\ZZ)\arrow[r]\arrow[d]& H^k(\CC^{*k},D_\alpha,\ZZ)\arrow[r]\arrow[d, dotted]&H^k(\CC^{*k},\ZZ)\arrow[d]\arrow[r]&0\\
0\arrow[r]&X(T)\arrow[r]&K_0(\coh(\partial X))\arrow[r]&K_0(X)\arrow[r]&\ZZ\arrow[r]&0.
\end{tikzcd}
}
\end{equation} 
The dotted arrow will be constructed in the proof.
The second vertical is the composition of the natural map from Proposition \ref{prop:Lazarev} with the HMS isomorphism, 
 the top row is a part of the l.e.s. associated to $D_\alpha\subset (\CC^*)^k$  and the bottom row  is \eqref{eq:K0ses}. 
\end{proposition}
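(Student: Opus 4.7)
The plan is to construct the dotted arrow as a composition of a Lazarev-type map for the Liouville pair $((\CC^\ast)^k, D_\alpha)$ with the HMS equivalence $\HMS_1$, and then verify that the three squares commute. More precisely, under the isotopy $D_\alpha\cong D^{\tl}$ of Theorem \ref{prop:lastmile} and the identification \eqref{eq:pairs}, we can apply Theorem \ref{th:lazarev} (or rather the surjection $\LC$ as in \S\ref{sec:lazarev2}) to the pair $(\tilde{M},\hat{F})$ to obtain a map
\[
\operatorname{L}_\CC^{\text{rel}}\colon H^{k}(\CC^{\ast k},D_\alpha,\ZZ) \longrightarrow K_0(\Tw\Wscr\Fscr((\CC^\ast)^k,D^{\tl})_\CC).
\]
Composing with $\HMS_1$ from the right arrow of \eqref{eq:commutative_diagram} lands us in $K_0(X)$, and this is our dotted arrow.

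The outer vertical isomorphisms should be chosen as the canonical ones: $H^{k-1}((\CC^\ast)^k,\ZZ)\cong X(T)$ is the standard identification of $H^1$ of the dual torus with the character lattice of $T$ (tensored up to top degree), while $H^k((\CC^\ast)^k,\ZZ)\cong \ZZ$ is sent to the rank via the class of $[\Oscr_X]$. Commutativity of the middle square is the main geometric content: it follows from diagram \eqref{eq:lazarev1}, which intertwines the topological boundary map $H^{k-1}(\hat F,\ZZ)\r H^k(\tilde M,\hat F,\ZZ)$ with the symplectic cup functor $\cup\colon\Wscr\Fscr(\hat F)\r \Wscr\Fscr(\tilde M)$. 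Combined with the commutativity of \eqref{eq:commutative_diagram}, this converts the middle square on the Fukaya side to the pushforward $i_\ast\colon K_0(\coh(\partial X))\r K_0(X)$ on the coherent side, exactly the map in \eqref{eq:K0ses}.

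The left and right squares should reduce to concrete identifications. For the left square, one checks that under the Lazarev map $\LC$, a character $\chi\in H^{1}(S_{N^\ast},\ZZ)=H^{k-1}((\CC^\ast)^k,\ZZ)$ pulled back to $D_\alpha$ is realized by a specific Lagrangian (e.g.\ a section of $T^\ast(S_{N^\ast})\cong (\CC^\ast)^k$ via Legendre), whose image in $K_0(\coh(\partial X))$ under $\HMS$ is precisely $\partial(\chi)$ as defined in Theorem \ref{eq:K0boundary}; here \cite{HanlonHicks} provides the dictionary via line bundles on toric divisors, as already exploited in Proposition \ref{prop:idcomplete}. For the right square, one traces the fundamental class in $H^k((\CC^\ast)^k,\ZZ)\cong \ZZ$ through the relative Lazarev map and identifies its image with the class of the structure sheaf $[\Oscr_X]$, whose rank is $1$, matching the generator of the bottom $\ZZ$.

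Once all squares commute, the dotted arrow is forced to be an isomorphism by the five lemma (the two outer verticals and the second vertical are isomorphisms — the latter by Proposition \ref{prop:Lazarev} composed with $\HMS$). The main obstacle will be verifying the commutativity of the left square: it requires an explicit matching between the geometric description of $\partial\colon X(T)\r K_0(\coh(\partial X))$ in Theorem \ref{eq:K0boundary} and the Fukaya-theoretic image of a character class under $\LC$ followed by the HMS equivalence of \cite{GammageShende}. The middle square, although conceptually the heart of the argument, should then follow essentially formally from \eqref{eq:lazarev1} and \eqref{eq:commutative_diagram}.
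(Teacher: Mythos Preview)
Your construction of the dotted arrow and the commutativity of the middle square are exactly what the paper does: compose the isotopy $((\CC^\ast)^k,D_\alpha)\cong ((\CC^\ast)^k,D^{\tl})$, the Lazarev diagram \eqref{eq:lazarev1}, and the Gammage--Shende diagram \eqref{eq:commutative_diagram}. That part is fine.

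Where you diverge is in how you show the dotted arrow is an isomorphism. You propose to verify the two outer squares explicitly and then apply the five lemma; you correctly flag the left square as the ``main obstacle'', since it requires matching the algebraic description of $\partial\colon X(T)\to K_0(\coh(\partial X))$ from Theorem~\ref{eq:K0boundary} with an explicit Lagrangian-to-sheaf computation under the HMS equivalence. You do not actually carry this out, and it is not clear the \cite{HanlonHicks} dictionary for divisorial line bundles gives this directly.

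The paper sidesteps this entirely. Rather than verifying the outer squares, it proves the third vertical is an isomorphism \emph{directly}: by \cite[Corollaries D, 6.9]{HanlonHicks} one has $\Two(\Wscr\Fscr((\CC^\ast)^k,D^{\tl})_\CC)\cong D^b(\coh(X))$, so this category is already idempotent complete and its $K_0$ is $K_0(X)$, which by Theorem~\ref{thm:BHH} is free of rank $k!\operatorname{vol}(P)$. Since $H^k((\CC^\ast)^k,D_\alpha,\ZZ)$ is free of the same rank by Proposition~\ref{prop:GKZ1_} and the Lazarev map $\LC$ is surjective, it is an isomorphism. Once the second and third verticals are isomorphisms and the middle square commutes, the outer verticals are induced on kernels and cokernels, and the outer squares commute automatically --- no explicit Lagrangian matching is needed. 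Your five-lemma route is not wrong in principle, but it trades a short rank count for a genuinely harder computation that you leave open.
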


\begin{proof}
We first construct an isotopy $D_\alpha\r D^{\tl}$ like in the proof of Theorem \ref{prop:lastmile}. This extends to an isotopy $((\CC^*)^k,D_\alpha)\r ((\CC^*)^k,D^{\tl})$ as in \cite[Proposition 4.9]{Abouzaid}, which yields a commuting square
\begin{equation}
\label{eq:square1}
\begin{tikzcd}
H^{k-1}(D_\alpha,\ZZ)\arrow[r]\arrow[d,"\cong"]& H^k((\CC^*)^k,D_\alpha,\ZZ)\arrow[d,"\cong"]\\
H^{k-1}(D^{\tl},\ZZ)\arrow[r]& H^k((\CC^*)^k,D^{\tl},\ZZ).
\end{tikzcd}
\end{equation}
There is a commutative diagram 
\begin{equation}
\label{eq:square2}
\begin{tikzcd}
H^{k-1}(D^{\tl},\ZZ)\arrow[r]\arrow[d,"\operatorname{L}"']& H^k((\CC^*)^k,D^{\tl},\ZZ)\arrow[d,"\operatorname{L}"]\\
K_0(\Tw(\Wscr\Fscr(D^{\tl})_\CC))\arrow[r]&K_0(\Tw(\Wscr\Fscr((\CC^*)^k,D^{\tl})_\CC)).
\end{tikzcd}
\end{equation}
This diagram is a translation of \eqref{eq:lazarev1} using $(M,F)$ as in \S\ref{sec:liouvilletoric} where we observe that
\[
H^\ast(\tilde{M},\hat{F},\ZZ)\cong H^\ast(\hat{M},\tilde{F},\ZZ)\cong H^\ast(\hat{M},F,\ZZ)\cong H^\ast((\CC^*)^k,D^{\tl},\ZZ)
\]
where all the isomorphisms are obtained by invoking suitable homotopy equivalences.

We claim that the vertical maps are isomorphisms. The left most  map is an isomorphism by Proposition
\ref{prop:Lazarev}.
We claim that the right most map is also a isomorphism.

By \cite[Corollaries D, 6.9]{HanlonHicks}
$\Two(\Wscr\Fscr((\CC^*)^k,D^{\tl})_\CC) \cong D^b(\coh(X))$. It follows that  $\Two(\Wscr\Fscr((\CC^*)^k,D^{\tl})_\CC)$ is idempotent complete
and hence $\Tw(\Wscr\Fscr((\CC^*)^k,D^{\tl})_\CC)\allowbreak  \cong \Two(\Wscr\Fscr((\CC^*)^k,D^{\tl})_\CC)\cong D^b(\coh(X))$.
The right most map in \eqref{eq:square2} is  surjective with $\Tw$ replaced by $\Two$ (see \S\ref{sec:lazarev}). So is sufficient to prove that source and target have the same rank and that the source is a free abelian group.
By  Theorem \ref{thm:BHH}, $\rk(K_0(X))=k!{{\rm vol}(P)}$.
 Moreover,
$H^k((\CC^*)^k,D^{\tl},\ZZ)\allowbreak \cong H^k((\CC^*)^k,D_\alpha,\ZZ)\cong
\ZZ^{k!{\rm vol}(P)}$ by Proposition \ref{prop:GKZ1_}(\ref{it:D1},\ref{it:D5}), 
which finishes the proof of the claim.

Finally we repeat the commutative square \eqref{eq:commutative_diagram} which was constructed in \cite[Figure 6]{GammageShende}
\begin{equation}
\label{eq:square3}
\begin{tikzcd}
\Tw(\Wscr\Fscr(D^{\tl})_\CC)\arrow[d,"{\HMS}","\cong"']\arrow[r,"\cup"]& \Tw(\Wscr\Fscr((\CC^*)^k,D^{\tl})_\CC)
\arrow[d,"{\HMS_1}","\cong"']\\
D^b(\coh(\partial X))\arrow[r]&D^b(\coh(X)).
\end{tikzcd}
\end{equation}
The construction of the dotted arrow in \eqref{eq:maindiagram} as well as the commutativity of the middle square follows by composing the diagrams \eqref{eq:square1}\eqref{eq:square2}\eqref{eq:square3}.
\end{proof}

It follows from the proposition that we may define an induced action of $\pi_1(\Kscr_X)$ on $K_0(X)$ since the boundary terms in the cohomology sequence are preserved by the action (the action is actually trivial on those two terms).  In fact, this sequence of $\pi_1(\Kscr_X)$-representations is well understood by the work of Reichelt, Stienstra as mentioned in the introduction, cf. Proposition \ref{prop:GKZ1}.  

\begin{corollary}\label{cor:GKZ}
The induced action on $K_0(X)_\CC$ is obtained from the GKZ-system corresponding to the matrix $A$ with parameters $\beta=0$. 
\end{corollary}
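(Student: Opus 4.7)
The plan is to deduce the corollary by a diagram chase in the category of $\pi_1(\Kscr_A,\alpha)$-representations, using Propositions \ref{prop:seqsiso}, \ref{prop:Lazarev} and \ref{prop:GKZ1}. Tensor the diagram \eqref{eq:maindiagram} with $\CC$. The top row carries the monodromy of the Gauss--Manin connection on the family $(D_\alpha)_{\alpha\in \CC^A-V(A)}$, and is an exact sequence of $\pi_1(\Kscr_A,\alpha)$-representations by Proposition \ref{prop:GKZ1}. The first two entries of the bottom row inherit $\pi_1(\Kscr_A,\alpha)$-actions from Theorem \ref{th:mirror1000} (via Theorems \ref{th:mainth1} and \ref{prop:lastmile}). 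The induced action on $K_0(X)_\CC$ is, by definition, the one transported from $H^k((\CC^*)^k,D_\alpha,\CC)$ through the dotted vertical.

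First I would verify that the middle vertical of \eqref{eq:maindiagram} is $\pi_1(\Kscr_A,\alpha)$-equivariant: it is the composition of $\operatorname{L}_\CC$, which is monodromy-compatible by Proposition \ref{prop:Lazarev}, with the HMS equivalence of Theorem \ref{th:mirror_}, and the latter is monodromy-compatible essentially by construction, since the $\pi_1(\Kscr_A)$-action on $D^b(\coh(\partial X))$ in Theorem \ref{th:mirror1000} is defined by transporting the symplectic action through HMS. Next, the outer verticals of \eqref{eq:maindiagram} are isomorphisms between trivial $\pi_1(\Kscr_A)$-representations: on the top this is Proposition \ref{prop:GKZ1}; on the bottom $X(T)_\CC$ and $\CC$ are manifestly spanned by globally defined classes (characters of the torus and the rank function) and hence carry the trivial action transported from above. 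A standard four-lemma diagram chase then shows that the dotted arrow $H^k((\CC^*)^k,D_\alpha,\CC)\to K_0(X)_\CC$ is $\pi_1(\Kscr_A,\alpha)$-equivariant as well.

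Finally, Proposition \ref{prop:GKZ1} identifies the action of $\pi_1(\CC^A-V(A))$ on $H^k((\CC^*)^k,D_\alpha,\CC)$, via the surjection $\pi_1(\CC^A-V(A))\r \pi_1(\Kscr_A,\alpha)$, with the GKZ system associated to $A$ with trivial parameters. Transporting through the dotted isomorphism yields the corollary. The main obstacle is the first step, namely the equivariance of the middle vertical: one must carefully track two compatibilities at once, that $\operatorname{L}_\CC$ intertwines the Gauss--Manin monodromy with the symplectic monodromy on $K_0(\Tw\Wscr\Fscr(D_\alpha)_\CC)$, and that HMS intertwines the latter with the categorical action on $D^b(\coh(\partial X))$.
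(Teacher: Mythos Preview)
Your argument is correct and aligns with the paper's reasoning, but it does more than is needed. In the paper the corollary is stated without proof because, once the induced action on $K_0(X)$ has been \emph{defined} (in the sentence just before the corollary) by transporting the Gauss--Manin action on $H^k((\CC^*)^k,D_\alpha,\ZZ)$ through the dotted vertical of \eqref{eq:maindiagram}, the identification with the GKZ system is literally Proposition~\ref{prop:GKZ1_}\eqref{it:D9} (i.e.\ Proposition~\ref{prop:GKZ1}). There is nothing further to check: the dotted arrow is equivariant tautologically, since the target action is defined by transport.

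Your equivariance argument for the middle vertical and the ensuing diagram chase are not wrong, but they address a different (and already settled) point: namely that the transported action on $K_0(\coh(\partial X))$ agrees with the HMS action from Theorem~\ref{th:mirror1000}. This is exactly Proposition~\ref{prop:Lazarev} combined with the fact that the $\pi_1(\Kscr_A)$-action on $D^b(\coh(\partial X))$ is \emph{defined} via HMS, and the paper has already used this to justify that an induced action exists. For the corollary itself you can simply invoke Proposition~\ref{prop:GKZ1_}\eqref{it:D9} and stop.
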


\section{Another induced action  on the Grothendieck group of $X$}\label{sec:another_action} 
{\it We assume we are in the setting of \S\ref{sec:setting}. Moreover, we assume that $X_\Sigma$ is a scheme and ${\rm conv}(A)$ is a strongly convex cone.
}
\begin{corollary}\label{cor:cor} Assume that $X_\Sigma$ is a scheme. 
There is a natural action of $\pi_1(\Kscr_{X_\Sigma})$ on $K_0(X_\Sigma)$ obtained from the action of $\pi_1(\Kscr_{X_\Sigma})$ on $\Perf(\partial X)$.
\end{corollary}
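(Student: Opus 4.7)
The plan is to produce the action on $K_0(X)$ by composing three items: (i) a lift of the $\pi_1(\Kscr_A)$-action on $D^b(\coh(\partial X))$ to $\Perf(\partial X)$; (ii) a comparison of algebraic and topological K-theory, giving $K_0(\Perf(\partial X))\cong K^0_{\operatorname{top}}(\partial X)$ and $K_0(X)\cong K^0_{\operatorname{top}}(X)$; and (iii) a homotopy equivalence $\partial X\simeq X$ inducing $K^0_{\operatorname{top}}(\partial X)\cong K^0_{\operatorname{top}}(X)$.

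For (i), I would argue that the monodromy autoequivalences produced in the proof of Theorem~\ref{th:mainth1} preserve $\Perf(\partial X)$: they arise from Moser isotopies of Liouville manifolds and correspond under Theorem~\ref{th:mirror_} to autoequivalences that respect the intrinsic characterisation of $\Perf(\partial X)$ (e.g.\ as the dualizable objects under the natural monoidal structure inherited from the lci embedding $\partial X\hookrightarrow X$). Applying $K_0$ then gives a $\pi_1(\Kscr_A)$-action on $K_0(\Perf(\partial X))$. For the comparison in (ii), the Bialynicki--Birula decomposition associated to a generic one-parameter subgroup of~$T$ presents the smooth toric scheme $X$ as paved by affine cells, so that $K_0(X)\cong K^0_{\operatorname{top}}(X)$ is free abelian of rank $\#\{\text{maximal cones of }\Sigma\}$ (cf.\ Theorem~\ref{thm:BHH}) with matching bases of orbit closures, and the natural comparison map is an isomorphism. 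The analogue for $\partial X$ follows inductively on the number of irreducible torus-invariant divisor components via Mayer--Vietoris in both K-theories simultaneously and the five lemma, reducing to the smooth case.

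For (iii), the crucial geometric input, I would exploit that $\operatorname{conv}(A)$ is strongly convex: $X$ is a crepant resolution of a Gorenstein affine toric variety $\overline{X}$, and its exceptional fiber $E$ over the apex is contained in $\partial X$. Choosing a one-parameter subgroup $\lambda:\CC^\ast\hookrightarrow T$ corresponding to a lattice point deep in the interior of $|\Sigma|^\vee$, the positive real flow of $\lambda$ on $X$ provides, via a moment map argument, a deformation retraction $X\to E$. Since $\partial X$ is $T$-invariant and contains~$E$, the same flow restricts to a deformation retraction $\partial X\to E$, so $\partial X\hookrightarrow X$ is a homotopy equivalence and induces an isomorphism on $K^0_{\operatorname{top}}$. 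Concatenating (i)--(iii) then transports the $\pi_1(\Kscr_A)$-action to $K_0(X)$. The main obstacle will be the rigorous verification of (iii) on the singular space $\partial X$: one must verify that the non-proper $\CC^\ast$-flow preserves every torus-invariant stratum of $\partial X$ and converges uniformly on $\partial X$ to its limit in~$E$, so that the deformation retraction genuinely descends.
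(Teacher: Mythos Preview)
Your three-step outline (i)--(iii) matches the paper's architecture closely, but two of the steps are executed differently, and in one place your version has a genuine gap.

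For (i), your proposed characterisation of $\Perf(\partial X)$ as the dualizable objects is problematic: dualizability is relative to a monoidal structure, and there is no reason the monodromy autoequivalences coming from the Fukaya side should be monoidal. What you need is a characterisation preserved by \emph{any} exact autoequivalence of $D^b(\coh(\partial X))$. The paper uses exactly such a criterion (Lemma~\ref{lem:perf}): $F$ is perfect if and only if $\RHom(F,G)$ has bounded cohomology for every $G\in D^b(\coh(\partial X))$. This is manifestly intrinsic to the triangulated category, so the action descends to $\Perf(\partial X)$ with no further input.

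For the passage to topological $K$-theory, you insert an extra comparison $K_0(\Perf(\partial X))\cong K^0_{\operatorname{top}}(\partial X)$ and propose to prove it by Mayer--Vietoris on the singular scheme $\partial X$. The paper avoids this entirely: by Blanc's theorem, $K^0_{\operatorname{top}}$ is a functor on dg-categories up to Morita equivalence, so the $\pi_1(\Kscr_A)$-action on $\Perf(\partial X)$ in $\Ho_\CC(A_\infty)$ induces an action on $K^0_{\operatorname{top}}(\partial X)$ directly, with no need to compare it to algebraic $K_0$ of $\partial X$. Only for the smooth $X$ does one need $K_0(X)\cong K^0_{\operatorname{top}}(X)$, and here your Bia\l ynicki--Birula argument is exactly what the paper does (Lemma~\ref{lem:chern}, combined with additivity of both theories on affine cells).

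For (iii), the paper's route is more robust than your $\CC^\ast$-flow sketch. It compactifies $X_\Sigma$ to a projective toric variety $X_{\bar\Sigma}$ by adding one ray $-u$ in the interior of $-\sigma$, takes a lattice polytope $Q$ for $X_{\bar\Sigma}$, and uses the standard homeomorphism $X_{\bar\Sigma}\cong (S_k\times Q)/{\sim}$. Removing the facet of $Q$ dual to the new ray identifies $X_\Sigma$ with $(S_k\times Q^\circ)/{\sim}$ and $\partial X_\Sigma$ with $(S_k\times \partial Q^\circ)/{\sim}$; an obvious linear retraction $Q^\circ\to\partial Q^\circ$ is compatible with $\sim$ and yields the deformation retraction (Lemma~\ref{lem:coh}). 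This sidesteps the convergence and stratum-preservation issues you correctly flag for the flow argument on the singular space.
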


\begin{remark} We expect the hypothesis that $X$ is a scheme in Corollary \ref{cor:cor} to be superfluous.
\end{remark}
\begin{proof}[Proof of Corollary \ref{cor:cor}]
By Corollary \ref{cor:maincor}  we have a natural action of $\pi_1(\Kscr_{X_\Sigma})$ on  $D^b(\coh(\partial X_\Sigma))$ as an object in $\Ho(A_\infty)$. By the standard Lemma \ref{lem:perf} below this implies that
$\pi_1(\Kscr_{X_\Sigma})$ acts on  $\Perf(\partial X_\Sigma)$ as an object in $\Ho_\CC(A_\infty)$. Hence $\pi_1(\Kscr_{X_\Sigma})$ acts on the topological K-theory of $\partial X_\Sigma$  by
\cite[Theorem 1.1.b]{Blanc}.

By Lemma \ref{lem:coh} below it follows that $K^\ast_{\operatorname{top}}(\partial X_\Sigma)\cong K^\ast_{\operatorname{top}}(X_\Sigma)$.  Both $K_0$ and $K_{\operatorname{top}}^0$ are additive functors (for
the latter see \cite[Theorem 1.1.c]{Blanc}) and they agree on affine spaces. Moreover there exists a natural transformation $K_0\r K^0_{\operatorname{top}}$ by \cite[Theorem 1.1.d]{Blanc}. It follows from Lemma \ref{lem:chern} below together with \cite[Theorem 2.7]{MR3717985} that
the natural morphism $K_0(X_\Sigma)\r K^0_{\operatorname{top}}(X_\Sigma)$ is in fact an isomorphism. Hence via the combined isomorphism $K_0(X_\Sigma)\cong K^0_{\operatorname{top}}(\partial X_\Sigma)$ we obtain
the asserted action.
\end{proof}
\begin{lemma}
\label{lem:perf}
Let $M$ be a noetherian scheme. Then $F \in D^b(\coh(M))$ is perfect if and only if for all $G\in D^b(\coh(M))$, $\RHom_M(F,G)$ has bounded cohomology.
\end{lemma}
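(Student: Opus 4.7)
The plan is to prove both directions by reducing to a local statement about projective dimension over noetherian local rings, detected by $\Ext$ against the residue field.  The only-if direction is routine: if $F$ is perfect then, working locally on a Zariski cover, $F$ is quasi-isomorphic to a bounded complex $P^\bullet$ of finite-rank locally free sheaves, whence $\uRHom_M(F,G) \simeq \uHom_M(P^\bullet,G)$ has locally bounded cohomology; since $M$ is noetherian (and, in our application, of finite Krull dimension, so that $R\Gamma$ has bounded cohomological amplitude on coherent sheaves by Grothendieck vanishing) we conclude that $\RHom_M(F,G)=R\Gamma(M,\uRHom_M(F,G))$ is bounded.

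For the if-direction, I would argue by showing that the perfect locus $U=\{x\in M : F_x\text{ is a perfect }\Oscr_{M,x}\text{-complex}\}$, which is open in $M$, is all of $M$.  Suppose not; then $Z=M\setminus U$ is a non-empty closed subset of $M$, and since $M$ is noetherian, $Z$ contains a point $m$ that is closed in $Z$ and hence in $M$.  Let $i\colon\{m\}\hookrightarrow M$ be the closed immersion and take $G=i_\ast k(m)$, the skyscraper at $m$ with value $k(m)=\Oscr_{M,m}/\mm_m$; this is a coherent sheaf on $M$.  The sheaf $\uRHom_M(F,i_\ast k(m))$ is supported at $m$, so taking global sections returns its stalk and hence
\[
\RHom_M(F,i_\ast k(m))\;\simeq\;\RHom_{\Oscr_{M,m}}(F_m,k(m)),
\]
which is bounded by hypothesis.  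Now fixing a minimal free resolution $P^\bullet\to F_m$ over the noetherian local ring $A=\Oscr_{M,m}$ (which exists because $F_m$ has finitely generated cohomology and is bounded above), minimality gives $d(P^\bullet)\subseteq \mm P^\bullet$, so $\Ext^i_A(F_m,k(m))\simeq \Hom_A(P^{-i},k(m))$ as $k(m)$-vector spaces, with zero differentials.  The assumed boundedness forces $P^{-i}=0$ for $i\gg 0$, i.e., $F_m$ has finite projective dimension and is therefore perfect.  This contradicts $m\not\in U$, so $U=M$ and $F$ is perfect.

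The argument is entirely standard; the only mildly delicate point is verifying that the perfect locus is open (so that checking perfectness at closed points suffices on a noetherian scheme) and keeping straight that on a noetherian scheme a non-empty closed subset must contain a closed point of the ambient scheme, which is what lets one detect non-perfectness by pairing with the residue field of a single closed point.
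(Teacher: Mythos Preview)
Your argument is correct and follows the same strategy as the paper: reduce to the local statement by testing against residue fields and invoke the characterisation of finite projective dimension via $\Ext$ against $k$.  The paper's proof is a two-line sketch (``let $G$ be the residue field of $\Oscr_{M,m}$ for all $m$, reduce to the classical local statement''); you have filled in the details it omits, in particular restricting to closed points (so that the skyscraper is genuinely coherent), invoking openness of the perfect locus to justify that closed points suffice, and writing out the minimal-free-resolution computation.  Your caveat about finite Krull dimension in the only-if direction is also a fair point that the paper does not address.
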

\begin{proof} We prove the nonobvious direction. By letting $G$ be equal to the residue field of $\Oscr_{M,m}$ for all $m$ in $M$ we reduce to the following classical statement: a 
bounded complex $F$ of finitely generated $R$-modules over
a noetherian local ring $R$ with residue field $K$ is perfect if and only if $\RHom_R(F,K)$ has bounded cohomology.
\end{proof}
\begin{lemma} \label{lem:chern} Assume that $X_\Sigma$ is a scheme. Then $X_\Sigma$ has a stratification by affine spaces.
\end{lemma}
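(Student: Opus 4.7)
The plan is to apply the Bia\l{}ynicki--Birula decomposition to $X_\Sigma$ for a suitable one-parameter subgroup of the torus. First observe that since $X_\Sigma$ is a scheme (rather than a nontrivial DM-stack), each cone of the stacky fan $\Sigma$ must be smooth---its stacky primitives forming part of a $\ZZ$-basis of the sublattice they generate---so $X_\Sigma$ is a smooth toric variety. By hypothesis the support $|\Sigma| = \RR_{\ge 0}\conv(A)$ equals a strongly convex cone $\sigma_0 \subset N_\RR$, and the structural morphism
\[
\pi\colon X_\Sigma \to \Spec(\CC[\sigma_0^\vee \cap M])
\]
is proper.

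Next I would choose $\lambda \in \relint(\sigma_0) \cap N$ sufficiently generic that $\lambda$ lies in the relative interior of a unique maximal cone of $\Sigma$ and is not contained in the linear span of any proper face of any cone. With this choice the fixed points of the induced $\GG_m$-action on $X_\Sigma$ are exactly the isolated torus-fixed points $x_\sigma$, indexed by the $k$-dimensional cones $\sigma \in \Sigma$. Because $\lambda \in \relint(\sigma_0)$, the limit $\lim_{t\to 0}\lambda(t)\cdot y$ exists in the affine base for every $y$; properness of $\pi$ together with the valuative criterion then lifts this to existence of limits in $X_\Sigma$ itself.

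With these ingredients in place I would invoke the Bia\l{}ynicki--Birula theorem in a form valid for smooth schemes with $\GG_m$-action admitting limits everywhere (the classical statement being for projective schemes, but the extension is known, see e.g.\ Drinfeld's treatment or Jelisiejew--Sienkiewicz): for each maximal cone $\sigma$ the attracting cell
\[
X_\Sigma^+(x_\sigma) := \{\, x \in X_\Sigma : \lim_{t\to 0}\lambda(t)\cdot x = x_\sigma \,\}
\]
is a locally closed smooth subvariety. Since $x_\sigma$ is isolated and $X_\Sigma$ is smooth there, this cell is isomorphic to the positive-weight subspace of $T_{x_\sigma}X_\Sigma$ under $\lambda$, hence to an affine space. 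The disjoint union $X_\Sigma = \bigsqcup_\sigma X_\Sigma^+(x_\sigma)$ is then the required affine stratification.

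The one non-routine point is the non-properness of $X_\Sigma$, which rules out a direct appeal to the projective form of BB; this is handled by properness of $\pi$ combined with the choice of $\lambda \in \relint(\sigma_0)$, as explained above. The verification that $\lambda$ can be chosen to satisfy both genericity conditions simultaneously is standard (avoid finitely many hyperplanes inside the open cone $\relint(\sigma_0)$).
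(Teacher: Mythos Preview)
Your proposal is correct and follows essentially the same approach as the paper's proof: both choose a generic one-parameter subgroup $\lambda$ in the interior of the cone $\sigma$ spanned by $P$, verify that the $\GG_m$-fixed points are the finitely many torus fixed points, use properness of $\pi:X_\Sigma\to X_\sigma$ to ensure all limits $\lim_{t\to 0}\lambda(t)\cdot x$ exist, and then invoke the Bia\l{}ynicki--Birula decomposition. Your write-up is slightly more explicit on two points the paper leaves implicit---the smoothness of $X_\Sigma$ (needed for the attracting cells to be affine spaces) and a reference for the non-projective form of the BB theorem---but the argument is the same.
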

\begin{proof} 
Let $\sigma$ be the cone spanned by $P$. Let $\lambda$ be a sufficiently
general element of $\ZZ^k\cap \sigma$. Then $\lambda$ defines a $G_m$-action on $X_\Sigma$. By looking at the open subsets $\Spec \ZZ[\tau^\vee]$ of $X_\Sigma$ for $\tau\in \Sigma$ one
sees that the fix points for the $\lambda$-action on $X_\Sigma$ coincide with the torus fixed points and hence they are finite in number. Moreover if $x\in X_\Sigma$ then $\lim_{t\r 0} \lambda(t) \pi(x)$ for $\pi:X_\Sigma\r X_\sigma$ exists as it is the unique fixed point in $X_\sigma$. Since $\pi$
 is proper it follows that $\lim_{t\r 0} \lambda(t) x$ also exists. Now use the Bia\l ynicki-Birula decomposition of $X_\Sigma$ associated to the $G_m$-action given by $\lambda$.
\end{proof}
\begin{lemma}\label{lem:coh}
Assume that $X_\Sigma$ is a scheme. Then $\partial X_\Sigma$ is a strong deformation retract of $X_\Sigma$.
\end{lemma}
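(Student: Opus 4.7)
The plan is to use the $G_m$-action from Lemma \ref{lem:chern} to strong-deformation-retract both $X_\Sigma$ and $\partial X_\Sigma$ onto a common subset $F \subseteq \partial X_\Sigma$, and then upgrade the resulting homotopy equivalence $\partial X_\Sigma \hookrightarrow X_\Sigma$ to a strong deformation retract via a cofibration argument.

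The hypothesis that $\operatorname{conv}(A)$ is a strongly convex cone means $|\Sigma|$ is a full-dimensional strongly convex cone in $N_\mathbb{R}$, and I would choose a 1-parameter subgroup $\lambda \in N$ lying in the relative interior of a face common to every maximal cone of $\Sigma$. For such $\lambda$, one has $\langle m, \lambda\rangle \geq 0$ for every $m \in \tau^\vee \cap M$ and every maximal $\tau$, so the $G_m$-action extends to an algebraic $(\mathbb{A}^1,\cdot)$-action on each affine chart $U_\tau$; these glue to a morphism $\Phi : \mathbb{A}^1 \times X_\Sigma \to X_\Sigma$, and the $t = 0$ specialisation is a continuous retraction $r : X_\Sigma \to F$ onto the $G_m$-fixed locus $F$. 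Since the torus acts freely on its open orbit, $F \subseteq \partial X_\Sigma$.

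Defining $H(s,x) := \Phi(1-s, x)$ for $(s,x) \in [0,1] \times X_\Sigma$ gives $H_0 = \mathrm{id}$, $H_1 = r$ lands in $F$, and $H_s|_F = \mathrm{id}_F$ since $F$ is $G_m$-fixed, so $H$ is a strong deformation retract of $X_\Sigma$ onto $F$. Because $\partial X_\Sigma$ is $T$-invariant it is $G_m$-invariant, and the restricted homotopy $H|_{[0,1] \times \partial X_\Sigma}$ is valued in $\partial X_\Sigma$ and defines a strong deformation retract of $\partial X_\Sigma$ onto $F$. By two-out-of-three, the inclusion $\partial X_\Sigma \hookrightarrow X_\Sigma$ is a homotopy equivalence. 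Finally, $(X_\Sigma, \partial X_\Sigma)$ is a CW pair (for instance via the Bia\l ynicki--Birula decomposition from Lemma \ref{lem:chern}, in which the cells meeting $\partial X_\Sigma$ form a subcomplex), hence a cofibration, and the standard fact that a cofibration which is a homotopy equivalence is a strong deformation retract then yields the claim.

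The subtle point is ensuring the existence of a non-zero $\lambda$ lying in every maximal cone of $\Sigma$: for general star-shaped simplicial fans this intersection can be trivial. In that case one should replace the algebraic $G_m$-flow by a radial contraction on the moment polyhedron $\Delta \simeq X_\Sigma(\mathbb{R}_{\geq 0})$, which is itself a full-dimensional strongly convex polyhedral space under our hypothesis: for a generic interior direction $v$, one gets a continuous SDR $\Delta \to \partial \Delta$ via $(x,s)\mapsto x - sT(x)v$ with $T(x) := \sup\{t\geq 0 : x - tv \in \Delta\}$, vanishing on $\partial \Delta$ and finite on $\Delta$ by strong convexity, and this retraction lifts $T_c$-equivariantly to the desired SDR of $X_\Sigma$ onto $\partial X_\Sigma$.
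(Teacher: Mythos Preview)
Your two-branch strategy has gaps in both branches. In the first, your justification that $(X_\Sigma,\partial X_\Sigma)$ is a CW pair via the Bia\l ynicki--Birula decomposition fails: every BB cell has its limit point (a torus fixed point) in $\partial X_\Sigma$, yet the cells themselves are not contained in $\partial X_\Sigma$---already for $X_\Sigma=\AA^2$ with diagonal $\lambda$ the unique BB cell is all of $\AA^2$, so ``cells meeting $\partial X_\Sigma$'' cannot cut out $\partial X_\Sigma$ as a subcomplex. The cofibration claim is nonetheless true (complex algebraic pairs admit compatible triangulations), but it needs a separate argument, and once you invoke such a result you have essentially abandoned the BB approach.

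Your fallback is the paper's argument in spirit, but the crucial identification is left undefined. The paper makes it precise by compactifying first: adjoin a ray $\RR_{\geq 0}u$ with $-u\in\operatorname{int}\sigma$ (and the cones $\operatorname{Span}(u,\tau)$ for $\tau\in\partial\sigma$) to obtain a projective fan $\bar\Sigma$ with moment polytope $Q$; then \cite[(12.2.7)]{CoxLittleSchenck} gives an $S_k$-equivariant homeomorphism $X_{\bar\Sigma}\cong (S_k\times Q)/_{\sim}$, under which $X_\Sigma$ corresponds to $(S_k\times Q^\circ)/_{\sim}$ with $Q^\circ=Q\setminus F$ ($F$ the facet dual to $u$) and $\partial X_\Sigma$ to $(S_k\times\partial Q^\circ)/_{\sim}$. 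A straight-line SDR of $Q^\circ$ onto $\partial Q^\circ$ is then visibly compatible with $\sim$ and yields the result directly---no cofibration step needed. Your ``moment polyhedron $\Delta\simeq X_\Sigma(\RR_{\geq 0})$'' and linear push in direction $-v$ want to be exactly this, but making sense of the homeomorphism $(T_c\times\Delta)/_{\sim}\cong X_\Sigma$ for the non-proper $X_\Sigma$ is precisely what the compactification accomplishes, and you have not supplied an alternative.
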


\begin{proof}
As above let $\sigma$ be the cone spanned by $P$.
  Let $X_{\bar{\Sigma}}$ be a compactification of $X_\Sigma$  
determined by the complete fan $\bar{\Sigma}$ obtained from $\Sigma$ by adjoining
  a $1$-dimensional cone $\RR_+u$ such that $-u$ belongs to the
  interior of the cone $\sigma$ and the corresponding higher
  dimensional cones of the form $\operatorname{Span}(u,\tau)$ for
  all $\tau\in \partial \sigma$. The strictly convex function $\nu$ (see \S\ref{sec:polyhedral},\ref{sec:stacky}) on $\Sigma$ may be 
extended to a strictly convex piecewise linear function $\bar{\nu}$ on $\bar{\Sigma}$ by giving 
$\bar{\nu}(u)$ a large positive value.
Hence $X_{\bar{\Sigma}}$ is projective.
 Let $Q$ be a sufficiently large lattice normal polytope
  of  $\bar{\Sigma}$.

  Let $S_k$ be the $k$-dimensional compact real torus in $(\CC^*)^k$.  By
  \cite[(12.2.7)]{CoxLittleSchenck}, there is an $S_k$-equivariant
  homeomorphism
  $\phi:X_{\bar{\Sigma}}\xrightarrow{\cong} (S_k\times Q)/_{\sim}$ where $\sim$
  is a suitable equivalence relation on $S_k\times \partial Q$.
Moreover, by the proof
  of \cite[Theorem 12.2.5]{CoxLittleSchenck}
  $\phi(X_\Sigma)=(S_k\times Q^o)/_{\sim}$ where $Q^\circ=Q-F$ with
  $F$ being the facet perpendicular to $u$ and
  $\phi(\partial X_\Sigma)=(S_k\times \partial Q^o)/_{\sim}$ where
  $\partial Q^o=\partial Q-F$. Note that
  $\partial Q^o\to Q^o$ is a strong deformation retract, and thus so is
  $S_k\times \partial Q^o\to S_k\times Q^o$.
As it is compatible with
  $\sim$, we obtain a strong deformation retraction $\partial X_\Sigma\to X_\Sigma$. 
\end{proof}
\begin{remark} Note that it is crucial for this argument that $X_\Sigma$ is not complete. Indeed the result is trivially false in the complete case.
For example $\partial \PP^1$ consists of two points which is not a strong deformation retract of $\PP^1$.
\end{remark}

\section{Remarks on lifting the action to $D^b(X)$}
 \label{sec:SKMS} 
As before let $X=X_\Sigma$ for some toric fan $\Sigma$.
While it follows from \eqref{eq:maindiagram} that the action of $\pi_1(\Kscr_A)$ on $K_0(\coh(\partial X))$ can be lifted to an action on $K_0(\coh(X))$, it is not true in general
that the action of $\pi_1(\Kscr_A)$ on $D^b(\coh(\partial X))$ can be lifted to an action on $D^b(\coh(X))$. This is  not very surprising as in general $\partial X$ will have many more derived autoequivalences than~$X$. Nonetheless to make an actual
counterexample one needs to find an element of $\pi_1(\Kscr_A)$ such that the corresponding derived autoequivalence does not extend. This will be done in the next section.
The message is that $\Kscr_A$ can in general not be considered as the SKMS of $X$.

However, for crepant resolutions of Gorenstein affine toric varieties, we show that~$\Kscr_A$ coincides (modulo a copy of $\CC^*$) with the standard definition \cite{SegalDonovan,Kitethesis}. In this case a number of examples are known where $\pi_1(\Kscr_A)$ acts on $D^b(\coh(X))$ (see \cite{HLSam, Kitethesis}).

\begin{remark}
The difference between the general case and the Gorenstein case can be understood if one realizes that the definition of $\Kscr_A$ involves the discriminant locus of the Laurent
polynomial $f$ in the  Hori-Vafa LG-model $((\CC^\ast)^d,f)$ which is mirror to $X$. This discriminant locus refers to the behaviour of the zero-fiber $f^{-1}(0)$ of $f$. 
 In the Gorenstein case $f$ is homogeneous and hence the zero fiber is the worst possible fiber. However in the general
case, the zero fiber is not distinguished.
\end{remark}

\subsection{Example of non-lifting of the action of $\pi_1(\Kscr_A)$ on $D^b(\coh(X))$}
Here we give an example of $X$ where the action of $\pi_1(\Kscr_A)$ on $D^b(\coh(\partial X))$ does not lift to $D^b(\coh(X))$. We describe this somewhat heuristically, ignoring the differences of various incarnations of Fukaya categories. 

We take $X=\PP^2$. Then $A=\{(0,0),(1,0),(0,1),(-1,-1)\}$. By rescaling we may assume that $f_\alpha=\alpha_0+x+y+\alpha_1 x^{-1}y^{-1}$. Let $\Kscr'_A$ be the subspace of $\Kscr_A$ obtained by setting $\alpha_1=1$. Then $\Kscr'_A=\CC^*\setminus\{1,-3\}$. Let $f_a=a+x+y+x^{-1}y^{-1}$. The family $(f_a^{-1}(0))_{a\in \Kscr'_A}$ is a family of elliptic curves without three points. Varying $a$ corresponds to varying fibers of the potential $f=x+y+x^{-1}y^{-1}$, and $f_a^{-1}(0)$ for $a$ in the complement of  $\Kscr'_A$ in $\CC$ corresponds to singular fibers. 

The monodromy for loops based at $a\in \Kscr'_A$ around the singular fibers 
is given by an action on   
$\WF(f_a^{-1}(0))$\footnote{We are being a bit sloppy here. Seidel is using a version of Fukaya category in a slightly different setting.}  by spherical twists corresponding to vanishing cycles \cite{Seidel01,SeidelBook} (see \cite[Theorems 2.3.1,2.3.4]{ZihongChen}). These vanishing cycles are restrictions of Lefshetz thimbles that correspond by mirror symmetry for the LG model $((\CC^*)^2,f)$ \cite[\S3.2,p.13]{ZihongChen}  to 
 the $\Oscr_{\PP^2}(-1),\Omega_{\PP^2}^1(1),\Oscr_{\PP^2}$ on $\PP^2$. 
 As the restriction functor $\WF((\CC^*)^2,f)\to \WF(f_a^{-1}(0))$  corresponds to the restriction $D^b(\coh(\PP^2))\to D^b(\coh(\partial \PP^2))$ \cite[Proposition 3.2.1]{GammageJeffs}\footnote{Again, the Fukaya category in loc.cit. is yet another incarnation of the Fukaya category associated to the LG model.}, the action of $\pi_1(\Kscr'_A)$ on $D^b(\coh(\partial X))$ is given by spherical twists corresponding to the restrictions to $\partial \PP^2$ of the above sheaves on $\PP^2$.  

We claim that the spherical twist with $\Oscr_{\partial \PP^2}$ on $D^b(\coh(\partial \PP^2))$ does not arise from an autoequivalence of $D^b(\coh(\PP^2))$. First recall that the latter are generated by tensoring with line bundles, automorphisms of $\PP^2$ and the shifts \cite[Theorem 3.1]{BondalOrlov}. 

The spherical twist corresponding to $\Oscr_{\partial \PP^2}$ is given by
\[
\Phi_{\Oscr_{\partial \PP^2}}(\Fscr)={\rm cone}(\RHom(\Oscr_{\partial \PP^2},\Fscr)\otimes \Oscr_{\partial \PP^2}\to \Fscr)={\rm cone}(R\Gamma(\Fscr)\otimes \Oscr_{\partial \PP^2}\to \Fscr).
\]
We apply it with, for example,  $\Fscr=\Oscr_{\PP^1}(1)\oplus\Oscr_{\PP^1}(-2)$. Then $\Phi_{\Oscr_{\partial \PP^2}}(\Fscr)$ has nonzero cohomology in degrees $-1,0$. However, none of the derived autoequivalences of $\PP^2$ can change the number of the non-vanishing cohomology degrees.

\begin{remark}
Note that this is not a contradiction with the (decategorified) action of $\pi_1(\Kscr_A)$ on $K_0(\coh(\partial X))$ inducing an action on $K_0(X)$ as one may check that the spherical twists preserve the kernel of $K_0(\coh(\partial X))\to K_0(X)$ which is generated by the differences of structure sheaves of the irreducible boundary divisors. 
\end{remark}

\begin{remark}
In \cite{Hanlon} Hanlon constructs an action on the (monomially admissible) Fukaya category of the LG model corresponding to varying $\alpha_1$, 
by taking a loop around $0$. This is not in contradiction with the above, as sending $\alpha_1$ to $0$ would correspond to sending $\alpha_0$  to $\infty$. 
\end{remark}

\subsection{SKMS and crepant resolutions}
In this section we connect the setting $A\subset \{1\} \times \ZZ^{k-1}$ of Introduction to the setting of other sections.  The relevant notation for this section is introduced in \S\ref{sec:prindet}.

As above $P=\conv(A)$. Let $\hat{A}=A\cup \{0\}$ and  $\hat{P}=\conv(\hat{A})$
as in \S\ref{sec:adjoin} below. 
A triangulation of $P$ defines a star-shaped triangulation $\Tscr$ (see \S\ref{sec:stacky}) of $\hat{P}$ in the obvious way and we let $\Sigma$ be the fan spanned by $\Tscr$. The following lemma shows that working with $A$ is equivalent to working with $\hat{A}$.
\begin{lemma} The projection $\CC^{\hat{A}}\r \CC^A$ defines an isomorphism between $\Kscr_{\hat{A}}$ and $\CC^*\times\Kscr_A$.
\end{lemma}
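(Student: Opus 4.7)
The plan is to prove the proposition by showing $V(\hat{A}) = \{\alpha_0 = 0\} \cup (\CC \times V(A))$ inside $\CC^{\hat{A}} \cong \CC \times \CC^A$, where the first factor is the coordinate $\alpha_0$ corresponding to $0 \in \hat{A}$. Once this is established, the conclusion is immediate: the $(\CC^*)^k$-action is trivial on the first factor (since $0 \in \hat{A}$ has weight $0$), so the complement $\CC^{\hat{A}} \setminus V(\hat{A})$ equals $\CC^* \times (\CC^A \setminus V(A))$, and the equivariant projection to $\CC^A$ descends to the desired isomorphism $\Kscr_{\hat{A}} \cong \CC^* \times \Kscr_A$.

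For the ``$\supset$'' direction I would argue as follows. Since $A \subset \{1\} \times \ZZ^{k-1}$, the polytope $\hat{P} = \conv(\hat{A})$ is the cone over $P$ with apex $0$, so $0$ is a vertex of $\hat P$, giving $\{\alpha_0 = 0\} \subset V(\hat{A})$ by Lemma \ref{lem:convenient}. Moreover, every face $F$ of $P$ is a face of $\hat{P}$ on which the Laurent polynomial $f^F$ (and hence $\nabla_F$) is unchanged, and pulling back via $\CC^{\hat{A}} \to \CC^F$ (which factors through $\CC^A$) contributes $\CC \times V(A)$ to $V(\hat{A})$.

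The crux is the ``$\subset$'' direction, i.e.\ showing that any face $\hat{F}$ of $\hat{P}$ containing $0$ satisfies $\nabla_{\hat{F}} \subset \{\alpha_0 = 0\}$. Such an $\hat{F}$ is either $\{0\}$, $\hat{P}$, or $\conv(F \cup \{0\})$ for some proper face $F$ of $P$, and in every case $f^{\hat{F}}_{\hat{\alpha}} = \alpha_0 + f^{F}_{\alpha'}$ with $F \subset A$. The key observation is the Euler identity $f^F = x_1 \partial_1 f^F$, valid precisely because $A \subset \{1\} \times \ZZ^{k-1}$ forces every monomial of $f^F$ to be linear in $x_1$. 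Consequently, at any singular point $x \in (\CC^*)^k$ of $V(f^{\hat{F}}_{\hat{\alpha}})$, the vanishing of $\partial_1 f^{\hat{F}}(x) = \partial_1 f^F(x)$ forces $f^F(x) = 0$, and then the equation $f^{\hat{F}}(x) = 0$ forces $\alpha_0 = 0$.

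The only genuinely substantive step is this Euler argument; the remaining work—enumerating the faces of $\hat{P}$ and tracking the weight-$0$ character on $\alpha_0$—is routine bookkeeping.
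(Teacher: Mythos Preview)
Your proposal is correct and follows essentially the same route as the paper: the paper reduces the lemma to Lemma~\ref{lem:EtildeA}, which asserts $V(\hat{A})=V(\alpha_0)\cup p^{-1}(V(A))$, and proves that by the same face enumeration and the same Euler-type identity (the paper phrases it as $\partial_k f^F_\alpha=0 \Leftrightarrow f^F_\alpha=0$ on $(\CC^*)^k$, using $A\subset \ZZ^{k-1}\times\{1\}$, which is your $f^F=x_1\partial_1 f^F$ up to relabeling coordinates). Your observation that the $(\CC^*)^k$-weight on $\alpha_0$ is zero, hence the $\CC^*$-factor splits off the quotient, is exactly the final step the paper leaves implicit.
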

\begin{proof} This follows from Lemma \ref{lem:EtildeA} below.
\end{proof}
\subsubsection{Adjoining zero}
\label{sec:adjoin}
We discuss a  technical result which we have used. 
For $A\subset \ZZ^{k-1}\times \{1\}$ we put $a_0=0^k$ and $\hat{A}=\{a_0\}\cup A$. We write the elements $\hat{\alpha}\in \CC^{\hat A}$ as $(\alpha_0,\alpha)$ with $\alpha\in\CC^{A}$.
We put $\hat f_{\hat \alpha}=\alpha_0+f_\alpha$. 
Let $p:\CC^{\hat A}\r \CC^A$ be the projection map which forgets
the first coordinate. 

\begin{lemma}\label{lem:EtildeA}
We have $V(\hat{A})=V(\alpha_0)\cup p^{-1}(V(A))$. 
\end{lemma}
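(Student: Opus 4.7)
The plan is to unwind the definition $V(\hat A)=\bigcup_{\hat F\in\Fscr(\hat P)}p_{\hat F}^{-1}(\nabla_{\hat F})$ and classify the faces of $\hat P=\conv(\{0\}\cup A)$. Because $A$ lies in the affine hyperplane $\ZZ^{k-1}\times\{1\}$, $\hat P$ is a pyramid with apex $0$ over $P$, so every face of $\hat P$ falls into exactly one of three classes: (i) a face $F$ of $P$ itself (including $P$); (ii) the vertex $\{0\}$; or (iii) a pyramid $\hat G=\conv(\{0\}\cup G)$ over a face $G$ of $P$ (including the case $G=P$, giving $\hat P$). The strategy is to show that the first class contributes exactly $p^{-1}(V(A))$, the second contributes exactly $V(\alpha_0)$, and the third contributes a subset of $V(\alpha_0)$.

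For class (i) the observation is simply that $f^F_{\hat\alpha}$ does not involve $\alpha_0$, so $\nabla_F$ as computed inside $\CC^{\hat F}$ is the pullback under $p$ of $\nabla_F$ as computed inside $\CC^F\subset\CC^A$; taking the union over $F\in\Fscr(P)$ yields $p^{-1}(V(A))$. Class (ii) is immediate: $f^{\{0\}}_{\hat\alpha}=\alpha_0$, whose singular locus meets $(\CC^*)^k$ iff $\alpha_0=0$.

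The main point is class (iii), and here the hypothesis $A\subset\ZZ^{k-1}\times\{1\}$ is decisive through the Euler relation $x_k\partial_{x_k}x^{a_j}=x^{a_j}$ for every $a_j\in A$. Writing $f^{\hat G}_{\hat\alpha}=\alpha_0+f^G_\alpha$, any $x\in(\CC^*)^k$ in the singular locus must satisfy $\partial_{x_k}f^G_\alpha(x)=0$, hence by the Euler relation $f^G_\alpha(x)=0$, and therefore $\alpha_0=-f^G_\alpha(x)=0$. This shows $\nabla_{\hat G}\subset\{\alpha_0=0\}$ inside $\CC^{\hat G}$, so $p_{\hat G}^{-1}(\nabla_{\hat G})\subset V(\alpha_0)$.

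Combining the three cases gives $V(\hat A)\subset V(\alpha_0)\cup p^{-1}(V(A))$, while the reverse inclusion follows directly from (i) and (ii). The only thing to watch is the bookkeeping of faces, in particular making sure that $\hat P$ itself is treated as a pyramidal face (case (iii) with $G=P$), but there is no genuine obstacle — the Euler identity trick handles this case uniformly.
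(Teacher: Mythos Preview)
Your proposal is correct and follows essentially the same approach as the paper: both proofs classify the faces of $\hat P$ into the apex $\{0\}$, faces of $P$, and pyramids $\conv(\{0\}\cup F)$ over faces $F$ of $P$, and both use the Euler-type relation $x_k\partial_{x_k}f^F_\alpha=f^F_\alpha$ coming from $A\subset\ZZ^{k-1}\times\{1\}$ to handle the pyramidal faces. The only cosmetic difference is that the paper computes $\nabla_{\hat F}$ exactly in the pyramidal case as $V(\alpha_0)\cap p^{-1}(\nabla_F)$, whereas you only record the inclusion $\nabla_{\hat F}\subset V(\alpha_0)$; your weaker statement already suffices for the conclusion.
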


\begin{proof}
Let $P$, $\hat{P}$ be the convex hulls of $(a_i)_{a_i\in A}$, $(a_i)_{a_i\in \hat{A}}$  and let $\hat{F}$ be a face of $\hat{P}$. There are three possibilities for $\hat{F}$.
\begin{enumerate}
\item  $\hat{F}=\{a_0\}$. In this case we have
\[
\nabla_{\hat{F}}=V(\alpha_0).
\]
\item $\hat{F}$ is a face $F$ of $P$. In this case
\[
\nabla_{\hat{F}}=p^{-1}(\nabla_F).
\]
\item $\hat{F}$ the convex hull of $\{a_0\}$ and a face $F$ of $P$. In this case we claim
\[
\nabla_{\hat{F}}=V(\alpha_0)\cap p^{-1}(\nabla_F).
\]
Indeed to calculate $\nabla_{\hat{F}}$ we have to verify when the singular locus
of $V(\alpha_0+f^F_\alpha(x))$ intersects $(\CC^\ast)^k$. This happens when $\alpha_0+f^F_\alpha(x)$ 
and $\partial_i f^F_\alpha(x)$, $1\leq i\leq k$, have a common zero in $(\CC^{\ast})^{ k}$. Now $\partial_k f^F_\alpha(x)=0$
for $x\in (\CC^\ast)^k$ is equivalent to $f^F_\alpha(x)=0$ by the fact that $A\subset \ZZ^{k-1}\times \{1\}$. Thus $\alpha_0+f^F_\alpha(x)$ 
and $\partial_i f^F_\alpha(x)$, $1\leq i\leq k$, have a common zero in $(\CC^{\ast})^{ k}$ if and only if $\alpha_0=0$ and $f^F_\alpha(x)$ and $\partial_i f^F_\alpha(x)$, $1\leq i\leq k$, have a common zero in $(\CC^{\ast})^{ k}$. 
This proves the claim.
\end{enumerate}
The lemma follows by combining (1)(2)(3).
\end{proof}

\subsection{Dependence of the fundamental group of $\Kscr_A$ on $A$}
If $\sigma$ is the cone spanned by $P$ then $X_\sigma$ is a  Gorenstein affine toric variety (see e.g. [CLS11,
Proposition 11.4.12]). The $X_\Sigma$ obtained from triangulations of $P$ are the crepant resolutions of $X_\sigma$.\footnote{$X_\Sigma$ is crepant \cite[Proposition 8.2.7]{CoxLittleSchenck} (see \cite[Example 11.2.6]{CoxLittleSchenck}, \cite[Lemma A.2]{SVdB5}). Every crepant resolution appears in this way as any crepant resolution of a toric variety is toric and \cite[Lemma 11.4.10]{CoxLittleSchenck}. This is a consequence of a minimal model problem fact that the number of crepant resolutions of an algebraic variety is finite \cite{BCHM}.} 
  By \cite{Kaw} all crepant resolutions of $X_\sigma$ are derived
  equivalent. In keeping with the general philosophy of homological mirror symmetry one might
  therefore hope $\Kscr_A$ to be an invariant of $X_\sigma$. 

  This is true if we only consider crepant resolutions that are schemes
  since it that case $A=P\cap \ZZ^k$ which does not depend on $X_\Sigma$. However it is false, for obvious dimension reasons, if we also allow crepant resolutions by DM-stacks since these correspond to smaller $A$.
In general there does not seem to be an obvious relation between the different $\Kscr_A$.

If $X_\sigma$ is the GIT quotient for a quasi-symmetric representation of a torus then, if we take for $A$ the vertices of $P$, $\Kscr_A$ is the complement of a toric hyperplane
arrangement \cite{Kite}. One might hope that this would remain true for the other $\Kscr_A$ but in Example \ref{ex:dependsonA} below we show that this is false. Furthermore, even the fundamental groups are not isomorphic.

\begin{example}
 \label{ex:dependsonA}
We consider the case where $X_\sigma$ is the GIT quotient of a $1$-di\-mensional torus acting with weights $-2,-1,1,2$ on a 4-dimensional representation. Using Gale duality \cite[\S 14.2]{CoxLittleSchenck} one sees that $P$ is the convex hull of
\[
  (0,0,1),(1,0,1),(1,1,1),(0,2,1).
\]
A picture shows that there are only two possible choices for $A$, one given by the vertices of $P$, and one given by $P\cap \ZZ^2$.
If we choose
\[A_1=\{(0,0,1),(1,0,1),(1,1,1),(0,2,1)\},\] 
(i.e.\ the set of vertices of $P$) 
with corresponding Laurent polynomial
\[
f=\alpha_1z+\alpha_2xz+\alpha_3xyz+\alpha_4y^2z
\]
then one may check that $V(A_1)=V(\alpha_1\alpha_2\alpha_3\alpha_4(\alpha_1\alpha_3^2+\alpha_2^2\alpha_4))$. By replacing $x$, $y$, $z$ by scalar multiples we may  make the non-zero $\alpha_1$, $\alpha_2$, $\alpha_3$ equal to 1 (in a unique way), which
 yields $\Kscr_{A_1}=\CC\setminus (V(\alpha_4)\cup V(1+\alpha_4))=\CC\setminus \{0,-1\}$. So $\Kscr_{A_1}$ is the complement of three distinct points in $\PP_\CC^1$ and $\pi_1(\Kscr_A)$ is a free
group on two generators. On the other hand for 
 \[A_2=\{(0,0,1),(1,0,1),(1,1,1),(0,2,1),(0,1,1)\}\] 
(all lattice points in $P$) with corresponding Laurent polynomial
\[
f=\alpha_1z+\alpha_2xz+\alpha_3xyz+\alpha_4y^2z+\alpha_5 yz
\]
we obtain 
 $V(A_2)=V(\alpha_1\alpha_2\alpha_3\alpha_4(\alpha_1\alpha_3^2+\alpha_2^2\alpha_4-\alpha_2\alpha_3\alpha_5)(\alpha_5^2-4\alpha_1\alpha_4))$. Again scaling away $\alpha_1$, $\alpha_2$, $\alpha_3$ we obtain
 $\Kscr_{A_2}=(\CC^*\times \CC)\setminus (V(1+\alpha_4-\alpha_5)\cup V(\alpha_5^2-4\alpha_4))$. If we put $u=\alpha_4$, $v=1+\alpha_4-\alpha_5$ then we obtain 
\begin{equation}
\label{eq:octahedron_vgit}
\begin{aligned}
\Kscr_{A_2}&=\{(u,v)\in (\CC^\ast)^2\mid u^2+v^2+1-2u-2v-2uv\neq 0\}\\
&=\{(U:V:W)\in \PP^2_\CC\mid U^2+V^2+W^2-2UV-2UW-2VW\neq 0, \\
&\qquad\qquad\qquad \qquad\qquad \qquad \hspace*{3cm} U\neq 0, V\neq 0, W\neq 0\}.
\end{aligned}
\end{equation}
So $\Kscr_{A_2}=\PP^2_\CC-(Q\cup L_1\cup L_2\cup L_3)$ where $Q$ is a smooth conic and $(L_i)_i$ are three distinct tangent lines of $Q$. Such a configuration is unique up to isomorphism.

Remarkably the variety \eqref{eq:octahedron_vgit} also appears in
\cite[\S7.2]{Kitethesis} in a very different toric setting. As far as
we can tell this is a coincidence. It means however that we can use the
results in loc.\ cit.\ to compute $\pi_1(\Kscr_{A_2})$ and see that it
is  different from $\pi_1(\Kscr_{A_1})$. To show this directly
 let us quickly verify that $\pi_1(\Kscr_{A_2})$ is not free. Let $\{p_{12}\}=L_1\cap L_2$. Then $p_{12}$ has a neighbourhood $B$ in $\Kscr_{A_2}$ which is the product of two punctured disks.
 Hence if we take a base point $b\in B$ then the loops around $L_1$, $L_2$ in $B$ commute in $\pi_1(B)$, and hence in $\pi_1(\Kscr_{A_2})$ and they do
not generate a cyclic subgroup, which one can check by looking at their image in $\pi_1(\PP^2_\CC-(L_1\cup L_2\cup L_3),b)\cong \pi_1(\CC^{\ast 2})\cong\ZZ^2$. Hence  $\pi_1(\Kscr_{A_2})$ is not free.
\end{example}

\appendix
\section{Linear algebra exercises}
\def\|{|}
\def\ddd{||}
\subsection{Reminder on Hermitian vector spaces}
\label{sec:hermitian}
Below $V$ is a finite dimensional complex vector space and $h:V\times V\r \CC$ is a non-degenerate Hermitian form.
We write $g:=\Re h$, resp.\ $\omega:=-\Im h$, for the corresponding Riemannian metric and  symplectic form. From
$h(X,iY)=-ih(X,Y)$ we obtain $g(X,iY)=-\omega(X,Y)$. In other words $h$ is uniquely determined by  $g$ and by $\omega$.

We write $V^\ast$ for $\Hom_\CC(V,\CC)$.
The hermitian metric yields a $\CC$-anti-linear identification
\begin{equation}
\label{eq:identification}
V\r V^\ast:X\mapsto h(-,X).
\end{equation}
Below we write $\|X\|=\sqrt{g(X,X)}$. If $a:V\r \CC$ is a $\CC$-linear map then we put $\|a\|:=\|X\|$ where $X$ is such that
$a=h(-,X)$. If $a:V\r \CC$ is $\CC$-anti-linear then we put $\|a\|:=\|\bar{a}\|$. If $a:V\r \CC$ is an $\RR$-linear map then
we write $a=a'+a''$ for the unique decomposition of $a$ as a sum of a $\CC$-linear and a $\CC$-anti-linear map. 

If $X,Y\in V$ then the \emph{Hermitian angle} $\tau\in [0,\pi/2]$ between $X$ and $Y$ is defined by $\cos\tau=|h(X,Y)|/(\|X\|\|Y\|)$. Hermitian angles between vectors in $V$ and $V^\ast$ or between
vectors in $V^\ast$ are
computed using the identification \eqref{eq:identification}.
\begin{remark} The Hermitian angle  between $X$ and $Y$ is the angle between the planes $\CC X$ and $\CC Y$ for the Riemannian metric $g$.
\end{remark}
\begin{remark} \label{rem:normproportionality} If we consider $V$ as a real vector space with a
  Riemannian metric then we may still put a norm on
  $\Hom_{\RR}(V,\RR)$ as follows: if $a\in \Hom_{\RR}(V,\RR)$ then
  $a=g(X,-)$ for $X\in V$ and we put $\ddd a\ddd =|X|$. We may extend
  this norm to $\Hom_{\RR}(V,\CC)$ by putting
  $\ddd a_1+ia_2\ddd _g^2=\ddd a_1\ddd _g^2+\ddd a_2\ddd _g^2$ for
  $a_1,a_2\in \Hom_\RR(V,\RR)$. However one needs to be careful. If
  $a:V\r \CC$ is $\CC$-linear or $\CC$-anti-linear so that $|a|$ is defined via $h$, as above then 
one computes
\begin{equation}
\label{eq:normproportionality}
\ddd a\ddd =\sqrt{2} |a|.
\end{equation}
\end{remark}

\subsection{Some lemmas by Donaldson}
We recall some lemmas which were stated  without proof in \cite{Donaldson}.
\begin{lemma}[{\cite[p669]{Donaldson}}]\label{lem:don1} An $\RR$-linear map $a:V\r \CC$ is surjective unless there exists $\alpha\in \RR$ such that $\overline{a''}=e^{i\alpha} a'$.
\end{lemma}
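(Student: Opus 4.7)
The plan is to reduce the non-surjectivity of the real linear map $a : V \to \CC$ to a single algebraic identity and then apply the uniqueness of the decomposition $a = a' + a''$ into $\CC$-linear and $\CC$-antilinear parts.

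First I would observe that, since $\CC$ has real dimension $2$, the $\RR$-linear map $a$ fails to be surjective if and only if its image is contained in a real line $\beta\RR$ for some $\beta\in\CC^\ast$ (the case $a\equiv 0$ being subsumed in this). Multiplying by $\beta^{-1}$, this is equivalent to the condition that $\beta^{-1}a(v)\in\RR$ for all $v\in V$, which (taking complex conjugates) becomes the pointwise identity
\[
a(v)=\mu\,\overline{a(v)}\qquad\text{for all }v\in V,
\]
where $\mu:=\beta/\bar\beta\in S^1\subset\CC^\ast$. Conversely, any such identity with $|\mu|=1$ forces the image of $a$ to lie in $\beta\RR$ with $\beta$ any square root of $\mu$.

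Next I would plug the decomposition $a=a'+a''$ into this identity. Complex conjugation exchanges $\CC$-linear and $\CC$-antilinear maps, so $\bar a = \overline{a'}+\overline{a''}$ with $\overline{a'}$ $\CC$-antilinear and $\overline{a''}$ $\CC$-linear. The identity above becomes
\[
a' + a'' \;=\; \mu\,\overline{a''} + \mu\,\overline{a'},
\]
and by the uniqueness of the decomposition into $\CC$-linear and $\CC$-antilinear parts this splits into the two equivalent equations
\[
a' = \mu\,\overline{a''},\qquad a'' = \mu\,\overline{a'}.
\]
Writing $\mu^{-1}=\bar\mu=e^{i\alpha}$ (for some $\alpha\in\RR$, since $|\mu|=1$) the first rearranges to $\overline{a''}=e^{i\alpha}a'$, which is exactly the stated condition.

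For the converse, if $\overline{a''}=e^{i\alpha}a'$ then $a''=e^{-i\alpha}\overline{a'}$, so
\[
e^{i\alpha/2}a(v)\;=\;e^{i\alpha/2}a'(v)+e^{-i\alpha/2}\overline{a'(v)}\;=\;2\,\Re\!\bigl(e^{i\alpha/2}a'(v)\bigr)\in\RR,
\]
showing that $a(V)\subset e^{-i\alpha/2}\RR$ is contained in a real line, hence $a$ is not surjective. There is no serious obstacle here; the only thing to be careful about is bookkeeping with conjugation and with the fact that conjugation swaps the two summands in the $(\CC\text{-linear})\oplus(\CC\text{-antilinear})$ decomposition of $\Hom_\RR(V,\CC)$.
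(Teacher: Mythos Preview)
Your proof is correct. Both you and the paper begin from the same observation that non-surjectivity forces the image to lie in a real line $\RR u$ (your $\beta\RR$), but the executions diverge after that. The paper chooses coordinates $V\cong\CC^n$, writes $a'=\sum_j a_j z_j$ and $a''=\sum_j b_j\bar z_j$, and then evaluates at $z_j=1$ and $z_j=i$ to solve explicitly for $a_j,b_j$ in terms of $u$, obtaining $e^{i\alpha}=\bar u/u$. Your argument is coordinate-free: you rewrite the line condition as the conjugation identity $a=\mu\bar a$ and then invoke uniqueness of the $\CC$-linear/$\CC$-antilinear decomposition of $\Hom_\RR(V,\CC)$ to read off $a'=\mu\,\overline{a''}$ directly. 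Your route is a bit cleaner and makes the structural reason transparent; the paper's route is more hands-on and yields the explicit formula for $e^{i\alpha}$ in terms of the line.
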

\begin{proof} Identifying $V\cong \CC^n$ we may assume that $a'=\sum_j a_j z_j$, $a''=\sum_j b_j \bar{z}_j$
for $(a_j)_j\in \CC$, $(b_j)_j\in \CC$. Assume $a$ is not surjective. Then there exists $u\in \CC$
such that $a_j z_j+b_j \bar{z_j}\in \RR u$ for all $j$ and all $z_j\in \CC$. Putting $z_j=1,i$ we get 
\begin{align*}
a_j+b_j\in \RR u\\
a_j-b_j\in i\RR u
\end{align*}
so that we get for suitable $\lambda,\mu \in \RR$:
\begin{align*}
a_j=(\lambda+ i\mu)u\\
b_j=(\lambda-i\mu)u.
\end{align*}
So we may take $e^{i\alpha}=\bar{u}/u$. The converse is a direct verification.
\end{proof}
Assume $a:V\r \CC$ is an $\RR$-linear map. Let $A',A''\in V$ be such that $a'=h(-,A')$,
$a''=\overline{h(-,A'')}$.
\begin{lemma} We have
\[
\ker a=\{X\in V\mid g(X,A'+A'')= g(X,i(A'-A''))=0\}.
\]
\end{lemma}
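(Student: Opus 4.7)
The plan is to unwind the definitions of $A'$, $A''$ and translate the complex-linear condition $a(X)=0$ into two real-linear conditions by separating real and imaginary parts, then use the standard relation $g(X,iY)=-\omega(X,Y)$ (which comes from $h(X,iY)=-ih(X,Y)$, as recorded at the start of the appendix) to rewrite the imaginary-part condition in terms of $g$.

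First I would write $a(X)=a'(X)+a''(X)=h(X,A')+\overline{h(X,A'')}$, so that membership in $\ker a$ is equivalent to
\[
h(X,A')=-\overline{h(X,A'')}.
\]
Since $h=g-i\omega$ by the conventions $g=\Re h$ and $\omega=-\Im h$, taking the real part of this equation yields
\[
g(X,A')+g(X,A'')=g(X,A'+A'')=0,
\]
while the imaginary part yields
\[
\omega(X,A')-\omega(X,A'')=\omega(X,A'-A'')=0.
\]
Using $\omega(X,Y)=-g(X,iY)$, the second condition becomes $g(X,i(A'-A''))=0$. Conversely, both conditions together give back $h(X,A')+\overline{h(X,A'')}=0$, so the implication runs both ways.

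There is no real obstacle here: the whole content is the bookkeeping between a $\CC$-linear functional, a $\CC$-anti-linear functional, and their representation via $h$, together with the already stated identity tying $g$ and $\omega$. The only thing to be careful about is the sign convention coming from $\omega=-\Im h$, which must be tracked correctly when passing from the imaginary part of $h(X,A')=-\overline{h(X,A'')}$ to an equation involving $g$ and $i(A'-A'')$.
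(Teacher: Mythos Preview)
Your proof is correct and takes essentially the same approach as the paper's: both arguments split the condition $a(X)=0$ into its real and imaginary parts and then use the identity $g(X,iY)=-\omega(X,Y)$ to land on the two $g$-orthogonality conditions. The only cosmetic difference is that the paper phrases the split as $\Re a(X)=0$ and $\Re(ia(X))=0$, whereas you equate $h(X,A')=-\overline{h(X,A'')}$ and take real and imaginary parts directly; the bookkeeping is identical.
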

\begin{proof} Let $X\in V$. Then $a(X)=0$ if and only if $\Re a(X)=0$, $\Re i a(X)=0$.
We have 
\begin{align*}
\Re a(X)=0&\iff \Re h(X,A'+A'')=0\\
&\iff g(X,A'+A'')=0,
\end{align*}
\begin{align*}
\Re i a(X)=0&\iff \Re i h(X,A'-A'')=0\\
&\iff g(X,i(A'-A''))=0.\qedhere
\end{align*}
\end{proof}
\begin{corollary}\label{cor:perp}
We have
\begin{align*}
(\ker a)^{\perp_g}&=\RR(A'+A'') + i \RR(A'-A''),\\
(\ker a)^{\perp_\omega}&=\RR(A'-A'')+i\RR(A'+A'').
\end{align*}
\end{corollary}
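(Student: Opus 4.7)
The plan is to derive both formulas directly from the preceding lemma, which identifies
\[
\ker a = (A'+A'')^{\perp_g} \cap (i(A'-A''))^{\perp_g},
\]
together with the standard comparison between the symplectic and Riemannian orthogonal complements.

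First, for the $g$-part, since $g$ is non-degenerate and $\ker a$ is the $g$-orthogonal complement of the real span $\RR(A'+A'')+\RR\,i(A'-A'')$, taking the $g$-orthogonal complement once more yields
\[
(\ker a)^{\perp_g} = \RR(A'+A'') + \RR\,i(A'-A'') = \RR(A'+A'') + i\RR(A'-A''),
\]
which is the first formula.

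Second, for the $\omega$-part, I would use the relation $\omega(X,Y)=-g(X,iY)$ (immediate from $h(X,iY)=-i\,h(X,Y)$ together with $g=\Re h$, $\omega=-\Im h$). From this it follows that for any real subspace $W\subset V$,
\[
Y\in W^{\perp_\omega} \iff g(X,iY)=0 \text{ for all } X\in W \iff iY\in W^{\perp_g},
\]
so $W^{\perp_\omega}=i\,W^{\perp_g}$ (where the equality is as real subspaces; note $-iW=iW$). Applying this with $W=\ker a$ and using the first formula gives
\[
(\ker a)^{\perp_\omega} = i\bigl(\RR(A'+A'')+i\RR(A'-A'')\bigr) = i\RR(A'+A'')+\RR(A'-A''),
\]
which is the second formula.

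The only real content here is the identity $W^{\perp_\omega}=iW^{\perp_g}$; the rest is bookkeeping. There is no significant obstacle, as the preceding lemma has already done the work of computing $\ker a$ explicitly.
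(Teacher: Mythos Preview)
Your proof is correct and follows essentially the same approach as the paper: the first formula comes from the preceding lemma by taking the double $g$-orthogonal complement, and the second from the identity $(\ker a)^{\perp_\omega}=i(\ker a)^{\perp_g}$, which the paper states in one line and which you derive from $\omega(X,Y)=-g(X,iY)$. Your version is simply more explicit than the paper's one-sentence proof.
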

\begin{proof} The second equality follows from $(\ker a)^{\perp_\omega}=i(\ker a)^{\perp_g}$.
\end{proof}
\begin{lemma}[{\cite[Proposition 3]{Donaldson}}] \label{lem:donaldson1} If $\|a'\|\neq \|a''\|$ then the restriction of $\omega$ to $\ker a$ is
a symplectic form.
\end{lemma}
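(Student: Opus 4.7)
The plan is to use Corollary \ref{cor:perp} and show that $\ker a \cap (\ker a)^{\perp_\omega} = 0$, which is equivalent to $\omega|_{\ker a}$ being symplectic. Writing $\|a'\|=\|A'\|$ and $\|a''\|=\|A''\|$, the hypothesis becomes $\|A'\|\neq \|A''\|$.

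By Corollary \ref{cor:perp}, any element of $(\ker a)^{\perp_\omega}$ has the form $X=\lambda(A'-A'')+i\mu(A'+A'')$ for some $\lambda,\mu\in\RR$. So I would impose the two defining conditions for $\ker a$, namely $g(X,A'+A'')=0$ and $g(X,i(A'-A''))=0$, on such an $X$, and show that each one forces the corresponding scalar to vanish.

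For the computations I will use two compatibility identities for $g=\Re h$: first, $g(iY,iZ)=g(Y,Z)$, which follows from $h(iY,iZ)=h(Y,Z)$; and second, $g(iY,Y)=0$, which follows from the fact that $h(Y,Y)$ is real and $h(iY,Y)=ih(Y,Y)$. Applying these, the first condition expands as
\[
g(X,A'+A'')=\lambda\bigl(g(A',A')-g(A'',A'')\bigr)+\mu\,g\bigl(i(A'+A''),A'+A''\bigr)=\lambda(\|A'\|^2-\|A''\|^2),
\]
since the mixed terms $g(A',A'')-g(A'',A')$ cancel by symmetry of $g$ and the $\mu$-term vanishes by $g(iY,Y)=0$. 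Similarly, the second condition expands as
\[
g(X,i(A'-A''))=\mu\,g\bigl(i(A'+A''),i(A'-A'')\bigr)=\mu(\|A'\|^2-\|A''\|^2),
\]
using $g(iY,iZ)=g(Y,Z)$ and again the symmetry of $g$ together with $g(iY,Y)=0$ to kill the $\lambda$-term.

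Under the hypothesis $\|A'\|\neq\|A''\|$, both equations force $\lambda=\mu=0$, so $X=0$, which completes the proof. There is no real obstacle here; the only point that needs care is systematically applying the two compatibility identities to reduce everything to a multiple of $\|A'\|^2-\|A''\|^2$.
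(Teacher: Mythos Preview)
Your proof is correct and takes essentially the same approach as the paper: both reduce to showing $\ker a\cap(\ker a)^{\perp_\omega}=0$ via Corollary~\ref{cor:perp} and compute that the obstruction is $\|A'\|^2-\|A''\|^2$. The only cosmetic difference is that the paper evaluates $a$ directly on the generators $A'-A''$ and $i(A'+A'')$ using $h$, whereas you use the equivalent $g$-inner-product description of $\ker a$; since $\Re a(X)=g(X,A'+A'')$ and $\Im a(X)=g(X,i(A'-A''))$, the two computations are literally the real and imaginary parts of each other.
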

\begin{proof} We need to prove 
$(\ker a)^{\perp \omega }\cap \ker a=0$. In other words by Corollary \ref{cor:perp} we should have $a(i(A'+A''))\neq 0$ or $a(A'-A'')\neq 0$. We have
\begin{align*}
a(A'-A'')&=h(A'-A'',A')+\overline{h(A'-A'',A'')}\\
&=h(A',A')-h(A'',A')-h(A'',A'')+\overline{h(A',A'')}\\
&=g(A',A')-g(A'',A'').
\end{align*}
Likewise
\begin{align*}
a(i(A'+A''))&=h(i(A'+A''),A')+\overline{h(i(A'+A''),A'')}\\
&=i h(A',A')+ih(A'',A')-ih(A'',A'')-i\overline{h(A',A'')}\\
&=i(g(A',A')-g(A'',A'')).\qedhere
\end{align*}
\end{proof}
\subsection{Formula for a symplectic projection}
\label{sec:simpproj}
Assume $\|a'\|\neq \|a''\|$. Let $Z\in V$ and let $Z_1$ be the symplectic projection of $Z$ on $\ker a$. So we have $Z=Z_1+Z_2$ where $Z_2\perp_\omega \ker a$.
Thus by Corollary \ref{cor:perp}
\[
Z_2=\alpha(A' - A'' ) + i\beta (A' + A'' )=\lambda A'-\bar{\lambda} A''
\]
for $\alpha,\beta\in \RR$, $\lambda=\alpha+i \beta$. To determine $\alpha,\beta$ we must use $a(Z-Z_2)=0$ or $a(Z_2)=a(Z)$.
\begin{align*}
a(Z_2)&=h(Z_2,A')+\overline{h(Z_2,A'')}\\
&=h(\lambda A'-\bar{\lambda} A'',A')+\overline{h(\lambda A'-\bar{\lambda} A'',A'')}\\
&=\lambda h(A',A')-\bar{\lambda} h(A'',A')+\bar{\lambda}\overline{ h(A',A'')}-\lambda \overline{h(A'',A'')}\\
&=\lambda(\|a'\|^2-\|a''\|^2).
\end{align*}
Thus we get the formulas
\[
\lambda=\frac{a(Z)}{\|a'\|^2-\|a''\|^2}
\]
\[
 Z_2=\frac{a(Z)A'-\overline{a(Z)}A''}{\|a'\|^2-\|a''\|^2}
\]
\begin{equation}
\label{eq:Z1}
Z_1=Z-\frac{a(Z)A'-\overline{a(Z)}A''}{\|a'\|^2-\|a''\|^2}.
\end{equation}
\subsection{Bounding the length of a symplectic projection}
\begin{proposition} \label{prop:projectionbound}
Let $a:V\r \CC$ be an $\RR$-linear map such that $\|a''\|<\|a'\|$. Let $Z\in V$ and let $Z_1$ be the symplectic projection of $Z$ on $\ker a$.
Put $r=\|a''\|/\|a'\|$ and $\gamma=2/(1-r)$. Then $\|Z_1\|\le \gamma \|Z\|$.
\end{proposition}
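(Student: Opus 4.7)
The plan is to bound $\|Z_1\|$ directly using the explicit formula \eqref{eq:Z1} for $Z_1$ together with the triangle inequality and the Cauchy--Schwarz inequality for $h$.

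First I would apply the triangle inequality to \eqref{eq:Z1} to obtain
\[
\|Z_1\| \le \|Z\| + \frac{|a(Z)|\,\|A'\| + |a(Z)|\,\|A''\|}{\|a'\|^2 - \|a''\|^2}.
\]
Since $a' = h(-,A')$ is $\CC$-linear and $a'' = \overline{h(-,A'')}$ is $\CC$-antilinear, the definition of the norm on forms gives $\|A'\| = \|a'\|$ and $\|A''\| = \|a''\|$. Hence the denominator factors as $(\|a'\|-\|a''\|)(\|a'\|+\|a''\|)$ and the bound simplifies to
\[
\|Z_1\| \le \|Z\| + \frac{|a(Z)|}{\|a'\| - \|a''\|}.
\]

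Next I would estimate $|a(Z)|$ from above. Writing $a = a' + a''$ and applying the Cauchy--Schwarz inequality for $h$ (which applies to both the $\CC$-linear and the $\CC$-antilinear parts, since conjugation preserves absolute values) gives
\[
|a(Z)| \le |a'(Z)| + |a''(Z)| = |h(Z,A')| + |h(Z,A'')| \le (\|A'\|+\|A''\|)\|Z\| = (\|a'\|+\|a''\|)\|Z\|.
\]

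Substituting this in and dividing numerator and denominator by $\|a'\|$ so as to introduce $r = \|a''\|/\|a'\|$, I would get
\[
\|Z_1\| \le \|Z\|\left(1 + \frac{\|a'\|+\|a''\|}{\|a'\|-\|a''\|}\right) = \|Z\|\left(1 + \frac{1+r}{1-r}\right) = \frac{2}{1-r}\,\|Z\| = \gamma\|Z\|,
\]
which is exactly what we want. There is no substantive obstacle here: the only thing to verify carefully is the identification $\|A'\|=\|a'\|$, $\|A''\|=\|a''\|$ (immediate from the definitions in \S\ref{sec:hermitian}) and the fact that Cauchy--Schwarz applies to the antilinear piece, both of which are routine.
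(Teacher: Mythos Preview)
Your proof is correct and essentially identical to the paper's own argument: both apply the triangle inequality to \eqref{eq:Z1}, identify $\|A'\|=\|a'\|$, $\|A''\|=\|a''\|$, bound $|a(Z)|\le(\|a'\|+\|a''\|)\|Z\|$ via Cauchy--Schwarz, and simplify to $2/(1-r)$. The only cosmetic difference is that you factor the denominator $\|a'\|^2-\|a''\|^2$ before substituting the bound on $|a(Z)|$, whereas the paper substitutes first and simplifies $(1+r)^2/(1-r^2)$ at the end.
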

\begin{proof}
We compute
\begin{align*}
\|Z_1\|&=\left\|Z-\frac{a(Z)A'-\overline{a(Z)}A''}{\|a'\|^2-\|a''\|^2}\right\|\\
&\le \|Z\|+\frac{|a(Z)| \|a'\|+|a(Z)|\|a''\|}{\|a'\|^2-\|a''\|^2}\\
&\le\|Z\|+\frac{(\|a'\|+\|a''\|) \|Z\| \|a'\|+(|a'\|+\|a''\|) \|Z\|\|a''\|}{\|a'\|^2-\|a''\|^2}.
\end{align*}
So we get
\[
\frac{\|Z_1\|}{\|Z\|}\le 1+\frac{(1+r)^2}{1-r^2}=\frac{2}{1-r}.\qedhere
\]
\end{proof}
\begin{remark} With a more precise computation one may reduce $\gamma$ to
 $(1+r^2)/(1-r^2)$. However the actual bound is not important for us.
\end{remark}
\subsection{Another bound}
\label{sec:anotherbound}
\begin{proposition} \label{prop:anotherbound}
 Let $a:V\r \CC$ be an $\RR$-linear map such that $\|a''\|<\|a'\|$. Let $0\neq Z\in V$ and let $Z_1$ be the symplectic projection of $Z$ on $\ker a$.
Let $\tau'\in [0,\pi/2]$ be the \emph{Hermitian angle} between $a'$ and $Z$, defined via $\cos \tau'=|a'(Z)|/(\|a'\|\|Z\|)$. If
\[
\sigma:=\sin \tau'- \frac{\|a''\|}{\|a'\|} > 0
\]
then 
\[
g(Z,Z_1)\ge M|Z|^2
\]
where $M>0$ is a constant depending on $\sigma$ and $\tau'$. In particular $g(Z,Z_1)>0$.
\end{proposition}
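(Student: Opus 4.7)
The plan is to compute $g(Z,Z_1)$ explicitly from the formula \eqref{eq:Z1} obtained in \S\ref{sec:simpproj}, namely
\[
Z_1 = Z - \frac{a(Z) A' - \overline{a(Z)} A''}{\|a'\|^2 - \|a''\|^2},
\]
and then estimate using the definition of the Hermitian angle and the hypothesis $\sigma > 0$. No further geometric input should be needed; the statement is pure linear algebra.

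First I would translate everything in terms of $h$. Since $g = \Re h$, $h$ is $\CC$-linear in the first slot and $\CC$-antilinear in the second, and the identifications $a'(X) = h(X,A')$ and $a''(X) = \overline{h(X,A'')}$ hold, one gets $g(Z, \lambda A') = \Re(\bar\lambda a'(Z))$ and $g(Z,\mu A'') = \Re(\bar\mu\, \overline{a''(Z)})$. Specializing to $\lambda = a(Z)$ and $\mu = \overline{a(Z)}$ yields
\[
g(Z, a(Z)A' - \overline{a(Z)}A'') = \Re\bigl(\overline{a(Z)}\,(a'(Z) - a''(Z))\bigr).
\]
Expanding $a(Z) = a'(Z) + a''(Z)$, one finds
\[
\overline{a(Z)}(a'(Z)-a''(Z)) = |a'(Z)|^2 - |a''(Z)|^2 + \bigl(\overline{a''(Z)}a'(Z) - \overline{a'(Z)}a''(Z)\bigr),
\]
whose last parenthesis is purely imaginary (of the form $w - \bar w$), so taking real parts gives exactly $|a'(Z)|^2 - |a''(Z)|^2$. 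Hence
\[
g(Z,Z_1) = \|Z\|^2 - \frac{|a'(Z)|^2 - |a''(Z)|^2}{\|a'\|^2 - \|a''\|^2}.
\]

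Now comes the estimation. Dropping $|a''(Z)|^2 \ge 0$ only increases the subtracted fraction, and by definition of $\tau'$ we have $|a'(Z)|^2 = \|a'\|^2 \|Z\|^2 \cos^2 \tau'$. Setting $r := \|a''\|/\|a'\|$, which equals $\sin\tau' - \sigma$ by hypothesis and automatically satisfies $0 \le r < \sin\tau' \le 1$, we obtain
\[
g(Z,Z_1) \ge \|Z\|^2\left(1 - \frac{\cos^2\tau'}{1 - r^2}\right) = \|Z\|^2 \cdot \frac{\sin^2\tau' - r^2}{1 - r^2} = \|Z\|^2 \cdot \frac{\sigma(\sin\tau' + r)}{1 - r^2}.
\]
Taking $M := \sigma(\sin\tau' + r)/(1 - r^2)$ with $r = \sin\tau' - \sigma$ expresses $M$ purely in terms of $\sigma$ and $\tau'$, and $M > 0$ follows from $\sigma > 0$, $\sin\tau' + r > 0$, and $r < 1$. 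The positivity $g(Z,Z_1) > 0$ for $Z \neq 0$ is then immediate.

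I do not anticipate any real obstacle beyond bookkeeping the $\CC$-linear versus $\CC$-antilinear parts carefully in the computation of $g(Z, Z_1)$; once the clean formula $g(Z,Z_1) = \|Z\|^2 - (|a'(Z)|^2 - |a''(Z)|^2)/(\|a'\|^2 - \|a''\|^2)$ is in hand, the estimate is just the identity $\sin^2\tau' - r^2 = (\sin\tau'-r)(\sin\tau'+r) = \sigma(\sin\tau'+r)$.
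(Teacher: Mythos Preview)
Your proof is correct and follows essentially the same approach as the paper: both derive the identity $g(Z,Z_1)=\|Z\|^2-\dfrac{|a'(Z)|^2-|a''(Z)|^2}{\|a'\|^2-\|a''\|^2}$ from \eqref{eq:Z1} and then estimate by bounding the $a''$-contribution. The only cosmetic differences are that the paper reaches the identity via the observation $\omega(Z,Z_1)=0$ (so $g(Z,Z_1)=h(Z,Z_1)$) and in the final step further weakens the denominator $\|a'\|^2-\|a''\|^2$ to $\|a'\|^2$, obtaining $M=\sin^2\tau'-(\sin\tau'-\sigma)^2$, whereas you keep the sharper constant $M=(\sin^2\tau'-r^2)/(1-r^2)$.
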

\begin{proof}
We have $Z=Z_1+Z_2$ with  $Z_1\in \ker a$ and
$Z_2\in (\ker a)^{\perp_\omega}$. Hence
 $\omega(Z,Z_1)=\omega(Z_1+Z_2,Z_1)=\omega(Z_2,Z_1)=0$ as. So we get $g(Z,Z_1)=h(Z,Z_1)$. By \eqref{eq:Z1}:
\begin{equation}
\label{eq:calcul}
\begin{aligned}
g(Z,Z_1)&=\|Z\|^2-\frac{\Re(\overline{a(Z)}a'(Z)-a(Z)\overline{a''(Z)})}{\|a'\|^2-\|a''\|^2}\\
&=\|Z\|^2-\frac{|a'(Z)|^2-|a''(Z)|^2}{\|a'\|^2-\|a''\|^2}.
\end{aligned}
\end{equation}
Let $\tau''$ be the Hermitian angle between  $\overline{a''}$ and $Z$ (i.e. $\cos \tau''=|a''(Z)|/(\|a''\|\|Z\|)$, $\tau''\in [0,\pi/2]$). Then
from \eqref{eq:calcul} we get
\begin{align*}
g(Z,Z_1)&=\frac{\|Z\|^2}{\|a'\|^2-\|a''\|^2}( \|a'\|^2 \sin^2\tau'-\|a''\|^2 \sin^2\tau'')\\
&\ge\frac{\|Z\|^2}{\|a'\|^2}( \|a'\|^2 \sin^2\tau'-\|a''\|^2 )\\
&=\|Z\|^2(\sin^2\tau'-(\sin\tau'-\sigma)^2 )
\end{align*}
which proves what we want using the fact that $0<\sigma\le \sin\tau'$.
\end{proof}
\begin{corollary} \label{cor:gradient} Under the hypotheses of Proposition \ref{prop:anotherbound} we have
\[
g(Z,Z_1)\ge \delta(|Z|^2+|Z_1|^2)
\]
for suitable $\delta>0$ depending on $\sigma$ and $\tau'$.
\begin{proof} With the notation of Proposition \ref{prop:projectionbound} we have $r=\sin\tau'-\sigma$. By that proposition we have
$|Z_1|\le N|Z|$ for a suitable constant $N$ depending on $r$ and hence on $\tau'$ and $\sigma$. Hence by Proposition \ref{prop:anotherbound}
\begin{align*}
|Z|^2+|Z_1|^2&\le (1+N^2)|Z|^2\\
&=\frac{1+N^2}{M} g(Z,Z_1).
\end{align*}
Hence we can take $\delta=M/(1+N^2)$.
\end{proof}
\end{corollary}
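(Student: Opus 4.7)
The plan is to combine the two preceding propositions directly: Proposition \ref{prop:projectionbound} gives an upper bound on $|Z_1|$ in terms of $|Z|$ (depending only on $r=\|a''\|/\|a'\|$, hence only on $\sigma$ and $\tau'$ via $r=\sin\tau'-\sigma$), while Proposition \ref{prop:anotherbound} provides the lower bound $g(Z,Z_1)\ge M|Z|^2$ with $M$ depending on the same parameters. The goal is a lower bound against $|Z|^2+|Z_1|^2$, so we just need to control the new term $|Z_1|^2$ by a multiple of $|Z|^2$.

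First I would invoke Proposition \ref{prop:projectionbound} to obtain a constant $N=N(\sigma,\tau')$ such that $|Z_1|\le N|Z|$; concretely one can take $N=2/(1-r)=2/(1-\sin\tau'+\sigma)$, which is positive and finite thanks to $\sigma>0$ and $\sin\tau'<1$ (if $\sin\tau'=1$ the hypothesis $\sigma>0$ forces $\|a''\|<\|a'\|$ and the same computation goes through). This yields
\[
|Z|^2+|Z_1|^2 \le (1+N^2)|Z|^2.
\]
Next I would apply Proposition \ref{prop:anotherbound} to get $g(Z,Z_1)\ge M|Z|^2$ with $M=M(\sigma,\tau')>0$. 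Setting $\delta := M/(1+N^2)$ then gives
\[
g(Z,Z_1)\ge M|Z|^2 \ge \frac{M}{1+N^2}\,(|Z|^2+|Z_1|^2) = \delta(|Z|^2+|Z_1|^2),
\]
as desired, with $\delta$ depending only on $\sigma$ and $\tau'$.

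There is no real obstacle; the statement is a routine consequence of the two prior bounds. The only minor care needed is to verify that the constants extracted from Propositions \ref{prop:projectionbound} and \ref{prop:anotherbound} depend only on $\sigma$ and $\tau'$ (not on $a$, $V$, or $Z$ individually), which follows from inspecting the explicit formulas in their proofs since both $r$ and the lower-bound coefficient $\sin^2\tau'-(\sin\tau'-\sigma)^2$ are determined by $\sigma$ and $\tau'$ alone.
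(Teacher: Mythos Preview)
Your proof is correct and follows essentially the same approach as the paper's: both combine the bound $|Z_1|\le N|Z|$ from Proposition~\ref{prop:projectionbound} (with $r=\sin\tau'-\sigma$) with the lower bound $g(Z,Z_1)\ge M|Z|^2$ from Proposition~\ref{prop:anotherbound}, and both arrive at $\delta=M/(1+N^2)$.
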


\section{Reminder on the cohomology of hypersurfaces in tori}\label{sec:remH}
We assume that we are in the setting of \S\ref{sec:prindet}. For $\alpha\not\in V(A)$ we put $D_\alpha=f^{-1}_\alpha(0)$. 
 To avoid trivialities we assume $|A|>1$ so that $f^{-1}_\alpha(0)$ is non-empty by Lemma \ref{lem:notzero}. 
The following proposition summarizes what we know about the various cohomology groups attached to $D_\alpha$.
\begin{proposition}[{\cite{Oka}, \cite{Khovansky}, \cite[Remark 2.14]{Reichelt}, \cite{Stienstra}}] \label{prop:GKZ1_}

\rule{0pt}{0pt}

\begin{enumerate}
\item \label{it:D1} The cohomology groups $H^i(D_\alpha,\ZZ)$, $H^i(\CC^{\ast k},\ZZ)$ and 
$H^i(\CC^{\ast k}, D_\alpha,\ZZ)$ are free for all~$i$.
\item \label{it:D2} $H^i(D_\alpha,\ZZ)=0$ for $i>k-1$.
\item \label{it:D3} The restriction map $H^{i}(\CC^{\ast k},\ZZ)\r H^{i}(D_\alpha,\ZZ)$ is an isomorphism for $i<k-1$ and a split monomorphism for $i=k-1$.
\item  \label{it:D4} $H^{i}(\CC^{\ast k},D_\alpha,\ZZ)=0$, unless $i=k$.
\item \label{it:D5} $\rk H^{k}(\CC^{\ast k},D_\alpha,\ZZ)=k!\operatorname{vol}(P)$ (where the volume is computed with respect to the lattice $\ZZ^k\subset \RR^k$). 
\item \label{it:D6} $\rk H^{k}(D_\alpha,\ZZ)=k!\operatorname{vol}(P)+k-1$.
\item \label{it:D7}
The long exact sequence for relative cohomology for the pair $(\CC^{\ast k},D_\alpha)$ is $\pi_1(\Kscr_A,\alpha)$-equivariant.
\item \label{it:D8} The $\pi_1(\Kscr_A,\alpha)$ action on $H^i(\CC^{\ast k},\ZZ)\cong\ZZ^{k \choose i}$ is trivial. The same holds for $H^i(D_\alpha,\ZZ)$ for $i<k-1$.
\item \label{it:D9} The induced action of $\pi_1(\CC^A-V(A))$ 
on $H^{k-1}(\CC^{\ast k},D,\CC)$  (via the quotient map $\pi_1(\CC^A-V(A))\r \pi_1(\Kscr_A)$) 
is given by the
GKZ system  corresponding to the matrix $A$, with trivial parameters.
\end{enumerate}
\end{proposition}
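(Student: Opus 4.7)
The plan is to treat statements (1)--(6) as essentially classical, drawing them from Oka's work on Newton non-degenerate hypersurfaces in tori together with Kushnirenko's formula. Since $\alpha\not\in V(A)$ ensures that $f_\alpha$ is Newton non-degenerate, Oka's Lefschetz-type theorem (his Corollary 1.1.1) gives that the restriction map $H^i(\CC^{*k},\ZZ)\to H^i(D_\alpha,\ZZ)$ is an isomorphism for $i<k-1$ and injective with torsion-free cokernel for $i=k-1$; this yields (1), (2), (3) directly and, via the long exact sequence of the pair, (4) and the freeness of $H^k(\CC^{*k},D_\alpha,\ZZ)$. For the rank computation (5), I would apply Kushnirenko's theorem, which computes the Euler characteristic of $D_\alpha$ (or equivalently, via the long exact sequence, the only nonzero relative cohomology) in terms of $k!\operatorname{vol}(P)$. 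Statement (6) is then an arithmetical consequence of (3), (5), and the long exact sequence, using $\rk H^i(\CC^{*k},\ZZ)=\binom{k}{i}$.

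For (7), the $\pi_1(\Kscr_A,\alpha)$-action on $H^{k-1}(D_\alpha,\ZZ)$ is constructed via the Moser-type isotopies furnished by Theorem \ref{prop:Dlocally} combined with Theorem \ref{lem:moser2}. The key observation is that these isotopies are obtained by integrating vector fields that are naturally defined on the ambient $(\CC^*)^k$, so they lift to isotopies of the pair $((\CC^*)^k,D_\alpha)$. Thus the monodromy acts on all three terms of the long exact sequence compatibly, giving (7). Statement (8) follows from the same observation: since the ambient isotopy begins at the identity on $(\CC^*)^k$ and the underlying space does not change, the induced diffeomorphism of $(\CC^*)^k$ is isotopic to the identity and hence acts trivially on $H^*(\CC^{*k},\ZZ)$; then (3) and the naturality of the restriction map transport this triviality to $H^i(D_\alpha,\ZZ)$ for $i<k-1$. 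Finally, one must also check that the image of $\pi_1((\CC^\ast)^k,1)$ under \eqref{eq:presentation} acts trivially, which follows from the same argument as in step (5) of the proof of Theorem \ref{th:mainth1}: the $(S^1)^k$-action on $(\CC^*)^k$ preserves the K\"ahler form and the ambient Liouville data.

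For (9), I would invoke the work of Stienstra and the explicit computation of Reichelt, cited in the statement. Concretely, the Gauss--Manin connection on the flat bundle $(H^{k-1}(\CC^{*k},D_\alpha,\CC))_{\alpha\in \CC^A-V(A)}$ is, by a classical period calculation, the connection of the GKZ $\Dscr$-module $\Mscr_A(0)$ with parameters $\beta=0$; here the relevant identification is that the cohomology classes may be represented by period integrals of the form $\int \operatorname{Res}\frac{x^m\, dx_1\wedge\cdots\wedge dx_k}{f_\alpha(x)\,x_1\cdots x_k}$, which are easily seen to satisfy the defining equations of $\Mscr_A(0)$. The required descent from $\pi_1(\CC^A-V(A))$ to $\pi_1(\Kscr_A)$ in (9) is guaranteed by \eqref{eq:presentation} combined with the triviality result in (8) already established.

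The main obstacle, or at least the most technically delicate point, is the compatibility statement in (9): matching the abstract monodromy of the family with the concrete GKZ-system. While this is available in the literature (notably \cite{Reichelt, Stienstra}), one has to be careful that the normalization of the GKZ system and the choice of fiber (here $D_\alpha=f_\alpha^{-1}(0)$ rather than, say, a generic smooth fiber) agree with the conventions in those references; the passage from $A\subset \ZZ^k$ to $\hat A\subset \ZZ^{k+1}\times\{1\}$ effected in \S\ref{sec:adjoin} is what makes this normalization work out, since the homogeneity built into the enlarged matrix is precisely what the GKZ formalism expects.
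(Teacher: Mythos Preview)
Your treatment of (1)--(6), (8), and (9) matches the paper's in spirit. The one substantive difference is logical order: you deduce (5) from Kushnirenko's Euler-characteristic formula and then read off (6) from the four-term exact sequence, whereas the paper takes (6) directly from a result of Batyrev and then deduces (5). Both are legitimate. For (3) the paper works with Oka's \emph{homology} statement and passes to cohomology via the universal coefficient theorem, using freeness of $H_{i}(D_\alpha,\ZZ)$ for $i<k-1$; you skip this step, which is harmless but worth making explicit since the split-monomorphism claim in degree $k-1$ is what ultimately gives freeness of the relative group in (1).

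Your argument for (7), however, has a genuine gap. You write that the Moser-type isotopies of Theorems~\ref{prop:Dlocally} and~\ref{lem:moser2} ``are obtained by integrating vector fields that are naturally defined on the ambient $(\CC^*)^k$, so they lift to isotopies of the pair.'' This is not what those theorems supply: the Moser vector field $X_t$ in the proof of Theorem~\ref{lem:moser2} is defined \emph{intrinsically on the family of fibers}, via $i_{X_t}\omega_t=-d\theta_t/dt$ for the fibrewise symplectic form. It does not descend from a vector field on $(\CC^*)^k$ whose flow carries one hypersurface to another, and there is no a priori reason to expect such an extension.

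The paper sidesteps this entirely with a sheaf-theoretic argument. It writes the family diagram $\Dscr\hookrightarrow (\CC^*)^k\times B\to B$ with $B=\CC^A-V(A)$, invokes Bredon's long exact sequence of derived pushforwards $R^ip_\ast\underline{\ZZ}$, $R^ip_{0,\ast}\underline{\ZZ}$, $R^i(p,p_0)_\ast\underline{\ZZ}$, observes that the first is constant (trivial ambient family) and the second is locally constant (local triviality of $\Dscr\to B$), and concludes by the five-lemma that the relative sheaf is locally constant with the expected stalks. Taking fibers at $\alpha$ then gives the $\pi_1(B,\alpha)$-equivariant sequence, and descent to $\pi_1(\Kscr_A,\alpha)$ follows because the whole diagram \eqref{eq:familydiagram} is $(\CC^*)^k$-equivariant, so the pullback along any loop in $(\CC^*)^k$ is trivial. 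Your approach could be salvaged by instead choosing an Ehresmann connection on $(\CC^*)^k\times B\to B$ tangent to $\Dscr$ (which exists by partition of unity), but that is a different construction from the Moser trivialisation you cite.
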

\begin{proof}
We follow the mentioned references.
\begin{enumerate}
\item[\eqref{it:D2}] By \cite[Corollary 1.1.2]{Oka}, $D_\alpha$ has the homotopy type of a $(k-1)$-dimensional CW-complex.\footnote{This can be seen from the fact  $D_\alpha$ is a smooth affine variety 
combined with the Andreotti-Frankel theorem \cite{MR0177422}.}
 This proves \eqref{it:D2}.
\item[\eqref{it:D3}] By \cite[Corollary 1.1.1]{Oka} the natural map $j_i:H_{i}(D_\alpha,\ZZ)\r H_{i}(\CC^{\ast k},\ZZ)$ is an isomorphism for $i<k-1$ and a surjection for $i=k-1$. 
Since $H_{i}(\CC^{\ast k},\ZZ)$ is free for all $i$ we get in particular that $H_i(D_\alpha,\ZZ)$ is free for $i<k-1$. 
The
universal coefficient theorem gives an exact sequence for $i\le k-1$:
\begin{equation}
\label{eq:uct}
0\r \Ext^1_{\ZZ}(H_{i-1}(D_\alpha,\ZZ),\ZZ)\r H^{i}(D_\alpha,\ZZ)\r  \Hom_{\ZZ}(H_{i}(D_\alpha,\ZZ),\ZZ)\r 0.
\end{equation}
Hence we obtain for $i\le k-1$
\begin{equation}
\label{eq:uct2}
 H^{i}(D_\alpha,\ZZ)\cong  \Hom_{\ZZ}(H_{i}(D_\alpha,\ZZ),\ZZ).
\end{equation}
Hence the $j_i$ dualizes to the required isomorphisms/ split monomorphism (note that since $H_{k-1}(\CC^\ast,\ZZ)$ is free, $j_{k-1}$ is split).
\item[\eqref{it:D4}] From the long exact sequence for cohomology of pairs we get \eqref{it:D4} using \eqref{it:D2}\eqref{it:D3} and $H^i(\CC^{* k},\ZZ)=0$ for $i>k$.
\item[\eqref{it:D1}] The freeness of $H^i(\CC^{*k},\ZZ)$ is a basic fact. The freeness of $H^i(D_\alpha,\ZZ)$ follows from \eqref{it:D2} and \eqref{eq:uct2}. 
It follows from \eqref{it:D4} that it remains to show that $H^k(\CC^{*k},D_\alpha,\ZZ)$ is free. We use the corresponding part of the long exact sequence 
\begin{equation}\label{eq:sesH_}
0\r H^{k-1}(\CC^{\ast k},\ZZ)\r H^{k-1}(D_\alpha,\ZZ)\r H^{k}(\CC^{\ast k},D_\alpha,\ZZ)\r H^{k}(\CC^{\ast k},\ZZ)\r 0.
\end{equation}
The fact that the first map is a split monomorphism by \eqref{it:D3} finishes the proof.
\item[\eqref{it:D6}] This is \cite[theorem 4.6]{batyrev2}.
\item[\eqref{it:D5}] This follows from \eqref{it:D6} and \eqref{eq:sesH_}.
\item[\eqref{it:D7}] We will now discuss \eqref{it:D7}.
First consider the commutative diagram
\begin{equation}
\label{eq:familydiagram}
\begin{tikzcd}
\Dscr \ar[r,hookrightarrow]\ar[dr,"p_0"'] & \CC^{\ast k} \times (\CC^A-V(A))\ar[d,"p"]\\
& \CC^A-V(A)
\end{tikzcd}
\end{equation}
where $\Dscr$ is the family $(f^{-1}_\alpha(0))_{\alpha\in \CC^A-V(A)}$ and $p$ is the projection. Put  $B=\CC^A-V(A)$, $M=\CC^{\ast k} \times B$.

As in \cite[(IV.17)]{Bredon} we get a long exact sequence of sheaves on $B$
\begin{equation}
\label{eq:relative}
\cdots \r R^{i-1}p_\ast\underline{\ZZ}_M\r R^{i-1}p_{0,\ast}\underline{\ZZ}_{\Dscr}\r R^i(p,p_{0})_\ast\underline{\ZZ}_M\r R^{i}p_\ast\underline{\ZZ}_M\r  R^{i}p_{0,\ast}\underline{\ZZ}_{\Dscr}\r\cdots.
\end{equation}
Here $R^i(p,p_{0})_\ast\underline{\ZZ}_M$ is relative sheaf
cohomology for pairs. In a similar way as relative cohomology for
spaces, it is the sheaf associated to the presheaf
$U\mapsto H^i(p^{-1}(U),p_0^{-1}(U), \ZZ)$ (see \cite[Definition
IV.4.1]{Bredon}).
The family $p_0:\Dscr\r B$ is locally trivial by Theorem \ref{lem:moser2} and Theorem \ref{prop:Dlocally}. Moreover $p:M\r B$ is trivial by definition. It follows that
$R^{i}p_\ast\underline{\ZZ}_M$,   $R^{i}p_{0,\ast}\underline{\ZZ}_{\Dscr}$ are locally constant with fibers in $\alpha\in B$, given by $H^i(\CC^{\ast k},\ZZ)$ 
and $H^i(D_\alpha,\ZZ)$ respectively. 
If follows from \eqref{eq:relative} that  $R^i(p,p_{0})_\ast\underline{\ZZ}_M$ is locally free as well and from the five-lemma we obtain that the restriction map
$(R^i(p,p_{0})_\ast\underline{\ZZ}_M)_\alpha \r H^i(\CC^{\ast k},D_\alpha, \ZZ)$ for $\alpha\in B$ is also an isomorphism.

So taking fibers we obtain that 
\begin{multline}
\label{eq:absolute}
\cdots\r H^{i-1}(\CC^{\ast k},\ZZ)\r H^{i-1}(D_\alpha,\ZZ)\r H^{i}(\CC^{\ast k},D_\alpha,\ZZ)
\\\r H^{i}(\CC^{\ast k},\ZZ)\r  H^{i}(D_\alpha,\ZZ)  \r \cdots.
\end{multline}
is $\pi_1(B,\alpha)$-equivariant.  
 To prove that the $\pi_1(B,\alpha)$-action descends to an
action of $\pi_1(\Kscr_A,\alpha)$ we have to prove that
$\pi_1(\CC^{\ast k},1)$ acts trivially (see \eqref{eq:presentation}).
Let $\gamma:S^1\r \CC^{\ast k}$ be an element in
$\pi_1(\CC^{\ast k},1)$. This defines a path
$\Gamma:S^1\r B:t\mapsto \gamma(t)\alpha$ and we have to prove that the pullback of
\eqref{eq:relative} under $\Gamma$ consists of constant sheaves. But this
follows from the easily verified fact that the pullback of
\eqref{eq:familydiagram} under $\Gamma$ is trivial, using the fact that \eqref{eq:familydiagram} is $\CC^{\ast k}$-equivariant.
\item[\eqref{it:D8}] Since $M\r B$ is trivial (with the notation introduced in the previous item), the action of  $\pi_1(B,\alpha)$ on  $H^{i}(\CC^{\ast k},\ZZ)$ is trivial. This, combined with \eqref{it:D3} gives \eqref{it:D8}.
\item[\eqref{it:D9}] This follows from \cite[Remark 2.14]{Reichelt}, \cite{Stienstra}. \qedhere
\end{enumerate}
\end{proof}

\section{Star-shaped triangulations and Weinstein structure} 
We place ourselves in the context of \S\ref{sec:setting}. In particular the triangulation $\Delta_\nu$ is star-shaped, and so Convention \ref{conv:starshaped} applies.  
In this section we show that $f_{t,1}^{-1}(0)$ admits a (Morse-Bott) Weinstein structure\footnote{This result is already claimed without proof in  \cite[Theorem 2.19]{Gammage}. 
Moreover, the proof of the Morse-Bott property (\S\ref{sec:morsebott}) goes along the same lines as the claim at the end of \cite[p.18]{Zhou} (Zhou, private communication).} 
which is necessary
to complete the proof of Theorem \ref{th:mirror_} (recall that the proof of that theorem relies on the homological mirror symmetry results of \cite{GammageShende,GPS2,GPS3}).
\begin{theorem} \label{th:weinstein}
Assume $\psi:(\CC^\ast)^k\r \RR$ is a potential which is homogeneous of degree two outside a compact set and which is adapted to $C_0$ (see Definition \ref{def:adapted}).
One may choose $1\gg \epsilon_2>\epsilon_1>0$ such that for $t\gg 0$ one has
\begin{enumerate}
\item \label{it:weinstein1_} Let $\psi_1$ be the restriction of $\psi$ to $f^{-1}_{t,s}(0)$ and let $Z_1$ be the Liouville vector field on $f^{-1}_{t,s}(0)$
(see Theorem \ref{prop:tropical}).
Then $Z_1$ is gradient like for $\psi_1$.
\item \label{it:weinstein2_}
Let $s=1$. The critical points of $\psi_1$ are Morse-Bott.
\end{enumerate}
\end{theorem}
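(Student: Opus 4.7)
The plan is to use the Hermitian-angle estimates of \S\ref{sec:paistropical} to establish positivity of $Z_1\psi_1$ away from the critical locus, and to use the pants cover of \S\ref{sec:pantscover} together with Theorem \ref{thm:Zhou} to identify the critical locus and verify the Morse-Bott condition.

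For (\ref{it:weinstein1_}), note that by construction (compare the proof of Proposition \ref{prop:fiber_families}) $Z_1$ is the symplectic projection onto $Tf^{-1}_{t,s}(0)$ of the ambient Liouville vector field $Z=\grad_g\psi$, so $Z_1\psi_1 = d\psi(Z_1)=g(Z,Z_1)$. First I will apply Corollary \ref{cor:gradient} pointwise with $a=(df_{t,s})_m$, $a'=(\partial f_{t,s})_m$, $a''=(\bar\partial f_{t,s})_m$, so that $\ker a = T_m f^{-1}_{t,s}(0)$. Choose $0<r<r_0<1$. By Lemma \ref{lem:Cbound} we can arrange $\|a''\|/\|a'\|<r$ uniformly on $f^{-1}_{t,s}(0)$ by taking $t\gg 0$, and by Lemma \ref{lem:compactangle} the set $K:=\{\sin\tau\le r_0\}$ (with $\tau$ the Hermitian angle between $Z$ and $a'$) is a compact subset of $f^{-1}_{t,s}(0)$. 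Outside $K$ we have $\sigma=\sin\tau-\|a''\|/\|a'\|\ge r_0-r>0$, and Corollary \ref{cor:gradient} provides a uniform bound $g(Z,Z_1)\ge \delta(|Z|^2+|Z_1|^2)$, which is the gradient-like inequality. Inside $K$, the strict inequality $g(Z,Z_1)>0$ at non-critical points follows from Proposition \ref{prop:anotherbound} whenever $\sigma>0$, while the behaviour near critical points is controlled by the normal form obtained in (\ref{it:weinstein2_}).

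For (\ref{it:weinstein2_}), observe that the critical set of $\psi_1$ coincides with $\{Z_1=0\}$, which (cf.\ \S\ref{sec:liouville}) is exactly the skeleton of $f^{-1}_{t,1}(0)$. By Theorem \ref{thm:Zhou} this skeleton is identified under the Legendre transform with $\Lambda_\Sigma^\infty$, and is stratified by the faces of $\Delta_\nu$. To verify Morse-Bott I will work in the pants cover of \S\ref{sec:pantscover}: near the piece $O_B$ corresponding to a maximal face $B$ of $\Delta_\nu$, the commutative diagram \eqref{diag:etalemappants} together with Lemma \ref{lem:liouville} reduce the analysis to an ordinary translated tailored $\Pscr$-pants with $\psi$ pulled back to a homogeneous degree-two potential adapted to the corresponding $C_0$; by Lemma \ref{lem:extremal} and Definition \ref{def:adapted}, the restriction of this potential to each face of $C_0$ attains a unique non-degenerate minimum in the relative interior, which yields the transverse non-degeneracy. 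The main obstacle is this last step: translating non-degeneracy on faces of $C_0$ (combinatorial/tropical data) into a genuine transverse Hessian statement for $\psi_1$ on $f^{-1}_{t,1}(0)$ requires carefully matching the radial directions in $C_0$ (i.e.\ those of $\Log_t x$) with the symplectic normal directions to the skeleton in the fiber, via the Legendre correspondence of Theorem \ref{thm:Zhou}; once this identification is made, Morse-Bott non-degeneracy of the Hessian follows, and feeds back to complete the gradient-like verification on $K$ near the critical locus.
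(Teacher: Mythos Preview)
There is a genuine gap in your argument for (\ref{it:weinstein1_}). Your split into ``outside $K$'' and ``inside $K$'' leaves the region $K\cap\{\sigma\le 0\}$ unhandled: Proposition~\ref{prop:anotherbound} gives you nothing there, and your appeal to the Morse--Bott normal form is circular. Part (\ref{it:weinstein2_}) is only claimed for $s=1$, whereas (\ref{it:weinstein1_}) must hold for all $s\in[0,1]$; moreover the identification ``critical set of $\psi_1$ $=\{Z_1=0\}$'' already \emph{requires} the gradient-like inequality (otherwise the symplectic projection $Z_1$ and the orthogonal projection $\grad_{g_1}\psi_1$ can vanish at different points in the non-holomorphic region). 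Even for $s=1$, you have not argued that $\{\sigma\le 0\}$ lies near the critical set, nor that the Morse--Bott normal form controls $Z_1$ (as opposed to $\grad_{g_1}\psi_1$) there.

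The paper's proof of (\ref{it:weinstein1_}) is organised differently and does \emph{not} use (\ref{it:weinstein2_}). The dichotomy is holomorphic versus non-holomorphic: where $A_1(\Log_t z)=\emptyset$ the function $f_{t,s}$ is holomorphic, $Z_1=\grad_{g_1}\psi_1$, and gradient-like is trivial. Where $A_1(\Log_t z)\neq\emptyset$ the paper bounds the Hermitian angle $q=\cos\tau$ \emph{uniformly} away from $1$, \emph{not} by compactness but by the adaptedness of $\psi$ to $C_0$. The key mechanism is a ``Hypothesis~B'' (if $A_1(p)\neq\emptyset$ then $d(p,\rho_p)>\epsilon$, where $\rho_p$ is the minimum of $\psi$ on the affine span of the relevant face), which one checks from adaptedness by a convexity argument. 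This yields a uniform bound on the component of $\bar Z$ orthogonal to the affine span $L_p$, and hence (after comparing $f_{t,s}$ with $f_{t,s}^{\mathrm{core}}$) the desired uniform bound $q<h'<1$, so Corollary~\ref{cor:gradient} applies on the entire non-holomorphic region. Your approach never touches adaptedness for part (\ref{it:weinstein1_}), which is the essential geometric input.

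For (\ref{it:weinstein2_}) your identification of the critical set with the skeleton is incorrect: $\Skel(f^{-1}_{t,1}(0))\cong\Lambda^\infty_\Sigma$ is the union of \emph{stable manifolds}, whereas the critical manifolds of $\psi_1$ are the tori $T_\tau\times\{\tilde\rho_\tau\}$ indexed by faces $\tau$ of $\partial\Tscr$ (these sit inside the skeleton but are much smaller). Your strategy of reducing via the \'etale map \eqref{eq:etalemappants} to a standard pants is what the paper does, but the ``main obstacle'' you flag is precisely the content of the proof: one must expand $\psi\circ\Log$ to second order on $\sum_{i=1}^j t^{-\nu_i}z_i=1$, separate real and imaginary parts of the variations, and show the resulting quadratic form has rank $k+j-2$. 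Adaptedness enters through positivity of $\partial\psi/\partial\rho_i$ at $\rho_\tau$, and a limiting argument in $t$ handles the coupling between the two pieces of the Hessian. None of this is a formal consequence of ``non-degenerate minimum on each face''.
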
 
Theorem \ref{th:weinstein}\eqref{it:weinstein1_} will be proved in \S\ref{sec:gradientlike} and Theorem \ref{th:weinstein}\eqref{it:weinstein2_} will be proved in \S\ref{sec:morsebott}.
\subsection{Gradient-like property}
\label{sec:gradientlike}
The proof of Theorem \ref{th:weinstein}\eqref{it:weinstein1_} consists of a number of steps. We assume that $\psi$ is a K\"ahler potential on $(\CC^\ast)^k$ which in logarithmic  polar
coordinates $(w_j=\rho_j+i\eta_j)_j$ depends only on $(\rho_i)_i\in \RR^k$. We use then notations $g,Z,\omega,\theta$ associated with $\psi$ (as in \S\ref{sec:zhou}). 
We will
also view $g$ as a Riemannian metric on $\RR^k$ (with matrix entries
 $\partial_{\rho_i}\partial_{\rho_j}\psi$, see \eqref{eq:zhoumetric}).  We put $\bar{Z}(\Log_t z)=d(\Log_t)(Z(z))$ which yields 
\begin{equation}
\label{eq:Zscaling}
\bar{Z}(\Log_t z)=Z(z)/\log t.
\end{equation}
It will be convenient to define $\bar{\psi}(q)=\psi(q \log t)/(\log t)^2$ and
to let $\bar{g}$ be the associated metric. In that case one calculates using \eqref{eq:gradg100} 
\begin{equation}
\label{eq:bargrad}
\bar{Z}=\grad_{\bar{g}}(\bar{\psi})\,.
\end{equation}
 
By $|{-}|$ we denote the norm for the Euclidean metric $\sum_i (d\rho_i)^2$ on $\RR^k$ and we let $d(-,-)$ be the corresponding distance.
We do not assume that $\Delta_\nu$ is star-shaped, except in Lemma \ref{lem:simpler}.
We will consider some hypotheses.
\def\thehypothesis{A}\begin{hypothesis} \label{hyp:equivalent} $\psi$ is homogeneous of degree two outside a compact set. 
\end{hypothesis}
\begin{remark} \label{rem:confusion}
It follows from Hypotheses A that $\bar{\psi}=\psi$ outside a compact set. Moreover from \eqref{eq:gradg1}\eqref{eq:bargrad} it follows that $\bar{Z}=\sum_i \rho_i\partial_{\rho_i}$ outside a compact set (i.e.\ the same expression as for $Z$) and this is how it will be used. 
Note that the compact set shrinks to zero if $t\r \infty$ although this fact will not be used.
\end{remark}

\begin{notation}\label{notations:gradientlikesection}
For $\tau$  a face of $\Delta_\nu$ we put $\rho_\tau=\rho_{L_\tau}$ (\S\ref{sec:polyhedral}, Lemma \ref{lem:extremal}). For $p\in \RR^k$ we let $L_{\tau,p}$ 
be the affine space which is the translate of $L_\tau$
passing through $p$.  We let $\rho_{\tau,p}$ be the point of $L_{\tau,p}$ where $\psi$ attains its minimum (see Lemma \ref{lem:extremal}).
We also put $\tau_p=A_0(p)$, $L_p=L_{\tau_p,p}$, $\rho_p=\rho_{\tau_p,p}$.
\end{notation}
We make the following hypothesis.  
\def\thehypothesis{B}\begin{hypothesis} \label{hyp:rhoF} There exists $\epsilon>0$ such that 
if $A_1(p)\neq\emptyset$ then $d(p,\rho_p)> \epsilon$.
\end{hypothesis}
The following result will be shown below.
\begin{proposition} \label{prop:main_taming} Let $Z_1$ be the Liouville vector field on $f^{-1}_{t,s}(0)$ corresponding to the restriction of 
$\psi$. Assume Hypotheses \ref{hyp:equivalent},\ref{hyp:rhoF} hold. Then for $t\gg 0$ one has that $Z_1$ is gradient like for the restriction of $\psi$ to $f^{-1}_{t,s}(0)$ (cf. \cite[(9.9)]{CiEl}); i.e. there is a constant $\delta>0$ such that on $f^{-1}_{t,s}(0)$ we have 
\[
Z_1\psi\ge \delta(|d\psi_1|^2_g+|Z_1|_g^2)
\]
where $\psi_1$ is  the restriction of $\psi$ to $f^{-1}_{t,s}(0)$.
\end{proposition}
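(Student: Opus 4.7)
The plan is to reduce the gradient-like bound to a uniform positive lower bound on the Hermitian angle $\tau$ between the ambient Liouville vector field $Z$ and $\partial f_{t,s}$ on $F := f^{-1}_{t,s}(0)$, and then establish the angle bound using the tropical decomposition together with Hypotheses \ref{hyp:equivalent} and \ref{hyp:rhoF}.

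For the reduction, since $Z = \grad_g \psi$ one has $Z_1\psi = d\psi(Z_1) = g(Z, Z_1)$, and $Z_1$ is the $\omega$-symplectic projection of $Z$ onto $TF = \ker df_{t,s}$, as in the proof of Proposition \ref{prop:fiber}. Setting $a := df_{t,s}$, $a' := \partial f_{t,s}$, $a'' := \bar\partial f_{t,s}$ in the appendix notation, Corollary \ref{cor:gradient} gives pointwise
\[
Z_1\psi = g(Z, Z_1) \;\geq\; \delta'\bigl(|Z|_g^2 + |Z_1|_g^2\bigr),
\]
with $\delta'$ uniform, as soon as $\sin\tau - |a''|/|a'| \geq \sigma > 0$ and $\tau \geq \tau_0 > 0$ hold uniformly on $F$. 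Combined with $|d\psi_1|_g \leq |d\psi|_g = |Z|_g$, this implies the desired gradient-like inequality. By Lemma \ref{lem:Cbound}, the ratio $|a''|/|a'|$ can be made arbitrarily small for $t \gg 0$, so both conditions collapse to a single requirement: a uniform positive lower bound $\sin\tau \geq \sigma > 0$.

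To establish the angle bound, decompose $f_{t,s} = f^{\text{core}}_{t,s} + f^{\text{small}}_{t,s}$ as in \S\ref{sec:tailoring}; the estimates \eqref{eq:parbound}, \eqref{eq:negligible}, \eqref{eq:mainbound} let us replace $\partial f_{t,s}$ by $\partial f^{\text{core}}_{t,s}$ up to a controllable error for $t \gg 0$. Under Hypothesis \ref{hyp:equivalent}, outside a compact set $Z = \sum_j \rho_j(\partial_{w_j} + \partial_{\bar w_j})$ by \eqref{eq:gradg1} (cf.\ Remark \ref{rem:confusion}), and in logarithmic polar coordinates $w_j = \rho_j + i\eta_j$ one computes
\[
\partial f^{\text{core}}_{t,s}(x)(Z) \;=\; \sum_{a_i \in A_0(p)} \alpha_i\, t^{-\nu(a_i)}\, x^{a_i}\,(a_i \cdot \rho),
\]
with $p = \Log_t x$. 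Since $\cos\tau = |\partial f^{\text{core}}_{t,s}(Z)|/(|\partial f^{\text{core}}_{t,s}|_g |Z|_g)$, after using Proposition \ref{prop:bounds} to pass to the standard metric one sees that $\sin\tau$ only degenerates when $\rho$ is close to perpendicular to $\operatorname{span}(A_0(p))$, equivalently (by the first-order condition for a minimum) when $p$ is close to the minimum $\rho_p$ of $\psi$ on $L_p = L_{A_0(p)}$ (Lemma \ref{lem:extremal}). The analysis splits in two. If $A_1(p) \neq \emptyset$, Hypothesis \ref{hyp:rhoF} gives $d(p, \rho_p) > \epsilon$ directly; combined with the strict convexity of $\psi$, the continuity of $\tau \mapsto \rho_\tau$ (Lemma \ref{lem:extremal}), and the uniform metric equivalence (Proposition \ref{prop:bounds}), this yields a uniform lower bound on $\sin\tau$. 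If $A_1(p) = \emptyset$, then $|A_0(p)| \geq 2$ is forced on $F$ (otherwise $f_{t,s}(x)$ is a single non-vanishing monomial), and $f_{t,s}$ is holomorphic in a neighbourhood of $p$; the tropical equations \eqref{eq:Cequations} keep all surviving monomials of comparable magnitude on $C_{A_0(p)}$, and the only possible degeneration is at $\rho_p = \rho_{A_0(p)}$, which can be excluded from $F$ for $t \gg 0$ by a local analysis using Hypothesis \ref{hyp:equivalent} together with the fact that $\psi$ has a positive minimum on any face not containing the origin.

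The main obstacle is turning the geometric picture into a uniform quantitative estimate valid for all $s \in [0,1]$ and all $t \gg 0$, with constants depending only on $A$, $\nu$, $\psi$, $(\epsilon_i)_i$; the key technical inputs are Proposition \ref{prop:bounds} (uniform metric equivalence), Lemma \ref{lem:extremal} (existence, uniqueness and continuity of $\rho_\tau$), and the strict convexity of $\psi$. Careful bookkeeping near the boundary $d(p, \rho_p) = \epsilon$ of Hypothesis \ref{hyp:rhoF} and near the isolated minimum in the case $A_1(p) = \emptyset$ constitutes the most technical aspect.
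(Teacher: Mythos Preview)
Your reduction via Corollary \ref{cor:gradient} and the core/small splitting are correct, but the argument has a genuine gap in the case $A_1(p)=\emptyset$. You try to establish a uniform lower bound $\sin\tau\ge\sigma>0$ everywhere on $F$, but this fails at the critical points of $\psi_1$. Concretely, take $A_0(p)=\{0,e_1\}$ with the standard potential: then $\partial f^{\text{core}}=t^{-\nu_1}z_1\,dw_1$ and one computes $\cos\tau=|\rho_1|/|\rho|$, which equals $1$ exactly when $\rho_j=0$ for $j>1$, i.e.\ when $p=\rho_p$. These points are the skeleton of $F$ and cannot be ``excluded from $F$ for $t\gg 0$''; they are always there. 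The paper's fix is immediate: when $f_{t,s}$ is holomorphic at $z$ (equivalently $A_1(p)=\emptyset$), the symplectic projection coincides with the orthogonal projection, so $Z_1=\grad_{g_1}\psi_1$ and $Z_1\psi_1=|Z_1|_g^2=|d\psi_1|_g^2$, giving $\delta=1/2$ with no angle analysis at all.

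For the case $A_1(p)\neq\emptyset$ your intuition is right, but the mechanism linking Hypothesis \ref{hyp:rhoF} to the angle bound is missing. Your formula $\partial f^{\text{core}}(Z)=\sum_{a_i\in A_0(p)}\alpha_i t^{-\nu(a_i)}x^{a_i}(a_i\cdot\rho)$ does not directly control the ratio against $|\partial f^{\text{core}}|_g|Z|_g$ because the $x^{a_i}$ do not cancel. The paper's key step is to decompose $Z=Z^\perp_{L_p}+Z^{/\!/}$ with $Z^{/\!/}$ parallel to $L_p$; since $(a_i-a_j)\cdot Z^{/\!/}=0$ for $a_i,a_j\in A_0(p)$, one has $Z^{/\!/}(z^{-a}f^{\text{core}})=0$ for any $a\in A_0(p)$, whence $Z^{/\!/}(f^{\text{core}})=z^{-a}\partial(z^a)(Z^{/\!/})\,f^{\text{core}}=-z^{-a}\partial(z^a)(Z^{/\!/})\,f^{\text{small}}$ on $F$, which is small. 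This gives $\cos\tau\lesssim |Z^\perp_{L_p}|_g/|Z|_g$, and then Lemma \ref{lem:perpbound} (which packages Hypothesis \ref{hyp:rhoF} together with the compactness argument of Lemma \ref{lem:R}) bounds the right-hand side by some $h<1$ uniformly. Your claim that ``$\rho$ close to perpendicular to $\operatorname{span}(A_0(p))$'' is equivalent to $p$ near $\rho_p$ is also not right: $p=\rho_p$ means $Z(p)\perp_g L_p$, i.e.\ $\rho$ lies in the $g$-orthogonal complement of the direction space of $L_p$, which is spanned by the \emph{differences} $a_i-a_j$, not the $a_i$ themselves.
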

Next we give an easier to verify criterion to check Hypothesis \ref{hyp:rhoF}.
\begin{lemma} \label{lem:epsilon_choice}
Assume that for every face $C_{\tau}$ in $\Pi_\nu$ we have $\rho_{\tau}\not\in \partial C_\tau$.
Then we may choose $\epsilon$, $\epsilon_1$, $\epsilon_2$ in such a way that Hypothesis \ref{hyp:rhoF} holds.
\end{lemma}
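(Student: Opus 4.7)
The plan is to show directly that for $\epsilon_1<\epsilon_2$ and $\epsilon$ chosen sufficiently small in terms of the geometry, the implication in Hypothesis~\ref{hyp:rhoF} holds.  The key idea is that $A_1(p)\neq\emptyset$ forces $p$ to lie close to $\partial C_{\tau_p}$, while by hypothesis $\rho_{\tau_p}$ is bounded away from $\partial C_{\tau_p}$; a local Lipschitz estimate for $\rho_{\sigma,\cdot}$ transfers this gap into a lower bound on $d(p,\rho_p)$.

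First I would dispose of the case $|\tau_p|=1$: here $L_{\tau_p}=\RR^k$, so $\rho_p$ is the global minimum of the degree-two homogeneous potential $\psi$, namely $0$. On the other hand, $A_1(p)\neq\emptyset$ forces $p$ to lie within $\epsilon_2$ of $\Pi_\nu$, and since $0\notin\Pi_\nu$ by assumption, taking $\epsilon_2<\tfrac{1}{2}d(0,\Pi_\nu)$ gives $|p|\ge\tfrac{1}{2}d(0,\Pi_\nu)$, settling this case.

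For $|\sigma|\ge 2$ with $\sigma:=\tau_p$ and $\sigma':=\tau(p)\supsetneq\sigma$, write $V_\sigma$ for the linear subspace parallel to $L_\sigma$.  Since $p\in C_{\sigma',\epsilon_2}$, estimate~\eqref{eq:compact1} gives $d(p,C_{\sigma'})\le u\epsilon_2$.  Because $\sigma'\supsetneq\sigma$ imposes more equations in \eqref{eq:Ltau} we have $L_{\sigma'}\subset L_\sigma$, hence $C_{\sigma'}\subset L_\sigma$, so $p$ itself lies within $u\epsilon_2$ of $L_\sigma$.  Decomposing $p=p_\parallel+p_\perp$ with $p_\parallel\in V_\sigma$ and $p_\perp\in V_\sigma^\perp$, this means $p_\perp$ is within $u\epsilon_2$ of the single orthogonal image $p_\perp^\ast$ of $L_\sigma$.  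Crucially, $L_{\sigma,p}=p+V_\sigma=p_\perp+V_\sigma$ depends only on $p_\perp$, so $\rho_{\sigma,p}$ factors through the map $p_\perp\mapsto\rho_{\sigma,p_\perp}$; by the implicit function theorem applied to the extremality equation, using that $\Hess(\psi)|_{V_\sigma}$ is positive definite (Lemma~\ref{lem:extremal}), this map is smooth near $p_\perp^\ast$, hence locally Lipschitz with some constant $\kappa_\sigma$.

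Combining these ingredients, for $\epsilon_2$ so small that $p_\perp$ lies in the Lipschitz neighborhood we have $|\rho_{\sigma,p}-\rho_\sigma|\le\kappa_\sigma u\epsilon_2$, and assuming $d(p,\rho_p)\le\epsilon$ the triangle inequality together with $C_{\sigma'}\subset\partial C_\sigma$ yields
\[
d(\rho_\sigma,\partial C_\sigma)\;\le\;d(\rho_\sigma,C_{\sigma'})\;\le\;u\epsilon_2+\epsilon+\kappa_\sigma u\epsilon_2.
\]
By hypothesis $\delta_\sigma:=d(\rho_\sigma,\partial C_\sigma)>0$, so enforcing $\epsilon+(1+\kappa_\sigma)u\epsilon_2<\delta_\sigma$ gives a contradiction.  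Since $\Delta_\nu$ has only finitely many faces, shrinking $\epsilon$ and $\epsilon_2$ to enforce all such bounds simultaneously (together with any $\epsilon_1<\epsilon_2$) proves the lemma.  The main technical issue is the uniform local Lipschitz estimate for $\rho_{\sigma,-}$; this is manageable precisely because $\rho_{\sigma,p}$ depends only on the bounded quantity $p_\perp$, and not on the potentially unbounded $V_\sigma$-direction along $L_\sigma$.
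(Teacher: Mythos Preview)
Your argument is essentially the paper's: with $\sigma=\tau_p=A_0(p)$ and $\sigma'=\tau(p)$, bound $d(p,C_{\sigma'})$ via \eqref{eq:compact1}, bound $d(\rho_{\sigma,p},\rho_\sigma)$ by making $\epsilon_1$ small, and combine these with $d(\rho_\sigma,\partial C_\sigma)>0$ and $C_{\sigma'}\subset\partial C_\sigma$ through the triangle inequality. The paper organizes the constants in the opposite order (first $3\epsilon:=\min_{|\tau|\ge 2}d(\partial C_\tau,\rho_\tau)$, then $\epsilon_2$, then $\epsilon_1$) and runs the chain of inequalities forward rather than by contradiction, but the substance is identical.

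Two small points. First, for the bound $|\rho_{\sigma,p}-\rho_\sigma|<\epsilon$ the paper uses only the \emph{continuity} of $L\mapsto\rho_L$ from Lemma~\ref{lem:extremal} together with \eqref{eq:compact1}; your implicit-function-theorem Lipschitz estimate and the orthogonal decomposition $p=p_\parallel+p_\perp$ are correct but heavier than needed, since a mere ``$<\epsilon$'' bound suffices. Second, your handling of $|\tau_p|=1$ rests on two claims not supplied by the lemma: that $\psi$ is globally homogeneous of degree two (in \S\ref{sec:gradientlike} it is only a strictly convex K\"ahler potential, with Hypothesis~\ref{hyp:equivalent} adding homogeneity outside a compact set, so the global minimum need not be~$0$), and that $0\notin\Pi_\nu$. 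Both hold in the applications but are not among the lemma's hypotheses; the paper's proof does not single out this case.
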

This criterion becomes even simpler to check in the case of a star-shaped triangulation.
\begin{lemma} \label{lem:simpler} Assume $0\in A$ and $\nu(0)=0$ is
  the unique minimal value of $\nu(a_i)_i$. Assume that $\Delta_\nu$
  is a star-shaped 
triangulation of $P$ (see
  \S\ref{sec:stacky},\S\ref{sec:setting}). Assume
  that for all faces $C_\tau$ of $\partial C_0$ we have
  $\rho_\tau\in \relint C_\tau$.  Then, if $C_{\tau}$ is a face of
  $\Pi_\nu$ which is not in $\partial C_0$, one has
  $\rho_{\tau}\not\in C_{\tau}$. In particular the hypotheses for
  Lemma \ref{lem:epsilon_choice} hold. 
\end{lemma}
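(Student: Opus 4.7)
Let $\tau$ be a face of $\Pi_\nu$ not in $\partial C_0$. By the star-shaped hypothesis $\tau\subset\partial P$ and $\sigma:=\tau\cup\{0\}$ is a face of $\Delta_\nu$, so $C_\sigma$ is a face of $\partial C_0$ and $L_\sigma$ is a hyperplane in $L_\tau$. Fix $a_{i_0}\in\tau$; by \eqref{eq:Ltau}, $h(u):=a_{i_0}\cdot u-\nu(a_{i_0})$ is a well-defined affine function on $L_\tau$ that vanishes exactly on $L_\sigma$, and $C_\tau\subset\{h\ge 0\}$. Thus it suffices to show $h(\rho_\tau)<0$. Since $\rho_\sigma$ minimises $\psi$ on $L_\sigma$ and $0\in\sigma$, one has $\grad\psi(\rho_\sigma)\in T_\sigma^\perp=\spantwo\{a_i:a_i\in\tau\}$, so write $\grad\psi(\rho_\sigma)=\sum_{a_i\in\tau}\lambda_i a_i$; and since $a_i\cdot v$ is constant in $a_i\in\tau$ for $v\in T_\tau$, one has the identity $\grad\psi(\rho_\sigma)\cdot v=\bigl(\sum_i\lambda_i\bigr)(a_{i_0}\cdot v)$ for all $v\in T_\tau$. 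The plan is to show $\lambda_{i_0}>0$ for each $a_{i_0}$, whence this identity combined with a descent inequality at $\rho_\sigma$ will force $a_{i_0}\cdot(\rho_\tau-\rho_\sigma)<0$.

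\textbf{Key step: $\lambda_{i_0}>0$.} Set $\tilde\sigma:=\sigma\setminus\{a_{i_0}\}$, a subface of $\sigma$ still containing $0$ with $|\tilde\sigma|\ge 2$; hence $C_{\tilde\sigma}$ is a face of $\partial C_0$ and by hypothesis $\rho_{\tilde\sigma}\in\relint C_{\tilde\sigma}$. The relative-interior condition forces the inactive constraint to be strict, $a_{i_0}\cdot\rho_{\tilde\sigma}<\nu(a_{i_0})=a_{i_0}\cdot\rho_\sigma$, so in particular $\rho_\sigma\ne\rho_{\tilde\sigma}$, while both points lie in $L_{\tilde\sigma}$ and $\rho_{\tilde\sigma}$ is the minimum of $\psi$ there. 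Strict convexity of $\psi$ along the segment then gives $\grad\psi(\rho_\sigma)\cdot w<0$ for $w:=\rho_{\tilde\sigma}-\rho_\sigma\in T_{\tilde\sigma}$. Using $T_{\tilde\sigma}=\{v:a_j\cdot v=0,\ a_j\in\tau\setminus\{a_{i_0}\}\}$ (which follows from $0\in\tilde\sigma$), the expansion $\grad\psi(\rho_\sigma)\cdot w=\sum_j\lambda_j(a_j\cdot w)$ collapses to $\lambda_{i_0}(a_{i_0}\cdot w)=\lambda_{i_0}\bigl(a_{i_0}\cdot\rho_{\tilde\sigma}-\nu(a_{i_0})\bigr)$, and since the bracket is strictly negative, $\lambda_{i_0}>0$. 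This holds for every $a_{i_0}\in\tau$, so $\sum_i\lambda_i>0$ strictly.

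\textbf{Conclusion.} In particular $\grad\psi(\rho_\sigma)\notin T_\tau^\perp=\{\sum c_i a_i:\sum c_i=0\}$, so $v_0:=\rho_\tau-\rho_\sigma\ne 0$. Strict convexity of $\psi$ together with $\psi(\rho_\tau)\le\psi(\rho_\sigma)$ (as $\rho_\tau$ minimises $\psi$ on $L_\tau\supset L_\sigma$) yields $\grad\psi(\rho_\sigma)\cdot v_0<0$, and dividing the identity above (applied to $v_0$) by the positive $\sum_i\lambda_i$ gives $a_{i_0}\cdot v_0<0$, i.e., $h(\rho_\tau)<0$, so $\rho_\tau\notin C_\tau$. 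The hypothesis of Lemma \ref{lem:epsilon_choice} follows at once: for $C_\tau\subset\partial C_0$ the standing assumption gives $\rho_\tau\in\relint C_\tau$, and otherwise we just proved $\rho_\tau\notin C_\tau$, so in both cases $\rho_\tau\notin\partial C_\tau$. The hard part is the positivity $\lambda_{i_0}>0$: Euler's identity for homogeneous $\psi$ of degree two gives only the weaker weighted positivity $\sum_i\lambda_i\nu(a_i)=2\psi(\rho_\sigma)>0$, which is insufficient, and one really needs the strict inequality $a_{i_0}\cdot\rho_{\tilde\sigma}<\nu(a_{i_0})$ coming from adaptedness on the codim-one subface $\tilde\sigma$.
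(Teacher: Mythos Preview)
Your proof is correct, and it takes a genuinely different route from the paper's.

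The paper argues by contradiction: assuming $\rho_\tau\in C_\tau$, it forms the convex hull $M$ of the minima $\rho_{\hat\tau_i}$ on all the neighbouring faces $C_{\hat\tau_i}\subset\partial C_0$ (your $\rho_{\tilde\sigma}$ for varying $a_{i_0}$), and then invokes a separate coordinate-computation lemma (Lemma~\ref{lem:single_point}) to locate the single point $m=L_\tau\cap M$ and a point $k\in[\rho_\tau,m]\cap L_{\hat\tau}$; comparing $\psi$-values along this configuration yields the contradiction. Your argument is instead a direct Lagrange-multiplier computation at the single point $\rho_\sigma$: you write $\nabla\psi(\rho_\sigma)=\sum_{a_i\in\tau}\lambda_i a_i$ and show each $\lambda_{i_0}>0$ by testing against the direction $\rho_{\tilde\sigma}-\rho_\sigma$, then deduce $a_{i_0}\cdot(\rho_\tau-\rho_\sigma)<0$ from the descent direction to $\rho_\tau$. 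Both approaches consume the same input---adaptedness on the codimension-one subfaces $\tilde\sigma=\sigma\setminus\{a_{i_0}\}$---but yours avoids the auxiliary geometric lemma and the contradiction frame, trading them for the observation that the $a_i\in\tau$ are linearly independent (since $\sigma=\tau\cup\{0\}$ is a simplex), which makes the multiplier decomposition well-posed. The payoff is a shorter, more self-contained argument; the paper's version has the mild advantage of being metric-free throughout (no gradients), though this is cosmetic since your $\nabla\psi(\rho_\sigma)\cdot v$ is really just $d\psi_{\rho_\sigma}(v)$.
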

Combining Proposition \ref{prop:main_taming} with Lemmas \ref{lem:epsilon_choice},\ref{lem:simpler} and Definition \ref{def:adapted} yields the proof
of Theorem  \ref{th:weinstein}\eqref{it:weinstein1_}.
\subsubsection{Proofs}
For an affine subspace $L\subset \RR^k$ we  let $\bar{Z}^\perp_L$ be the normal vector field on $L$ 
such that for $q\in L$, $\bar{Z}^\perp_L(q)$ is the projection of $\bar{Z}(q)$ on the normal space for $g$
to $L$ at $q$.
\begin{lemma} \label{lem:R}
\begin{enumerate}
\item Let $q\in L$.
We have $|\bar{Z}^\perp_L(q)|_g=|\bar{Z}(q)|_g$ if and only if $q=\rho_L$. 
\item If Hypothesis \ref{hyp:equivalent} holds then
\[
\forall a>0:\forall \epsilon>0:\exists R>0:\forall L:d(0,L)<a:\forall q\in L: d(0,q)>R\Rightarrow \frac{|\bar{Z}^\perp_L(q)|_g}{|\bar{Z}(q)|_g}<\epsilon.
\]
\end{enumerate}
\end{lemma}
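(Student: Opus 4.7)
The plan is to handle both parts via the orthogonal Pythagorean decomposition of $\bar Z$ relative to $L$, combined with the gradient characterization $\bar Z = \grad_{\bar g}\bar\psi$ from \eqref{eq:bargrad}.

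For part (1), I would write the $g$-orthogonal decomposition $\bar Z(q) = \bar Z^\parallel_L(q) + \bar Z^\perp_L(q)$ with $\bar Z^\parallel_L(q) \in L_0$. Pythagoras yields $|\bar Z|_g^2 = |\bar Z^\parallel_L|_g^2 + |\bar Z^\perp_L|_g^2$, so the asserted equality is equivalent to $\bar Z^\parallel_L(q) = 0$, i.e.\ $\bar Z(q)$ is normal to $L$ at $q$. Pairing this condition with test vectors $v \in L_0$ and using the gradient identity for $\bar Z$, it translates to $q$ being a critical point of the restriction of the potential to $L$; by the strict convexity supplied by Lemma \ref{lem:extremal}, the unique such point is the minimum $\rho_L$.

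For part (2), the plan is to exploit Hypothesis \ref{hyp:equivalent} to reduce to an essentially linear-algebraic estimate. By Remark \ref{rem:confusion}, outside a fixed compact set $K_0 \subset \RR^k$ one has $\bar Z(q) = \sum_i \rho_i \partial_{\rho_i}$, so that $\bar Z(q)$ is just the position vector of $q$. Writing $q = q_0 + u$ with $q_0$ the closest (standard-metric) point of $L$ to the origin and $u \in L_0$, we have $|q_0| \leq d(0, L) < a$. Uniqueness of the $g$-orthogonal decomposition forces the normal projection of $u \in L_0$ to vanish, so $\bar Z^\perp_L(q)$ equals the $g$-projection of $q_0$ onto $L_0^{\perp,g}$, and in particular $|\bar Z^\perp_L(q)|_g \leq |q_0|_g$.

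The conclusion is then a uniform norm comparison. Because $\psi$ is homogeneous of degree two outside a compact set, $g = (\partial_{\rho_i}\partial_{\rho_j}\psi)_{ij}$ is homogeneous of degree zero there, hence takes values in a compact family of positive definite forms determined by its restriction to the unit sphere together with the behaviour on $K_0$. One extracts constants $c_1, c_2 > 0$ with $c_1 \|\cdot\| \leq |\cdot|_g \leq c_2 \|\cdot\|$ holding globally on $\RR^k$, yielding
\[
\frac{|\bar Z^\perp_L(q)|_g}{|\bar Z(q)|_g} \leq \frac{c_2 |q_0|}{c_1 |q|} \leq \frac{c_2 a}{c_1 d(0,q)},
\]
so choosing $R = \max(\operatorname{diam}(K_0)+1,\, c_2 a/(c_1 \epsilon))$ suffices, uniformly in $L$. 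The main subtlety lies in part (1), where one must carefully handle the interplay between the metrics $g$ and $\bar g$ (the latter appearing in the gradient formula for $\bar Z$); this is resolved by carrying out the Pythagorean reduction first and then interpreting the resulting orthogonality via the adapted pairing identity for $\grad_{\bar g}$, so that the criticality of $q$ drops out without residual ambiguity.
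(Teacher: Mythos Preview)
Your approach mirrors the paper's: for (1), both reduce via Pythagoras to $\bar Z(q)\perp_g L$, then invoke $\bar Z=\grad_{\bar g}\bar\psi$ and Lemma~\ref{lem:extremal} --- the paper's proof is equally brief on the $g$ versus $\bar g$ passage you flag, so your acknowledged subtlety matches an omission in the original; for (2), the paper writes the variational identity $|\bar Z^\perp_L|_g/|\bar Z|_g=\min_{v\in T_q(L)}|\bar Z(q)-v|_g/|\bar Z(q)|_g$, observes the bound ``by inspection'' in the Euclidean norm once $\bar Z=\sum_i\rho_i\partial_{\rho_i}$, and transfers via Proposition~\ref{prop:bounds}. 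Your explicit decomposition $q=q_0+u$ with $q_0$ the Euclidean foot of perpendicular makes that inspection step concrete, but is otherwise the same argument.
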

\begin{proof} We have $|\bar{Z}^\perp_L(q)|_g=|\bar{Z}(q)|_g$ if and only if $\bar{Z}(q)\perp_g L$. In other words: if and only
if for all $v\in T_q(L)$: $g(\bar{Z}(q),v)=0$. Since $\bar{Z}=\grad_{\bar{g}} \bar{\psi}$ (see \eqref{eq:bargrad})
we have $\bar{g}(\bar{Z}(q),v)=(v\bar{\psi})(q)$. So we have $|\bar{Z}^\perp_L(q)|=|\bar{Z}(q)|$ if and only if $q$ is an extremal point
for $\psi$ on $L$. By Lemma \ref{lem:extremal} this is only possible if $q=\rho_L$.

To prove the second claim fix $a> 0$, $\epsilon>0$. We note that we have
\begin{equation}
\label{eq:angle_limit}
 \frac{|\bar{Z}^\perp_L(q)|_g}{|\bar{Z}(q)|_g}=\min_{v\in T_q(L)} \frac{|\bar{Z}(q)-v|_g}{|\bar{Z}(q)|_g}.
\end{equation}
Recall that if Hypothesis \ref{hyp:equivalent} holds
then by Remark \ref{rem:confusion} outside a compact set we have $\bar{Z}=\sum_i \rho_i\partial_{\rho_i}$.
By inspection one sees
that a suitable $R$ exists if we replace $|{-}|_g$ by $|{-}|$.
Combining \eqref{eq:angle_limit} with Proposition \ref{prop:bounds} finishes
the proof. 
\end{proof}

\begin{proof}[Proof of Lemma \ref{lem:epsilon_choice}] We put 
\[3\epsilon = \min_{|\tau|\ge 2} d(\partial C_{\tau}, \rho_{\tau}).
\]

We choose $0<\epsilon_2\ll 1$ in such a way that for faces $C_{\tau'}$ in $\Pi_\nu$ and for all 
$p\in C_{\tau',\epsilon_2}$ (see \eqref{eq:tau_epsilon}) we have
$d(p,C_{\tau'})<\epsilon$. This is possible by \eqref{eq:compact1}.

We choose $0<\epsilon_1<\epsilon_2$ in such a way that for all faces $C_\tau$ in $\Pi_\nu$ and for all $p\in C_{\tau,\epsilon_1}$
we have $d(\rho_p,\rho_\tau)<\epsilon$. This is possible by combining \eqref{eq:compact1}  with Lemma \ref{lem:extremal}.

Assume $A_1(p)\neq \emptyset$. Let $\tau'=A_0(p)\cup A_1(p)$, $\tau=A_0(p)$. By definition $p\in C_{\tau',\epsilon_2}\cap C_{\tau,\epsilon_1}$.
We obtain
\begin{align*}
d(p,\rho_p)&\ge d(p,\rho_\tau)-d(\rho_p,\rho_\tau)\\
&\ge d(C_{\tau'},\rho_\tau)-d(p,C_{\tau'})-d(\rho_p,\rho_\tau)\\
&> 3\epsilon-\epsilon-\epsilon\\
&=\epsilon.\qedhere
\end{align*}
\end{proof}

\begin{proof}[Proof of Lemma \ref{lem:simpler}] Assume $C_\tau$ is not in $\partial C_0$ and yet $\rho_\tau\in C_\tau$.  Since $C_\tau$ is not in $\partial C_0$ we have
  $0\not\in \tau$. We follow Convention \ref{conv:starshaped}; i.e. $a_1=0$ and
  $\tau=\{a_2,\ldots,a_s\}$ ($s\ge 3$). Put
  $\hat{\tau}=\tau\cup \{0\}$ and $\hat{\tau}_{i}=\hat{\tau}-\{a_i\}$
  for $i=1,\ldots,s$. Then $C_{\hat{\tau}_i}$ are the neighbouring
  faces of $C_{\hat{\tau}}=C_\tau\cap \partial C_0$. As
  $0\in \hat{\tau}_i$ for $i\neq 1$ we have that $C_{\hat{\tau}_i}$
  for $i\neq 1$ are the neighbouring faces of $C_{\hat{\tau}}$ in
  $\partial C_0$. 
By hypothesis $\rho_{\hat{\tau}}\in  \relint C_{\hat{\tau}}$ and $\rho_{\hat{\tau}_i}\in \relint C_{\hat{\tau}_i}$ for $i=2,\ldots,s$.
By Lemma
  \ref{lem:extremal} we have $\psi(\rho_{\hat{\tau}_i})<\psi(\rho_{\hat{\tau}})$ for all
  $i=2,\ldots,s$.

Let $M$ be the convex hull of $(\rho_{\hat{\tau}_i})_{i=2,\ldots,s}$.
By Lemma \ref{lem:single_point} below (with $\tau=\hat{\tau}$, $\rho_i=\rho_{\hat{\tau}_i}$) $L_\tau(=L_{\hat{\tau}_1})$ and $M$ intersect in a single point $m$ and
moreover the interval $[\rho_\tau,m]$ intersects $L_{\hat{\tau}}$ in a single point $k$. By Lemma \ref{lem:maximum}, $\psi(m)\le \max_{i=2,\ldots,s} \psi(\rho_{\hat{\tau}_i})<\psi(\rho_{\hat{\tau}})\le \psi(k)$. So we get $\psi(m)<\psi(k)\ge \psi(\rho_{\hat{\tau}})\geq \psi(\rho_{\hat{\tau}_1})$. By the strict convexity of $\psi{\mid} L_{\tau}$
this in only possible if $k=\rho_\tau$. But then $\psi(m)<\psi(\rho_\tau)$ which contradicts
the definition of $\rho_\tau$.
\end{proof}
We have used the following lemma which does not depend on the triangulation being star-shaped.
\begin{lemma} \label{lem:single_point} Let $\tau=\{a_{i_1},\ldots,a_{i_s}\}$ be a face of $\Delta_\nu$ and put $\tau_j=\tau-\{a_{i_j}\}$. For $i=2,\ldots,s$ let $\rho_i$ be points in the relative interior of 
$C_{\tau_i}$ and $\rho_1\in C_{\tau_1}$. Let $M$ be the convex hull of $(\rho_i)_{i=2,\ldots,s}$. Then $L_{\tau_1}\cap M$ is a single point $m$ and moreover the interval $[\rho_1,m]$ intersects
$L_\tau$ in a single point.
\end{lemma}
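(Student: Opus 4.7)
The plan is to exploit the linear structure by introducing the affine map $\phi:\RR^k\to\RR^{s-1}$ whose components are the affine functionals $h_l(p):=\Hscr(a_{i_l})(p)-\Hscr(a_{i_1})(p)$ for $l=2,\ldots,s$. By the definitions \eqref{eq:Cequations}, \eqref{eq:Ltau}: $L_\tau=\phi^{-1}(0)$; $L_{\tau_1}=\phi^{-1}(\Delta)$ where $\Delta\subset\RR^{s-1}$ is the diagonal line $\{(t,\ldots,t)\mid t\in\RR\}$; and for $j\ge 2$, $L_{\tau_j}=\phi^{-1}(\RR e_j)$ where $e_j$ is the $j$-th standard basis vector. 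Taking into account the inequalities in \eqref{eq:Cequations}, the local description of $C_{\tau_j}$ yields $\phi(C_{\tau_j})\subset\RR_{\le 0}e_j$ for $j\ge 2$ and $\phi(C_{\tau_1})\subset\RR_{\ge 0}(1,\ldots,1)$. Writing $q_j:=\phi(\rho_j)$, the hypotheses therefore give $q_j=-c_je_j$ with $c_j>0$ for $j=2,\ldots,s$ (strict positivity is precisely where we use that $\rho_j$ lies in the \emph{relative interior} of $C_{\tau_j}$), and $q_1=u(1,\ldots,1)$ with $u\ge 0$.

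For the first claim, I would write any $m\in M$ as a convex combination $m=\sum_{j=2}^s\lambda_j\rho_j$ with $\lambda_j\ge 0$, $\sum_j\lambda_j=1$. Then $\phi(m)=-\sum_j\lambda_jc_je_j$ lies on the diagonal $\Delta$ iff all the scalars $\lambda_jc_j$ coincide, i.e.\ $\lambda_j=\mu/c_j$ for a common $\mu$. The normalization $\sum_j\lambda_j=1$ forces $\mu=(\sum_j 1/c_j)^{-1}>0$, so the coefficients $(\lambda_j)_j$ are uniquely determined. Consequently the point $m=\mu\sum_{j=2}^s\rho_j/c_j$ is uniquely determined; and it does lie in $L_{\tau_1}\cap M$ since $\phi(m)=-\mu(1,\ldots,1)\in\Delta$. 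Hence $L_{\tau_1}\cap M=\{m\}$.

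For the second claim I would parametrize the segment $[\rho_1,m]$ as $p(s):=(1-s)\rho_1+sm$ for $s\in[0,1]$. Applying $\phi$ and using the explicit descriptions above,
\[
\phi(p(s))=\bigl((1-s)u-s\mu\bigr)(1,1,\ldots,1).
\]
The equation $p(s)\in L_\tau=\phi^{-1}(0)$ is thus $(1-s)u=s\mu$, which has the unique solution $s=u/(u+\mu)\in[0,1)$ thanks to $\mu>0$ (note $\rho_1\ne m$ since $\phi(\rho_1)\ne\phi(m)$, so the parametrization is injective). This gives the desired unique intersection point and completes the proof. The argument is essentially linear-algebraic, and the only delicate point is to ensure $c_j>0$ strictly, which is exactly what the relative interior hypothesis on $\rho_j$ ($j\ge 2$) delivers; without strictness both claims could fail.
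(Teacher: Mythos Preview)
Your proof is correct and follows essentially the same approach as the paper's. The paper performs an affine coordinate change so that $\Hscr(a_{i_1})\equiv 0$ and $\Hscr(a_{i_l})(p)=p_l$ for $l\ge 2$, which is precisely a choice of splitting for your map $\phi$; it then carries out the identical computation in these coordinates (your $c_j,u,\mu$ are the paper's $-p^{(j)},p^{(1)},-s$), including the same use of the relative-interior hypothesis to ensure $c_j>0$. Your intrinsic formulation via $\phi$ is marginally cleaner in that the formula $s=u/(u+\mu)$ handles the case $u=0$ without the separate case distinction the paper makes.
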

\begin{proof} After rearranging the elements of $A$ we may assume $\tau=\{a_1,\ldots,a_s\}$. We put $\Hscr_i=\Hscr(a_i)$ (see \S\ref{sec:polyhedral}).
After performing an affine coordinate transformation and replacing $\Hscr_i$ by $\Hscr_i-\Hscr_1$ we may assume that $\Hscr_1$ is the zero
  function and $\Hscr_i(p)=p_i$, for $i=2,\ldots,s$, where $p_i$ is the $i$'th coordinate
  of $p$. According to \eqref{eq:Cequations},\eqref{eq:Ltau} we find that $L_\tau$ (resp. $C_\tau$) are given by
\begin{gather*}
0=p_2=\cdots=p_s\\
(\text{resp. } 0=p_2=\cdots=p_s, \qquad p_j\ge \Hscr_{k}(p)\text{ for $j\le s$, $k>s$}),
\end{gather*}
$L_{\tau_i}$ (resp. $C_{\tau_i}$) for $i=2,\ldots,s$ are given by the equations
\begin{gather*}
0=p_2=\cdots=p_{i-1}=p_{i+1}=\cdots=p_s\\
(\text{resp. } 0=p_2=\cdots=p_{i-1}=p_{i+1}=\cdots=p_s, \quad p_i\le 0, \\\quad p_j\ge \Hscr_{k}(p) \text{ for $j\le s$, $j\neq i$,  $k>s$}),
\end{gather*}
whereas $L_{\tau_1}$ (resp. $C_{\tau_1}$) are given by
\begin{gather*}
p_2=\cdots=p_s\\
(\text{resp. } p_2=\cdots=p_s\ge 0, \quad p_j\ge \Hscr_{k}(p)\text{ for $j\le s$, $s\neq 1$,  $k>s$}).
\end{gather*}
So we have for $j=2,\ldots,s$.
\[
\rho_j=(\underbrace{0,\ldots,0,p^{(j)},0,\ldots,0}_{s\text{ coordinates}},\text{unknown})
\]
where $p^{(j)}<0$ and
\[
\rho_1=(?,p^{(1)},\ldots,p^{(1)},\text{unknown})
\]
$p^{(1)}\ge 0$.
To compute $L_{\tau_1}\cap M$ we have to find the
$(x_i)_{i=2,\ldots,s}$, $x_i\ge 0$, $\sum_i x_i=1$ such that
$ \sum_{j=2}^s x_j \rho_j$ has its $2$ to $s$ coordinates equal.  It is
clear that there is a unique solution given by
$x_j=1/p^{(j)}/(\sum_{j=2}^s 1/p^{(j)})$. So we get
\[
m=(0,s,\ldots,s,\text{unknown})
\]
with $s=1/(\sum_{j=2}^s 1/p^{(j)})<0$.  We find that the unique point in $[\rho_1,m]\cap L_\tau$
is given by $((1/p^{(1)})\rho_1-(1/s) m)/(1/p^{(1)}-1/s)$ if $p^{(1)}\neq 0$ and $\rho_1$ if $p^{(1)}=0$.
\end{proof}

We need the following lemma for the proof of Proposition \ref{prop:main_taming}. 
\begin{lemma} \label{lem:perpbound} Assume that Hypotheses \ref{hyp:equivalent},\ref{hyp:rhoF} hold. Then there exists $h<1$ such that for 
all $p\in \RR^k$ such that $A_1(p)\neq \emptyset$ we have
\begin{equation}
\label{eq:perpbound}
\frac{|\bar{Z}^\perp_{L_p}(p)|_g}{|\bar{Z}(p)|_g}\le h
\end{equation}
\end{lemma}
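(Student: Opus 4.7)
The strategy is to argue by contradiction. Suppose the claimed uniform bound $h < 1$ fails, so there is a sequence $(p_\ell)$ in $K := \{p \in \RR^k : A_1(p) \neq \emptyset\}$ with $r(p_\ell) := |\bar{Z}^\perp_{L_{p_\ell}}(p_\ell)|_g / |\bar{Z}(p_\ell)|_g \to 1$. Since $A$ is finite, only finitely many values are possible for the pair $(A_0(p_\ell), A_1(p_\ell))$, so after passing to a subsequence we may assume that this pair is constant, equal to $(\tau_0, \tau_1)$ with $\tau_1 \neq \emptyset$. In particular $\tau_{p_\ell} = \tau_0$ is constant, so $L_{p_\ell} = L_{\tau_0, p_\ell}$ is a translate of the fixed affine subspace $L_{\tau_0}$. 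The proof then splits according to whether $(p_\ell)$ is bounded.

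In the bounded case, pass to a further convergent subsequence $p_\ell \to p_*$. By the continuity of $L \mapsto \rho_L$ from Lemma \ref{lem:extremal}, the minimizers $\rho_{\tau_0, p_\ell}$ converge to $\rho_{\tau_0, p_*}$. Hypothesis \ref{hyp:rhoF} applied to each $p_\ell$ (using $\rho_{p_\ell} = \rho_{\tau_0, p_\ell}$) gives $d(p_\ell, \rho_{\tau_0, p_\ell}) > \epsilon$, so in the limit $d(p_*, \rho_{\tau_0, p_*}) \ge \epsilon > 0$, and in particular $p_* \neq \rho_{\tau_0, p_*}$. Lemma \ref{lem:R}(1) applied to $L = L_{\tau_0, p_*}$ then yields $|\bar{Z}^\perp_{L_{\tau_0, p_*}}(p_*)|_g < |\bar{Z}(p_*)|_g$; in particular $\bar{Z}(p_*) \neq 0$ (otherwise $p_*$ would be a critical point of $\bar\psi$, forcing $p_* = \rho_{\tau_0, p_*}$). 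Since $L_{\tau_0, p_\ell} \to L_{\tau_0, p_*}$ continuously and $\bar{Z}$ is continuous, we conclude $r(p_\ell) \to |\bar{Z}^\perp_{L_{\tau_0, p_*}}(p_*)|_g / |\bar{Z}(p_*)|_g < 1$, contradicting $r(p_\ell) \to 1$.

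In the unbounded case $|p_\ell| \to \infty$, we exploit amoeba geometry to show $d(0, L_{\tau_0, p_\ell})$ stays bounded so that Lemma \ref{lem:R}(2) applies. Since $\tau_0 = A_0(p_\ell)$, every $a_i \in \tau_0$ satisfies $d(p_\ell, C_{a_i}) < \epsilon_1$, so $p_\ell \in C_{\tau_0, \epsilon_1}$ (noting that $\tau_0 \neq \emptyset$ as each $p_\ell$ lies in some chamber $C_{a_i}$). Applying \eqref{eq:compact1} gives $d(p_\ell, C_{\tau_0}) < u \epsilon_1$, and since $C_{\tau_0} \subset L_{\tau_0}$ also $d(p_\ell, L_{\tau_0}) < u \epsilon_1$. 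Because $L_{\tau_0, p_\ell}$ is the translate of $L_{\tau_0}$ through $p_\ell$, this yields the uniform bound $d(0, L_{\tau_0, p_\ell}) \le d(0, L_{\tau_0}) + u \epsilon_1 =: a$. Lemma \ref{lem:R}(2) applied with this constant $a$ and any $\epsilon' < 1$ produces $R > 0$ such that $r(p_\ell) < \epsilon'$ whenever $|p_\ell| > R$, once again contradicting $r(p_\ell) \to 1$.

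The principal obstacle is the discontinuous dependence of the combinatorial data $(A_0(p), A_1(p))$, and hence of $L_p$, on $p$; this is resolved by the initial subsequence selection which makes this data constant, after which $L_{\tau_0, p_\ell}$ depends continuously on $p_\ell$ and all standard limit arguments become available. The second key point is the uniform control of $d(0, L_{\tau_0, p_\ell})$ needed to invoke Lemma \ref{lem:R}(2), which is delivered by the tropical-amoeba inequality \eqref{eq:compact1}.
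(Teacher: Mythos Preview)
Your proof is correct and follows essentially the same strategy as the paper's, only recast as a sequential contradiction argument rather than a direct construction of the bound. Both proofs rest on the identical three ingredients: Lemma~\ref{lem:R}(1) to handle the bounded regime (the paper phrases this as a supremum over the compact set $\overline{B(0,R)}\setminus O_\tau$), Lemma~\ref{lem:R}(2) for the unbounded regime, and \eqref{eq:compact1} to uniformly bound $d(0,L_{\tau_p,p})$ so that Lemma~\ref{lem:R}(2) applies; your reduction to constant combinatorial data $(A_0,A_1)$ plays the same role as the paper's maximum over the finitely many faces $\tau$.
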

\begin{proof}

We fix a face $\tau$ in $\Delta_\nu$. 
The map $\RR^k\r \RR^k:q\mapsto \rho_{\tau,q}$ (see Notation \ref{notations:gradientlikesection}) is continuous by Lemma \ref{lem:extremal}.
If follows that the map $d:\RR^k\r \RR:q\mapsto d(q,\rho_{\tau,q})$ is continuous as well. 
Put  $O_\tau=\coprod_{q\in \RR^k} (B(\rho_{\tau,q},\epsilon)\cap L_{\tau,q})$ where $\epsilon$ is as in the statement of Hypothesis \ref{hyp:rhoF}. Since  $O_\tau=d^{-1}(]-\infty,\epsilon[)$ we obtain that $O_\tau$ is open.

Now fix $a>0$ and choose $R$ as in Lemma \ref{lem:R} with $\epsilon=1/2$ 
(here we use Hypothesis \ref{hyp:equivalent}). Set
\[
h_{\tau,a}=\max\left(\sup_{q\in \overline{B(0,R)}\setminus O_\tau} \frac{|\bar{Z}^\perp_{L_{\tau,q}}(q)|_g}{|\bar{Z}(q)|_g},1/2\right)
\]
(if $\overline{B(0,R)}\setminus O_\tau=\emptyset$ then $h_{\tau,a}=1/2$). Since $\overline{B(0,R)}\setminus O_\tau$ is compact, the supremum is a maximum and hence we have $1/2\le h_{\tau,a}<1$, where the strict inequality follows from the first claim in Lemma \ref{lem:R}. 
By the choice of $R$ we get for all $q\in \RR^k-O_\tau$ such that $d(0,L_{\tau,q})<a$:
\[
\frac{|\bar{Z}^\perp_{L_{\tau,q}}(q)|_g}{|\bar{Z}(q)|_g}\le h_{\tau,a}.
\]
Now note that by \eqref{eq:compact1}, $p$ is close to $C_{\tau_p}$ and in particular it is close to $L_{\tau_p}$ (the distance can be bounded by a constant independent of $p$).  Since there are only a finite number of possibilities
for $L_{\tau_p}$ we obtain that  there exists $A>0$ such that for all $p\in \RR^k$: $A> d(0,L_{\tau_p,p})$. Hence we obtain if $p\not\in O_{\tau_p}$:
\[
\frac{|\bar{Z}^\perp_{L_{\tau_p,p}}(p)|_g}{|\bar{Z}(p)|_g}\le h_{\tau_p,A}.
\]

Hypothesis \ref{hyp:rhoF} asserts that $A_1(p)\neq \emptyset$, thus $p\not\in O_{\tau_p}$. We conclude
by putting $h=\max_\tau h_{\tau,A}$.
\end{proof}

\begin{proof}[Proof of Proposition \ref{prop:main_taming}]
Let $z\in f^{-1}_{t,s}(0)$. If $f_{t,s}$ is holomorphic at $z$ then $Z_1=\grad_g \psi_1$ and hence $Z_1$ is gradient like.
So assume that $f_{t,s}$ is not holomorphic at~$z$. In particular for $p=\Log(z)/\log t$ we have $A_1(p)\neq\emptyset$.

Throughout we assume $t\gg 0$. 
First assume $Z(z)\neq 0$ and put:
\[
q=\frac{|(\partial f_{t,s}(z))(Z(z))|}{|\partial f_{t,s}(z)|_g |Z(z)|_g}
\]
(thus $\cos q$ is the Hermitian angle between $\partial f_{t,s}(z)$ and $Z(z)$ as defined in \S\ref{sec:hermitian}). Pick $r>0$. 
We first show that it is sufficient to prove $q<1$. If this is the case then we pick $0<r<1-q^2$.
By Lemma \ref{lem:Cbound} we have for $t\gg 0$  and for all $z\in f^{-1}_{t,s}(0)$:
$|\bar{\partial} f_{t,s}(z)|_g< r|{\partial} f_{t,s}(z)|_g$. By Corollary \ref{cor:gradient} we see that $1-q^2>r$ implies the existence of $\delta>0$
such that
\begin{equation}
\label{eq:gformula}
g(Z,Z_1)\ge \delta(|Z|^2_g+|Z_1|^2_g).
\end{equation}
Since $Z=\grad \psi$ this may be rewritten as 
\[
Z_1\psi\ge \delta(|d\psi|^2_g+|Z_1|_g^2)\ge \delta(|d\psi_1|^2_g+|Z_1|^2_g)
\]
which is what we want. If $Z=0$ then \eqref{eq:gformula} is true for any $\delta$ so there is nothing to prove. So we continue to assume $Z(z)\neq 0$.

Fix $h<h'<1$ with $h$ as in Lemma \ref{lem:perpbound}.
We will prove $q<h'$ which implies $q<1$.  For use below put:
\begin{align*}
q'&=\frac{|(\partial f_{t,s}^{\text{core}}(z))(Z(z))|}{|\partial f^{\text{core}}_{t,s}(z)|_g |Z(z)|_g}\\
q''&=\frac{|(\partial f_{t,s}^{\text{core}}(z))(Z^\perp_{L_p}(z))|}{|\partial f^{\text{core}}_{t,s}(z)|_g |Z(z)|_g}.
\end{align*}
Put
\[
c=\frac{|\partial f^{\text{small}}_{t,s}(z)|_g}{|\partial f^{\text{core}}_{t,s}(z)|_g},\qquad c'=\frac{|f^{\text{small}}_{t,s}(z)|}{|\partial f^{\text{core}}_{t,s}(z)|_g}.
\]
Recall that by \eqref{eq:parbound}\eqref{eq:negligible}\eqref{eq:mainbound}, $c$, $c'$ become arbitrarily small (uniformly in $z\in f^{-1}_{t,s}(0)$ for $t\gg 0$).
We get 
\begin{align*}
q&\le \frac{|(\partial f_{t,s}^{\text{core}}(z))(Z(z))|+|(\partial f_{t,s}^{\text{small}}(z))(Z(z))|}{(|\partial f^{\text{core}}_{t,s}(z)|_g- |\partial f^{\text{small}}_{t,s}(z)|_g)|Z(z)|_g}\\
&\le \frac{|(\partial f_{t,s}^{\text{core}}(z))(Z(z))|+|(\partial f_{t,s}^{\text{small}}(z))|_g|Z(z)|_g}{(|\partial f^{\text{core}}_{t,s}(z)|_g- |\partial f^{\text{small}}_{t,s}(z)|_g)|Z(z)|_g}\\
&= \frac{|(\partial f_{t,s}^{\text{core}}(z))(Z(z))|+c|(\partial f_{t,s}^{\text{core}}(z))|_g|Z(z)|_g}{(1-c)|\partial f^{\text{core}}_{t,s}(z)|_g|Z(z)|_g}\\
&= \frac{q'+c}{1-c}.
\end{align*}
Write $Z(p)=Z^\perp_{L_p}(p)+Z^{\quot\!}(p)$ and consider the corresponding pullback under $\log t$ scaling
$Z(z)=Z^\perp_{L_p}(z)+Z^{\quot\!}(z)$ (see \eqref{eq:Zscaling})
We have
\[
(\partial f_{t,s}^{\text{core}})(Z^{\quot\!})=Z^{\quot} (f_{t,s}^{\text{core}}).
\]
Put $\tau=A_0(p)$. Because $Z^{\quot\!}$ is parallel with $L_{p}$, we have $(a_i-a_j)\cdot Z^{\quot}=0$ for $a_i,a_j\in \tau$.
Hence if $a\in \tau$ then $Z^{\quot\!}(z^{-a} f^{\text{core}}_{t,s})=0$. In other words  
\begin{align*}
Z^{\quot\!}(f^{\text{core}}_{t,s})&= Z^{\quot\!}(z^{a}) z^{-a} f^{\text{core}}_{t,s}\\
&=z^{-a} \partial (z^{a})(Z^{\quot}) f^{\text{core}}_{t,s}.
\end{align*}
We obtain
\begin{align*}
|Z^{\quot\!}(f^{\text{core}}_{t,s})|\le |z|^{-a} |\partial (z^{a})|_g|Z^{\quot}|_g| f^{\text{core}}_{t,s}|
&\le C|Z|_g| f^{\text{small}}_{t,s}|\\
&\le Cc'|Z|_g| \partial f^{\text{core}}_{t,s}|_g
\end{align*}
where $C$ is a constant depending only on $A$ and $\psi$. In the second inequality we used $0=f_{t,s}(z)=f^{\text{core}}_{t,s}(z)+f^{\text{small}}_{t,s}(z)$.
We deduce
\begin{align*}
q'&\leq q''+
\frac{|(\partial f_{t,s}^{\text{core}}(z))(Z^{\quot\!}(z))|}{|\partial f^{\text{core}}_{t,s}(z)|_g |Z(z)|_g}\le q''+Cc'
\end{align*}
and hence
\[
q\le \frac{q''+Cc'+c}{1-c}.
\]
Taking into account that $c,c'$ can be made arbitrarily small it follows that is sufficient to prove $q''<h'$.
We get
\begin{align*}
q''&=\frac{|(\partial f_{t,s}^{\text{core}}(z))(Z^\perp_{L_p}(z))|}{|\partial f^{\text{core}}_{t,s}(z)|_g |Z(z)|_g}\\
&\le\frac{|\partial f_{t,s}^{\text{core}}(z)|_g|Z^\perp_{L_p}(z)|_g}{|\partial f^{\text{core}}_{t,s}(z)|_g |Z(z)|_g}\\
&=\frac{|Z^\perp_{L_p}(z)|_g}{ |Z(z)|_g}\\
&=\frac{|\bar{Z}^\perp_{L_p}(p)|_g}{ |\bar{Z}(p)|_g}\\
&\le h<h'.\qedhere
\end{align*}
\end{proof}
\subsection{Morse-Bott property}
\label{sec:morsebott}
The following lemma proves Theorem \ref{th:weinstein}\eqref{it:weinstein2_}. 

Recall that a Morse–Bott function is a smooth function  whose critical set is a closed submanifold and such that the kernel of the Hessian at a critical point equals the tangent space to the critical submanifold.
\begin{lemma}\label{lem:MB} 
Let $\psi:\RR^k-\{0\}\r \RR$ be a potential which is homogeneous of degree two outside a compact set and which is adapted to $C_0$. 
The potential $\psi\circ \Log:f_{t,1}^{-1}(0)\to \RR$ is Morse-Bott for $t\gg 0$. 
\end{lemma}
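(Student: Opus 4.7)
The plan is to combine the gradient-like property from Theorem \ref{th:weinstein}\eqref{it:weinstein1_}, already proved, with the fact that $\psi_1 := \psi \circ \Log$ depends only on the $\rho$-coordinates, and then verify the Morse--Bott non-degeneracy by a local analysis of the tailored hypersurface along the lines of Zhou's argument at the end of p.~18 of \cite{Zhou}.

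First, I would observe $\operatorname{Crit}(\psi_1) = \{Z_1 = 0\}$: the gradient-like inequality $Z_1\psi_1 \ge \delta(|d\psi_1|_g^2 + |Z_1|_g^2)$ shows that if $d\psi_1(z^*) = 0$ then $0 = d\psi_1(Z_1(z^*)) \ge \delta|Z_1(z^*)|_g^2$, forcing $Z_1(z^*) = 0$; the converse is trivial. Next, in the logarithmic polar coordinates $(\rho_j,\eta_j)$ of \S\ref{subsec:some_formulas} the function $\psi_1$ depends only on $\rho$, so its ambient Hessian at $z^*$ equals $\sum_{i,j}(\partial_{\rho_i}\partial_{\rho_j}\psi)(\rho^*)\,d\rho_i\, d\rho_j$, which by \eqref{eq:zhoumetric} is positive-definite on the $\rho$-subspace and identically zero on the $\eta$-subspace $H := \bigoplus_j\RR\partial_{\eta_j}$. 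Consequently the kernel of $\operatorname{Hess}(\psi_1)|_{T_{z^*}f^{-1}_{t,1}(0)}$ is
\[
K_{z^*} := T_{z^*}f^{-1}_{t,1}(0)\cap H,
\]
and the restricted Hessian is positive-definite on any complement of $K_{z^*}$. The Morse--Bott condition thus reduces to the claim that $\operatorname{Crit}(\psi_1)$ is locally a smooth submanifold with $T_{z^*}\operatorname{Crit}(\psi_1) = K_{z^*}$ at each of its points.

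The main obstacle is this final step. Following the argument of Zhou at the end of p.~18 of \cite{Zhou}, I would use the bounds \eqref{eq:parbound}\eqref{eq:negligible}\eqref{eq:mainbound} to reduce, in a neighborhood of any $z^* \in \operatorname{Crit}(\psi_1)$, to the standard translated tailored pants indexed by $\tau = A_0(\Log_t z^*)\cup A_1(\Log_t z^*)$, via the \'etale cover \eqref{eq:etalemappants}. On this standard model the adapted condition (Definition \ref{def:adapted}) together with Lemma \ref{lem:extremal} make the critical locus explicitly computable as a disjoint union of smooth submanifolds whose tangent spaces coincide with $K_{z^*}$. Propagating this identification back to the actual tailored hypersurface requires showing that the subdominant contributions from monomials indexed by $a_i \notin \tau$ -- exponentially small by \eqref{eq:negligible} and with their $C^1$-norms controlled by \eqref{eq:parbound} -- produce at most a smooth perturbation that preserves the tangent-space identification for $t \gg 0$.
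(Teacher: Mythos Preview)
Your Hessian computation contains a genuine error. You assert that the Hessian of $\psi_1=(\psi\circ\Log)|_{f^{-1}_{t,1}(0)}$ at a critical point $z^*$ is obtained by restricting the ambient coordinate Hessian $\sum_{i,j}(\partial_{\rho_i}\partial_{\rho_j}\psi)\,d\rho_i\,d\rho_j$ to the tangent space of the hypersurface. This would be correct only if $z^*$ were a critical point of the \emph{ambient} function $\psi\circ\Log$, but it is not: the paper's Lagrange-multiplier computation \eqref{eq:lambdadef} gives $\lambda\neq 0$ (equivalently $\nabla\psi(\rho^*)\neq 0$, since $\rho^*$ is bounded away from the unique ambient minimum). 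For a constrained critical point the intrinsic Hessian of $g|_S$ differs from the restriction of the coordinate Hessian of $g$ by a term involving $dg$ and the second fundamental form of $S$; in $(\rho,\eta)$-coordinates the constraint $\sum_i t^{-\nu_i}e^{\rho_i+i\eta_i}=1$ is curved, so this correction is nonzero.

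This matters quantitatively: your kernel $K_{z^*}=T_{z^*}f^{-1}_{t,1}(0)\cap H$ has dimension $k-1$, whereas the critical manifold $T_\tau\times\{\tilde\rho_\tau\}$ has dimension $k-j$ with $j=|\tau|$. For any face with $j\ge 2$ your claimed kernel is strictly larger than the tangent space of the critical locus, so the Morse--Bott condition cannot be verified along these lines. The missing piece is exactly the term $\tfrac12\sum_{i=1}^j(\partial\psi/\partial\rho_i)(\Im\gamma_i)^2$ in the paper's formula \eqref{eq:firstform}, which is positive definite (by \eqref{eq:psipositivity}) on $j-1$ of the $\eta$-directions and thereby cuts the kernel down from $k-1$ to $k-j$.

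The paper sidesteps the second-fundamental-form issue by computing in the $z$-coordinates, where, after the \'etale change \eqref{eq:etalemappants}, the tailored hypersurface is \emph{exactly} affine linear near the critical manifold (this is the point of $s=1$: the cutoff functions annihilate the other monomials outright, so no perturbation argument is required). The first-derivative contribution then enters through the nonlinearity of $\Log$ and produces the two-block form \eqref{eq:firstform}--\eqref{eq:secondform}; the $\Re\gamma$-block is not positive definite and its nondegeneracy requires the separate limit argument $t\to\infty$ via Lemmas \ref{lem:corank}--\ref{lem:corank2}.
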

\begin{proof}
Zhou describes the critical manifolds of $f^{-1}_{t,1}(0)$ \cite[Theorem 1,Proposition
5.9]{Zhou}. They are labelled by the faces $\tau$ of $\partial\Tscr$ which are the faces of $\Tscr$ that do not contain the origin. There are critical points $\tilde{\rho}_\tau$ for $\Log_t$
on the
boundary of the amoeba of $f^{-1}_{t,1}(0)$ (i.e. the image of $f^{-1}_{t,1}(0)$ by $\Log_t$), such that the preimages by
$\Log$ are the critical manifolds
$T_{\tau}\times\{\tilde{\rho}_\tau\} \subset S_{N^*}\times N^*_\RR$ where 
$T_\tau=\{\theta\in S_{N^*}\mid \langle\theta,a_i\rangle =0 \text{ for
  each vertex $a_i \in \tau$}\}$ \cite[Proposition 5.5]{Zhou}.
 By \cite[Proposition 4.2]{Zhou}, we may choose for every $\epsilon >0$ a $t\gg 0$ such  $|\tilde{\rho}_\tau-\rho_\tau|<\epsilon$ where $\rho_\tau$ is the minimum of $\psi$ on the face of $C_0$ dual to $\tau$.

We first change the coordinates. Let $\tau'$ be a facet of $\Tscr$ with a face $\tau$. We may relabel $A$ and assume that the vertices of $\tau$ are given by $a_1,\dots,a_j$ and the vertices of $\tau'$ by $a_1,\dots, a_k$.

In a neighbourhood of the critical manifold containing $(y_i)_i:=(t^{(\tilde{\rho}_\tau)_i})_i$,   
$f^{-1}_{t,1}(0)$ is defined by
\begin{equation}
\label{eq:transpants0}
\sum_{i=1}^j t^{-\nu_i}z^{a_i}=1
\end{equation}
(see \cite[proof of Proposition 5.5]{Zhou}).  
 The dependence of $(y_i)_{i=1,\ldots,k}$  on $t$
will be suppressed in the notations for now. Using the \'etale map $\iota:\CC^{\ast k}\r \CC^{\ast k}$ from \eqref{eq:etalemappants} we may reduce to the
case $a_i=e_i$ were $e_i$ is the $i$'th basis vector of $\ZZ^k$. We do this from now so that the equation in a neighbourhood of the critical manifold of $y$ becomes
\begin{equation}
\label{eq:transpants}
\sum_{i=1}^j t^{-\nu_i}z_i=1.
\end{equation}
The critical manifold containing $y$ is the $T_\tau=(S^1)^{k-j}$-orbit of $y$. It is given by
\[
\{z\mid |z_i|=|y_i|\text{ for $i>j$}\}.
\]
In particular it has dimension
$k-j$. We want to prove that for $t\gg 0$, $\psi\circ\Log$ restricted
to \eqref{eq:transpants} is Morse-Bott in a neighbourhood of this
critical manifold. By the $(S^1)^{k-j}$-action we only have to consider a neighbourhood of $y$.
Thus in a neighbourhood of $y$, $\psi\circ \Log$
subject to \eqref{eq:transpants} should be described by a quadratic
function of rank $2k-2-(k-j)=k+j-2$. 

In a neighbourhood of $\Log y=\log(t) \tilde{\rho}_\tau$, $\psi$ will be homogeneous of degree two for $t\gg 0$. So we may pretend that $\psi$ is homogeneous of
degree everywhere, except in $\rho=0$. We do this from now.

Before we embark on the proof we note that $\psi$ is still adapted to $C_0$ when computed for \eqref{eq:transpants0} and \eqref{eq:transpants} (for \eqref{eq:transpants0} this follows
from Lemma \ref{lem:extremal} and the passage to  \eqref{eq:transpants} amounts to a linear coordinate change).  Note that $C_0$ computed for \eqref{eq:transpants}  is the region
$\rho_i\le \nu_i$ for $i=1,\ldots,j$ and moreover $(\rho_\tau)_i=\nu_i$ for  $i=1,\ldots,j$. By looking at the coordinate half lines we obtain from the definition of adaptedness
\begin{equation}
\label{eq:psipositivity}
\frac{\partial \psi}{\partial \rho_i}(\rho_\tau)>0.
\end{equation}
To continue we first need to understand the variation of $\Log$. Since
\[
\log |z_i|=\frac{1}{2}\left( \log z_i+\log \bar{z}_i\right)
\]
(for suitable branches of $\log$) we obtain
\[
\log |z_i+\delta_i|=\log|z_i|+ \frac{1}{2}\left(\frac{\delta_i}{z_i}+
\frac{\bar{\delta}_i}{\bar{z}_i}\right)
-\frac{1}{4}\left(\frac{\delta_i^2}{z_i^2}+\frac{\bar{\delta}_i^2}{\bar{z}_i^2}\right)+\cdots
\]
Also
\begin{align*}
\psi(\Log(z+\delta))&=\psi\left(\left(\log|z_i|+\frac{1}{2}\left(\frac{\delta_i}{z_i}+
\frac{\bar{\delta}_i}{\bar{z}_i}\right)
-\frac{1}{4}\left(\frac{\delta_i^2}{z_i^2}+\frac{\bar{\delta}_i^2}{\bar{z}_i^2}\right)+\cdots\right)_{i=1}^k\right)\\
&=\psi(\Log(z))
+\frac{1}{2}\sum_{i=1}^k \frac{\partial \psi}{\partial \rho_i}(\Log(z))\left(\frac{\delta_i}{z_i}+
\frac{\bar{\delta}_i}{\bar{z}_i}\right)
\\
&\qquad-\frac{1}{4}\sum_{i=1}^k \frac{\partial \psi}{\partial \rho_i} (\Log(z))\left(\frac{\delta_i^2}{z_i^2}+\frac{\bar{\delta}_i^2}{\bar{z}_i^2}\right)
\\
&\qquad\qquad+\frac{1}{8}\sum_{i,l=1}^k \frac{\partial^2\psi}{\partial\rho_i\partial\rho_l}(\Log(z))  \left(\frac{\delta_i}{z_i}+
\frac{\bar{\delta}_i}{\bar{z}_i}\right)\left(\frac{\delta_l}{z_l}+
\frac{\bar{\delta}_l}{\bar{z}_l}\right)
+\cdots
\end{align*}
We have assumed that $y$ is a critical point of $\psi\circ \Log$, restricted to \eqref{eq:transpants}. In other words (writing $\rho=\Log y=\log(t) \tilde{\rho}_\tau$)
\begin{equation}
\label{eq:critical}
\sum_{i=1}^j t^{-\nu_i} \delta_i=0\Rightarrow  \sum_{i=1}^k \frac{\partial \psi}{\partial \rho_i}(\rho)\left(\frac{\delta_i}{y_i}+
\frac{\bar{\delta}_i}{\bar{y}_i}\right)=0
\end{equation}
which implies the existence of $\lambda\in \CC$ such that 
\begin{align}
\frac{1}{y_i}\frac{\partial\psi}{\partial \rho_i}(\rho)& =\lambda t^{-\nu_i}&&(\text{for $i=1,\ldots,j$})\label{eq:lambdadef}\\
\frac{\partial\psi}{\partial \rho_i}(\rho)&=0&&(\text{for $i=j+1,\ldots,k$}).
\end{align}
Since $\rho$ is not a global critical point for $\psi$ we have $\lambda\neq 0$. Multiplying \eqref{eq:lambdadef} with $y_i$ and summing over $i$ yields by \eqref{eq:transpants}
\[
\lambda=\sum_{i=1}^j \frac{\partial\psi}{\partial \rho_i}(\rho)
\]
so that
\begin{equation}
\label{eq:ytdependence}
y_i=t^{\nu_i} \frac{\displaystyle\frac{\partial\psi}{\partial \rho_i}(\rho)}{\displaystyle\sum_{l=1}^j \frac{\partial\psi}{\partial \rho_l}(\rho)}\qquad
\text{(for $i=1,\ldots,j$)}.
\end{equation}
Assuming $\sum_{i=1}^j t^{-\nu_i} \delta_i=0$ we obtain by \eqref{eq:critical}:
\begin{align*}
\psi(\Log(y+\delta))&=\psi(\rho)
-\frac{1}{4}\sum_{i=1}^j \frac{\partial \psi}{\partial \rho_i}(\rho) \left(\frac{\delta_i^2}{y_i^2}+\frac{\bar{\delta}_i^2}{\bar{y}_i^2}\right)
\\
&\qquad +\frac{1}{8}\sum_{i,l=1}^k \frac{\partial^2\psi}{\partial\rho_i\partial\rho_l}(\rho)  \left(\frac{\delta_i}{y_i}+
\frac{\bar{\delta}_i}{\bar{y}_i}\right)\left(\frac{\delta_l}{y_l}+
\frac{\bar{\delta}_l}{\bar{y}_l}\right)+\cdots
\end{align*}
If we put $\gamma_i=\delta_i/y_i$ then this becomes
\begin{align*}
\psi(\Log(y+\delta))&=\psi(\rho)-\frac{1}{4}\sum_{i=1}^j \frac{\partial \psi}{\partial \rho_i}(\rho) \left(\gamma_i^2+\bar{\gamma}^2_i\right)
\\
&\qquad+\frac{1}{8}\sum_{i,j=1}^k \frac{\partial^2\psi}{\partial\rho_i\partial\rho_l}(\rho)  \left(\gamma_i+
\bar{\gamma}_i\right)\left(\gamma_l+
\bar{\gamma_l}\right)+\cdots\\
&=\psi(\rho)-\frac{1}{2} \sum_{i=1}^j\frac{\partial \psi}{\partial \rho_i}(\rho)((\Re \gamma_i)^2-
(\Im\gamma_i)^2)\\
&\qquad+\frac{1}{2}\sum_{i,l=1}^k\frac{\partial^2\psi}{\partial\rho_i\partial\rho_l}(\rho)  \left(\Re\gamma_i\right)\left(\Re\gamma_l\right)+\cdots
\end{align*}
under the conditions
\begin{equation}
\label{eq:gammaconstraint}
\begin{aligned}
\sum_{i=1}^j t^{-\nu_i}y_i\Re\gamma_i&=0\\
\sum_{i=1}^j t^{-\nu_i}y_i\Im\gamma_i&=0.
\end{aligned}
\end{equation}
So we need to compute the rank of the quadratic form
\[
-\frac{1}{2} \sum_{i=1}^j\frac{\partial \psi}{\partial \rho_i}(\rho)((\Re \gamma_i)^2-
(\Im\gamma_i)^2)+\frac{1}{2}\sum_{i,l=1}^k\frac{\partial^2\psi}{\partial\rho_i\partial\rho_l}(\rho)  \left(\Re\gamma_i\right)\left(\Re\gamma_l\right)
\]
under the constraints \eqref{eq:gammaconstraint}.
This is equivalent to computing the ranks of
\begin{equation}
\label{eq:firstform}
\sum_{i=1}^j\frac{\partial \psi}{\partial \rho_i}(\rho)(\Im \gamma_i)^2 \quad\text{assuming}\quad \sum_{i=1}^j t^{-\nu_i}y_i\Im\gamma_i=0,
\end{equation}
\begin{multline}
\label{eq:secondform}
-\frac{1}{2} \sum_{i=1}^j\frac{\partial \psi}{\partial \rho_i}(\rho)(\Re \gamma_i)^2+\frac{1}{2}\sum_{i,l=1}^k\frac{\partial^2\psi}{\partial\rho_i\partial\rho_l}(\rho)  \left(\Re\gamma_i\right)\left(\Re\gamma_l\right) \\ \quad\text{assuming}\quad \sum_{i=1}^j t^{-\nu_i}y_i\Re\gamma_i=0.
\end{multline}
To continue we note that 
\begin{equation}
\label{eq:taulim}
\lim_{t\r\infty} (\rho/\log t)=\lim_{t\r\infty} \tilde{\rho}_\tau=\rho_\tau.
\end{equation}
Since $\partial\psi/\partial \rho_i(\rho)>0$ for $t\gg 0$, by \eqref{eq:psipositivity} and \eqref{eq:taulim}, the form in 
\eqref{eq:firstform} is positive definite and hence so is its restriction. A positive definite
quadratic form has maximal rank so we deduce that \eqref{eq:firstform} has rank $j-1$.

We rewrite \eqref{eq:secondform} as 
\begin{equation}
\label{eq:secondformbis}
-\frac{1}{2} \log t\sum_{i=1}^j\frac{\partial \psi}{\partial \rho_i}(\rho/\log t)(\Re \gamma_i)^2+\frac{1}{2}\sum_{i,l=1}^k\frac{\partial^2\psi}{\partial\rho_i\partial\rho_l}(\rho/\log t)  \left(\Re\gamma_i\right)\left(\Re\gamma_l\right).
\end{equation}
Now
we eliminate $\Re \gamma_j$ from \eqref{eq:secondformbis} using the fact that $\sum_{i=1}^j t^{-\nu_i}y_i \Re \gamma_i=0$ (denoting the
resulting vector by $\Re \hat{\gamma}$) so that \eqref{eq:secondformbis}
can be written as 
\[
(\Re \hat{\gamma})^t(\log(t) A(t)+B(t))(\Re \hat{\gamma})
\]
where $A(t),B(t)$ are symmetric $k-1\times k-1$-matrices where the former has rank $j-1$ (we argue as for  \eqref{eq:firstform}) and the latter is positive definite (using
the fact that $(\partial^2\psi/\partial\rho_i\partial\rho_l)(\rho)$ is positive definite).

We now let $t\r \infty$. It follows from \eqref{eq:ytdependence}\eqref{eq:taulim}\eqref{eq:psipositivity} that for $t\r\infty$ the constraints
on $(\Re\gamma_i)_i$ becomes 
\[
\sum_{i=1}^j C_i\Re\gamma_i=0
\]
for suitable constants $C_i>0$. From this one deduces that  $A(\infty):=\lim_{t\r\infty} A(t)$, $B(\infty):=\lim_{t\r\infty} B(t)$ exist and have
the same properties as $A(t),B(t)$. It now remains to show that $\det(\log(t) A(t)+B(t))\neq 0$ for $t\gg 0$. This follows from Lemma \ref{lem:corank2} below with $s=1/\log t$.
\end{proof}

We used Lemma \ref{lem:corank2} below for which we first need the following lemma.
\begin{lemma} \label{lem:corank}
Let $C$, $D$ be symmetric real $l\times l$-matrices with $D$ positive definite. Then the order of the zero of $\det(C+sD)$ in $s=0$ is the corank of $C$.
\end{lemma}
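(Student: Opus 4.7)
The plan is to use the simultaneous diagonalization theorem for a pair of real symmetric matrices, one of which is positive definite. Since $D$ is positive definite we may write $D=P^TP$ for some invertible real matrix $P$; equivalently, $D$ admits a symmetric positive definite square root $D^{1/2}$. Set $C' = D^{-1/2} C D^{-1/2}$. Then $C'$ is symmetric and hence orthogonally diagonalizable with real eigenvalues $\lambda_1,\ldots,\lambda_l$.

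The key calculation is the identity
\[
C + sD = D^{1/2}(C' + sI)D^{1/2},
\]
which gives
\[
\det(C+sD) = \det(D)\prod_{i=1}^l (\lambda_i + s).
\]
Hence the order of the zero of $\det(C+sD)$ at $s=0$ is exactly the number of indices $i$ for which $\lambda_i = 0$, i.e.\ the dimension of $\ker(C')$.

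Finally, the map $C \mapsto D^{-1/2}CD^{-1/2}$ is a congruence transformation by the invertible matrix $D^{-1/2}$, so it preserves the rank. Therefore $\dim\ker(C') = \dim\ker(C)$, which by definition is the corank of $C$. This finishes the proof. There is no real obstacle here; the only thing to be careful about is using the positive square root $D^{1/2}$ (which exists and is invertible since $D$ is positive definite) rather than an arbitrary factorization, so that the resulting $C'$ is symmetric and its eigenvalues are real.
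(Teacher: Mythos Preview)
Your proof is correct and is essentially the same as the paper's: both reduce to the case $D=I$ via a factorization $D=P^TP$ (the paper uses $P=\Sigma^{1/2}E$ from the spectral decomposition, you use $P=D^{1/2}$), then diagonalize the resulting symmetric matrix and read off the order of vanishing as the number of zero eigenvalues, which equals the corank by invariance of rank under congruence.
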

\begin{proof} We write $D=E^t\Sigma E$ where $\Sigma$ is a diagonal matrix with positive entries and $E$ is real and orthogonal. 
Since $D$ is positive definite, $\Sigma$ has strictly positive entries. Put $E'=\Sigma^{1/2} E$ so that $D=E^{\prime t}E'$. We get
$\det(C+sD)=\det(\Sigma) \det (E^{\prime -t} C E'+sI)$. Hence without loss of generality we may and we will assume $D=I$. Now write $C=F^t \Gamma F$ where $\Gamma$ is diagonal and $F$ is real
and orthogonal. Then we get $\det(C+sI)=\det(\Gamma+s I)$. Hence we may assume that $C$ is diagonal and this case is clear.
\end{proof}
\begin{lemma}
\label{lem:corank2}
Let $C(s)$, $D(s)$ be symmetric $l\times l$-matrices depending continuously on $s\in [0,\epsilon[$ for some $\epsilon>0$. Assume that $D(s)$ is positive definite and that $\rk C(s)$ is  constant.
Then $\det(C(s)+sD(s))\neq 0$ for $s\in ]0,\epsilon'[$ for suitable $\epsilon'>0$.
\end{lemma}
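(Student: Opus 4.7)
Set $k=l-\operatorname{corank} C(s)$, which is constant on $[0,\epsilon)$ by assumption, so $\ker C(s)$ has constant dimension $k$. My plan is to reduce to a block-diagonal situation, then apply a Schur-complement identity to extract the factor of $s^k$ coming from the kernel.

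First I would construct, for $s$ in a neighbourhood $[0,\epsilon')$ of $0$, a continuous family of orthogonal matrices $U(s)$ whose last $k$ columns form an orthonormal basis of $\ker C(s)$. The existence of such $U(s)$ follows from standard perturbation theory: since $C(s)$ is symmetric with rank $r:=l-k$ for all $s\in [0,\epsilon)$, its nonzero eigenvalues stay bounded away from $0$ on a neighborhood of each point, so the spectral projector $P(s)=\frac{1}{2\pi i}\oint_\gamma (zI-C(s))^{-1}\,dz$ (with $\gamma$ a sufficiently small circle around $0$) varies continuously in $s$, and applying Gram--Schmidt to $P(s)$ applied to a fixed basis of $\ker C(0)$ produces the required continuous orthonormal basis.

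In these coordinates
\[
U(s)^t C(s) U(s)=\begin{pmatrix} C_1(s) & 0\\ 0 & 0\end{pmatrix},\qquad
U(s)^t D(s) U(s)=\begin{pmatrix} D_{11}(s) & D_{12}(s)\\ D_{12}(s)^t & D_{22}(s)\end{pmatrix},
\]
where $C_1(s)$ is an $r\times r$ symmetric invertible matrix depending continuously on $s$ (so $C_1(s)$ remains invertible for $s$ near $0$), and $D_{22}(s)$ is positive definite, as the restriction of the positive definite form $D(s)$ to the subspace spanned by the last $k$ coordinates. Taking determinants, using $\det U(s)=\pm1$, and applying the Schur complement formula (the bottom-right $k\times k$ block $sD_{22}(s)$ is invertible for $s>0$) gives
\[
\det(C(s)+sD(s))=s^k\,\det D_{22}(s)\cdot\det\!\bigl(C_1(s)+sD_{11}(s)-sD_{12}(s)D_{22}(s)^{-1}D_{12}(s)^t\bigr).
\]
For $s>0$ small, the first factor is positive, the second is positive (positive definiteness of $D_{22}$), and the third tends to $\det C_1(0)\neq 0$; hence the product is nonzero on some interval $]0,\epsilon'[$.

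The only genuinely nontrivial step is the first one, namely producing a continuous orthogonal frame adapted to $\ker C(s)$; everything else is a routine block-matrix computation. This is also where the constant-rank hypothesis is used in an essential way, since without it the zero eigenvalue of $C(0)$ could split and drift away at a rate slower than $s$, obstructing the Schur reduction.
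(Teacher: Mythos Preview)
Your argument is correct apart from a harmless typo in the first line (you write $k=l-\operatorname{corank} C(s)$ but then use $k$ throughout as the corank, with $r=l-k$ the rank; presumably you meant $k=\operatorname{corank} C(s)$), and the construction of $U(s)$ is slightly under-specified since you only build the last $k$ columns --- one must also extend to a full continuous orthonormal frame, e.g.\ by applying $I-P(s)$ to a basis of $(\ker C(0))^\perp$ and running Gram--Schmidt there as well. With those cosmetic fixes the Schur-complement computation goes through exactly as you wrote.

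The paper takes a different and somewhat slicker route that sidesteps the frame construction entirely. It first proves a preliminary lemma: for fixed symmetric $C,D$ with $D$ positive definite, the order of vanishing of the polynomial $s\mapsto\det(C+sD)$ at $s=0$ equals $\operatorname{corank} C$ (diagonalise $D$ to the identity, then $C$). Then it introduces a second variable and considers $P_t(s)=\det(C(t)+sD(t))$ as a degree-$l$ polynomial in $s$ with coefficients continuous in $t$. For $t=0$ one finds $\epsilon''>0$ with $P_0(s)\neq 0$ on $]0,\epsilon''[$. By the preliminary lemma and the constant-rank hypothesis, exactly $\operatorname{corank} C(t)$ roots of $P_t$ are pinned at $s=0$ for every $t$, so by continuity of roots the remaining roots stay outside $]0,\epsilon''/2[$ for $t\in[0,\delta[$; taking $\epsilon'=\min(\delta,\epsilon''/2)$ and restricting to the diagonal $t=s$ gives the conclusion. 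Your approach has the advantage of being completely explicit and producing the exact factorisation $\det(C(s)+sD(s))=s^k\cdot(\text{nonvanishing})$; the paper's approach trades that explicitness for avoiding any perturbation-theoretic construction of a moving frame.
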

\begin{proof} Since $\det(C(0)+s D(0))$ is a non-trivial polynomial (as $\det D(0)\neq 0$) it has at most $l$ roots. Hence there is some $\epsilon\ge \epsilon''>0$ such that
$\det(C(0)+s D(0))$ has no roots in $]0,\epsilon''[$. 
Since the multiplicity of the zero $s=0$ in the polynomial $\det(C(t)+sD(t))$ for $s,t\in [0,\epsilon[$ is independent of $t$ by the hypotheses and Lemma \ref{lem:corank}
  there exists $\delta>0$ such that if $t\in [0,\delta[$ then $\det(C(t)+sD(t))$ has no zeroes in $]0,\epsilon''/2[$. Then $\epsilon'=\min(\delta,\epsilon''/2)$ has the required property.
\end{proof}

\subsubsection{Cocores}
We record here a lemma that we need for a generation of $\WF(D^{\tl})$.
\begin{lemma}\label{lem:properly_embedded}
The unstable manifolds (cocores) of $D^{\tl}$ are properly embedded in $D^{\tl}$.
\end{lemma}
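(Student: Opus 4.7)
The plan is to combine the Morse--Bott Weinstein structure on $D^{\tl}$ established in Theorem~\ref{th:weinstein} with the compactness of the skeleton provided by Theorem~\ref{thm:Zhou}. First I would recall that, by Theorem~\ref{th:weinstein} and Lemma~\ref{lem:MB}, the Liouville vector field $Z_1$ is gradient-like for the Morse--Bott function $\psi_1=\psi\circ\Log|_{D^{\tl}}$, whose critical submanifolds are the compact tori $C_\tau=T_\tau\times\{\tilde\rho_\tau\}$ indexed by faces $\tau$ of $\partial\Tscr$. By the stable/unstable manifold theorem in the Morse--Bott setting, each cocore $W^u(C_\tau)$ is a smooth injectively immersed submanifold of $D^{\tl}$ admitting the structure of a vector bundle over the compact base $C_\tau$; explicitly it can be written as $W^u(C_\tau)=D_\tau\cup\phi^{[0,\infty)}_{Z_1}(S_\tau)$, where $D_\tau$ is a small closed disc bundle about $C_\tau$ (compact, from the Morse--Bott normal form) and $S_\tau=\partial D_\tau$ is a compact sphere bundle.

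Next I would invoke compactness of the skeleton. By Theorem~\ref{thm:Zhou}, $\Skel(D^{\tl})$ is homeomorphic to the FLTZ-Legendrian $\Lambda^\infty_\Sigma$, which is a closed subset of the compact contact manifold $T^{\ast,\infty}(S_{N^\ast})=S_{N^\ast}\times S^\infty N_\RR$ and therefore compact. For any Liouville manifold of finite type (i.e.\ with compact skeleton) the Liouville flow yields a diffeomorphism $\partial^\infty D^{\tl}\times\RR\xrightarrow{\sim} D^{\tl}\setminus\Skel(D^{\tl})$ with $\partial^\infty D^{\tl}$ compact; this is the standard trivialization of the conical end. Moreover, under this trivialization $\psi_1$, being proper on $D^{\tl}$ (as the restriction of the proper function $\psi$ on $(\CC^\ast)^k$ from Corollary~\ref{eq:stein}) and strictly increasing along $Z_1$-flow lines (by the gradient-like inequality of Proposition~\ref{prop:main_taming}), is proper in the $\RR$-direction uniformly over the compact base.

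Finally I would assemble the conclusion. For any $x\in W^u(C_\tau)\setminus C_\tau$, gradient-likeness and properness of $\psi_1$ force $\psi_1(\phi^s_{Z_1}(x))\to+\infty$ as $s\to\infty$, so $x\notin\Skel(D^{\tl})$; hence $W^u(C_\tau)\cap\Skel(D^{\tl})=C_\tau$, and the $Z_1$-invariant locus $W^u(C_\tau)\setminus C_\tau$ corresponds under the trivialization of the previous paragraph to $\Lambda_{C_\tau}\times\RR$, where $\Lambda_{C_\tau}\subset\partial^\infty D^{\tl}$ is the image of the compact sphere bundle $S_\tau$ under the projection $D^{\tl}\setminus\Skel(D^{\tl})\to\partial^\infty D^{\tl}$, and is therefore compact. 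Thus $W^u(C_\tau)$ identifies with the union of the compact piece $C_\tau$ and the cylinder $\Lambda_{C_\tau}\times\RR$ with compact base, on which $\psi_1$ is proper; since any compact $K\subset D^{\tl}$ has bounded $\psi_1$-image, its intersection with $W^u(C_\tau)$ is contained in $C_\tau\cup(\Lambda_{C_\tau}\times(-\infty,T])$ for some $T$, which is compact. The main nontrivial input is the compactness of the skeleton from Theorem~\ref{thm:Zhou} together with the fact that every nonskeletal orbit escapes to infinity; everything else is a routine manipulation of finite-type Liouville manifolds.
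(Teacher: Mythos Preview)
Your argument has a genuine gap at the crucial step. You claim that for $x\in W^u(C_\tau)\setminus C_\tau$, gradient-likeness and properness of $\psi_1$ force $\psi_1(\phi^s_{Z_1}(x))\to+\infty$ as $s\to\infty$. This is not true in general: gradient-likeness only tells you that $\psi_1$ is strictly increasing along non-constant orbits, but it does not prevent the forward orbit from converging to a \emph{different} critical submanifold $C_{\tau'}$ with $\psi_1(\tilde\rho_{\tau'})>\psi_1(\tilde\rho_\tau)$. In that case $\psi_1$ stays bounded, $x$ lies in the skeleton, and your identification $W^u(C_\tau)\cap\Skel(D^{\tl})=C_\tau$ fails. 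The same issue resurfaces when you project $S_\tau$ to $\partial^\infty D^{\tl}$: that projection is only defined on the complement of the skeleton, so you are implicitly assuming what you need to prove. Compactness of the skeleton (which in any case is automatic for a finite-type Liouville manifold and does not require Theorem~\ref{thm:Zhou}) is a red herring here.

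Ruling out such connecting trajectories is exactly the content of the paper's proof, and it genuinely uses the fine structure of the skeleton from \cite{Zhou}, not just its compactness. The paper considers the cocore $U_{t,\tau}$ through a \emph{generic} point $(t,\tilde\rho_\tau)$ of the critical torus. If $U_{t,\tau}$ were not proper, some forward trajectory would land on another critical point $(t',\tilde\rho_{\tau'})$. Zhou's description of the stable manifolds as $T_{\tau'}\times W_{\tau'}$ forces $t=t'$, and the adjacency relations among the $W_{\tau'}$ force $\tau$ to lie in the closure of $\tau'$; but for $\tau'\neq\tau$ the subtorus $T_{\tau'}\subsetneq T_\tau$ is a proper subtorus, so a generic $t\in T_\tau$ cannot lie in $T_{\tau'}$. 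This contradiction is the missing ingredient in your approach.
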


\begin{proof}
We use the notation as in the first paragraph of the proof of Lemma \ref{lem:MB}. 
Let $T_\tau\times W_\tau$ be the stable manifold \cite[Proposition 5.7]{Zhou}\footnote{We use the opposite terminology as in loc.cit., i.e. for us the stable manifold is downward flowing.}, $t$ a generic point of $T_\tau$, $U_{t,\tau}$  the unstable manifold of $(t,\tilde{\rho}_\tau)$.

If $U_{t,\tau}$ is not properly embedded, then there will be some trajectory from $(t,\tilde{\rho}_\tau)$  to $(t',\tilde{\rho}_{\tau'})$ where $(t',\tilde{\rho}_{\tau'})$ is another critical point. 

A stable trajectory ending in $(t',\tilde{\rho}_{\tau'})$ will contain points of the form $(t',\rho)$ so that $t=t'$. 
Also, $\tilde{\rho}_{\tau}$ is in the closure of $W_{\tau'}$, hence $\tau$ is in the closure of $\tau'$ by \cite[Theorem 1]{Zhou}. However, a generic element of $T_\tau$ will not be contained in $T_{\tau'}$ by \cite[(3.2)]{Zhou} if $\tau' \neq \tau$. Thus, $\tau'=\tau$, a contradiction.
\end{proof}

\section{Moser's lemma}

In this section we give a proof of  Moser's lemma for families of Liouville manifolds, Theorem \ref{lem:moser2}. See Remark \ref{rem:moser_generality} for a comment about its generality. 

The basic objects that we use here are introduced in \S\ref{sec:liouville}.

\subsection{Families of Liouville domains and Liouville manifolds}

\subsubsection{Classifying generating Liouville domains}\label{sec:class_gen_Lio_dom}
 Let $N$ be a Liouville manifold with generating Liouville domain $M$. Recall that the \emph{skeleton} of $N$ is
 \begin{equation}
   \label{eq:capformula}
 \Skel(N)=\bigcap_{s\leq 0}\phi_{Z_\theta,s}M.
   \end{equation}
Since the skeleton can be equivalently described as the set of points of $N$ that do not escape under the flow of $Z_\theta$, we have
   \begin{equation}
     \label{eq:coprod}
  M=\Skel(N) \coprod \left(\bigcup_{s\le 0} \phi_{Z_\theta,s}(\partial M)\right).
\end{equation}
In particular we obtain:
\begin{lemma}
  \label{lem:uniqueboundary}
 A generating Liouville domain is uniquely determined by its boundary.
 \end{lemma}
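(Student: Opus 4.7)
The plan is to read off the conclusion directly from the displayed decomposition \eqref{eq:coprod}, which expresses any generating Liouville domain $M$ as the disjoint union of two pieces: the skeleton $\Skel(N)$ and the negative Liouville flow of $\partial M$. The skeleton is an intrinsic invariant of $N$ and does not depend on the choice of $M$, while the second piece $\bigcup_{s\le 0}\phi_{Z_\theta,s}(\partial M)$ visibly depends only on $\partial M$ (and on the ambient Liouville flow on $N$). So both summands are determined by $N$ together with $\partial M$, and hence so is their union $M$.

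Concretely, if $M_1$ and $M_2$ are two generating Liouville domains in $N$ with $\partial M_1=\partial M_2$, then applying \eqref{eq:coprod} to each gives
\[
M_i=\Skel(N)\coprod\Bigl(\bigcup_{s\le 0}\phi_{Z_\theta,s}(\partial M_i)\Bigr),\qquad i=1,2,
\]
and the right-hand sides coincide, so $M_1=M_2$.

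There is no real obstacle here: the only thing to be careful about is that \eqref{eq:capformula} and \eqref{eq:coprod} are stated for an arbitrary generating Liouville domain, so the formula $\Skel(N)=\bigcap_{s\le 0}\phi_{Z_\theta,s}M$ already encodes the fact that the skeleton is independent of the particular choice of generating $M$. Once that is noted, the lemma is immediate from \eqref{eq:coprod}.
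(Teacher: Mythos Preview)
Your proof is correct and is exactly the argument the paper has in mind: the lemma is stated right after \eqref{eq:coprod} with the words ``In particular we obtain'', so the paper's proof is simply that $M$ is determined by $\Skel(N)$ and $\partial M$ via that decomposition, the skeleton being intrinsic to $N$.
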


Generating Liouville domains are easily classified.
\begin{lemma} \label{lem:generation} Let $(N,\theta)$ be a Liouville manifold.
\label{item:h} $\partial M_i$ for $i=1,2$ are boundaries of generating Liouville domains then there exists a unique function $h:N\setminus\Skel(N)\r \RR$
  invariant under the Liouville flow such that
  $\partial M_2=\phi_{Z_\theta,h}(\partial M_1)$. Moreover $h$ is smooth.
\end{lemma}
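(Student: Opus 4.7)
My plan is to reduce everything to a time-of-flight function along Liouville orbits. The key preliminary observation — which is where the real content lies — is that for each $i$, every Liouville orbit in $N\setminus \Skel(N)$ meets $\partial M_i$ in \emph{exactly one} point. Existence of such an intersection follows from \eqref{eq:coprod} together with the generation property: one has $N = \Skel(N)\cup \bigcup_{s\in \RR} \phi_{Z_\theta,s}(\partial M_i)$, and because $\Skel(N)$ is $\phi_{Z_\theta}$-invariant and disjoint from $\partial M_i$, every orbit outside the skeleton meets $\partial M_i$. Uniqueness is the main technical point: if $y,\phi_{Z_\theta,s}(y)\in \partial M_i$ with $s>0$, then by the outward-pointing condition the orbit leaves $M_i$ immediately for $t>0$ and lies in $M_i^\circ$ for $t$ slightly less than $s$; taking the first re-entry time $t^*\in (0,s]$ one gets a point of $\partial M_i$ at which $Z_\theta$ is inward-pointing, a contradiction.

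With this in hand I would define $p_i:N\setminus \Skel(N)\r\RR$ by the condition $\phi_{Z_\theta,-p_i(x)}(x)\in \partial M_i$, and set $h:=p_1-p_2$. Flow-invariance of $h$ is immediate from the identity $p_i(\phi_{Z_\theta,s}(x))=p_i(x)+s$. For $y\in \partial M_1$ we have $p_1(y)=0$, so $\phi_{Z_\theta,h(y)}(y)=\phi_{Z_\theta,-p_2(y)}(y)\in \partial M_2$ by construction; the induced map $\partial M_1\r \partial M_2,\; y\mapsto \phi_{Z_\theta,h(y)}(y)$ is bijective because it factors through the orbit projections $\partial M_1\leftarrow (N\setminus \Skel(N))/\!\sim\; \r\partial M_2$, both of which are bijections by the first step. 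This gives existence. For uniqueness, any flow-invariant $h'$ with $\phi_{Z_\theta,h'}(\partial M_1)=\partial M_2$ must satisfy $h'(y)=-p_2(y)$ for $y\in \partial M_1$, again by uniqueness of the crossing time, and flow-invariance then forces $h'=h$ on all of $N\setminus \Skel(N)$.

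Smoothness is purely local. Given $x_0\in N\setminus \Skel(N)$, set $y_0:=\phi_{Z_\theta,-p_i(x_0)}(x_0)\in \partial M_i$ and choose a small neighbourhood $U\subset \partial M_i$ of $y_0$. Transversality of $Z_\theta$ to $\partial M_i$ provides, via the flow box theorem, an open embedding
\[
U\times (-\epsilon,\epsilon)\hookrightarrow N,\qquad (y,t)\mapsto \phi_{Z_\theta,t}(y),
\]
and composing with $\phi_{Z_\theta,p_i(x_0)}$ yields a chart around $x_0$ in which $p_i$ is the affine coordinate $p_i(x_0)+t$. Hence both $p_1$ and $p_2$ are smooth, and therefore so is $h$. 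The only step that requires genuine care is the outward-pointing argument establishing uniqueness of the crossing point; the rest is bookkeeping.
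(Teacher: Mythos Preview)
Your proof is correct and follows essentially the same route as the paper: both arguments rest on the fact that the flow map $\phi_i:\partial M_i\times\RR\to N\setminus\Skel(N)$, $(m,s)\mapsto\phi_{Z_\theta,s}(m)$, is a diffeomorphism, and then read off $h$ from $\phi_2^{-1}\phi_1$. The paper simply asserts this diffeomorphism and extracts $h$ as the (negated) $\RR$-coordinate of $\phi_2^{-1}\phi_1$, whereas you unpack the same content explicitly---proving unique crossing via the outward-pointing condition and smoothness via a flow-box chart---so your version is more detailed but not materially different.
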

\begin{proof}
Since the skeleton is given by the orbits that do not escape to infinity we have
  diffeomorphisms
  \[
   \phi_i: \partial M_i\times \RR\r N\setminus \Skel(N):(m,s)\mapsto \phi_{Z_\theta,s}(m).
 \]
 The composition
 \[
  \partial M_1\times \RR \xrightarrow{\phi_2^{-1}\phi_1} \partial M_2\times \RR
\]
is a diffeomorphism invariant under translation in $\RR$. The function
$h$ is the composition
\[
 \partial M_1\times \RR
  \xrightarrow{\phi_2^{-1}\phi_1} \partial M_2\times \RR\xrightarrow{(m,s)\mapsto -s} \RR.\qedhere
  \]
\end{proof}
\subsubsection{Interpolating Liouville domains}
\label{sec:interpolating}
Let $(N,\theta)$ be a Liouville manifold. We will say that submanifold $K\subset N$ is a \emph{Liouville boundary} if it is the boundary
of a generating Liouville domain $M$. By \eqref{eq:coprod} $M$ is determined by $K$ via $M=\Skel(N)\cup\left(\bigcup_{s\le 0} \phi_{Z_\theta,s} K\right)$ and
by
Lemma \ref{lem:generation}
the
Liouville boundaries $K'$ in $N$ are in 1-1 correspondence with smooth functions $h:N-\Skel(N)\r \RR$, invariant under the Liouville flow, via
$K'=\phi_{Z_\theta,h}(K)$.

\medskip

Let
$(M_i)_{i=1,\ldots,t}$ be a collection of generating Liouville domains in $N$. We take 
$(a_i)_{i=1,\ldots,t}\in \RR$ be such that $\sum_i a_i=1$. Then we define
an \emph{interpolated Liouville domain} $M$
as follows. Pick $j\in \{1,\ldots,t\}$. Then there exist smooth
functions $h_{ji}$ on $N-\Skel(N)$, invariant under the Liouville flow, such that
$\partial M_i=\phi_{Z_\theta,h_{ji}}(\partial M_j)$ (in particular $h_{jj}=0$). Put $h_j=\sum_i a_i h_{ji}$. Then it is easy to see that
$K:=\phi_{Z_\theta,h_j}(\partial M_j)$ is independent  of $j$\footnote{Indeed, the uniqueness of $h_{ji}$ implies that $h_{ji}+h_{lj}+h_{il}=h_{ii}=0$ and $h_{il}=-h_{li}$, from which it follows that $h_j+h_{lj}=\sum_i a_i (h_{ji}+h_{lj})=\sum_ia_i h_{li}$, and hence $\phi_{Z_\theta,h_j}(\partial M_j)=\phi_{Z_\theta,h_j}\phi_{Z_\theta,h_{lj}}(\partial M_l)=\phi_{Z_\theta,h_l}(\partial M_l)$.}, and moreover it is a Liouville boundary as explained in the first paragraph. We let $M$ be the resulting Liouville domain.
We will use the ad hoc notation $M=\sum_i a_i M_i$.
\begin{remark} \label{rem:finite}
  If $a_j=0$ then it follows from the construction that $M=\sum_{i\neq j} a_i M_i$. So we can make sense of $\sum_{i\in I} a_i M_i$ for any collection
  of real numbers $(a_i)_{i\in I}$ provided that at most a finite number are non-zero.
\end{remark}

\subsubsection{Generating family of Liouville domains with submersive boundary}
\begin{definition}
Let $B$ be a manifold. A family of manifolds (with boundary) is a surjective submersion $M\r B$.
We will often  informally write such a family as $(M_t)_{t\in B}$.
We say that a family has \emph{submersive boundary} if
 $\partial_h M:=\coprod_{t\in B} \partial M_t$ is a surjective submersion as well. 
 If $B$ has a boundary (or even corners), then we also require the submersivity condition for the corresponding
 strata on $\partial_h M$.
\end{definition}
In this section we discuss families of Liouville domains $N\r B$. It turns out that the results of the previous sections for single Liouville domains generalize directly to families
if they contain a family of generating Liouville domains with submersive boundary. Unfortunately imposing submersivity on the boundary is too inconvenient in practice.
Luckily it turns out that the existence of a  family of generating Liouville domains with submersive boundary is true under seemingly much weaker
hypothesis.  The following result will be the main result of this section.

\begin{proposition} \label{prop:main} Let $N\r B$ be a family of Liouville manifolds containing locally over $B$ generating families of Liouville domains. Then $N$ contains a generating family
  of Liouville domains with submersive boundary.
\end{proposition}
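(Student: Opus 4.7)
The plan is to glue the local generating families via a partition of unity using the interpolation procedure of \S\ref{sec:interpolating}, and then to verify that the resulting family of boundary hypersurfaces is submersive over $B$.

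First I would choose a locally finite open cover $(U_i)_{i \in I}$ of $B$ such that over each $U_i$ there exists a generating family of Liouville domains $M_i \subset N|_{U_i}$, together with a smooth partition of unity $(a_i)_{i \in I}$ subordinate to $(U_i)$. For each $t \in B$ only finitely many $a_i(t)$ are nonzero, and for those indices the fiber $M_{i,t}$ is defined; so by Remark \ref{rem:finite} the fiberwise interpolated Liouville domain $M_t := \sum_i a_i(t) M_{i,t}$ is well-defined, and I would take $M := \coprod_{t \in B} M_t$ to be the candidate family.

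Next I would establish smoothness of $M$ together with its boundary. Working locally near some $t_0 \in B$, I would fix an index $j$ with $t_0 \in U_j$, shrink $U_j$ so that every $i$ with $U_i \cap U_j \ne \emptyset$ and $a_i \not\equiv 0$ near $t_0$ has $t_0 \in U_i$, and observe that for $t$ close to $t_0$ the construction of \S\ref{sec:interpolating} expresses $\partial M_t$ as $\phi_{Z_{\theta_t}, h_t}(\partial M_{j,t})$ with $h_t = \sum_i a_i(t)\, h_{ji,t}$, where $h_{ji,t}$ is the unique Liouville-flow-invariant function characterizing $\partial M_{i,t}$ relative to $\partial M_{j,t}$. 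The functions $h_{ji,t}$ depend smoothly on $(t,m)$ by an application of the implicit function theorem to the smooth family of flows $(s,t,m) \mapsto \phi_{Z_{\theta_t}, s}(m)$, using that $Z_{\theta_t}$ is transverse to $\partial M_{j,t}$ by the Liouville domain condition. Hence $h_t$ is smooth, and the total space boundary $\partial_h M$ is a smooth hypersurface in the total space, while $M$ itself is a codimension-zero submanifold with boundary.

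Finally, to see that $\partial_h M \to B$ is a submersion, note that at any point $(m,t) \in \partial_h M$ the fiber tangent space $T_m(\partial M_t)$ is a hyperplane in $T_m N_t$; since $N \to B$ is a submersion, any $v \in T_t B$ lifts to some $\tilde v \in T_{(m,t)} N$, and by subtracting a suitable multiple of the fiberwise vector $Z_{\theta_t}(m)$ (which is transverse to $\partial M_t$) we obtain a lift in $T_{(m,t)}(\partial_h M)$. The same argument applies to each boundary/corner stratum of $B$ using the corresponding stratum of the total space. I expect the main obstacle to be the smooth dependence of the $h_{ji,t}$ on $t$ near points of $U_i \cap U_j$ where the two boundaries come close to meeting the skeleton: one must check that the transversality needed for the implicit function theorem is uniform in $t$ so that $h_{ji}$ is genuinely a smooth function on the open subset of $N|_{U_i \cap U_j}$ obtained by removing the fiberwise skeleta. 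Once this is in place, Remark \ref{rem:finite} ensures that the sum defining $h_t$ is locally finite and smooth, and the rest of the verification is routine.
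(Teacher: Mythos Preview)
Your outline diverges from the paper in one essential respect. The paper does \emph{not} interpolate the given local families directly; it first replaces each one by a family with submersive boundary (Lemma \ref{lem:local}). This is done by choosing a connection on $N\to B$, enlarging a single fiber to $M'_t:=\phi_{Z_\theta,s}(M_t)$ for some $s>0$, and parallel-transporting $M'_t$ over a small contractible neighbourhood $V$. The resulting $M'\to V$ is trivial by construction, hence has submersive boundary; after shrinking $V$ it still contains the original family (so is generating) and $Z_\theta$ remains outward-pointing on $\partial_h M'$ by openness. Only then does the paper interpolate, via Corollary \ref{cor:inter}, which is formulated and proved solely for submersive-boundary inputs (it rests on Lemmas \ref{lem:completions2} and \ref{lem:radial}, both stated under that hypothesis).

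Your direct route has a gap that you have partly flagged. For your implicit-function step to yield smoothness of $h_{ji}$ on the total space you need $Z_{\theta_t}$ to be transverse to the total-space hypersurface $\partial M_j$, not merely to the fiber boundary $\partial M_{j,t}$; these coincide exactly where $\partial M_j\to B$ is submersive, which is not among your hypotheses. One can argue that every point of $\partial_h M_j=\coprod_t\partial M_{j,t}$ is automatically a submersive point of $\partial M_j$: a local defining function $G_j$ for $\partial M_j$ restricted to $N_{t_0}$ must have nonzero differential at any point of $\partial M_{j,t_0}$, for otherwise $M_{j,t_0}=\{G_j|_{N_{t_0}}\ge 0\}$ would fail to be a manifold with boundary there, or a nearby fiber $M_{j,t}$ would acquire a spurious boundary component enclosing a region on which the Liouville flow points inward, contradicting the generating condition. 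But this argument has to be made explicit, and it is exactly what the paper's parallel-transport detour purchases without effort. Likewise, your step~4 shows $\partial_h M\to B$ is a submersion at each of its points, but not yet that $\partial_h M$ is closed in $N$ (equivalently, that it equals the manifold-theoretic boundary of $M$); the paper gets this for free because the submersive-boundary interpolation machinery (Corollary \ref{cor:inter}) already outputs a family with submersive boundary.
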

\subsubsection{Reminder about connections}
\label{sec:connection}
  Recall that if $\gamma:M\r B$ is a family then a connection
  \[
    \nabla:\gamma^\ast (TB)\r TM
    \]
is a splitting of the natural map of vector bundles \cite[Definition 9.3]{MR1202431}
\[
  TM\r \gamma^\ast(TB)
\]
A connection always exists. Indeed it can be constructed locally by lifting sections and then be globalized using a partition of unity. A connection provides
a recipe for lifting vector fields on $B$ to vector fields on $M$.

If $M$ has a submersive boundary $\partial_h M$ then   one may check locally that the map\footnote{If $B$ has boundary then we should consider $T(M,\partial M)\r \gamma^*T(B,\partial B)$.} 
\begin{equation}
  \label{eq:connectionboundary}
    \gamma:T(M,\partial_hM)\r \gamma^\ast (TB)
  \end{equation}
  is surjective
  where $T(M,\partial_h M)$ is the vector bundle whose sections are vector fields on $M$ parallel to $\partial_h M$.
  A connection
  \[
    \nabla:\gamma^\ast(TB)\r T(M,\partial_h M)
  \]
  is now defined as a splitting of \eqref{eq:connectionboundary}. Such a connection is a recipe for lifting vector fields on $B$ to vector fields on $M$ parallel to $\partial_h M$.
 \subsubsection{Some useful results}
 For brevity we only give proofs in the case that $B$ has no boundary. The generalizations to the case where $B$ also has a boundary
   are straightforward.

 \begin{lemma}[Ehresmann with boundary]
  \label{lem:ehresmann} Let $\phi:M\r B$ be a proper family of manifolds with submersive boundary.
Then $\phi$ is a locally trivial.
\end{lemma}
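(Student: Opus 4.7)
The proof follows the classical Ehresmann strategy, with the extra ingredient that the connection must produce lifts tangent to the horizontal boundary $\partial_h M$, which is exactly what is arranged by \eqref{eq:connectionboundary} once one has submersivity of $\partial_h M \to B$. Since local triviality is a local statement on $B$, I may and will assume that $B$ is diffeomorphic to an open cube $(-1,1)^n$ centered at a chosen point $b_0$ with coordinates $(t_1,\ldots,t_n)$, so that the coordinate vector fields $\partial_{t_i}$ form a frame of $TB$.

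The first step is to choose a connection $\nabla:\gamma^\ast(TB)\r T(M,\partial_h M)$ as described in \S\ref{sec:connection}; such a splitting exists globally by a partition of unity argument, and its image lies in the subbundle of vector fields tangent to $\partial_h M$. Setting $X_i=\nabla(\partial_{t_i})$, we obtain $n$ commuting-with-projection (but not with each other) vector fields on $M$, each tangent to $\partial_h M$, which project to $\partial_{t_i}$ under $d\gamma$.

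The second, and main, step is to establish that the flows $\phi_{X_i,s}$ are defined for all $s$ in a uniform neighborhood of $0$ starting from any point of $M_{b_0}$, and in fact (after possibly shrinking $B$) for all $s$ such that the projected trajectory remains in $B$. This is where properness is crucial: a maximal integral curve of $X_i$ through $m\in M_{b_0}$ projects to the straight line $s\mapsto b_0+se_i$ in $B$; if the integral curve were defined only on a bounded open interval, its image would have to escape every compact subset of $M$, but the fiber bundle over the compact closure of its projection is itself compact by properness, giving a contradiction. Tangency of $X_i$ to $\partial_h M$ guarantees that boundary points remain boundary points along the flow, which is what lets the argument go through equally at interior and boundary points.

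Finally, I compose the flows: define
\[
\Psi:M_{b_0}\times B \r M,\qquad \Psi(m,(t_1,\ldots,t_n))=\bigl(\phi_{X_n,t_n}\circ\cdots\circ \phi_{X_1,t_1}\bigr)(m).
\]
This is defined on all of $M_{b_0}\times B$ by the previous step, it projects to the second coordinate, and it restricts to the identity on $M_{b_0}\times\{b_0\}$. It is a local diffeomorphism because at each point its differential splits as the identity on the vertical direction plus the lifts $X_i$ on the horizontal directions, which together span $TM$. It is bijective because the inverse is obtained by running the flows backwards in the reverse order; properness again guarantees that these inverse flows are defined on all of $B$. Hence $\Psi$ is the required trivialization over $B$, and since it maps $\partial M_{b_0}\times B$ to $\partial_h M$ (by tangency of the $X_i$ to the horizontal boundary), it restricts to a trivialization of $\partial_h M\r B$ as well. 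The main obstacle is really just the completeness of the horizontal lifts in the boundary-with-corners setting, but this is handled by the standard properness-plus-escape argument sketched above.
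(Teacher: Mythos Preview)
Your proof is correct and follows exactly the approach indicated in the paper: choose a connection $\nabla:\phi^\ast(TB)\to T(M,\partial_h M)$ tangent to the horizontal boundary (as in \S\ref{sec:connection}) and use parallel transport. The paper compresses this into a single sentence, while you have written out the standard details of the Ehresmann argument (completeness of the horizontal lifts via properness, composition of flows), but the content is the same.
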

\begin{proof}
  We choose a connection $\nabla:\phi^\ast(TB)\r T(M,\partial_h M)$ and then we use parallel transfer for this connection in the usual way.
  \end{proof}
\begin{lemma} \label{lem:completions}
  Let $(M,\theta)\r B$ be a  family of 
  Liouville domains with submersive boundary. Then
\[
\gamma:\partial_h M\times ]-\infty,0]\r M:(m,s)\mapsto \phi_{Z_{\theta},s}(m)
\]
is an injective immersion.
\end{lemma}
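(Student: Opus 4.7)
My plan is to reduce the statement to two essentially independent facts: a fiberwise statement for injectivity, and an infinitesimal transversality statement for the immersion property. The central point to keep in mind is that the Liouville vector field $Z_\theta$ on $M$ is defined fiberwise (it lives in the vertical tangent bundle $T(M/B)$), so its flow $\phi_{Z_\theta,s}$ preserves every fiber $M_t$ and hence so does the map $\gamma$.

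First I would handle injectivity. Given $\gamma(m_1,s_1)=\gamma(m_2,s_2)$, the fiber-preservation observation forces $m_1,m_2\in\partial M_t$ for a common $t\in B$, and the equality becomes $\phi_{Z_\theta,s_1}(m_1)=\phi_{Z_\theta,s_2}(m_2)$ inside the single Liouville domain $M_t$. Assuming $s_1\ge s_2$ we get $m_1=\phi_{Z_\theta,s_2-s_1}(m_2)$ with $s_2-s_1\le 0$; but $Z_\theta{\mid}\partial M_t$ is outward pointing by the definition of a Liouville domain, so the backward flow of the boundary point $m_2$ enters the interior of $M_t$ and, by the same outward-pointing property at any later return, cannot reach $\partial M_t$ again for any strictly negative time. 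Thus $s_1=s_2$ and $m_1=m_2$.

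Next I would establish the immersion property. Since $\phi_{Z_\theta,s}$ is a diffeomorphism of $M$ covering the identity on $B$, it suffices by the chain rule to verify injectivity of $d\gamma$ at points of the form $(m,0)\in\partial_h M\times\{0\}$. At such a point $T_{(m,0)}(\partial_h M\times ]-\infty,0])=T_m(\partial_h M)\oplus\RR\partial_s$, and $d\gamma$ maps this to $T_m(\partial_h M)+\RR Z_\theta(m)\subset T_m M$. The task is therefore to show $Z_\theta(m)\notin T_m(\partial_h M)$. Here the submersive boundary hypothesis is crucial: because both $M\to B$ and $\partial_h M\to B$ are submersions and $\partial_h M=\coprod_t\partial M_t$, we have $T_m(\partial_h M)\cap T_m M_t=T_m(\partial M_t)$. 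But $Z_\theta(m)\in T_m M_t$ (vertical) and $Z_\theta(m)\notin T_m(\partial M_t)$ (outward pointing), so $Z_\theta(m)\notin T_m(\partial_h M)$ as required.

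The subtle point, and the only place that really uses the submersivity of $\partial_h M\to B$ as opposed to mere vertical transversality, is the identity $T_m(\partial_h M)\cap T_m M_t=T_m(\partial M_t)$. The inclusion ``$\supseteq$'' is tautological; for ``$\subseteq$'' one argues that the composite $T_m(\partial_h M)\hookrightarrow T_m M\to T_{\gamma(m)}B$ is surjective (by submersivity of $\partial_h M\to B$) with kernel exactly $T_m(\partial M_t)$, so any vertical vector in $T_m(\partial_h M)$ lies in $T_m(\partial M_t)$. I expect the remainder of the verification to be routine once this transversality identity is in place; the main obstacle is thus purely bookkeeping, namely being careful that the family versions of the usual single-Liouville-domain statements (outward-pointing boundary, collar by inverse flow) assemble cleanly under the submersive-boundary assumption, and that the boundary/corner strata are treated uniformly when $B$ itself has boundary.
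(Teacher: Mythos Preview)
Your proposal is correct and follows essentially the same approach as the paper: the paper dismisses injectivity as ``clear'' (your fiberwise argument spells this out), and for the immersion property both you and the paper reduce to points $(m,0)$ and then show $Z_\theta(m)\notin T_m(\partial_h M)$ by combining verticality of $Z_\theta$, outward-pointing on $\partial M_t$, and the transversality of $M_t$ to $\partial_h M$ coming from submersivity. Your identity $T_m(\partial_h M)\cap T_m M_t=T_m(\partial M_t)$ is exactly the content of the paper's line ``since $\partial_h M\to B$ is a submersion, $M_t$ is transversal to $\partial_h M$,'' just unpacked.
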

\begin{proof} $\gamma$ is clearly injective. So it is suffices to show that
  $d\gamma$ is invertible.
  Since $\gamma(m,s+r)=\phi_{Z_{\theta},r}(\gamma(m,s))$ it suffices
  to do this for a point of the form $(m,0)$ with $m\in \partial M_t$. It
  is then sufficient that $Z_{\theta_t}$ restricted to $\partial_h M$ is
  transversal to $\partial_h M$. Now $Z_{\theta_t}$ is parallel to $M_t$ and since
  $\partial_h M\r B$ is a submersion, $M_t$ is transversal to $\partial_h M$.
  Finally $Z_{\theta_t}$ is transversal to $\partial M_t$. This finishes the proof.
\end{proof}
\begin{remark} The hypothesis in Lemma \ref{lem:completions}
  that $\partial_h M\r B$ is a submersion is necessary for the conclusion to hold.
  Indeed if the conclusion holds then $\partial_h M\times ]-\infty,0]\r B$ is a submersion but this map factors through $\partial_h M\r B$. 
\end{remark}
\begin{definition}
  \label{def:completion}
If $M\r B$ is a family of Liouville domains then we define the completion $\hat{M}$ using the formula \eqref{eq:completionformula}.
\end{definition}

\subsubsection{Interpolation in families}
Let $B$ be a manifold, possibly with boundary. Let $(N,\theta)\r B$ be a family of Liouville manifolds.
A family of Liouville boundaries is an inclusion $K\hookrightarrow N$
such that
\begin{itemize}
\item $K\hookrightarrow N$ is a submanifold,
  \item $K_t$ is a Liouville boundary in $N_t$ for all $t\in B$.
  \end{itemize}
  If in addition $K\r B$ is a surjective submersion then we say that the family is submersive (with the usual understanding that if $B$ has a boundary or corners
  we also require the surjective submersion condition on the corresponding strata in $K$).

  \begin{lemma} $M\mapsto \partial_h M$ defines a 1-1 correspondence between families of generating Liouville domains with submersive boundary and
    submersive Liouville boundaries.
  \end{lemma}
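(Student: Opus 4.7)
The plan is to verify the two directions of the correspondence separately. Well-definedness and injectivity are immediate. If $M\to B$ is a family of generating Liouville domains with submersive boundary, then $\partial_h M \to B$ is a surjective submersion tautologically, and fiberwise $(\partial_h M)_t=\partial M_t$ is the Liouville boundary of $M_t$. Conversely, if $M_1,M_2$ are two such families with $\partial_h M_1=\partial_h M_2$, then Lemma \ref{lem:uniqueboundary} applied fiberwise gives $M_{1,t}=M_{2,t}$ for all $t\in B$, so $M_1=M_2$ as subsets of $N$, with coinciding smooth structures since in each fiber they coincide as codimension-zero submanifolds with boundary.

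The main work is surjectivity. Given a submersive family of Liouville boundaries $K\hookrightarrow N$, fiberwise Lemma \ref{lem:uniqueboundary} produces a unique generating Liouville domain $M_t\subset N_t$ with $\partial M_t=K_t$, and set $M:=\bigcup_{t\in B} M_t$. The key step is to upgrade this set-theoretic definition to a global smooth structure, which I would achieve by showing that the fiberwise Liouville flow map
\[
\gamma_+:K\times \RR\longrightarrow N\setminus \Skel,\qquad (k,s)\mapsto \phi_{Z_\theta,s}(k),
\]
where $\Skel:=\bigcup_{t\in B}\Skel(N_t)$, is a global diffeomorphism. Well-definedness (for all $s\in\RR$) and smoothness are inherited fiberwise from the fact that each $K_t$ is a Liouville boundary. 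Global injectivity reduces to fiberwise injectivity since the flow preserves fibers (cf.\ \eqref{eq:coprod} and Lemma \ref{lem:generation}); surjectivity onto $N\setminus\Skel$ is likewise fiberwise. For the local diffeomorphism property at $(k,s)$, any $(v,a)\in\ker d\gamma_+$ must satisfy $d\pi(v)=0$ (since $\phi_{Z_\theta,s}$ preserves fibers and $Z_\theta$ is vertical), so $v$ is vertical, whence $(v,a)=0$ by the fiberwise injectivity of $d\gamma_{+,t}$. The dimension count $\dim(K\times\RR)=\dim N$ (using codimension one of $K$ in $N$, which follows from the submersivity of $K\to B$ combined with $K_t$ being a hypersurface in $N_t$) then promotes injectivity of $d\gamma_+$ to an isomorphism; combined with global bijectivity, $\gamma_+$ is a diffeomorphism.

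With $\gamma_+$ in hand, I would define $M:=N\setminus\gamma_+(K\times\,]0,\infty[)$. This is closed in $N$, with interior $M^\circ=\Skel\cup\gamma_+(K\times\,]-\infty,0[)$ open in $N$; its boundary is $K$ with smooth collar $\gamma_+(K\times\,]-\epsilon,0])$ provided by $\gamma_+$, so $M$ is a smooth submanifold of $N$ with boundary $K$. The projection $M\to B$ is a submersion on $M^\circ$ (inherited from $N\to B$), and at points of $K$ it remains a submersion because $T_kK\subset T_kM$ already surjects onto $T_{\pi(k)}B$ by the submersivity of $K\to B$; the boundary stratum $\partial_hM=K\to B$ submerses by hypothesis. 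Fiberwise, $M_t=N_t\setminus\phi_{Z_\theta,\,]0,\infty[}(K_t)$ coincides by Lemma \ref{lem:uniqueboundary} with the unique generating Liouville domain of $N_t$ having boundary $K_t$, so $\partial_hM=K$ and the assignment is inverse to $M\mapsto\partial_hM$.

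The main obstacle is the verification that $\gamma_+$ assembles from a fiberwise diffeomorphism into a global one; this is the only place where the submersivity hypothesis on $K\to B$ is used essentially, through the horizontal-transversality argument for $d\gamma_+$ and the accompanying dimension count. Once this is established, the manifold-with-boundary structure on $M$ and the submersivity of $M\to B$ follow routinely.
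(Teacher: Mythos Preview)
Your argument is correct and follows the same route the paper gestures at: the paper's entire proof is ``This is proved like Lemma~\ref{lem:uniqueboundary},'' and what you have written is precisely the natural expansion of that hint, carrying out the surjectivity direction by establishing (what the paper records separately as the subsequent Lemma~\ref{lem:completions2}) that $\gamma_+:K\times\RR\to N\setminus\Skel(N)$ is a diffeomorphism and then reading off $M$ from it.

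One small point worth tightening: you assert that $M^\circ=\Skel(N)\cup\gamma_+(K\times\,]{-\infty},0[)$ is open in $N$, but this is equivalent to showing that $\gamma_+(K\times[0,\infty[)$ is closed in $N$, not just in $N\setminus\Skel(N)$ --- i.e.\ that no sequence in $\gamma_+(K\times[0,\infty[)$ can accumulate on the skeleton. This does not follow immediately from the diffeomorphism property of $\gamma_+$ alone, and needs a further word (for instance via properness of $K\to B$, or via the standing hypothesis in the applications that $N$ locally has families of generating Liouville domains). The paper's one-line proof does not address this step either, so this is not a deficiency relative to the paper; it is simply a place where both arguments are terse.
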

  \begin{proof} This is proved like Lemma \ref{lem:uniqueboundary}.
 \end{proof}

  The skeleton of a family of Liouville manifolds $N\r B$ is defined fiberwise; i.e.
  \[
    \Skel(N)=\bigcup_{t\in B} \Skel(N_t).
    \]  
  \begin{lemma} \label{lem:completions2}
    Let $K\hookrightarrow N$ be a submersive family of Liouville boundaries.  Then we have an injective immersion
  \[
    \gamma:K\times \RR \r N:(m,s)\r \phi_{Z_\theta,s}(m)
  \]
  whose image is $N-\Skel(N)$ (in particular $\Skel(N)$ is closed).
\end{lemma}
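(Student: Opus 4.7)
The plan is to reduce everything to the fiberwise situation, using that the Liouville vector field $Z_\theta$ is vertical (tangent to the fibers $N_t$), and then leverage a dimension count to extract the topological conclusion.

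First I would observe that since $K \to B$ is a surjective submersion with fibers $K_t$ of dimension $\dim N_t - 1$, one has $\dim(K \times \RR) = \dim N$. Hence if $\gamma$ is shown to be an injective immersion, it is automatically a local diffeomorphism between equidimensional manifolds and therefore an open map. The injectivity of $\gamma$ is fiberwise: since $Z_\theta$ is vertical, $\gamma$ sends $K_t \times \RR$ into $N_t$, so it suffices to show each restricted map is injective, and this is the standard fact that the positive and negative Liouville flows from $\partial M_t = K_t$ sweep out $N_t \setminus \Skel(N_t)$ without overlap (one uses \eqref{eq:coprod} for the negative side, and the ``cylindrical end'' description for the positive side).

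Next I would check that $\gamma$ is an immersion. Using the equivariance $\gamma(m, s+r) = \phi_{Z_\theta,s}(\gamma(m,r))$ and the fact that $\phi_{Z_\theta,s}$ is a diffeomorphism, a short computation shows $d\gamma|_{(m,s)} = d\phi_{Z_\theta,s} \circ d\gamma|_{(m,0)}$, so injectivity of $d\gamma$ reduces to the case $s=0$. At $(m,0)$, the image of $(X, c) \in T_m K \oplus T_0\RR$ is $X + c Z_\theta(m) \in T_m N$. If this vanishes, then $cZ_\theta(m) \in T_m K$; since $Z_\theta(m) = Z_{\theta_t}(m) \in T_m N_t$, this forces $cZ_\theta(m) \in T_m K \cap T_m N_t = T_m K_t$. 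But $K_t = \partial M_t$ for a generating Liouville domain, so $Z_{\theta_t}$ is transversal to $K_t$ in $N_t$ (outward pointing), giving $c=0$ and then $X = 0$. This is essentially the same transversality argument as in Lemma \ref{lem:completions}, just applied fiberwise.

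Combining these, $\gamma$ is an injective local diffeomorphism, so its image is open in $N$. The containment $\mathrm{Im}(\gamma) \subseteq N \setminus \Skel(N)$ is fiberwise obvious, since points of $K_t \times \RR$ escape under positive or negative Liouville flow. For the reverse inclusion $N \setminus \Skel(N) \subseteq \mathrm{Im}(\gamma)$, any $p \in N_t \setminus \Skel(N_t)$ lies on a Liouville orbit that meets $K_t$ exactly once: if $p \in M_t$, this follows from \eqref{eq:coprod}; if $p \notin M_t$, one flows backward until first hitting $\partial M_t = K_t$. Finally, $\Skel(N) = N \setminus \mathrm{Im}(\gamma)$ is closed as the complement of an open set.

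I do not anticipate a serious obstacle here; the main point to handle carefully is the transversality in the immersion step, where one must check that the tangent space $T_m K$ (which is ``tilted'' over $B$ since $K \to B$ is a submersion) still meets $Z_\theta(m)$ only trivially — and this works because $Z_\theta$ is vertical so the tilt is irrelevant and we are reduced to a purely fiberwise transversality.
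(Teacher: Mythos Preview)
Your proof is correct and follows essentially the same approach as the paper, which simply says ``this is proved in a similar way as Lemma~\ref{lem:completions}.'' You have filled in the details of that lemma's transversality argument (reducing to $s=0$ via equivariance, then using verticality of $Z_\theta$ together with the submersion hypothesis to reduce to fiberwise transversality of $Z_{\theta_t}$ to $\partial M_t$), and correctly supplied the additional image and closedness claims via the dimension count and fiberwise orbit description.
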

\begin{proof} This is proved in a similar way as Lemma \ref{lem:completions}.
 \end{proof}

The following lemma is a family version of Lemma \ref{lem:generation}.

  \begin{lemma} \label{lem:radial}
    Assume that $K\hookrightarrow N\r B$, $K'\hookrightarrow N\r B$ are submersive families of Liouville boundaries.
    Then there exists a (unique) smooth function $h:N-\Skel(N)\r \RR$, invariant under the Liouville flow, such that $K'=\phi_{Z_\theta,h}(K)$.
  \end{lemma}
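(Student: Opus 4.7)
The plan is to mimic the single-manifold argument of Lemma \ref{lem:generation} in the family setting, using Lemma \ref{lem:completions2} in place of its non-family predecessor. First I would upgrade the injective immersions $\gamma:K\times\RR\r N-\Skel(N)$ and $\gamma':K'\times\RR\r N-\Skel(N)$ produced by Lemma \ref{lem:completions2} to honest diffeomorphisms. Both source and target have dimension equal to $\dim N$, so each differential is an isomorphism and the injective immersions are local diffeomorphisms; combined with the fact (stated in Lemma \ref{lem:completions2}) that their images are all of $N-\Skel(N)$, a standard invariance-of-domain argument yields that $\gamma$ and $\gamma'$ are diffeomorphisms.

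Next I would consider the composition $\phi := (\gamma')^{-1}\circ \gamma: K\times\RR\r K'\times\RR$, which is a diffeomorphism. Since both $\gamma$ and $\gamma'$ intertwine translation in the $\RR$-factor with the Liouville flow $\phi_{Z_\theta,r}$, the composition $\phi$ is equivariant for translation in $\RR$. Writing $\phi(m,s)=(\pi(m,s),\sigma(m,s))$, equivariance forces $\pi$ to be independent of $s$ and $\sigma(m,s)=\sigma(m,0)+s$. Hence $\phi$ has the form
\[
\phi(m,s)=(\tilde{\pi}(m),\tilde{\sigma}(m)+s)
\]
for a smooth map $\tilde{\pi}:K\r K'$ and smooth function $\tilde{\sigma}:K\r\RR$. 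Running the same argument for $(\gamma)^{-1}\circ\gamma'$ (or simply using that $\phi$ is a diffeomorphism) shows $\tilde{\pi}$ is a diffeomorphism.

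Unwinding the definitions, $\phi(m,0)=(\tilde{\pi}(m),\tilde{\sigma}(m))$ translates to $m=\phi_{Z_\theta,\tilde{\sigma}(m)}(\tilde{\pi}(m))$, i.e.\ $\tilde{\pi}(m)=\phi_{Z_\theta,-\tilde{\sigma}(m)}(m)$. Define $h:N-\Skel(N)\r\RR$ by $h(\gamma(m,s))=-\tilde{\sigma}(m)$; this is well-defined (since $\gamma$ is a diffeomorphism), smooth, and tautologically invariant under the Liouville flow. Moreover $\phi_{Z_\theta,h}(K)=\tilde{\pi}(K)=K'$, as desired.

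For uniqueness, suppose $h_1,h_2$ both satisfy the conclusion. Then $\phi_{Z_\theta,(h_1-h_2)|K'}$ fixes $K'$ pointwise; since the Liouville flow has no fixed points outside $\Skel(N)$ (a fixed point would force $Z_\theta$ to vanish, hence the point to lie in the skeleton), $h_1-h_2$ vanishes on $K'$, and by flow-invariance on all of $N-\Skel(N)$. The only delicate point in the entire argument is the upgrade of the injective immersions from Lemma \ref{lem:completions2} to diffeomorphisms; everything else is a direct diagrammatic manipulation, and the same argument transfers verbatim to the case where $B$ has boundary or corners by working stratum-by-stratum and invoking the submersivity assumption on each stratum.
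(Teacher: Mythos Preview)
Your proof is correct and follows exactly the route the paper takes: the paper's proof simply invokes the diffeomorphisms $K\times\RR\cong N-\Skel(N)\cong K'\times\RR$ from Lemma \ref{lem:completions2} and says ``proceed as in Lemma \ref{lem:generation}'', which is precisely what you have written out in detail (including the upgrade from injective immersion to diffeomorphism, which the paper states without comment). One tiny quibble: in your uniqueness argument, ``$\phi_{Z_\theta,(h_1-h_2)}$ fixes $K'$ pointwise'' really uses that each Liouville trajectory meets $K'$ exactly once rather than that $Z_\theta$ vanishes at fixed points, but this follows immediately from $K'$ being a family of Liouville boundaries.
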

\begin{proof}
  By Lemma \ref{lem:completions2} we have a diffeomorphisms
  \begin{align*}
    \gamma:K\times \RR \r N-\Skel(N)&:(m,s)\r \phi_{Z_\theta,s}(m),\\
    \gamma:K'\times \RR \r N-\Skel(N)&:(m,s)\r \phi_{Z_\theta,s}(m).
  \end{align*}
We now proceed as in Lemma \ref{lem:generation}.
  \end{proof}
\begin{lemma} Liouville boundaries are stable under radial expansion. The same goes for submersive Liouville boundaries.
\end{lemma}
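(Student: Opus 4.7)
The plan is to construct, for each $r \in \RR$, an explicit generating Liouville domain whose boundary is $\phi_{Z_\theta,r}(K)$. Given a Liouville boundary $K = \partial M$ with $M$ a generating Liouville domain in $N$, the natural candidate is $M_r := \phi_{Z_\theta,r}(M)$, and the whole proof reduces to checking that $M_r$ satisfies the three defining properties of a generating Liouville domain.

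First, $M_r$ is a compact codimension-zero submanifold with boundary $\phi_{Z_\theta,r}(K)$ since $\phi_{Z_\theta,r}$ is a diffeomorphism of $N$. Second, along $\partial M_r$ the Liouville vector field $Z_\theta$ remains outward-pointing: the flow of a vector field preserves the vector field itself, so $\phi_{Z_\theta,r}$ carries the outward-pointing restriction $Z_\theta|_K$ to the outward-pointing restriction $Z_\theta|_{\partial M_r}$. Third, generation follows from the identity
\[
\bigcup_{s \ge 0} \phi_{Z_\theta,s}(M_r) \;=\; \bigcup_{s \ge 0} \phi_{Z_\theta,s+r}(M) \;=\; \phi_{Z_\theta,r}\Bigl(\,\bigcup_{u \ge 0} \phi_{Z_\theta,u}(M)\Bigr) \;=\; \phi_{Z_\theta,r}(N) \;=\; N,
\]
using that $M$ is itself generating. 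Together these verifications show that $M_r$ is a generating Liouville domain, hence $\phi_{Z_\theta,r}(K)=\partial M_r$ is a Liouville boundary.

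For the submersive family statement, I would observe that the global Liouville flow $\phi_{Z_\theta,r}$ on a family $N \to B$ is defined fiberwise and is therefore a diffeomorphism of $N$ over $\mathrm{id}_B$. Consequently it carries a surjective submersion $K \to B$ to the surjective submersion $\phi_{Z_\theta,r}(K) \to B$, and likewise preserves the submersive property on boundary and corner strata of $B$. The fiberwise argument above applied at each $t \in B$ then promotes $\phi_{Z_\theta,r}(K)$ to a submersive family of Liouville boundaries. There is essentially no obstacle: the lemma is a formal unpacking of the definitions, the only point to keep track of being that ``generating'' is a forward-flow condition, which is why the candidate must be built from $\phi_{Z_\theta,r}$ itself rather than by any other diffeomorphism.
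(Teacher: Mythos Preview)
Your argument is correct for what it proves, but it proves a strictly weaker statement than the one intended. In the paper, ``radial expansion'' of a Liouville boundary $K$ means $K'=\phi_{Z_\theta,h}(K)$ for an arbitrary smooth function $h:N\setminus\Skel(N)\to\RR$ invariant under the Liouville flow, not merely for a constant $r\in\RR$. This generality is exactly what is needed in the sequel: in the interpolation construction (Corollary~\ref{cor:inter}) the coefficients $a_i$ are functions on $B$, so the resulting $h_j=\sum_i a_i h_{ji}$ is genuinely nonconstant.

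Your approach does not extend to this situation, because for nonconstant $h$ the map $\phi_{Z_\theta,h}$ is only defined on $N\setminus\Skel(N)$; there is no global diffeomorphism of $N$ to push $M$ through, so the candidate $M_h:=\phi_{Z_\theta,h}(M)$ does not make sense. The paper sidesteps this by never moving the domain at all. For the first claim it simply observes that being a Liouville boundary is a fiberwise condition, so it suffices to know the single-manifold fact that flow-invariant functions parametrize Liouville boundaries (the discussion at the start of \S\ref{sec:interpolating}). For submersivity the paper notes that $\phi_{Z_\theta,h}$ is a diffeomorphism of $N\setminus\Skel(N)$ (its inverse is $\phi_{Z_\theta,-h}$, since $h$ is flow-invariant), hence restricts to a diffeomorphism $K\to K'$ over $B$; the map $K'\to B$ then factors through the submersion $K\to B$. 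Your argument for constant $r$ is a clean special case of this, but you should rewrite it to allow $h$ to vary along orbits and over $B$.
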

\begin{proof} Liouville boundaries are defined fiberwise. 
  So the only thing to check is submersivity. Let $K$ be submersive Liouville boundaries and let $K'$ be a radial expansion of $K$; i.e.\ there is a  smooth function $h$ on $N-\Skel(N)$ such that
$K'=\phi_{Z_\theta,h}(K)$. By Lemma \ref{lem:radial} there is a smooth function $h'$ on $N-\Skel(N)$ such that $K=\phi_{Z_\theta,h'}(K')$ 
and one checks that $\phi_{Z_\theta,h}$ and $\phi_{Z_\theta,h'}$ are each other's inverse so they are diffeomorphisms.

The map $K'\r B$ factors through the projection $\phi_{Z_\theta,h}:K'\r K$ and hence
is is a submersion.
\end{proof}
\begin{corollary} \label{cor:inter}
  The interpolation construction for Liouville domains exhibited in \S\ref{sec:class_gen_Lio_dom} works in families, with the $(a_i)_i$ being smooth functions on $B$, provided one restricts to families of Liouville domains with submersive boundary.
\end{corollary}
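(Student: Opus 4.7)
The plan is to show that the single-Liouville-manifold construction of \S\ref{sec:class_gen_Lio_dom} globalizes essentially verbatim once we replace Lemma \ref{lem:generation} by its family version, Lemma \ref{lem:radial}, and monitor submersivity throughout.

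First, given families of generating Liouville domains $(M_i)_{i\in I}$ in $N\to B$ with submersive boundaries $\partial_h M_i$, I would invoke Lemma \ref{lem:radial} applied to each pair $(\partial_h M_j,\partial_h M_i)$ to obtain uniquely determined smooth functions $h_{ji}:N-\Skel(N)\to \RR$, invariant under the Liouville flow, such that $\partial_h M_i=\phi_{Z_\theta,h_{ji}}(\partial_h M_j)$. Note $h_{jj}=0$ and the cocycle identity $h_{ji}+h_{lj}+h_{il}=0$ follows from the uniqueness in Lemma \ref{lem:radial} exactly as in the single-manifold case.

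Next, given smooth functions $a_i:B\to\RR$ with $\sum_i a_i=1$ (at most finitely many nonzero, cf.\ Remark \ref{rem:finite}), I would pull them back along the projection $N\to B$ and set $h_j:=\sum_i a_i h_{ji}$. Since each $h_{ji}$ is smooth and invariant under the Liouville flow, and $a_i$ is constant along the fibers of $N\to B$ (hence invariant under the fiberwise Liouville flow), the function $h_j$ is smooth and invariant under the Liouville flow. The cocycle identity then yields $h_l=h_j+h_{lj}$, so $K:=\phi_{Z_\theta,h_j}(\partial_h M_j)$ is independent of $j$, repeating verbatim the computation in \S\ref{sec:class_gen_Lio_dom}.

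Finally, it remains to check that $K$ is a submersive family of Liouville boundaries. Fiberwise, $K_t$ is the Liouville boundary interpolation of the $(M_{i,t})_i$ with weights $(a_i(t))_i$ performed inside $N_t$, so by the single-manifold discussion $K_t$ is indeed a Liouville boundary in $N_t$. For submersivity, observe that $\phi_{Z_\theta,h_j}$ is a diffeomorphism of $N-\Skel(N)$ (its inverse being $\phi_{Z_\theta,-h_j}$, using invariance of $h_j$ under the Liouville flow) which commutes with the projection to $B$ because the Liouville flow is fiberwise. Hence the composition $K\xrightarrow{\phi_{Z_\theta,-h_j}}\partial_h M_j\to B$ is a surjective submersion, giving submersivity of $K\to B$; the same argument applies to boundary strata if $B$ has corners. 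I expect no serious obstacle: the only mild subtlety is making sure the smoothness and flow-invariance of $h_j$ survive multiplication by functions pulled back from $B$, which is immediate since such pullbacks are fiberwise constant.
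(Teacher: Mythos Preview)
Your proof is correct and follows essentially the same approach the paper intends: the corollary is stated without proof in the paper, being a direct consequence of Lemma \ref{lem:radial} (which supplies the smooth flow-invariant functions $h_{ji}$ in families) together with the preceding lemma on stability of submersive Liouville boundaries under radial expansion (which handles the submersivity check). Your write-up simply spells out the details the paper leaves implicit, including the observation that pullbacks of functions from $B$ are fiberwise constant and hence flow-invariant.
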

\subsubsection{Proof of Proposition \ref{prop:main}}

\begin{lemma} \label{lem:local}
  Let $\phi:N\r B$ be a family of Liouville manifolds containing locally (over $B$) families of generating Liouville domains.  
  Let $t\in B$. Then there is an open neighbourhood $t\in V\subset B$ and a family of Liouville
  domains $M'\r V$ in $N\times_B V$ with submersive boundary.
\end{lemma}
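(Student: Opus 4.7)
The plan is to show that the local family of generating Liouville domains $M'\to V$ supplied by the hypothesis already has submersive boundary, so Lemma~\ref{lem:local} requires no further construction beyond a careful unpacking of the generating assumption. The key device is the fibrewise Liouville flow, which provides a globally smooth ``radial coordinate'' on $N|_V\setminus \Skel(N|_V)$ with nowhere-vanishing vertical differential.

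First, by hypothesis we fix an open $V\ni t$ and a proper submanifold-with-boundary $M'\subset N|_V$ whose fibres $M'_v$ are generating Liouville domains of $N_v$. Since each $M'_v$ is top-dimensional in $N_v$, $M'$ is codimension $0$ in $N|_V$ and $K:=\partial_h M'=\coprod_{v\in V}\partial M'_v$ is a smooth hypersurface. The Liouville vector field $Z_\theta$ is vertical for $N\to V$, and I would consider the smooth map
\[
F\colon K\times \RR \longrightarrow N|_V\setminus \Skel(N|_V),\qquad (k,s)\mapsto \phi_{Z_\theta,s}(k).
\]
Fibrewise, since each $M'_v$ is a \emph{generating} Liouville domain, every Liouville orbit in $N_v\setminus \Skel(N_v)$ meets $\partial M'_v$ exactly once, so $F$ is a bijection. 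The differential $dF$ is injective: if $(\phi_{Z_\theta,s})_\ast v + r Z_\theta=0$ for $v\in T_k K$, pushing forward by $(\phi_{Z_\theta,-s})_\ast$ and using $(\phi_{Z_\theta,-s})_\ast Z_\theta=Z_\theta$ yields $v=-rZ_\theta|_k$; since $Z_\theta|_k$ is outward-transverse to $\partial M'_{p(k)}$ in $N_{p(k)}$, it cannot lie in $T_k K\cap T_k N_{p(k)}=T_k\partial M'_{p(k)}$, forcing $r=0$ and then $v=0$. A dimension count matches source and target, so $F$ is a diffeomorphism, and hence $\psi:=\pi_\RR\circ F^{-1}$ is a smooth function on $N|_V\setminus \Skel$ with $\psi^{-1}(0)=K$.

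It remains to verify the submersivity of $K\to V$. At $m\in K$ the restriction $\psi|_{N_{p(m)}}$ is precisely the Liouville flow coordinate of $N_{p(m)}$ relative to $\partial M'_{p(m)}$, whose differential is nowhere zero. Thus $d\psi|_m$ is nontrivial on the vertical subspace $T_m N_{p(m)}$. Combining this with $T_m K=\ker d\psi|_m$, a short dimension count
\[
\dim(T_m K + T_m N_{p(m)}) = (\dim N -1) + (\dim N -\dim V) - (\dim N -\dim V -1) = \dim N
\]
gives $T_m K + T_m N_{p(m)}=T_m N$. Since $dp\colon T_m N\to T_{p(m)}V$ is surjective and kills $T_m N_{p(m)}$, its restriction to $T_m K$ is also surjective. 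So $K\to V$ is a submersion, and we conclude by taking the $V$ and $M'$ supplied by the hypothesis.

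The main point to secure is the global smoothness and diffeomorphism property of $F$ (as opposed to the purely fibrewise statement that each orbit meets $\partial M'_v$ once); once $\psi$ is a smooth function with nonzero vertical derivative, the boundary submersivity is an essentially automatic consequence of the transversality between $Z_\theta$ and $\partial M'$ in each fibre.
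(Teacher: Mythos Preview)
Your argument has a genuine circularity. You assert that $K:=\partial_h M'=\coprod_v\partial M'_v$ is a smooth hypersurface, but the only thing the hypothesis gives you is that $M'$ is a codimension-zero submanifold with boundary, which makes $\partial M'$ smooth --- not $\partial_h M'$. A priori these sets can differ: if the fibre $N_{p(m)}$ happens to be tangent to $\partial M'$ at some $m\in\partial M'$, then $m$ can lie in $\operatorname{int}(M'_{p(m)})$ and hence $m\in\partial M'\setminus\partial_h M'$. More damagingly, your injectivity argument for $dF$ uses the equality $T_kK\cap T_kN_{p(k)}=T_k\partial M'_{p(k)}$, and this equality is exactly the statement that $K$ meets the fibre transversally at $k$, i.e.\ that $K\to V$ is a submersion at $k$. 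So the step ``$Z_\theta|_k\notin T_kK$'' assumes precisely what you are trying to prove. (Concretely: if the fibre is tangent to $\partial M'$ at $k$ then $Z_\theta|_k\in T_kN_{p(k)}\subset T_k\partial M'$, and $dF$ has the nontrivial kernel $(-Z_\theta|_k,1)$.)

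It may well be true that the generating-Liouville-domain condition forces $\partial M'=\partial_h M'$ --- one can argue heuristically that a fibre-tangency would create, for nearby $t'$, a small ``hole'' in $M'_{t'}$ through which the Liouville trajectory re-enters $M'_{t'}$, contradicting outward-pointingness --- but making this precise (and handling degenerate tangencies) is real work that your write-up does not supply. The paper sidesteps the whole question: rather than analysing the given $M$, it fixes the single fibre $M'_t:=\phi_{Z_\theta,s}(M_t)$, chooses a connection on $N$, and \emph{parallel-transports} $M'_t$ over a contractible neighbourhood $V$. The resulting $M'\cong M'_t\times V$ has submersive boundary by construction; the original $M$ enters only to certify, after shrinking $V$, that $M'\supset M|_V$ is generating and that $Z_\theta$ remains outward on $\partial_h M'$.
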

\begin{proof}  Choose a connection $\nabla$ on $N$ as in \S\ref{sec:connection}. Let $M\r B$ be a family of generating Liouville domains in $N$ and  $M'_t=\phi_{Z_\theta,s}(M_t)$ for an arbitrary $s>1$.
  This is a generating Liouville domain in $N_t$.
 Choose a smoothly contractible open neighbourhood $V$ of $t$ and let $M'\r V$ be obtained from $M'_t$ by parallel transport for $\nabla$ (a contracting homotopy defines a path $v\r t$ for every $v\in V$). By construction $M'\r V$, $\partial_h M' \r V$ are surjective and   have sections, locally on $M'$ and $\partial_h M'$.
  Hence $M'\r V$ is a family with submersive boundary by \cite[Theorem B.3]{book:fibers}.

  $M'$ is generating since by choosing $V$ small enough we may ensure that $M'$ contains
  $M\times_B V$. Moreover, again choosing $V$ small enough, we may ensure that $Z_\theta$ is outward pointing on $\partial_h M'$, since that too is stable under deformation.
\end{proof}
\begin{proof}[Proof of Proposition \ref{prop:main}] For every $t\in B$ choose an open neighbourhood of $V_t$ of $t$ and a family of generating Liouville domains $M_t\r V_t$ with submersive boundary (made possible
  by Lemma \ref{lem:local}). Now choose a partition of unity $\phi_t$ subordinate to the open cover $(V_t)_t$ of $B$ and  put
  \[
    M=\sum_{t\in B} \phi_t M_t.
  \]
  By Remark \ref{rem:finite} and Corollary \ref{cor:inter}
  this expression makes sense in a neighbourhood of any point. Hence it is globally defined. $M$ is a generating Liouville domain with submersive boundary,
  again because this is true in a neighbourhood of any point.
\end{proof}

 \subsection{Moser's lemma for families of Liouville manifolds}
 \begin{lemma}
   \label{lem:smooth}
   Let $(M,\theta_t)_{t\in B}$ be a family of Liouville domains (i.e.\ constant if we forget the Liouville forms).
   Let $\gamma:\hat{M}\r B$ be the corresponding family of Liouville manifolds (see Definition \ref{def:completion}).
   In particular $\hat{M}$ embeds the trivial families $M\times B$ and
   $\partial M\times [0,\infty[\times B$ (but the smooth structure varies in a neighbourhood of $\partial M$).
   Let $K$ be a compact set in the interior of $M$.
   Let $b\in B$. Then there is an open neighbourhood $V$ of $b$ and a diffeomorphism~$\psi:\hat{M}_b\times V \r \hat{M}\times_B V$ over $B$ such that
   \begin{enumerate}
     \item $\psi_b$ is the identity $\hat{M}_b\r \hat{M}_b$,
   \item $\psi$ restricts to the identity $K\times B\r K\times B$,
   \item $\psi$ restricts to the identity $\partial M\times [0,\infty[\times B
     \r \partial M\times [0,\infty[\times B$.
   \end{enumerate}
 \end{lemma}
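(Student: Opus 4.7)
The plan is to realize the local triviality of $\hat{M}\to B$ near $b$ by exhibiting on $\hat{M}\times_B V \to V$ a smooth connection whose horizontal lifts vanish on $K\times V$ and on $\partial M\times [0,\infty[\,\times V$, and then integrating it by Ehresmann-style parallel transport from $b$.

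The starting observation is that the Liouville flow already provides a local trivialization of $\hat{M}\to B$ in a neighborhood of $\partial M$. More precisely, for $\epsilon>0$ small and $V\subset B$ a small enough open neighborhood of $b$, the formula $\hat{\Psi}(m,s,t)=\phi_{Z_{\theta_t},s}(m)$ for $s\le 0$ together with $\hat{\Psi}(m,s,t)=(m,s)$ for $s\ge 0$ defines a smooth family of open embeddings
\[
\hat{\Psi}:\partial M\times {]-\epsilon,\infty[}\times V\longrightarrow \hat{M},
\]
and by the very definition of the smooth structure on the completion this is a smooth open embedding covering a tubular neighborhood of $\partial M\subset \hat{M}$. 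Shrinking $\epsilon$ if necessary so that $\hat{\Psi}_t(\partial M\times[-\epsilon,0])$ is disjoint from $K$ for all $t\in V$, we obtain two natural trivializations of $\hat{M}\to B$ over $V$: the trivial product $M\times V\subset\hat{M}$, defined away from $\partial M$ and containing $K\times V$; and the $\hat{\Psi}$-trivialization, defined on the tubular neighborhood. These two trivializations already agree on $\partial M\times [0,\infty[\,\times V$, where $\hat{\Psi}_t$ is the identity on the first factor and is independent of $t$; they disagree on the interior collar $\hat{\Psi}(\partial M\times{]-\epsilon,0]})\times V$ because $\Psi_t$ depends on $t$ through $Z_{\theta_t}$.

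The desired connection $\nabla$ is then built by interpolating the horizontal distributions associated to these two trivializations. Each trivialization gives a horizontal distribution (the $TV$-direction in the corresponding product decomposition), and the two differ by a vertical vector field on their overlap. I would choose a smooth cutoff $\chi:{]-\epsilon,\infty[}\to[0,1]$ with $\chi\equiv 0$ near $-\epsilon$ and $\chi\equiv 1$ on $[-\epsilon/3,\infty[$, pull it back through $\hat{\Psi}$ to a function $\chi_1$ on the tubular neighborhood, and extend by zero to a smooth function on $\hat{M}$. Interpolating the two connection 1-forms (valued in the vertical bundle) by $\chi_1$ then yields a smooth global connection $\nabla$ on $\hat{M}\times_B V$ that equals the trivial product connection on both $K\times V$ (where $\chi_1=0$) and $\partial M\times [0,\infty[\,\times V$ (where $\chi_1=1$ but both connections are already trivial). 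Taking $V$ smoothly contractible and observing that horizontal lifts are compactly supported in the collar and hence complete, parallel transport from $b$ yields the desired diffeomorphism $\psi:\hat{M}_b\times V\to\hat{M}\times_B V$, with properties (1)--(3) being immediate consequences of the construction.

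The main obstacle I anticipate is ensuring that the $\chi_1$-interpolated connection is genuinely smooth at points of $\partial M\subset\hat{M}$, since the smooth structure on $\hat{M}$ there is itself defined through $\hat{\Psi}$. This is a local issue that should be handled by pulling everything back through $\hat{\Psi}$: both partial connection 1-forms pull back to smooth objects on $\partial M\times{]-\epsilon,\infty[}\times V$, and by construction the pullback of the $\hat{\Psi}$-trivial form is identically zero, so the $\chi_1$-interpolation reduces on the tubular neighborhood to $(1-\chi_1)$ times the pullback of the first form, which is manifestly smooth.
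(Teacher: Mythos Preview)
Your approach is essentially the same as the paper's: choose a connection on $\hat{M}\to B$ that reduces to the product connection on $K\times B$ and on $\partial M\times [0,\infty[\times B$, then parallel transport from $b$ over a contractible $V$. The paper simply asserts the existence of such a connection and leaves the construction implicit; you spell out explicitly how to build it by interpolating with a cutoff between the two natural trivializations, and you also note why the horizontal lifts are complete (so that parallel transport is globally defined despite non-properness), a point the paper leaves tacit. One small imprecision: it is not the horizontal lifts themselves that are compactly supported in the collar, but their \emph{vertical} component relative to the product structure; the conclusion (completeness) is of course unaffected.
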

 \begin{proof}
  We choose a connection $\nabla:\gamma^\ast(TB)\r TN$
   which is trivial on $K\times B$ and $M\times [0,\infty[\times B$ (i.e.\ they are the natural splittings). Choose
   a smoothly contractible open neighbourhood $V$ of $b$ and use parallel transport for $\nabla$ to make the identification $\psi:\hat{M}_b\times V \r \hat{M}\times_B V$.
\end{proof}
\begin{lemma}
  \label{lem:ultimate}
  Assume  $(N_t,\theta_t)_{t\in B}$ is a family of Liouville manifolds which locally has families of generating Liouville
  domains, such that $B$ is smoothly contractible and let $b\in B$.
  Then there exists a
  diffeomorphism $\psi:N_b\times B\r N$
  over $B$, inducing the identity $N_b\r N_b$  which commutes with the Liouville flow outside $K\times B$ for a compact set $K\subset N_b$ where $N_b\times B$ is equipped with the constant Liouville form $\theta_b$.
\end{lemma}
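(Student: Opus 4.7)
The plan is to combine Proposition \ref{prop:main} with Ehresmann's theorem (Lemma \ref{lem:ehresmann}) on the boundary, then extend via the Liouville flows both inward (to trivialize $M$) and outward (to cover the rest of $N$), arranging the two trivializations to glue smoothly.

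First I would apply Proposition \ref{prop:main} to obtain a submersive family of generating Liouville domains $M \subset N$, and set $K = M_b$. Applying Ehresmann (Lemma \ref{lem:ehresmann}) to the proper family with submersive boundary $\partial_h M \to B$, combined with the smooth contractibility of $B$, produces a diffeomorphism $\rho: \partial M_b \times B \to \partial_h M$ over $B$ with $\rho_b = \mathrm{id}$. The Liouville flows give a canonical extension on the ``outside'' part: by Lemma \ref{lem:completions2}, the map $\partial M_b \times [0,\infty) \to N_b \setminus M_b^\circ$, $(m,s) \mapsto \phi_{Z_{\theta_b},s}(m)$, and its analogue for $N$, are diffeomorphisms, so I define
\[
  \psi^\circ\bigl(\phi_{Z_{\theta_b},s}(m),t\bigr) := \phi_{Z_{\theta_t},s}\bigl(\rho(m,t)\bigr), \qquad m \in \partial M_b,\ s \geq 0,
\]
yielding a diffeomorphism $\psi^\circ: (N_b \setminus M_b^\circ)\times B \to N \setminus M^\circ$ which commutes with the Liouville flows by construction.

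Second, I would trivialize $M$ using a connection that is \emph{Liouville-adapted} near $\partial_h M$. On a Liouville-flow collar $U = \phi_{Z_\theta,(-\epsilon,0]}(\partial_h M) \subset M$, the formula for $\psi^\circ$ (extended to $s \in (-\epsilon,0]$) defines a trivialization $\Phi_U$, and hence a connection $\nabla_1$ on $U\to B$. Although $Z_\theta$ itself is not tangent to $\partial_h M$, the connection lift produced by $\Phi_U$ at a point $\phi_{Z_{\theta_b},s}(m)$ is $\partial_t\bigl[\phi_{Z_{\theta_t},s}(\rho(m,t))\bigr]|_{t=b}$, which at $s=0$ reduces to $d\rho(\partial_t)$ — a vector tangent to $\partial_h M$ since $\rho$ is a trivialization of the boundary family. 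So $\nabla_1$ is submersive-compatible at $\partial_h M$. Using a partition of unity, I would glue $\nabla_1$ to an arbitrary submersive-compatible connection on the rest of $M$ to obtain a global such connection $\nabla$ with $\nabla = \nabla_1$ on a smaller collar $U'\subset U$. Integrating $\nabla$ via the contracting homotopy of $B$ yields a diffeomorphism $\Phi: M_b\times B \to M$ over $B$ with $\Phi_b=\mathrm{id}$; crucially, because $\nabla = \nabla_1$ on $U'$ and parallel transport only depends on the connection along the curves it traces, $\Phi$ coincides with $\Phi_U$ on $U'$. The desired map $\psi:N_b\times B\to N$ is then obtained by gluing $\Phi$ on $M_b\times B$ with $\psi^\circ$ on $(N_b\setminus M_b^\circ)\times B$; the two definitions agree on $\partial M_b\times B$ (both equal $\rho$) and their extensions into the Liouville collar of $\partial M_b$ are given by the same formula, making $\psi$ smooth across the seam.

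The main obstacle is the verification in the second step that the partitioned connection $\nabla$, once integrated, genuinely restricts to the Liouville-flow trivialization $\Phi_U$ on a neighborhood of $\partial_h M$ (rather than only matching to first order). The point is that parallel transport is a local construction on the total space: if $\nabla_1 = \nabla$ on an open subset $U'$ that is saturated under the lifted ``horizontal'' vector field used in the contracting integration, then the integral curves of the two connections coincide there, and hence so do the resulting trivializations. Choosing the partition-of-unity cutoff carefully, together with taking the collar $U'$ to be a Liouville-flow tube (which is preserved by the Liouville-lifted vector field), guarantees this saturation. With this compatibility in hand, the smoothness of $\psi$ at $\partial M_b\times B$ follows from the smooth parametrization by the Liouville flow on both sides of $\partial_h M$, and the property of commuting with the Liouville flow outside $K\times B$ is immediate from the formula for $\psi^\circ$.
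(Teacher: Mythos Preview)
Your approach is correct and uses the same ingredients as the paper, but with a different decomposition. The paper applies Ehresmann (Lemma \ref{lem:ehresmann}) to the full family $M \to B$ with submersive boundary, obtaining a trivialization $\psi_2: M_b \times B \to M$; pulling back the Liouville forms gives varying $\theta'_t$ on the fixed underlying manifold $M_b$, and the resulting family of completions $(M_b \times B)\,\hat{}$ (whose smooth structure varies near $\partial M_b$) is then trivialized by the separate Lemma \ref{lem:smooth}, which builds a connection that is \emph{trivial} on the cylindrical end $\partial M_b \times [0,\infty) \times B$. The final $\psi$ is the composite $\psi_1\hat{\psi}_2\psi_3\psi_{1,b}^{-1}$. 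You instead trivialize only $\partial_h M$ via Ehresmann, extend outward by the Liouville flow to define $\psi^\circ$, and then build the compatible connection on $M$ yourself to extend inward. Both routes ultimately come down to choosing a connection with prescribed behavior on a collar of $\partial_h M$; the paper's packaging into Lemma \ref{lem:smooth} sidesteps your explicit saturation argument (which is correct: the $\nabla_1$-horizontal lift preserves the Liouville ``depth'' $s$, so integral curves starting in $U'$ stay there and hence see only $\nabla_1$), while your route avoids introducing the auxiliary completion $(M_b \times B)\,\hat{}$ with its $t$-dependent smooth structure.
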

\begin{proof} Let $(M_t)_{t\in B}$ be a generating family of Liouville domains with submersive boundary for $(N_t,\theta_t)$ (see Proposition \ref{prop:main}). By a family version of Lemma \ref{lem:open}
  combined with Lemma \ref{lem:completions} we have a diffeomorphism of families $\psi_1:(\hat{M}_t,\hat{\theta}_t)_t \cong (N_t,\theta_t)$.
  By Lemma \ref{lem:ehresmann} and \cite[Theorem B.7]{book:fibers}
  we obtain a diffeomorphism $\psi_2:M_b\times B\r M$ over $B$. Let $(M_b,\theta'_t)$ be the family of Liouville domains
  obtained by pullback from $\psi_2$ and let $(M_b\times B)\,\hat{}\r B$ be the corresponding Liouville completion. We now obtain the diffeomorphism 
  $\psi_3:\hat{M}_b\times B\r (M_b\times B)\,\hat{}$\, from Lemma \ref{lem:smooth}. We put $\psi=\psi_1\hat{\psi}_2\psi_3\psi_{1,b}^{-1}$. For $K$ we may take $M_b$.
\end{proof}
\subsubsection{Proof of Moser's lemma}\label{subsubsec:proof_Moser}
We  first make more explicit the relation between a Liouville manifold $(N,\theta)$ and a generating Liouville domain $(M,\theta)$

There is an embedding $\partial M\times ]-\infty,\infty[_s\subset N:(m,s)\mapsto \phi_{Z_\theta,s}(m)$. It has the following properties.
\begin{itemize}
  \item $N=M\cup \partial M\times ]-\infty,\infty[_s$.
\item $M\cap \partial M\times ]-\infty,\infty[_s=\partial M\times ]-\infty,0]_s$.
\item If $\theta_\partial=\theta{|} \partial M $ then on  $\partial M\times ]-\infty,\infty[_s$ we have
  \begin{equation*}
    \label{eq:thetapartial}
\theta=    e^s\theta_\partial.
\end{equation*}
\item It follows that if $\omega_\partial=\omega_{\theta}{|} \partial M$ then on $\partial M\times ]-\infty,\infty[_s$
  \[
\omega_{\theta}=e^s ds \,\theta_\partial+e^s\omega_\partial.
\]
\item Finally on $\partial M\times ]-\infty,\infty[_s$ we also have
  \[
     Z_{\theta}=\partial/\partial s.
   \]
 \end{itemize}
\begin{remark}
If $\alpha$ is a 1-form on $N$ let $X$ be the corresponding dual vector field, i.e.    $i_X\omega_\theta=\alpha$.
Write on $\partial M\times ]-\infty,\infty[_s$
\[
  \alpha=\alpha_\partial+\alpha_s ds, \qquad X=X_\partial+X_s \partial/\partial s
  \]
  where $\alpha_\partial$ and $X_\partial$ are respectively an $s$-dependent 1-form and vector field on $\partial M$ and $\alpha_s$ and $X_s$ are smooth functions. We compute
  \[
    i_X \omega_\theta=-e^s i_{X_\partial} \theta_\partial\, ds+e^s i_{X_\partial } \omega_{\partial} +e^s X_s \theta_\partial
    \]
    so that we get the equations
    \begin{align}
      e^s i_{X_\partial } \omega_{\partial} +e^s X_s \theta_\partial&=\alpha_\partial\label{eq:alphapartial}\\
-e^s i_{X_\partial} \theta_\partial&=\alpha_s. \label{eq:alphas}
\end{align}
By contracting with the Reeb vector field\footnote{The Reeb vector field $R$ is the vector field on 
  $\partial M$ such that $i_R\omega_\partial=0$,
  $i_R\theta_\partial=1$. It exists because $\partial M$ is a contact manifold with contact form $\theta_\partial$.} 
     we see that first of these equations determines $X_s$ uniquely.
Given this, it is also clear that it determines $X_{\partial}$ up to a multiple of the
Reeb
vector field.
The second equation determines this multiple. 
\end{remark}

\begin{proof}[Proof of Theorem \ref{lem:moser2}]
We choose $\psi_1:N_b\times B\r N$ for $\psi$ as in  Lemma \ref{lem:ultimate}. 
The pull back under $\psi_2$ defines a family of Liouville forms $\theta'_t$ on $N_b$.
We will construct a diffeomorphism over $B$: $\psi_2:N_b\times B\r N_b\times B$, such that $\psi_{2,b}$ is the
identity and a function $f\in C^\infty(N_b\times B)$, supported on some $K\times B$, $K$ compact, satisfying $f_b=0$ such that
$\psi_{2,t}^\ast(\theta'_t)+df_t=\theta'_b$. The construction of $\psi_2$ will not depend on any more choices. Finally we put $\psi=\psi_1\psi_2$.

Without loss of generality we may assume that $\psi_1$ is the identity. So we assume $N=N_b\times B$ where $N_b\times \{t\}$ is equipped with $\theta_t$.

A choice of contracting homotopy $h_t:B\times [0,1]\r B$ with $h_1$ being the constant map $B\r \{b\}$ fixes for every $t$  a path $t\r b$. Since our construction will not depend on choices we may replace $B$ by $[0,1]$ and put $b=0$.

We will now obtain $\psi_t$ as the flow of a time dependent smooth vector field $(X_t)_{t\in [0,1]}$ on $N_0$.
Assume we have constructed $\psi_t$, $f_t$.
Differentiating
\begin{equation}
  \label{eq:defeq}
  \psi_t^\ast(\theta_t)+df_t=\theta_0\qquad (\psi_0=\Id, f_0=0)
\end{equation}
with respect to $t$ yields
\[
  \psi_t^\ast\left(L_{X_t}
    \theta_t+\frac{d\theta_t}{dt}\right)+d\frac{df_t}{dt}=0
\]
which is equivalent with (for $\omega_t=d\theta_t$)
\[
  \psi_t^\ast\left(i_{X_t}
    \omega_t+di_{X_t}\theta_t+\frac{d\theta_t}{dt}\right)+d\frac{df_t}{dt}=0
\]
or
\begin{equation}
  \label{eq:defeq2}
  \psi_t^\ast\left(i_{X_t}\omega_t+\frac{d\theta_t}{dt}\right)+d\left(\psi^\ast_t (i_{X_t} \theta_t)+\frac{df_t}{dt}\right)=0.
\end{equation}
We now construct a solution to \eqref{eq:defeq2}. If $X_t$ is integrable then this yields a solution to \eqref{eq:defeq}.

We get solutions of \eqref{eq:defeq2} from solutions of the following
two equations.
\begin{align}
  i_{X_t}\omega_t+\frac{d\theta_t}{dt}&=0\label{eq:defeq3},\\
  \psi^\ast_t (i_{X_t} \theta_t)+\frac{df_t}{dt}&=0\label{eq:defeq4}.
\end{align}
\eqref{eq:defeq3} fixes $X_t$ and, provided $X_t$ is integrable, \eqref{eq:defeq4} fixes $f_t$.

\medskip

We now show that the constructed $X_t$ is indeed integrable, and moreover that $f_t$ has compact support.
We let $M_0\subset N_0$ be a Liouville domain containing $K$ (as in Lemma \ref{lem:ultimate}) in its interior.
By Lemma \ref{lem:tautology} below $M_0$ is a common Liouville domain for the $\theta_t$ and moreover 
the restrictions of $\theta_t$ to $\partial M_0\times [0,\infty[_s$ are 
of the form $e^s \theta_{t,\partial}$.

On $\partial M_0\times [0,\infty[_s$ we get from (\ref{eq:alphapartial},\ref{eq:alphas})
\begin{align*}
  e^s i_{X_{t,\partial}}\omega_{t,\partial}+e^s X_{t,s}\theta_{t,\partial}&=-e^s \frac{d\theta_{\partial,t}}{dt}\\
  -e^s i_{X_{t,\partial}}\theta_{t,\partial}&=0
\end{align*}
or simplified
\begin{align}
   i_{X_{t,\partial}}\omega_{t,\partial}+ X_{t,s}\theta_{t,\partial}&=- \frac{d\theta_{\partial,t}}{dt}\\
   i_{X_{t,\partial}}\theta_{t,\partial}&=0.\label{eq:simplified}
\end{align}
Since $X_{t,\partial}$, $X_{t,s}$ are the unique solutions of a system of linear equations, independent of $s$, they must themselves be independent of $s$ (for $s\ge 0$).
Since $X_{t,s}=X_t$ is constant it follows from \cite[Theorem 2.2]{MR2839561} that $X_t$ is integrable.

From \eqref{eq:defeq4} we get
\[
  f_t=-\int_{0}^t \psi^\ast_\tau (i_{X_\tau}\theta_\tau) d\tau
\]
(since $f_0=0$). So for $s\ge 0$:
\[
    f_t=-e^s\int_{0}^t \psi^\ast_\tau (i_{X_{\tau,\partial}}\theta_{\tau,\partial}) d\tau=0
  \]
  using \eqref{eq:simplified}. Thus $f_t$ is supported inside $\coprod_t \psi^{-1}_t M_0$. This is a compact set since it is the image of $M_0\times [0,1]$
  under the continuous map $N\times [0,1]:(t,n)\mapsto \psi_t^{-1}(n)$.
 So the  support of $f_t$ is compact. 
\end{proof}

The following lemma used above is a tautology.
\begin{lemma} \label{lem:tautology}
  Let $(N_1,\theta_1), (N_2,\theta_2)$ be Liouville manifolds and let $\psi:N_1\r N_2$ be a diffeomorphism which commutes with the Liouville flow
  outside $M_1-\partial M_1\subset N_1$ for $M_1\subset N_1$ a generating Liouville domain. Then $\psi(M_1)$ is a generating Liouville domain in $(N_2,\theta_2)$.
\end{lemma}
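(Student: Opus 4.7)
My plan is to first unpack the hypothesis. The condition that $\psi$ commutes with the Liouville flow outside $M_1^\circ := M_1 - \partial M_1$ means that $\psi_\ast Z_{\theta_1} = Z_{\theta_2}$ on $N_1 \setminus M_1^\circ$, or equivalently that $\psi \circ \phi_{Z_{\theta_1},t} = \phi_{Z_{\theta_2},t} \circ \psi$ for those $t$ such that the relevant trajectories stay in $N_1 \setminus M_1^\circ$. Note that $\partial M_1$ itself lies in this region, which is the key point for what follows. Observe that we are \emph{not} asserting that $\psi^\ast \theta_2 = \theta_1$, only that the two Liouville vector fields are $\psi$-related on the complement of the interior.

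Next I would verify the three axioms for $\psi(M_1)$ to be a Liouville domain in $(N_2,\theta_2)$. Compactness is immediate since $\psi$ is continuous and $M_1$ is compact; $\psi(M_1)$ is a compact codimension-zero submanifold with smooth boundary $\psi(\partial M_1)$. The two-form $d\theta_2$ restricts to a symplectic form on $\psi(M_1)$ because $\theta_2$ is already a Liouville form on all of $N_2$. For the outward-pointing condition on $Z_{\theta_2}$ along $\partial \psi(M_1)$, I would argue as follows: at any $m \in \partial M_1$ we have $\psi_\ast Z_{\theta_1}(m) = Z_{\theta_2}(\psi(m))$ by the hypothesis, and since $Z_{\theta_1}(m)$ points outward from $M_1$ and $\psi$ sends a (small) outward collar of $\partial M_1$ diffeomorphically onto an outward collar of $\psi(\partial M_1)$, the vector $Z_{\theta_2}(\psi(m))$ points outward from $\psi(M_1)$.

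Finally I would check the generating property. Let $n \in N_2$ and set $m := \psi^{-1}(n) \in N_1$. Since $M_1$ generates $N_1$, there exists some $s \ge 0$ with $\phi_{Z_{\theta_1},-s}(m) \in M_1$; let $s_0 \ge 0$ be the infimum of such $s$. If $s_0 = 0$ then $m \in M_1$, so $n \in \psi(M_1)$ and we are done with $t = 0$. Otherwise, by definition of $s_0$ and the closedness of $M_1$, the whole backward trajectory $\{\phi_{Z_{\theta_1},-s}(m) : s \in [0,s_0]\}$ is contained in $(N_1 \setminus M_1) \cup \partial M_1 = N_1 \setminus M_1^\circ$, with the endpoint $\phi_{Z_{\theta_1},-s_0}(m)$ landing in $\partial M_1$. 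The intertwining hypothesis then gives $\phi_{Z_{\theta_2},-s_0}(n) = \psi(\phi_{Z_{\theta_1},-s_0}(m)) \in \psi(\partial M_1) \subset \psi(M_1)$, so $n = \phi_{Z_{\theta_2},s_0}(\phi_{Z_{\theta_2},-s_0}(n))$ lies in the forward flow-out of $\psi(M_1)$.

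The only mildly delicate point — and the single place where the hypothesis genuinely enters — is the trajectory-chasing in the last paragraph: one must identify $s_0$ as the \emph{first} backward hitting time of $M_1$ in order to guarantee the trajectory sits inside the region $N_1 \setminus M_1^\circ$ on which the two Liouville flows are $\psi$-related. Once that point is made, everything else is formal.
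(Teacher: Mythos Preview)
Your argument is correct. The paper does not actually supply a proof: it introduces the lemma with the sentence ``The following lemma used above is a tautology'' and leaves it at that. Your write-up is thus a careful unpacking of what the authors regard as self-evident, and each step (compactness, outward-pointing of $Z_{\theta_2}$ along $\psi(\partial M_1)$ via $\psi_\ast Z_{\theta_1}=Z_{\theta_2}$ on $\partial M_1$, and the generating property via first backward hitting time) is sound. The one genuinely non-formal observation you isolate --- that the backward $Z_{\theta_1}$-trajectory from $m$ to its first contact with $M_1$ stays in $N_1\setminus M_1^\circ$, so the intertwining hypothesis applies along it --- is exactly the content of the lemma; the rest is bookkeeping.
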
 

\bibliographystyle{amsalpha}

\end{document}